\tikzset{
  arrow/.pic={\path[tips,every arrow/.try,->,>=#1] (0,0) -- +(.1pt,0);},
  pics/arrow/.default={triangle 90}
}
\definecolor{light-gray}{gray}{0.92}
\definecolor{ultra-light-gray}{gray}{0.97}
\newtheorem{thm}{Theorem}[section]
\newtheorem{cor}[thm]{Corollary}
\newtheorem{lemma}[thm]{Lemma}
\newtheorem{prop}[thm]{Proposition}
\newtheorem{conj}[thm]{Conjecture}
\newtheorem{exa}[thm]{Example}
\newtheorem{rem}[thm]{Remark}
\newtheorem{claim}[thm]{Claim}
\newtheorem*{caseI}{Case I}
\newtheorem*{caseII}{Case II}
\newtheorem{assumps}[thm]{Assumptions}
\theoremstyle{definition}
\newtheorem{defn}[thm]{Definition}
\def\qed{{\hspace{2mm}{\small $\diamondsuit$}}}
\def\zed{{\mathbb Z}}
\def\D{{\mathcal D}}
\def\ep{\varepsilon}
\newtheoremstyle{cases}
  {12pt plus 6 pt}
  {2pt}
  {\bfseries}   
  {}
  {\bfseries}
  {.}
  {.5em}
  {}
\theoremstyle{cases}
\newtheorem{case}{Case}
\numberwithin{subcase}{case} \numberwithin{subsubcase}{subcase}
\numberwithin{equation}{subsection}
\def\sfrac#1#2{\kern.1em\raise.5ex\hbox{$#1$}
    \kern-.1em/\kern-.05em\lower.25ex\hbox{$#2$}}
\def\zed{{\mathbb Z}}
\def\zed{{\mathbb Z}}
\def\sfrac#1#2{\kern.1em\raise.5ex\hbox{$#1$}
        \kern-.1em/\kern-.05em\lower.25ex\hbox{$#2$}}
\def\zed{{\mathbb Z}}
\def\G{{\Gamma}}
 \def\d{{\delta}}
 \def\e{{\epsilon}}
 \def\l{{\lambda}}
 \def\L{{\Lambda}}
  \def\O{{\Omega}}
   \def\s{{\sigma}}
 \def\a{{\alpha}}
 \def\b{{\beta}}
 \def\p{{\partial}}
 \def\r{{\rho}}
 \def\ra{{\rightarrow}}
 \def\g{{\gamma}}
 \def\D{{\Delta}}
 \def\c{{\mathbb C}}
 \def\z{{\mathbb Z}}
 \def\2{{\mathbb Z_2}}
 \def\t{{\tau}}
 \def\sl2{{SL(2,\mathbb C)}}
 \def\qed{{\hspace{2mm}{\small $\diamondsuit$}}}
 \def\pf{{\noindent{\bf Proof.\hspace{2mm}}}}
 \def\sl{{{\mbox{\tiny $\L$}}}}
\begin{document}

\title{Dehn fillings of knot manifolds containing essential twice-punctured tori\footnotetext{2000 Mathematics Subject Classification. Primary 57M25, 57M50, 57M99}}

\author[Steven Boyer]{Steven Boyer}
\thanks{Steven Boyer was partially supported by NSERC grant RGPIN 9446-2008}
\address{D\'epartement de Math\'ematiques, Universit\'e du Qu\'ebec \`a Montr\'eal, 201 avenue du Pr\'esident-Kennedy, Montr\'eal, QC H2X 3Y7.}
\email{boyer.steven@uqam.ca}
\urladdr{http://www.cirget.uqam.ca/boyer/boyer.html}

\author{Cameron McA. Gordon}
\thanks{Cameron Gordon was partially supported by NSF grant DMS-0906276.}
\address{Department of Mathematics, University of Texas at Austin, 1 University Station, Austin, TX 78712, USA.}
\email{gordon@math.utexas.edu}
\urladdr{http://www.ma.utexas.edu/text/webpages/gordon.html}

\author{Xingru Zhang}
\address{Department of Mathematics, University at Buffalo, Buffalo, NY, 14214-3093, USA.}
\email{xinzhang@buffalo.edu}
\urladdr{http://www.math.buffalo.edu/~xinzhang/}

\maketitle
\vspace{-.6cm}
\begin{center}
\today
\end{center}

\maketitle

\begin{abstract}
We show that if a hyperbolic knot manifold $M$
contains an essential twice-punctured torus $F$ with
boundary slope $\beta$ and admits a filling with slope
$\alpha$ producing a Seifert fibred space, then the
distance between the slopes $\alpha$ and $\beta$ is
less than or equal to $5$ unless $M$ is the exterior
of the figure eight knot. The result is sharp; the
bound of $5$ can be realized on infinitely many
hyperbolic knot manifolds. We also determine distance
bounds in the case that the fundamental group of the
$\alpha$-filling contains no non-abelian free group.
The proofs are divided into the four cases $F$ is a
semi-fibre, $F$ is a fibre, $F$ is non-separating but
not a fibre, and $F$ is separating but not a
semi-fibre, and we obtain refined bounds in each case.
\end{abstract}

\section{Introduction}

This is the third in a series of papers in which we investigate the following conjecture of the second named author (\cite[Conjecture 3.4]{Go2}). Throughout we assume that $M$ is a {\it hyperbolic knot manifold}. That is, $M$ is a compact, connected, orientable 3-manifold with torus boundary whose interior has a complete finite volume hyperbolic structure.

\begin{conj}\label{conj}{\rm  (C. McA. Gordon)} Suppose that $M$ is a hyperbolic knot manifold and $\alpha, \beta$ are slopes on $\partial M$ such that $M(\beta)$ is a toroidal manifold and $M(\alpha)$ is a Seifert manifold. If $\Delta(\alpha,\beta) > 5$, then $M$ is the figure eight knot exterior.\end{conj}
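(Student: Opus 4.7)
The plan is to reduce the conjecture to a combinatorial analysis of intersection graphs, case-split according to the minimal number of punctures of an essential surface in $M$ that caps off to the toroidal summand in $M(\beta)$. Since $M(\beta)$ is toroidal, it contains an essential torus $\hat T$. After isotopy minimizing $|\hat T \cap K_\beta|$, where $K_\beta$ is the core of the $\beta$-filling, $F := \hat T \cap M$ is a properly embedded essential surface in $M$ with boundary slope $\beta$ and $n := |\partial F| \geq 1$ components (the case $n=0$ is excluded since $M$ is hyperbolic). The case $n=1$ is treated by earlier papers of the series; this paper addresses $n=2$, which is precisely the hypothesis that $M$ contains an essential twice-punctured torus $F$ of slope $\beta$.

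Assume $n=2$ and, for contradiction, $\Delta(\alpha,\beta) \geq 6$. The approach is to pair $F$ with a suitable essential surface $Q$ in $M$ coming from the Seifert structure on $M(\alpha)$ --- a horizontal surface when the base $2$-orbifold of $M(\alpha)$ is hyperbolic, or a vertical annulus or torus when the base orbifold is small --- and form the intersection graphs $G_F$ on $F$ and $G_Q$ on $Q$ induced by $F \cap Q \subset M$. Each vertex of $G_F$ has valence at least $|\partial Q| \cdot \Delta(\alpha,\beta) \geq 6 |\partial Q|$, and similarly for $G_Q$, so Euler characteristic, label parity, and Scharlemann cycle estimates give strong restrictions. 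The analysis then splits into the four cases listed in the abstract. When $F$ is a fibre or a semi-fibre, $M$ is a surface bundle or semi-bundle over $S^1$, and the demand that a second filling slope produce a Seifert manifold forces the monodromy into a very narrow class; the figure eight knot exterior should emerge as the unique exception. When $F$ is non-separating but not a fibre, the JSJ decomposition of $M \setminus F$, combined with Thurston-norm and guts estimates, bounds the valence data in $G_F$ and $G_Q$ and rules out $\Delta \geq 6$.

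The main obstacle will be the case in which $F$ is separating but not a semi-fibre. Here $M$ splits into two pieces $M_1, M_2$ glued along $F$, with no inherited global fibration, and the intersection graphs admit a particularly rich collection of small faces --- bigons, trigons, Scharlemann cycles, extended Scharlemann cycles, and great-web configurations --- each of which must be eliminated using the detailed Seifert structure on $M(\alpha)$ together with the specific topology of the twice-punctured torus. Because the bound $\Delta \leq 5$ is shown to be sharp on infinitely many hyperbolic knot manifolds, the estimates here must be essentially tight, leaving no slack for a softer argument. Finally, the refined bounds when $\pi_1(M(\alpha))$ contains no non-abelian free group (so that $M(\alpha)$ is small Seifert fibred, with base orbifold having at most three cone points) should follow by sharpening the same graph-theoretic analysis, using the additional restriction on the number of exceptional fibres to eliminate further configurations.
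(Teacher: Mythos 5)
The statement you were asked to prove is Conjecture~\ref{conj}, and it is worth stressing at the outset that the paper does not prove it. What the paper proves is Theorem~\ref{thm: twice-punctured precise} (the $n=2$ case) and the partial Corollary~\ref{cor:of main thm}, which asserts the conjecture only for hyperbolic knot manifolds that are \emph{not} $m$-punctured torus bundles for $m\ge 3$ or $m$-punctured torus semi-bundles for $m\ge 4$. Your reduction ``case-split on the minimal $n=|\partial F|$, apply prior work for $n=1$, handle $n=2$, done'' therefore has a genuine gap: nothing in your sketch, or in the paper, addresses $n\ge 3$ when the minimal-puncture surface is a fibre or semi-fibre, and that is exactly where the conjecture remains open. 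Your passing remark that for fibres and semi-fibres ``the demand that a second filling slope produce a Seifert manifold forces the monodromy into a very narrow class'' is not a known argument; for $n\ge 3$ fibres/semi-fibres it is precisely the missing step, and no amount of intersection-graph combinatorics in the $n=2$ case substitutes for it.

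Even restricted to $n=2$, your sketch of the method diverges from what the paper actually does in several essential respects. First, the paper does not pair $F$ with a horizontal or vertical surface $Q$ coming from the Seifert structure on $M(\alpha)$ and run a two-graph $(G_F,G_Q)$ analysis. After an initial reduction (Assumptions~\ref{assumptions 0}) that eliminates singular slopes, reducible and toroidal $M(\alpha)$, the paper works with the immersed surface / characteristic submanifold machinery of \cite{BCSZ1,BGZ2}: an immersion $h:Y\to M(\alpha)$ of a disk or torus, the single intersection graph $\Gamma_F=h^{-1}(F)\subset Y$, and the filtration $\dot\Phi_j^\epsilon$ of characteristic subsurfaces together with the invariants $t_1^\epsilon$, which drive all the face-counting estimates. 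Second, the fibre and semi-fibre cases (Sections~\ref{sec: twice-punctured semi-fibre} and~\ref{sec: fibre}) are not handled by graph combinatorics on $(G_F,G_Q)$ at all, but by lifting the hyperelliptic involution $\tau_F$ of $F$ to $M$, passing to the quotient $(V,L)$ where $L$ is a $4$-braid in a solid torus, and then analysing $L$ via Seifert-fibred branched covers, Heegaard genus estimates, and one-bridge braid classifications; this is supplemented by Culler--Shalen seminorm and $PSL_2(\mathbb C)$-character-variety arguments (Sections~\ref{sec: cs theory},~\ref{sec: amalgams}) that bound $\Delta$ whenever $M(\alpha)$ is very small. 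Third, the hardest separating-but-not-semi-fibre case, besides the $Q$-disk and Scharlemann-cycle analysis you anticipate, depends crucially on a character-variety argument when $d\ne 1$ and $M(\alpha)$ is very small or not very small (Sections~\ref{sec: sep not semifibre very small} and~\ref{sec: sep not semifibre not very small}), and when $d=1$ on explicitly building essential annuli in $\widehat X^\pm$ and recognizing $M$ as a filling of the minimally twisted $5$-chain link exterior, which in turn rests on the Martelli--Petronio and Roukema computer classifications (Section~\ref{sec: recogn the fig 8}). None of these ingredients is suggested in your proposal, and the naive valence estimate $\ge 6|\partial Q|$ on a directly-constructed $Q$ will not produce the sharp bound by itself.
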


Recall that if $M$ is the exterior of the figure eight knot then $M(\gamma)$ is hyperbolic unless $\gamma \in \{\infty,0, \pm1,\pm2,\pm3,\pm4 \}$. Moreover, $M(\infty) \cong S^3$, $M(0)$ is a torus bundle, $M(\pm4)$ are toroidal, and $M(\pm1), M(\pm2)$ and $M(\pm3)$ are Seifert manifolds.

We note that 6 is the threshold distance characterising the figure eight knot exterior: there are infinitely many triples $(M;\alpha,\beta)$ where $M$ is a hyperbolic knot manifold with Seifert and toroidal filling slopes $\alpha$ and $\beta$ such that $\Delta(\alpha,\beta) = 5$.  See Example \ref{infinitely-many-5}.

Suppose that $\alpha$ and $\beta$ are slopes on $\partial M$ such that $M(\beta)$ is toroidal and $M(\alpha)$ is a Seifert manifold. In Proposition 3.3 of \cite{BGZ3} we showed that $\Delta(\alpha,\beta) \le 3$ if one of $M(\alpha)$ or $M(\beta)$ is reducible. Further, in Theorem 1.2 of \cite{BGZ3} we showed that if $M(\alpha)$ is toroidal then $\Delta(\alpha,\beta) \le 4$ (see Theorem \ref{thm:toroidal Seifert}  below). Thus we may assume that both $M(\alpha)$ and $M(\beta)$ are irreducible, and $M(\alpha)$ is atoroidal. In particular, $M(\alpha)$ is a {\it small Seifert manifold}, that is, it admits a Seifert structure with base orbifold of the form $S^2(a,b,c)$ where $a,b,c \ge 1$.

Since $M$ is hyperbolic and $M(\beta)$ is toroidal, $M$ contains an essential punctured torus $F$ with non-empty boundary of slope $\beta$. Let $m$ be the minimum value of $|\partial F|$ over all such $F$. In \cite{BGZ2} we verified the conjecture in the case that $m \geq 3$ and $M$ admits no punctured torus of boundary slope $\beta$ which is a fibre or semi-fibre. In \cite{BGZ3} we proved the conjecture when $M$ contains an essential once-punctured torus. In the present paper we consider the case $m = 2$. Note that this is the case that arises in the $\pm4$-surgeries on the figure eight knot.

Here is our main result.

\begin{thm}
\label{thm: twice-punctured precise}
  Let $M$ be a hyperbolic knot manifold which contains an essential twice-punctured torus $F$ with boundary slope $\beta$, and let $\alpha$ be a slope on $\partial M$ such that $M(\alpha)$ is an irreducible small Seifert manifold. If $\Delta(\alpha,\beta) > 5$ then $M$ is the exterior of the figure eight knot. More precisely:

$(1)$ If $F$ is a fibre in $M$, then $\Delta(\alpha,\beta) \le 3$.

$(2)$ If $F$ is a semi-fibre in $M$, then $\Delta(\alpha,\beta) \le 4$.

$(3)$ If $F$ is non-separating in $M$, though not a fibre, then $\Delta(\alpha,\beta) \le 5$.

$(4)$ If $F$ is separating in $M$, though not a semi-fibre, and if $\Delta(\alpha,\beta) > 5$, then $M$ is the exterior of the figure eight knot.
\end{thm}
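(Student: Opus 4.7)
The plan is to follow the four-case partition in the statement and, in each case, analyse the standard pair of labelled intersection graphs arising from $F$ and a suitably chosen surface in $M(\alpha)$. Cap the two boundary circles of $F$ off with meridian discs in $M(\beta)$ to obtain a torus $\widehat{F}\subset M(\beta)$, and choose in $M(\alpha)$ an essential surface $S$ adapted to the small Seifert structure of $M(\alpha)$ (for instance, a vertical torus or annulus when the core of the $\alpha$-filling is a Seifert fibre, or a horizontal surface otherwise). Putting $S$ and $F$ in general position yields labelled graphs $G_F$ on $\widehat F$ and $G_S$ on $\widehat S$; every edge of $G_F$ meets $\partial M$ in $\Delta(\alpha,\beta)$ points, so the numerical restrictions on the graphs (via Euler characteristic, parity, the trigon/great-web machinery, and disc-counting on $\widehat F$ which, being a torus, has Euler characteristic zero) drive each of the distance bounds.

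For Case $(1)$, since $F$ is a fibre, $M$ is a mapping torus of a twice-punctured torus and both the mapping class group of the surface and the corresponding fillings are well-understood. The slope $\beta$ is determined by the boundary restriction of the monodromy, and $M(\alpha)$ must be of mapping torus type for every $\alpha$; imposing that $M(\alpha)$ be Seifert-fibred forces $\Delta(\alpha,\beta)\le 3$ by direct analysis of the monodromy action on $H_1(\widehat F)$. Case $(2)$ reduces to Case $(1)$ by passing to the double cover $\widetilde M\to M$ dual to the semi-fibre structure, in which $\widetilde F$ becomes a genuine fibre; the distance doubles under this cover, accounting for the one extra unit in the bound $\Delta(\alpha,\beta)\le 4$.

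For Case $(3)$, cut $M$ along $F$ to obtain a connected knot manifold $M_F$ whose boundary is a closed genus-two surface. Because $F$ is non-separating but not a fibre, the Thurston norm detects a second essential surface which interacts nontrivially with the two copies of $F$ inside $\partial M_F$, giving extra edges in $G_S$ or extra face combinatorics in $G_F$ beyond what would be available if $F$ were a fibre. Feeding this into the intersection-graph analysis of the small Seifert splitting of $M(\alpha)$, along lines parallel to the arguments in \cite{BGZ2,BGZ3}, yields the sharp bound $\Delta(\alpha,\beta)\le 5$.

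Case $(4)$, the separating non-semi-fibre case, is the main case and I expect it to be the principal obstacle. Here $F$ splits $M$ as $X_1\cup_F X_2$ where each $X_i$ has a single torus boundary (built from a copy of $F$ together with one of the two annuli into which $\partial F$ cuts $\partial M$) and at least one $X_i$ is \emph{not} a twisted $I$-bundle over the Klein bottle. The strategy is to suppose $\Delta(\alpha,\beta)\ge 6$ and to show that the resulting labelled graphs $G_F, G_S$ admit only finitely many combinatorial types; each type is then analysed by reassembling the Seifert structure of $M(\alpha)$ across the JSJ pieces of $M_F = X_1\sqcup X_2$. Either the analysis contradicts the irreducibility or smallness of $M(\alpha)$ (for instance, producing a fourth exceptional fibre, an essential sphere, or an essential torus), or it pins down both $X_i$ precisely; in the unique surviving configuration one recognises the twice-punctured Seifert surface of the figure eight knot, and the extremal distance $\Delta(\alpha,\beta)=6$ matches exactly the $\pm 4$-surgeries recalled after Conjecture \ref{conj}. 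The delicate point, and the one I expect to demand the most work, is ruling out all the non-figure-eight combinatorial types, since the separating hypothesis removes several of the rigidity tools that were available in Cases $(1)$--$(3)$.
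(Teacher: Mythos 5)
Your proposal diverges from the paper's proof at several points, and two of your claimed steps fail outright.

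In Case (1) you assert that because $F$ is a fibre, ``$M(\alpha)$ must be of mapping torus type for every $\alpha$,'' and that the bound follows from the monodromy action on $H_1(\widehat F)$. This is false: only the fibre slope $\beta$ gives a surface bundle; for $\alpha\ne\beta$ the filling $M(\alpha)$ is not a mapping torus of a closed surface, and there is no a priori reason the monodromy on $H_1$ controls which fillings are Seifert. The paper instead exploits the central hyperelliptic involution $\tau_F$ of $F$: its fibrewise extension $\tau$ realises $M$ as a $2$-fold branched cover of a solid torus $V$ over a $4$-braid $L$, and each filling $M(\gamma)$ becomes a $2$-fold branched cover of a lens space $V(\bar\gamma)$ over a braid-like link $L_\gamma$. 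The bound comes from comparing Heegaard genera of such branched covers and from intersection-graph arguments in $V$. In Case (2) your ``distance doubles under the double cover'' step is also wrong: the $2$-fold cover $\widetilde M\to M$ dual to the semi-fibre has disconnected boundary, and the covering map restricts to a homeomorphism on each boundary torus, so $\Delta(\alpha,\beta)$ is preserved, not doubled. Moreover the paper does not reduce (2) to (1): the very small subcase is handled by $PSL_2(\mathbb C)$-character variety (Culler--Shalen seminorm) arguments built from the epimorphism $\pi_1(M)\to\mathbb Z/2*\mathbb Z/2$, and the not-very-small subcase uses a separate branched-cover/graph analysis.

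Two further tools that the paper uses essentially are missing from your outline. First, the Culler--Shalen seminorm machinery (bending curves in the character varieties of triangle-group amalgams and HNN extensions, Propositions \ref{prop: sing slope exceptional} and \ref{prop: boundary values}) is the engine for all very small subcases in (3) and (4); your appeal to the Thurston norm in Case (3) is not a substitute and is not how the paper argues. Second, in Case (4) the recognition that the extremal configuration \emph{is} the figure eight knot exterior is not achieved by ``pinning down both $X_i$ precisely'' by hand; it goes through the explicit annulus constructions of Lemmas \ref{nice annuli} and \ref{lemma:nice annuli2}, which exhibit $M$ as a filling of the minimally twisted $5$-chain link exterior or a related $7$-component link exterior, and then invoke the Martelli--Petronio and Roukema classifications of exceptional fillings together with SnapPy-assisted verification. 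Your combinatorial-types sketch would need these inputs (or a genuine replacement for them) to close the case.
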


Theorem \ref{thm: twice-punctured precise} combines with Proposition 3.3 and Theorems 1.2 and 1.3 of \cite{BGZ3}, and Theorem 2.7 of \cite{BGZ2}, to give:

\begin{cor}\label{cor:of main thm} Let $M$ be a hyperbolic knot manifold that is not an $m$-punctured torus bundle, $m \ge 3$, or an $m$-punctured torus semi-bundle, $m \ge 4$. Then Conjecture \ref{conj} holds for $M$.\end{cor}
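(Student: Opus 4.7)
The plan is a case-check that assembles the previously proved cases into the statement of the corollary. Fix a hyperbolic knot manifold $M$ satisfying the hypotheses of the corollary, and suppose $\alpha,\beta$ are slopes on $\partial M$ with $M(\alpha)$ Seifert, $M(\beta)$ toroidal and $\Delta(\alpha,\beta)>5$; the goal is to deduce that $M$ is the figure eight knot exterior. First I would apply Proposition 3.3 of \cite{BGZ3} to assume both $M(\alpha)$ and $M(\beta)$ are irreducible (else $\Delta\le 3$), and Theorem 1.2 of \cite{BGZ3} (= Theorem \ref{thm:toroidal Seifert}) to assume $M(\alpha)$ is atoroidal (else $\Delta\le 4$). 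In particular $M(\alpha)$ is a small Seifert manifold, placing us in the framework of the present paper's main theorem.

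Since $M$ is hyperbolic and $M(\beta)$ is toroidal, $M$ contains an essential punctured torus of slope $\beta$; let $m$ be the minimum of $|\partial F|$ over all such $F$. The case $m=1$ is handled by Theorem 1.3 of \cite{BGZ3} (the once-punctured torus case), which verifies Conjecture \ref{conj} for $M$ and gives $M=$ figure eight exterior. The case $m=2$ is precisely Theorem \ref{thm: twice-punctured precise} of the present paper, with the same conclusion. Neither of these two cases requires the corollary's hypotheses on $M$, so both are immediate.

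It remains to exclude $m\ge 3$ using the excluded bundle and semi-bundle types. If $M$ contains no essential fibre or semi-fibre of slope $\beta$, Theorem 2.7 of \cite{BGZ2} forces $M$ to be the figure eight exterior; but that manifold is a once-punctured torus bundle, so its invariant $m$ equals $1$, contradicting $m\ge 3$. Hence $M$ must contain a fibre or semi-fibre $F$ of slope $\beta$. If $F$ is a fibre, $M$ is an $n$-punctured torus bundle with $n=|\partial F|\ge m\ge 3$, which is excluded by hypothesis. If $F$ is a semi-fibre, then $M$ is a semi-bundle in which the connected orientable surface $F$ is a double cover of a non-orientable base surface, so $\chi(F)=-n$ is even and hence $n$ is even; combined with $n\ge m\ge 3$ this gives $n\ge 4$, again excluded by hypothesis. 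I do not anticipate any genuine obstacle: every serious step has been carried out in the earlier papers of the series or in Theorem \ref{thm: twice-punctured precise}, and the only self-contained observation is the parity of $|\partial F|$ for a semi-fibre, which follows directly from the definition of a semi-bundle.
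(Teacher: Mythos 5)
Your proof is correct and follows exactly the same line of reasoning the paper has in mind (the paper itself merely cites the ingredients without elaborating the case analysis). Two small remarks. First, the assertion that the figure eight exterior ``is a once-punctured torus bundle, so its invariant $m$ equals $1$'' conflates two different slopes: the once-punctured torus fibre has boundary slope $0$, while the toroidal slopes at which $\Delta>5$ is achieved are $\pm 4$, for which $m=2$ (as the paper itself notes). The quantity $m$ depends on $\beta$, and one cannot read it off from the bundle structure alone. The contradiction with $m\ge 3$ survives either way, so this is only an imprecision, but the correct justification is that the figure eight has $m(\pm4)=2$. Second, your contradiction step in the no-fibre/semi-fibre subcase is slightly circuitous: once Theorem 2.7 of \cite{BGZ2} yields $M=$ figure eight, the corollary's conclusion already follows and no contradiction is needed; the contradiction is simply confirming that this subcase cannot in fact occur when $m\ge 3$. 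Your parity observation for semi-fibres is correct and is exactly what justifies the asymmetry between the bundle ($m\ge 3$) and semi-bundle ($m\ge 4$) exclusions in the hypothesis.
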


The triples $(M;\alpha,\beta)$ where $M(\alpha)$ and $M(\beta)$ are toroidal and $\Delta(\alpha,\beta) \ge 4$ have been classified, by Gordon \cite{Go1} when $\Delta(\alpha,\beta) \ge 6$ and by Gordon and Wu \cite{GW} when $\Delta(\alpha, \beta) = 4$ or 5. Using this, the following result,  which deals with the case where $M(\beta)$ is toroidal and $M(\alpha)$ is toroidal Seifert, was proved in \cite{BGZ3}. Here $N$ is the 3-chain link (shown in Figure \ref{2 and 3 chain link}) exterior (also called the {\it magic manifold}), and slopes are parametrised in the usual way for exteriors of links in $S^3$.

\begin{thm}\label{thm:toroidal Seifert}{\rm  (\cite[Theorem 1.2]{BGZ3})}. Let $M$ be a hyperbolic knot manifold with slopes $\alpha$ and $\beta$ on $\partial M$ such that $M(\beta)$ is toroidal and $M(\alpha)$ is a toroidal Seifert manifold. Then

$(1)$ $\Delta(\alpha,\beta) \le 4$, and

$(2)$ $\Delta(\alpha,\beta) = 4$ if and only if $(M;\alpha,\beta) \cong (N(-1/2,-1/2);-4,0)$.\end{thm}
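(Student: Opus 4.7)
The plan is to reduce the theorem to the complete classification of toroidal/toroidal Dehn fillings on hyperbolic knot manifolds. Since both $M(\alpha)$ and $M(\beta)$ are toroidal, we invoke the work of Gordon \cite{Go1} (which handles the case $\Delta(\alpha,\beta) \ge 6$) together with Gordon-Wu \cite{GW} (which handles $\Delta(\alpha,\beta) \in \{4,5\}$); combined, they give a finite list, up to homeomorphism, of all triples $(M;\alpha,\beta)$ with $M$ a hyperbolic knot manifold and $M(\alpha), M(\beta)$ both toroidal at distance at least $4$. In every entry of this list, $M$ is obtained by Dehn filling on one of a small family of cusped manifolds (most prominently the magic manifold $N$), and the filled manifolds are presented explicitly as graph manifolds through their JSJ decompositions.

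For part $(1)$ I would impose the extra hypothesis that $M(\alpha)$ is itself Seifert fibred and verify that this excludes every entry on the combined list with $\Delta(\alpha,\beta) \ge 5$. Concretely, for each distance-$5$ triple from \cite{GW} and each distance-$\ge 6$ triple from \cite{Go1}, the surgery description exhibits the JSJ decomposition of $M(\alpha)$ with at least two Seifert pieces glued along an essential torus in such a way that the fibrations on the two sides do not match up across the torus; hence $M(\alpha)$ is a proper graph manifold, contradicting the Seifert hypothesis. This yields the bound $\Delta(\alpha,\beta) \le 4$. For part $(2)$ I would then go through the Gordon-Wu list at distance exactly $4$ and, for each entry, decide whether $M(\alpha)$ admits a Seifert fibration by inspecting its JSJ decomposition. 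The anticipated outcome is that only the single triple $(N(-1/2,-1/2);-4,0)$ survives; for this triple one verifies directly that $N(-1/2)$ is a one-cusped hyperbolic manifold whose $(-1/2)$-filling is a toroidal Seifert manifold and whose $0$-filling is toroidal but not Seifert, with the slopes $-1/2$ and $0$ at distance $4$ on the remaining cusp.

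The main obstacle is the case-by-case bookkeeping in part $(2)$: the distance-$4$ list of \cite{GW} contains several families, and for each family one must identify the JSJ pieces of $M(\alpha)$ and determine whether adjacent Seifert fibrations match across the common JSJ torus, which is precisely the condition for $M(\alpha)$ to be Seifert. By contrast the distance-$\ge 5$ cases in part $(1)$ should be handled uniformly, since the constructions of \cite{Go1} and \cite{GW} at those distances visibly produce proper graph manifolds, so the Seifert hypothesis eliminates them wholesale.
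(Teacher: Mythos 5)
Your proposal matches the approach the paper takes: Theorem \ref{thm:toroidal Seifert} is not reproved here but is quoted from \cite{BGZ3}, and the paper explicitly states that it was obtained there by reducing to the classifications of toroidal/toroidal filling pairs at distance $\ge 6$ (Gordon \cite{Go1}) and at distance $4$ or $5$ (Gordon--Wu \cite{GW}) and then checking, entry by entry, whether the Seifert hypothesis on $M(\alpha)$ can hold --- exactly the strategy you describe. The only thing worth flagging is that the case-by-case bookkeeping you anticipate as the main obstacle is indeed delicate: the paper's Remark following the theorem records a correction to one of the filling computations in the original proof in \cite{BGZ3}.
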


\begin{rem}{\rm  There is a mistake in the proof of \cite[Lemma 2.5]{BGZ3}, in the case of $N(-4) \cong M_3$. Namely, in the table just before Lemma 2.5, in the list of exceptional slopes for $N(-4)$, ``$-1/2$" should be ``$-5/2$". Similarly, in part (c) of Lemma 2.5, ``$N(-4,-1/2,\gamma)$" should be ``$N(-4,-5/2,\gamma)$", and ``$\gamma = -1/2$" should be ``no $\gamma$". Finally, in the first sentence after the proof of Lemma 2.5, ``parts (a) and (c)" should be ``part (a)", and the phrase ``and $N(-4)$, respectively" should be deleted.}
\end{rem}

We also consider the case that $M(\alpha)$ is {\it very small}, in other words, its fundamental group does not contain a non-abelian free group. In this case, consideration of the JSJ decomposition of $M(\alpha)$ shows that it is either a torus bundle over the circle, a torus semi-bundle over an interval, or a Seifert manifold whose base orbifold has non-negative Euler characteristic. As in the proof of Theorem \ref{thm:toroidal Seifert}, the results of \cite{Go1} and \cite{GW} can be used to deal with the cases where $M(\alpha)$ is a torus bundle or semi-bundle. These are described in Theorems \ref{thm:torus bundle} and \ref{thm:torus semi-bundle} below. In these theorems, $Wh$ denotes the exterior of the Whitehead link (shown in Figure \ref{2 and 3 chain link}), with slopes again parametrised in the usual way for link exteriors in $S^3$.

\begin{thm}\label{thm:torus bundle}
 Let $M$ be a hyperbolic knot manifold with slopes $\alpha$ and $\beta$ on $\partial M$ such that $M(\beta)$ is toroidal and $M(\alpha)$ is a torus bundle. Then

$(1)$ $\Delta(\alpha,\beta) \le 4$, and

$(2)$ $\Delta(\alpha,\beta) = 4$ if and only if $(M;\alpha,\beta) \cong (Wh(n);0,-4), \, n \in \zed, \, n \ne -4,-3,-2,-1,0$.\end{thm}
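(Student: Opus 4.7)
The plan is to reduce the theorem to the known classification of hyperbolic knot manifolds with two toroidal Dehn fillings at large distance, mirroring the approach to Theorem \ref{thm:toroidal Seifert}. The starting observation is that any torus bundle over $S^1$ contains its fibre as an essential torus, so if $M(\alpha)$ is a torus bundle then $M(\alpha)$ is toroidal. Thus $(M;\alpha,\beta)$ is a triple where $M$ is a hyperbolic knot manifold and both $M(\alpha)$ and $M(\beta)$ are toroidal, to which the classifications of Gordon \cite{Go1} (for $\Delta(\alpha,\beta) \ge 6$) and Gordon--Wu \cite{GW} (for $\Delta(\alpha,\beta) \in \{4,5\}$) apply.

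In each of those classification tables, $M$ is realised explicitly as a Dehn filling on a small hyperbolic $2$- or $3$-cusped manifold---typically the magic manifold $N$, the Whitehead link exterior $Wh$, the Whitehead sister, or a close relative---and the homeomorphism types of $M(\alpha)$ and $M(\beta)$, together with their JSJ decompositions, can be read off. The strategy is therefore to walk through the finite list of triples and retain only those for which $M(\alpha)$ is genuinely a torus bundle, that is, a Sol, Nil or flat manifold whose JSJ graph is a single vertex corresponding to a mapping torus of $T^2$.

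For $\Delta(\alpha,\beta) \ge 6$ none of the candidates has $M(\alpha)$ a torus bundle---the relevant closed fillings are either small Seifert manifolds or graph manifolds with at least two Seifert pieces---so $\Delta(\alpha,\beta) \le 5$, and the $\Delta = 5$ candidates are eliminated by the same check. For $\Delta = 4$, the distance-$4$ fillings of $N$ were already enumerated during the proof of Theorem \ref{thm:toroidal Seifert}, and the single one with $M(\alpha)$ toroidal, namely $(N(-1/2,-1/2);-4,0)$, has $M(\alpha)$ a graph manifold with two Seifert pieces, hence not a torus bundle. The remaining candidates come from $Wh$, where the only distance-$4$ pair of toroidal slopes on the two cusps is (up to the link symmetry) $\{0,-4\}$; one checks that $Wh(n)(0)$ is a torus bundle and $Wh(n)(-4)$ is toroidal, so the only constraint is hyperbolicity of $Wh(n)$, which amounts to $n \notin \{-4,-3,-2,-1,0\}$. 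This yields precisely the asserted family.

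The main obstacle is the combinatorial bookkeeping in the middle step: running through every triple in the classifications of \cite{Go1,GW} and testing whether $M(\alpha)$ is a torus bundle rather than merely toroidal. Each individual test is routine once the JSJ decomposition of the closed filling is extracted from its parent hyperbolic manifold, but the enumeration itself is case-heavy. No new machinery beyond that already deployed in the proof of Theorem \ref{thm:toroidal Seifert} is required.
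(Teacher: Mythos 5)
Your approach matches the paper's: observe that a torus bundle is toroidal, then feed the pair $(M(\alpha),M(\beta))$ into the classifications of Gordon \cite{Go1} ($\Delta\ge 6$) and Gordon--Wu \cite{GW} ($\Delta\in\{4,5\}$) and filter for torus bundles. That is exactly what the paper does.

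Where your sketch falls short is in the scope of the case analysis for $\Delta\in\{4,5\}$. The Gordon--Wu classification is not a list of $N$-fillings and $Wh$-fillings: it produces fourteen hyperbolic manifolds $M_1,\dots,M_{14}$, of which $M_4,\dots,M_{13}$ are one-cusped and only $M_1,M_2,M_3,M_{14}$ are two-cusped, and a complete proof must check every one. The paper handles $M_6,\dots,M_{13}$ via \cite[Lemma 22.2]{GW}, $M_4$ via \cite[Lemma 2.3]{BGZ3}, $M_5$ by identifying it with $N(1,-1/3)$, $M_{14}$ by identifying it with Lee's manifold $Y$, and $M_2,M_3$ by identifying them with $N(-1/2)$ and $N(-4)$ respectively; none of these cases is touched by your outline. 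You also misattribute the $\Delta=4$ enumeration to the proof of Theorem \ref{thm:toroidal Seifert}: that theorem classified triples with $M(\alpha)$ a \emph{toroidal Seifert} manifold, not toroidal in general, so it does not preclude torus bundle (in particular Sol) fillings. Finally, your assertion that $N(-1/2,-1/2,-4)$ is a graph manifold with two JSJ pieces is in tension with Theorem \ref{thm:toroidal Seifert}, which states it is a (single) toroidal Seifert manifold; that case must in any event be excluded by recognising its base orbifold is not Euclidean, not by counting JSJ pieces. So the plan is right, but the middle step you describe as bookkeeping is actually where the proof lives, and what you have written for it would not carry you through.
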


In part (2) of Theorem \ref{thm:torus bundle}, note that $Wh(1)$ is the figure eight knot exterior and $(Wh(1);0,4) \cong (Wh(1);0,-4)$.

\begin{thm}\label{thm:torus semi-bundle}
 Let $M$ be a hyperbolic knot manifold with slopes $\alpha$ and $\beta$ on $\partial M$ such that $M(\beta)$ is toroidal and $M(\alpha)$ is a torus semi-bundle. Then

$(1)$ $\Delta(\alpha,\beta) \le 4$, and

$(2)$ $\Delta(\alpha,\beta) = 4$ if and only if $(M;\alpha,\beta) \cong (Wh(-4n/(2n+1);-4,0), \, n \in \zed, \, n \ne -1,0$.\end{thm}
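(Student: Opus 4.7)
The approach parallels that of Theorem \ref{thm:torus bundle}. A torus semi-bundle over an interval decomposes canonically as the union of two twisted $I$-bundles over the Klein bottle glued along a torus $T$, so $M(\alpha)$ is automatically toroidal (either $T$ is essential, or $M(\alpha)$ is a small Seifert fibred space containing an essential vertical torus). Hence both $M(\alpha)$ and $M(\beta)$ are toroidal, and the classification of triples of hyperbolic knot manifolds with two toroidal fillings at distance $\ge 4$, due to Gordon \cite{Go1} for $\Delta \ge 6$ and Gordon--Wu \cite{GW} for $\Delta = 4, 5$, applies.

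First I would treat $\Delta(\alpha,\beta) \ge 6$. The Gordon list in \cite{Go1} consists of only a handful of families, all arising from explicit fillings of the Whitehead link exterior $Wh$, the magic manifold $N$, and a few sporadic examples. For each candidate one can compute the JSJ decomposition of $M(\alpha)$ directly and check that no entry has $M(\alpha)$ a torus semi-bundle. The case $\Delta(\alpha,\beta) = 5$ is handled analogously using the short Gordon--Wu list for that distance; in each case the JSJ structure of $M(\alpha)$ is incompatible with the semi-bundle decomposition.

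The central case is $\Delta(\alpha,\beta) = 4$. The Gordon--Wu classification partitions the triples into finitely many parametrised families with mother manifolds mainly $Wh$ and $N$. For each family I would determine the JSJ decomposition of $M(\alpha)$ explicitly. The family $(M;\alpha,\beta) \cong (Wh(-4n/(2n+1));-4,0)$ is distinguished: the slopes $-4n/(2n+1)$ are precisely those Dehn fillings of one cusp of $Wh$ which leave the ambient manifold hyperbolic (for $n \ne -1, 0$) and such that the further $(-4)$-filling on the remaining cusp produces a closed manifold whose canonical torus separates it into two twisted $I$-bundles over the Klein bottle, realising the semi-bundle structure. For each remaining family on the Gordon--Wu list --- including the toroidal Seifert candidate $(N(-1/2,-1/2);-4,0)$ from Theorem \ref{thm:toroidal Seifert} and the other families issuing from $N$ or its one-cusp fillings --- one verifies that at least one JSJ piece of $M(\alpha)$ is not a twisted $I$-bundle over the Klein bottle (its base orbifold or Seifert invariants are incompatible), so $M(\alpha)$ is not a torus semi-bundle.

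The main technical obstacle is this last case analysis: for each candidate appearing in the Gordon--Wu tables, one must pin down the Seifert pieces of the JSJ decomposition of $M(\alpha)$ with enough precision to recognise, or rule out, the twisted $I$-bundle over the Klein bottle as each piece, and then confirm that the gluing across the JSJ torus is of semi-bundle type. Isolating exactly the Whitehead family $Wh(-4n/(2n+1))$ with $n \ne -1, 0$ among the Gordon--Wu candidates, and identifying the exceptional values of $n$ with loss of hyperbolicity of $M$, is the most delicate point.
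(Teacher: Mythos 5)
Your plan is correct and matches the paper's argument: since a torus semi-bundle is automatically toroidal (the separating torus is incompressible in each twisted $I$-bundle piece, hence essential in the union), both $M(\alpha)$ and $M(\beta)$ are toroidal, and the Gordon \cite{Go1} and Gordon--Wu \cite{GW} classifications of toroidal pairs at distance $\ge 4$ reduce everything to a finite case check over the four $\Delta\ge 6$ triples and the fourteen manifolds $M_1,\ldots,M_{14}$ of \cite{GW}, exactly as you propose. The one point your plan leaves implicit is the mechanism for isolating the family $Wh(-4n/(2n+1))$: your description of those slopes is a characterization rather than a derivation. The paper obtains it by noting that the semi-bundle family can only come from the two-cusped mother manifold $M_1\cong Wh\cong N(1)$, translating all slopes into the magic-manifold coordinates, and reading off from the Martelli--Petronio tables \cite{MP} exactly which $\gamma$ make one of the two toroidal fillings of $N(1,\gamma)$ a torus semi-bundle (namely $\gamma=-(2n+1)/(2n+3)$), then converting back to $Wh$ coordinates; the analogous table checks for $M_2\cong N(-1/2)$, $M_3\cong N(-4)$, $M_{14}\cong Y$, and the single-cusped $M_4,\ldots,M_{13}$ are what eliminate all other candidates.
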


In Corollary \ref{cor:of main thm} we had to exclude the cases where $M$ is either an $m$-punctured torus bundle, $m \ge 3$, or an $m$-punctured torus semi-bundle, $m \ge 4$. If we assume that $M(\alpha)$ is very small Seifert, then we can include the latter case, and we can also include the former case if $M(\alpha)$ is of $C$- or $D$-type (whose definitions can be found at the beginning of \S \ref{subsec: semibundle}).

\begin{thm}\label{thm: very small cases}
 Let $M$ be a hyperbolic knot manifold with slopes $\alpha$ and $\beta$ on $\partial M$ such that $M(\beta)$ is toroidal and $M(\alpha)$ is a very small Seifert manifold. If $M(\alpha)$ is not of $C$- or $D$-type, assume that $M$ is not an $m$-punctured torus bundle, $m \ge 3$. Then $\Delta(\alpha,\beta) \le 5$.\end{thm}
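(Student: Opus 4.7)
The plan is to bootstrap from the results already established in \cite{BGZ2}, \cite{BGZ3}, and Theorem \ref{thm: twice-punctured precise} above, organising the argument by the minimal number $m$ of boundary components of an essential $\beta$-sloped punctured torus in $M$, and to handle the residual punctured torus (semi-)bundle cases directly.

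First I would dispatch the easy reductions on $M(\alpha)$. Proposition 3.3 of \cite{BGZ3} yields $\Delta(\alpha,\beta) \le 3$ when $M(\alpha)$ is reducible, and Theorem \ref{thm:toroidal Seifert} yields $\Delta(\alpha,\beta) \le 4$ when $M(\alpha)$ is toroidal. Hence I may assume $M(\alpha)$ is an irreducible, atoroidal (necessarily small) Seifert manifold whose fundamental group is virtually solvable. Since $M$ is hyperbolic and $M(\beta)$ is toroidal, $M$ contains an essential punctured torus of slope $\beta$; let $m$ denote the minimal number of its boundary components. If $m = 1$, apply \cite[Theorem 1.3]{BGZ3}; if $m = 2$, apply Theorem \ref{thm: twice-punctured precise}; if $m \ge 3$ and no essential $\beta$-sloped punctured torus in $M$ is a fibre or semi-fibre, apply \cite[Theorem 2.7]{BGZ2}. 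In each invocation the conclusion is either $\Delta(\alpha,\beta) \le 5$ directly, or $M$ is forced to be the figure eight knot exterior. The latter alternative is ruled out here: every Seifert filling of the figure eight exterior is a Brieskorn homology sphere $\Sigma(2,3,k)$ with $k \in \{7,8,9\}$, each carrying $\widetilde{SL}_2(\mathbb R)$ geometry and hence having a fundamental group containing a non-abelian free subgroup, contradicting the very small hypothesis on $M(\alpha)$.

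The residual situations are $m \ge 3$ together with the existence in $M$ of an essential $\beta$-sloped fibre or semi-fibre; equivalently, $M$ is an $m$-punctured torus bundle or semi-bundle. The hypothesis of the theorem excludes the bundle possibility unless $M(\alpha)$ is of $C$- or $D$-type, so exactly two subcases remain: (i) $M$ is an $m$-punctured torus semi-bundle, and (ii) $M$ is an $m$-punctured torus bundle with $M(\alpha)$ having cyclic or dihedral-type fundamental group. To treat (i), the plan is to pass to the orientation double cover of $M$ along the semi-fibre, which is an $m$-punctured torus bundle, and to analyse how $\alpha$ and $\beta$ lift in order to reduce to a bundle computation. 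For (ii), the strategy is to exploit the rigidity of cyclic and dihedral representations of $\pi_1(M(\alpha))$ together with the explicit description of the monodromy of an $m$-punctured torus bundle, bounding $\Delta(\alpha,\beta)$ via a $PSL(2,\mathbb C)$ character-variety or Culler--Shalen norm argument.

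The principal obstacle is precisely this punctured torus bundle case. The intersection-graph combinatorics used in \cite{BGZ2} are inadequate when $M$ is itself a bundle, and the bound $\Delta(\alpha,\beta) \le 5$ must instead be extracted from the representation-theoretic constraints imposed by a $C$- or $D$-type filling. This is why the statement distinguishes $C$- and $D$-type Seifert fillings from the remaining very small Seifert types.
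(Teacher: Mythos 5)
Your high-level decomposition is sound and matches the paper's skeleton: reduce out the reducible and toroidal Seifert cases, stratify by the minimal number $m$ of punctures, rule out the figure eight exterior alternative (your reason — that every Seifert filling of the figure eight exterior has hyperbolic base orbifold, hence non-virtually-solvable $\pi_1$ — is essentially what the paper uses, though be careful: the fillings are Seifert over $S^2(2,3,7)$, $S^2(2,4,5)$, $S^2(3,3,4)$, so not all of the form $\Sigma(2,3,k)$), and then isolate the residual bundle/semi-bundle cases as the real work. You have correctly identified exactly where the difficulty lies and why $C$- and $D$-type fillings must be singled out.

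Where you diverge from the paper is in how the residuals are discharged, and it is here that your outline is a sketch rather than a proof. For the $C$- and $D$-type cases the paper does not run a character-variety argument tuned to the bundle monodromy; it simply invokes, as a bloc, known classification theorems for the relevant surgeries on hyperbolic knot manifolds with a toroidal filling: Gordon--Luecke for $S^3$ ($\Delta \le 2$), Oh and Wu for $S^1 \times S^2$ ($\Delta \le 3$), Lee for lens spaces ($\Delta \le 4$), and Valdez-S\'anchez together with Lee and Matignon--Sayari for Klein bottle fillings ($\Delta \le 4$). These hold with no hypothesis on whether $M$ is a punctured torus bundle, so they dispose of the $C$/$D$ case uniformly — that is precisely why those two types can be exempted from the bundle exclusion in the theorem's hypothesis. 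A bespoke Culler--Shalen norm argument on the bundle monodromy, as you propose, would be much harder to close: for a cyclic filling the machinery controls $\|\alpha\|$ but does not by itself produce $\Delta(\alpha,\beta)\le 5$ against a toroidal boundary slope, and for the dihedral ($D$-type) case you would need the rigidity of the (finitely many) dihedral characters to interact correctly with the seminorm of a curve of characters coming from the bundle structure — none of which is specified in your outline.

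For the semi-bundle case the paper does not pass to the orientation double cover as you suggest. Instead (Proposition~\ref{prop: semi very small}, proved earlier in the paper) one kills the semi-fibre to obtain an epimorphism $\pi_1(M)\twoheadrightarrow \mathbb Z/2 * \mathbb Z/2$ with $\beta$ in the kernel and $\beta^*$ mapping to $(xy)^{m/2}$, so that an $\alpha$-filling factors through the dihedral group $D_{m\Delta/2}$; one then enumerates which small Seifert base orbifolds surject onto nontrivial dihedral groups. This already yields $\Delta\le 4$ for all very small types in the semi-bundle case (and in fact rules out $T$- and $I$-type entirely). Your double-cover plan might be made to work, but you would still need to control how $\alpha$ lifts and then appeal to a bundle argument that you have deferred, so at best it is circular as stated. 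In short: your framing is right, your identification of the residual obstruction is right, but the two residual cases are resolved in the paper by concrete citations and a dihedral-quotient computation, not by the character-variety and double-cover programs you sketch, and those sketches as written do not yet close the gap.
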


Theorem \ref{thm: very small cases} is best possible; see Example \ref{example very small}.

The proofs of the four parts of Theorem \ref{thm: twice-punctured precise} are independent and are dealt with in separate parts of the paper.

Here is how the paper is organized.

In \S \ref{sec: examples} we show that Theorem \ref{thm: twice-punctured precise} is sharp by producing an infinite family hyperbolic knot
manifolds each of which contains an essential twice punctured torus of boundary slope $\beta$ and admits a small Seifert filling slope $\alpha$ such that
$\D(\alpha,\beta)=5$. Theorems \ref{thm:torus bundle} and \ref{thm:torus semi-bundle} are proven  in  \S\ref{sec:proofs of todoaidl bundle and semi-bundle cases}.
 Sections \ref{sec: initial reduction} and \ref{sec: cs theory} contain assumptions, reductions and background material to be applied later in the paper. In \S \ref{sec: amalgams} we construct curves in the $PSL_2(\mathbb C)$-character varieties of certain amalgams of triangle groups which will play an essential part in the proof of Theorem \ref{thm: twice-punctured precise}. The first two assertions of this theorem are proven in \S \ref{sec: twice-punctured semi-fibre} and \S \ref{sec: fibre}.   The relative JSJ method for studying Dehn fillings introduced in \cite{BCSZ1} and further developed in \cite{BGZ2} is outlined in \S \ref{sec: second reduction} and used to prove the third assertion of Theorem \ref{thm: twice-punctured precise} in \S \ref{non-sep not fibre}. Further background material for dealing with the last assertion is contained in \S \ref{sec: n-gons} and \S \ref{sec: background sep not semifibre} while \S \ref{sec: recogn the fig 8} provides sufficient conditions for recognizing that $M$ is the figure eight knot exterior when $\Delta(\alpha, \beta) > 5$. The proof of the final assertion of Theorem \ref{thm: twice-punctured precise} is then dealt with in \S \ref{sec: t1+ + t1- > 0}, \ref{sec: Delta 7 or more}, \ref{sec: delta = 6 d=1},   \ref{sec: sep not semifibre very small} and \ref{sec: sep not semifibre not very small}. The proof of Theorem \ref{thm: very small cases} is given in \S \ref{sec: very small cases}.

{\bf Acknowledgements}. The authors would like to
thank Bruno Martelli for graciously performing
computer calculations related to an earlier version of
the paper, and the referee for their careful reading
of the manuscript and especially for pointing out
flaws in our original proofs  of Lemmas
\ref{--quotient 4} and  \ref{F2-lens}.

\section{Examples}
\label{sec: examples}
In this section we examine the sharpness of the theorems in the introduction. We assume throughout that $M$ is a hyperbolic knot manifold which contains an essential twice-punctured torus $F$ of boundary slope $\beta$ and $\alpha$ is a slope on $\partial M$ such that $M(\alpha)$ either Seifert fibred or very small.

We use $N$ to denote the exterior of the hyperbolic $3$-chain link (shown  in Figure \ref{2 and 3 chain link}) with the coordinates on $\partial N$  as given in \cite{MP}.
\begin{figure}[!ht]
\includegraphics{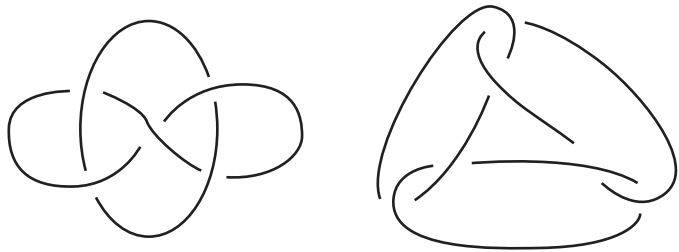}\caption{The Whitehead link and the 3 chain link}\label{2 and 3 chain link}
\end{figure}

The following example shows that $6$ is the threshold distance characterising the figure eight knot exterior in that there are infinitely many hyperbolic knot manifolds  which contain an essential twice punctured torus of boundary slope $\beta$ and admit a small Seifert filling slope $\alpha$ such that $\D(\alpha,\beta)=5$.

\begin{exa}
\label{infinitely-many-5}
{\rm
For each integer slope
$n\ne 0, -1, -2, -3$,
the manifold $N(-\frac{3}{2}, n)$
 is a hyperbolic knot manifold.
This fact follows from \cite[Theorems 1.1 and 1.2]{MP}.
Now $N(-\frac{3}{2}, n, -\frac{5}{2})$ is a toroidal manifold
 and $N(-\frac{3}{2}, n, 0)$ is a small Seifert fibred manifold, by \cite[Table A.6]{MP}. So we just need to
 verify that $N(-\frac{3}{2}, n)$ contains an essential twice-punctured torus of boundary slope $-\frac{5}{2}$.
To this end, first note that the manifold $N(-\frac{3}{2})$ is  hyperbolic by \cite[Theorem 1.1]{MP}.
Next note that $N(-\frac{3}{2},  -\frac{5}{2})$ and $N(-\frac{3}{2},  0)$
are both toroidal manifolds (\cite[Table 1]{MP}). Hence by \cite[Theorem 1.1]{GW2}, $N(-\frac{3}{2})$
is the exterior of the Whitehead sister link  (shown in \cite[Figure 7.1 (c)]{GW2})  since it is the only hyperbolic link exterior of two boundary components which admits two toroidal Dehn fillings  on one of its boundary components at distance $5$.
Furthermore by \cite[Table 1]{MP}, $N(-\frac{3}{2}, -\frac{5}{2})$ has a unique essential torus $T$  (up to isotopy) and this torus $T$ splits the manifold into two pieces, one is Seifert fibred over a disk with
two cone points and the other is Seifert fibred over an annulus with one cone point.
Now by \cite[Theorem 1.1]{GW2}, the essential torus $T$ in  $N(-\frac{3}{2},  -\frac{5}{2})$ can be arranged to intersect the third boundary torus of $N$ in two components of slope $-\frac{5}{2}$.
Moreover by the unique torus decomposition of the toroidal  manifold $N(-\frac{3}{2}, n, -\frac{5}{2})$ given in \cite[Table A.6]{MP}, one can see clearly that the torus $T$ remains essential in  $N(-\frac{3}{2}, n, -\frac{5}{2})$.
As $T$ intersects the boundary torus of $N(-\frac{3}{2}, n)$ in two components (minimally possible) of slope $-\frac{5}{2}$, its  restriction on $N(-\frac{3}{2}, n)$ is an essential twice-punctured torus of boundary slope $-\frac{5}{2}$.
}
\end{exa}

The following example shows that Theorem \ref{thm: very small cases} is sharp.

\begin{exa}\label{example very small}
{\rm Taking $n=-4$ or $-5$ in Example \ref{infinitely-many-5},
the Seifert filling on $N(-\frac{3}{2}, n)$ with slope $0$ has finite fundamental group (of $T$-type or $I$-type respectively) while the slope $0$ has distance $5$
to the essential twice-punctured torus of boundary slope $-\frac{5}{2}$. Note that $N(-\frac32, -4)$ is the figure eight sister manifold.
}
\end{exa}

\section{Proof of Theorems \ref{thm:torus bundle} and \ref{thm:torus semi-bundle}}\label{sec:proofs of todoaidl bundle and semi-bundle cases}

By \cite{Go1}, there are four triples $(M;\alpha,\beta)$ with $M(\alpha)$ and $M(\beta)$ toroidal and $\Delta(\alpha,\beta) \ge 6$. In all cases one readily checks that neither $M(\alpha)$ nor $M(\beta)$ is a torus bundle or semi-bundle. For example, one may proceed as follows. The manifolds $M$ are $Wh(1), Wh(-5), Wh(2)$ and $Wh(-5/2)$. Now $Wh \cong N(1)$, and with respect to the slope parametrizations described above, $Wh(r) \cong N(1,r+1)$. An examination of Tables A.2 and A.3 in \cite{MP}
 shows that none of the toroidal fillings in question is a torus bundle or semi-bundle.

For $\Delta = 4$ or 5, the triples $(M;\alpha,\beta)$ with $M(\alpha)$ and $M(\beta)$ toroidal are determined in \cite{GW}.
There are 14 hyperbolic manifolds $M_i, \, 1 \le i \le 14$, each with a pair of toroidal filling slopes $\alpha_i, \beta_i$ at distance 4 or 5, where $M_1, M_2, M_3$ and $M_{14}$ have two (torus) boundary components, and the others, one. It is shown in \cite{GW} that a hyperbolic knot manifold has two toroidal filling slopes $\alpha$ and $\beta$ at distance 4 or 5 if and only if either $(M;\alpha,\beta) \cong (M_i;\alpha_i,\beta_i)$ for some $4 \le i \le 13$, or $(M;\alpha,\beta) \cong (M_i(\gamma);\alpha_i,\beta_i)$, for $i = 1,2,3$ or 14 and some slope $\gamma$ on the second boundary component of $M_i$.

First, one sees from \cite[Lemma 22.2]{GW} that for $6 \le i \le 13$, neither $M_i(\alpha_i)$ nor $M_i(\beta_i)$ is a torus bundle or semi-bundle.

Next, the proof of \cite[Lemma 2.3]{BGZ3} shows that neither $M_4(\alpha_4)$ nor $M_4(\beta_4)$ is a torus bundle or semi-bundle.

For $M_5$, one can argue as follows. By \cite[Section 6]{L2}, $M_5 \cong N(1,-1/3)$, the toroidal filling slopes being $-4$ and 1. Then \cite[Table A.4]{MP} shows that neither $N(1,-1/3,-4)$ nor $N(1,-1/3,1)$ is a torus bundle or semi-bundle.

It remains to consider the cases $M_1, M_2, M_3$, and $M_{14}$. Recall that $M_1 \cong Wh \cong N(1), M_2 \cong N(-1/2)$, the exterior of the 2-bridge link with associated rational number 10/3, and $M_3 \cong N(-4)$, the exterior of the Whitehead sister link (see \cite{GW} and \cite[Table A.1]{MP}).

For $M_1 \cong N(1)$, the toroidal slopes are $-3$ and 1 (with respect to the standard slope coordinates for $N$, used in \cite{MP}). We must therefore  determine the slopes $\gamma$ such that $N(1,\gamma)$ is hyperbolic, i.e. $\gamma \notin \{\infty,-3,-2,-1,0,1 \}$ (see \cite[Table A.1]{MP}), $N(-3,1,\gamma)$ and $N(1,1,\gamma)$ are toroidal, and one is a torus bundle or semi-bundle. An examination of Tables 2, 3 and 4 of \cite{MP} shows that the only such slopes are (a) $\gamma = n \in \zed, \, n \ne -1,1$, and (b) $\gamma = -(2n+1)/(2n+3), \, n \in \zed, \, n\ne -2,-1$. Note that $N(-3,1,\gamma)$ is a torus bundle in case (a) and a torus semi-bundle in case (b). Translating the slopes into the standard coordinates for $Wh$ gives the examples in parts (2) of Theorems \ref{thm:torus bundle} and \ref{thm:torus semi-bundle}.

For $M_2 \cong N(-1/2)$ and $M_3 \cong N(-4)$ we do a similar analysis, the corresponding sets of exceptional slopes being $\{ \infty,-4,-3,-2,-1,0 \}$ and $\{ \infty,-3,-2,-1,-5/2,0 \}$, respectively, and toroidal slopes $\{ -4,0 \}$ and $\{ -5/2,0 \}$ (see \cite[Table A.1]{MP}). We conclude from Tables 2, 3 and 4 of \cite{MP} that no triples $(M;\alpha,\beta)$ of the required type arise.

Finally we take care of $M_{14}$. It follows from the classification in \cite{GW} of the hyperbolic knot manifolds with toroidal fillings at distance 4 that $M_{14} \cong Y$, where $Y$ is the manifold defined by Lee in \cite{L1}. The two toroidal fillings of $Y$ are $Y(0) \cong Y(4) \cong Q(2,2) \cup Wh$, where $Q(2,2)$ is the Seifert manifold with base orbifold $D^2(2,2)$. If $Q(2,2) \cup Wh(\gamma)$ were a torus bundle or semi-bundle then $\gamma$ would be an exceptional slope for $Wh$, and therefore would be in $\{\infty, -4,-3,-2,-1,0 \}$. But from \cite[Table A.1]{MP}  (making the appropriate change of slope coordinates) we see that for no such $\gamma$ is $Q(2,2) \cup Wh(\gamma)$ a torus bundle or semi-bundle.

\section{Initial assumptions and reductions} \label{sec: initial reduction}

We assume throughout the paper that $M$ is a hyperbolic knot manifold. A {\it slope} on the boundary of $M$ is a $\partial M$-isotopy class of essential simple closed curves. Slopes correspond bijectively with $\pm$ pairs of primitive elements of $H_1(\partial M)$ in the obvious way and we shall often represent a slope by a primitive element of $H_1(\partial M)$. The {\it rational longitude} on $\partial M$ is the unique slope $\lambda_M$ having the property that it represents a torsion element of $H_1(M)$.

To each slope $\gamma$ on $\partial M$ we associate the $\gamma$-Dehn filling of $M$, denoted by $M(\gamma)$, obtained by attaching a solid torus $V$ to $M$ in such a way that the meridional slope of $V$ is identified with $\gamma$. The resulting manifold is independent of all choices.

Given a closed, essential surface $S$ in $M$, we use $\mathcal{C}(S)$ to denote the set of slopes $\delta$ on $\partial M$ such that $S$ compresses in $M(\gamma)$.
A slope $\eta$ on $\partial M$ is called a {\it singular slope} for $S$ if $\eta \in \mathcal{C}(S)$ and $\Delta(\gamma, \eta) \leq 1$ for each $\gamma \in \mathcal{C}(S)$. For instance, if $M(\eta)$ is Seifert fibred with hyperbolic base orbifold other than a $2$-sphere with three cone points or a projective plane with two cone points, then $\eta$ is a singular slope of some closed, essential surface in $M$ (\cite[Theorem 1.7]{BGZ1}). A theorem of Wu (\cite{Wu1}) shows that if $S$ is a closed essential surface in $M$ for which $\mathcal{C}(S) \ne \emptyset$, then there is a singular slope for $S$.

\begin{prop}
\label{prop: sing slope exceptional}
{\rm (cf. \cite[Theorem 1.5]{BGZ1})}
If $\eta$ is a singular slope for some closed essential surface $S$ in $M,$ then for an arbitrary slope $\gamma$ we have
$$\Delta(\gamma, \eta) \leq
\left\{
\begin{array}{ll}
1 & \mbox{if $M(\gamma)$ is either  small Seifert or reducible} \\
1 & \mbox{if $M(\gamma)$ is Seifert fibred and $S$ does not separate}\\
2 & \mbox{if $M(\gamma)$ is toroidal and ${\mathcal C}(S)$ is infinite}\\
3 & \mbox{if $M(\gamma)$ is toroidal and ${\mathcal C}(S)$ is finite}.\\
\end{array} \right.$$
Consequently if $M(\gamma)$ is not hyperbolic, then
$\Delta(\gamma, \eta) \leq 3.$ \qed
\end{prop}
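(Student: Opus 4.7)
The plan is to leverage the defining property of a singular slope: if $\gamma \in \mathcal{C}(S)$, then $\Delta(\gamma,\eta) \le 1$ holds automatically. Consequently, in each of the four rows of the tabulated bound it is enough to treat the complementary situation in which $S$ remains incompressible in $M(\gamma)$, and therefore descends to a closed essential surface in $M(\gamma)$. The proof is then a case analysis on the topology of $M(\gamma)$.

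For the rows ``$M(\gamma)$ small Seifert or reducible'' and ``$M(\gamma)$ Seifert fibred with $S$ non-separating'' I would argue that the alternative hypothesis is incompatible with $S$ remaining essential, so that necessarily $\gamma \in \mathcal{C}(S)$. In the reducible case, one isotopes a reducing sphere to meet $S$ in a minimal family of curves and applies standard innermost-disk arguments to produce a compression of $S$ in $M(\gamma)$. In the small Seifert case, a closed essential surface in a small Seifert manifold must, by the absence of essential simple closed curves in $S^{2}(a,b,c)$, be horizontal; this constrains $M(\gamma)$ to a handful of exceptional fibrations with zero orbifold Euler characteristic, and in each of those sub-cases one verifies directly that $S$ was already compressible for the filling slope $\gamma$. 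The non-separating Seifert row is handled by an analogous vertical-versus-horizontal dichotomy, where the non-separating hypothesis eliminates the vertical option.

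For the two toroidal rows the standard route applies. One takes an essential torus $T\subset M(\gamma)$, isotopes it to meet the filling solid torus in a minimal set of meridional disks, and obtains an essential punctured torus $\widehat T\subset M$ of boundary slope $\gamma$. Combining $\widehat T$ with a compressing disk for $S$ in $M(\delta)$ for some $\delta \in \mathcal{C}(S)$ produces an intersection graph whose faces can be counted in the style of Gordon--Luecke and Wu to bound $\Delta(\gamma,\delta)$; combining this with $\Delta(\delta,\eta)\le 1$ and the triangle inequality gives $\Delta(\gamma,\eta)\le 3$. When $\mathcal{C}(S)$ is infinite one has arbitrarily many distinct choices of $\delta$, and this extra freedom improves the bound to $2$. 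The main obstacle I expect is the sharp ``$\le 3$'' estimate in the finite case, which requires careful face counting and an appeal to Wu's uniqueness theorem for the singular slope; in the present paper, however, the bulk of this work is already packaged in \cite[Theorem 1.5]{BGZ1}, so the proof of Proposition \ref{prop: sing slope exceptional} reduces to specializing that result to the four listed scenarios, with the \emph{consequently} clause then following by inspecting the possible types of non-hyperbolic filling.
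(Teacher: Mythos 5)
The paper gives no proof of this proposition at all: the statement is followed immediately by the tombstone, with the parenthetical ``(cf.\ \cite[Theorem 1.5]{BGZ1})'' indicating that it is quoted verbatim from that reference. You correctly arrive at this conclusion in your final sentence, and in that sense your proposal matches the paper's approach. The earlier part of your sketch, which tries to reconstruct the internal argument of \cite[Theorem 1.5]{BGZ1}, is best regarded as optional background and contains a few inaccuracies worth flagging: it is the Euler \emph{number} of the Seifert fibration, not the Euler characteristic of the base orbifold, that must vanish for a horizontal surface to exist; the statement that a reducing sphere can be turned into a compression of $S$ by innermost-disk arguments is not immediate and is not how that case is usually handled; and the claim that, once $S$ is seen to be horizontal, ``one verifies directly that $S$ was already compressible for the filling slope $\gamma$'' is circular as worded, since the whole point of the case split was to assume $S$ stays incompressible in $M(\gamma)$. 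None of this affects the validity of the proposition as used in the paper, because all of that work is carried out in \cite{BGZ1}, not here.
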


\begin{prop} \label{compresses}
Suppose that $F$ is an essential, properly embedded, twice-punctured torus of boundary slope $\beta$ which caps-off to a compressible torus in $M(\beta)$. If $\gamma$ is a slope on $\partial M$ such that $M(\gamma)$ is not hyperbolic, then $\Delta(\gamma, \beta) \leq 3$. If $M(\gamma)$ is a small Seifert manifold, then $\Delta(\gamma, \beta) \leq 1$.
\end{prop}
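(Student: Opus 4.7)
The plan is to exhibit a closed essential surface $S\subset M$ for which $\beta$ is a singular slope, and then apply Proposition~\ref{prop: sing slope exceptional} to extract both bounds in a single stroke.

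First I dispose of the case $M(\beta)$ reducible by standard CGLS-type arguments: a reducing sphere in $M(\beta)$ descends to an essential planar surface in $M$ of boundary slope $\beta$, and graph-theoretic intersection with the essential (reducing, toroidal, or Seifert) surface produced by the non-hyperbolic structure of $M(\gamma)$ forces $\Delta(\gamma,\beta)\le 3$, and $\Delta(\gamma,\beta)\le 1$ when $M(\gamma)$ is small Seifert. Henceforth assume $M(\beta)$ is irreducible. Then $\widehat F$, being a compressible torus in the irreducible manifold $M(\beta)$, bounds a solid torus $U\subset M(\beta)$. Let $V$ denote the $\beta$-filling solid torus. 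The intersection $\widehat F\cap V=D_1\cup D_2$ splits $V$ into two $3$-balls $C_1,C_2$, and since $\widehat F=\partial U$ separates $M(\beta)$, exactly one of them---relabel if necessary so it is $C_1$---lies in $U$. Writing $A_1=\partial C_1\cap\partial V\subset\partial M$, an annulus with $\partial A_1=\partial F$, and $W=\overline{U\setminus C_1}\subset M$, the boundary $\partial W=F\cup A_1$ is a closed orientable genus-$2$ surface in $M$. Pushing $A_1$ slightly off $\partial M$ into $\operatorname{int}(M)$ produces a closed surface $S\subset\operatorname{int}(M)$.

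I then verify: (i) $S$ is essential in $M$; (ii) $\beta$ is a singular slope for $S$. For (i), take a compressing disk $E$ for $S$ in $M$ minimizing $|E\cap F|$. The boundary $\partial E$ either lies entirely in the pushed copy of $F$, lies entirely in the pushed copy of $A_1$, or crosses $\partial F$. An essential boundary on $F$ yields a compressing disk for $F$ in $M$, contradicting essentiality of $F$; an essential boundary on the pushed $A_1$ is parallel to a component of $\partial F$ (thus of slope $\beta$) and isotopes through the collar of $\partial M$ to a compressing disk for $\partial M$, contradicting hyperbolicity of $M$; in the mixed case an outermost arc of $\partial E\cap F$ on $E$ provides a boundary-compressing disk for $F$, again excluded by essentiality of $F$. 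Boundary-parallelism is ruled out by the genus mismatch between $S$ (genus $2$) and $\partial M$ (genus $1$), and as a bonus $W$ cannot be a handlebody. For (ii), $\beta\in\mathcal{C}(S)$ since the meridian disk of $V$ whose boundary is parallel to the core of $A_1$ provides a compressing disk for $S$ in $M(\beta)$. For any other $\delta\in\mathcal{C}(S)$, a compressing disk $E_\delta$ for $S$ in $M(\delta)$ minimizing $|E_\delta\cap V_\delta|$ cuts down to a planar surface $Q=E_\delta\cap M$ in $M$ with one essential boundary circle on $S$ and $n\ge 1$ boundary circles of slope $\delta$ on $\partial M$. A standard intersection-graph argument (\`a la \cite{Wu1}) applied to $Q$ and the essential twice-punctured torus $F$ in $M$ then forces $\Delta(\delta,\beta)\le 1$.

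Given (i) and (ii), Proposition~\ref{prop: sing slope exceptional} immediately delivers $\Delta(\gamma,\beta)\le 1$ when $M(\gamma)$ is a small Seifert manifold, and $\Delta(\gamma,\beta)\le 3$ when $M(\gamma)$ is merely non-hyperbolic. The main technical obstacle is (ii), the verification of the singular-slope property, which demands careful combinatorial graph-theoretic bookkeeping between the planar surface $Q$ and $F$; the potentially problematic handlebody case for $W$, which would otherwise jeopardize (i), is ruled out automatically by the essentiality argument in (i).
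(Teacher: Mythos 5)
Your strategy is the paper's: exhibit a closed essential surface $S$ (a push-off of $\partial X^- = F\cup B^-$, which is exactly your $\partial W$) for which $\beta$ is a singular slope, then apply Proposition~\ref{prop: sing slope exceptional}. However, there are genuine gaps. First, you never address the case that $F$ is non-separating; there $\widehat F$ is non-separating in $M(\beta)$, cannot bound a solid torus, and your construction of $U$, $C_1$, $W$ collapses. The paper disposes of this case by citing \cite[Proposition~3.1]{BGZ3}. Second, and more seriously, your verification of (i) is incomplete precisely where the real content lies. A compressing disk $E$ for $S$ on the $W$-side is a compressing disk for $\partial X^-$ in $X^-$; you must show $\partial X^-$ is incompressible in $X^-$. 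Your three cases do not do this: when $\partial E$ crosses the push-off of $\partial F$ in $2k$ points with $k\ge 2$, $E$ is a $2k$-gon in $X^-$ — neither a compression of $F$ nor a single boundary-compression — and the phrase ``outermost arc of $\partial E\cap F$ on $E$'' is not meaningful, since for $E$ on the $W$-side one has $E\cap F\subset\partial E$, and there is no outermost-arc structure on the circle $\partial E$. The fact that no such $E$ exists is exactly Jaco's handle addition lemma \cite[Theorem~2]{Ja2}: because $F=\partial X^- - B^-$ is incompressible in $X^-$ and $\widehat F = \partial\widehat X^-$ is compressible (this is the side where $\widehat F$ compresses, since $\widehat X^-=U$ is a solid torus), the contrapositive forces $\partial X^-$ to be incompressible in $X^-$. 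Your assertion that ``the potentially problematic handlebody case for $W$ \dots is ruled out automatically by the essentiality argument in (i)'' is therefore circular.

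Two further remarks. Your verification of (ii) is a gesture, not a proof: ``a standard intersection-graph argument (\`a la \cite{Wu1})'' is not a substitute for an argument. The paper gets the singular-slope property immediately from \cite[Theorem~2.4.3]{CGLS}: $S$ contains a curve isotopic in $M$ to the core of $B^-\subset\partial M$ (slope $\beta$), that curve is non-separating hence essential in $S$, and it bounds a meridian disk of $V_\beta$ in $M(\beta)$, so $S$ compresses in $M(\beta)$ and every $\gamma\in\mathcal C(S)$ has $\Delta(\gamma,\beta)\le 1$. Finally, your preliminary split into $M(\beta)$ reducible vs.\ irreducible is unnecessary: once you use the handle addition lemma you never need $\widehat F$ to bound a solid torus, and in particular you never need $M(\beta)$ irreducible, so the entire ``reducible'' branch (which you dispatch only vaguely) can be deleted.
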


\begin{proof}
If $F$ is non-separating the proposition is a special case of \cite[Proposition 3.1]{BGZ3}. Assume then that $F$ is separating. The surface $F$ splits $M$ into two components which we denote by
 $X^{\pm}$, and $\partial F$ splits $\partial M$ into two annuli which we denote by $B^{\pm}$
 so that $F\cup B^{\pm}=\partial X^\pm$.
 Let $\widehat F$ be the corresponding torus in $M(\beta)$ obtained by capping off
 $\partial F$ with two disjoint meridian disks $D_1$ and $D_2$ of the filling solid torus $V_\b$.
 The disks $D_1$ and $D_2$ cut $V_\b$ into two
 components $H^\pm$ with $H^\pm$ attached to $X^\pm$ along $B^\pm$ as a $2$-handle.
 Let $\widehat X^\pm=X^\pm\cup_{B^\pm} H^\pm$.
 If  $\partial X^\epsilon$,  $\e\in\{+,-\}$,  is compressible in $X^\epsilon$, then by the   handle addition lemma (\cite[Theorem 2]{Ja2}),
 $\widehat F=\partial \widehat X^\epsilon$ is incompressible in $\widehat X^\epsilon$.
 Hence by the assumption of the proposition, $\partial X^\epsilon$
 is incompressible in $X^\epsilon$ for at least one $\epsilon$, say $\e=+$.
 Then $\partial X^+$ is also incompressible in $M$, since $F$ is (cf. \cite[\S1.2(5), page 15]{Ha}).
 Pushing  $\partial X^+$  slightly into the interior of $X^+$, we obtain
 an embedded closed separating incompressible surface $S$ in $M$.
Since $S$ contains an essential simple closed curve which is isotopic to the center circle
of the annulus  $B^+\subset \partial M$, $\beta$ is a singular slope
by \cite[Theorem 2.4.3]{CGLS}. The proposition now follows from Proposition \ref{prop: sing slope exceptional}.
\end{proof}

Here is a list of assumptions that will hold throughout the paper.

\begin{assumps}
\label{assumptions 0}  $\;$ \newline
{\begin{enumerate}

\vspace{-.85cm} \item $\alpha$ is a slope on $\partial M$ such that $M(\alpha)$ is Seifert fibred.

\vspace{.1cm} \item $\beta \ne \alpha$ is a slope on $\partial M$ of an essential, twice-punctured torus $F$.

\vspace{.1cm} \item $F$ caps-off to an essential torus in $M(\beta)$ {\rm (cf.~Proposition \ref{compresses})}.

\vspace{.1cm} \item Neither $\alpha$ nor $\beta$ is a singular slope of a closed essential surface in $M$ $(${\rm cf.~Proposition \ref{prop: sing slope exceptional}}$)$.

\vspace{.1cm} \item $M(\alpha)$ is an irreducible, atoroidal, small Seifert manifold with base orbifold $S^2(a,b,c)$ where $a,b,c \geq 1$ $($ {\rm cf.}~\cite[Theorem 1.2 and Proposition 3.3(b)]{BGZ3}$)$.

\vspace{.1cm} \item $M(\beta)$ is irreducible $(${\rm cf.}~\cite[Proposition 3.3(b)]{BGZ3}$)$.

\end{enumerate}
}
\end{assumps}
\vspace{-.3cm}
Assumption \ref{assumptions 0}(5) implies that $b_1(M)$, the first Betti number of $M$, is at most $2$ and if it is $2$, then $M(\alpha)$ fibres over the circle with fibre a horizontal surface.

Here are some additional notations which will be used throughout the paper.
For a set $\O$ and a subset $\s\subset \O$, $\O-\s$ denotes the complement of $\s$ in $\O$.
When $\O$ is a manifold and $\s\subset \O$ a submanifold, $\O\setminus \s$ denotes
the manifold which is the metric completion of $\O-\s$ with respect to the path metric
on $\O-\s$.

\section{Culler-Shalen theory} \label{sec: cs theory}

We collect various results on the $PSL_2(\mathbb C)$-character variety of $M$ which will be used in the paper. See \cite{CGLS}, \cite{LM}, \cite{BZ2} and \cite{BCSZ2} for the details.

Denote by $\mathcal{D}$ the abelian subgroup of $PSL_2(\mathbb C)$ consisting of diagonal matrices and by $\mathcal{N}$ the subgroup consisting of those matrices which are either diagonal or have diagonal coefficients $0$. Note that $\mathcal{D}$ has index $2$ in $\mathcal{N}$ and any element in $\mathcal{N} - \mathcal{D}$ has order $2$.

The action of $SL_2(\mathbb C)$ on $\mathbb C^2$ descends to one of $PSL_2(\mathbb C)$ on $\mathbb CP^1$. We call a representation $\rho$ with values in $PSL_2(\mathbb C)$ {\it irreducible} if the associated action on $\mathbb CP^1$ is fixed point free, otherwise we call
it {\it reducible}. We call it {\it strictly irreducible} if the action has no invariant subset in $\mathbb CP^1$ with fewer than three points. Equivalently, $\rho$ is strictly irreducible if it is irreducible and is not conjugate to a representation with values in $\mathcal{N}$.

Let $\Gamma$ be a finitely generated group. The set $R_{PSL_2}(\Gamma)$ of representations of $\Gamma$ with values in $PSL_2(\mathbb C)$ admits the structure of a $\mathbb C$-affine algebraic set \cite{LM} called the {\it $PSL_2(\mathbb C)$-representation variety} of $\Gamma$. The action of $PSL_2(\mathbb C)$ on $R_{PSL_2}(\Gamma)$ determines an algebro-geometric quotient $X_{PSL_2}(\Gamma)$ whose coordinate ring is $\mathbb C[R_{PSL_2}(\Gamma)]^{PSL_2(\mathbb C)}$ and a regular map $t: R_{PSL_2}(\Gamma) \to X_{PSL_2}(\Gamma)$ \cite{LM}. This quotient is called the {\it $PSL_2(\mathbb C)$-character variety} of $\Gamma$. For $\rho \in R_{PSL_2}(\Gamma)$, we denote $t(\rho)$ by $\chi_\rho$ and refer to it as the {\it character} of $\rho$. If $\chi_{\rho_1} = \chi_{\rho_2}$ and $\rho_1$ is irreducible, then $\rho_1$ and $\rho_2$ are conjugate representations. We can therefore call a character $\chi_\rho$ reducible, irreducible, or strictly irreducible if $\rho$ has that property.

When $\Gamma$ is the fundamental group of a path-connected space $Y$, we write $R_{PSL_2}(Y)$ rather than $R_{PSL_2}(\pi_1(Y))$, $X_{PSL_2}(Y)$ rather than $X_{PSL_2}(\pi_1(Y))$, and refer to them respectively as the $PSL_2(\mathbb C)$-representation variety of $Y$ and $PSL_2(\mathbb C)$-character variety of $Y$.

We call a curve in $X_{PSL_2}(\Gamma)$ {\it non-trivial}, respectively {\it strictly non-trivial}, if it contains an irreducible character, respectively strictly irreducible character. All but at most finitely many characters on a (strictly) non-trivial curve are (strictly) irreducible.

Each $\g \in \pi_1(M)$ determines an element $f_\g$ of the
coordinate ring $\mathbb C [X_{PSL_2}(M)]$ satisfying
$$f_\g(\chi_\rho) = (\mbox{trace}(\rho(\g)))^2 - 4$$
where $\rho \in R_{PSL_2}(M)$. Each $\d \in H_1(\partial M) = \pi_1(\partial M)$ determines an element of $\pi_1(M)$ well-defined up to conjugation and therefore an element $f_\d \in \mathbb C [X_{PSL_2}(M)]$. Similarly each slope $\d$ on $\partial M$ determines an element of $\pi_1(M)$ well-defined up to conjugation and taking inverse, and so defines $f_\d \in \mathbb C[X_{PSL_2}(M)]$.

To each curve $X_0$ in $X_{PSL_2}(M)$ we associate a function
$$\| \cdot \|_{X_0}: H_1(\partial M; \mathbb R) \to [0, \infty)$$
characterized by the fact that for each $\d \in H_1(\partial M)$
we have $\|\d\|_{X_0} =   \hbox{degree}(f_\d: X_0 \to \mathbb C)$.
It was shown in \cite{CGLS} that $\| \cdot \|_{X_0}$ is a seminorm, which we refer to as the {\it Culler-Shalen seminorm} of $X_0$.

Recall that $X_0$ admits an affine desingularisation
$X_0^\nu \stackrel{\nu}{\longrightarrow} X_0$ where $\nu$ is surjective and
regular. Moreover, the smooth
projective model $\widetilde X_0$ of $X_0$ is obtained by adding a finite number of
ideal points to $X_0^\nu$. Thus
$\widetilde X_0 = X_0^\nu \cup \mathcal{I}(X_0)$ where $\mathcal{I}(X_0)$ is the set of ideal
points of $X_0$. There are natural
identifications between the function fields of $X_0, X_0^\nu,$ and $\widetilde X_0$.
Thus to each $f \in \mathbb C(X_0)$ we
have corresponding $f^\nu \in \mathbb C(X_0^\nu) = \mathbb
C(X_0)$ and $\tilde f \in \mathbb C(\widetilde
X_0) = \mathbb C(X_0)$ where $f^\nu = f \circ \nu = \tilde f|X_0^\nu$.

For $x \in \widetilde X_0$ and $\gamma \in \pi_1(M)$ we use
$Z_x(\tilde f_\gamma)$, respectively $\Pi_x(\tilde f_\gamma)$, to denote the multiplicity of $x$
as a zero, respectively pole, of $\tilde f_\gamma$. From the definition of $\|\cdot\|_{X_0}$ we see that for each $\d \in H_1(\partial M)$ we have
\begin{equation}
\|\d\|_{X_0} = \sum_{x \in \widetilde X_0} Z_x(\tilde f_\d) =
\sum_{x \in \mathcal{I}(X_0)} \Pi_x(\tilde f_\d)
\end{equation}

For each group $G$, an epimorphism $\varphi: \pi_1(M) \to G$ determines a closed injective morphism
$\varphi^*: X_{PSL_2}(G) \to X_{PSL_2}(M), \; \chi_\rho \mapsto \chi_{\rho \circ \varphi}$. In particular, we can identify $X_{PSL_2}(M(\beta))$ with
an algebraic subset of $X_{PSL_2}(M)$.

Recall  that the triple ($M$, $\a$ , $\b$) always satisfies Assumptions \ref{assumptions 0}.
The purpose of this section is to prepare some
results to be applied later to deal with a special case when
$X_{PSL_2(\c)}(M(\b))$ is  positive dimensional and contains a nontrivial curve $X_0$.
Under this extra condition,  $X_0 \subset X_{PSL_2}(M(\beta))\subset X_{PSL_2}(M)$ yields a Culler-Shalen seminorm
for which $\|\beta\|_{X_0} = 0$ (since $f_\b$ is constantly equal to $0$ on $X_0$).
In fact, if $\beta^*$ is a dual slope to $\beta$ (i.e. $\Delta(\beta, \beta^*) = 1$) and we set $s_{X_0} = \|\beta^*\|_{X_0}$, then for each slope $\delta$ on $\partial M$ we have
\begin{equation}
\label{seminorm distance}
\|\delta\|_{X_0} = \Delta(\delta, \beta) s_{X_0}.
\end{equation}
It follows from the proof of \cite[Proposition 6.2]{Bo} that for each $x \in X_0^\nu$ and slope $\delta \ne \beta$ we have $Z_x(\widetilde f_\delta) \geq     Z_x(\widetilde f_{\beta^*})$.
Set
$$J_{X_0}(\alpha) = \{x \in \widetilde X_0 : Z_x(\widetilde f_\alpha) > Z_x(\widetilde f_{\delta}) \hbox{ for some } \delta \in H_1(\partial M) - \{0\} \}$$
Since  $X_0$ is non-trivial, each element of $X_0$ is the character of a representation which is either irreducible, or has non-abelian image, or has image $\{\pm I\}$. (See \cite[\S 2]{Bo}.) In the last case, we must have $b_1(M(\beta)) = 2$ (cf. \cite[Proposition 2.8]{Bo}), so $\beta = \lambda_M$, where $\l_M$ is the rational longitude of $M$. (Note that as remarked right after Assumptions \ref{assumptions 0}, $b_1(M)$ is at most $2$.)

It is shown in \cite[Lemma 4.1]{BZ2} that the inverse image in $R_{PSL_2}(M)$ of a non-trivial irreducible curve $X_0 \subset X_{PSL_2}(M)$ has a unique $4$-dimensional component $R_0$ which is conjugation invariant and maps onto $X_0$.

\begin{lemma}
\label{lemma: factors}
Suppose that Assumptions \ref{assumptions 0} hold and let $X_0 \subset X_{PSL_2}(M(\beta)) \subset X_{PSL_2}(M)$ be a non-trivial irreducible curve. Fix $x \in J_{X_0}(\alpha) \cap X_0^\nu$ and fix $\rho \in R_0$ satisfying $\nu(x) = \chi_\rho$.

$(1)$ $\rho$ factors through $\pi_1(M(\alpha)) \to \pi_1(S^2(a,b,c)) = \Delta(a,b,c)$ and can be chosen to be either irreducible or have non-abelian image.

$(2)$ $\chi_\rho$ is a simple point of $X_{PSL_2}(M)$.

$(3)$ If $\nu(x)$ is irreducible and $X_0$ is strictly non-trivial, then
$$Z_x(\widetilde f_\alpha) = \left\{ \begin{array}{ll} 1 & \mbox{ if the image of $\rho$ is a dihedral group of order $6$ or more } \\ 2 & \mbox{ if $\rho$ is strictly irreducible} \end{array} \right.$$
\end{lemma}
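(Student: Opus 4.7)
The plan is to reduce all three parts to statements about representations factoring through the triangle group $\Delta(a,b,c)$, and then extract the local multiplicity data from character variety deformation theory.

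For $(1)$, the first step is to show $\rho(\alpha) = I$ in $PSL_2(\mathbb{C})$. Since $x \in J_{X_0}(\alpha) \cap X_0^\nu$ there is $\delta \in H_1(\partial M) \setminus \{0\}$ with $Z_x(\widetilde f_\alpha) > Z_x(\widetilde f_\delta)$; combined with the inequality $Z_x(\widetilde f_{\delta'}) \geq Z_x(\widetilde f_{\beta^*})$ for every slope $\delta' \ne \beta$ recorded above, this strict excess is exactly the hypothesis used in the proof of \cite[Proposition 6.2]{Bo} to deduce $\rho(\alpha) = I$, so $\rho$ factors through $\pi_1(M(\alpha))$. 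As $X_0$ is non-trivial, $\chi_\rho$ is the character of a representation in $R_0$ which is either irreducible or has non-abelian image, and one may take this as the choice of $\rho$. A case analysis of the non-abelian subgroups of $PSL_2(\mathbb{C})$ then shows that the regular fiber $h$, which generates the centre of $\pi_1(M(\alpha))$, must satisfy $\rho(h) = I$: for irreducible images because the centralizer in $PSL_2(\mathbb{C})$ of an irreducible subgroup is trivial, and for dihedral images of order at least $6$ because the centre of such a subgroup is trivial in $PSL_2(\mathbb{C})$. Hence $\rho$ factors further through $\pi_1(M(\alpha))/\langle h\rangle = \pi_1(S^2(a,b,c)) = \Delta(a,b,c)$.

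For $(2)$, I would combine the factorization from $(1)$ with standard smoothness properties of $X_{PSL_2}(\Delta(a,b,c))$. Near the character of a strictly irreducible or non-abelian dihedral representation of the triangle group, $X_{PSL_2}(\Delta(a,b,c))$ is a smooth curve, and the closed embedding $X_{PSL_2}(\Delta(a,b,c)) \hookrightarrow X_{PSL_2}(M)$ carries this curve into $X_0$. Smoothness of $X_{PSL_2}(M)$ at $\chi_\rho$ would then follow from the fact that $R_0$ is the unique $4$-dimensional irreducible component of $t^{-1}(X_0)$ together with an $H^1(\pi_1(M); \mathrm{Ad}\,\rho)$ computation via the inflation-restriction sequence for $\pi_1(M) \twoheadrightarrow \Delta(a,b,c)$, which yields a one-dimensional Zariski tangent space at $\chi_\rho$, ruling out the presence of any other component of $X_{PSL_2}(M)$ through this point.

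For $(3)$, the computation is local: pick a uniformizer $u$ on $X_0^\nu$ at $x$ and lift to a one-parameter family $\rho_u \in R_0$ with $\rho_0 = \rho$. Since $\rho(\alpha) = I$, the function $\widetilde f_\alpha$ vanishes at $u = 0$, and after diagonalizing $\rho_u(\alpha)$ with eigenvalues $\lambda_u^{\pm 1}$ the multiplicity $Z_x(\widetilde f_\alpha)$ equals the order of $(\lambda_u - \lambda_u^{-1})^2$ at $u=0$. In the strictly irreducible case the four-dimensional deformation space of $\rho$ in $R_0$ moves the eigenvalues of $\rho_u(\alpha)$ to first order, producing a simple zero of $\lambda_u - \lambda_u^{-1}$ and hence $Z_x(\widetilde f_\alpha) = 2$. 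In the dihedral case $\rho$ has image in $\mathcal{N}$ after conjugation and the deformation within $R_0$ pushes $\rho_u(\alpha)$ off $\mathcal{N}$; the resulting trace $\mathrm{tr}(\rho_u(\alpha)) \mp 2$ itself admits a simple zero, so $Z_x(\widetilde f_\alpha) = 1$. The hypothesis that the dihedral image have order at least $6$ enters here to ensure $\alpha$'s image in $\Delta(a,b,c)$ is not central in the dihedral subgroup, which is what forces a first-order variation rather than a degenerate higher-order one. The main obstacle I anticipate is this multiplicity computation: verifying uniformly that strict irreducibility forces a double zero while dihedral image of order $\geq 6$ forces a simple zero requires an explicit description of $\alpha$ as a word in the generators of $\Delta(a,b,c)$ and careful bookkeeping of the leading order of the Taylor expansion of $\rho_u$.
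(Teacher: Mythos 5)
Your overall outline matches the paper's, which proves all three assertions by citation: part~$(1)$ via part~$(1)(b)$ of \cite[Proposition~6.2]{Bo}, \cite[Proposition~1.5.4]{CGLS}, and \cite[Lemma~3.1]{BeBo}; part~$(2)$ via \cite[Proposition and Theorem~1.1]{BeBo}; and part~$(3)$ via the method of \cite[Lemma~6.1]{BZ2} combined with $(2)$. You are attempting to unpack those citations, which is a legitimate exercise, but the unpacking has two genuine gaps.

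For $(1)$, your claim that ``the centralizer in $PSL_2(\mathbb C)$ of an irreducible subgroup is trivial'' is false in $PSL_2(\mathbb C)$: the Klein four group $\mathcal{K} = \{\pm I, \pm\left(\begin{smallmatrix} i & 0 \\ 0 & -i \end{smallmatrix}\right), \pm\left(\begin{smallmatrix} 0 & 1 \\ -1 & 0 \end{smallmatrix}\right), \pm\left(\begin{smallmatrix} 0 & i \\ i & 0 \end{smallmatrix}\right)\}$ acts without fixed point on $\mathbb C P^1$, so is irreducible in the paper's sense, yet its centralizer in $PSL_2(\mathbb C)$ is $\mathcal{K}$ itself, which is nontrivial. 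Moreover, $\mathcal{K}$ is abelian, so it is not covered by your ``case analysis of the non-abelian subgroups'' either, and your two explicit cases (irreducible, dihedral of order $\geq 6$) do not partition the possibilities. The paper's citation of \cite[Lemma~3.1]{BeBo} is precisely what covers these edge cases (and also the non-abelian reducible representations, which your analysis leaves implicit); note in this connection that Proposition~\ref{prop: boundary values}$(2)$ explicitly carves out the $D_2$ case as an exception, indicating it really is a separate situation.

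For $(3)$, the closing remark that the computation ``requires an explicit description of $\alpha$ as a word in the generators of $\Delta(a,b,c)$'' reflects a misconception about the structure of the argument. By construction $\alpha$ normally generates the kernel of $\pi_1(M) \to \pi_1(M(\alpha))$, so after the factorization of part $(1)$ the class of $\alpha$ in $\Delta(a,b,c)$ is the identity; there is no ``word'' to bookkeep. What governs the zero order is how the deformation $\rho_u$ along $X_0$ moves \emph{off} the subvariety $X_{PSL_2}(M(\alpha))$, i.e.\ how $\rho_u(\alpha)$ leaves $\pm I$, and disentangling the strictly irreducible case (double zero) from the dihedral case (simple zero) is exactly the delicate local analysis of \cite[Lemma~6.1]{BZ2}. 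That argument also makes essential use of the smoothness of $X_{PSL_2}(M)$ at $\chi_\rho$ from part $(2)$ --- the paper explicitly says ``combined with $(2)$'' --- but your sketch of $(3)$ does not invoke $(2)$, appealing instead to an informal picture of the four-dimensional deformation space which by itself cannot pin down the exact multiplicities.
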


\begin{proof} It follows from the discussion  preceding the lemma,
the condition $x\in  J_{X_0}(\alpha) \cap X_0^\nu$ implies  $Z_x(\tilde f_\alpha)> Z_x(\tilde f_{\beta^*})$.
Thus it follows from part (1)(b) of \cite[Proposition 6.2]{Bo}  that  $\rho(\pi_1(\partial M))$ is a  nontrivial  finite cyclic group.
In particular $\nu(x)$ is not the character of the trivial representation.
It also follows from \cite[Proposition 1.5.4]{CGLS} that $\rho(\alpha) = \pm I$, and so $\rho$ induces a homomorphism $\pi_1(M(\alpha)) \to PSL_2(\mathbb C)$.  As noted just before the statement of the lemma, we can choose $\rho$ to be irreducible or have non-abelian image. In either case, it further factors through $\pi_1(S^2(a,b,c)) = \Delta(a,b,c)$ by Lemma 3.1 of \cite{BeBo}, thus proving (1).
It then follows from \cite[Proposition and Theorem 1.1]{BeBo} that $\chi_\rho$ is a simple point of $X_{PSL_2}(M)$, so (2) holds. Finally, (3) is a consequence of the method of the proof of \cite[Lemma 6.1]{BZ2} combined with (2), which completes the proof.
\end{proof}
Set
$$\mathcal{N}_{X_0}(\alpha) = \{ x \in J_{X_0}(\alpha)\cap X_0^\nu\; | \;  \nu(x) = \chi_\rho \hbox{ where $\rho$ is irreducible and takes values in } \mathcal{N}\}$$

\begin{prop}
\label{prop: boundary values}
Suppose that Assumptions \ref{assumptions 0} hold and let $X_0$ be a curve in $X_{PSL_2}(M(\beta)) \subset X_{PSL_2}(M)$
and $\beta^* \in H_1(\partial M)$ be a dual class to $\beta$.

$(1)$  If $\tilde f_{\beta^*}$ has poles at each ideal point of $\tilde X_0$ and
$n > 1$ divides $\Delta(\alpha, \beta)$, then there is a point $x \in J_{X_0}(\alpha) \cap X_0^\nu$
such that $\rho(\pi_1(\p M)) = \mathbb Z/n$  for any $\rho$ with $\nu(x) = \chi_\r$.

$(2)$ If  $X_0$ is strictly non-trivial and $s_{X_0} \ne 0$, then
$$\Delta(\alpha, \beta) = 1 + \frac{1}{s_{X_0}} \big(2|J_{X_0}(\alpha)| - |\mathcal{N}_{X_0}(\alpha)| \big)$$
as long as   $J_{X_0}(\alpha)\subset X_0^\nu$,  $\nu(x)$ is irreducible for each $x\in J_{X_0}(\alpha)$ and
no element of $\mathcal{N}_{X_0}(\alpha)$ corresponds to the character
of a representation with image a dihedral group of order $4$.
\end{prop}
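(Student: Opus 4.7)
My plan is to translate both statements into a zero-and-pole bookkeeping on $\tilde X_0$ for certain trace-squared functions, using the key fact that every $\rho$ with character in $X_0 \subset X_{PSL_2}(M(\beta))$ satisfies $\rho(\beta) = \pm I$. Fix $\mu \in H_1(\partial M)$ dual to $\beta$; since $\rho(\beta) = \pm I$, $\tilde f_\mu = \tilde f_{\beta^*}$ on $\tilde X_0$. For each integer $k \ge 1$, pick $q_k$ with $\gcd(k, q_k) = 1$ and let $\delta_k = k\mu + q_k\beta$, a slope with $\|\delta_k\|_{X_0} = k\, s_{X_0}$. On $X_0$ we have $\rho(\delta_k) = \pm \rho(\mu)^k$, and the Chebyshev identity $\mathrm{tr}(A^k)^2 - 4 = \big(\mathrm{tr}(A)^2 - 4\big)\, U_{k-1}(\mathrm{tr}(A)/2)^2$ yields a factorisation $\tilde f_{\delta_k} = \tilde f_\mu \cdot g_k$, where $g_k := U_{k-1}(T/2)^2$ is well defined on $\tilde X_0$ because it is a polynomial in $T^2 = \tilde f_\mu + 4$. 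In particular $g_k$ has total degree $(k-1)\, s_{X_0}$.

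For Part (1), the assumption that $\tilde f_{\beta^*}$ has a pole at every ideal point confines all poles of $g_k$ to ideal points, so $\sum_{x \in X_0^\nu} Z_x(g_k) = (k-1)\, s_{X_0}$. At a smooth $x$ where $\rho(\mu)$ has order $m$ in $PSL_2$, one has $T^2(x) = 4\cos^2(j\pi/m)$ for some $j$ coprime to $m$, and a direct calculation using $U_{k-1}(\cos\theta) = \sin(k\theta)/\sin\theta$ shows that $Z_x(g_k)$ equals a positive integer $\zeta_x$ (depending only on the local ramification of $T^2$ at $x$) whenever $m \mid k$, and is $0$ otherwise. Setting $c_m := \sum_{\mathrm{ord}(x) = m}\zeta_x$, the relation $\sum_{m\mid k,\, m > 1} c_m = (k-1)\, s_{X_0}$ holds for every $k > 1$ dividing $N := \Delta(\alpha,\beta)$, and Möbius inversion collapses this to $c_n = \varphi(n)\, s_{X_0}$, with $\varphi$ the Euler totient. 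Because the pole hypothesis forces $s_{X_0} > 0$ and $\varphi(n) \ge 1$, there is a smooth $x$ at which $\rho(\mu)$ has order exactly $n$. Then $n \mid N$ and $\rho(\beta) = \pm I$ give $\rho(\alpha) = \pm \rho(\mu)^N = \pm I$, while $Z_x(\tilde f_\alpha) = Z_x(g_N) = \zeta_x > 0 = Z_x(\tilde f_{\beta^*})$ shows $x \in J_{X_0}(\alpha) \cap X_0^\nu$; finally $\rho(\pi_1(\partial M)) = \langle \rho(\mu) \rangle = \mathbb{Z}/n$.

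For Part (2), begin with the identity $(\Delta(\alpha,\beta) - 1)\, s_{X_0} = \|\alpha\|_{X_0} - \|\beta^*\|_{X_0} = \sum_{x \in X_0^\nu}\bigl(Z_x(\tilde f_\alpha) - Z_x(\tilde f_{\beta^*})\bigr)$, where the hypothesis $J_{X_0}(\alpha) \subset X_0^\nu$ prevents any ideal-point contribution. At each $x \in J_{X_0}(\alpha)$, $\nu(x)$ is irreducible by hypothesis, so Lemma \ref{lemma: factors}(1) forces $\rho(\pi_1(\partial M))$ to be a nontrivial finite cyclic group; in particular $\rho(\mu) \ne \pm I$ and $Z_x(\tilde f_{\beta^*}) = 0$. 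Lemma \ref{lemma: factors}(3) then evaluates $Z_x(\tilde f_\alpha)$ as $2$ when $\rho$ is strictly irreducible and as $1$ when $\rho$ takes values in $\mathcal N$ with dihedral image of order $\ge 6$, the remaining possibility of dihedral image of order $4$ being excluded by hypothesis. Summing over the disjoint decomposition $J_{X_0}(\alpha) = (J_{X_0}(\alpha) \setminus \mathcal{N}_{X_0}(\alpha)) \sqcup \mathcal{N}_{X_0}(\alpha)$ yields $(\Delta(\alpha,\beta) - 1)\, s_{X_0} = 2|J_{X_0}(\alpha)| - |\mathcal{N}_{X_0}(\alpha)|$, which rearranges to the stated formula.

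The main technical hurdle lies in Part (1): one must confirm both that $\zeta_x$ is truly $k$-independent as long as $m \mid k$, and that $g_k$ acquires no poles outside ideal points. The first follows from the nonvanishing of the Chebyshev derivative at $T/2 = \cos(j\pi/m)$; the second is immediate from the pole hypothesis on $\tilde f_{\beta^*}$. Ramification of $T^2 : X_0^\nu \to \mathbb{A}^1$ over the finitely many critical values only rescales each $\zeta_x$ by its local ramification index and leaves the Möbius-inversion bookkeeping intact.
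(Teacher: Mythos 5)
Your proof of part (2) follows the paper's argument essentially verbatim: write $\|\alpha\|_{X_0} - \|\beta^*\|_{X_0} = (\Delta(\alpha,\beta)-1)s_{X_0}$, localize the difference to $J_{X_0}(\alpha)$ using the inequality $Z_x(\tilde f_\delta) \geq Z_x(\tilde f_{\beta^*})$ on $X_0^\nu$ together with $Z_x(\tilde f_{\beta^*})=0$ for $x\in J_{X_0}(\alpha)$, then evaluate $Z_x(\tilde f_\alpha)$ via Lemma~\ref{lemma: factors}(3). One imprecision: ``the hypothesis $J_{X_0}(\alpha)\subset X_0^\nu$ prevents any ideal-point contribution'' is not by itself a complete justification. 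At an ideal point $x$, $x\notin J_{X_0}(\alpha)$ only yields $Z_x(\tilde f_\alpha)\leq Z_x(\tilde f_{\beta^*})$; the reverse inequality there needs a separate argument, which your Chebyshev factorisation $\tilde f_\alpha = \tilde f_\mu\, g_{\Delta(\alpha,\beta)}$ from part (1) in fact supplies directly (at an ideal point where $\tilde f_{\beta^*}$ vanishes, $T^2 = 4$ so $g_p \neq 0$ and $Z_x(\tilde f_\alpha)=Z_x(\tilde f_{\beta^*})$; where $\tilde f_{\beta^*}$ has a pole, both zero orders vanish). You should say this explicitly.

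For part (1) your route genuinely differs from the paper, which simply cites part (2) of \cite[Proposition~6.2]{Bo} and gives no argument. You instead construct an explicit proof: the Chebyshev identity $\tilde f_{\delta_k} = \tilde f_\mu\cdot U_{k-1}(T/2)^2$ (valid on $X_{PSL_2}$ because $U_{k-1}^2$ is a polynomial in $T^2$), the pole hypothesis forcing all $(k-1)s_{X_0}$ zeros of $g_k$ to lie on $X_0^\nu$, the observation that $g_k(x)=0$ exactly when $\mathrm{ord}\,\rho(\mu)$ divides $k$, and M\"obius inversion of $\sum_{m\mid k}c_m = k\,s_{X_0}$ to get $c_n=\varphi(n)s_{X_0}>0$. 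This is sound, and it makes the mechanism behind Boyer's result visible in a way the paper's one-line citation does not. One small point worth making explicit: your $\zeta_x$ does depend on $x$ (through the ramification of $T^2$ at $x$, and through whether $\mathrm{ord}\,\rho(\mu)=2$, since $s=0$ is a simple root of $g_k$ while the others are double roots); what matters is only that $\zeta_x$ is positive and independent of $k$ once $\mathrm{ord}\,\rho(\mu)\mid k$, which you do assert. Finally, to conclude $\rho(\pi_1(\partial M))\cong \mathbb{Z}/n$ for \emph{any} $\rho$ with $\nu(x)=\chi_\rho$, note that $T^2(x)=4\cos^2(j\pi/n)\neq 4$ forces $\rho(\mu)$ to be semisimple of order $n$, and then $\rho(\beta)$ commuting with $\rho(\mu)$ and having $\tilde f_\beta(\chi_\rho)=0$ forces $\rho(\beta)=\pm I$; this is worth a sentence.
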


\begin{proof}
Part  (1) follows from part  (2) of \cite[Proposition 6.2]{Bo}.

Under the conditions of part  (2) of this proposition, note that a point $x$ of $\widetilde X_0$ belongs to  $ J_{X_0}(\alpha)$ if and only if
$Z_x(\widetilde f_\alpha)>Z_x(\widetilde f_{\beta^*})$.
Thus  $$\D(\alpha,\beta) s_{X_0}=\|\alpha\|_{X_0}=\sum_{x \in \widetilde X_0} Z_x(\tilde f_\alpha) = \sum_{x\in \widetilde X_0} Z_x(\tilde f_{\beta^*})+
\sum_{x\in J_{X_0}(\alpha)} Z_x(\tilde f_\alpha)=s_{X_0}+\sum_{x\in J_{X_0}(\alpha)} Z_x(\tilde f_\alpha).$$
The formula in part (2) now follows from Lemma \ref{lemma: factors}(3).
\end{proof}

\section{Bending characters of triangle group amalgams} \label{sec: amalgams}

\subsection{Curves of characters of free products of cyclic groups}
\label{subsec: prod cyclics}
In this section we construct curves in the character varieties of certain amalgams of triangle groups to be used to
prove the cases of Theorem \ref{thm: twice-punctured precise} examined in \S \ref{sec: twice-punctured semi-fibre},
\S \ref{subsec: refinements nonsep very small}, \S \ref{sec: sep not semifibre very small}, and \S \ref{sec: sep not semifibre not very small}.

Fix integers $q \geq p \geq 2$ and write $\mathbb Z/p* \mathbb Z/q = \langle a, b : a^p = b^q= 1 \rangle$.
It was shown in Example 3.2 of \cite{BZ2} that $X_{PSL_2}(\mathbb Z/p * \mathbb Z/q)$ is a disjoint union of a finite number of isolated
points and $\lfloor \frac{p}{2} \rfloor \lfloor \frac{q}{2} \rfloor$ non-trivial curves, each isomorphic to a complex line. Explicit parametrisations of these curves can be given as follows.  For integers
$j, k$ with $1\leq j \leq \lfloor \frac{p}{2} \rfloor$ and $1 \leq k \leq \lfloor \frac{q}{2} \rfloor$, set
$$\lambda = e^{\pi ij/p}, \mu = e^{\pi ik/q},\tau = \mu + \mu^{-1}.$$
For $z \in \mathbb C$ define $\rho_z \in R_{PSL_2}(\mathbb Z/p * \mathbb Z/q)$ by
$$\rho_z(a)=\pm \left(\begin{array}{cc} \lambda & 0\\0& \lambda^{-1} \end{array}\right),
\rho_z(b)=\pm \left(\begin{array}{cc}
z & 1\\z(\tau -z) -1 & \tau -z \end{array}\right).$$
The characters of the representations $\rho_z$ parameterize a curve $X(j,k) \subset X_{PSL_2}(\mathbb Z/p * \mathbb Z/q).$
Moreover, it is shown in \cite[Example 3.2]{BZ2} that the holomorphic map
$$\Psi_{(j,k)}: \mathbb C \to X(j,k), z \mapsto \chi_{\rho_z},$$
is bijective if $j < \frac p2 $ and $k < \frac q2 $, and a 2-1 branched
cover otherwise. Since $(\hbox{trace}(\rho_z(ab)))^2 - 4  = ((\lambda - \lambda^{-1})z  + \lambda^{-1} \tau)^2 - 4$, the map
$$f_{a b}: X(j,k) \to \mathbb C, \chi_{\rho_z} \mapsto (\hbox{trace}(\rho_z(ab)))^2 - 4$$
has degree $1$ if $j = \frac{p}{2}$ or $k = \frac{q}{2}$, and $2$ otherwise.

\begin{lemma}
\label{lemma: product curves}
Fix a positive integer $d > 2$ and an element $A$ of order $d$ in $PSL_2(\mathbb C)$.

$(1)$ If $1 \leq j < \frac{p}{2}$ and $1 \leq k < \frac{q}{2}$, then there is an irreducible character
$\chi_\rho \in X(j,k)$ such that $\rho(ab) = A$.

$(2)$ If $j = \frac p2$ or $k = \frac q2$,  then there is an irreducible character
$\chi_\rho \in X(j,k)$ such that $\rho(ab) = A$ for all but at most one curve $X(j,k) \subset X_{PSL_2}(\mathbb Z/p* \mathbb Z/q)$.

$(3)(a)$  If $(p,q,d) \ne (2,3,6)$, there is an irreducible representation $\rho: \mathbb Z/p * \mathbb Z/q \to PSL_2(\mathbb C)$ such that $\rho(ab) = A$.

$(b)$   If $(p,q,d) \ne (2,3,6), (2,6,3), (2,4,4)$, there is an irreducible representation $\rho: \mathbb Z/p * \mathbb Z/q \to PSL_2(\mathbb C)$ such that $\rho(a)$ has order $p$, $\rho(b)$ has order $q$, and $\rho(ab) = A$.\end{lemma}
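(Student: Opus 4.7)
The plan is to convert the condition $\rho(ab) = A$ into a condition on $f_{ab}(\chi_\rho)$ and then exploit the explicit parametrization $z \mapsto \chi_{\rho_z}$ of each $X(j,k)$. Since $A$ has order $d > 2$ in $PSL_2(\mathbb C)$, it is conjugate to $\pm\mathrm{diag}(\zeta,\zeta^{-1})$ with $\zeta = e^{\pi i m/d}$, $\gcd(m,d)=1$; set $c := (\mathrm{tr}\,A)^2 - 4 = -4\sin^2(m\pi/d)$, noting $c \notin \{0,-4\}$. Any representation $\rho$ with $f_{ab}(\chi_\rho) = c$ has $\rho(ab)$ non-parabolic and not $\pm I$, with the same square-trace as $A$; hence $\rho(ab)$ is $PSL_2(\mathbb C)$-conjugate to $A$ and, after an inner conjugation of $\rho$, we may arrange $\rho(ab) = A$. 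Thus (1) and (2) reduce to producing an irreducible character in $f_{ab}^{-1}(c) \cap X(j,k)$.

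A direct inspection identifies the reducibles: $\rho_z(a)$ fixes $\{0,\infty\}\subset \mathbb{CP}^1$; the point $0$ is never fixed by $\rho_z(b)$ (whose $(1,2)$-entry is $1$), and $\infty$ is fixed iff the $(2,1)$-entry $z(\tau-z)-1$ vanishes, i.e.\ $z \in \{\mu,\mu^{-1}\}$. At these points
\[
f_{ab}(\chi_{\rho_\mu}) = (\lambda\mu + \lambda^{-1}\mu^{-1})^2 - 4, \qquad f_{ab}(\chi_{\rho_{\mu^{-1}}}) = (\lambda\mu^{-1} + \lambda^{-1}\mu)^2 - 4.
\]
For (1), with $j < p/2$ and $k < q/2$, $\Psi_{(j,k)}$ is bijective and $f_{ab}$ has degree $2$. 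Equating the two displayed values and factoring forces $\lambda^2 \in \{\pm 1\}$ or $\mu^2 \in \{\pm 1\}$, each excluded by the range hypotheses. So the two reducibles have distinct $f_{ab}$-values, meaning at most one of the two preimages of $c$ (counted with multiplicity) is reducible. If the preimage collapses to a double point, the vanishing of the derivative of $((\lambda - \lambda^{-1})z + \lambda^{-1}\tau)^2 - 4$ forces the corresponding trace to vanish, whence $c = -4$, a contradiction. Thus $f_{ab}^{-1}(c)$ contains an irreducible character.

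For (2), say $k = q/2$ (the case $j = p/2$ is analogous via conjugation by a suitable antidiagonal matrix). Then $\mu = i$, $\tau = 0$; conjugation by $\mathrm{diag}(i,-i)$ shows $\chi_{\rho_z} = \chi_{\rho_{-z}}$, so $\Psi_{(j,q/2)}$ is $2$-to-$1$, fusing $\chi_{\rho_i}$ with $\chi_{\rho_{-i}}$ into a single reducible character. Since $f_{ab}$ has degree $1$, the unique preimage of $c$ is irreducible unless $c = f_{ab}(\chi_{\rho_i}) = -2\cos(2\pi j/p) - 2$. This bad equation reduces to $\cos(2\pi j/p) = -\cos(2m\pi/d)$, determining $j$ modulo $p$ up to sign, so at most one $j \in \{1,\dots,\lfloor p/2\rfloor\}$ is bad; symmetrically for $j = p/2$. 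The corner curve $X(p/2,q/2)$ is never bad, since the bad equation would force $m \equiv 0 \pmod d$.

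Statement (3) follows by case analysis. When $p,q \geq 3$, the curve $X(1,1)$ lies in case (1) and has $\gcd(1,p) = \gcd(1,q) = 1$, so (1) yields both (a) and (b). When $p = 2$, every curve $X(1,k)$ is of case-(2) type with $j = p/2 = 1$, and the bad equation becomes $\cos(2\pi k/q) = -\cos(2m\pi/d)$. For (b), the constraint $\gcd(k,q)=1$, $1\le k \le \lfloor q/2\rfloor$, leaves only $k = 1$ when $q \in \{3,4,6\}$, and a direct check shows $X(1,1)$ is bad exactly for $(q,d) \in \{(3,6),(4,4),(6,3)\}$; for $q = 5$ or $q \geq 7$ there are at least two candidate $k$'s, so some is non-bad. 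For (a), only the single-curve case $q = 3$, $d = 6$ can be fatal, giving the unique exclusion $(2,3,6)$. When $p = q = 2$ the single reducible has $f_{ab}$-value $0$, ruled out by $d > 2$. The main obstacle is the bad-equation bookkeeping in (2)–(3), where the interplay between $j/p$, $k/q$, and $m/d$ must be tracked precisely to pin down the exceptional triples.
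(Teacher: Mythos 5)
Your proposal is correct and follows essentially the same route as the paper's proof: reduce $\rho(ab)=A$ to the trace condition $f_{ab}(\chi_\rho)=c$, identify the reducible characters as $\chi_{\rho_\mu},\chi_{\rho_{\mu^{-1}}}$, and argue that some preimage of $c$ on $X(j,k)$ is irreducible, with the exceptional triples pinned down by explicit arithmetic. The only differences are cosmetic: you phrase the ``bad'' condition via equality of $f_{ab}$-values and cosine identities where the paper matches eigenvalue sets of $\rho_\mu(ab)$ and $\rho_{\mu^{-1}}(ab)$ up to sign, and you include an explicit check that $g_\omega$ cannot have a double root, which the paper relegates to ``the reader will verify.'' One small gain of your explicit computation in (3)(b): for $(p,q)=(2,6)$ the bad equation $\sin^2(\pi/6)=\cos^2(\pi m/d)$ with $\gcd(m,d)=1$ forces $d=3$ outright, so $(2,6,6)$ never arises as a bad case; the paper instead lists $(2,6,6)$ as a possibility and then dismisses it via the discrete faithful representation of $\Delta(2,6,6)$, a step your version renders unnecessary.
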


\begin{proof}
Fix a curve $X(j,k) \subset X_{PSL_2}(\mathbb Z/p * \mathbb Z/q)$ and consider the parametrisation $\Psi_{(j,k)}: \mathbb C \to X(j,k), z \mapsto \chi_{\rho_z}$ described above. If $\omega =( \hbox{trace}(A))^2 - 4$, then $\rho_z(ab)$ is conjugate to $A$ if and only if $f_{a b}(\chi_{\rho_z}) = \omega$. Hence we must show that under the hypotheses of the lemma, we can find a $z \in \mathbb C$ such that $\rho_z$ is irreducible and $f_{a b}(\chi_{\rho_z}) = \omega$.  It is clear that $\rho_z$ is reducible if and only if $z^2 - \tau z + 1 = 0$, or equivalently, $z = \mu, \mu^{-1}$.

Let $g_\omega$ be the polynomial $g_\omega(z) = f_{a b}(\chi_{\rho_z}) - \omega = ((\lambda - \lambda^{-1})z  + \lambda^{-1} \tau)^2 - (\omega + 4)$. By construction, $\omega \in (-4, 0)$, and the reader will verify that this implies that $g$ has simple roots.
 Thus if there is no irreducible character $\chi_\rho \in X(j,k)$ such that $\rho(ab) = A$, then the two roots of $g_\omega$ are $\mu$ and $\mu^{-1}$. On the other hand, if $z$ is a root of $g_\omega$, then $\rho_z(ab)$ is conjugate to $A$ in $PSL_2(\mathbb C)$, so the eigenvalues of $\rho_z(ab)$ coincide with those of $A$, at least up to sign. Hence the eigenvalues of $\rho_\mu(ab)$ and $\rho_{\mu^{-1}}(ab)$ coincide up to sign. It follows that $\lambda \mu^{-1} \in \{\pm \lambda \mu, \pm \lambda^{-1} \mu^{-1}\}$. But this implies that either $\lambda^2 = -1$ or $\mu^2 = -1$, i.e. $j=p/2$ or $k=q/2$.
Part (1) of the lemma thus holds.

Now assume that $j = \frac p2$, so $\lambda = i$. If there is no irreducible character $\chi_\rho \in X(\frac p2,k)$ such that $\rho(ab) = A$, then by  the argument  in the preceding paragraph  the eigenvalues of $A$ are $\pm i \mu_k, \pm i \mu_k^{-1}$. (We write $\mu_k$ here to underline the dependence of $\mu$ on $k$.) Suppose as well that there is no irreducible character $\chi_\rho \in X(\frac p2,k')$ such that $\rho(ab) = A$, for some $1 \leq k' \leq \lfloor \frac q2 \rfloor$. Then the eigenvalues of $A$ are  $\pm i \mu_{k'}, \pm i \mu_{k'}^{-1}$. Thus $\mu_{k'} \in \{\pm \mu_k, \pm \mu_k^{-1}\}$. Hence $e^{2\pi i(k \pm k')/q} = 1$, or equivalently, $k ' \equiv \pm k$ (mod $q$). Our constraints on $k$ and $k'$ then imply that $k ' = k$.

The case that $k  = \frac q2$ can be handled similarly, which completes the proof of (2).

It follows from (1) and (2) that  if there is no irreducible representation $\rho: \mathbb Z/p * \mathbb Z/q \to PSL_2(\mathbb C)$ such that $\rho(ab) = A$,  then $p = 2$ and $\lfloor \frac q2 \rfloor = 1$, so $q = 2$ or $3$. The case $q = 2$ is easily ruled out using elementary properties of the dihedral group $\Delta(2, 2, d)$. Suppose then that $q = 3$. The proofs of (1) and (2) show that some eigenvalue of $A$ is conjugate to $\pm \lambda \mu = \pm i e^{\pm \pi i/3}$. But then $d = 6$, so (3)(a) holds.

To prove part (3)(b), suppose that  there is no irreducible representation $\rho: \mathbb Z/p* \mathbb Z/q \to PSL_2(\mathbb C)$ such that $\rho(a)$ has order $p$, $\rho(b)$ has order $q$, and $\rho(ab) = A$.
By (1) and (2), we see that  $p = 2$. Further, (2) implies that the value of Euler's totient function at $q$ is at most $2$, so $q$ is either $2, 3, 4$, or $6$. We have already seen that the case that $q = 2$ can be ruled out and that $q = 3$ implies that $(p, q, d) = (2, 3, 6)$. If $q = 4$ or $6$,
the argument of the previous paragraph shows that $(p, q, d) = (2, 4, 4)$ or $(2, 6, 3)$ or $(2, 6, 6)$. Since $(2, 6, 6)$ is a hyperbolic triple, there is a discrete faithful irreducible representation $\rho: \Delta(2, 6, 6) \to PSL_2(\mathbb C)$, and since there is a unique conjugacy class of elements of order $6$ in $PSL_2(\mathbb C)$, we can assume that $\rho(ab) = A$, a contradiction. This completes the proof.\end{proof}




%

%

\subsection{Amalgamated products}  \label{afp}
Fix positive integers $p_+ \leq q_+, p_- \leq q_-$ and $d$, each at least $2$, and consider the triangle groups
$$\Delta(p_+, q_+, d) = \langle a_+, b_+ : a_+^{p_+} = b_+^{q_+} = (a_+b_+)^{d}= 1\rangle$$
$$\Delta(p_-, q_-, d) = \langle a_-, b_- : a_-^{p_-} = b_-^{q_-} = (a_-b_-)^{d}= 1\rangle$$
For each $\epsilon \in \{\pm\}$, $\mathbb Z/d \cong \langle a_\epsilon b_\epsilon \rangle \leq \Delta(p_\epsilon, q_\epsilon, d)$.
Let $\psi: \langle a_+ b_+ \rangle \to \langle a_- b_- \rangle$ be an isomorphism. Then
$$\psi(a_+ b_+) = (a_- b_-)^s$$
where $1 \leq s < d$ and $\gcd(s,d) = 1$. We consider the amalgamated free product
$$\Delta(p_+, q_+, d) *_{\psi} \Delta(p_-, q_-, d)$$

\begin{lemma}
\label{lemma: homs 1}
There is a homomorphism $\rho: \Delta(p_+, q_+, d) *_{\psi}  \Delta(p_-, q_-, d) \to PSL_2(\mathbb C)$ satisfying:

$(1)$ The restriction $\rho_\epsilon$ of $\rho$ to $\Delta(p_\epsilon, q_\epsilon, d)$ is irreducible for both values of $\epsilon$.

$(2)$ If $\rho_+$  is not faithful then either $(p_+, q_+, d)$ is a Euclidean triple or $(p_-, q_-, d) = (2,3,6)$. Further,

$(a)$ if $(p_+, q_+, d)$ is a Euclidean triple, then $\rho_+$ has image isomorphic to
\vspace{-.2cm}
\begin{itemize}

\item $\Delta(2,3,3)$ when $(p_+, q_+, d)$ is a permutation of $(2,3,6)$ or $(3,3,3)$;

\vspace{.2cm} \item $\Delta(2,2,4)$ when $(p_+, q_+, d)$ is a permutation of $(2,4,4)$;

\end{itemize}
\vspace{-.2cm}
$(b)$  if $(p_+, q_+, d)$ is a spherical or hyperbolic triple and $(p_{-}, q_{-}, d) = (2,3,6)$, $\rho_+$ can be taken to factor through a representation of $\Delta(p_+, q_+, 3)$ which is faithful if $(p_+, q_+, 3)$ is spherical or hyperbolic.

$(3)$ $\rho_+(\langle a_+ b_+ \rangle) \cong \left\{ \begin{array}{ll} \mathbb Z/d & \mbox{ if $(p_{\epsilon}, q_{\epsilon}, d) \ne (2,3,6)$ for both $\epsilon$} \\
\mathbb Z/3 & \mbox{ if $(p_{\epsilon}, q_{\epsilon}, d) = (2,3,6)$ for some $\epsilon$} \end{array} \right. $

$(4)(a)$  If $d>2$, then $\rho_-(a_-)$ has order $p_-$ and either $\rho_-(b_-)$ has order $q_-$ or
\vspace{-.2cm}
\begin{itemize}

\item $(p_+, q_+, d) = (2, 3, 6), (p_-, q_-, d) = (2,6,6)$ and $\rho_-(b_-)$ has order $3$,

\vspace{.2cm} \item $(p_+, q_+, d) = (3, 3, 3), (p_-, q_-, d) = (2,6, 3)$ and $\rho_-(b_-)$ has order $3$,

\vspace{.2cm} \item $(p_-, q_-, d) = (2, 4, 4)$ and $\rho_-(b_-)$ has order $2$.
\end{itemize}

$(b)$ If  $d=2$,   $\rho_-$
is a discrete faithful representation when $(p_-, q_-, 2)$ is a spherical or hyperbolic triple, or
 an epimorphism $\Delta(3, 6, 2) \to \Delta(3, 3, 2)$,  or
 an epimorphism $\Delta(4, 4, 2) \to \Delta(4, 2, 2)$.

$(5)$ If $\rho$ is conjugate to a representation with values in $\mathcal{N}$, then $(p_\epsilon, q_\epsilon, d)$ is a permutation of either $(2, 4, 4)$ or some $(2, 2, n)$ for both values of $\epsilon$.

\end{lemma}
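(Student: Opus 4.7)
The plan is to build $\rho$ by constructing $\rho_+$ and $\rho_-$ separately as irreducible representations of each $\Delta(p_\epsilon, q_\epsilon, d)$ into $PSL_2(\mathbb{C})$ that agree on the amalgamated subgroup $\langle a_+b_+\rangle$, then invoke the universal property of the amalgamated free product. The key tool is Lemma \ref{lemma: product curves}, which produces irreducible representations of $\mathbb{Z}/p * \mathbb{Z}/q$ sending $ab$ to a prescribed element of given order, with the single exceptional triple $(2,3,6)$. To set this up, I would first fix the order $e$ of the common image: let $e=d$ if $(p_\epsilon, q_\epsilon, d)\ne (2,3,6)$ for both $\epsilon$, and $e=3$ (still a divisor of $d=6$) otherwise. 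Choose $A\in PSL_2(\mathbb{C})$ of order $e$ and, using $\gcd(s,d)=1$, the unique $B\in PSL_2(\mathbb{C})$ of order $e$ with $B^s=A$.

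Lemma \ref{lemma: product curves}(3)(a), applied to $(p_\epsilon, q_\epsilon, e)$ (which avoids $(2,3,6)$ by our choice of $e$), yields irreducible $\bar\rho_+$ on $\mathbb{Z}/p_+ * \mathbb{Z}/q_+$ with $\bar\rho_+(a_+b_+)=A$, and similarly $\bar\rho_-$ with $\bar\rho_-(a_-b_-)=B$. Since $A^d=B^d=1$, these descend to $\rho_\pm$ on the triangle groups; since $B^s=A$, they glue to $\rho$, proving (1) and (3). For (4)(a), I would strengthen this by invoking Lemma \ref{lemma: product curves}(3)(b) to additionally require $\rho_\epsilon(a_\epsilon)$ and $\rho_\epsilon(b_\epsilon)$ to have orders $p_\epsilon$ and $q_\epsilon$; this is possible whenever $(p_\epsilon, q_\epsilon, e)\notin\{(2,3,6),(2,6,3),(2,4,4)\}$, and the three listed exceptions of (4)(a) correspond exactly to triples where $(p_-,q_-,e)$ lands in this excluded set. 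In those exceptions, irreducibility still forces $\rho_-(a_-)$ to have order $p_-=2$, while $\rho_-(b_-)$ takes the smaller order shown.

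For part (2), when $(p_+, q_+, d)$ is spherical or hyperbolic and $(p_-, q_-, d)\ne (2,3,6)$, one has $e=d$ and the strengthened $\rho_+$ is an irreducible representation of $\Delta(p_+, q_+, d)$ preserving all three generator orders; standard rigidity of triangle group representations then yields faithfulness. When $(p_+, q_+, d)$ is Euclidean, the representation factors through the natural finite rotational quotient, giving the groups $\Delta(2,3,3)$ or $\Delta(2,2,4)$ listed in (2)(a). Case (2)(b) follows because $e=3$ forces $\rho_+$ through $\Delta(p_+, q_+, 3)$, on which rigidity again delivers faithfulness when $(p_+, q_+, 3)$ is non-Euclidean. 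For (4)(b), $d=2$ makes $a_-b_-$ an involution, and irreducibility then pins $\rho_-$ to either the discrete faithful of $\Delta(p_-, q_-, 2)$ (spherical or hyperbolic case) or one of the dihedral quotients listed (Euclidean case). For (5), if $\rho$ lands in $\mathcal{N}$, each $\rho_\epsilon$ has irreducible dihedral image, forcing the generator orders $p_\epsilon, q_\epsilon, d$ to fit inside a common dihedral group, which enumerates to permutations of $(2,2,n)$ or $(2,4,4)$. The main obstacle will be part (2): verifying faithfulness in the hyperbolic cases requires invoking rigidity, and identifying the precise finite image in the Euclidean cases requires carefully tracking which conjugacy class of order-$e$ element $\rho_\epsilon(a_\epsilon b_\epsilon)$ is forced to lie in.
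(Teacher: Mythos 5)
Your overall architecture — build irreducible $\rho_\pm$ separately and glue via the pushout — matches the paper, but the order in which you set up the data creates two genuine gaps.

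\textbf{Gap 1: faithfulness of $\rho_+$ is not established.} You first fix $A\in PSL_2(\mathbb C)$ of order $e$ and then produce $\rho_+$ via Lemma \ref{lemma: product curves}. That lemma delivers only an \emph{irreducible} representation of $\mathbb Z/p_+ * \mathbb Z/q_+$ with prescribed image of $a_+b_+$ (and, using part (3)(b), prescribed orders of $a_+$ and $b_+$); it says nothing about the kernel of the induced representation of $\Delta(p_+, q_+, d)$. Your appeal to ``standard rigidity of triangle group representations'' to conclude faithfulness is a nontrivial assertion — one would have to argue that every irreducible $PSL_2(\mathbb C)$-representation of a hyperbolic triangle group preserving the orders of $a,b,ab$ is a Galois conjugate of the Fuchsian representation, and hence faithful — and you neither prove it nor cite it. The paper avoids this entirely by reversing the order of operations: when $(p_+,q_+,d)$ is spherical or hyperbolic and $(p_-,q_-,d)\ne(2,3,6)$, it simply \emph{defines} $\rho_+$ to be a discrete faithful representation, sets $A = \rho_+(a_+b_+)$, and only then uses Lemma \ref{lemma: product curves} (or an explicit epimorphism) to solve for $\rho_-$ in terms of that $A$. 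With the order reversed as in your proposal, faithfulness must be proved rather than arranged, and that proof is missing.

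\textbf{Gap 2: the case $d = 2$ is not covered by your main construction.} Lemma \ref{lemma: product curves} hypothesizes $d > 2$. When $d = 2$ and neither $(p_\epsilon,q_\epsilon,d)$ is $(2,3,6)$, your $e$ equals $2$, so the lemma cannot be invoked for either $\rho_+$ or $\rho_-$. You do address $\rho_-$ for $d=2$ in the discussion of (4)(b), but you never say how $\rho_+$ is to be built in that case, and your opening construction silently presupposes $e>2$. The paper handles $d=2$ by taking the discrete faithful (or suitable dihedral quotient) directly and gluing using the fact that $PSL_2(\mathbb C)$ has a unique conjugacy class of involutions, which is an elementary but necessary step your proposal omits.

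As a smaller remark, your treatment of (5) is a plausible sketch but does not track the specific representations constructed: the lemma's conclusion (5) is a claim about \emph{this} $\rho$, not about all representations of the amalgam, so one must verify, triple by triple, that the chosen $\rho_\epsilon$ fails to conjugate into $\mathcal N$ except in the listed cases — as the paper does. Absent that bookkeeping and the two gaps above, the proposal is incomplete.
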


\begin{proof}
We construct $\rho$ by piecing together representations $\rho_+: \Delta(p_+, q_+, d) \to PSL_2(\mathbb C)$ and
$\rho_-: \Delta(p_-, q_-, d) \to PSL_2(\mathbb C)$ which agree on $\langle a_+b_+ \rangle \equiv_\psi \langle a_-b_- \rangle$.

First suppose that $(p_+, q_+, d)$ is spherical or hyperbolic. If
\vspace{-.2cm}
\begin{itemize}

\item $(p_-, q_-, d) \ne (2, 3, 6)$, we choose $\rho_+$ to be discrete and faithful;

\vspace{.2cm} \item $(p_-, q_-, d) = (2, 3, 6)$ and $(p_+, q_+, 3)$ is spherical or hyperbolic, we take $\rho_+$ to be a discrete, faithful representation of $\Delta(p_+, q_+, 3)$;

\vspace{.2cm} \item $(p_-, q_-, d) = (2, 3, 6)$ and $(p_+, q_+, 3)$ is Euclidean, then $(p_+, q_+, 3) = (2, 6, 3)$ or $(3,3,3)$. In either case we take $\rho_+$ to be an epimorphism $\Delta(p_+, q_+, 3) \to \Delta(2,3,3)$.

\end{itemize}
\vspace{-.2cm}

There are several cases to consider when $(p_+, q_+, d)$ is a Euclidean triple:
\vspace{-.2cm}
\begin{itemize}

\item if $(p_+, q_+, d) = (3,3,3)$, we take $\rho_+$ to be an epimorphism $\Delta(3,3,3) \to \Delta(2,3,3) \subset SO(3) \subset PSL_2(\mathbb C)$. Note that each element of order $3$ in $\Delta(3,3,3)$ is sent to an element of order $3$ in $\Delta(2,3,3)$.

\vspace{.2cm} \item if $(p_+, q_+, d)$ is a permutation of $(2,4,4)$, we take $\rho_+$ to be an epimorphism of $\Delta(p_+, q_+, d)$ onto $\Delta(2,2,4) \subset SO(3) \subset PSL_2(\mathbb C)$ which send $a_+b_+$ to an element of order $d$.

\vspace{.2cm} \item if $(p_+, q_+, d)$ is a permutation of $(2,3, 6)$, we take $\rho_+$ to be an  epimorphism of $\Delta(p_+, q_+, d)$ onto $\Delta(2,3,3)$

\end{itemize}
\vspace{-.2cm}
The reader will verify that assertion (3) holds with these choices.

Next we construct $\rho_-$.

In the case that  $\rho_+(\langle a_+ b_+ \rangle) \cong \mathbb Z/d$ and $d>2$, neither $(p_+, q_+, d)$ nor $(p_-, q_-, d)$ is $(2, 3, 6)$.
Lemma \ref{lemma: product curves}(3) implies that if $(p_-, q_-, d) \ne (2, 6, 3), (2, 4, 4)$, we can find an irreducible representation $\rho_-: \Delta(p_-, q_-, d) \to PSL_2(\mathbb C)$ such that $\rho_-(a_-)$ has order $p_-$, $\rho_-(b_-)$ has order $q_-$, and
$\rho_-(a_-b_-) = \rho_+(a_+b_+)^m$ where $m$ is an integer satisfying $sm+dl=1$ for some integer $l$.
So we have $\rho_-(a_-b_-)^s = \rho_+(a_+b_+)$.
This  identity shows that $\rho_+$ pieces together with $\rho_-$ to form a homomorphism $\rho$ satisfying assertions (1) through (4).
If  $(p_-, q_-, d) =(2, 6, 3)$ or $(2, 4, 4)$, we take $\rho_-$ to be an epimorphism $\Delta(2, 6, 3) \to \Delta(2, 3, 3)$ or
an epimorphism $\Delta(2, 4, 4) \to \Delta(2, 2, 4)$. A conjugate of $\r_-$ will then piece together with $\r_+$ since there is a unique
conjugacy class of elements of order $3$ in $PSL_2(\c)$ and a unique conjugacy class of elements of order $4$.

In the case that  $\rho_+(\langle a_+ b_+ \rangle) \cong \mathbb Z/2$, we choose  $\rho_-$
to be a discrete faithful representation when $(p_-, q_-, 2)$ is a spherical or hyperbolic triple. Otherwise we choose
 $\r_-$ to be an epimorphism $\Delta(3, 6, 2) \to \Delta(3, 3, 2)$  or
 an epimorphism $\Delta(4, 4, 2) \to \Delta(2, 4, 2)$. Since there is a unique
conjugacy class of elements of order $2$ in $PSL_2(\c)$, a conjugate of $\r_-$ will piece together with $\r_+$ to produce a homomorphism
$\rho$ satisfying assertions (1) through (4).

If $\rho_+(\langle a_+ b_+ \rangle) \not \cong \mathbb Z/d$, then $\rho_+(\langle a_+ b_+ \rangle) \cong \mathbb Z/3$ and some $(p_\epsilon, q_\epsilon, d)$ is $(2, 3, 6)$. If
\vspace{-.2cm}
\begin{itemize}

\item $(p_+, q_+, d) = (2, 3, 6)$ and $(p_-, q_-, 3)$ is spherical or hyperbolic, we take $\rho_-$ to be the composition of the obvious epimorphism
$\Delta(p_-, q_-, 6) \to \Delta(p_-, q_-, 3)$ with a faithful representation
$\Delta(p_-, q_-, 3) \to PSL_2(\mathbb C)$;

\vspace{.2cm} \item if $(p_+, q_+, d) = (2, 3, 6)$ and $(p_-, q_-, 3)$ is Euclidean, then $(p_-, q_-, 3) = (2, 6, 3)$ or $(3,3,3)$ and
we take $\rho_-$ to be the composition of the quotient homomorphism $\Delta(p_-, q_-, 6) \to \Delta(p_-, q_-, 3)$
with an epimorphism $\Delta(p_-, q_-, 3) \to \Delta(2, 3, 3)$;

\vspace{.2cm} \item $(p_+, q_+, d) \ne (2, 3, 6)$, then $(p_-, q_-, d) = (2, 3, 6)$ and we take $\rho_-$ to be an epimorphism
$\Delta(2, 3, 6) \to \Delta(2, 3, 3)$.

\end{itemize}
\vspace{-.2cm}
As above, a conjugate of $\rho_-$ pieces together with $\rho_+$ to yield a homomorphism satisfying assertions (1) through (4).

To complete the proof, we verify that assertion (5) holds.

If   $(p_\epsilon, q_\epsilon, d)$ is a permutation of $(2, 3, 6)$ for some $\epsilon$, then the image of $\rho_\epsilon$ is $\Delta(2,3,3)$, which does not conjugate into $\mathcal{N}$.

Suppose that $(p_\epsilon, q_\epsilon, d)$ is not a permutation of $(2, 3, 6)$ for either choice of $\epsilon$. Then
$\rho_+(\langle a_+ b_+ \rangle) \cong \mathbb Z/d$ and the reader will verify that the image of $\rho_+$ conjugates into $\mathcal{N}$ if and only if
$(p_+, q_+, d)$ is a permutation of $(2, 4, 4)$ or some $(2, 2, n)$.

Suppose that $(p_-, q_-, d)$ is not a permutation of $(2, 4, 4)$ and $d>2$. Then (3) and (4) show that $\rho_-(a_-)$ has order $p_-$, $\rho_-(b_-)$ has order $q_-$, and $\rho_-(a_-b_-)$ has order $d$. Then as $\Delta(p_-, q_-, d)$ is generated by any two of $a_-, b_-, a_-b_-$, the image of $\rho_-$ is conjugate into $\mathcal{N}$ if and only if two $p_-, q_-, d$ are $2$.
If $(p_-, q_-, 2)$ is not $(4, 4, 2)$ or $(2, n, 2)$, then our choice of  $\r_-$ guarantees that its image cannot be conjugate into
$\mathcal{N}$, which completes the proof.
\end{proof}

Fix a homomorphism $\rho: \Delta(p_+, q_+, d) *_{\psi} \Delta(p_-, q_-, d) \to PSL_2(\mathbb C)$ and denote by $\rho_\epsilon$ the restriction of $\rho$ to $\Delta(p_\epsilon, q_\epsilon, d)$. Let $Z(\rho(\langle a_+ b_+ \rangle))$ denote the centraliser of $\rho(\langle a_+ b_+ \rangle)$ in $PSL_2(\mathbb C)$ and $Z^0(\rho(\langle a_+ b_+ \rangle))$ its component of the identity. For each $S \in Z(\rho_+(\langle a_+ b_+\rangle))$, define $\rho_S: \Delta(p_+, q_+, d) *_{\psi} \Delta(p_-, q_-, d)  \to PSL_2(\mathbb C)$ to be the homomorphism determined by the push-out diagram
\begin{center}
\begin{tikzpicture}[scale=0.8]
\node at (12, 6) {$\Delta(p_+, q_+, d)$};
\node at (6, 4.5) {$ \langle a_+ b_+ \rangle$};
\node at (18, 4.5) {$PSL_2(\mathbb{C})$};
\node at (12, 3) {$\Delta(p_-, q_-, d)$};

\node at (15.55, 5.7) {$\rho_+$};
\node at (15.55, 3.2) {$S \rho_- S^{-1}$};

\draw [ ->] (6.9, 4.8) --(10.5, 5.9);
\draw [ ->] (6.9,4.2) -- (10.5, 3.15);
\draw [ ->] (13.45, 5.9) -- (16.9,4.8);
\draw [ ->] (13.45, 3.15) -- (16.9,4.3);
\end{tikzpicture}
\end{center}

We say that the character $\chi_{\rho_S}$ is obtained by {\it bending} $\chi_\rho$ by $S$. The {\it bending function} of $\rho$ is the map
$$\beta_\rho: Z^0_{PSL_2}(\rho(\langle a_+ b_+ \rangle)) \to X_{PSL_2}(\Delta(p_+, q_+, d) *_{\psi} \Delta(p_-, q_-, d)), \; S \mapsto \chi_{\rho_S}.$$
It is shown in \cite[Lemma C.1]{BoiBo} that $\beta_\rho$ is constant if and only if one of the following two conditions holds:
\begin{itemize}

\vspace{-.2cm} \item  $\rho_+(\langle a_+ b_+ \rangle) = \{\pm I\}$ and either $\rho_+(\Delta(p_+, q_+, d)) = \{\pm I\}$ or $\rho_-(\Delta(p_-, q_-, d)) = \{\pm I\}$;

\vspace{.2cm} \item $\rho_+(\langle a_+ b_+ \rangle) \ne \{\pm I\}$ and either $\rho_+(\Delta(p_+, q_+, d)$ is abelian and reducible, or $\rho_-(\Delta(p_-, q_-, d))$  is abelian and reducible, or $\rho$ is reducible.

\end{itemize}
\vspace{-.2cm}
In particular, if $\rho$ is irreducible and $\beta_{\rho}$ is constant, then $\rho_\epsilon(\Delta(p_\epsilon, q_\epsilon, d))$ is abelian and reducible for some $\epsilon$.

We say that $\rho$ can be {\it bent non-trivially} if $\beta_\rho$ is non-constant. In this case, the image of $\beta_{\rho}$ is contained in a curve in $X_{PSL_2}(\Delta(p_+, q_+, d) *_{\psi} \Delta(p_-, q_-, d) )$.

\begin{lemma}
\label{lemma: bending homs sep}
Let $\rho: \Delta(p_+, q_+, d) *_{\psi} \Delta(p_-, q_-, d) \to PSL_2(\mathbb C)$ be an irreducible homomorphism as constructed in Lemma \ref{lemma: homs 1}. Then

$(1)$ $\rho$ can be bent non-trivially to produce a non-trivial curve $Y_0 \subset X_{PSL_2}(\Delta(p_+, q_+, d) *_{\psi} \Delta(p_-, q_-, d))$.

$(2)$ $Y_0$ is strictly non-trivial if and only if $(p_\epsilon, q_\epsilon, d) \not \in \{(2,2,d), (2,4,4)\}$ for some $\epsilon$.

$(3)$  $Y_0$ has exactly two ideal points unless $p_\epsilon = q_\epsilon = d = 2$ for some $\epsilon$, in which case it has one. Further, for each integer $k$, the map $\tilde f_{c_+ (a_+b_+)^k c_-}$ has a pole at each ideal point of $Y_0$ where $c_\pm \in \{a_\pm, a_\pm^{-1}, b_\pm, b_\pm^{-1}\}$.

\end{lemma}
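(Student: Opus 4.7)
The plan is to exploit the explicit bending construction together with Lemma \ref{lemma: homs 1}.

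For assertion (1), I invoke the criterion \cite[Lemma C.1]{BoiBo} recalled just before the lemma. None of its conditions for $\beta_\rho$ to be constant can hold: by Lemma \ref{lemma: homs 1}(3), $\rho_+(\langle a_+ b_+\rangle)$ is $\mathbb Z/d$ or $\mathbb Z/3$, so $\rho_+(a_+b_+) \ne \pm I$; by Lemma \ref{lemma: homs 1}(1), both $\rho_\epsilon$ are irreducible; and $\rho$ itself is irreducible by hypothesis. Hence $\beta_\rho$ is non-constant, and its image generates a non-trivial curve $Y_0$ containing $\chi_\rho$.

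For assertion (2), the key observation is that bending by $S \in Z^0(\rho(a_+b_+))$ preserves the property of having image in $\mathcal N$ if and only if $Z^0(\rho(a_+b_+))$ is contained in a common copy of $\mathcal N$ containing both factor images $\rho_\pm$. A case analysis guided by Lemma \ref{lemma: homs 1}(3)(4)(5) shows this holds precisely when both $(p_\epsilon, q_\epsilon, d) \in \{(2,2,d), (2,4,4)\}$: in these cases both images are dihedral (or Klein four) subgroups of $\mathcal N$, and $\rho(a_+b_+)$ can be arranged to lie in $\mathcal D \subset \mathcal N$ (either because it has order $d \geq 3$ in the dihedral image, or by choosing the Klein four embedding to send $a_+b_+$ to the diagonal element of $V_4$). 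Conversely, if some factor is outside this set, then either Lemma \ref{lemma: homs 1}(5) directly forces $\rho \notin \mathcal N$, or the factor is of the form $(2,q,2)$ with $q \geq 3$ so that $\rho_\epsilon(a_\epsilon b_\epsilon)$ is a ``flip'' in $\mathcal N \setminus \mathcal D$ and hence $Z^0(\rho(a_+b_+))$ is not contained in any $\mathcal N$ containing $\rho$. In either situation, generic bent representations lie outside $\mathcal N$ and produce strictly irreducible characters on $Y_0$.

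For assertion (3), conjugate so that $\rho(a_+b_+) = \mathrm{diag}(\lambda, \lambda^{-1})$ and parameterize $\mathcal D = Z^0(\rho(a_+b_+))$ by $s \in \mathbb C^*$ via $S = \mathrm{diag}(s, s^{-1})$. For $g = c_+(a_+b_+)^k c_-$ with $c_\pm \in \{a_\pm^{\pm 1}, b_\pm^{\pm 1}\}$, expanding $\rho_S(g) = \rho_+(c_+(a_+b_+)^k) \, S \, \rho_-(c_-) \, S^{-1}$ yields
\[
\mathrm{tr}(\rho_S(g)) \;=\; P + Q s^2 + R s^{-2},
\]
where $Q$ and $R$ are products of the off-diagonal matrix entries of $\rho_+(c_+(a_+b_+)^k)$ and $\rho_-(c_-)$. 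Since $\rho_\pm(a_\pm b_\pm)$ is diagonal in the chosen basis while $\rho_\pm$ is irreducible, $\rho_\pm(a_\pm^{\pm 1})$ and $\rho_\pm(b_\pm^{\pm 1})$ are all non-diagonal, so $Q, R \ne 0$. Therefore $\tilde f_g$ has poles at both $s = 0$ and $s = \infty$, which correspond to ideal points of $Y_0$ under the natural extension of $\beta_\rho$ to a morphism $\mathbb{CP}^1 \to \tilde Y_0$. The number of ideal points of $Y_0$ equals $2$ unless $\chi_{\rho_S} = \chi_{\rho_{S^{-1}}}$ holds identically in $S$, in which case it equals $1$; a case analysis of the images in Lemma \ref{lemma: homs 1} shows this Weyl-type symmetry is present exactly when $(p_\epsilon, q_\epsilon, d) = (2,2,2)$ for some $\epsilon$, where the image of $\rho_\epsilon$ is the Klein four group and admits the required outer symmetry. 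The main obstacle will be this detailed case analysis, particularly distinguishing the Klein four case from the other dihedral $\mathcal N$-valued cases where the analogous Weyl symmetry fails.
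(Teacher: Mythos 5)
Your strategy matches the paper's: bend $\rho$ along $Z^0(\rho(a_+b_+)) = \mathcal{D}$, compute the trace of $\rho_S(c_+(a_+b_+)^k c_-)$ as a Laurent polynomial in the bending parameter, and then analyze when the bent characters stay in $\mathcal{N}$ (for part (2)) and when the two ends of $\mathbb{C}^*$ are identified (for part (3)). But there are two genuine gaps.

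The first gap is in the ``only if'' direction of part (2). Your key observation --- that bending preserves the property of having image in $\mathcal{N}$ iff $Z^0(\rho(a_+b_+))$ lies in a common $\mathcal{N}$ containing both factor images --- addresses whether the bent representations literally land in a \emph{fixed} $\mathcal{N}$, whereas strict non-triviality concerns whether they can be \emph{conjugated} into some $\mathcal{N}$, with a conjugator $B(S)$ that could a priori depend on $S$. Knowing $Z^0 \not\subset \mathcal{N}$ does not immediately close off that possibility. The paper closes it with a normalizer argument: when $(p_\epsilon, q_\epsilon) \ne (2,2)$ the image $\rho_S(\Delta(p_\epsilon, q_\epsilon, d))$ is a dihedral group properly containing $\mathcal{K}$, so any $B$ carrying $\rho_S$ into $\mathcal{N}$ must itself lie in $\mathcal{N}$ (after accounting for the bending element), and the resulting constraint on $\left(\begin{smallmatrix} x & y \\ -y & x \end{smallmatrix}\right)$ forces $xy = 0$. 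Your sketch also lists only $(2,q,2)$ with $q \geq 3$ among the triples to rule out, omitting $(4,4,2)$, which the paper handles by the same mechanism since $\rho_\epsilon(a_\epsilon b_\epsilon)$ is again a flip in $\mathcal{N} - \mathcal{D}$.

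In part (3), ``non-diagonal'' is not quite strong enough to conclude $Q, R \ne 0$: you need \emph{both} off-diagonal entries of $\rho_\pm(c_\pm)$ nonzero. This does follow from irreducibility of $\rho_\pm$ together with $\rho_\pm(a_\pm b_\pm) \in \mathcal{D}$ (an upper- or lower-triangular $\rho_\pm(c_\pm)$ would give a common eigenvector for $\langle a_\pm b_\pm, c_\pm\rangle = \Delta(p_\pm, q_\pm, d)$), but it needs to be said. You correctly flag the ideal-point-identification case analysis as the remaining obstacle; for the record, the paper resolves it by noting that $g(t) = g(t')$ forces the conjugating element $B$ to centralize $\rho_+$, hence $B = \pm I$ (so $t' = \pm t$, giving two distinct ideal points) unless some $\rho_\epsilon$ has image $\mathcal{K}$, which by Lemma \ref{lemma: homs 1} occurs precisely for $(p_\epsilon, q_\epsilon, d) = (2,2,2)$.
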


\begin{proof}
By construction, $\rho_\epsilon = \rho|\Delta(p_\epsilon, q_\epsilon, d)$ is irreducible for both $\epsilon$ and so it can be bent non-trivially to produce a
curve $Y_0$ in $X_{PSL_2}(\Delta(p_+, q_+, d) *_{\psi} \Delta(p_-, q_-, d) )$ determined by bending $\rho$.

If $Y_0$ is {\it not} strictly non-trivial, it is a curve of characters of representations with values in $\mathcal{N}$. Then Lemma \ref{lemma: homs 1} implies that for each $\epsilon$, $(p_\epsilon, q_\epsilon, d)$ is a permutation of either $(2,4,4)$ or a triple of the form $(2,2,n)$. Without loss of generality, we can assume that the images of both $\rho_+$ and $\rho_-$ are contained in $\mathcal{N}$.

If, for some $\epsilon$, we have $(p_\epsilon, q_\epsilon, d) = (2,n,2)$ for $n > 2$ or $(4,4,2)$, then $\rho_+(a_+b_+) \in \mathcal{N} - \mathcal{D}$. After conjugating by an element of $\mathcal{D}$, we can assume that $\langle \rho_+(a_+b_+) \rangle = \{\pm I, \pm \left(\begin{smallmatrix} 0 & 1 \\ -1 & 0   \end{smallmatrix}\right)\}$. Then $Z^0(\rho(\langle a_+ b_+ \rangle)) = \{\pm \left(\begin{smallmatrix} x & y \\ -y & x   \end{smallmatrix}\right) \; | \; x^2 + y^2 = 1\}$. Let $\rho_{(x,y)}$ denote the representation obtained by piecing together $\rho_+$ and $\left(\begin{smallmatrix} x & y \\ -y & x   \end{smallmatrix}\right) \rho_- \left(\begin{smallmatrix} x & y \\ -y & x   \end{smallmatrix}\right)^{-1}$. Since $(p_\epsilon, q_\epsilon) \ne (2,2)$, the only conjugate of $\rho_{(x,y)}$ which takes values in $\mathcal{N}$ is $\rho_{(x,y)}$ itself when $\epsilon = +$, and $\left(\begin{smallmatrix} x & y \\ -y & x   \end{smallmatrix}\right)^{-1} \rho_{(x,y)} \left(\begin{smallmatrix} x & y \\ -y & x  \end{smallmatrix}\right)$ otherwise. In either case, the fact that $\left(\begin{smallmatrix} x & y \\ -y & x   \end{smallmatrix}\right)$ conjugates an irreducible subgroup of $\mathcal{N}$ into $\mathcal{N}$ implies that $\pm \left(\begin{smallmatrix} x & y \\ -y & x   \end{smallmatrix}\right) \in \mathcal{N}$. Thus $x = 0$ or $y = 0$. It follows that for $xy \ne 0$, $\rho_{(x,y)}$ does not conjugate into $\mathcal{N}$, contrary to our assumptions. Thus $(p_\epsilon, q_\epsilon, d) \in \{(2,2,d), (2,4,4)\}$ for both $\epsilon$. This proves the forward implication of (2).

For the reverse implication, suppose that $(p_+, q_+, d)$ and $(p_-, q_-, d)$ are of the form $(2,2,d)$ or $(2,4,4)$.
As a first case, suppose that $(p_\epsilon, q_\epsilon, d)$ is either $(2,2,d)$ where $d > 2$ or $(2,4,4)$ for some $\epsilon$. Lemma \ref{lemma: homs 1} then implies that the image of $\rho_\epsilon$ is isomorphic to $\Delta(2,2,d)$ for both $\epsilon$, and we can suppose it is contained in $\mathcal{N}$. As $d > 2$, $\rho_+(a_+b_+) \in \mathcal{D}$. Hence $Z^0(\rho(\langle a_+ b_+ \rangle)) = \mathcal{D}$ and it is then easy to see that the image of $\rho_S$ is contained in $\mathcal{N}$ for all $S \in \mathcal{D}$. Hence $Y_0$ contains no characters of strictly irreducible representations.

Suppose next that $(p_\epsilon, q_\epsilon, d) = (2,2,2)$ for both $\epsilon$. Then we can conjugate $\rho$ so that its image lies in $\mathcal{N}$ and $\rho_+(a_+b_+) \in \mathcal{D}$. Hence, again we have that $Y_0$ contains no characters of strictly irreducible representations, which completes the proof of (2).

Finally we consider assertion (3) for the case that $c_+ = a_+$ and $c_- = a_-$. The other cases are treated similarly.

Without loss of generality we suppose that $\rho_+(a_+b_+) \in \mathcal{D}$ and therefore $Z^0(\rho(\langle a_+ b_+ \rangle)) = \mathcal{D}$. There is a morphism
$$g: \mathbb C^* \to Y_0, \; t \mapsto \chi_{\rho_t}$$
where $\rho_{t-} = \left(\begin{smallmatrix} t & 0 \\ 0 & t^{-1}  \end{smallmatrix}\right) \rho_- \left(\begin{smallmatrix} t^{-1} & 0 \\ 0 & t  \end{smallmatrix}\right)$. Then $\rho_t(a_+ (a_+b_+)^k a_-) = \rho_+(a_+) \rho_+(a_+b_+)^k \rho_{t-}(a_-)$. By construction, $\rho_+$ and $\rho_-$ are irreducible and $\rho_+(a_+b_+)$ is diagonal. Thus we can write
$$\rho_+(a_+) = \pm \left(\begin{matrix} a & b \\ c & d  \end{matrix}\right) \;\;\;\;\; \rho_+(a_+b_+) = \pm \left(\begin{matrix} s & 0 \\ 0 & s^{-1}  \end{matrix}\right) \;\;\;\;\; \rho_-(a_-) = \pm \left(\begin{matrix} x & y \\ z & w  \end{matrix}\right)$$
where $bc \ne 0, yz \ne 0$ and $s \ne 0$. Then
$\rho_+(a_+ (a_+b_+)^k) =  \pm \left(\begin{smallmatrix} a_0 & b_0 \\ c_0 & d_0  \end{smallmatrix}\right)$ where $b_0c_0 \ne 0$ and $\rho_t(a_-) =  \pm \left(\begin{smallmatrix} x & t^2y \\ t^{-2}z & w \end{smallmatrix}\right)$ so that
$$\rho_t(a_+ (a_+b_+)^k a_-) =  \pm \left(\begin{matrix} a_0 x + t^{-2} b_0z & * \\ * & d_0 w + t^{2} c_0y   \end{matrix}\right)$$
The trace of $\rho_t(a_+ (a_+b_+)^k a_-)$ is therefore $\pm (c_0 y t^2 +(a_0 x + d_0w) + b_0 z t^{-2})$, which diverges as $t$ tends to either $0$ or infinity since $b_0c_0yz \ne 0$.  Thus the two ideal points of $\mathbb C^*$ are sent to ideal points of $Y_0$ and $\tilde f_{a_+ (a_+b_+)^k a_-}$ has a pole at each of them. Thus $Y_0$ has at most two ideal points. It has only one if the two ideal points of $\mathbb C^*$ are sent to the same ideal point of $Y_0$.

To complete the proof, suppose that $g(t) = g(t')$. Then there is a $B \in PSL_2(\mathbb C)$ such that $\rho_{t'} = B \rho_t B^{-1}$. In particular, $B \rho_+B^{-1} = \rho_+$ and $B \rho_-^{t} B^{-1} = \rho_-^{t'}$. Since
\begin{itemize}

\vspace{-.2cm} \item  $\rho_+$ is irreducible, either $B = \pm I$ or, up to conjugation, the image of $\rho_+$ is $\mathcal{K} = \{\pm I, \pm \left(\begin{smallmatrix} i & 0 \\ 0 & -i   \end{smallmatrix}\right), \pm \left(\begin{smallmatrix} 0 & 1 \\ -1 & 0   \end{smallmatrix}\right),  \pm \left(\begin{smallmatrix} 0 & i \\ i & 0   \end{smallmatrix}\right)\}$ and $B \in \mathcal{K}$.

\vspace{.2cm} \item $\rho_-$ is irreducible, either $B = \pm \left(\begin{smallmatrix} t'/t & 0 \\ 0 & t/t'  \end{smallmatrix}\right)$ or, up to conjugation, the image of $\rho_-$ is $\mathcal{K}$ and $\pm \left(\begin{smallmatrix} 1/t' & 0 \\ 0 & t'  \end{smallmatrix}\right) B \left(\begin{smallmatrix} t & 0 \\ 0 & 1/t  \end{smallmatrix}\right)\in \mathcal{K}$.

\end{itemize}
\vspace{-.2cm}
Hence if neither $\rho_+$ nor $\rho_-$ has image isomorphic to $\Delta(2,2,2) \cong \mathcal{K}$, then $\pm I = B = \pm \left(\begin{smallmatrix} t'/t & 0 \\ 0 & t/t'  \end{smallmatrix}\right)$. Hence $t' = \pm t$. It follows that the two ideal points of $\mathbb C^*$ are sent to distinct ideal points of $Y_0$.

On the other hand, suppose that $p_\epsilon = q_\epsilon = d = 2$ for some $\epsilon$. By Lemma \ref{lemma: homs 1}, the image of $\rho_\epsilon$ is isomorphic to $\Delta(2,2,2)$ and we can suppose that this image is $\mathcal{K}$. In this case, $\rho_+(a_+ b_+) = \left(\begin{smallmatrix} i & 0 \\ 0 & -i  \end{smallmatrix}\right)\in \mathcal{K}$. If $\epsilon = +$, the reader will verify that conjugation by $B = \pm \left(\begin{smallmatrix} 0 & 1 \\ -1 & 0   \end{smallmatrix}\right)$ takes $\rho_t$ to $\rho_{\frac{1}{t}}$. Thus $g(t) = g(\frac{1}{t})$. It follows that $g$ sends the two ideal points of $\mathbb C^*$ to the same ideal point of $Y_0$. Similarly, if $\epsilon = -$, conjugation by $B = \pm \left(\begin{smallmatrix} 0 & 1/t \\ -t & 0   \end{smallmatrix}\right)$ takes $\rho_t$ to $\rho_{\frac{1}{t}}$. Thus $g(t) = g(\frac{1}{t})$, so again $g$ sends the two ideal points of $\mathbb C^*$ to the same ideal point of $Y_0$. This completes the proof.
\end{proof}

\subsection{HNN extensions} \label{hnn}

Fix integers $d, n \geq 2$ and consider the presentation
$\Delta(d, n, d) = \langle a, b : a^{d} = b^{n} = (ab)^{d}= 1\rangle$.
Let $\psi: \langle a \rangle \to \langle ab \rangle$ be an isomorphism. Then
$$\psi(a) = (ab)^s$$
where $1 \leq s \leq d - 1$ and $\gcd(s,d) = 1$. Form the HNN extension
$$\Delta(d,n,d)*_{\psi} :=  \langle a, b, t : a^{d} = b^{n} = (ab)^{d}= 1, t a t^{-1} = (ab)^s \rangle$$

A homomorphism $\rho: \Delta(d,n,d)*_{\psi} \to PSL_2(\mathbb C)$ can be thought of as a pair $(\theta, \pm A)$ where $\theta = \rho|\Delta(d,n,d)$ and $\pm A = \rho(t) \in PSL_2(\mathbb C)$ satisfies $A \theta(a) A^{-1} = \theta(ab)^s$. Conversely, a pair $(\theta, \pm A)$ where $\theta: \Delta(d,n,d) \to PSL_2(\mathbb C)$ is a homomorphism and $A \in PSL_2(\mathbb C)$ satisfies $A \theta(a) A^{-1} = \theta(ab)^s$ defines a homomorphism $\rho: \Delta(d,n,d)*_{\psi} \to PSL_2(\mathbb C)$ in the obvious way. We shall write
$$\rho = (\theta, A)$$
Note that $(\theta, A), (\theta, B) \in R_{PSL_2}(\Delta(d,n,d)*_{\psi})$ if and only if $B = AS$ for some $S \in Z_{PSL_2}(\rho(\langle a \rangle))$.
Let $Z^0_{PSL_2}(\rho(\langle a \rangle))$ denote the component of the identity of $Z_{PSL_2}(\rho(\langle a \rangle))$.
The map
$$\beta_{(\theta, A)}: Z^0_{PSL_2}(\rho(\langle a \rangle)) \to X_{PSL_2}(\Delta(d,n,d)*_{\psi}), S \mapsto \chi_{(\theta, AS)}$$
is called the {\it bending function} of $(\theta, A)$. It is shown in \cite[Lemma C.3]{BoiBo} that if $\beta_{(\theta, A)}$ is constant, then $\theta(\langle a \rangle) \ne \{\pm I\}$ and,  after a possible conjugation, either
\begin{itemize}

\vspace{-.2cm} \item  $\theta(\Delta(d,n,d)) \subset \mathcal{D}$ and $A = \pm \left(\begin{smallmatrix} 0 & 1 \\ -1 & 0 \end{smallmatrix}\right)$, or

\vspace{.2cm} \item $\theta(\Delta(d,n,d))$ and $A$ are contained in the group of upper-triangular matrices.

\end{itemize}
\vspace{-.2cm}
In particular, if $\beta_{(\theta, A)}$ is constant, then $\theta$ is reducible and $(\theta, A)$ is either reducible or conjugate into $\mathcal{N}$.

We say that $(\theta, A)$ can be {\it bent non-trivially} if $\beta_{(\theta, A)}$ is non-constant. In this case, the image of $\beta_{(\theta, A)}$ is contained in a curve in $X_{PSL_2}(\Delta(d,n,d)*_{\psi})$.

\begin{lemma}
\label{lemma: bending homs non-sep}
There is a homomorphism
$(\theta, A): \Delta(d,n,d)*_{\psi} \to PSL_2(\mathbb C)$
for which

$(1)$ $\theta$ is irreducible.

$(2)$ $a, b$ and $ab$ are sent to elements of order $d,n$ and $d$ respectively unless $(d,n) = (4,2)$. In this case $a, b$ and $ab$ are sent to elements of order $2$.

$(3)(a)$ $(\theta, A)$ can be bent non-trivially to produce a curve $Y_0 \subset X_{PSL_2}(\Delta(d,n,d)*_{\psi})$.

$(b)$ $Y_0$ is strictly non-trivial.

$(c)$ $Y_0$ has two ideal points unless $n = 2$ and $d \in\{2,4\}$ $($so $\theta(\Delta(d,n,d)) \cong \Delta(2,2,2)$$)$, in which case it has one ideal point.  For each $j, l \in \mathbb Z$, the function $\tilde f_{t a^jta^l}$ has a pole at each ideal point of $Y_0$.

\end{lemma}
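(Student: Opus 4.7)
The plan is to mirror the strategy of Lemma \ref{lemma: bending homs sep} adapted to the HNN setting. First I would construct an irreducible representation $\theta: \Delta(d,n,d) \to PSL_2(\c)$ factoring through $\z/d * \z/n$ by applying Lemma \ref{lemma: product curves}(3)(b), yielding $\theta(a), \theta(b), \theta(ab)$ of orders $d, n, d$ respectively. Checking the three exceptions $(2,3,6), (2,6,3), (2,4,4)$ of that lemma against our constraint that the first and third orders both equal $d$, only $(d,n)=(4,2)$ is problematic; in that single case I would instead take $\theta$ to be an epimorphism $\Delta(4,2,4)\to \Delta(2,2,2)\cong \mathcal{K} \subset PSL_2(\c)$, sending all three generators to involutions, which proves (2). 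Next I need $A \in PSL_2(\c)$ satisfying $A\theta(a)A^{-1} = \theta(ab)^s$, which exists iff $\theta(a)$ and $\theta(ab)^s$ are $PSL_2(\c)$-conjugate. I would exploit the freedom in choosing the curve $X(j,k)$ within Lemma \ref{lemma: product curves}: arranging the eigenvalue $e^{\pi ij/d}$ of $\theta(a)$ and $e^{\pi ik/d}$ of $\theta(ab)$ so that $j \equiv sk \pmod{d}$ forces the required conjugacy.

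For (3a), since $\theta$ is irreducible by construction, the criterion stated just before the lemma (from \cite[Lemma C.3]{BoiBo}) implies that the bending function $\beta_{(\theta,A)}$ is non-constant, so bending produces the curve $Y_0$. For (3b), I split on whether the image of $\theta$ lies in $\mathcal{N}$. When $d > 2$ and $(d,n) \ne (4,2)$, every element of $\mathcal{N}$ of order greater than $2$ already lies in $\mathcal{D}$; so if $\theta$ conjugated into $\mathcal{N}$ then $\theta(a),\theta(ab) \in \mathcal{D}$, whence $\theta(b) = \theta(a)^{-1}\theta(ab) \in \mathcal{D}$, contradicting irreducibility. Hence $\chi_{(\theta,A)} \in Y_0$ is already strictly irreducible. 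In the remaining cases $d = 2$ or $(d,n) = (4,2)$, the image of $\theta$ is dihedral or $\mathcal{K}$; I would verify by direct computation that the conjugator $A$---which sends the eigenspaces of $\theta(a)$ to those of the non-diagonal $\theta(ab)^s$---has all four matrix entries non-zero, hence lies outside $\mathcal{N}$, and since $\mathcal{N}$ is closed under right-multiplication by $\mathcal{D} = Z^0(\theta(a))$, all $AS$ remain outside $\mathcal{N}$, so each $\rho_S = (\theta, AS)$ is strictly irreducible.

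For (3c), I would parametrise $Y_0$ by $g: \c^* \to Y_0$, $\tau \mapsto \chi_{(\theta, AS(\tau))}$ with $S(\tau) = \mathrm{diag}(\tau,\tau^{-1})$ (after conjugating so that $\theta(a) = \pm\mathrm{diag}(\lambda,\lambda^{-1})$ and $Z^0(\theta(a)) = \mathcal{D}$). An identification $g(\tau) = g(\tau')$ forces the existence of $B' \in Z(\theta) \subset PSL_2(\c)$ with $B'AS(\tau)B'^{-1} = AS(\tau')$. When $Z(\theta) = \{I\}$---which holds whenever the image of $\theta$ is not (conjugate to) $\mathcal{K}$---only $\tau = \tau'$ survives, so $0$ and $\infty$ descend to two distinct ideal points of $Y_0$. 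The image of $\theta$ equals $\mathcal{K}$ exactly when $n=2$ and $d \in \{2, 4\}$, and in that case $Z(\theta) = \mathcal{K}$; I would then exhibit a $B' \in \mathcal{K}\setminus\{I\}$ (the anti-diagonal generator) that centralises the chosen $A$ while satisfying $B'S(\tau)B'^{-1} = S(\tau^{-1})$, producing $g(\tau) = g(\tau^{-1})$ and collapsing the two ideal points to one. A direct matrix computation yields
\begin{equation*}
\mathrm{trace}\bigl(\rho_\tau(ta^j t a^l)\bigr) = x^2\lambda^{j+l}\tau^2 + yz\bigl(\lambda^{l-j} + \lambda^{j-l}\bigr) + w^2\lambda^{-(j+l)}\tau^{-2},
\end{equation*}
where $A = \pm\left(\begin{smallmatrix} x & y \\ z & w \end{smallmatrix}\right)$; since $\theta(ab)$ is non-diagonal (by irreducibility of $\theta$ with $\theta(a)$ diagonal), $A$ has both diagonal entries non-zero, so $\tilde f_{ta^j t a^l}$ has a pole as $\tau \to 0$ and as $\tau \to \infty$, hence at each ideal point of $Y_0$ whether or not they are identified.

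I expect the main obstacle to be the bookkeeping in the cases where $\theta$ falls into $\mathcal{N}$---namely $d = 2$ and the exceptional $(d,n) = (4,2)$---where both strict non-triviality and the one-ideal-point collapse must be verified by explicit matrix computation with the Klein four-group rather than from the formal irreducibility of $\theta$.
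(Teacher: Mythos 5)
Your proposal follows the paper's strategy closely: building $\theta$ from the free-product lemma, using the bending criterion from \cite{BoiBo}, parametrising $Y_0$ by $\mathcal{D} \cong \mathbb C^*$, computing the trace of $\rho_\tau(ta^j t a^l)$, and handling identifications via the centraliser of $\theta$. The construction in part (2), the reduction of (3)(a) to irreducibility of $\theta$, the $\mathcal{D}$-argument for (3)(b) when $d > 2$ and $(d,n) \ne (4,2)$, the trace formula, and the two-ideal-points argument when $Z(\theta)$ is trivial all agree with the paper. The gaps are exactly where you predicted they would be --- the dihedral/Klein cases --- and both are genuine.

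\emph{Gap in (3)(b).} For $d = 2$ or $(d,n) = (4,2)$, you argue that $(\theta, AS)$ is strictly irreducible because $AS \notin \mathcal{N}$ (deducing this from $A \notin \mathcal{N}$ and closure of $\mathcal{N}$ under right multiplication by $\mathcal{D}$). But strict irreducibility means the representation is not \emph{conjugate} into $\mathcal{N}$, not merely that its image is not literally inside $\mathcal{N}$. Since $\theta(\Delta(d,n,d))$ is $\mathcal{K}$ or a dihedral group here, there is a positive-dimensional family of $B \in PSL_2(\mathbb C)$ with $B\theta B^{-1} \subset \mathcal{N}$ (every diagonal $B$, for a start), and you have not ruled out that one of these simultaneously moves $AS$ into $\mathcal{N}$. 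The paper handles this by assuming a conjugate of the whole image lies in $\mathcal{N}$ and working out what that forces on the conjugator, which is the missing step.

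\emph{Gap in (3)(c).} You assert there is a $B' \in \mathcal{K} \setminus \{I\}$ that centralises $A$ and satisfies $B' S(\tau) B'^{-1} = S(\tau^{-1})$. No such $B'$ exists. With the explicit normalisation $A = A_1 = \pm\left(\begin{smallmatrix} 1 & i/2 \\ i & 1/2 \end{smallmatrix}\right)$ (so $A\theta(a)A^{-1} = \theta(ab)$ with $\theta(a)$ diagonal and $\theta(ab)$ anti-diagonal), $A$ is a regular semisimple element (trace $3/2$), so its centraliser is a maximal torus meeting $\mathcal{K}$ only in $\pm I$. What actually works --- and what the paper does --- is to take $B = \pm\left(\begin{smallmatrix} 0 & i \\ i & 0 \end{smallmatrix}\right) \in \mathcal{K}$, which does \emph{not} centralise $A$: one computes $BA_1B^{-1} = A_{1/2}$, so conjugation by $B$ carries $\rho_v = (\theta, A_v)$ to $\rho_{1/(2v)}$ rather than $\rho_{1/v}$. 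This still identifies the two ends of $\mathbb C^*$ and yields a single ideal point, but the mechanism is the relation $A^{-1}(BAB^{-1}) \in \mathcal{D}$ rather than $B$ commuting with $A$. As stated, your direct computation would fail to find the claimed $B'$.
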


\begin{proof}
If $(d,n,d)$ is a spherical triple, it is either $(2,n,2)$ or $(3,2,3)$. In either case we take $\theta$ to be injective with image $\Delta(d,n,d)$. Then $s \equiv \pm 1$ (mod $d$), and as there is a unique conjugacy class of elements of order $d \in \{2,3\}$ in $PSL_2(\mathbb C)$ we can find an $A \in PSL_2(\mathbb C)$ such that $A{\theta}(a)A^{-1} = {\theta}(ab)^s$. Then $({\theta}, A)$ satisfies (1), (2), and (3)(a) (cf. \cite[Lemma C.3]{BoiBo}).

If $(d,n,d)$ is a Euclidean triple, it is either $(3,3,3)$ or $(4,2,4)$. In the first case we take $\theta$ to have image $\Delta(3,2,3)$ and note that each element of order $3$ in $\Delta(3,3,3)$ is sent to an element of order $3$ in $\Delta(3,2,3)$. In the second case we take $\theta$ to have image $\Delta(2,2,2)$. As in the previous paragraph, there is some $A \in PSL_2(\mathbb C)$ for which $(\theta, A)$ is a homomorphism satisfying (1), (2), and (3)(a).

Assume then that $(d,n,d)$ is a hyperbolic triple. In particular $d>2$. Lemma \ref{lemma: product curves} implies that we can find an irreducible representation $\theta: \mathbb Z/d * \mathbb Z/n \to PSL_2(\mathbb C)$ such that $\theta(a)$ has order $d$, $\theta(b)$ has order $n$ and
$\theta(ab)$ is conjugate to $\theta(a)^m$, where $m$ is a chosen  integer such that $ms+dl=1$ for some integer $l$.
Then $\theta(a)$ is conjugate to $\theta(ab)^s$ and we can therefore construct a representation $(\theta, A): \Delta(d,n,d)*_{\psi} \to PSL_2(\mathbb C)$ satisfying (1), (2), and (3)(a).

Next we prove (3)(b).
Suppose that the image of $(\theta, A)$ is conjugate into $\mathcal{N}$. Without loss of generality we can suppose that its image is contained in $\mathcal{N}$. Then $A \in \mathcal{N}$ and as $\theta$ is irreducible, part (2) of the lemma implies that either $d = 2$ or $(d,n,d) = (4,2,4)$. In the first case the image of $\theta$ is the dihedral group $D_n$ while in the second it is $D_2$. In either case we can suppose that
$$\theta(\langle a \rangle) = \{\pm I, \pm \left(\begin{smallmatrix} 0 & 1 \\ -1 & 0   \end{smallmatrix}\right)\}$$
so, as we noted above, the component of the identity of the centraliser of $\theta(\langle a \rangle)$ is given by $\{\pm \left(\begin{smallmatrix} z & w \\ -w & z   \end{smallmatrix}\right) : z, w \in \mathbb C \hbox { and } z^2 + w^2 = 1\}$. Set $\rho_{(z,w)} = (\theta, A  \left(\begin{smallmatrix} z & w \\ -w & z   \end{smallmatrix}\right))$ and suppose that the image of $B\rho_{(z,w)}B^{-1}$ is contained in $\mathcal{N}$ for some $B \in PSL_2(\mathbb C)$.

If $n > 2$, then $\rho_{(z,w)}(b) = \theta(b) \in \mathcal{D}$ and so as $B\rho_{(z,w)}(b)B^{-1} \in \mathcal{N}$, we must have $B\rho_{(z,w)}(b)B^{-1} = \theta(b)^{\pm 1}$. It follows that $B \in \mathcal{N}$. But then as $BA  \left(\begin{smallmatrix} z & w \\ -w & z   \end{smallmatrix}\right)B^{-1} \in \mathcal{N}$, so $\left(\begin{smallmatrix} z & w \\ -w & z   \end{smallmatrix}\right) \in \mathcal{N}$ and therefore $z = 0$ or $w = 0$. Hence if $zw \ne 0$ and $n > 2$, $\rho_{(z,w)}$ does not conjugate to a representation with values in $\mathcal{N}$ and so $Y_0$ is strctly non-trivial.

Suppose that $n = 2$. Then the image of $\theta$ is $D_2$. If the image of $\rho_{(z,w)}$ contains a diagonal element of order different than $2$, we can proceed as in the case $n > 2$ to complete the proof that $Y_0$ is strictly non-trivial. Otherwise the image of $\rho_{(z,w)}$ coincides with the image of $\theta$, which we can take to be $\mathcal{K} = \{\pm I, \pm \left(\begin{smallmatrix} i & 0 \\ 0 & -i   \end{smallmatrix}\right), \pm \left(\begin{smallmatrix} 0 & 1 \\ -1 & 0   \end{smallmatrix}\right),  \pm \left(\begin{smallmatrix} 0 & i \\ i & 0   \end{smallmatrix}\right)\}$. Hence $A$ is an element of the abelian group $\mathcal{K}$ and so $\theta(ab)^s = A \theta(a) A^{-1} = \theta(a)$. Since $\theta(a)$ has order $2$ and $s$ is odd,
we have $\theta(ab) = \theta(a)$. But this is impossible as $\theta(a)$ and $\theta(ab)$ generate $\mathcal{K}$. This completes the proof of (3)(b).

Finally we prove (3)(c). We assume, without loss of generality, that $\theta(a)$ is diagonal, say
$$\theta(a) = \pm \left(\begin{matrix} u & 0 \\ 0 & u^{-1}  \end{matrix}\right),$$
so $Z^0_{PSL_2}(\theta(\langle a \rangle)) = \mathcal{D} \cong \mathbb C^*$. Then there is a regular map
$$g: \mathbb C^* \to Y_0, r \mapsto \chi_{\rho_r}, \; \rho_r = (\theta, \pm A  \left(\begin{smallmatrix} r & 0 \\ 0 & r^{-1}   \end{smallmatrix}\right))$$
which factors through $\mathbb C^* / \{\pm 1\} \cong \mathbb C^*$.

Write $A = \pm \left(\begin{matrix} x & y \\ z & w  \end{matrix}\right)$
and fix an integer $k$ so that $sk \equiv 1$ (mod $d$). Then as $\theta$ is irreducible and $A \theta(a)^k A^{-1} = \theta(ab)^{sk} = \theta(ab)$, the reader will verify that $x \ne 0$ and $w \ne 0$. A short calculation then shows that
$$\rho_r(t a^jta^l) = \pm \left(\begin{matrix} u^{(j+l)}x^2 r^2 + u^{(l-j)}yz  & * \\ * & u^{-(j+l)} w^2 r^{-2} + u^{-(l-j)}yz  \end{matrix}\right)$$
Hence, up to sign, the trace of $\rho_r(t a^jta^l)$ is $(u^{(j+l)}x^2) r^2 + (u^{(l-j)} + u^{-(l-j)})yz + (u^{-(j+l)} w^2) r^{-2}$, which diverges as $r$ tends to either $0$ or infinity since $x \ne 0$ and $w \ne 0$. Thus, each of the two ideal points of $\mathbb C^*$ is sent to an ideal point of $Y_0$ under the map $g$. Further, $\tilde f_{t a^jta^l}$ has a pole at each of them.

If $g(r) = g(r')$, there is a $B \in PSL_2(\mathbb C)$ such that $\rho_{r'} = B \rho_r B^{-1}$. In particular, $\theta = B \theta B^{-1}$ and $B A  \left(\begin{smallmatrix} r' & 0 \\ 0 & (r')^{-1}   \end{smallmatrix}\right) B^{-1} = A  \left(\begin{smallmatrix} r & 0 \\ 0 & r^{-1}   \end{smallmatrix}\right)$. Since $\theta$ is irreducible, either $B = \pm I$ or, up to conjugation, $\theta(\Delta(d,n,d)) = \mathcal{K}$ and $B \in \mathcal{K}$. In the former case, a simple calculation shows that $r' = \pm r$,
while in the latter, $(d, n)$ is either $(2,2)$ or $(4, 2)$.  In particular, if $(d, n)$ is neither $(2,2)$ nor $(4, 2)$, the two ideal points of $\mathbb C^*$ are sent to distinct ideal points of $Y_0$.

Suppose then that $(d, n)$ is either $(2,2)$ or $(4, 2)$ and therefore $\theta(\Delta(d,n,d)) = \mathcal{K}$. We have assumed that $\theta(a)$ is diagonal, so $\theta(a) = \pm \left(\begin{smallmatrix} i & 0 \\ 0 & -i   \end{smallmatrix}\right)$, and up to conjugation by $\pm \left(\begin{smallmatrix} \sqrt{i} & 0 \\ 0 & 1/\sqrt{i}  \end{smallmatrix}\right)$ we can suppose that $\theta(ab) = \pm \left(\begin{smallmatrix} 0 & 1 \\-1 & 0   \end{smallmatrix}\right)$. It is easy to verify that if $v \ne 0$ and we set
$$A_v = \pm \left(\begin{smallmatrix} v & i/2v \\iv & 1/2v   \end{smallmatrix}\right) = \pm \left(\begin{smallmatrix} 1 & i/2 \\ i & 1/2   \end{smallmatrix}\right) \left(\begin{smallmatrix} v & 0 \\ 0 & 1/v   \end{smallmatrix}\right),$$
then $A_v \theta(a) A_v^{-1} = \theta(ab) = \theta(ab)^s$, since $s$ is odd. Further, $B = \pm \left(\begin{smallmatrix} 0 & i \\ i & 0   \end{smallmatrix}\right)$ conjugates $(\theta, A_v)$ to $(\theta, A_{\frac{1}{2v}})$.
It follows that $g$ sends the two ideal points of $\mathbb C^*$ to the same ideal point of $Y_0$. Thus $Y_0$ has a unique ideal point.
\end{proof}

\section{The proof of Theorem \ref{thm: twice-punctured precise} when $F$ is a semi-fibre} \label{sec: twice-punctured semi-fibre}

In this section we prove the semi-fibre case of Theorem \ref{thm: twice-punctured precise}.

\begin{prop}
\label{prop: F2-semi-fibre}
Suppose that $M$ is a hyperbolic knot manifold which contains an essential twice-punctured torus $F$ of boundary slope $\beta$ and let $\alpha$ be a slope on $\partial M$ such that $M(\alpha)$ is an irreducible small Seifert manifold. If $F$ is a semi-fibre in $M$, then
$\Delta(\alpha, \beta) \leq 4$.
\end{prop}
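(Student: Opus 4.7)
The plan is to run a Culler--Shalen argument built on the semi-bundle structure of~$M$. Because $F$ is a semi-fibre with two boundary components, we may write $M = E_+ \cup_F E_-$, where each $E_\pm$ is a twisted $I$-bundle over a once-punctured Klein bottle whose orientation double cover is $F$ (the once-punctured Klein bottle is the unique non-orientable surface with one boundary component and $\chi = -1$). By Assumptions~\ref{assumptions 0}(3), the capped-off torus $\widehat F$ is essential in $M(\beta)$, so $M(\beta) = N_+ \cup_{\widehat F} N_-$ is a torus semi-bundle with each $N_\pm$ a twisted $I$-bundle over a closed Klein bottle. Hence
\[
\pi_1(M(\beta)) \cong K_+ *_{\pi_1(\widehat F)} K_-,
\]
where $K_\pm$ is a Klein bottle group and $\pi_1(\widehat F) \cong \mathbb Z^2$ sits in $K_\pm$ as its index-$2$ orientation-preserving subgroup. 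If $M(\beta)$ happens to be Seifert fibred then it is a toroidal Seifert manifold and Theorem~\ref{thm:toroidal Seifert} gives $\Delta(\alpha,\beta)\le 4$ at once, so I will assume from now on that $M(\beta)$ is a genuine (Sol-type) torus semi-bundle and argue by contradiction assuming $\Delta(\alpha,\beta)\ge 5$.

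Next I would build a non-trivial curve of characters by bending. Writing $K_\pm = \langle s_\pm, t_\pm \mid s_\pm^2 = t_\pm^2\rangle$, the quotient imposing $s_\pm^2 = 1$ and $(s_\pm t_\pm)^d = 1$ maps $K_\pm$ onto $\Delta(2,2,d)$, and $\pi_1(\widehat F)\cong\mathbb Z^2$ maps onto the cyclic group $\langle s_\pm t_\pm\rangle \cong \mathbb Z/d$. For a suitable $d\ge 2$ compatible with the $\widehat F$-gluing, these quotients splice to give an epimorphism
\[
\pi_1(M) \twoheadrightarrow \pi_1(M(\beta)) \twoheadrightarrow \Gamma \;:=\; \Delta(2,2,d) *_{\mathbb Z/d} \Delta(2,2,d).
\]
Lemma~\ref{lemma: bending homs sep}(1) applied to $\Gamma$ produces a non-trivial curve $Y_0 \subset X_{PSL_2}(\Gamma)$; its pullback $X_0 \subset X_{PSL_2}(M(\beta)) \subset X_{PSL_2}(M)$ is then a non-trivial curve on which $f_\beta \equiv 0$, so the Culler--Shalen seminorm satisfies $\|\delta\|_{X_0} = \Delta(\delta,\beta)\, s_{X_0}$ for every slope $\delta$.

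Because both triangle-group factors are of type $(2,2,d)$, Lemma~\ref{lemma: bending homs sep}(2) tells us $Y_0$ is \emph{not} strictly non-trivial, so Proposition~\ref{prop: boundary values}(2) does not apply. I would instead invoke Proposition~\ref{prop: boundary values}(1): its pole hypothesis for $\tilde f_{\beta^*}$ is supplied by Lemma~\ref{lemma: bending homs sep}(3), and for each prime $n$ dividing $\Delta(\alpha,\beta)$ we get a point $x\in J_{X_0}(\alpha)\cap X_0^\nu$ whose associated representation $\rho: \pi_1(M)\to PSL_2(\mathbb C)$ factors through $\Gamma$, has $\rho(\alpha)=\pm I$, and satisfies $\rho(\pi_1(\partial M))\cong\mathbb Z/n$. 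Consequently $\rho(\beta^*)$ has order $n$ in $\Gamma$; as it is conjugate into one of the triangle-group factors, $n$ must divide $2d$. The final step is to show that the admissible values of $d$—those compatible with the gluing data of $M(\beta)$—never produce a prime $n\ge 5$, yielding the required contradiction.

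The main obstacle is this last compatibility step: the quotients $K_\pm \twoheadrightarrow \Delta(2,2,d)$ must be arranged to agree on the amalgamating $\mathbb Z^2=\pi_1(\widehat F)$, and the permissible $d$'s are constrained by the monodromy matrix of the Sol semi-bundle. Pinning down which $d$'s are available and confirming that every prime $n$ they could contribute to $\Delta(\alpha,\beta)$ is $\le 3$ (so that $\Delta(\alpha,\beta)$ has no prime factor exceeding $3$, hence is $\le 4$ in conjunction with $\Delta\ge 5$ being excluded) is the delicate combinatorial-arithmetic point on which the bound $\Delta(\alpha,\beta)\le 4$ rests.
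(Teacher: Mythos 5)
Your proposed route is genuinely different from the paper's, which splits the proof into $M(\alpha)$ very small versus not very small, handles the first by the elementary epimorphism $\pi_1(M)\twoheadrightarrow\mathbb Z/2*\mathbb Z/2$ with kernel $\pi_1(F)$ (plus a Culler--Shalen refinement only when needed), and handles the second by passing to the $F$-bundle double cover $\widetilde M$, quotienting by the hyperelliptic involution to land in a solid torus with a $4$-braid branch set, and then running a detailed Gordon--Luecke-style graph-theoretic analysis. Your plan tries to make a single Culler--Shalen argument carry the whole proposition, and it has several genuine gaps.

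First, the reduction to the case where $M(\beta)$ is not Seifert does not go through as stated. Theorem~\ref{thm:toroidal Seifert} requires $M(\alpha)$ to be a \emph{toroidal} Seifert manifold, but under Assumptions~\ref{assumptions 0}(5) the filling $M(\alpha)$ is an irreducible atoroidal small Seifert manifold; interchanging the roles of $\alpha$ and $\beta$ fails for the same reason. When $M(\beta)$ is Seifert you would instead need an argument along the lines of Proposition~\ref{prop: sing slope exceptional} (if the base orbifold is hyperbolic) or something else entirely (the Euclidean/Nil torus semi-bundle cases), none of which you supply.

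Second and more seriously, the step ``$\rho(\beta^*)$ has order $n$ in $\Gamma$; as it is conjugate into one of the triangle-group factors, $n$ must divide $2d$'' conflates two different things and is wrong. The image of $\beta^*$ in $\Gamma=\Delta(2,2,d)*_{\mathbb Z/d}\Delta(2,2,d)$ is a word of length two of the form $c_+(a_+b_+)^k c_-$ with $c_\pm\notin\mathbb Z/d$ (this is exactly the form appearing in Lemma~\ref{lemma: bending homs sep}(3)); by the normal-form theorem for amalgamated free products such an element has infinite order and is \emph{not} conjugate into either factor. What Proposition~\ref{prop: boundary values}(1) gives you is that $\rho(\beta^*)\in PSL_2(\mathbb C)$ has order $n$, but $\rho(\Gamma)$ is generated by two dihedral groups $D_d$ sharing a rotation subgroup, which is in general not dihedral and can contain elements of arbitrary order; so you cannot conclude $n\mid 2d$. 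You also cannot use Proposition~\ref{prop: boundary values}(2), as you correctly note, because $Y_0$ is not strictly non-trivial when both factors are of type $(2,2,d)$. Bridging this gap requires invoking Lemma~\ref{lemma: factors} to force $\rho$ to factor through $\Delta(a,b,c)$ and then using the classification of irreducible dihedral representations of triangle groups; this is precisely what the paper does in its \emph{refinement} Proposition~\ref{prop: very small d not 1}, but only after the main bound has been established in Proposition~\ref{prop: semi very small} by the simpler quotient argument, and only in the very small case.

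Third, even granting the prime constraint, the logic ``every prime $n$ dividing $\Delta(\alpha,\beta)$ is at most $3$, hence $\Delta(\alpha,\beta)\leq 4$'' is unsound: it leaves $\Delta\in\{6,8,9,12,\dots\}$ available, so no contradiction with $\Delta\geq 5$ results. Finally, you acknowledge that the compatibility analysis over admissible $d$ (the divisors of $\Delta(\phi_+,\phi_-)$) is ``the delicate combinatorial-arithmetic point'' and leave it undone; in fact this is the entire content of the result, and when $M(\alpha)$ is not very small the character-variety route does not obviously close at all, which is why the paper switches to the branched-cover and intersection-graph machinery there.
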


We divide the proof of Proposition \ref{prop: F2-semi-fibre} into two cases.

\subsection{Proof of Proposition \ref{prop: F2-semi-fibre} when $M(\alpha)$ is very small}\label{subsec: semibundle}
In this subsection we prove refined versions of the very small case of Proposition \ref{prop: F2-semi-fibre}.
In fact in this case, we allow the semi-fibre $F$ to have arbitrary number of punctures.

Recall that the base orbifold of $M(\alpha)$ is $S^2(a,b,c)$ where $a,b,c$ are positive integers. The condition that
$M(\alpha)$ be very small corresponds to the requirement that either $\min\{a,b,c\} = 1$ or $(a,b,c)$ is a Platonic or a Euclidean triple.

If $\min\{a,b,c\} = 1$ the fundamental group of $\pi_1(M(\alpha))$ is cyclic and we say that $\alpha$ is a {\it $C$-type filling slope}.

If $(a,b,c)$ is a Platonic triple we
say that $\alpha$ is, respectively, a {\it $D$-type, $T$-type, $O$-type} or {\it $I$-type filling slope} if up to permutation $(a,b,c)$ is, respectively, $(2,2,n), (2,3,3), (2,3,4)$, or $(2,3,5)$.

Finally, $(a,b,c)$ is a Euclidean triple when $(a,b,c)$ is one of $(2,3,6), (2,4,4)$, or $(3,3,3)$.

 \begin{prop}
 \label{prop: semi very small}
Let $M$ be a hyperbolic knot manifold that contains an $m$-punctured torus semi-fibre with boundary slope $\beta$, and let $\alpha$ be a slope on $\partial M$ such that $M(\alpha)$ is an irreducible  very small Seifert manifold. Then

$(1)$ $M(\alpha)$ is not of $T$- or $I$-type.

$(2)$ If $M(\alpha)$ is of $C$-type or has base orbifold $S^2(3,3,3)$ then $\Delta(\alpha,\beta) = 1$ and $m=2$.

$(3)$ If $M(\alpha)$ is of $O$-type or has base orbifold $S^2(2,3,6)$ then either
\vspace{-.2cm}
\begin{itemize}

\item $\Delta(\alpha,\beta) = 1$ and $m = 2$ or $6$, or

\vspace{.2cm} \item $\Delta(\alpha,\beta) = 3$ and $m = 2$.

\end{itemize}

$(4)$  If $M(\alpha)$ has base orbifold $S^2(2,4,4)$ then either
\vspace{-.2cm}
\begin{itemize}

\item $\Delta(\alpha,\beta) = 1$ and $m = 2, 4$ or $8$, or

\vspace{.2cm} \item $\Delta(\alpha,\beta) = 2$ and $m = 2$ or $4$, or

\vspace{.2cm} \item $\Delta(\alpha,\beta) = 4$ and $m = 2$.

\end{itemize}

$(5)$ If $M(\a)$ is of $D$-type, then $\D(\a,\b)\leq 3$.
\end{prop}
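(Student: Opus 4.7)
The plan is to exploit the semi-bundle structure of $M$ to reduce the problem to a character variety computation on an amalgam of triangle groups, and then apply the Culler--Shalen machinery of Section \ref{sec: cs theory}. Since $F$ is an $m$-punctured torus semi-fibre, $M$ splits along $F$ as $N^+\cup_F N^-$ where each $N^\epsilon$ is a twisted $I$-bundle over an $(m/2)$-punctured non-orientable surface. After $\beta$-filling, $F$ caps off to a torus $\hat F$ and each $N^\epsilon$ becomes $\hat N^\epsilon$, a twisted $I$-bundle over a closed Klein bottle that is Seifert fibred with base orbifold $D^2(2,2)$. Van Kampen yields
$$\pi_1(M(\beta)) \;\cong\; \pi_1(\hat N^+)\,*_{\pi_1(\hat F)}\,\pi_1(\hat N^-),$$
an amalgam of two Klein bottle groups over $\pi_1(\hat F)\cong\mathbb{Z}^2$, where the amalgamation identifies cyclic subgroups generated by the Seifert fibres of the two sides.

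Using the Seifert fibrations of $\hat N^\pm$ I would construct surjections onto triangle groups $\Delta(p^\epsilon,q^\epsilon,d)$ compatible with the amalgamation, producing an epimorphism
$$\pi_1(M(\beta))\twoheadrightarrow \Delta(p^+,q^+,d)*_\psi\Delta(p^-,q^-,d),$$
with the parameters adapted to $(a,b,c)$ and to the semi-bundle data. Bending as in Lemma \ref{lemma: bending homs sep} produces a curve $X_0\subset X_{PSL_2}(M(\beta)) \subset X_{PSL_2}(M)$, strictly non-trivial once at least one triple $(p^\epsilon,q^\epsilon,d)$ avoids the exceptional families $\{(2,2,d),(2,4,4)\}$; by Lemma \ref{lemma: bending homs sep}(3) the function $\tilde f_{\beta^*}$ has poles at every ideal point of $\tilde X_0$. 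Proposition \ref{prop: boundary values} then yields the identity
$$\Delta(\alpha,\beta) \;=\; 1 + \tfrac{1}{s_{X_0}}\bigl(2\,\lvert J_{X_0}(\alpha)\rvert - \lvert\mathcal{N}_{X_0}(\alpha)\rvert\bigr),$$
and, by Lemma \ref{lemma: factors}(1), each character in $J_{X_0}(\alpha)\cap X_0^\nu$ is the character of an irreducible representation of $\pi_1(M)$ that kills $\alpha$ and hence factors through $\pi_1(M(\alpha))\twoheadrightarrow\Delta(a,b,c)\to PSL_2(\mathbb{C})$.

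I would then analyse each part of the proposition by counting $|J_{X_0}(\alpha)|$ and $|\mathcal{N}_{X_0}(\alpha)|$ according to the irreducible representation theory of $\Delta(a,b,c)$ that is compatible with the amalgam's peripheral subgroup. For part (1), the simplicity of $A_4 = \Delta(2,3,3)$ and $A_5 = \Delta(2,3,5)$ forces the relevant characters to be incompatible with the bending on both sides of the amalgam, ruling out $T$- and $I$-type. For parts (2)--(4), the enumerations for $\Delta(a,b,c)$ cyclic, $\Delta(3,3,3)$, $S_4 = \Delta(2,3,4)$, $\Delta(2,3,6)$, and $\Delta(2,4,4)$ yield the stated upper bounds on $\Delta(\alpha,\beta)$; the admissible values of $m$ arise from the requirement that the cyclic identification on $\pi_1(\hat F)$ match the number of boundary components of $F$, which is essentially a divisibility condition linking $m$ to the order of the Seifert fibre classes in the triangle quotients.

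The main obstacle is part (5), the $D$-type case, in which $\pi_1(M(\alpha))$ is infinite and $\Delta(2,2,n)$ admits many irreducible $PSL_2(\mathbb{C})$-representations. Here a careful separation of characters with dihedral image, which contribute to $\mathcal{N}_{X_0}(\alpha)$, from strictly irreducible characters is required, and the bending construction must be tuned so that $X_0$ is strictly non-trivial despite the natural appearance of the exceptional triple $(2,2,d)$. The bound $\Delta(\alpha,\beta)\leq 3$ should then emerge from a refined optimisation of $(p^\pm,q^\pm,d)$ combined with an accurate count of the surviving characters.
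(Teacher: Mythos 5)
Your plan routes everything through character varieties, but this fails for a semi-fibre in two ways. First, both $\widehat X^+$ and $\widehat X^-$ are twisted $I$-bundles over the Klein bottle with base orbifold $D^2(2,2)$, so the amalgam you get is $\Delta(2,2,d)*_\psi\Delta(2,2,d)$. By Lemma \ref{lemma: bending homs sep}(2), the bending curve $Y_0$ is \emph{not} strictly non-trivial exactly in this case, so Proposition \ref{prop: boundary values}(2) is unavailable; no amount of ``tuning'' fixes this, as the constraint is forced by the semi-bundle structure. (The paper's subsequent refinement, Proposition \ref{prop: very small d not 1}, uses these very curves and has to work around this obstruction by a different counting argument.) Second, in part (5) the group $\Delta(2,2,l)$ has $\lfloor l/2\rfloor$ irreducible characters, all dihedral, and this number is unbounded in $l$, so a raw character count cannot yield $\Delta(\alpha,\beta)\le 3$; you acknowledge this is the main obstacle but offer no way to close it.

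The paper's actual proof avoids character varieties entirely for (1)--(4): since $F$ is a semi-fibre, $\pi_1(X^\epsilon)/\pi_1(F)\cong\mathbb Z/2$, giving an epimorphism $\varphi\colon\pi_1(M)\to\mathbb Z/2*\mathbb Z/2$ with $\varphi(\beta)=1$ and $\varphi(\beta^*)=(xy)^{m/2}$; hence $\alpha$-filling yields a surjection $\pi_1(M(\alpha))\twoheadrightarrow D_n$ with $n=m\Delta(\alpha,\beta)/2$. Part (1) is then a parity-of-$H_1$ obstruction (your appeal to ``simplicity of $A_4$'' is also faulty, since $A_4$ is not simple), and parts (2)--(4) follow by classifying the dihedral quotients $D_n$ of $\Delta(a,b,c)$; the values of $m$ drop out of $n=m\Delta(\alpha,\beta)/2$. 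Your proposal, working with $\pi_1(M(\beta))$ rather than $\pi_1(M)$, loses this direct relation between $m$, $\Delta(\alpha,\beta)$, and $n$. For (5) the paper uses a completely different and much more elementary idea: a $D$-type manifold contains a Klein bottle, and so does $M(\beta)$ (inside the twisted $I$-bundle $\widehat X^+$), so $\alpha$ and $\beta$ are both Klein bottle filling slopes; Matignon--Sayari \cite{MS} then gives $\Delta(\alpha,\beta)\le 4$, and Lee's classification \cite{L1} rules out the boundary case $\Delta=4$. This observation --- that the semi-fibre forces $\beta$ to be a Klein bottle slope --- is the missing idea in your proposal.
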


\pf The semi-fibre $F$ separates $M$ into two components $X^+$ and $X^-$ each a twisted $I$-bundle over an $\frac{m}{2}$-punctured Klein bottle.
Hence $\pi_1(F)$ is an index two subgroup of $\pi_1(X^\e)$ for each $\e\in\{\pm\}$
and there is an epimorphism $\varphi: \pi_1(M) \to \mathbb Z/2* \mathbb Z/2$ whose kernel is $\pi_1(F)$.
Clearly $\varphi(\beta)=1$. We use $x$ and $y$ to denote the generators of the two $\mathbb Z/2$ factors.
Let $\b^*$ be a dual slope to $\b$ on $\p M$, then we may assume that each component of $\b^*\cap X^\e$ is an $I$-fibre of $X^\e$.
Choose a disk region in $F$, which contains $F\cap \beta^*$, as a fat base point for each of $F, X^\e, M, M(\b), M(\a)$.
Then each component of $\b^*\cap X^\e$ represents  an element of $\pi_1(X^\e)$ which is not contained in $\pi_1(F)$, and thus
its image under $\varphi$ is the generator of one of the $\mathbb Z/2$ factors.
It follows that $\varphi(\b^*)=(xy)^{m/2}$, at least up to exchanging $x$ and $y$.

Write
 $\alpha=(\beta^*)^p\beta^q$ where $p=\D(\alpha,\beta)$.
Then  $\varphi(\alpha) = \varphi(\beta^*)^p = (xy)^{mp/2}$.
 It follows that $\varphi$ induces  a surjective homomorphism
 $\overline{\varphi}:\pi_1(M(\alpha))\ra (\z/2*\z/2)/\langle \langle (xy)^{mp/2}\rangle \rangle = \langle x,y;x^2=y^2=(xy)^{mp/2}=1\rangle =D_{mp/2}$.
Set $n=mp/2$.
As $H_1(D_n)\cong\mathbb Z/2$ ($n$ odd) and $\mathbb Z/2\oplus\mathbb Z/2$ ($n$ even)
while a very small Seifert manifold of type $T$ or $I$ has odd order  first homology, part (1) of the proposition holds.
When $n>1$,  $D_n$ is  an irreducible subgroup of $PSL_2(\c)$. So if $M(\a)$ is of $C$-type, then $n=1$ which means
 $p=\D(\a,\b)=1$ and $m=2$.
Hence when $\D(\a,\b)>1$, the base orbifold of $M(\a)$ is $S^2(a,b,c)$ with $a,b,c\geq 2$
and  the surjective homomorphism $\overline{\varphi}$ must factor
through $\pi_1(S^2(a,b,c))=\D(a,b,c)$ (\cite[Lemma 3.1]{BeBo}).
But then at least two of $a, b, c$ are even as otherwise the abelianisation of $\D(a,b,c)$
is of odd order and so there is no surjective homomorphism
$\D(a,b,c) \to D_n$. In particular $(a,b,c) \ne (3,3,3)$. So (2) holds.

In the case that $\alpha$ is of $O$-type (i.e. $(a,b,c) = (2,3,4)$) or $(a, b, c) = (2, 3, 6)$, then $n=1$ or $3$
since $D_3$ is the only irreducible dihedral group which is a quotient of either $\D(2,3,4)$ or $\D(2,3,6)$ (cf. \cite[Lemma 5.3]{BZ1},
\cite[Proposition 5.4]{Bo}). This yields (3).

Similarly, if $(a,b,c) = (2,4,4)$, then $n=1, 2$ or $4$ (cf. \cite[Proposition 5.3]{Bo}), which gives (4).

Finally, suppose that $\alpha$ is of $D$-type. Then $(a,b,c) = (2,2,l)$ for some $l \geq 2$ and therefore $\alpha$ is a Klein bottle filling slope on $\partial M$. By assumption $\beta$ is also a Klein bottle filling slope. It then follows from \cite[Theorem 1.1]{MS} that $\Delta(\alpha, \beta) \leq 4$.  Further, such triples $(M;\alpha,\beta)$ with $\Delta(\alpha,\beta) = 4$ are determined by Lee in \cite{L1}. One checks that in all cases both $M(\alpha)$ and $M(\beta)$ are toroidal. Here are the details.

Lee (\cite{L1}) defines two infinite families of hyperbolic knot manifolds:
\vspace{-.2cm}
\begin{itemize}

\item $X_n, \, n \in \zed$, each having a pair of Klein bottle Dehn filling slopes $\alpha_n$ and $\beta_n$ with $\Delta(\alpha_n,\beta_n) = 4$,

\vspace{.2cm} \item $Y(r), \,  r \notin \{\infty,0,4\}$, where $Y(r)$ is the $r$-Dehn filling of the exterior $Y$ of a 2-component link in $S^3$ for which $Y(r)(0)$ and $Y(r)(4)$ contain Klein bottles,

\end{itemize}
\vspace{-.2cm}
and shows that any triple $(M;\alpha,\beta)$ such that $M(\alpha)$ and $M(\beta)$ contain Klein bottles with $\Delta(\alpha,\beta) = 4$ is homeomorphic to either $(X_n;\alpha_n,\beta_n)$ for some $n \in \zed$, or $(Y(r);0,4)$ for some $r \ne \infty, 0,$ or $ 4$.

Now $X_n$ is obtained by Dehn filling along one boundary component of $M_2 \cong N(-1/2)$, the exterior of the 10/3 2-bridge link (\cite{L1}). Comparing the parametrisations of the exceptional slopes $\{\infty,-2,-1,0,1,2 \}$ in \cite{L1} and $\{\infty,-4,-3,-2,-1,0 \}$ in \cite[Table A.1]{MP}, we see that $X_n \cong N(-1/2,(2n-5)/2) \cong N(-1/2,-(2n+3)/2)$, where the second homeomorphism follows from
\cite[Proposition 1.5(4)]{MP}. An examination of Table A.8 and (for the case $N(-1/2,-5/2) \cong N(-1/2,-3/2)$) Table A.4 in \cite{MP} now shows that $\{ \alpha_n,\beta_n \} = \{-4,0 \}$ and $X_n(\alpha_n)$ and $X_n(\beta_n)$ are toroidal.

Finally we consider the manifolds $Y(r)$. By \cite{L1}, $Y(0) \cong Y(4) \cong Q(2,2) \cup Wh$, and the Klein bottle fillings on $Y(r)$ are $Y(r)(0) \cong Y(0,r)$ and $Y(r)(4) \cong Y(0,4-r)$ (see \cite[Lemma 2.2]{L1}). Now $Y(0,r) \cong Q(2,2) \cup Wh(r)$, with the induced parametrisation of slopes on $Wh$. From Lemma 2.4 of \cite{L1} we have that $Wh(2) \cong Q(2,4)$ and $Wh(3) \cong Q(2,3)$, so from Table A.1 of \cite{MP} it follows that this parametrisation of slopes on $Wh$ is related to the standard parametrisation on $N(-1)$ by $Wh(r) \cong N(-1,r-3)$. Since $r \ne \infty$, Table A.1 of \cite{MP} shows that $Wh(r)$ has incompressible boundary, and hence $Y(0,r)$ is toroidal.
Part (5) is proved
\qed

We can refine the bounds given in Proposition \ref{prop: semi very small} in certain situations, which we describe next.

The semi-fibre $F$ splits $M$ into two components $X^+, X^-$ and if $\widehat F$ denotes the closed surface in $M(\beta)$
obtained by attaching disjoint meridian disks of the $\beta$-filling solid torus to $F$, then $\widehat F$ splits $M(\beta)$
into two twisted $I$-bundles $\widehat X^+$ and $\widehat X^-$ over the Klein bottle. Let $\phi_\epsilon$ denote the slope on $\widehat F$ of the Seifert fibre
structure on $\widehat X^\epsilon$ with base orbifold $D^2(2,2)$ and set
$$d = \Delta(\phi_+, \phi_-)$$
We consider constraints on $d$, $M(\alpha)$ when $\Delta(\alpha, \beta) > 1$. Given the latter condition, Proposition \ref{prop: semi very small} implies that $\alpha$ has type $D$ or $O$, or $(a,b,c)$ is either $(2,3,6)$ or $(2,4,4)$.

\begin{prop}
\label{prop: very small d not 1}
Suppose that $F$ is an $m$-punctured semi-fibre and that $\alpha$ has type $O$, so $(a,b,c) = (2,3,4)$, or $(a,b,c)$ is either $(2,3,6)$ or $(2,4,4)$. Suppose further that $d \ne 1$ and $\Delta(\alpha, \beta) > 1$. Then $(a,b,c) = (2,4,4)$ and $\Delta(\alpha, \beta) = 2$.
\end{prop}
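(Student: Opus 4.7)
The plan is to use the bending construction of Section~\ref{sec: amalgams} together with Culler--Shalen theory to produce a suitable curve of characters of $\pi_1(M)$ and then read off a bound on $\Delta(\alpha,\beta)$ via Proposition~\ref{prop: boundary values}. By Proposition~\ref{prop: semi very small}, the remaining cases with $\Delta(\alpha,\beta)>1$ to consider are $(a,b,c)=(2,3,4)$ with $\Delta=3$, $(a,b,c)=(2,3,6)$ with $\Delta=3$, and $(a,b,c)=(2,4,4)$ with $\Delta\in\{2,4\}$. I only need to rule out the first two triples and the $\Delta=4$ subcase of the third.

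First I would exploit the amalgam decomposition $\pi_1(M(\beta))=\pi_1(\widehat X^+)*_{\pi_1(\widehat F)}\pi_1(\widehat X^-)$. Each $\widehat X^\epsilon$ is a twisted $I$-bundle over the Klein bottle with Seifert base $D^2(2,2)$, so $\pi_1(\widehat X^\epsilon)=\langle a_\epsilon,b_\epsilon\mid a_\epsilon^2=b_\epsilon^2\rangle$ with regular fibre $h_\epsilon=a_\epsilon^2$; quotienting by $h_\epsilon^{k_\epsilon}$ produces a surjection onto $\Delta(2,2,k_\epsilon)$. On $\widehat F$ the slopes $\phi_+,\phi_-$ meet in $d$ points, and I would choose the integers $k_+,k_-$ so that the two quotient maps piece together through the gluing isomorphism $\psi$ to an amalgam of triangle groups, in a way that uses $d\neq1$ to avoid a single common Seifert fibration (the $d=1$ case would force the two fibrings to extend across $\widehat F$, collapsing the target to a single triangle group). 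Compatibility of $\alpha$-filling would then further require the orders $k_\pm$ to reflect the type $(a,b,c)$ of $M(\alpha)$.

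Once the quotient $\pi_1(M(\beta))\twoheadrightarrow \Delta(2,2,k_+)*_\psi\Delta(2,2,k_-)$ is in place, I would apply Lemma~\ref{lemma: bending homs sep} to obtain an irreducible representation of the amalgam together with a non-trivial bending curve $Y_0$, strictly non-trivial unless both triples $(2,2,k_\epsilon)$ land in the excluded list $\{(2,2,k),(2,4,4)\}$. Pulling back along $\pi_1(M)\to\pi_1(M(\beta))\to\Delta(2,2,k_+)*_\psi\Delta(2,2,k_-)$ yields a curve $X_0\subset X_{PSL_2}(M(\beta))\subset X_{PSL_2}(M)$ with $\|\beta\|_{X_0}=0$ and $\|\alpha\|_{X_0}=\Delta(\alpha,\beta)s_{X_0}$. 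The pole information at ideal points from Lemma~\ref{lemma: bending homs sep}(3) together with Proposition~\ref{prop: boundary values}(2) then gives an identity of the form
\[
\Delta(\alpha,\beta)=1+\tfrac{1}{s_{X_0}}\bigl(2|J_{X_0}(\alpha)|-|\mathcal{N}_{X_0}(\alpha)|\bigr),
\]
from which $\Delta(\alpha,\beta)$ can be bounded in terms of the arithmetic of $(a,b,c)$ and $d$.

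A final case-by-case check would finish the argument. For $(a,b,c)=(2,3,4)$ or $(2,3,6)$, Lemma~\ref{lemma: factors}(3) combined with the identity above, together with the fact that the relevant irreducible representations of $\pi_1(M(\alpha))$ factor through $\Delta(a,b,c)$ and must be compatible with the chosen $k_\pm$, would yield $\Delta(\alpha,\beta)<3$, contradicting Proposition~\ref{prop: semi very small}; similarly $\Delta=4$ would be excluded when $(a,b,c)=(2,4,4)$. I expect the main obstacle to be Step~2: ensuring that the triangle group quotient can be realised with the geometric data $(\phi_+,\phi_-,d)$ and the filling class $\alpha$ all compatibly, in particular that the image of $a_+b_+$ really is identified with a power of $a_-b_-$ coprime to the chosen order, so that $\psi$ makes sense and the bending curve $Y_0$ produced by Lemma~\ref{lemma: bending homs sep} is strictly non-trivial; once this is arranged, the Culler--Shalen count becomes essentially mechanical.
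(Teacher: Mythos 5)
Your high-level instincts (bend triangle-group amalgams and apply Culler--Shalen theory) are correct, but the central mechanism you propose does not work and you miss the actual engine of the proof.

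First, the quotient you describe cannot have independently chosen $k_+\neq k_-$. Killing the fibre classes $\phi_+,\phi_-$ yields $\Delta(2,2,d)*_{\mathbb Z/d}\Delta(2,2,d)$ with the \emph{same} parameter $d$ on both sides, since the amalgamating subgroup is the image of $\pi_1(\widehat F)$, which must sit inside both factors with the same order. One can pass to divisors $d'\mid d$, but the two factors always share the same $d'$; there is no freedom to ``reflect the type $(a,b,c)$'' in the choice.

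Second, and more seriously, your plan to conclude via the equality in Proposition~\ref{prop: boundary values}(2) is blocked: that formula requires $X_0$ to be \emph{strictly} non-trivial, but by Lemma~\ref{lemma: bending homs sep}(2) the bending curve from $\Delta(2,2,d')*_\psi\Delta(2,2,d')$ is never strictly non-trivial, precisely because both triples lie in the excluded set $\{(2,2,d'),(2,4,4)\}$. So the identity $\Delta(\alpha,\beta)=1+\frac{1}{s_{X_0}}(2|J_{X_0}(\alpha)|-|\mathcal N_{X_0}(\alpha)|)$ is simply not available here. The paper sidesteps this by using a different mechanism: it builds a \emph{family} of disjoint curves $X_1, X_{d'}$ ($1<d'\mid d$), observes that every representation with character on $X_{d'}$ has image containing a normal $\mathbb Z/d'$ (which, together with the fact that the possible images for $\alpha$ of type $O$, $(2,3,6)$, or $(2,4,4)$ are among $D_2,D_3,D_4,T_{12},O_{24}$ with $T_{12},O_{24}$ lacking nontrivial normal cyclic subgroups, forces $d\in\{2,3,4\}$ and restricts images to $D_2,D_3,D_4$), and then exploits the fact that each $\chi_\rho\in J_{X_{d'}}(\alpha)$ is a \emph{smooth} point of $X_{PSL_2}(M)$ (Lemma~\ref{lemma: factors}(2)), so it lies on a unique curve. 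Since each of $X_1$ and $X_d$ must contain such a character when $\Delta(\alpha,\beta)>1$, and $\Delta(2,3,4)$, $\Delta(2,3,6)$ each admit exactly one irreducible dihedral character, these two cases are killed immediately. For $(2,4,4)$, $\Delta(2,4,4)$ has exactly three irreducible characters ($D_2$ and two $D_4$'s), and if $\Delta(\alpha,\beta)=4$ then Proposition~\ref{prop: boundary values}(1) forces each of the three disjoint curves $X_1,X_2,X_4$ to contain at least two elements of $J$ (one with $\rho(\pi_1(\partial M))\cong\mathbb Z/2$ and one with $\mathbb Z/4$), which is too many. None of this multi-curve, smooth-point, pigeon-hole reasoning appears in your sketch; without it the ``mechanical count'' you promise does not close the argument.
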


\pf
We saw in the proof of Proposition \ref{prop: semi very small} that there is an epimorphism
$$\varphi_1: \pi_1(M(\beta)) \to \mathbb Z/2* \mathbb Z/2$$
whose kernel is $\pi_1(\widehat F)$. Pulling back the curve $X(1,1) \subset X_{PSL_2}(\mathbb Z/2* \mathbb Z/2)$ of \S \ref{subsec: prod cyclics} produces a curve
$$X_1 = (\varphi_1)^*(X(1,1)) \subset X_{PSL_2}(M(\beta)) \subset X_{PSL_2}(M)$$
More generally, for each positive integer $d'$ dividing $d$ we can construct a curve $X_{d'}$ in the character variety of $M$ as follows.

Killing the fibre classes $\phi_+, \phi_-$ in $\pi_1(M(\beta))$ yields a surjection
$$\varphi_d: \pi_1(M(\beta)) \to \Delta(2, 2, d) *_{\mathbb Z / d} \Delta(2, 2, d)$$
where in the case that $d = 0$ we take $\Delta(2, 2, d)$ to be $\mathbb Z / 2 * \mathbb Z / 2$ and $\mathbb Z / d$ to be the central $\mathbb Z$ of index $2$ in each of the $(\mathbb Z / 2 * \mathbb Z / 2)$-factors.

If $d \geq 2$ let $Y_d \subset X_{PSL_2}(\Delta(2, 2, d) *_{\psi} \Delta(2, 2, d))$ be a curve constructed by bending a representation
$\Delta(2, 2, d) *_{\psi} \Delta(2, 2, d) \to PSL(2, \mathbb C)$ which restricts to an irreducible representation with image $D_d$
on each of the two copies of $\Delta(2, 2, d)$. (See Lemma \ref{lemma: bending homs sep}.) Set
$$X_d = (\varphi_d)^*(Y_d) \subset X_{PSL_2}(M(\beta)) \subset X_{PSL_2}(M)$$
Lemma \ref{lemma: bending homs sep}(3) shows that $X_d$ has exactly one ideal point if $d = 2$ and two if $d > 2$. Further, $\tilde f_{\beta^*}$ has a pole at each ideal point of $X_d$ so that $s_{X_d} \geq 1$ if $d = 2$ and $s_{X_d} \geq 2$ if $d > 2$.

In the case that $d = 0$, one can use the method of proof of Lemmas \ref{lemma: product curves}, \ref{lemma: homs 1}, and \ref{lemma: bending homs sep} to produce a non-trivial curve $Y_0$ in $X_{PSL_2}((\mathbb Z / 2 * \mathbb Z / 2) *_\mathbb Z (\mathbb Z / 2* \mathbb Z / 2))$ obtained by bending an irreducible  homomorphism $(\mathbb Z / 2 * \mathbb Z / 2) *_\mathbb Z (\mathbb Z / 2 * \mathbb Z / 2) \to PSL_2(\mathbb C)$ for which $\rho(a_+ b_+)$ is an element of infinite order in $PSL_2(\mathbb C)$. In particular, the image of each representation with character in $Y_0$ is irreducible and infinite. As above,
$$X_0 = (\varphi_0)^*(Y_0) \subset X_{PSL_2}(M(\beta)) \subset X_{PSL_2}(M)$$
has two ideal points and $\tilde f_{\beta^*}$ has a pole at each of them. Hence $s_{X_0} \geq 2$.

Similarly, if $d' > 1$ is a positive integer dividing $d$ we can use the obvious quotient homomorphism
$$\pi_1(M(\beta)) \to \Delta(2, 2, d) *_{\mathbb Z / d} \Delta(2, 2, d) \to \Delta(2, 2, d') *_{\mathbb Z / d'} \Delta(2, 2, d')$$
to construct a curve
$$X_{d'} \subset X_{PSL_2}(M(\beta)) \subset X_{PSL_2}(M)$$
each of whose characters is irreducible. Further,
$s_{X_d'} \geq 1$ if $d' = 2$ and $s_{X_d'} \geq 2$ if $d' > 2$.

By construction, the image of each representation whose character lies on some $X_{d'}$ contains a normal subgroup isomorphic to $\mathbb Z/  d'$.

It is shown in \cite[Lemma 3.1]{BeBo} that if $\rho: \pi_1(M(\alpha)) \to PSL_2(\mathbb C)$ has non-diagonalisable image and has a non-trivial character, then it factors through the triangle group $\Delta(a, b, c)$. Therefore by Lemma 5.3 of \cite{BZ1} and Propositions 5.3 and 5.4 of \cite{Bo}, $\rho$ has image
$$\mbox{image}(\rho) \cong \left\{
\begin{array}{ll}
D_3 \mbox{ or } O_{24} = \Delta(2,3,4) & \mbox{ if $\alpha$ is of $O$-type} \\
D_3 \mbox{ or } T_{12} = \Delta(2,3,3) & \mbox{ if $(a,b,c) = (2,3,6)$} \\
D_2 \mbox{ or } D_4 & \mbox{ if $(a,b,c) = (2,4,4)$}
\end{array} \right.
$$
In particular, the image of $\rho$ is a finite group whose non-trivial elements have order $2, 3$, or $4$.

No $X_{d'}$ contains the character of a representation with image $T_{12}$ or $O_{24}$. This is clear when $d' = 1$ since neither of these groups is generated by two elements of order $2$, and it is true when $d' > 1$ since neither $T_{12}$ nor $O_{24}$ contain non-trivial cyclic normal subgroups. Thus the only representations of $\pi_1(M(\alpha))$ whose characters lies on some $X_{d'}$ have image $D_2, D_3$, or $D_4$.

Each of the curves $X_{d'}$ we constructed above has $s_{X_{d'}} > 0$ and so as
$$\|\alpha\|_{X_{d'}} = \Delta(\alpha, \beta) s_{X_{d'}}$$
(cf. (\ref{seminorm distance})) and $\Delta(\alpha, \beta) > 1$, $J_{X_{d'}}(\alpha) \ne \emptyset$ for each $d'$. Note as well that since $\tilde f_{\beta^*}$ has a pole at each ideal point of $X_{d'}$, so does $\tilde f_{\alpha} = \tilde f_{(\beta^*)^p}$ where $p = \Delta(\alpha, \beta) > 1$. Hence, any element of $J_{X_{d'}}(\alpha)$ is contained in $X_{d'}^\nu$ and if its image in $X_{d'}$ is the character $\chi_\rho$, then $\chi_\rho$ is a smooth point of $X_{PSL_2}(M)$ (Lemma \ref{lemma: factors}). In particular, $X_{d'}$ is the unique curve in $X_{PSL_2}(M)$ containing it. We know, again by Lemma \ref{lemma: factors}, that $\rho(\alpha) = \pm I$, and so $\rho$ induces irreducible homomorphism
$$\bar \rho: \pi_1(M(\alpha)) \to PSL(2, \mathbb C)$$
Thus the image of $\rho$ is  $D_2, D_3$, or $D_4$. This excludes the possibility that $d' = 0$ as all representations with characters on $X_0$ have infinite image. It also excludes the possibility that $d' > 4$. Thus $d \in \{2, 3, 4\}$.

Since $d > 1$, there are at least two curves $X_1$ and $X_d$, and as $\Delta(\alpha, \beta) > 1$, each has $J_{X_{d'}}(\alpha) \ne \emptyset$ and therefore contains the character of a representation with dihedral image. This rules out the cases that $(a, b, c)$ is either $(2,3,4)$ or $(2,3,6)$, since these groups admit a unique character of an irreducible representation with dihedral image. Thus $\Delta(\alpha, \beta) \leq 1 \mbox{ when } d \ne 1 \mbox{ and } (a,b,c) = (2,3,4) \mbox{ or }(2,3,6)$.

Finally suppose that $(a, b, c) = (2, 4, 4)$. Then $\pi_1(M(\alpha))$ admits exactly three irreducible characters
of representations with values in $PSL(2, \mathbb C)$, one with image $D_2$ and two with image $D_4$, by Proposition 5.3 of \cite{BeBo}. Thus $d$ is either $2$ or $4$, as is $\Delta(\alpha, \beta)$ by Proposition \ref{prop: boundary values}. Further, the latter result implies that if $\Delta(\alpha, \beta) = 4$, each of disjoint $X_1, X_2$ and $X_4$ contains at least two elements of $J_{X_{d'}}(\alpha)$, which is impossible. Thus $\Delta(\alpha, \beta) = 2$.
\qed

\subsection{Proof of Proposition \ref{prop: F2-semi-fibre} when $M(\alpha)$ is not very small}
\label{subsec: 7.2}

In this subsection we verify Proposition \ref{prop: F2-semi-fibre} in the case that $M(\alpha)$ is not very small.

There is a unique $2$-fold cover $\widetilde M \to M$ where $\widetilde M$ is an $F$-bundle over $S^1$. Indeed, using the notation of the proof of Lemma \ref{prop: semi very small}, it corresponds to the composition of $\varphi$ with the epimorphism $\mathbb Z/2 * \mathbb Z/2 \to \mathbb Z/2$ with kernel $\langle xy \rangle$.
Let $\theta$ denote the covering involution of this cover. The reader will verify that $\partial \widetilde M$ has two boundary components $T_1$ and $T_2$, so the covering map restricts to a homeomorphism between $T_i$ and $\partial M$ for each $i$. Hence each slope $\gamma$ on $\partial M$ lifts to a slope $\gamma_i$ on $T_i$ ($i = 1, 2$) such that $\theta(\gamma_1) = \gamma_2$. Thus there is an induced $2$-fold cover $\widetilde M (\gamma_1, \gamma_2) \to M(\gamma)$.

Pull back the Seifert structure on $M(\alpha)$ to $\widetilde M(\alpha_1, \alpha_2)$. The base orbifold $\mathcal{B}$ of $\widetilde M(\alpha_1, \alpha_2)$ covers that of $M(\alpha)$ with degree $1$ or $2$, and so is either $S^2(a,b,c)$ or $S^2(\frac{a}{2}, \frac{b}{2}, c,c)$, at least up to a permutation of $a, b, c$. Since $M(\alpha)$ is not very small, $\mathcal{B}$ has at least three cone points. In particular, $\widetilde M(\alpha_1, \alpha_2)$ is not a lens space.

\begin{lemma}
\label{lemma: c}
Suppose that $M$ is a hyperbolic knot manifold which contains an essential twice-punctured torus $F$ of boundary slope $\beta$ and let $\alpha$ be a slope on $\partial M$ such that $M(\alpha)$ is an irreducible small Seifert manifold. Suppose as well that $F$ is a semi-fibre in $M$ and $M(\alpha)$ is not very small. If $\Delta(\alpha, \beta) > 1$, then the cover $\mathcal{B} \to S^2(a,b,c)$ has degree $2$ so for some permutation of $a,b,c$ we can suppose that $a$ and $b$ are even and that $\mathcal{B} \cong S^2(a/2, b/2,c,c)$. Further, $c \geq 2\Delta(\alpha, \beta) - 2$.
\end{lemma}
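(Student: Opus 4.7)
My approach is to pull back the Seifert fibration on $M(\alpha)$ via the $2$-fold cover $\widetilde M(\alpha_1, \alpha_2) \to M(\alpha)$, obtaining a $\theta$-invariant Seifert structure on $\widetilde M(\alpha_1, \alpha_2)$ whose base orbifold $\mathcal{B}$ admits an induced involution $\bar\theta$ with quotient $S^2(a, b, c)$. The structure of $\mathcal{B}$ is thus determined by whether the orbifold cover has degree $1$ or degree $2$ on the base.

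First I would rule out the case $\bar\theta = \mathrm{id}_{\mathcal{B}}$ (degree $1$ on the base) under the hypothesis $\Delta(\alpha, \beta) > 1$. In this case, since $\theta$ is free, it would have to act on each Seifert fibre as a free involution, i.e.\ rotation by $\pi$. On the other hand, the $F$-bundle structure on $\widetilde M$ supplies an epimorphism $\pi_1(\widetilde M(\alpha_1, \alpha_2)) \twoheadrightarrow \mathbb{Z}/\Delta(\alpha, \beta)$, since $\alpha_i$ meets the fibre-boundary slope $\beta_i$ in $\Delta(\alpha, \beta)$ points; this quotient is nontrivial precisely when $\Delta(\alpha, \beta) > 1$. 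Combining with the $\theta$-equivariance (inversion on $\mathbb{Z}/\Delta(\alpha, \beta)$) yields the dihedral surjection $\pi_1(M(\alpha)) \twoheadrightarrow D_{\Delta(\alpha, \beta)}$ of Proposition~\ref{prop: semi very small}. The contradiction with $\bar\theta = \mathrm{id}_{\mathcal{B}}$ comes from the fact that the two boundary tori $T_1, T_2 \subset \partial \widetilde M$ (exchanged by $\theta$) would then have to project to the same curve in $\mathcal{B}$, which is incompatible with the non-trivial cyclic quotient coming from the $F$-bundle.

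Once $\bar\theta$ is known to act nontrivially on $\mathcal{B}$, the claim $\mathcal{B} \cong S^2(a/2, b/2, c, c)$ follows from Riemann-Hurwitz for $2$-fold orbifold covers: the identity $\chi^{\mathrm{orb}}(\mathcal{B}) = 2\chi^{\mathrm{orb}}(S^2(a, b, c))$ combined with the fact that the underlying-surface cover must be $S^2 \to S^2$ forces exactly two branch points, which must be cone points of $S^2(a, b, c)$. For the local orbifold cover at such a cone point of order $k$ to be smooth, $k$ must be even, lifting to a single cone point of order $k/2$; the third (unbranched) cone point of order $c$ lifts to a pair of cone points of the same order. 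After a permutation we may take $a, b$ to be the branched orders.

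For the inequality $c \geq 2\Delta(\alpha, \beta) - 2$, my plan is to exploit the surjection $\pi_1(M(\alpha)) \twoheadrightarrow D_{\Delta(\alpha, \beta)}$ together with the orbifold cover structure. Since $D_{\Delta(\alpha,\beta)}$ is irreducible in $PSL_2(\mathbb{C})$ for $\Delta(\alpha, \beta) \geq 2$, by \cite[Lemma 3.1]{BeBo} this surjection factors through $\Delta(a, b, c) = \pi_1^{\mathrm{orb}}(S^2(a, b, c))$. The composition $\Delta(a, b, c) \twoheadrightarrow D_{\Delta(\alpha, \beta)} \twoheadrightarrow \mathbb{Z}/2$ (the second map killing the rotation subgroup) must coincide with the index-$2$ homomorphism defining the orbifold cover $\mathcal{B} \to S^2(a, b, c)$, which sends the generators $r_1, r_2$ around the branched (even-order) cone points to $1$ and the generator $r_3$ around the unbranched cone point of order $c$ to $0$. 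Hence the image of $r_3$ lies in the rotation subgroup $\mathbb{Z}/\Delta(\alpha, \beta)$ of $D_{\Delta(\alpha, \beta)}$, and surjectivity of the dihedral map forces this image to have order exactly $\Delta(\alpha, \beta)$, so that $\Delta(\alpha, \beta) \mid c$. The main obstacle, and where the factor of $2$ in $2\Delta(\alpha, \beta) - 2$ enters, is to upgrade this divisibility to the asserted bound. To do this, I would analyze the $\theta$-symmetric lift of $r_3$ to the pair $q_3, q_4 \in \pi_1^{\mathrm{orb}}(\mathcal{B})$ of generators around the two $c$-cone points exchanged by $\bar\theta$. Denoting by $Q_i$ the image of $q_i$ in the cyclic rotation subgroup $\mathbb{Z}/\Delta(\alpha,\beta)$, one has $cQ_3 \equiv 0$, $Q_4 = -Q_3$, together with the analogous constraints $(a/2) Q_1 \equiv 0$, $(b/2) Q_2 \equiv 0$ and the product relation $Q_1 + Q_2 + Q_3 + Q_4 \equiv 0$ from the Seifert presentation of $\pi_1^{\mathrm{orb}}(\mathcal{B})$. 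Carefully combining these compatibility constraints, using that the two $c$-cone points contribute independently to the cyclic quotient and jointly must realize its full order, should yield $c \geq 2\Delta(\alpha, \beta) - 2$.
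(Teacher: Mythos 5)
Your plan replaces the paper's Culler--Shalen seminorm argument with a more topological/algebraic one, but both halves of your argument have genuine gaps.

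For the degree-$2$ assertion, your claimed contradiction when $\bar\theta = \mathrm{id}_{\mathcal B}$ is too vague to be convincing. The tori $T_1, T_2$ are not subsets of $\widetilde M(\alpha_1,\alpha_2)$, and the cores of the two $\theta$-exchanged filling solid tori are not Seifert fibres of $\widetilde M(\alpha_1,\alpha_2)$ (a Seifert fibre of $M(\alpha)$ has Seifert exterior, whereas the exterior $M$ is hyperbolic), so they do not ``project to a curve in $\mathcal{B}$.'' The paper establishes the claim by a precise representation-theoretic dichotomy: the hypothesis $\Delta(\alpha,\beta) > 1$ gives $J_{X_0}(\alpha) \ne \emptyset$ for the $\mathbb Z/2 * \mathbb Z/2$-curve $X_0$, and any $\chi_\rho \in J_{X_0}(\alpha)$ has $\rho$ irreducible with dihedral image $D_n$ while $\rho|\pi_1(\widetilde M)$ is reducible with image the cyclic subgroup $C_n$ of index $2$; if the base cover had degree $1$, then $\bar\rho(\pi_1(\mathcal B)) = \bar\rho(\pi_1(S^2(a,b,c))) = D_n$, contradicting $\bar\rho(\pi_1(\mathcal B)) = C_n$. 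This is the actual content that your sketch is trying to capture, and it essentially needs the irreducibility/reducibility contrast coming from the specific curve $X_0$.

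For the inequality $c \geq 2\Delta(\alpha,\beta) - 2$, the constraints you list do not yield the stated bound, and you acknowledge this by writing that they ``should yield'' it. As you set it up, the generators $q_1, q_2$ around the branched cone points are images of squares of reflections, so $Q_1 = Q_2 = 0$; surjectivity onto $C_N$ then forces $\mathrm{ord}(Q_3) = N := \Delta(\alpha,\beta)$, giving only $N \mid c$, hence $c \geq N$. For $N \geq 3$ this is strictly weaker than $c \geq 2N - 2$. The additional strength in the paper's bound comes from a \emph{counting} argument inside Culler--Shalen theory that your single-representation analysis cannot reproduce: every $\chi_{\rho'} \in J_{X_0}(\alpha)$ factors through $\Delta(2,2,c)$, which has only $\lfloor c/2 \rfloor$ irreducible $PSL_2(\mathbb C)$-characters; feeding $|J_{X_0}(\alpha)| \leq \lfloor c/2 \rfloor$ into the seminorm identity $\Delta(\alpha,\beta)\, s_{X_0} = \|\alpha\|_{X_0}$ is what produces $\Delta(\alpha,\beta) \leq 1 + \lfloor c/2\rfloor$, i.e.\ $c \geq 2\Delta(\alpha,\beta) - 2$. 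Without tracking the full set $J_{X_0}(\alpha)$ and the associated zero multiplicities, no purely congruence-theoretic manipulation of a single dihedral surjection can recover the factor of $2$.
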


\begin{proof}
Let $X_0 \subset X_{PSL_2}(M)$ be the $\mathbb Z/2 * \mathbb Z/2$-curve $X_0$ constructed in the proof of Lemma \ref{prop: semi very small} . From the description of $\widetilde M \to M$ above, the image of $X_0$ in $X_{PSL_2}(\widetilde M)$ is a curve of characters of representations which factor through the cyclic subgroup
$\langle xy \rangle$ of $\mathbb Z/2 * \mathbb Z/2$. Hence the image is a curve of reducible characters.
We saw in the proof of Lemma \ref{prop: semi very small}  that $J_{X_0}(\alpha) \subset X_0^\nu = X_0$ and that each element of $J_{X_0}(\alpha)$ is irreducible.

Suppose that $\Delta(\alpha, \beta) > 1$. If $\chi_\rho \in J_{X_0}(\alpha)$, then $\rho$ factors through a homomorphism $\bar \rho: \pi_1(S^2(a,b,c)) \to PSL_2(\mathbb C)$ (Lemma \ref{lemma: factors}), and therefore the restriction of $\rho$ to $\pi_1(\widetilde M)$ factors through $\bar \rho|\pi_1(\mathcal{B})$. The image of $\rho$, and therefore that of $\bar \rho$, is a dihedral group $D_n$ where $n \geq 2$ divides $\Delta(\alpha, \beta)$. Since $\rho$ is irreducible, but $\rho|\pi_1(\widetilde M)$ is reducible,
$\rho(\pi_1(\widetilde M)) = \bar \rho(\pi_1(\mathcal{B}))$ is the index $2$ cyclic subgroup $C_n$ of $D_n$. Thus the cover $\mathcal{B} \to S^2(a,b,c)$ must have degree $2$. It follows that after possibly permuting $a,b,c$, we can suppose that $a$ and $b$ are even and that $\mathcal{B} \to S^2(a,b,c)$ is the obvious cover $S^2(a/2, b/2,c,c) \to S^2(a,b,c)$. This proves the first assertion of the lemma.

In order to prove the second assertion, note that by the previous paragraph the generators of order $a$ and $b$ in $\pi_1(S^2(a,b,c))$ are sent to $D_n - C_n$. Thus $\bar \rho$ factors through the dihedral group $\Delta(2,2,c) \cong D_c$ in the obvious way. Hence, for each $\chi_{\rho'} \in J_{X_0}(\alpha)$, $\rho'$ factors as a composition $\pi_1(M) \to \pi_1(M(\alpha)) \to \pi_1(S^2(a,b,c)) \to \Delta(2,2,c) \to PSL_2(\mathbb C)$. Since there are $\lfloor \frac{c}{2} \rfloor$ characters of irreducible representations $\Delta(2,2,c) \to PSL_2(\mathbb C)$ and the image of each one conjugates into $\mathcal{N}$, Proposition \ref{lemma: factors}(3) implies that $\Delta(\alpha, \beta) \leq 1 + \lfloor \frac{c}{2} \rfloor$. Hence $c \geq 2\Delta(\alpha, \beta) - 2$, which completes the proof.
\end{proof}

We will make use of the involution $\tau_F$ depicted in Figure
\ref{bgz5-fig25}. It is central in the mapping class group of $F$. The
quotient $F / \tau_F$ is the $2$-orbifold $D^2(2,2,2,2)$.

\begin{figure}[!ht]
\includegraphics{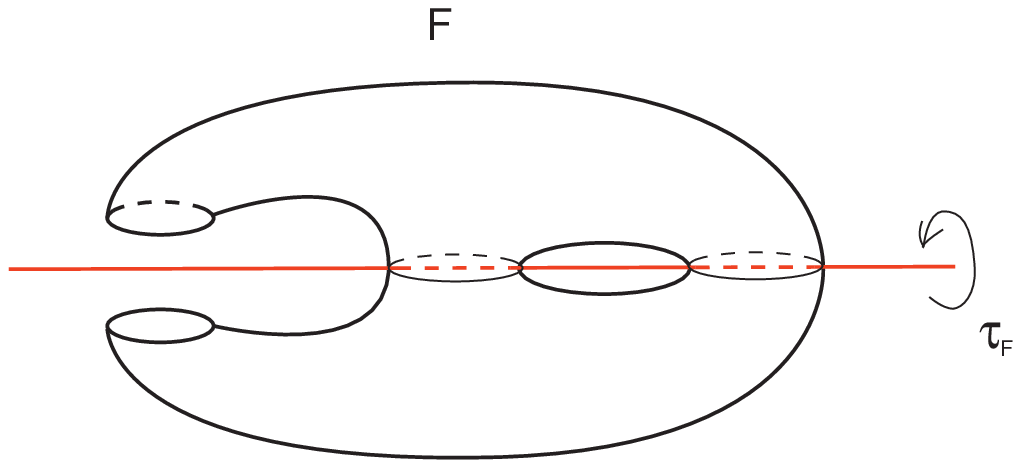} \caption{ }\label{bgz5-fig25}
\end{figure}

There is an involution $\tau$ on $\widetilde M$ induced by $\tau_F$. The quotient $\widetilde M / \tau$ is homeomorphic to $(V, L)$ where $V$ is a solid torus and $L$ is a $4$-braid. We claim that for each slope $\gamma$ on $\partial M$, $\tau$ induces an involution $\tau_\gamma$ of each $\widetilde M (\gamma_1, \gamma_2)$. To see this, note that $\tau$ is the lift of an involution $\tau_0$ of $M$ obtained by applying $\tau_F$ semi-fibre by semi-fibre. Then $\tau_0$ restricts to a hyperelliptic involution of $\partial M$, and so acts on $H_1(\partial M)$ by multiplication by $-1$. Since $\tau$ is a lift of $\tau_0$, the isomorphism $\tau_*: H_1(T_1) \to H_1(T_2)$ is multiplication by $-1$ under the identification induced by $\theta$. In particular, $\tau$ sends the slope $\gamma_1$ to $\gamma_2$, which is what we needed to prove.

The restriction of the quotient map $\widetilde M \to V$ to each $T_i$ yields a homeomorphism $T_i \to \partial V$ such that for any slope $\gamma$ on $\partial M$, $\gamma_1$ and $\gamma_2$ determine the same slope $\bar \gamma$ on $\partial V$. Clearly $\widetilde M(\gamma_1, \gamma_2)/\tau_\gamma = V(\bar \gamma)$. Further, the branch set $L_\gamma \subset V(\bar \gamma)$ of this quotient is $L \subset V \subset V(\bar \gamma)$.

Note that $\bar \beta$ is a meridian of $V$ while if $\beta^*$ is a dual class to $\beta$, $\bar \beta^*$ is a longitude of $V$. Thus after possibly changing the sign of $\alpha$ we can write
\begin{equation}\label{alpha 4}
\text{\em $\alpha = p \beta^* + q \beta$}
\end{equation}
where $p, q$ are coprime. After possibly changing the signs of $\beta^*$ and $\beta$ we may assume that
\begin{equation}\label{p 4}
\text{\em $p = \Delta(\alpha, \beta)$}
\end{equation}
By construction
\begin{equation}\label{Valpha}
\text{\em $V(\bar \alpha) \cong L(p, q)$}
\end{equation}
As in the proof of \cite[Lemma 4.1]{BGZ3}, we can assume that $\tau_\alpha$ preserves the Seifert structure on $\widetilde M(\alpha_1, \alpha_2)$.

\begin{lemma} \label{--quotient 4}
Suppose that $M$ is a hyperbolic knot manifold which contains an essential twice-punctured torus $F$ of boundary slope $\beta$ and let $\alpha$ be a slope on $\partial M$ such that $M(\alpha)$ is an irreducible small Seifert manifold. Suppose as well that $F$ is a semi-fibre in $M$ and $M(\alpha)$ is not very small. If $\tau_\alpha$ reverses the orientation of the Seifert fibres of $\widetilde M(\alpha_1, \alpha_2)$, then $\Delta(\alpha, \beta) \leq 4$.
\end{lemma}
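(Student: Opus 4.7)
The bound is vacuous when $\Delta(\alpha,\beta)\le 1$, so assume $\Delta(\alpha,\beta)\ge 2$. Lemma \ref{lemma: c} then applies and yields, after permuting $(a,b,c)$, that the base orbifold of the Seifert fibration on $\widetilde M(\alpha_1,\alpha_2)$ is $\mathcal B\cong S^2(a/2,b/2,c,c)$ with $c\ge 2\Delta(\alpha,\beta)-2$. The plan is to bound $c$ from above by analysing the quotient orbifold structure on the lens space $V(\bar\alpha)\cong L(p,q)$, with $p=\Delta(\alpha,\beta)$, and then to combine the resulting upper bound with the lower bound $c\ge 2p-2$ to force $p\le 4$.

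\textbf{Induced orbifold structure.} Because $\tau_\alpha$ is orientation-preserving on $\widetilde M(\alpha_1,\alpha_2)$, preserves the Seifert fibration, and by hypothesis reverses fiber orientation, the induced involution $\bar\tau$ on $\mathcal B$ reverses the orientation of the underlying $S^2$; thus $\bar\tau$ is either a reflection with a fixed circle or the antipodal map, and it permutes the four cone points while respecting their orders, so any swapped pair must have equal orders. Moreover, since $\widetilde M(\alpha_1,\alpha_2)/\tau_\alpha=L(p,q)$, the fiber-reversing property of $\tau_\alpha$ endows $L(p,q)$ with a generalized (orbi-)Seifert fibration whose base is $\mathcal B' = \mathcal B/\bar\tau$: either a disk with reflector boundary (reflection case) or $\mathbb{RP}^2$ (antipodal case). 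Cone points of $\mathcal B$ fixed by $\bar\tau$ become corner reflectors of the same order on the reflector boundary of $\mathcal B'$, while swapped pairs descend to single interior cone points of the same order, and the branch link $L_\alpha\subset L(p,q)$ sits over the reflector boundary of $\mathcal B'$.

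\textbf{Bounding $c$ and main obstacle.} Running through the possibilities for the $\bar\tau$-action on $\{x_{a/2},x_{b/2},x_c,x_c\}$, I would, in each case, apply the standard formula for $H_1$ of an orbi-Seifert fibered $3$-manifold in terms of its Seifert invariants (ordinary invariants at interior cone points, together with corner reflector invariants on the reflector boundary) and equate it to $\mathbb Z/p$. The resulting Diophantine constraint forces $c$ to be bounded above by an explicit function of $p$, and in every case the bound is incompatible with $c\ge 2p-2$ once $p\ge 5$, giving the desired contradiction. The principal technical obstacle is the case-by-case bookkeeping; the most delicate subcase is when $\bar\tau$ is a reflection with both order-$c$ cone points of $\mathcal B$ lying on the fixed circle, since $\mathcal B'$ then carries two distinct corner reflectors of order $c$ on the same reflector boundary component, producing the weakest (but still sufficient) upper bound on $c$ and requiring the most care in the homology calculation. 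A separate easier verification is needed for the antipodal case, where $\mathcal B'$ is $\mathbb{RP}^2$ with two interior cone points and the constraint $H_1(L(p,q))=\mathbb Z/p$ is immediate.
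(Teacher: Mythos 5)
Your proposal diverges from the paper's argument and has a genuine gap at the crucial step. You propose to bound the exceptional fibre multiplicity $c$ of $\widetilde M(\alpha_1,\alpha_2)$ from above via a homological Diophantine constraint coming from the induced orbi-Seifert structure on $L(p,q)$, and then combine with the lower bound $c\ge 2p-2$ from Lemma \ref{lemma: c} to force $p\le 4$. But no such upper bound on $c$ follows from $H_1(L(p,q))\cong\mathbb Z/p$ (or even from the identity of $L(p,q)$ as a manifold). For any Seifert fibred space over $S^2(n_1,\ldots,n_k)$ there is a fibre-orientation-reversing involution whose quotient is $S^3$, namely the covering involution of the $2$-fold branched cover of $S^3$ over the corresponding Montesinos link; similarly lens spaces of any fixed small order arise as such quotients with cone orders as large as desired. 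The abelianization of an orbi-Seifert fundamental group involves the Seifert pairs $\beta_i/n_i$ as well as the $n_i$, and the $\beta_i$ can be adjusted to absorb any would-be constraint on the $n_i$. So the claimed incompatibility of the ``Diophantine constraint'' with $c\ge 2p-2$ does not materialize.

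The ingredient you omit, and which the paper's proof of this lemma relies on essentially, is that the branch set $L_\alpha$ is the image of the $4$-braid $L\subset V$. The paper argues as follows: since $\tau_\alpha$ reverses fibre orientation, the method of \cite[Lemma 4.4]{BGZ3} shows $L_\alpha\subset L(p,q)$ is a closed one- or two-tangle Montesinos link in a Heegaard solid torus. Pulling back under the $p$-fold universal covering $S^3\to L(p,q)$ produces a Montesinos link $\widetilde L_\alpha$ with $r\ge p$ rational tangles, and $\widetilde L_\alpha$ is still a closed $4$-braid. Hence $\Sigma_2(\widetilde L_\alpha)$ has Heegaard genus at most $3$; on the other hand it is Seifert over $S^2$ with $r$ cone points, so by Sedgwick's theorem its Heegaard genus is $r-1$ (or $r-2$ with $r$ even in the horizontal exceptional case). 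This forces $r\le 4$, whence $\Delta(\alpha,\beta)=p\le r\le 4$. Note the logic is the opposite of yours: what gets bounded is the \emph{number} of exceptional fibres, not their multiplicities, and $p$ is dominated by that number through the covering degree. In fact the paper's proof of this lemma does not invoke Lemma \ref{lemma: c} at all — that lemma and the $c\ge 2\Delta-2$ bound are used in the companion fibre-orientation-\emph{preserving} case (Lemma \ref{+-quotient 5}), not here.
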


\pf Suppose that $\tau_\alpha$ reverses the orientation of the fibres of $\widetilde M(\alpha_1, \alpha_2)$. The method of proof of \cite[Lemma 4.4]{BGZ3} shows that either

\indent \hspace{.3cm} $(a)$ $\widetilde M(\alpha_1, \alpha_2)$ has base orbifold $S^2(p,p,m)$ for some $m \geq 2$ where the branch set $L_\alpha$ in $L(p,q)$  \\ \indent \hspace{.85cm}   is a closed $m/n$-rational tangle in a Heegaard solid torus as shown in  Figure \ref{bgz5-fig24} (the example  \\ \indent \hspace{.85cm}  shown in Figure \ref{bgz5-rational tangles} should suffice to illustrate our convention for rational tangles), or

\indent \hspace{.3cm} $(b)$ $\widetilde M(\alpha_1, \alpha_2)$ has base orbifold $S^2(p,p,m,s)$ for some $m, s \geq 2$ where the branch set $L_\alpha$ in \\ \indent \hspace{.85cm}  $L(p,q)$ is a closed two tangle Montesinos link with rational tangles $m/n$ and $s/t$.

Pull $L_\alpha$ back to the universal cover $S^3$ of
$L(p,q)$ and call the resulting link $\widetilde
L_\alpha$.  Then $\tilde L_\alpha$ is a Montesinos
link with $r \ge p$ rational tangles. The 2-fold cover
$\Sigma_2(\tilde L_\alpha)$ of $S^3$ branched over
$\tilde L_\alpha$ is Seifert with base orbifold a
2-sphere with $r$ cone points. By \cite[Theorem
5.3]{S} its Heegaard genus is realized by a vertical
splitting, of genus $r-1$, unless it is as in case (2)
of that theorem, in which case $r$ is even and $\ge 4$
and its Heegaard genus is realized by a horizontal
splitting of genus $r-2$. On the other hand,
$L_\alpha$ is the image of $L$ in $V(\bar \alpha)$, so
$\widetilde L_\alpha$ is the closure of a $4$-braid.
Hence the Heegaard genus of $\Sigma_2(\widetilde
L_\alpha)$ is at most $3$. Hence either $r-1 \le 3$,
or $r \ge 4$ is even and $r-2 \le 3$. It follows that
$\Delta(\alpha, \beta) = p \le r \le 4$. \qed

\begin{figure}[!ht]
\centerline{\includegraphics{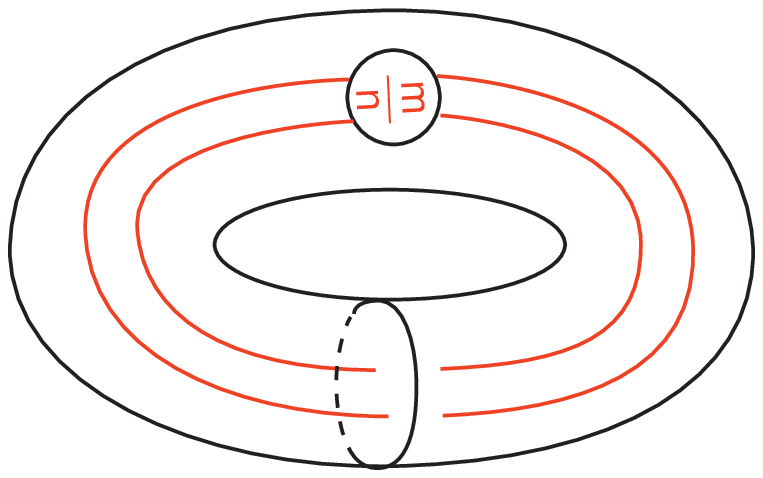}} \caption{}\label{bgz5-fig24}
\end{figure}

\begin{figure}[!ht]
\centerline{\includegraphics{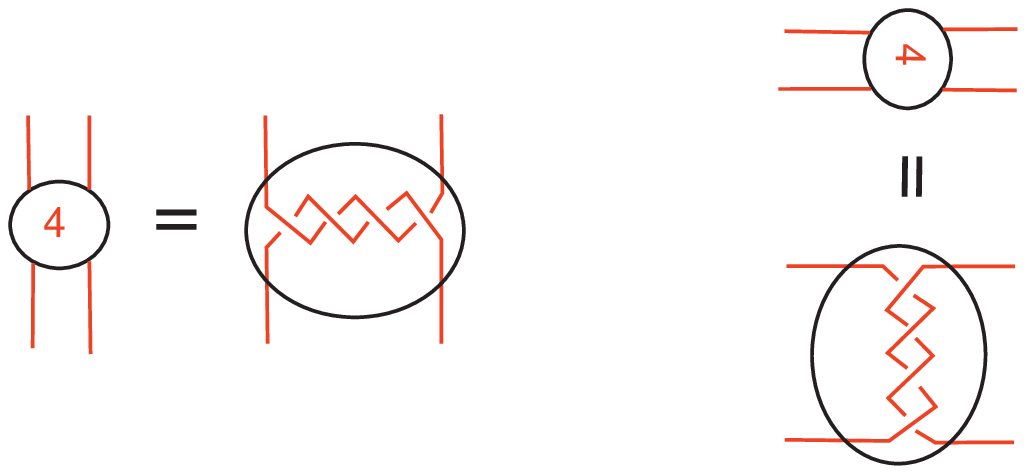}} \caption{}\label{bgz5-rational tangles}
\end{figure}

For the rest of this section we suppose that $\tau_\alpha$ preserves the orientation of the fibres of $\widetilde M(\alpha_1, \alpha_2)$. As in \cite[Lemma 4.3]{BGZ3} we have

\begin{lemma} \label{+-quotient 5}
Suppose that $M$ is a hyperbolic knot manifold which contains an essential twice-punctured torus $F$ of boundary slope $\beta$ and let $\alpha$ be a slope on $\partial M$ such that $M(\alpha)$ is an irreducible small Seifert manifold. Suppose as well that $F$ is a semi-fibre in $M$ and $M(\alpha)$ is not very small. If $\tau_\alpha$ preserves the orientation of the Seifert fibres of $\widetilde M(\alpha_1, \alpha_2)$ and $\Delta(\alpha, \beta) \geq 4$, then

$(1)$ there is an induced Seifert structure on $L(p, q) \cong \widetilde M(\alpha_1, \alpha_2)/\tau_\alpha$ whose branched set $L$ is a union of at most four Seifert fibres.

$(2)$ the induced Seifert structure on $L(p, q)$ has at least one cone point of order $\Delta(\alpha, \beta) - 1$.

$(3)$ $L$ contains at least one regular fibre of $L(p, q)$ and if only one, $L(p,q)$ has no exceptional fibre of multiplicity $2$.
\end{lemma}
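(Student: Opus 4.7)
The plan is to adapt the proof of \cite[Lemma 4.3]{BGZ3} from the once-punctured torus setting to the present twice-punctured semi-fibre setting, taking as key inputs the base orbifold description of $\widetilde M(\alpha_1,\alpha_2)$ from Lemma \ref{lemma: c} and the $4$-braid description of the branch locus $L\subset V$ from the paragraph preceding equation (\ref{alpha 4}).

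For part $(1)$, since $\tau_\alpha$ preserves both the Seifert fibration of $\widetilde M(\alpha_1,\alpha_2)$ and the orientations of the fibres, any such fibre-preserving orientation-preserving involution descends to give $L(p,q)$ an induced Seifert structure whose quotient map is fibre-preserving, and the fixed-point set $\mathrm{Fix}(\tau_\alpha)$ is a disjoint union of regular fibres in $\widetilde M(\alpha_1,\alpha_2)$. Its image in the quotient is precisely the branch locus $L\subset V\subset V(\bar\alpha)=L(p,q)$, which by construction is the closure of a $4$-braid in $V$, hence has at most four components. Each component is a Seifert fibre of the induced structure, yielding $(1)$.

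For part $(2)$, Lemma \ref{lemma: c} gives base orbifold $S^{2}(a/2,b/2,c,c)$ with $c\geq 2p-2$, where $p=\Delta(\alpha,\beta)$, and the unbranched covering involution $\theta$ of $\widetilde M\to M$ swaps the two order-$c$ cone points and fixes the other two. I will show that the orbifold involution $\bar\tau_\alpha$ induced on $S^{2}(a/2,b/2,c,c)$ must also swap the two order-$c$ cone points, so that they descend to a single cone point of order $c$ in the base orbifold of $L(p,q)$ which is not contained in the image of $\mathrm{Fix}(\tau_\alpha)$. Combining this with the fact that $V(\bar\alpha)=L(p,q)$ is a lens space, the Seifert invariants of the induced structure must satisfy the lens-space relation, and a direct arithmetic check will force one of the cone points associated to the branching to have order exactly $p-1$. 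I expect the technical heart of the argument to lie here: precisely tracking the orbit structure of $\bar\tau_\alpha$ on the four cone points, and matching the resulting Seifert invariants against the constraint that the total space is $L(p,q)$.

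For part $(3)$, since $L$ has at most four components and the induced Seifert fibration of $L(p,q)$ has only finitely many exceptional fibres, I will argue by contradiction that if every component of $L$ were exceptional then the base orbifold of $L(p,q)$ would force a contradiction with part $(2)$ and with the fact (to be obtained along the way) that the lifts to $\widetilde M(\alpha_1,\alpha_2)$ of the cone points assembled from $L$ must account for the orbifold $S^{2}(a/2,b/2,c,c)$. The supplementary statement in $(3)$ about no multiplicity-$2$ exceptional fibre when $L$ contains exactly one regular fibre will follow from the same orbifold accounting: a second multiplicity-$2$ fibre, together with three exceptional fibres branched over, would overfill the base of $L(p,q)$ relative to what the Seifert invariants obtained in $(2)$ permit. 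The main obstacle throughout is the careful bookkeeping of the involution $\bar\tau_\alpha$ and of the transformation of Seifert invariants under the branched double cover $\widetilde M(\alpha_1,\alpha_2)\to L(p,q)$.
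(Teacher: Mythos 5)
Your plan contains a gap that derails it from the start. You assert that $\mathrm{Fix}(\tau_\alpha)$ is a disjoint union of regular fibres and, relatedly, that $\bar\tau_\alpha$ must swap the two order-$c$ cone points of the base orbifold $S^2(\bar a,\bar b,c,c)$. Neither claim holds in general. The induced map $\bar\tau_\alpha$ on the base orbifold may well be the identity, and this is a central case of the argument. When $\bar\tau_\alpha = \mathrm{id}$, the involution $\tau_\alpha$ translates each fibre by a half-period, so its fixed-point set is precisely the union of the exceptional fibres of even multiplicity, not regular fibres, and no cone points are swapped. It appears you are conflating the unbranched covering involution $\theta_\alpha$ of $\widetilde M(\alpha_1,\alpha_2)\to M(\alpha)$ (whose induced map on the base does swap the two $c$-cone points, since $\mathcal B\to S^2(a,b,c)$ is the double orbifold cover) with $\tau_\alpha$ (whose quotient is $L(p,q)$); these are distinct involutions and their actions on cone points are unrelated. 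Consequently your strategy for part (2), which hinges on the $c$-cone points descending to a single cone point outside the branch locus, does not go through, and the ``lens-space relation arithmetic'' you invoke is left unsupported.

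The paper's actual argument is a three-way case analysis on $\bar\tau_\alpha$: (i) $\bar\tau_\alpha = \mathrm{id}$; (ii) $\bar\tau_\alpha\neq\mathrm{id}$ with one of $a,b$ equal to $2$; (iii) $\bar\tau_\alpha\neq\mathrm{id}$ with $a,b>2$. In each case one tracks how the cone points of orders $\bar a,\bar b,c,c$ descend to the base orbifold of $L(p,q)$, using the constraint that a lens space's Seifert base has at most two cone points, the bound $c\geq 2\Delta(\alpha,\beta)-2$ from Lemma \ref{lemma: c}, and the hyperbolicity of $S^2(a,b,c)$. Parts (1)--(3) all come out of this bookkeeping. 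Your observation that $L$ has at most four components because it is a $4$-braid in $V$ is correct (and is in fact what the argument ultimately uses for part (1)), but the remainder of the proof needs to be rebuilt along the lines above; in particular the case $\bar\tau_\alpha=\mathrm{id}$ cannot be skipped.
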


\begin{proof}
By Lemma \ref{lemma: c}, $\widetilde M(\alpha_1, \alpha_2)$ has base orbifold $S^2(\bar a, \bar b, c, c)$ and $c \geq 2(\Delta(\alpha, \beta) - 1)$. As $M(\alpha)$ is not very small, $S^2(\bar a, \bar b, c, c)$ is hyperbolic.

Let $\bar \tau_\alpha$ be the map of $S^2(\bar a, \bar b, c, c)$ induced by $\tau_\alpha$.  Either $\bar \tau_\alpha$ is the identity or it is an orientation-preserving involution with exactly two fixed points. In the former case, a fibre of multiplicity $j$ in $\widetilde M(\alpha_1, \alpha_2)$ is sent to a fibre of multiplicity $\bar j = \frac{j}{\gcd(j,2)}$ in $L(p,q)$. Further, the fixed point set of $\tau_\alpha$ is the union of exceptional fibres of $\widetilde M(\alpha_1, \alpha_2)$ of even multiplicity. Hence $L$ has at most four components. Further, the base orbifold $\overline{\mathcal{B}}$ of $L(p,q)$ in the induced Seifert structure is $S^2(\bar{\bar a}, \bar{\bar b}, \bar c, \bar c)$. Since $\bar c \geq \Delta(\alpha, \beta) - 1 \geq 3$ (Lemma \ref{lemma: c}) we must have $\bar{\bar a} = \bar{\bar b} = 1$. Thus $a, b \in \{2, 4\}$ and $\overline{\mathcal{B}} = S^2(\bar c, \bar c)$. Further, since $S^2(a,  b, c)$ is hyperbolic, at least one of $a, b$ is $4$. It follows that $L$ contains at least one regular fibre. If there is only one, then $a = 2$ (say) and $b = 4$. It is easy to see that the lemma holds in this situation.

Next suppose that $\bar \tau_\alpha$ is not the identity. In this case $L$ has at most two components. If $a = 2$, then $\bar b > 1$ and $\mathcal{B} = S^2(\bar b, c, c)$. As $\bar \tau_\alpha$ is not the identity, it fixes exactly one of the cone points of $\mathcal{B}$, which necessarily has order $\bar b$. Denote by $\phi_1$ the $\tau_\alpha$-invariant regular fibre and $\phi_{\bar b}$ the $\tau_\alpha$-invariant exceptional fibre. The image in $L(p,q)$ of $\phi_1$ is a fibre of multiplicity $1$ if $\phi_1$ lies in the fixed-point set of $\tau_\alpha$, and is $2$ otherwise. Similarly the image in $L(p,q)$ of $\phi_{\bar b}$ is a fibre of multiplicity $\bar b > 1$ if $\phi_{\bar b}$ lies in the fixed-point set of $\tau_\alpha$, and is $2 \bar b$ otherwise. As $\overline{\mathcal{B}}$ can have only two cone points, it follows that $\phi_1$ lies in the fixed point set of $\tau_\alpha$ and that its image in $L(p,q)$ is a regular fibre contained in $L$. Further, $\overline{\mathcal{B}} = S^2(\bar b, c)$ if $\phi_{\bar b}$ is contained in the fixed-point set of $\tau_\alpha$ and $S^2(2 \bar b, c)$ otherwise. It is easy to see that the lemma holds in this situation. A similar argument shows that the lemma holds if $b = 2$.

Finally suppose that $\bar \tau_\alpha$ is not the identity and $a , b > 2$ and define $\mathcal{F}$ to be the fixed point set of $\bar \tau_\alpha$. In this case $S^2(\bar a, \bar b, c,c)$ contains four cone points. Then $\mathcal{F}$ contains either two cone points or none. The former case does not occur as otherwise $\overline{\mathcal{B}}$ would have three cone points (since $a, b \geq 4$). Thus $\mathcal{F}$ contains no cone points and the two associated regular fibres must lie in the fixed point set of $\tau_\alpha$. Thus $L$ contains two regular fibres and $\overline{\mathcal{B}} = S^2(\bar b, c)$. This completes the proof in this final case.
\end{proof}

For the rest of this section we suppose that $\Delta = \Delta(\alpha, \beta) \geq 5$ and show that
this leads to a contradiction.

Recall that $L$ is a 4-braid in the Heegaard solid torus $V$ in $L(p,q)$, and
consists of Seifert fibres in the induced Seifert structure on $L(p,q)$.
Let $X$ be the exterior of $L$ in $L(p,q)$ and $Y$ the exterior of $L$ in $V$.
Thus $X$ inherits a Seifert structure.
By Lemma \ref{+-quotient 5}(3)
at least one component of $L$ is a regular fibre in $L(p,q)$.
We distinguish two cases.

\begin{caseI}
At least two components of $L$  are regular fibres.
\end{caseI}

\begin{caseII}
Exactly one component of $L$ is a regular fibre.
\end{caseII}

\begin{lemma}\label{lem4.7}
Let $K$ be  a component of $L$ that is a regular fibre of $L(p,q)$.
Then
\vspace{-.2cm}
\begin{itemize}

\item[(1)] the winding number of $K$ in $V$ is greater than $1$;

\vspace{.2cm} \item[(2)] in Case II the winding number of $K$ in $V$ is greater than $2$.

\end{itemize}
\end{lemma}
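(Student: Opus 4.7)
The plan is to observe that the Seifert structure on $L(p,q)$ is inherited from that of $\widetilde M(\alpha_1,\alpha_2)$ via $\tau_\alpha$, so that both $V$ and $V'$ are automatically saturated, and then to read off the winding number of the regular fibre $K$ in $V$ directly from the multiplicity of the core of $V$.

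First I will argue that $V$ and $V'$ are both saturated in the induced Seifert fibration of $L(p,q)$. The Seifert fibration of $\widetilde M(\alpha_1,\alpha_2)$ makes each $\alpha$-filling solid torus $V^\alpha_i$ a saturated Seifert-fibred solid torus, and so makes the complement $\widetilde M$ saturated as well. Because $\tau_\alpha$ preserves the Seifert fibration, swaps $V^\alpha_1$ with $V^\alpha_2$, and preserves $\widetilde M$ setwise, the quotients $V = \widetilde M/\tau$ and $V' = (V^\alpha_1\cup V^\alpha_2)/\tau_\alpha$ are saturated in $L(p,q) = V(\bar\alpha)$. Each is therefore a Seifert-fibred solid torus with base orbifold either $D^2$ or $D^2(m)$ for some integer $m\ge 2$.

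Next I will pin down the multiplicities. Gluing the bases of $V$ and $V'$ along their boundary circles recovers the base orbifold $\overline{\mathcal{B}}$ of $L(p,q)$, which by the analysis in the proof of Lemma \ref{+-quotient 5} has exactly two cone points. Since a Seifert-fibred solid torus contributes at most one cone point to its base, each of $V$ and $V'$ must contribute exactly one. Writing the bases as $D^2(m_V)$ and $D^2(m_{V'})$, this forces $m_V, m_{V'}\ge 2$; in particular, the core of $V$ is an exceptional fibre of $L(p,q)$ of multiplicity $m_V$.

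Finally I will compute the winding number. In a Seifert-fibred solid torus with exceptional core of multiplicity $m$, every regular fibre winds $m$ times around the core, and so has winding number $m$ in the solid torus. Since $K\subset V$ is a regular fibre of $L(p,q)$ and the core of $V$ is an exceptional fibre of $L(p,q)$, $K$ is not the core of $V$ and is therefore a regular fibre of $V$'s restricted Seifert fibration; hence the winding number of $K$ in $V$ equals $m_V$. For (1), $m_V\ge 2$ gives winding number strictly greater than $1$. For (2), Lemma \ref{+-quotient 5}(3) rules out any exceptional fibre of multiplicity $2$ in Case II, so $m_V\ge 3$ and the winding number is strictly greater than $2$. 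The only subtle point is checking that $V$ is genuinely saturated in the induced fibration rather than merely isotopic to a saturated solid torus, but this is immediate from the $\tau_\alpha$-equivariance of the fibration on $\widetilde M(\alpha_1,\alpha_2)$.
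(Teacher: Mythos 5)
Your argument hinges on the claim that the $\alpha$-filling solid tori $V^\alpha_i$ in $\widetilde M(\alpha_1,\alpha_2)$ are saturated in the (invariant) Seifert fibration, from which you then deduce that $\widetilde M$ and hence $V = \widetilde M/\tau$ are saturated. This claim is false, and in fact it cannot be true: if the $V^\alpha_i$ were saturated then their complement $\widetilde M$ would also be saturated, hence would inherit a Seifert fibration from $\widetilde M(\alpha_1,\alpha_2)$; but $\widetilde M$ is a double cover of the hyperbolic knot manifold $M$, so it is itself hyperbolic, and in particular not Seifert fibred. Consequently $V$ is not a fibred solid torus of the induced Seifert structure on $L(p,q)$, and the winding number of $K$ in $V$ cannot be read off as a multiplicity of the core of $V$. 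The entire saturation-based computation collapses at this point.

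The paper's proof avoids any saturation assumption. It uses only that $K$ is a closed braid in $V$ (inherited from $L$ being a braid), together with the fact that the exterior of a regular fibre of $L(p,q)$ is Seifert fibred over $D^2(a,b)$ with $a,b \ge 2$ (Lemma \ref{+-quotient 5}(2)). A closed $1$-braid is isotopic to the core of $V$, so its exterior in $L(p,q)$ would be a solid torus, impossible; a closed $2$-braid is a $2$-cable of the core, so its exterior is Seifert fibred with a multiplicity-$2$ cone point, forcing $a$ or $b$ to equal $2$ and contradicting Lemma \ref{+-quotient 5}(3) in Case~II. You should rework the proof along these lines, using the braid structure of $K$ in $V$ rather than any fibred structure on $V$ itself.
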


\begin{proof} Note that because $L$ is a braid in $V$, $K$ is too.
Since the Seifert structure on $L(p,q)$ has two exceptional fibres by Lemma \ref{+-quotient 5}(2),
the exterior of a regular fibre has a Seifert structure with base orbifold
$D^2 (a,b)$, $a,b\ge 2$.
If the winding number of $K$ in $V$ is 1 then the exterior of $K$ in $L(p,q)$ is
a solid torus.
This proves (1).

To prove (2), suppose $K$ has winding number 2 in $V$; thus $K$ is a 2-braid in $V$.
Then the exterior of $K$ in $L(p,q)$ has a Seifert structure with base orbifold of the
form $D^2 (2,c)$.
Hence $a$ or $b=2$, contradicting Lemma \ref{+-quotient 5}(3).
\end{proof}

It follows from Lemma~\ref{lem4.7} that in Case~I $L$ has exactly two components,
$K_1$ and $K_2$, say, each with winding number~2 in $V$.
Let $T_1$ and $T_2$ be the corresponding boundary components of $X$.
There is a vertical annulus $A$ in $X$ with boundary components
$a_1\subset T_1$, $a_2 \subset T_2$.

In Case II, let $K$ be the component of $L$ that is a regular fibre.
By Lemma~\ref{lem4.7}, either $K=L$ or $L$ has two components $K$ and $K'$,
with winding numbers 3 and 1, respectively, in $V$.
Let $T_K$ be the boundary component of $X$ corresponding to $K$.
There is a vertical essential annulus $A$ in $X$, with $\partial A\subset T_K$,
separating $X$ into two components $X_1$ and $X_2$.
Note that either $X_1$ and $X_2$ are both solid tori, or one is a solid torus
and the other is homeomorphic to $T^2\times I$.

In both Cases I and II, choose $A$ among all annuli with the stated properties
to have minimal intersection with the complementary solid torus to $V$ in $L(p,q)$.
We adopt the construction and terminology described in
\cite[Section 6]{BGZ3}.
Thus a meridian disk $D$ of $V$ gives a properly embedded 4-punctured
disk $P$ in $Y$, and from $A$ we get an essential $n$-punctured annulus $Q$ in $Y$.
The intersection of $P$ with $Q$ defines graphs $\Gamma_P$, $\Gamma_Q$ with
vertices $d_V$, $c_1$, $c_2$, $c_3$, $c_4$ and $a_1,a_2,b_1,\ldots,b_n$,
respectively.
Note that in Case~II $n$ is even.
Also, since $L$ is  a braid we may orient the components of $L$ coherently
in $V$; then the vertices $c_1,c_2,c_3,c_4$ all have the same sign.
By the remarks after Lemma~\ref{lem4.7}, each vertex $c_j$ of $\Gamma_P$
has valency~1 in Case~I, and in Case~II, valency 0 or 2, with at least three
having valency~2.

\begin{lemma}\label{lem4.8}
In Case I, $\Gamma_P$ does not contain a $D$-edge Scharlemann cycle.
\end{lemma}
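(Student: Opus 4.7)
The plan is to assume the existence of a $D$-edge Scharlemann cycle $\sigma$ in $\Gamma_P$ and extract a topological contradiction, in the spirit of the graph-theoretic arguments of \cite[Section 6]{BGZ3}. A Scharlemann cycle of length $k$ bounds a disk face $E$ in $P\subset D$ whose boundary decomposes as $k$ arcs on $Q = A\cap Y$ (the edges of $\sigma$) together with $k$ arcs on $\partial D\subset \partial V$ at the vertex $d_V$. Because $\sigma$ is a Scharlemann cycle, all $k$ arcs on $Q$ terminate on a single pair of consecutive boundary circles $b_j, b_{j+1}$ on $\partial V$.

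The next step is to combine $E$ with the annulus on $\partial V$ between $b_j$ and $b_{j+1}$, together with bands in $A$ connecting consecutive arcs of $\partial E\cap Q$, to produce an embedded surface $\widehat E$ in $V$. The coherent orientation of $L$ as a braid (so that the vertices $c_1,c_2,c_3,c_4$ all carry the same sign) and the valency-one property of these vertices in Case I should force $\widehat E$ to be either a disk or a M\"obius band in $V$, meeting $\partial V$ along a computable slope. A M\"obius band is impossible in the solid torus $V$, so $\widehat E$ must be a disk; after a small isotopy it provides either a compressing disk for the vertical annulus $A$ in the exterior $X$ of $L$, contradicting the essentiality of $A$, or (upon lifting to $\widetilde M(\alpha_1,\alpha_2)$ via the covering $V(\bar\alpha) = \widetilde M(\alpha_1,\alpha_2)/\tau_\alpha$) a reducing sphere, contradicting the irreducibility of $M(\alpha)$.

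The main obstacle will be verifying that the resulting disk $\widehat E$ is genuinely non-trivial, that is, that its boundary is essential on $\partial V$ relative to $A$, ruling out the degenerate outcome in which $\partial\widehat E$ bounds a disk on $A$ or is isotopic to a component of $\partial A$. This is where the Case I hypothesis enters decisively: the fact that $L$ has exactly two components $K_1, K_2$, each a regular fibre of winding number $2$ in $V$, together with the minimality assumption made on $A\cap(L(p,q)\setminus V)$ in its isotopy class, constrains the intersection pattern of $P$ and $Q$ sufficiently to preclude such degeneracies. I expect the detailed verification to closely parallel the analogous Scharlemann-cycle obstructions in \cite[Section 6]{BGZ3}, with the combinatorial modifications dictated by the twice-punctured torus semi-fibre setting and the $4$-braid structure of $L$.
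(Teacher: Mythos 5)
The paper's proof of Lemma~\ref{lem4.8} is a one-line appeal to minimality: the Scharlemann cycle face $f$, together with the corresponding $1$-handle of $\overline{L(p,q)-V}$, can be used to surger $A$ to a new annulus $A'\subset X$ with $\partial A' = \partial A$ and strictly fewer intersections with $T_V$, contradicting the explicit choice of $A$ to minimize $|A\cap T_V|$ made just before the lemma. Your proposal takes a different route and, as written, has real gaps.

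First, the construction of $\widehat E$ does not obviously yield a disk or M\"obius band. Take the simplest case $k=2$: $E$ is a bigon ($\chi=1$); gluing to it the annulus on $\partial V$ between $b_j$ and $b_{j+1}$ ($\chi=0$) along the two corners, and bands in $A$ along the two edges, drops the Euler characteristic well below $1$, so $\widehat E$ is not a disk. You would need a much more careful description of which sub-pieces you take (a sub-disk of $B$ rather than all of it, one band rather than two, etc.), and even then the outcome would need to be established rather than asserted. Second, the claim ``a M\"obius band is impossible in the solid torus $V$'' is false: a solid torus contains properly embedded M\"obius bands with boundary any $(2,2m+1)$-curve on $\partial V$, so that branch of your dichotomy does not produce a contradiction on its own. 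Third, the terminal contradiction is not pinned down: $\widehat E$ (as you build it) has boundary on $\partial V$, not on $A$, so it is not directly a compressing disk for $A$; and the ``reducing sphere upon lifting'' step is left entirely to the imagination. You do mention minimality of $A$ in passing, but you use it only as a constraint on $\Gamma_P\cap\Gamma_Q$ to exclude degeneracies, not as the engine of the contradiction. The intended argument is cleaner and sidesteps all of this: the Scharlemann cycle lets you replace the portion of $A$ between $\widehat b_j$ and $\widehat b_{j+1}$ (together with those two meridian disks of the complementary solid torus) by a tube running through a neighbourhood of $f$, producing an annulus $A'$ with the same boundary but two fewer intersection curves with $T_V$ --- a direct contradiction to the minimality assumption on $A$, with no need to determine the topology of an auxiliary surface.
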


\begin{proof}
Such a Scharlemann cycle could be used to construct an annulus
$A'\subset X$ with $\partial A' = \partial A$ and having fewer intersections
with $T_V$.
\end{proof}

\begin{lemma}\label{lem4.9}
In Case II, suppose $\Gamma_P$ contains a $D$-edge Scharlemann cycle
of order $m=2$ or $3$, lying in $X_i$.
Then $X_i$ is a solid torus and the core of $X_i$ is an exceptional fibre
of $X$ of multiplicity $m$.
\end{lemma}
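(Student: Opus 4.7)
The plan is to apply the classical Scharlemann cycle technique. Let $\sigma$ be a $D$-edge Scharlemann cycle of order $m\in\{2,3\}$ in $\Gamma_P$, bounding a disk face $f$ of $\Gamma_P$ whose interior lies in $X_i$. By definition $\partial f$ consists of $m$ mutually parallel edges $e_1,\dots,e_m$ of $\Gamma_P$, each an arc of $P\cap Q$ with both endpoints on the vertex $d_V$, alternating with $m$ arcs on $d_V=\partial D\subset T_V$. Viewed inside the annulus $Q=A$, the $e_i$ are parallel arcs joining the same pair of points of $\partial A$ and cut $Q$ into sub-rectangles.

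The first step is to assemble $f$ together with appropriate sub-rectangles of $Q$ and parallel copies of the meridian disk $D$ (used to cap off the $T_V$-arcs of $\partial f$) into a properly embedded disk $E\subset X_i$ with $\partial E\subset T_K$. This is the standard Scharlemann construction, and the key output is that $\partial E$ is a simple closed curve on $T_K$ whose geometric intersection number with each component of $\partial A\cap T_K$ is exactly $m$.

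Recall from the remarks following Lemma \ref{lem4.7} that $X_i$ is either a solid torus or homeomorphic to $T^2\times I$. The existence of the compressing disk $E$ immediately rules out the latter, proving the first assertion. For the multiplicity statement, note that $A$ is a vertical annulus in the Seifert-fibred $X$, so the components of $\partial A\cap T_K$ represent the regular fibre class on $T_K$. Hence the meridian $\partial E$ of the solid torus $X_i$ meets the fibre class exactly $m$ times, which is precisely the condition for the Seifert fibration on $X$ to extend across $X_i$ with the core of $X_i$ an exceptional fibre of multiplicity exactly $m$.

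The main obstacle will be the careful construction of the embedded disk $E$ and the verification that the intersection number of $[\partial E]$ with the fibre class is exactly $m$ and not a proper divisor. This requires using the minimality of $|A\cap T_V|$ in the choice of $A$ to ensure no simpler $D$-edge configuration produces the same boundary data, while the assumption $m\in\{2,3\}$ rules out order reduction via a lower-order Scharlemann cycle: when $m=3$ there is nothing to check since $3$ is prime, and when $m=2$ the verification is direct.
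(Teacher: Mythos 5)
Your plan is to produce a properly embedded compressing disk $E\subset X_i$ with $\partial E\subset T_K$, and then read off the multiplicity from the intersection number of $\partial E$ with the fibre class. Unfortunately this object cannot exist, and the argument is circular. If the conclusion of the lemma holds, then $X_i$ is a solid torus and the annulus $A$ has winding number $m\ge 2$ in it; the complementary annulus $T_K\cap\partial X_i$ is parallel to $A$ in $\partial X_i$ and therefore also has winding number $m\ge 2$, so it is incompressible in $X_i$ and there is no properly embedded disk $E\subset X_i$ with $\partial E$ essential in $T_K\cap\partial X_i$. (And if $\partial E$ is inessential in $T_K\cap\partial X_i$, the disk is boundary-parallel and gives nothing.) Note also that a properly embedded disk in $X_i$ whose boundary lies in $T_K\cap\partial X_i$ is disjoint from $\partial A$, so your claim that $\partial E$ meets each component of $\partial A$ exactly $m$ times is incompatible with $\partial E\subset T_K$ and $E\subset X_i$; you seem to be conflating the meridian disk of the solid torus $X_i$ (whose boundary runs over both $A$ and $T_K\cap\partial X_i$) with a compressing disk of $T_K$.

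The construction you sketch is also not the standard Scharlemann construction for this configuration. The face $f$ is a subdisk of the meridian disk $D$ of $V$; its corners lie on $\partial D\subset T_V$, not on $T_K$, and gluing ``parallel copies of $D$'' onto those corners does not move you toward $T_K$. What the Scharlemann cycle actually gives here (and what the paper uses) is the solid torus $W=N(A\cup H\cup f)\subset X_i$, where $H$ is the $1$-handle of the filling solid torus corresponding to the label pair; the crucial preliminary observation, which your argument omits entirely, is that the edges of $\partial f$ cannot lie in a disk in $A$ --- otherwise $X_i$ would contain a punctured lens space, contradicting irreducibility. With that observation one checks that $W$ is a solid torus in which $A$ has winding number $m$, which rules out $X_i\cong T^2\times I$ and then forces $\overline{X_i-W}$ to be a solid torus meeting $W$ along a longitudinal annulus, so the core of $X_i$ is an exceptional fibre of multiplicity $m$. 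Your proposal would need to be rebuilt around this $W$ (or an equivalent object) rather than a compressing disk of $T_K$.
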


\begin{proof}
Let the face of $\Gamma_P$ bounded by the Scharlemann cycle be $f$.
Note that the edges of $\partial f$ in $\Gamma_Q$ do not lie in a disk in $A$,
for otherwise $X_i$ would contain a punctured lens space.

Let $H$ be the component of the intersection of the filling solid torus
$\overline{L(p,q)-V}$ with $X_i$ corresponding to the label-pair of the
Scharlemann cycle.
Define $W = N(A\cup H\cup f)$.
It is easy to see that $W$ is a solid torus in which $A$ has winding number $m$.
Therefore $X_i$ is not homeomorphic to $T^2\times I$.
Hence $X_i$ is a solid torus, and so $\overline{X_i-W}$ is a solid torus in
which the annulus that is the frontier of $W$ in $X_i$ is longitudinal.
It follows that the core of $X_i$ is an exceptional fibre of $X$ of multiplicity $m$.
\end{proof}

\begin{cor}\label{cor4.10}
In both Cases I and II, $\Gamma_P$ does not contain a $D$-edge $S$-cycle.
\end{cor}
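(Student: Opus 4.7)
The proof should split along the Case~I / Case~II dichotomy, with essentially all of the substantive work already packaged in Lemmas~\ref{lem4.8} and~\ref{lem4.9}. The key observation is that in the terminology used here, an $S$-cycle means a Scharlemann cycle of length $2$, i.e.~a bigon Scharlemann cycle, which is a Scharlemann cycle of order $m=2$ in the sense of Lemma~\ref{lem4.9}.

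For Case~I the conclusion is immediate: Lemma~\ref{lem4.8} rules out any $D$-edge Scharlemann cycle whatsoever in $\Gamma_P$, and in particular the length-$2$ ones.

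For Case~II I would argue by contradiction. Suppose $\Gamma_P$ contains a $D$-edge $S$-cycle; this is a $D$-edge Scharlemann cycle of order $m=2$, so Lemma~\ref{lem4.9} applies and yields a component $X_i$ of $X\setminus A$ which is a solid torus whose core is an exceptional fibre of multiplicity $2$ in the Seifert structure on $X$. The small book-keeping point to check is that this exceptional fibre persists as an exceptional fibre of multiplicity $2$ in $L(p,q)$ itself: this is automatic because, by construction, $L\subset L(p,q)$ is a union of Seifert fibres, so the Seifert structure on $X$ is simply the restriction of that on $L(p,q)$, and the multiplicity of a fibre is preserved.

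But by the very definition of Case~II, exactly one component of $L$ is a regular fibre of $L(p,q)$, and under that hypothesis Lemma~\ref{+-quotient 5}(3) prohibits $L(p,q)$ from having any exceptional fibre of multiplicity $2$. This contradicts the preceding paragraph, completing the proof. There is no real obstacle here; the only minor subtlety is the compatibility of the two Seifert structures just mentioned, and this is built into the setup introduced after Lemma~\ref{+-quotient 5}.
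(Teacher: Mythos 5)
Your proof is correct and follows exactly the paper's route: Lemma~\ref{lem4.8} disposes of Case~I, and in Case~II one combines Lemma~\ref{lem4.9} (an $S$-cycle being a Scharlemann cycle of order~$2$) with Lemma~\ref{+-quotient 5}(3). The side remark about the Seifert structure on $X$ being the restriction of that on $L(p,q)$, so that the multiplicity-$2$ exceptional fibre of $X$ persists in $L(p,q)$, is an accurate gloss on a point the paper leaves implicit.
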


\begin{proof}
In Case I this follows from Lemma~\ref{lem4.8}.
In Case~II it follows from Lemma~\ref{lem4.9} together with Lemma \ref{+-quotient 5}(3).
\end{proof}

\begin{lemma}\label{lem4.11}
In Case II, $\Gamma_P$ does not contain $D$-edge Scharlemann cycles
$f_1$ and $f_2$ on distinct label-pairs that lie in the same component $X_i$,
where $f_j$ has order~$2$ or $3$, $j=1,2$.
\end{lemma}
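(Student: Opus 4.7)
I would argue by contradiction, supposing that such $D$-edge Scharlemann cycles $f_1, f_2$ of orders $m_1, m_2 \in \{2, 3\}$ exist in $X_i$ on distinct label-pairs. Applying Lemma~\ref{lem4.9} separately to each $f_j$, one concludes in each case that $X_i$ is a solid torus and that the core of $X_i$ is an exceptional Seifert fibre of multiplicity $m_j$. Since the core of a solid torus is unique up to isotopy, with a well-defined Seifert multiplicity, this forces $m_1 = m_2 =: m$.

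The key step is then to exploit the distinctness of the label-pairs. The corresponding components $H_1, H_2$ of the intersection $\overline{L(p,q)-V} \cap X_i$ are disjoint, and the disk faces $f_1, f_2$ of $\Gamma_P$ have disjoint interiors. For each $j$ set $W_j = N(A \cup H_j \cup f_j)$; by (the proof of) Lemma~\ref{lem4.9}, $W_j$ is a solid torus in which $A$ has winding number $m$, and $\overline{X_i \setminus W_j}$ is a complementary solid torus attached to $W_j$ along a longitudinal annulus. I would then form the combined neighborhood $W = N(A \cup H_1 \cup H_2 \cup f_1 \cup f_2)$ in $X_i \cup H_1 \cup H_2$, which is built from a product neighborhood of $A$ by attaching two $2$-handles (one for each pair $(H_j, f_j)$) along curves that wind $m$ times around the core of $N(A)$ at disjoint locations. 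Hence $W$ inherits a natural Seifert structure with \emph{two} disjoint exceptional fibres of multiplicity $m$, one arising from each handle.

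Extending this Seifert structure across $\overline{X_i \setminus W}$, which is a union of solid tori glued along longitudinal annuli, would produce a Seifert structure on all of $X_i$ whose base orbifold has at least two cone points of multiplicity $m$. This contradicts the fact that $X_i$ is a solid torus, whose base orbifold is a disk with at most one cone point. The main obstacle is verifying that the two handle attachments produce genuinely distinct exceptional fibres in the resulting global Seifert structure on $X_i$: a priori the two cores could coincide with the unique core of $X_i$ under isotopy. Ruling this out requires a careful intersection argument showing that, because $H_1 \neq H_2$ and $\partial f_1, \partial f_2$ realize distinct label-pairs, the meridian disks $H_1 \cup f_1$ and $H_2 \cup f_2$ of $W_1$ and $W_2$ respectively cannot be simultaneously isotoped to coincide with a single meridian disk of $X_i$.
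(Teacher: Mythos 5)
Your initial reduction to $m_1 = m_2$ is correct but unnecessary, and more importantly the crux of your argument is incomplete, as you yourself flag in your last paragraph. You construct a ``combined neighbourhood'' $W = N(A \cup H_1 \cup H_2 \cup f_1 \cup f_2)$ and assert it carries a Seifert structure with two distinct exceptional fibres that extends across $\overline{X_i - W}$. Neither claim is justified. First, the construction is under-specified: the $\Gamma_Q$-edges of $\partial f_2$ lie in $A$, so $f_2$ is not disjoint from $W_1 := N(A \cup H_1 \cup f_1)$, and one cannot simply superpose the two handle attachments. Second, and more seriously, even granting that some version of $W$ is built, you would need to show the two resulting exceptional fibres are non-isotopic in $X_i$, which is precisely the gap you acknowledge without closing. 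Since $X_i$ is a solid torus whose core is unique up to isotopy, this is the entire content of the lemma; flagging it as ``requiring a careful intersection argument'' is not a proof.

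The paper's proof avoids the Seifert-extension framing entirely and is both shorter and sharper. It forms $W_1 = N(A \cup H_1 \cup f_1)$ and applies the argument of Lemma~\ref{lem4.9} once: $W_1$ is a solid torus, and the frontier annulus $A_1$ of $W_1$ in $X_i$ is \emph{longitudinal} in the complementary solid torus $U = \overline{X_i - W_1}$. It then sets $H_2' = H_2 \cap U$, $f_2' = f_2 \cap U$ and forms $W_2 = N(A_1 \cup H_2' \cup f_2') \subset U$; the same argument shows $A_1$ has winding number equal to the order of $f_2$, which is $2$ or $3$, in $W_2$ and hence in $U$. This flatly contradicts longitudinality. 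Note this works regardless of whether $m_1 = m_2$, so the paper never needs your first step, and the contradiction is a clean numerical statement about a single annulus rather than a claim about global Seifert structures and isotopy classes of cores. If you want to salvage your approach, you would essentially be forced to reproduce this winding-number argument to rule out the coincidence of the two cores, at which point the Seifert-structure scaffolding becomes superfluous.
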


\begin{proof}
Let $H_1,H_2\subset X_i$ be the components of
the intersection of the filling solid torus $\overline{L(p,q)-V}$ with $X_i$
corresponding to the label-pairs of $f_1,f_2$, respectively.
Let $W_1 = N(A\cup H_1\cup f_1)$.
By the proof of Lemma~\ref{lem4.9}, $W_1$ is a solid torus and the annulus
$A_1$ that is the frontier of $W_1$ in $X_i$ is longitudinal in the solid torus
$U = \overline{X_i-W}$.
Let $W_2 = N(A_1 \cup H'_2 \cup f'_2)\subset U$, where $H'_2 = H_2\cap U$
and $f'_2 = f_2\cap U$.
Again by the proof  of Lemma~\ref{lem4.9},
$A_1$ has winding number 2 or 3
(the order of the Scharlemann cycle $f_2$) in the solid torus $W_2$,
and hence in $U$.
This is a contradiction.
\end{proof}

We will first dispose of Case I, and Case~II when $n\ge 4$.
Note that the valency of $d_V$ in $\Gamma_P$ is $\Delta n \ge 5n$ (since $\D\geq 5$ by assumption).

For an edge $\bar e$ in the reduced graph $\bar \Gamma_P$, we denote
by $wt(\bar e)$ the weight of $\bar e$, i.e. the number of parallel edges
represented by $\bar e$.
Let $\bar e$ be a $D$-edge of the reduced graph $\bar\Gamma_P$.
We say $\bar e$ is of {\em type $O$} if $\bar e$ cuts off a subdisk of $D$
that contains a single vertex $c_j$.
Otherwise, $\bar e$ is of {\em type $N$}.
Define
\begin{equation*}
\lambda (\bar e) = \begin{cases}
2\;wt (\bar e) \ ,\ \text{if $\bar e$ is of type $N$};\\
\noalign{\vskip6pt}
2\;wt (\bar e) + (\text{number of edges of $\Gamma_P$ incident to the
vertex $c_j$), if $\bar e$ of type $O$}.
\end{cases}
\end{equation*}

By Corollary~\ref{cor4.10} and the fact that (by the parity rule) no $D$-edge
can have the same label at both endpoints, it is easy to see that, assuming
$n\ge 4$ in Case~II, we have $\lambda (\bar e)\le n$ for every $D$-edge
$\bar e$ of $\bar\Gamma_P$.

Let $k$ be the number of $D$-edges of $\bar\Gamma_P$.
If $k=0$, then $5n \le 4$ in Case~I, and $5n\le 8$ in Case~II, both contradictions.
So we assume $k\ge 1$.
Also, if $n=1$ then $k=0$, so we also assume $n>1$.
Let $k_0$ be the number of $D$-edges of $\bar\Gamma_P$ of type $O$.
It is easy to see that $k\le 5$ and $k-1 \le k_0 \le 4$.
Note that there are $(4-k_0)$ vertices $c_j$ that are not associated with
$D$-edges of type~$O$.
It follows that
\begin{equation*}
5n \le \D n\le kn + \begin{cases}
(4-k_0)\ ,&\text{in Case I};\\
\noalign{\vskip6pt}
2(4-k_0)\ ,&\text{in Case II, if $n\ge 4$}
\end{cases}
\end{equation*}

\begin{lemma}\label{lem4.12}
Case I is impossible.
\end{lemma}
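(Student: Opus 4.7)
The plan is to derive a contradiction from the inequality $5n \leq kn + (4-k_0)$ already established in the paragraph preceding the lemma, together with the structural constraints $0 \leq k \leq 5$, $\max(0,k-1) \leq k_0 \leq 4$, and $n > 1$.

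First I would dispose of the subcases $k \in \{1,2,3,4\}$ by direct substitution. Since $4 - k_0 \leq 4-(k-1) = 5-k$, the inequality becomes
\[
5n \leq kn + (5-k), \quad \text{i.e.,} \quad (5-k)(n-1) \leq 0,
\]
which contradicts $n > 1$ whenever $k < 5$. Combined with the case $k = 0$ already handled before the lemma, this leaves only the possibility $k = 5$, whence $k_0 = 4$. In this remaining case every inequality above must be sharp, so $\Delta(\alpha,\beta) = 5$ and $\lambda(\bar e) = n$ for each of the five $D$-edges $\bar e$ of $\bar\Gamma_P$, exactly four of which are of type $O$ (one cutting off each $c_j$) and one of type $N$.

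The hard part is ruling out this final equality configuration. I would do so by a parity observation built directly into the definition of $\lambda$: in Case~I each $c_j$ has valency $1$ in $\Gamma_P$, so $\lambda(\bar e) = 2\,wt(\bar e)$ for the unique type-$N$ $D$-edge while $\lambda(\bar e) = 2\,wt(\bar e) + 1$ for each of the four type-$O$ $D$-edges. Since every $wt(\bar e)$ is a positive integer, the requirement $\lambda(\bar e) = n$ would simultaneously force $n$ to be even (from the type-$N$ edge, which demands $wt = n/2 \in \mathbb Z$) and $n$ to be odd (from any type-$O$ edge, which demands $wt = (n-1)/2 \in \mathbb Z$). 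These two conditions on the single integer $n$ are incompatible, yielding the desired contradiction and showing that Case~I cannot occur.
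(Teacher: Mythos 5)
Your proposal is correct and follows essentially the same route as the paper: reduce to $k=5$, $k_0=4$ with all $\lambda(\bar e)=n$, then exploit the parity mismatch between the type-$N$ edge (where $\lambda = 2\,wt(\bar e)$ is even) and the type-$O$ edges (where $\lambda = 2\,wt(\bar e)+1$ is odd). Your phrasing of the last step as a direct parity contradiction on $n$ is in fact a little cleaner than the paper's, which states parity-dependent upper bounds $\lambda(\bar e)\leq n$ or $n-1$ and sums them, but the underlying observation is identical.
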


\begin{proof}
We have $5n \le kn + (4-k_0) \le kn + (4-(k-1))$, and hence
$5(n-1) \le k(n-1)$.
Since $n>1$, this implies that $k=5$ and $k_0=4$.
Thus $\bar \Gamma_P$ has four $D$-edges of type~$O$ and one of type $N$.

Now if $\bar e$ is of type $N$ then
\begin{equation*}
\lambda (\bar e) \le \begin{cases}
n\ ,&\text{$n$ even};\\
\noalign{\vskip6pt}
n-1\ ,&\text{$n$ odd};
\end{cases}
\end{equation*}
and if $\bar e$ is of type $O$ then (since each $c_j$ has valency 1)
\begin{equation*}
\lambda (\bar e) \le \begin{cases}
n-1\ ,&\text{$n$ even};\\
\noalign{\vskip6pt}
n\ ,&\text{$n$ odd}.
\end{cases}
\end{equation*}
Thus we get $5(n-1) < 5(n-1)$, a contradiction.
\end{proof}

\begin{lemma}\label{lem4.13}
In Case II, $n=2$.
\end{lemma}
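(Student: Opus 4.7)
The plan is to suppose $n \geq 4$ (so $n \geq 4$ and even, since we are in Case~II) and extract a contradiction from the extremal nature of the numerical inequality used to prove the previous lemma. First, from $5n \leq \D n \leq kn + 2(4-k_0)$ combined with $k_0 \geq k-1$, rearrangement gives $(5-k)(n-2) \leq 0$. Since $n > 2$, this forces $k = 5$, and then necessarily $k_0 = 4$ and $\D(\a,\b) = 5$. In particular, every $D$-edge $\bar e$ of $\bar\G_P$ attains equality $\l(\bar e) = n$: the unique type $N$ $D$-edge represents a family of exactly $n/2$ parallel $D$-edges of $\G_P$, and each of the four type $O$ $D$-edges represents a family of exactly $(n-2)/2$ parallel $D$-edges with the associated vertex $c_j$ of valency~$2$ (which contributes the remaining $2$ to $\l(\bar e)$).

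Next I would exhibit a $D$-edge Scharlemann cycle of order at most~$3$ inside each of these five extremal families. Since $L$ is a coherently oriented braid in $V$, all vertices $c_1,c_2,c_3,c_4$ share the same sign, so the parity rule forces any $D$-edge of $\G_P$ to carry distinct labels at its two endpoints. In a parallel family of $w$ such $D$-edges the two sequences of labels at the endpoints are arcs of $w$ consecutive residues in $\{1,\ldots,n\}$ differing by a fixed nonzero shift. A standard label-tracking argument then shows that a family of $w = n/2$ parallel $D$-edges contains a $D$-edge Scharlemann cycle of order $\leq 3$, and that a family of $w = (n-2)/2$ parallel $D$-edges flanked by the two edges incident to a valency-$2$ vertex $c_j$ likewise contains a $D$-edge Scharlemann cycle of order $\leq 3$. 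By Corollary~\ref{cor4.10} none of these can have order~$2$, so each has order exactly~$3$.

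Finally, each of the five Scharlemann cycles so produced bounds a face lying in one of the two components $X_1, X_2$ of $X \setminus A$. By pigeonhole at least three of them lie in the same component $X_i$, and hence two of them, coming from distinct $D$-edges of $\bar\G_P$, are supported on distinct label-pairs. This directly contradicts Lemma~\ref{lem4.11}, finishing the proof.

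The main obstacle in this outline is the combinatorial label-tracking of Step~2: producing, inside each of the five extremal parallel configurations, an explicit $D$-edge Scharlemann cycle of order at most~$3$ using only the parity rule, the maximal weight, and the type ($N$ versus $O$) of the edge. Once that step is in place, Corollary~\ref{cor4.10} and Lemma~\ref{lem4.11} deliver the contradiction mechanically.
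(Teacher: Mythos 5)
Your opening numerical reduction --- forcing $\Delta = k = 5$, $k_0 = 4$, and $\lambda(\bar e) = n$ for every $D$-edge of $\bar\Gamma_P$ --- is correct and matches the paper. The gap is in Step~2, and it is not a detail to be filled in: it fails structurally. A parallel family of $D$-edges in $\Gamma_P$ consists of loops at $d_V$, and the only faces bounded entirely by edges \emph{within} such a family are the bigons between consecutive members; hence the only $D$-edge Scharlemann cycle one can hope to find inside a single parallel family is an $S$-cycle of order~$2$, and those are exactly what Corollary~\ref{cor4.10} prohibits. So each of your five extremal families contributes no Scharlemann cycle at all. An order-$3$ $D$-edge Scharlemann cycle necessarily uses edges from three different parallel families. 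This is precisely why the paper works with the two trigon faces $\bar f_1, \bar f_2$ of $\bar\Gamma_P$ (Figure~\ref{bgz5-fig1}), each bounded by three distinct $D$-edges of $\bar\Gamma_P$: the equality $\lambda(\bar e) = n$ on each of the three bounding reduced edges forces the innermost $3$-gon $D$-edge face $f_i$ inside $\bar f_i$ to be a Scharlemann cycle of order~$3$. One gets exactly two such cycles, not five.

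Two further points would remain even if Step~2 were salvaged. First, your pigeonhole step assumes that Scharlemann cycles arising from distinct reduced $D$-edges are supported on distinct label-pairs, but this is not automatic and you do not argue it. Second, relying solely on Lemma~\ref{lem4.11} misses the case where the two order-$3$ Scharlemann cycles land in \emph{different} components $X_1$ and $X_2$; Lemma~\ref{lem4.11} only speaks to cycles in the \emph{same} $X_i$. The paper handles this with a case split on $n \bmod 4$: when $n \equiv 2 \pmod 4$ the cycles $f_1, f_2$ fall in different components and the contradiction comes from Lemma~\ref{lem4.9} together with Lemma~\ref{+-quotient 5}(2) (both exceptional fibres would then have multiplicity~$3$, whereas one must have multiplicity $\Delta-1=4$), while Lemma~\ref{lem4.11} is the tool only when $n \equiv 0 \pmod 4$.
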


\begin{proof}
Assume $n\ge 4$.
We have $5n \le kn +2 (4-k_0)\le kn+2 (4-(k-1))$, giving $(5-k)n \le 2(5-k)$.
This is a contradiction unless $\Delta = k=5$ and $\lambda (\bar e) =n$ for each of
the five $D$-edges of $\bar \Gamma_P$.
The $D$-edges of $\bar\Gamma_P$ are as shown in Figure \ref{bgz5-fig1}.
(Each $c_j$ has valency~0 or 2 in $\Gamma_P$; the $CD$-edges
of $\bar\Gamma_P$ are not shown in Figure \ref{bgz5-fig1}.)
Corresponding to the faces $\bar f_1,\bar f_2$ of $\bar\Gamma_P$ shown
in Figure \ref{bgz5-fig1} are 3-gon $D$-edge faces $f_1,f_2$ of $\Gamma_P$.
Since $\lambda (\bar e)=n$ for each $D$-edge $\bar e$ of $\bar\Gamma_P$,
$f_1$ and $f_2$ are Scharlemann cycles of order~3.

\begin{figure}[!ht]
\centerline{\includegraphics{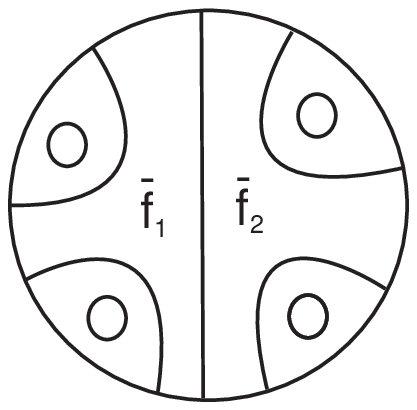}} \caption{ }\label{bgz5-fig1}
\end{figure}

If $n\equiv 2$ $(\text{mod }4)$ then $f_1$ (say) $\subset X_1$ and $f_2\subset X_2$.
Hence each of the two exceptional fibres of $L(p,q)$ has multiplicity~3.
But this contradicts Lemma \ref{+-quotient 5}(2).

If $n\equiv 0$ $(\text{mod }4)$ then $f_1$ and $f_2$ both lie in $X_1$, say.
Since the label-pairs at the corners of $f_1$ and $f_2$ are distinct, this
contradicts Lemma~\ref{lem4.11}.
\end{proof}

{From} now on we will assume that we are in Case~II and $n=2$.
Note that there are no parallel $D$-edges in $\Gamma_P$ by Corollary~\ref{cor4.10}.

Since all the $c_j$'s have the same sign, and $a_1,a_2$ (resp. $b_1,b_2$) have
opposite signs, we may assume that $a_1,a_2$ and $b_1,b_2$ are numbered
so that any $AB$-edge of $\Gamma_Q$ joins $a_i$ to $b_i$, $i=1$ or 2;
equivalently, every $CD$-edge of $\Gamma_P$ has the same label (1 or 2)
at both its endpoints.

Let $f$ be a face of $\Gamma_P$.
Then $\partial f$ consists of edges of $\Gamma_P$ together with {\em corners\/}
at the vertices, i.e. subarcs of the corresponding boundary components of $P$.
We call a corner at the vertex $d_V$ a {\em $d_V$-corner}, and a corner at
a vertex $c_j$ a {\em $c$-corner}.

\begin{lemma}\label{lem4.14}
Suppose $\Gamma_P$ has a disk face with at least two $d_V$-corners,
contained in $X_i$.
Then $X_i$ is a solid torus.
\end{lemma}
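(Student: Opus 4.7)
The approach is to imitate the construction in the proof of Lemma~\ref{lem4.9}, building a solid-torus neighbourhood of $A$ together with $f$ and certain auxiliary capping disks, and then use it to rule out the possibility $X_i \cong T^2 \times I$. Since $X_i$ is either a solid torus or $T^2 \times I$, this will finish the argument.

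First, I would select two of the $d_V$-corners $\alpha_1, \alpha_2$ of $f$; each is an arc on $\partial D \subset T_V$ with endpoints on components of $A \cap T_V$ (vertices $b_j$ of $\Gamma_Q$). Because $\partial D$ bounds the meridian disk $D$ of $V$, each arc $\alpha_j$, together with an appropriate subarc on $A \cap T_V$, bounds a subdisk of $D \cap X_i$. Let $H \subset X_i \cap V$ be a regular neighbourhood of (a collection of) such capping subdisks chosen for the $d_V$-corners of $f$. Here $H$ plays the role of the $2$-handle appearing in the proof of Lemma~\ref{lem4.9}, but comes from the solid torus $V$ rather than from the complementary solid torus $\overline{L(p,q)-V}$.

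Next, I would form $W = N(A \cup H \cup f) \subset X_i$. An Euler-characteristic computation, together with the fact that $f$ is a disk and $A$ an annulus, shows that $W$ is a solid torus, exactly parallelling the verification in the proof of Lemma~\ref{lem4.9}. Moreover, the winding number of $A$ in $W$ is at least $2$, because $\partial f$ runs at least twice along the core direction of $A \cup H$ (once per $d_V$-corner). An essential annulus in $T^2 \times I$ must be isotopic to a vertical annulus, hence has winding number $1$ in any solid-torus regular neighbourhood of itself. Thus $X_i \not\cong T^2 \times I$, so $X_i$ is a solid torus.

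The main technical obstacle is setting up the capping disks so that $H$ is a genuine collection of disjoint disks in $X_i$: one needs to verify that the sub-disks of $D$ cut off by the arcs $\alpha_j$ and the chosen arcs in $A \cap T_V$ are pairwise disjoint, contain no other punctures of $P$, and lie entirely in $X_i \cap V$. Such disjointness and containment follow from the minimality of $|A \cap V'|$ assumed just before the construction of $\Gamma_P$ and $\Gamma_Q$, together with the combinatorics of how $\partial D$ meets $A \cap T_V$ on the torus $T_V$. Once these configurations are controlled, the rest of the argument is the direct handle-theoretic computation outlined above.
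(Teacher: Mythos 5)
There is a genuine gap. The handle you construct cannot be built inside $V$. A $d_V$-corner $\alpha_j$ of $f$ is an arc of $\partial D \subset T_V$ whose two endpoints lie on \emph{different} components of $A\cap T_V$ (with $n=2$, one on $b_1$ and one on $b_2$), so $\alpha_j$ together with a single ``subarc on $A\cap T_V$'' does not close up, and there is no subdisk of $D$ that it cuts off. The object that actually plays the role of the handle in Lemma~\ref{lem4.9}, and in the proof of the present lemma, is the component $H_i = (\,\overline{L(p,q)-V}\,)\cap X_i$ of the \emph{complementary} solid torus: a $1$-handle attached to $A$ along the meridian disks $\widehat b_1$, $\widehat b_2$, whose boundary annulus $T_V\cap X_i$ carries the $d_V$-corners. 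Your ``handle from $V$'' has no three-dimensional realization.

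Moreover, even with the correct $H_i$, the winding-number argument does not go through. In Lemma~\ref{lem4.9} the face $f$ is bounded by a Scharlemann cycle of $D$-edges, so $\partial f$ lies entirely on $A\cup\partial H_i$ and reads a power of the core of the $1$-handle in $\pi_1(N(A\cup H_i))$; that is precisely why $N(A\cup H_i\cup f)$ is a solid torus in which $A$ winds with multiplicity the order of the cycle. A face as in Lemma~\ref{lem4.14} generally has $c$-corners (lying on $T_K\cap\partial X_i \subset T_i\setminus A$) and non-parallel edges, so $\partial f$ is not confined to $A\cup\partial H_i$ and $N(A\cup H_i\cup f)$ need not be a solid torus at all, let alone one with computable winding number. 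The paper's proof enlarges the neighbourhood to $W=N(T_i\cup H_i\cup f)$, absorbing the $c$-corners into $T_i$, and replaces the winding-number count with the relative homology computation $H_1(W,T_i)\cong\zed/m$ (where $m\ge 2$ is the number of $d_V$-corners), which is incompatible with $W\cong T^2\times I$. That shift --- including all of $T_i$ in $W$ and passing to $H_1(W,T_i)$ rather than to a solid-torus/winding-number statement --- is the step your proposal is missing.
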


\begin{proof}
Suppose $X_i$ is not a solid torus; then $X_i\cong T^2\times I$.
Let $T_i,T'_i$ be the boundary components of $X_i$, where $A\subset T_i$.
Let $f$ be a disk face of $\Gamma_P$ as in the statement of the lemma.
Let $H_i = (\,\overline{L(p,q)-V}\,)\cap X_i$, and define $W = N(T_i\cup H_i\cup f)$.
Then $\partial W = T_i  \cup T''_i$, say.
Let $E$ be a disk in $A$ containing $H_i \cap A$ and let $x\in H_1(W,E)\cong H_1 (W)$
be the class of the core of the 1-handle $H_i$.
Then $H_1(W) \cong (H_1(T_i) \oplus\zed)/([\partial f])$, where $\zed$ is
generated by $x$.
If $f$ has $m\ge 2$ $d_V$-corners then $[\partial f] = (y,mx)$ for some
$y\in H_1 (T_i)$.
Therefore $H_1(W,T_i) \cong\zed/m$.
But this contradicts the fact that, since $T''_i$ is homologous to $T_i$ in
$X_i \cong T^2\times I$, $W\cong T^2\times I$.
\end{proof}

Let $f$ be a face of $\Gamma_P$.
We say $f$ is {\em of type $(a,b)$} if $f$ is a disk and has $a$ $d_V$-corners and
$b$ $c$-corners.

%
%

 \begin{lemma}\label{lem4.15}
 $\Gamma_P$ does not have faces of types $(a,b)$ and $(c,d)$, where $c\ge2$
 and $ad-bc = \pm1$, that both lie in the same $X_i$.
 \end{lemma}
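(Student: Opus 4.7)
The plan is to generalize the homology computation of Lemma~\ref{lem4.14} by using the two faces $f_1,f_2$ to produce two simultaneous linear relations. Since $f_2$ has $c\ge 2$ $d_V$-corners, Lemma~\ref{lem4.14} applied to $f_2$ already forces $X_i$ to be a solid torus, so the entire argument will take place inside this solid torus. Let $T\subset\partial X_i$ be a torus boundary component carrying the $c$-corners of both $f_1$ and $f_2$ (if these corners sit on two different tori, first reduce to this situation, or else treat the two tori componentwise), let $H=V'\cap X_i$ be the filling $1$-handle as in Lemma~\ref{lem4.14}, and form
\[
W \;=\; N(T\cup H\cup f_1\cup f_2)\subset X_i.
\]

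A Mayer--Vietoris computation analogous to that in the proof of Lemma~\ref{lem4.14} will then yield
\[
H_1(W)\;\cong\;\bigl(H_1(T)\oplus \mathbb Z\langle x\rangle\bigr)\big/\bigl\langle[\partial f_1],\,[\partial f_2]\bigr\rangle,
\]
where $x$ is the class of the core of $H$. Choose a basis $(\lambda,\mu,x)$ of $H_1(T)\oplus \mathbb Z\langle x\rangle$, with $\mu$ the meridian on $T$ of the braid component $K_j$ to which the $c$-vertex of $f_1,f_2$ corresponds. A face of type $(s,t)$ contributes $s$ to the $x$-coordinate of its boundary class (one per $d_V$-corner) and $t$ to the $\mu$-coordinate (one per $c$-corner), with signs dictated by the parity rule. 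Hence the matrix of the two relations projected onto the $(\mu,x)$-summand is
\[
\begin{pmatrix} b & a\\ d & c\end{pmatrix},
\]
whose determinant is $\pm(ad-bc)=\pm 1$. This matrix is therefore unimodular, and both $\mu$ and $x$ are killed in $H_1(W)$.

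To finish, I would convert this homological vanishing into a geometric contradiction. The fact that the meridian $\mu$ of $K_j$ is nullhomologous in $W$ means that, after capping off with $N(K_j)$ and with the part of $V'$ not already absorbed in $W$, one extracts a compressing disk for the torus $T=\partial N(K_j)$. But by Lemma~\ref{+-quotient 5} the braid $L\subset V$ consists of Seifert fibres whose components are all essential in $V$, so $T$ remains incompressible after filling back in the other braid components, a contradiction. An alternative endgame is a winding-number mismatch along $A$ in the spirit of Lemmas~\ref{lem4.9} and~\ref{lem4.11}.

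The hard part will be Step~2: tracking the signs carefully enough so that the $(\mu,x)$-block of the relation matrix has determinant exactly $\pm(ad-bc)$ rather than some other combination of $a,b,c,d$, and dealing with the case where the $c$-corners of $f_1$ and $f_2$ are distributed over distinct boundary tori of $X_i$. Both issues are of the same type as bookkeeping already performed in Corollary~\ref{cor4.10} and Lemma~\ref{lem4.11}. The role of the hypothesis $c\ge 2$ is twofold: it provides enough $d_V$-corners on $f_2$ to invoke Lemma~\ref{lem4.14}, and it guarantees that $x$ contributes a genuine $\mathbb Z$-summand to $H_1(W)$ (so that killing it has content).
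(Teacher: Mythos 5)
Your core idea — two faces, a $2\times 2$ relation matrix of determinant $\pm(ad-bc)=\pm1$, homology killed — is the paper's, but the paper's set-up makes the contradiction immediate, and your extra "endgame" both unnecessary and, as written, incorrect. The paper works in the \emph{relative} group $H_1(X_i, A)$ rather than in the absolute $H_1$ of a regular neighbourhood: once the face of type $(c,d)$ forces $X_i$ to be a solid torus via Lemma~\ref{lem4.14}, and since $A$ is an essential annulus of winding number $m\ge 2$ in $X_i$, one has $H_1(X_i, A)\cong \mathbb Z/m$, generated by a cocore arc $t$ of $A_i=\overline{\partial X_i - A}$. With $x$ the class of the core of the $1$-handle $H_i$, a face of type $(s,u)$ contributes the single relation $sx+ut=0$ in $H_1(X_i,A)$. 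The two faces then give $ax+bt=0$ and $cx+dt=0$; unimodularity kills $t$, so $H_1(X_i,A)=0$ — directly contradicting $m\ge 2$, with no further geometry required.

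The gap in your version is the endgame: "$\mu$ is nullhomologous in $W$" does not produce a compressing disk — a boundary curve of a $3$-manifold that dies in $H_1$ need not bound any disk, embedded or singular. The paper avoids needing such an implication because the target group is known in advance to be a nontrivial finite cyclic group. Your secondary worry about $c$-corners spread over two distinct boundary tori also does not arise in the paper's framework: once $X_i$ is a solid torus, $\partial X_i$ is the single torus $A\cup A_i$, every $c$-corner of a face lying in $X_i$ sits on $A_i$, and $T_{K'}$ (if $L$ has two components) is entirely on the other side of $A$. Relatedly, $T=\partial X_i$ is not $T_{K_j}$, so "the meridian on $T$ of $K_j$" is not a curve on $T$ at all; the class you want is the cocore arc $t$ of $A_i$.
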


 \begin{proof}
 Let $f,g$ be faces of type $(a,b)$, $(c,d)$, respectively, lying in $X_i$.
 The existence of $g$ implies that $X_i$ is a solid torus by Lemma~\ref{lem4.14}.

 Let $A_i$ be the annulus $\overline{\partial X_i -A}$.
 Then $H_1 (X_i,A)$ is generated by $t$, where $t$ is represented by a cocore
 arc of $A_i$ running from $a_2$ to $a_1$.
 Also, $H_1(X_i,A)\cong \zed/m$, where $m$ is the multiplicity of the
 exceptional fibre that is the core of $X_i$.
 Let $H_i = (\,\overline{L(p,q)-V}\,) \cap X_i$, and $\partial_0 H_i =
 \partial H_i\cap T_V$.
 The boundary of the face $f$, as it lies in $X_i$, consists of $a$ cocore arcs of the
 annulus $\partial_0 H_i$ (coming from the $d_V$-corners of $f$),
 $b$ cocore arcs   of the annulus $A_i$,
 (coming from the $c$-corners of $f$),
 together with edges of $\Gamma_Q$, lying in $A$; similarly for $g$.
 Let $x$ be the element of $H_1(X_i,A)$ represented by the core of the
 1-handle $H_i$, oriented from $b_1$ to $b_2$.
 Then $f$ and $g$ give the relations
 \begin{align*}
 ax + bt & = 0\\
 cx + dt & = 0
 \end{align*}
 in $H_1 (X_i,A)$.
 Since $ad-bc = \pm1$, this implies that $t=0$ and therefore
 $H_1 (X_i,A)=0$, a contradiction.
 \end{proof}

\begin{cor}\label{cor4.16}
$\Gamma_P$ does not have a pair of faces of the following types lying in $X_i$:
\begin{itemize}
\item[(1)] $(1,1)$ and $(2,1)$;
\item[(2)] $(1,1)$ and $(3,2)$;
\item[(3)] $(2,1)$ and $(3,2)$.
\end{itemize}
\end{cor}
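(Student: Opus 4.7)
The plan is to deduce Corollary \ref{cor4.16} as three immediate instances of Lemma \ref{lem4.15} by verifying the numerical hypothesis $c \geq 2$ and $ad - bc = \pm 1$ in each case.

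First I would set up, for each pair of face types $(a,b)$ and $(c,d)$, the determinant computation. For the pair $(1,1)$ and $(2,1)$ take $(a,b) = (1,1)$, $(c,d) = (2,1)$, so that $c = 2 \geq 2$ and $ad - bc = 1\cdot 1 - 1\cdot 2 = -1$. For the pair $(1,1)$ and $(3,2)$ take $(a,b) = (1,1)$, $(c,d) = (3,2)$, so that $c = 3 \geq 2$ and $ad - bc = 1\cdot 2 - 1\cdot 3 = -1$. For the pair $(2,1)$ and $(3,2)$ take $(a,b) = (2,1)$, $(c,d) = (3,2)$, so that $c = 3 \geq 2$ and $ad - bc = 2\cdot 2 - 1\cdot 3 = 1$.

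In each instance the hypotheses of Lemma \ref{lem4.15} are met, and the lemma's conclusion directly rules out the simultaneous existence of the two faces in the same component $X_i$. There is no real obstacle here: the corollary is a purely arithmetic specialisation of Lemma \ref{lem4.15}, and the only thing to check is that one correctly identifies which face should play the role of $(a,b)$ and which should play the role of $(c,d)$, so that the condition $c \geq 2$ is satisfied in each of the three cases listed.

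Thus the proof will consist of little more than the three determinant computations above, presented as a short verification that each listed pair is of the form excluded by Lemma \ref{lem4.15}.
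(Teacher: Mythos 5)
Your proposal is correct and takes exactly the approach the paper intends: Corollary \ref{cor4.16} is stated without separate proof because it is a direct arithmetic specialisation of Lemma \ref{lem4.15}, and your three determinant checks (each giving $ad-bc=\pm1$ with $c\ge 2$) are precisely the required verification.
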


\begin{lemma}\label{lem4.17}
In $\Gamma_Q$, the endpoints of the $AB$-edges incident to $b_i$ are
consecutive around $b_i$, $i=1,2$.
\end{lemma}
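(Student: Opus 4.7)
The plan is to argue by contradiction via an innermost-disk analysis on the planar surface $Q$, exploiting the essentiality of $P$ and $Q$ in $Y$ to preclude trivial loops in $\Gamma_Q$. Suppose the $AB$-edges at $b_i$ are not consecutive. Then we can find two $AB$-edges $e$ and $e'$ incident to $b_i$, cyclically adjacent among $AB$-edges at $b_i$, with at least one $BB$-endpoint on the arc $\gamma_b\subset b_i$ between their $b_i$-endpoints (the arc containing no $AB$-endpoint). By the parity convention recalled before the lemma, both $e$ and $e'$ end on $a_i$, so together with $\gamma_b$ and a suitably chosen arc $\gamma_a\subset a_i$ they cobound a region $R\subset Q$.

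Since $Q$ is a $4$-punctured sphere, choosing $\gamma_a$ so that $R$ contains as few of the other boundary circles $a_{3-i},b_{3-i}$ as possible leaves three cases: $R$ is (i) a disk, (ii) an annulus with inner boundary $a_{3-i}$, or (iii) an annulus with inner boundary $b_{3-i}$. Any edge $f$ of $\Gamma_Q$ with an endpoint in the interior of $\gamma_b$ is trapped in $R$: it cannot cross the disjoint arcs $e, e'$, nor exit through $\gamma_a,\gamma_b\subset \partial Q$ except at its own endpoints.

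In cases (i) and (ii), $b_{3-i}$ lies outside $R$, so $f$'s other endpoint must also lie on $\gamma_b$, making $f$ a $BB$-loop at $b_i$ contained in $R$. Taking an innermost non-wrapping such loop (non-wrapping with respect to $a_{3-i}$ in case (ii)), its small disk side has interior disjoint from $\partial Q$ and from all other edges of $\Gamma_Q$. This produces a trivial loop at $b_i$, forbidden by the essentiality of $P$: such a loop would provide a boundary-compression for $P$ and hence violate the minimality of $|P\cap Q|$. The possibility in case (ii) that every $BB$-loop at $b_i$ in $R$ wraps around $a_{3-i}$ is ruled out by the dual argument at $a_{3-i}$, whose edges are all $AA$-edges to $a_i$ confined to $R$: a wrapping $BB$-loop would block these $AA$-edges from reaching $\gamma_a$, which is impossible since disjoint edges cannot cross.

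The main obstacle is case (iii), where $f$ may additionally be a $BB$-edge from $b_i$ to $b_{3-i}$. There every edge at $b_{3-i}$ is confined to $R$ and must be a $BB$-edge (its $AB$-counterpart would have to reach $a_{3-i}$ outside $R$). If some $BB$-loop at $b_i$ or at $b_{3-i}$ exists in $R$, a careful innermost choice again yields a trivial loop as above; otherwise $R$ contains only mutually parallel $BB$-edges between $b_i$ and $b_{3-i}$, whose bigon faces in $R$, combined with the valency $\Delta\geq 5$, force a forbidden Scharlemann cycle of order~$2$ in $\Gamma_P$ on label pair $(1,2)$, ruled out by Corollary~\ref{cor4.10}.
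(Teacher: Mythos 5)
The paper's proof is a one-liner, and the key ingredient you are missing is the parity rule for $BB$-edges: since, by parity, no $D$-edge of $\Gamma_P$ has the same label at both endpoints (this is recalled just before Lemma~\ref{lem4.12}), there are no $BB$-loops at $b_i$ at all. Combined with the stated fact that every $AB$-edge at $b_i$ lands on $a_i$, this means every edge-endpoint on $b_i$ belongs either to an arc running to $a_i$ or to an arc running to $b_{3-i}$. Any two $AB$-edges at $b_i$, together with $a_i$, separate the planar surface $Q$, and $b_{3-i}$ lies on one side; so all $BB$-endpoints on $b_i$ lie in a single complementary arc, which is exactly the statement. Your innermost-loop arguments are devoted to excluding $BB$-loops, which the parity rule excludes for free, and once loops are gone the remaining work is a trivial separation argument, not the multi-case analysis you run.

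Beyond being longer than necessary, your proof has genuine gaps. First, the assertion that one can ``choose $\gamma_a$'' to control which circles lie in $R$ is false: once $e$, $e'$ and $\gamma_b$ are fixed, the complementary region $R$ on the $\gamma_b$-side is determined, and the subarc $\gamma_a\subset a_i$ it meets is forced. In particular the case that $R$ contains both $a_{3-i}$ and $b_{3-i}$ is not eliminated and is simply missing from your enumeration. (It can be repaired by passing to a different gap between cyclically-adjacent $AB$-edges, but that is not what you wrote.) Second, in case~(iii) the claim that the bigon faces in $R$ produce a Scharlemann cycle of order $2$ in $\Gamma_P$ does not follow: a bigon face of $\Gamma_Q$ is a disk in $Q$, not in $P$, and mutually parallel edges in $\Gamma_Q$ need not be parallel in $\Gamma_P$, so you have not exhibited a $D$-edge $S$-cycle in $\Gamma_P$ and cannot invoke Corollary~\ref{cor4.10}. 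In fact, once $BB$-loops are excluded, case~(iii) dies by a simple valency count: all $\Delta$ edges at $b_{3-i}$ would have to land on the open arc $\gamma_b\subset b_i$, but $b_i$ carries only $\Delta$ endpoints in total and at least two of them (those of $e$ and $e'$) are $AB$-endpoints, a contradiction. No Scharlemann cycle machinery is needed.
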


\begin{proof}
This follows immediately from the fact that all the $AB$-edges incident to $b_i$
have their other endpoint on $a_i$, $i=1,2$.
\end{proof}

We will use Lemma~\ref{lem4.17} in conjunction with the following observation.
For $i=1$ or 2, consider the edge-endpoints labeled $i$ around $d_V$ in $\Gamma_P$.
In $\Gamma_Q$ these correspond to the edge-endpoints around vertex $b_i$.
Number these $p_0,p_1,\ldots,p_{\Delta-1}$ in order around $b_i$.
Then around $d_V$ the corresponding points with label~$i$ occur in the order
$p_0,p_d,\ldots,p_{(\Delta-1)d}$, for some integer $d$ coprime to $\Delta$.
In particular, if $\Delta=6$ then $d = \pm1$.
Since $AB$-edges in $\Gamma_Q$ correspond to $CD$-edges in $\Gamma_P$
we have the following corollary to Lemma~\ref{lem4.17}.

\begin{cor}\label{cor4.18}
If $\Delta =\ 6$ then the endpoints with label $i$ on $d_V$ of the $CD$-edges
of $\Gamma_P$ are consecutive among all the edge-endpoints with label~$i$
on $d_V$, $i=1,2$.
\end{cor}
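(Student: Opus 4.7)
The plan is to combine Lemma~\ref{lem4.17} with the cyclic-order observation made immediately before the statement of the corollary, and reduce the claim to a simple arithmetic fact about units modulo~$6$.

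First I would invoke Lemma~\ref{lem4.17}, which says that in $\Gamma_Q$ the endpoints of the $AB$-edges at $b_i$ form a consecutive block among all edge-endpoints at $b_i$. Since $AB$-edges of $\Gamma_Q$ correspond bijectively with $CD$-edges of $\Gamma_P$, and the edge-endpoints at $b_i$ in $\Gamma_Q$ correspond to the edge-endpoints with label $i$ at $d_V$ in $\Gamma_P$, the task reduces to showing that this bijective correspondence carries cyclically consecutive blocks to cyclically consecutive blocks when $\Delta = 6$.

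Next I would appeal to the observation stated just before the corollary: numbering the edge-endpoints at $b_i$ in cyclic order as $p_0, p_1, \ldots, p_{\Delta - 1}$, the corresponding endpoints with label $i$ around $d_V$ appear in cyclic order as $p_0, p_d, p_{2d}, \ldots, p_{(\Delta - 1)d}$ (indices read mod $\Delta$) for some integer $d$ coprime to $\Delta$. When $\Delta = 6$, the only units modulo~$6$ are $\pm 1$, so $d \equiv \pm 1 \pmod{6}$, which means the correspondence either preserves or reverses the cyclic order.

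Finally, since both the identity and the reversal of a cyclic order carry consecutive blocks to consecutive blocks, the consecutive block of $AB$-endpoints around $b_i$ provided by Lemma~\ref{lem4.17} transfers to a consecutive block of $CD$-endpoints with label $i$ around $d_V$, and this holds for each $i = 1, 2$. No genuine obstacle is expected; the entire argument turns on the numerical accident that $\varphi(6) = 2$, and indeed the conclusion would fail in general for larger values of $\Delta$ with more units.
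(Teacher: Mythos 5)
Your argument is correct and is essentially identical to the paper's own: the paper states Corollary~\ref{cor4.18} as an immediate consequence of Lemma~\ref{lem4.17} together with the observation (made immediately before the corollary) that the labeled endpoints around $d_V$ appear in the order $p_0, p_d, \ldots, p_{(\Delta-1)d}$ with $\gcd(d,\Delta)=1$, which forces $d \equiv \pm 1 \pmod 6$. Nothing further needs to be said.
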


\begin{lemma}\label{lem4.19}
Case II, $n=2$, is impossible.
\end{lemma}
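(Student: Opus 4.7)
Assume for contradiction that we are in Case II with $n=2$ and $\Delta := \Delta(\alpha,\beta) \geq 5$. The vertex $d_V$ has valency $2\Delta \geq 10$, each $c_j$ has valency $0$ or $2$ with at least three having valency $2$, so the number of $CD$-edges $e_{CD}$ lies in $\{6,8\}$ and hence $e_D = \Delta - e_{CD}/2 \geq \Delta - 4$. By the parity rule every $D$-edge has label-pair $\{1,2\}$, and by Corollary \ref{cor4.10} no two $D$-edges of $\Gamma_P$ are parallel. An Euler-characteristic count on $\Gamma_P \subset P$ (with $\chi(P)=-3$) then produces on the order of $\Delta$ disk faces, with $\sum_f a_f = 2\Delta$ and $\sum_f b_f = e_{CD}$ over all disk faces $f$ of type $(a_f,b_f)$.

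The plan is then to classify each disk face by its type $(a,b)$ (number of $d_V$-corners, number of $c$-corners) using the restrictions accumulated in the previous lemmas: no $D$-edge bigon (Corollary \ref{cor4.10}); any disk face with $a\geq 2$ forces its ambient $X_i$ to be a solid torus (Lemma \ref{lem4.14}); no pair of face-types from $\{(1,1),(2,1)\}$, $\{(1,1),(3,2)\}$, $\{(2,1),(3,2)\}$ can coexist in the same $X_i$ (Corollary \ref{cor4.16}); and the label-$i$ $CD$-endpoints on $d_V$ form a single block inside the cyclic sequence of all label-$i$ endpoints, by Lemma \ref{lem4.17}, sharpened by Corollary \ref{cor4.18} when $\Delta=6$. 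Combining these constraints with the planar non-crossing chord-diagram structure forced by the non-parallel $D$-edges, I would reduce the possibilities to a short list of candidate configurations for $\Gamma_P$ and rule each out either by producing a forbidden pair of faces inside one $X_i$ (via Corollary \ref{cor4.16} or Lemma \ref{lem4.15}) or by violating the face-sum identities above. The case $\Delta=6$ uses Corollary \ref{cor4.18} in an essential way to pin down the cyclic arrangement of $CD$-endpoints; $\Delta=5$ requires a direct enumeration of the small $D$-edge chord diagram; and $\Delta\geq 7$ should fall to the most generous form of the Euler-characteristic inequality, since then too many trigon-type $D$-edge faces must crowd into a common $X_i$.

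The main obstacle is that with $n=2$ there is only one available label-pair $\{1,2\}$ for $D$-edge Scharlemann cycles, so Lemma \ref{lem4.11} does \emph{not} bound the number of order-$3$ Scharlemann cycles inside a single $X_i$. The contradiction must therefore be extracted from the finer interplay between the planar chord diagram of $D$-edges at $d_V$, the block structure of $CD$-endpoints coming from Lemma \ref{lem4.17} and Corollary \ref{cor4.18}, and the face-type exclusions of Corollary \ref{cor4.16}, together with the dichotomy that at most one of $X_1,X_2$ fails to be a solid torus. Isolating the precise forbidden sub-configuration for each value of $\Delta\geq 5$ is the heart of the argument.
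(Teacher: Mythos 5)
Your outline correctly identifies the machinery and the strategy the paper uses: Corollary~\ref{cor4.10} to rule out parallel $D$-edges (so $\Gamma_P$ and $\bar\Gamma_P$ agree on $D$-edges), Lemma~\ref{lem4.14} to force a solid-torus constraint from any face with two or more $d_V$-corners, Lemma~\ref{lem4.15}/Corollary~\ref{cor4.16} to exclude pairs of faces in a common $X_i$, and Lemma~\ref{lem4.17}/Corollary~\ref{cor4.18} to constrain the cyclic order of label-$i$ $CD$-endpoints around $d_V$. You also correctly observe that Lemma~\ref{lem4.11} is toothless when $n=2$ (only one label-pair $\{1,2\}$ is available), which is exactly why the extra lemmas are needed. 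So conceptually you are aligned with the paper.

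However, the proposal is a plan, not a proof. The sentence ``I would reduce the possibilities to a short list of candidate configurations for $\Gamma_P$ and rule each out'' is where the substance of the lemma lives, and you do not carry it out. The paper's argument stratifies on $k$, the number of $D$-edges, running $k = 5, 4, 3, 2, 1$, and for each $k$ enumerates the non-crossing chord configurations of the $D$-edges (Figures~\ref{bgz5-fig2}--\ref{bgz5-fig16}), determines which $c_j$ must have valency $2$ by repeated invocation of Lemma~\ref{lem4.14}, pins down the $CD$-edge pattern (noting that $C$-edges can only occur for small $k$, since $\Delta\ge 5$ bounds how many corners a $c_j$-loop can absorb), and then in each remaining configuration exhibits either a forbidden face pair (Corollary~\ref{cor4.16} or Lemma~\ref{lem4.15}), a label violation (Corollary~\ref{cor4.18} when $\Delta=6$, or a direct contradiction to the cyclic-order claim after Lemma~\ref{lem4.17}), or a contradiction to Lemma~\ref{+-quotient 5}(2) via a double order-$3$ Scharlemann cycle. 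None of this enumeration appears in your write-up. As a secondary issue, your count $e_{CD}\in\{6,8\}$ silently assumes there are no $C$-edges; in fact $C$-edges are possible and must be handled in some of the small-$k$ sub-cases (the paper explicitly excludes or accommodates them using $\Delta\ge 5$). Until the case analysis is actually performed, there is no proof here, only a correct identification of the tools that such a proof must use.
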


\begin{proof}
Let $k$ be the number of $D$-edges of $\Gamma_P$, and $\ell$ the number of
vertices $c_j$ with valency~2.
Thus $1\le k\le 5$ and $\ell=3$ or 4.
\medskip

\noindent $\underline{k=5}$.
The $D$-edges of $\Gamma_P$ are as shown in Figure \ref{bgz5-fig2}.
Then each of $X_1$ and $X_2$ contains a $D$-edge Scharlemann cycle of order~3.
By Lemma~\ref{lem4.9} $L(p,q)$ has two exceptional fibres of multiplicity~3.
This contradicts Lemma \ref{+-quotient 5}(2).
\medskip

\begin{figure}[!ht]
\centerline{\includegraphics{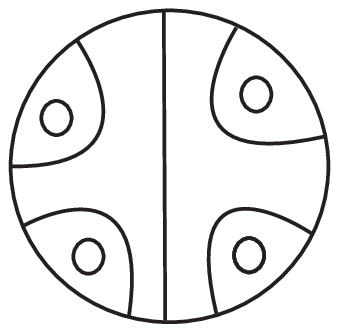}} \caption{ }\label{bgz5-fig2}
\end{figure}

\noindent $\underline{k=4}$.
There are two cases, (a) and (b), where the $D$-edges of $\Gamma_P$ are as
shown in Figure \ref{bgz5-fig3}(a) and (b), respectively.
\medskip

\begin{figure}[!ht]
\centerline{\includegraphics{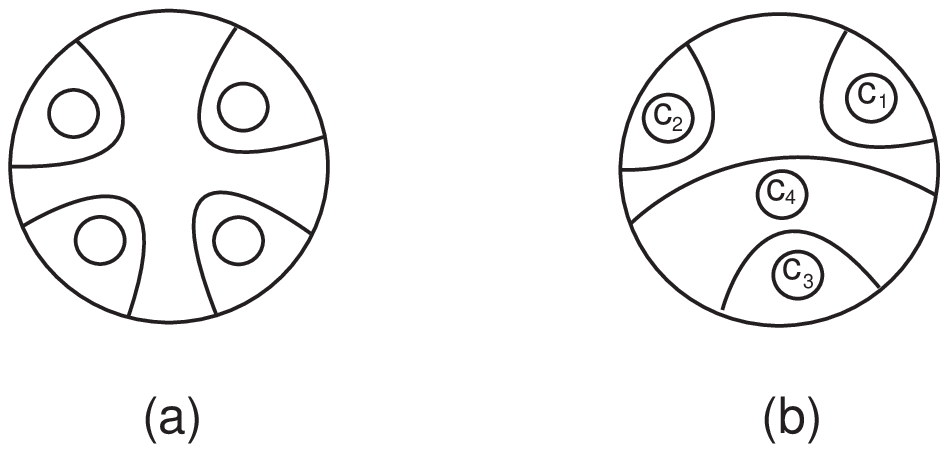}} \caption{ }\label{bgz5-fig3}
\end{figure}

\noindent  {\bf Case (a).}
A vertex $c_j$ of valency 2 gives rise to a face of type $(2,1)$.
It follows from Lemma~\ref{lem4.14} that $\ell=4$, and hence $\Gamma_P$ is
as shown in Figure \ref{bgz5-fig4}.
In particular $\Delta =8$.
Let the edge-endpoints around vertex $b_1$ in $\Gamma_Q$ be
$p_0,p_1,\ldots,p_7$, numbered in order around the vertex.
By Lemma~\ref{lem4.17} we may assume that $p_0,p_1,p_2,p_3$ are
endpoints of $AB$-edges and $p_4,p_5,p_6,p_7$ endpoints of $B$-edges.
By the remarks after Lemma~\ref{lem4.17} the corresponding points with label~1
on $d_V$ in $\Gamma_P$ appear in the order $p_0,p_d,\ldots,p_{7d}$ for some $d$
coprime to 8.
Since we see from Figure~4 that the $CD$-edges and $D$-edges with label~1
alternate around $d_V$ it is clear that no such integer $d$ exists.
\medskip

\begin{figure}[!ht]
\centerline{\includegraphics{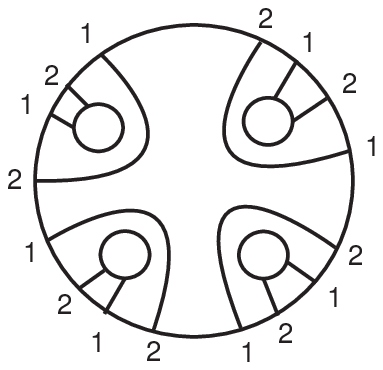}} \caption{ }\label{bgz5-fig4}
\end{figure}

\noindent  {\bf Case (b).}
First note that at least one of $c_1,c_2$ has valency $2$, and hence both do by
Lemma \ref{lem4.14}.
Similarly, Lemma \ref{lem4.14} implies that $c_4$ has valency $2$.
There are two possibilities for the edges incident to $c_4$, shown in
Figure \ref{bgz5-fig5}(i) and (ii).

\begin{figure}[!ht]
\centerline{\includegraphics{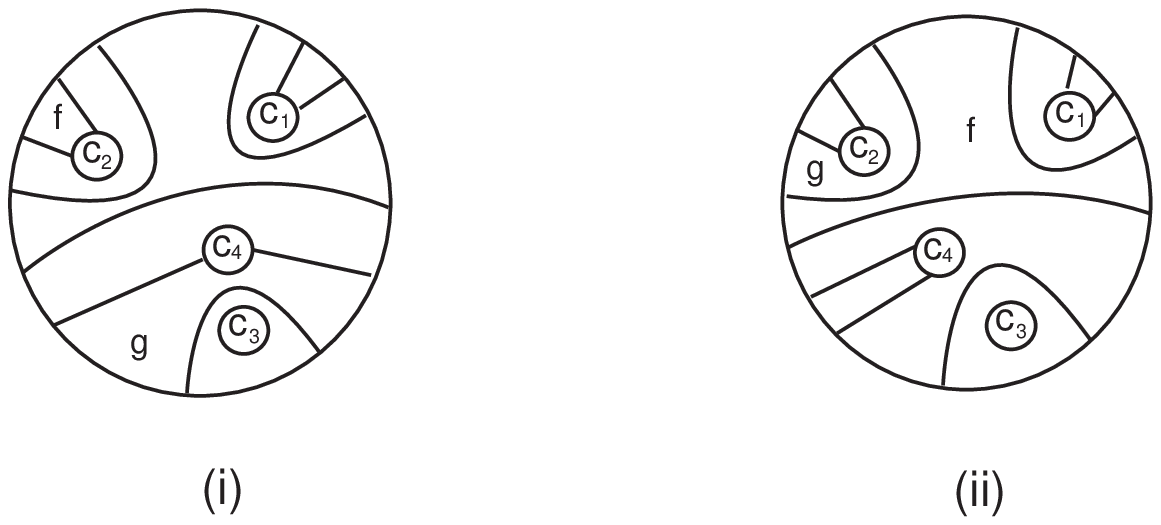}} \caption{ }\label{bgz5-fig5}
\end{figure}

In (i) the faces $f$ and $g$ contradict Corollary~\ref{cor4.16}(1).

In (ii), the face $f$ implies, by Lemma~\ref{lem4.14}, that $c_3$ has valency~2.
Hence there is a face of type (1,1) on the same side as the face $g$,
contradicting Corollary~\ref{cor4.16}(1).
(Here, and in the sequel, by ``on the same side'' we shall mean on the same
side of $A$ in $X$, i.e. in the same component $X_1$ or $X_2$.)
\medskip

\noindent $\underline{k=3}$.
There are three cases, (a), (b)  and (c), where the $D$-edges of $\Gamma_P$ are as
shown in Figure \ref{bgz5-fig6}(a), (b) and (c), respectively.
\medskip

\begin{figure}[!ht]
\centerline{\includegraphics{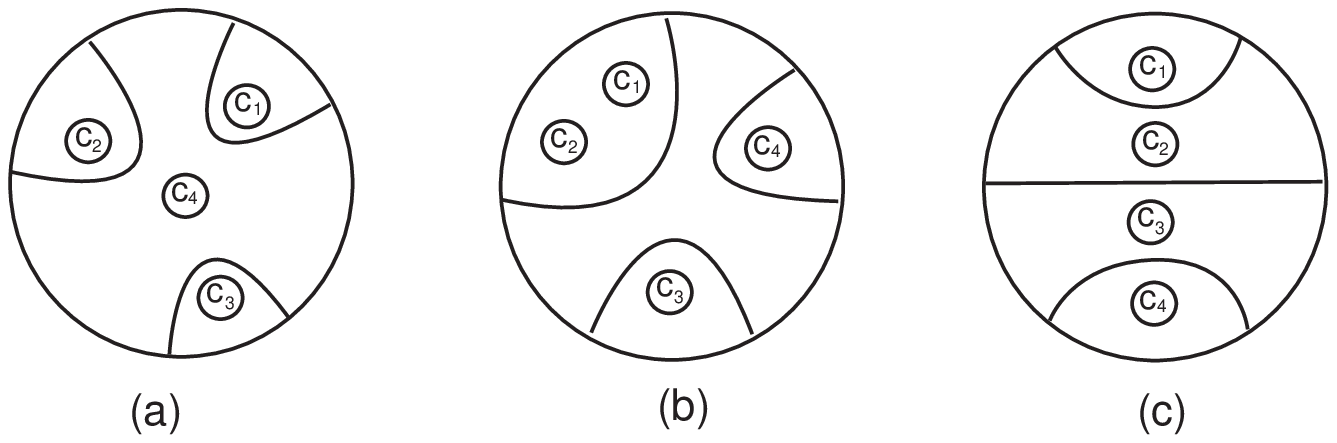}} \caption{ }\label{bgz5-fig6}
\end{figure}

\noindent  {\bf Case (a).}
If $c_4$ has valency 0 then $\Gamma_P$ is as shown in Figure \ref{bgz5-fig7}.
Hence $\Delta =6$.
Since the 1-labels on $d_V$ belonging to $CD$-edges are not consecutive
around $d_V$, this contradicts Corollary~\ref{cor4.18}.

\begin{figure}[!ht]
\centerline{\includegraphics{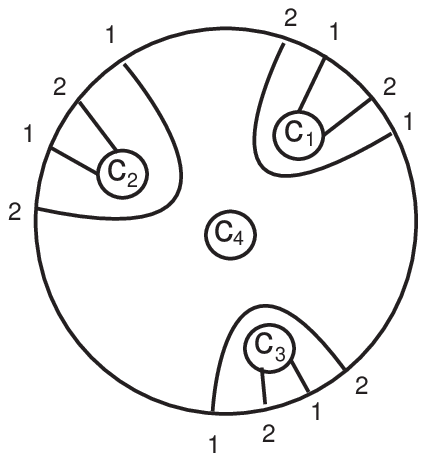}} \caption{ }\label{bgz5-fig7}
\end{figure}

So suppose $c_4$ has valency 2.
The two possibilities for the edges incident to $c_4$ are shown in Figure \ref{bgz5-fig8}(i)
and (ii).

\begin{figure}[!ht]
\centerline{\includegraphics{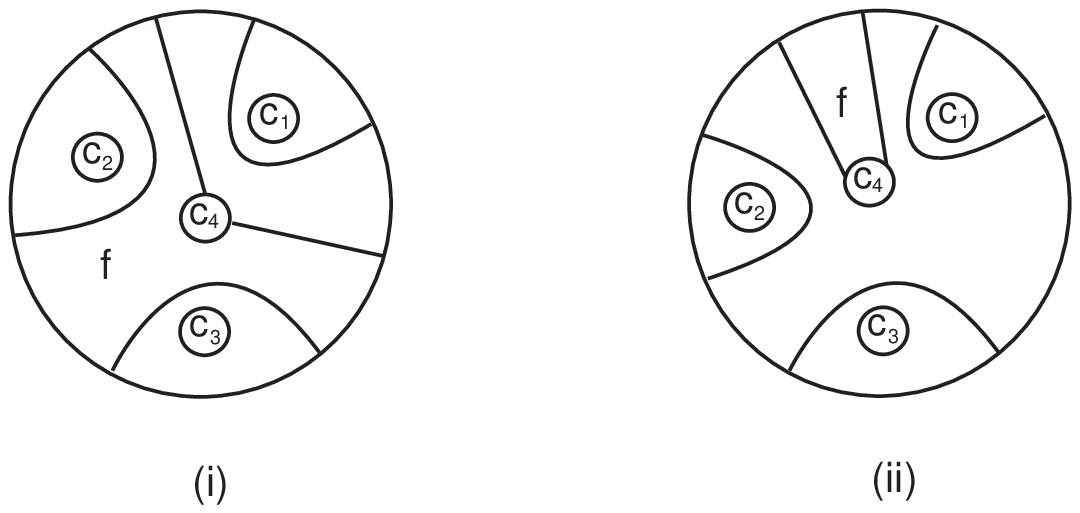}} \caption{ }\label{bgz5-fig8}
\end{figure}

In (i), the face $f$ implies that $c_1$ has valency~2, by Lemma~\ref{lem4.14}.
Hence there is a face $g$ of type (2,1) on the same side as $f$.
Also, at least one of $c_2,c_3$ has valency~2; this gives a face of type (1,1)
on the same side as $g$, contradicting Corollary~\ref{cor4.16}(1).

In subcase (ii), since some $c_j$, $j=1,2,3$, has valency~2, we get a face of
type (2,1) on the same side as $f$, contradicting Corollary~\ref{cor4.16}(1).
\medskip

\noindent  {\bf Case (b).}
At least one of $c_3,c_4$ has valency~2, and hence all the $c_j$'s have
valency~2 by Lemma~\ref{lem4.14}.
The four possibilities for $\Gamma_P$ are shown in Figure \ref{bgz5-fig9}(i), (ii), (iii) and (iv).

\begin{figure}[!ht]
\centerline{\includegraphics{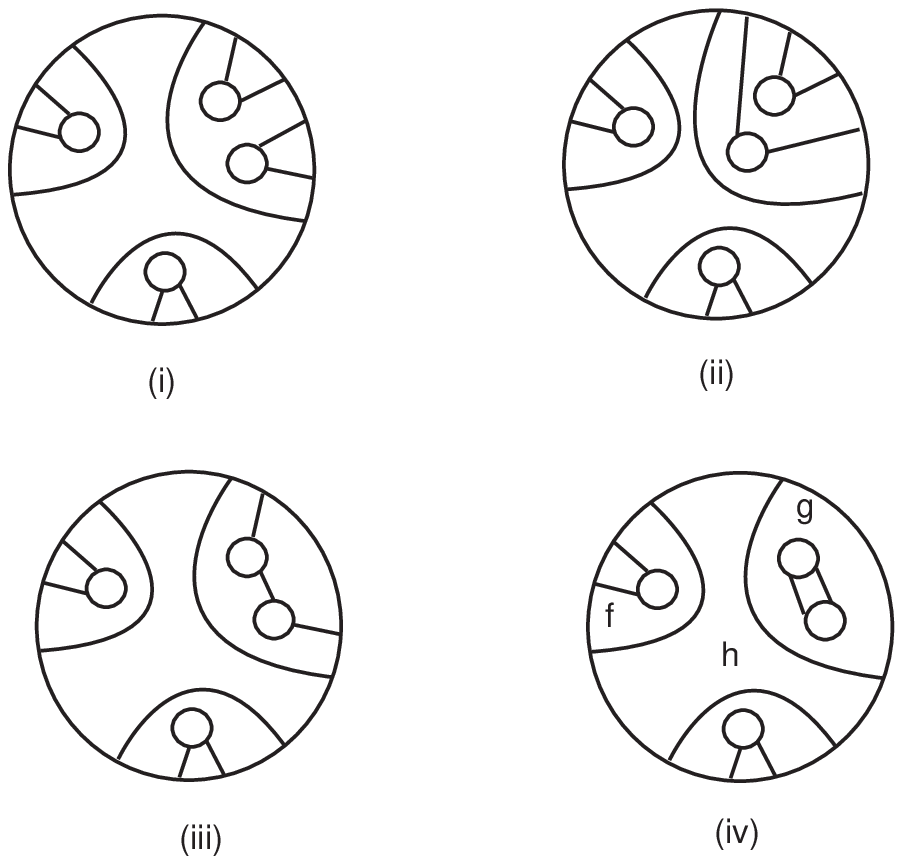}} \caption{ }\label{bgz5-fig9}
\end{figure}

(i) contradicts Corollary~\ref{cor4.16}(3), and (ii) contradicts Corollary~\ref{cor4.16}(1).

In (iii), $\Delta=6$ and the labeling around $d_V$ contradicts Corollary~\ref{cor4.18}.

In (iv), suppose the face $f$ lies in $X_1$.
Then $X_1$ is a solid torus by Lemma~\ref{lem4.14}, and $f$ gives the relation $2x+t=0$
in $H_1(X_1,A)$.
(See the proof of Lemma~\ref{lem4.15}  for notation.)
The annulus face $g$ also lies in $X_1$ and gives the relation $x+2t =0$.
These relations  give $3t=0$, and hence $H_1(X_1,A) \cong\zed/3$.
This implies that the core of $X_1$ is an exceptional fibre of multiplicity~3.
The $D$-edge Scharlemann cycle $h$ implies that the core of $X_2$ is also
an exceptional fibre of multiplicity~3, by Lemma~\ref{lem4.9}.
This contradicts Lemma \ref{+-quotient 5}(2).

Case (c). Since $\ell\ge 3$, either $c_1$ and $c_2$ both have valency 2, or
$c_3$ and $c_4$ both have valency 2. Assume the former without loss of
generality. The fact that $c_1$ has valency 2 gives a face $f$ of type
(1,1) and a face $g$ of type (2,1) on opposite sides. There are two
possibilities for the configuration of the $CD$-edges incident to $c_2$. In
one case we get a face of type (1,1) on the same side as $g$, and in the
other case a face of type (2,1) on the same side as $f$. These both
contradict Corollary 4.16(1).
\medskip

\noindent $\underline{k=2}$.
There are two cases (a) and (b), illustrated in Figure \ref{bgz5-fig10}(a) and (b).
\medskip

\begin{figure}[!ht]
\centerline{\includegraphics{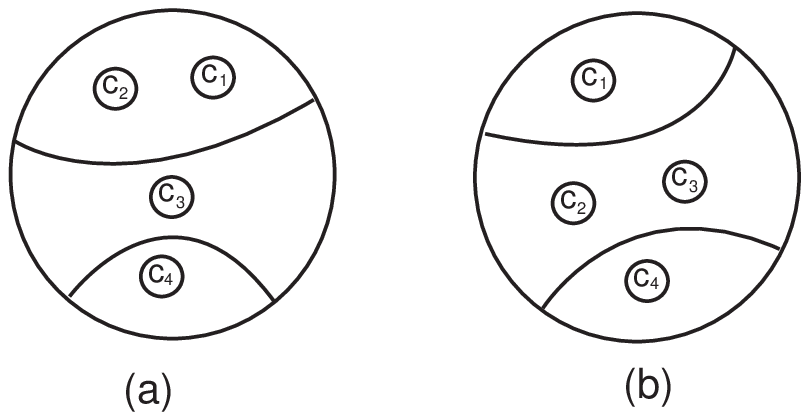}} \caption{ }\label{bgz5-fig10}
\end{figure}

\noindent  {\bf Case (a).}
First suppose that $c_3$ has valency $0$.
Then $c_1,c_2$ and $c_4$ have valency $2$, and since $\Delta \ge 5$ the
edges incident to $c_1,c_2$ and $c_4$ are all $CD$-edges.
This gives faces of type (2,1) and (3,2) on the same side, contradicting
Corollary~\ref{cor4.16}(3).

So suppose $c_3$ has valency~2.
The two possible arrangements of the edges incident to $c_3$ are shown
in Figure \ref{bgz5-fig11}(i) and (ii).

\begin{figure}[!ht]
\centerline{\includegraphics{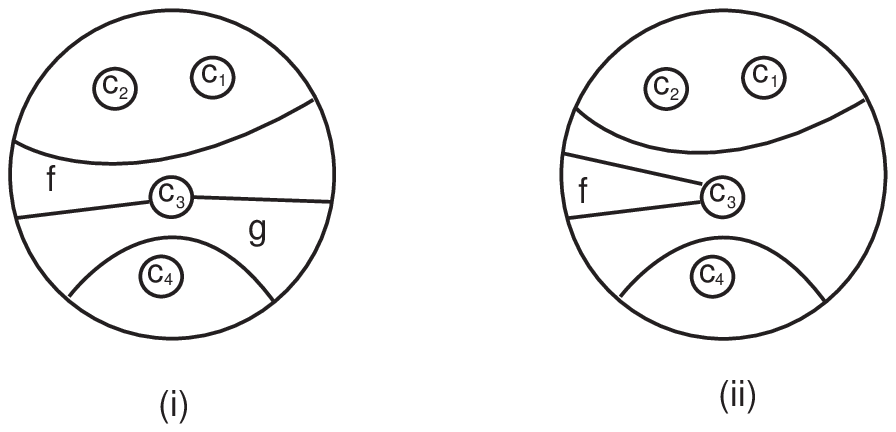}} \caption{ }\label{bgz5-fig11}
\end{figure}

In (i), the face $f$, together with Lemma~\ref{lem4.14}, implies that $c_4$
has valency~2.
This gives a face of type (1,1) on the same side as $g$, contradicting
Corollary~\ref{cor4.16}(1).

In subcase (ii), if $c_4$ has valency~2 then we get a face of type (2,1)
on the same side as $f$, contradicting Corollary~\ref{cor4.16}(1).
If $c_4$ has valency~0  then $c_1$ and $c_2$ have valency~2, and
since $\Delta \ge 5$ the edges incident to $c_1$ and $c_2$ are $CD$-edges.
The resulting face of type (3,2) contradicts Lemma~\ref{lem4.14}.
\medskip

\noindent  {\bf Case (b).}
First suppose that one of $c_2,c_3$ has valency~0.
Note that since $\Delta \ge 5$ $\Gamma_P$ has no $C$-edges.
Therefore there are three possibilities for $\Gamma_P$, illustrated
in Figure \ref{bgz5-fig12}(i), (ii) and (iii).

\begin{figure}[!ht]
\centerline{\includegraphics{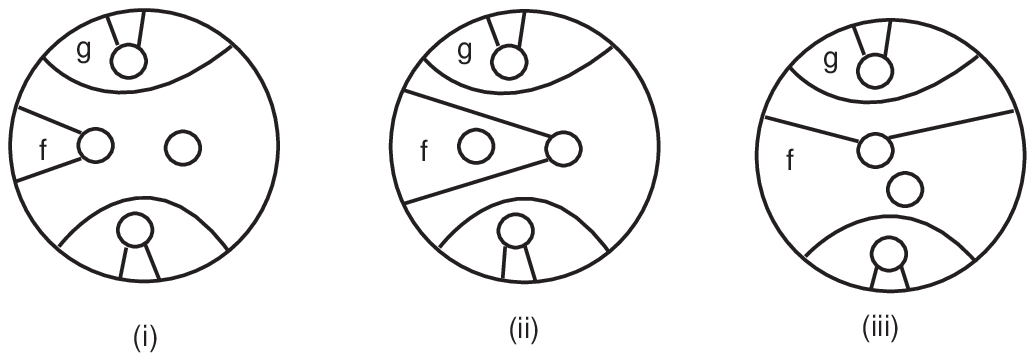}} \caption{ }\label{bgz5-fig12}
\end{figure}

In (i) the faces $f$ and $g$ contradict Corollary~\ref{cor4.16}(1).

In (ii) and (iii) the faces $f$ and $g$ contradict Lemma~\ref{lem4.14}.

So suppose that both $c_2$ and $c_3$ have valency~2.
Since $\Delta \ge 5$ there cannot be two $C$-edges joining $c_2$ and $c_3$.
There are therefore seven possibilities for the edges incident to $c_2$ and $c_3$,
shown in Figure \ref{bgz5-fig13}(i)--(vii).

\begin{figure}[!ht]
\centerline{\includegraphics{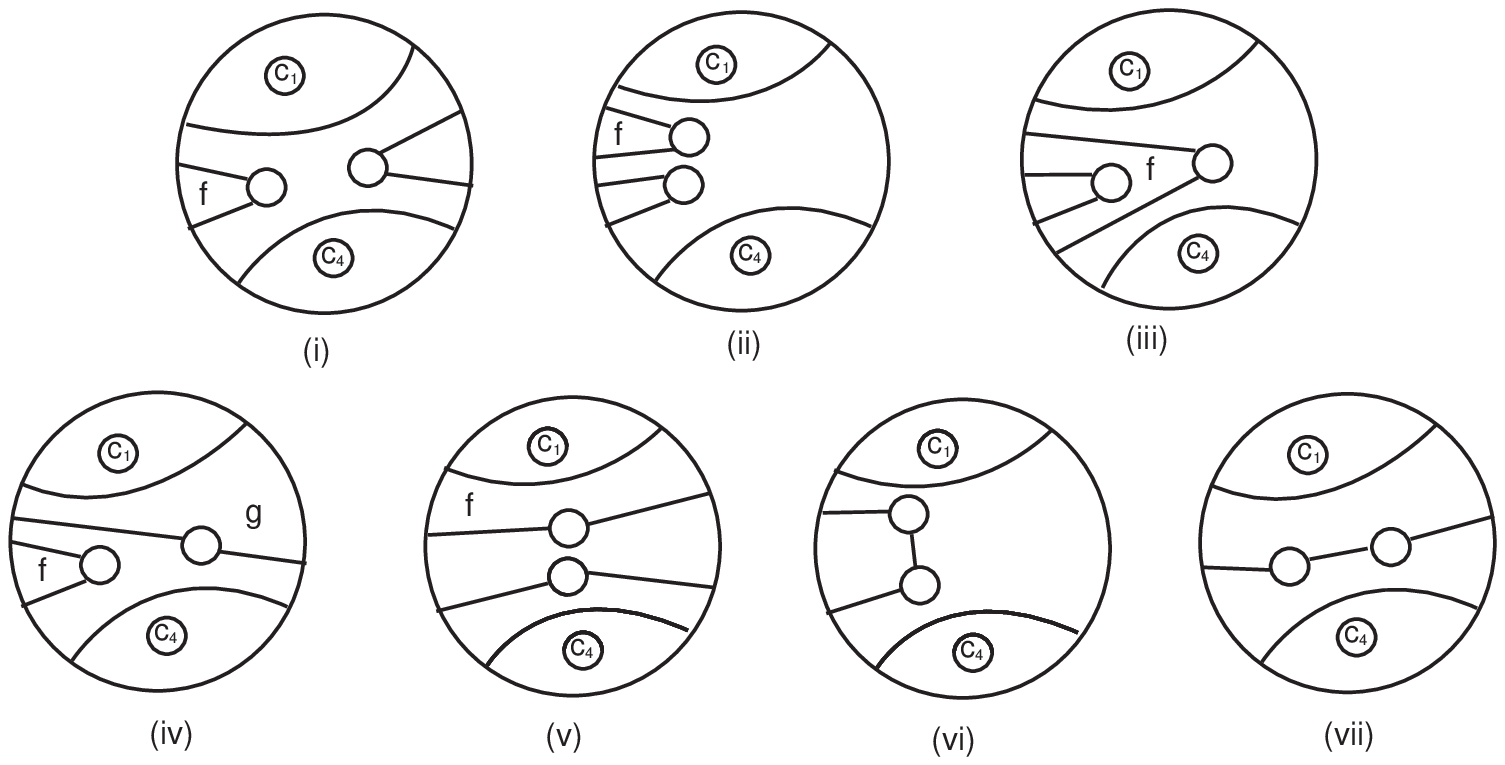}} \caption{ }\label{bgz5-fig13}
\end{figure}

In (i) and (ii), at least one of $c_1$ and $c_4$ has valency~2.
This gives a face of type (2,1) on the same side as $f$, contradicting
Corollary~\ref{cor4.16}(1).

In (iii), the face $f$ implies, by Lemma~\ref{lem4.14}, that both $c_1$ and $c_4$
have valency~2.
Hence $\Gamma_P$ is as illustrated in Figure \ref{bgz5-fig14}.
In particular $\Delta =6$.
But the 1-labels on $d_V$ of the $CD$-edges are not consecutive on $d_V$,
contradicting Corollary~\ref{cor4.18}.

\begin{figure}[!ht]
\centerline{\includegraphics{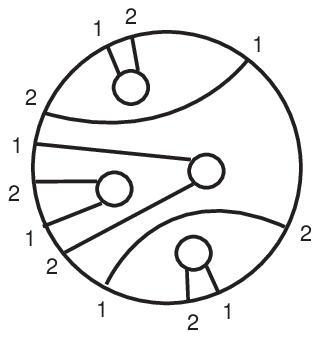}} \caption{ }\label{bgz5-fig14}
\end{figure}

In (iv), the faces $f$ and $g$ contradict Corollary~\ref{cor4.16}(1).

In (v), at least one of $c_1$ and $c_4$ has valency~2, giving a face of type
(1,1) on the same side as $f$.
This contradicts Corollary~\ref{cor4.16}(1).

In (vi) and (vii) we must have $\ell=4$ since $\Delta \ge 5$.
Then in (vi) we get faces of type (1,1) and (3,2) on the same side, and
in (vii) faces of type (1,1) and (2,1) on the same side, contradicting
Corollary~\ref{cor4.16}(2) and (1) respectively.
\medskip

\noindent $\underline{k=1}$.
The two possibilities for the $D$-edge of $\Gamma_P$ are shown in Figure \ref{bgz5-fig15}
(a) and (b).
Note that since $\Delta \ge 5$, $\Gamma_P$ has no $C$-edges and $\ell=4$.
\medskip

\begin{figure}[!ht]
\centerline{\includegraphics{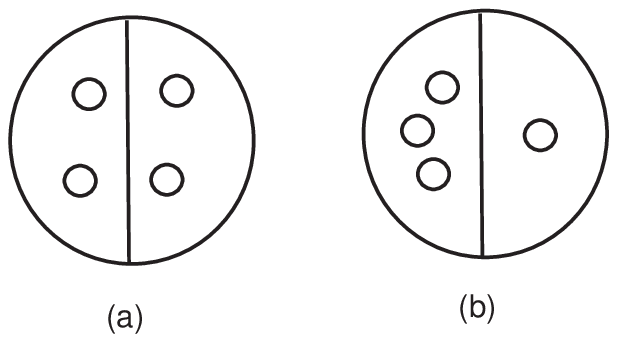}} \caption{ }\label{bgz5-fig15}
\end{figure}

\noindent  {\bf Case (a).}
Either $\Gamma_P$ is as shown in Figure \ref{bgz5-fig16}(i), or the $CD$-edges on one side
of the $D$-edge are as shown in Figure \ref{bgz5-fig16}(ii).

\begin{figure}[!ht]
\centerline{\includegraphics{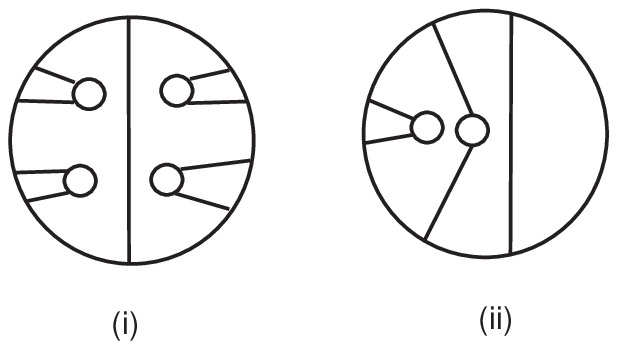}} \caption{ }\label{bgz5-fig16}
\end{figure}

In (i) we have faces of type (1,1) and (3,2) on the same side, contradicting
Corollary~\ref{cor4.16}(2).

In (ii) we have faces of type (1,1) and (2,1) on the same side,
contradicting Corollary~\ref{cor4.16}(1).
\medskip

\noindent  {\bf Case (b).}
It is easy to see that for all possible configurations of the $CD$-edges we get
faces of type (1,1) and (2,1) on the same side, contradicting Corollary~\ref{cor4.16}(1).

This completes the proof of Lemma~\ref{lem4.19}.\qed

The not very small case of Proposition \ref{prop: F2-semi-fibre} now follows from Lemmas~\ref{lem4.12}, \ref{lem4.13}
and \ref{lem4.19}.
\end{proof}

\section{The proof of Theorem \ref{thm: twice-punctured precise} when $F$ is a fibre} \label{sec: fibre}

In this section we prove the fibre case of Theorem \ref{thm: twice-punctured precise}.

\begin{prop}
\label{prop: F2fibre}
Suppose that $M$ is a hyperbolic knot manifold which contains an essential twice-punctured torus $F$ of boundary slope $\beta$ and let $\alpha$ be a slope on $\partial M$ such that $M(\alpha)$ is an irreducible small Seifert manifold. If $F$ is a fibre in $M$, then
$\Delta(\alpha, \beta) \leq 3$.
\end{prop}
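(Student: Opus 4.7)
The plan is to parallel the proof of Proposition \ref{prop: F2-semi-fibre} given in Section \ref{subsec: 7.2}, exploiting the fact that $F$ is already a fibre so no preliminary double cover of $M$ is required. Since $F$ is a fibre, $M$ is the mapping torus of a homeomorphism $\phi : F \to F$ that swaps the two boundary components of $F$ (the condition for $\partial M$ to be a single torus), and capping off $F$ shows that $M(\beta)$ is a torus bundle over $S^1$. The hyperelliptic involution $\tau_F$ of $F$ depicted in Figure \ref{bgz5-fig25} is central in the mapping class group, so after replacing $\phi$ within its isotopy class we may assume $\phi \tau_F = \tau_F \phi$. Then $\iota := \tau_F \rtimes \mathrm{id}$ is an orientation-preserving involution of $M$ whose fixed-point set is the union of four arcs traced by the Weierstrass points of $F$ along the base $S^1$. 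Since $F/\tau_F$ has underlying space a disk, the quotient $M/\iota$ is a solid torus $V$ with branch set a 4-braid $L$, and because $\iota$ restricts to a free translation on $\partial M$, it extends to an involution $\iota_\gamma$ of every Dehn filling $M(\gamma)$.

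Following the template of equations (\ref{alpha 4})--(\ref{Valpha}), after choosing a dual slope $\beta^*$ to $\beta$ and writing $\alpha = p\beta^* + q\beta$ with $p = \Delta(\alpha,\beta)$, one identifies $V(\bar\alpha) \cong L(p,q)$ and arranges $\iota_\alpha$ to either reverse or preserve the orientation of the Seifert fibres of $M(\alpha)$. In the fibre-reversing case, Schultens's Heegaard genus bound \cite[Theorem 5.3]{S} applied to the double branched cover of $S^3$ over the lift of $L_\alpha$, which is the closure of a 4-braid, gives $p \leq 4$, exactly as in Lemma \ref{--quotient 4}. In the fibre-preserving case, the intersection-graph analysis of Lemmas \ref{lem4.7}--\ref{lem4.19}, applied to a meridian disk of $V$ and an essential vertical annulus, similarly yields $p \leq 4$.

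To push the bound from $p \leq 4$ down to $p \leq 3$, I would exploit the additional rigidity provided by the fact that $M$ is a genuine bundle rather than a semi-bundle. Concretely, the monodromy of the 4-braid $L$ in $V$ is the descent of $\phi$ to the four Weierstrass points of $F$, whereas in the semi-fibre setting the relevant braid is obtained only after passing to a double cover and hence comes from $\phi^2$; this parity difference should eliminate the four extremal $p = 4$ configurations in Lemma \ref{lem4.19} (each of which requires two Scharlemann cycles of order $3$ lying in a specific parity pattern) as well as the Montesinos configuration saturating Schultens's bound in the fibre-reversing case.

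The main obstacle is this last step: verifying configuration-by-configuration that the $p = 4$ extremal graphs and braid patterns from Section \ref{subsec: 7.2} are actually incompatible with the fibre structure of $M$. An alternative and possibly cleaner route would be to construct a strictly non-trivial curve in $X_{PSL_2}(M(\beta)) \subset X_{PSL_2}(M)$ directly from the torus-bundle structure of $M(\beta)$, by taking a reducible representation of $\pi_1(M(\beta)) \cong \mathbb Z^2 \rtimes_A \mathbb Z$ whose $\mathbb Z^2$-restriction is a non-central diagonal representation preserved by $A$ (up to conjugation in $\mathcal N$), then bending along the centraliser of its $\mathbb Z^2$-image; the resulting curve would satisfy $\tilde f_{\beta^*}$ having poles at each of its (one or two) ideal points, and the constraints of Lemma \ref{lemma: factors} together with Proposition \ref{prop: boundary values} should then force $\Delta(\alpha,\beta) \leq 3$ because the normal $\mathbb Z^2$ subgroup of $\pi_1(M(\beta))$ obstructs all but dihedral images of order $\leq 6$.
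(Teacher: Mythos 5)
Your general setup — the fibre-by-fibre extension $\tau$ of the hyperelliptic involution $\tau_F$, the quotient $(V,L)$ with $L$ a 4-braid in a solid torus, and the observation that no preliminary double cover of $M$ is required — matches the paper and is correct. However, the proposal transfers the semi-fibre identification $V(\bar\alpha)\cong L(p,q)$ of equation~(\ref{Valpha}) directly to the fibre case, and this is where it goes wrong. Because $\tau$ acts \emph{freely} on $\partial M$ (as you note), the quotient map $\partial M\to\partial V$ is a genuine double cover of tori and sends $\beta^*$ to $2\bar\beta^*$, not to $\bar\beta^*$. The correct identification, which the paper establishes in (\ref{pbar 2}), is therefore $M(\alpha)/\tau_\alpha\cong L(\bar p,\bar q)$ with $\bar p=2p/\gcd(2,q)$ and $\bar q=q/\gcd(2,q)$. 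That factor of $2$ is the engine behind the improved bound: in the fibre-reversing case, when $q$ is odd one gets $\bar p=2p$, so Schultens's genus bound yields $2p-1\le 3$, i.e., $p\le 2$, immediately contradicting $p\ge 3$; when $q$ is even one gets $\bar p=p$ odd and the bound only gives $p\le 5$, after which $p=5$ is ruled out by a dedicated intersection-graph argument (the proof of Lemma~\ref{basicreduction3}). Your explanation of the improvement as a ``parity difference because the braid comes from $\phi$ rather than $\phi^2$'' does not correspond to any step in this analysis and, as stated, would not supply a proof; the claim that the semi-fibre extremal $p=4$ configurations can be eliminated by inspecting the monodromy is left entirely to verification, which is precisely the part that requires the $\bar p$-shift to work.

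Two further gaps. First, the proposal does not separate out the case where $M(\alpha)$ is a lens space; the argument organized around whether $\tau_\alpha$ preserves or reverses the orientation of the Seifert fibres presupposes a Seifert structure with a base orbifold having at least three cone points, and the paper handles lens spaces by a different route (Lemma~\ref{F2-lens}, which invokes the bound of \cite{L3} and then excludes $\Delta=4$ using the hyperbolicity of the braid exterior). Second, the fibre-preserving case also requires separate treatment of prism manifolds (Lemma~\ref{prism-case}), since the intersection-graph analysis in that case must cope with Scharlemann cycles of order $2$ whose presence is precisely what distinguishes the prism subcase; your proposal does not address this distinction. The alternative character-variety route you sketch is not what the paper does here and its claimed conclusion about dihedral images is not justified; since $\pi_1(M(\beta))\cong\mathbb Z^2\rtimes_A\mathbb Z$ contains a normal $\mathbb Z^2$ whose image in a finite quotient of $\Delta(a,b,c)$ is cyclic or trivial, it is unclear that bending produces a strictly non-trivial curve with the seminorm properties needed, and no argument is given that it does.
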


Recall the involution $\tau_F$ depicted in Figure
\ref{bgz5-fig25} and let $\tau$ be its fibre-by-fibre extension to $M$. Then $M / \tau = (V, L)$ where $V$ is a solid torus and $L$ is a $4$-braid. The reader will verify that there is curve representing a dual class $\beta^*$ to $\beta$ on $\partial M$ on which $\tau$ acts by rotation by $\pi$. Write
\begin{equation}\label{alpha 2}
\text{\em $\alpha = p \beta^* + q \beta$}
\end{equation}
where $p, q$ are coprime. After possibly changing the signs of $\beta^*$ and $\beta$ we may assume that
\begin{equation}\label{p 2}
\text{\em $p = \Delta(\alpha, \beta)$}
\end{equation}
Note that $\beta$ is sent to the meridian slope $\bar \beta$ on $\partial V$, so $V (\bar \beta) \cong S^1 \times S^2$, while $\beta^*$ is sent to $2 \bar \beta^*$ where $\bar \beta^*$ is a longitude of $V$.

For each slope $\gamma$ on $\partial M$, $\tau$ extends to an involution $\tau_\gamma: M(\gamma) \to M(\gamma)$. Moreover, if $\widetilde U_\gamma$ denotes the filling torus in $M(\g)$ and $\widetilde K_\gamma$ its core, then
\begin{equation}\label{fix 2}
\text{\em $\hbox{Fix}(\tau_\gamma) = \left\{ \begin{array}{ll}
\widetilde L & \hbox{ if } \Delta(\gamma, \beta^*) \hbox{ is odd} \\
\widetilde L \cup \widetilde K_\gamma & \hbox{ if } \Delta(\gamma, \beta^*)  \hbox{ is even}
\end{array} \right.$}
\end{equation}
It is clear that $\widetilde U_\gamma/\tau_\g$ is a solid torus $U_\gamma$. Denote its core $\widetilde K_\gamma / \tau_\gamma$ by $K_\gamma$. Thus $M(\gamma) / \tau_\gamma = V \cup_{\bar \g} U_\gamma$ is a lens space. Indeed, if $\gamma = r\beta^* + s \beta$, then under the double cover $\partial M \to \partial V$ we have $\gamma \mapsto 2r \bar \beta^*  + s \bar \beta$. Let $\bar \gamma = \frac{1}{\gcd(2,s)}(2r \bar \beta^*  + s\bar \beta)$ denote the associated slope and $L_\gamma$ the branch set in $M(\gamma) / \tau_\gamma$. Then
$$(M(\gamma) / \tau_\gamma, L_\gamma) = (V(\bar \gamma), L_\g) \cong \left\{ \begin{array}{ll}
(L(2r, s), L) & \hbox{ if } s  \hbox{ is odd} \\
(L(r, \frac{s}{2}), L \cup K_\gamma)  & \hbox{ if } s  \hbox{ is even}
\end{array} \right.$$
We are interested in the case $\gamma = \alpha$. Set
\begin{equation}\label{pbar 2}
\text{\em $\bar p = \frac{2p}{\gcd(2,q)}  \;\;\;\;\;  \hbox{ and } \;\;\;\;\; \bar q = \frac{q}{\gcd(2,q)}$}
\end{equation}
so that $\bar \alpha = \bar p \bar \beta^*  + \bar q \bar \beta$ and
$$M(\alpha) / \tau_\alpha \cong L(\bar p, \bar q)$$
From \ref{fix 2} we see that
\begin{equation}\label{comp 2}
\text{\em $|L_\alpha| = \left\{ \begin{array}{ll}
|L| & \hbox{ if } q  \hbox{ is odd} \\
|L| + 1 & \hbox{ if } q \hbox{ is even}
\end{array} \right.$}
\end{equation}
By \cite[Lemma 4.1]{BGZ3} there is a $\tau_\alpha$-invariant Seifert structure of $M(\alpha)$ with base orbifold of the form $S^2(a,b,c)$ where $a, b, c \geq 1$. Let $\bar \tau_\alpha$ be the induced map on $S^2(a,b,c)$.

We begin with the case where $M(\alpha)$ is a lens space.

\begin{lemma} \label{F2-lens}
Suppose that $M$ is a hyperbolic knot manifold which contains an essential twice-punctured torus $F$ of boundary slope $\beta$ and let $\alpha$ be a slope on $\partial M$ such that $M(\alpha)$ is an irreducible small Seifert manifold. If $F$ is a fibre in $M$ and $M(\alpha)$ is a lens space, then $\Delta(\alpha, \beta) \leq 3$.
\end{lemma}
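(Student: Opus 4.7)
The plan is to analyze the involution $\tau_\alpha$ on the lens space $M(\alpha)$ directly, in the spirit of the semi-fibre Lemmas~\ref{--quotient 4} and \ref{+-quotient 5}, but without the intermediate double-cover step (since in the fibre case $\tau$ already lives on $M$ itself and $M/\tau = (V,L)$ with $L$ a $4$-braid). By \cite[Lemma 4.1]{BGZ3} we may arrange that $\tau_\alpha$ preserves a Seifert structure on $M(\alpha)$ over $S^2(a,b,c)$, and because $M(\alpha)$ is a lens space, at most two of $a,b,c$ exceed $1$. The argument then divides according to whether $\tau_\alpha$ reverses or preserves the orientation of the Seifert fibres.

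If $\tau_\alpha$ reverses the orientation of the fibres, then the fibre-case analog of the argument in Lemma~\ref{--quotient 4} (adapted from \cite[Lemma 4.4]{BGZ3}) shows that the base orbifold of $M(\alpha)$ is $S^2(p,p,m)$, with $p = \Delta(\alpha,\beta)$, and that $\widetilde L_\alpha \subset S^3$ is a closed Montesinos link with at least $p$ rational tangles (the lift being through the universal cyclic cover $\pi\colon S^3 \to L(\bar p,\bar q)$). A Seifert fibration of a lens space over $S^2$ has at most two exceptional fibres and these must have coprime multiplicities, so the base $S^2(p,p,m)$ of a lens space forces $p = 1$, i.e.~$\Delta(\alpha,\beta) = 1 \le 3$.

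If $\tau_\alpha$ preserves fibre orientation, we assume $\Delta(\alpha,\beta) \ge 4$ and seek a contradiction. The fibre-case analog of Lemma~\ref{+-quotient 5} (adapted from \cite[Lemma 4.3]{BGZ3}) yields an induced Seifert structure on $L(\bar p,\bar q)$ in which $L_\alpha$ is a union of at most four Seifert fibres, with at least one exceptional fibre of multiplicity $\Delta(\alpha,\beta) - 1 \ge 3$. Pulling back via $\pi$, the double branched cover $\Sigma_2(\widetilde L_\alpha)$ covers the lens space $M(\alpha)$, hence is a lens space or $S^3$; in particular $\widetilde L_\alpha$ is a $2$-bridge link and has at most two components. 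Combining this with the component count obtained from lifting $L$ through the $\bar p$-fold cover $\widetilde V \to V$ (a component of winding number $w$ in $V$ produces $\gcd(w,\bar p)$ components in $\widetilde V$), and accounting for $\widetilde K_\alpha$ when $q$ is even (which contributes $\bar p$ further components), yields sharp numerical restrictions on $\bar p$ and hence on $p = \Delta(\alpha,\beta)$. Together with the exceptional-fibre-of-multiplicity-$(\Delta - 1)$ requirement and the coprimality constraint for lens-space Seifert fibrations, these force $\Delta(\alpha,\beta) \le 3$.

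The main obstacle is the interlocking case analysis in the fibre-preserving branch, where the $2$-bridge component bound, the winding-number arithmetic for the $\bar p$-fold cover, the parity of $q$, and the induced Seifert multiplicities on $L(\bar p,\bar q)$ all have to be reconciled; the bookkeeping developed in \S\ref{subsec: 7.2} for the semi-fibre case provides the template, while the lens-space hypothesis is the essential simplifying ingredient that makes the component counts so restrictive.
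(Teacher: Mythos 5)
Your proposal takes a genuinely different route from the paper's, but it has several gaps that would have to be filled before it could be considered a proof.

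The paper's own proof begins by invoking Lee's theorem \cite{L3} to get $\Delta(\alpha,\beta)\le 4$ immediately (this is the key external input), and then rules out $\Delta=4$ by a bespoke argument: $\bar p=8$ so $M(\alpha)/\tau_\alpha\cong L(8,q)$, \cite[Lemma~4.2]{BGZ3} forces $L_\alpha$ to be the cores of the two Heegaard solid tori, the $4$-braid $L$ therefore splits into a $1$-braid $L_0$ and a $3$-braid $L_1$, the exterior $Y$ of $L$ in $V$ is shown to be hyperbolic, and an intersection-graph argument (reminiscent of \S\ref{subsec: 7.2} but in a simpler setting because $\Delta(\bar\alpha,\bar\beta)=8$), combined at the end with Wu's classification of $1$-bridge braids \cite{Wu3}, produces the contradiction. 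None of the machinery you invoke — the fibre-orientation dichotomy, base-orbifold constraints, the $2$-bridge component count — appears in the paper's proof of this lemma.

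The concrete gaps in your plan are these. First, both pieces of machinery you want to adapt explicitly exclude lens spaces: Lemma~\ref{basicreduction3} is stated for ``$M(\alpha)$ is not a lens space'', and Lemma~\ref{+-quotient 5} (and its fibre-case analogue via \cite[Lemma~4.3]{BGZ3} as used in Lemmas~\ref{F2fibre-non-Seifert} and \ref{prism-case}) is stated under ``$M(\alpha)$ is not very small''. A lens space fails both hypotheses, and the reason matters: a lens space admits many inequivalent Seifert structures, so ``a $\tau_\alpha$-invariant Seifert structure over $S^2(a,b,c)$'' from \cite[Lemma~4.1]{BGZ3} does not pin down the cone-point data, and the base-orbifold conclusions of \cite[Lemma~4.4]{BGZ3} are derived under the assumption that the base orbifold is rigid enough to force $S^2(\bar p,\bar p,m)$, $m\ge 2$. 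You would have to reprove those conclusions directly in the lens-space case. Second, the exceptional-fibre-of-multiplicity-$(\Delta-1)$ claim in your fibre-preserving branch is a \emph{semi-fibre} result: it comes from Lemma~\ref{lemma: c}, whose proof is built on the $\mathbb{Z}/2*\mathbb{Z}/2$ quotient that exists only because $F$ is a semi-fibre there. No analogue is established for the fibre case anywhere in the paper, so this step has no support. Third, the passage from ``$\Sigma_2(\widetilde L_\alpha)$ is a lens space, hence $\widetilde L_\alpha$ is $2$-bridge with at most two components'' together with the winding-number arithmetic for the $\bar p$-fold cover of the $4$-braid $L$ to the conclusion $\Delta\le 3$ is not actually carried out; the claim that ``the bookkeeping forces $\Delta\le 3$'' is an assertion, not a proof, and as the paper's own $\Delta=4$ case analysis shows, that bookkeeping is quite delicate. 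In sum, the proposal sketches a plausible-sounding strategy by analogy with the other fibre and semi-fibre subcases, but the analogies break exactly at the points where those lemmas hypothesize away lens spaces, and the final numerical step is left entirely unexecuted.
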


\pf By \cite{L3} $\Delta(\alpha, \beta) \leq 4$ so we suppose that $p = \Delta(\alpha, \beta) = 4$ in order to derive a contradiction. In this case $q$ is odd, so that $L_\alpha = L$ (\ref{comp 2}). Further, $\bar p = 8$ and $\bar q = q$ so $M(\alpha) / \tau_\alpha \cong L(8, q)$. By \cite[Lemma 4.2]{BGZ3}, $L_\alpha$ is either
\vspace{-.2cm}
\begin{itemize}

\item the union of the cores of the two Heegaard solid tori of $L(8, q)$; or

\vspace{.2cm}\item the boundary of an annular spine of a Heegaard solid torus of $L(8, q)$.

\end{itemize}
\vspace{-.35cm}
It follows that each component of $L_\alpha$ carries a generator of $H_1(L(8, q))$. On the other hand,
$L_\alpha$ is isotopic in $L(8, q)$ to the $4$-braid $L$ in $V$ so $L$ must split into a $1$-braid $L_0$ and a $3$-braid $L_1$.
Since the generators of $H_1(L(8, q))$ have order $8$, $L_0$ and $L_1$ carry different generators and so $L_\alpha$ must be the union of the cores of the two Heegaard solid tori of $L(8, q)$.

{\bf Claim 1}. Let $Y_1$ be the exterior $L_1$ in $V$.
Then $Y_1$ is Seifert fibred.

The Dehn filling of $Y_1$ along $\p V$ with the slope $\bar \a$
is a manifold homeomorphic to the exterior of
$L_1$ in $L(8, q)$ and thus is  a solid torus.
On the other hand the Dehn filling of $Y_1$ along $\p V$ with the slope
$\bar\b$ is a manifold homeomorphic the exterior
of $L_1$ in $S^2\times S^1$ and thus is a twice-punctured disk bundle and therefore
Seifert fibred. Since $\D(\bar\a,\bar\b)=8>5$, $Y_1$ cannot be
hyperbolic by \cite[Theorem 1.3]{Go1}.
Since $Y_1$ is the exterior of a single component closed  $3$-braid
in $V$, the monodromy of the corresponding  thrice-punctured disk bundle structure of  $Y_1$ cannot be reducible.
  So it is periodic and $Y_1$ is Seifert fibred.

{\bf Claim 2.} Let $Y$ be the exterior of $L=L_0\cup L_1$ in $V$.
Then $Y$ is hyperbolic.

To prove this claim, suppose for contradiction that
$Y$ is non-hyperbolic. It is irreducible, since $L_0$ and $L_1$ are each
homotopically essential in $V$. Thus  $Y$ is either Seifert fibred or
  contains an
essential torus.
In the latter case,  the monodromy of  $Y$ with respect to the  $4$-punctured disk bundle
structure  is reducible and its  invariant essential curve in a
$4$-punctured disk fiber can only be a circle which
 encloses  the three
punctures coming from $L_1$ but not the  puncture from $L_0$.
Hence  the corresponding essential
torus $T$ separates $Y$ into a thrice-punctured disk
bundle $Y_1$ and a once-punctured annulus bundle
$Y_0$. Obviously $Y_0$ is Seifert fibred and
 $Y_1$ is homeomorphic to the exterior of
$L_1$ in $V$, which is  Seifert fibred by Claim 1.
Therefore $Y$ has no hyperbolic pieces in its JSJ torus decomposition.
Now   the exterior of $\tilde L$ in $M$ is a free double cover of $Y$
and thus has  no hyperbolic pieces in its JSJ torus decomposition.
Thus $M$ cannot be hyperbolic, giving a contradiction.

To finish the proof of the lemma,  we adopt an intersection graph argument as in \S \ref{subsec: 7.2}.
In the present case the argument is  much simpler due to
the relatively large distance of $8$ between the slopes $\bar\a$ and $\bar\b$ on $\p V$.

Let $T_i$ be the torus boundary of $Y$ corresponding to $L_i$, $i=0,1$.
Then $\p Y=T_0\cup T_1\cup\p V$.
The $\bar\a$-filling of $Y$ along $\p V$ is
the exterior of $L$ in $L(8, q)$
and thus  is homeomorphic to $T^2\times [0,1]$ (recall that $L_0$ and $L_1$ are cores
of the two Heegaard solid tori of $L(8,q)$).
Let $A$ be a properly embedded  essential annulus in this manifold
whose  boundary component  $a_1$ on $T_1$ is a meridional slope of $L_1$.
Then the other boundary component $a_0$ of $A$ on $T_0$
is a slope distance $8$ from the meridional slope of $L_0$.
Now isotope $A$ fixing $\p A$ so that $A\cap \p V$ has minimal
number of components and let $Q=Y\cap A$.
Then $Q$ is an essential  $n$-punctured annulus in $Y$.
Since $Y$ is hyperbolic, $n>0$.
Let $b_1, \cdots,  b_n$ be  the components  of $\p Q$ which
lie on $\p V$ and they are indexed in the order they
appear along $\p V$.
Note that $b_1,...,b_n$ are parallel circles in $\p V$
with the slope $\bar \a$.

Let $P$ be a fixed $4$-punctured disk fiber in $Y$. Then $P$ is
essential in $Y$. Denote  the components of $\p P$ by $d, c_0, c_1, c_2, c_3$,
where $d$ lies on $\p V$ and has slope $\bar\b$,
 $c_0$ lies on $T_0$ and has the meridional slope of $L_0$,
and $c_1, c_2, c_3$ lie on $T_1$ and each has the meridional slope of $L_1$.

Isotope $Q$ and $P$ to intersect transversely and
minimally, and consider their intersection graphs
$\G_Q$, $\G_P$ with vertices $a_0, a_1, b_1,\cdots
b_n$ and $d, c_0, c_1, c_2, c_3$ respectively, and
with the arc components of $Q\cap P$ as edges. Since
$a_1$ and $c_1,c_2, c_3$ are meridians of $L_1$ on
$T_1$, they are disjoint and thus have valency $0$. By
construction $a_0$ and $c_0$ have valency $8$, $d$ has
valency $8n$, and $b_1,\cdots, b_n$ have valency  $8$.

The edges of $\G_Q$ and $\G_P$ are  essential arcs in
$P$ and $Q$. By the parity rule,  no edge in $\G_Q$
has  both of its endpoints incident to  $a_0$ and thus
there are exactly $8$ edges in $\G_Q$ incident to
$a_0$, each connecting $a_0$ to  some $b_i$. Call such
edges {\it $ab$-edges}. The remaining $4(n-1)$ edges
$\G_Q$ connect $b_i$'s to $b_i$'s and we call them
{\it $bb$-edges}.

Similarly no edge in $\G_P$ has both of its endpoints
incident to  $c_0$ and thus in $\G_P$ there are
exactly $8$ {\it $cd$-edges} connecting $c_0$ to  $d$.
The remaining $4(n-1)$ edges of $\G_P$ connect $d$ to
$d$ which are called {\it $dd$-edges}.

An edge in one graph is dual to an edge in the other
graph if they come from the same arc component of
$P\cap Q$. Then $ab$-edges are dual to $cd$-edges and
$bb$-edges are dual to $dd$-edges. The parity rule
shows that each $dd$-edge is a positive edge and each
$bb$-edge is negative (i.e. it connects vertices of
opposite signs).

As in Lemma \ref{lem4.8}, $\G_P$ has no Scharlemann cycles consisting of $dd$-edges.
 It is an elementary fact that if $\G_P$ has a parallel family of more than $n/2$ $dd$-edges, then
 it has a Scharlemann cycle of $dd$-edges (an $S$-cycle to be exact; see \cite[Corollary 2.6.7]{CGLS}). Hence
 each parallel family of $dd$-edges has at most $n/2$ members.
 It's easy to see that the reduced graph of $\G_P$ has at most $5$
 $dd$-edges (cf. Figure \ref{bgz5-fig2}). Hence in $\G_P$ there are at most $5n/2$ $dd$-edges.
 On the other hand there are exactly $4(n-1)$ $dd$-edges.
 Hence we have $4(n-1)\leq 5n/2$, which yields $n\leq 2$.

 {\bf Claim 3.}  If    $e_1$ and $e_2$ are two parallel
 $ab$-edges in $\G_Q$, then their dual $cd$-edges $e_1'$ and $e_2'$ cannot be parallel in
 $\G_P$.

 To prove the claim assume, for a contradiction, that $e_1'$ and $e_2'$ are parallel
 in $\G_P$.
 Let $B$ be the disk region in $Q$ between $e_1$ and $e_2$
 and let $B'$ be the disk region in $P$ between $e_1'$ and $e_2'$.
We may assume that $e_1$ and $e_2$ are an innermost such pair, i.e.
 $B$ and $B'$ only intersect in $Y$ along $e_1=e_1'$ and $e_2=e_2'$.
 (Otherwise there is an edge $e\ne e_1, e_2$ in $B$
 whose dual edge $e'$ is contained $B'$, and we may replace $e_2$ by $e$
 and restart the proof.)
 Then  $B$ and $B'$ paste together in $Y$
  forming a properly embedded annulus $E$ in $Y$ connecting
  $\p V$ and $T_0$. We claim that $E$ is essential in $Y$, contrary to Claim 2, thus
  completing the proof of Claim 3.

  Since $E$ connects different components of $\p Y$,
it suffices to show that each component of $\p E$ is
  essential  in $T_0$ or $\p V$.
    To see  this, suppose that $e_1$ and $e_2$ are incident at $b_i$.
  Then the four endpoints of $e_1, e_2$ decompose
  $\p B$ into four arcs: $e_1$, $a$, $e_2$, $b$, where $a$ is
  a sub-arc of $a_0$ and $b$ a sub-arc of $b_i$.
  Similarly the endpoints of $e_1', e_2'$ decompose
  $\p B'$ into four arcs: $e_1'$, $c'$, $e_2'$, $d'$, where $c'$ is
  a sub-arc of $c_0$ and $d'$ a sub-arc of $d$.
  Since $a_0$ and $c_0$ intersect minimally in $T_0$,
  the arcs $a$ and $c'$ paste together in $T_0$ to form a simple essential circle.
  Likewise $b$ and $d'$ paste together in $\p V$ to form a simple essential circle.
  Thus $E$ is essential, contrary to Claim 2.

 {\bf Case $n=1$}.

 In this case there are exactly eight edges in $\G_P$, all being  $cd$-edges,
 and there are exactly eight edges in $\G_Q$, all being $ab$-edges.
It is clear that the eight $ab$-edges in $\G_Q$ are parallel to each other, and it is
also evident that there are at least two
 parallel $cd$-edges in $\G_P$, contrary to Claim 3. Thus $n \ne 1$.

{\bf Case $n=2$}.

 In this case there are exactly twelve edges in $\G_P$, eight being  $cd$-edges
 and four being $dd$-edges,
 and similarly there are exactly twelve edges in $\G_Q$, eight being $ab$-edges
 and four being $bb$-edges.
 Since $n=2$, the four $bb$-edges all connect
 $b_1$ to $b_2$.
 It follows that four of the eight $ab$-edges in $\G_Q$ are incident to $b_1$ and four
 are incident to $b_2$.
 Thus the  reduced graph of $\G_Q$ is one of the three cases (1) (2) (3)
 shown in Figure \ref{bgz5-fig Q}. It follows that there is at least one  family of four parallel $ab$-edges in $\G_Q$,
 which we denote by   $e_1, e_2, e_3, e_4$.
Their dual edges,  $e_1', e_2', e_3', e_4'$,  are $cd$-edges
and it is easy to see that as the valency of $c_0$ in the reduced graph of $\G_P$
is at most $3$, at least two of them are parallel in
$\G_P$. Again we obtain a contradiction to Claim 3 and this rules out the final possibility that $n = 2$.
\qed

\begin{figure}[!ht]
\centerline{\includegraphics{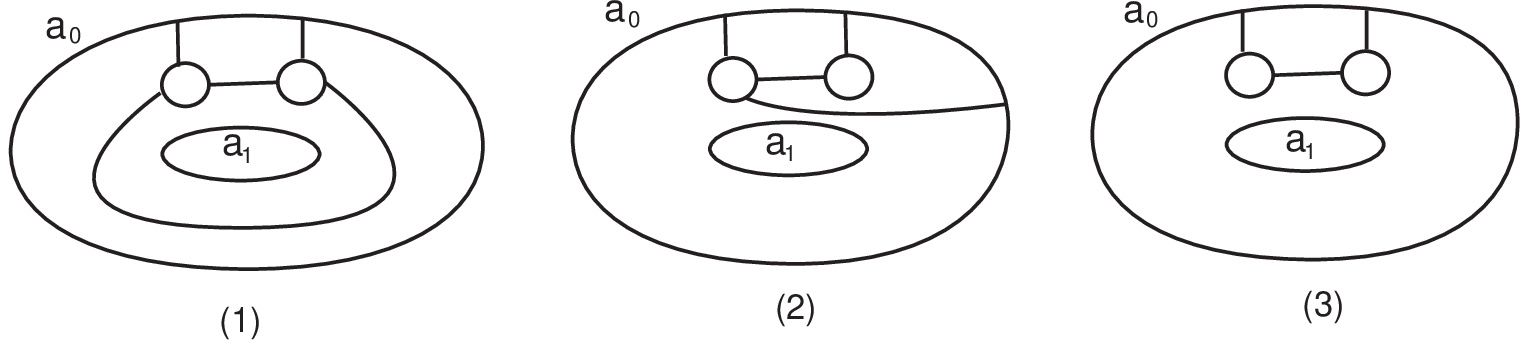}} \caption{ }\label{bgz5-fig Q}
\end{figure}

\begin{lemma} \label{basicreduction3}
Suppose that $M$ is a hyperbolic knot manifold which contains an essential twice-punctured torus $F$ of boundary slope $\beta$ and let $\alpha$ be a slope on $\partial M$ such that $M(\alpha)$ is an irreducible small Seifert manifold. If $F$ is a fibre in $M$, $M(\alpha)$ is not a lens space, and $\tau_\alpha$ reverses the orientations of the Seifert fibres of $M(\alpha)$, then $\Delta(\alpha, \beta) \leq 3$.
\end{lemma}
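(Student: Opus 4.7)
The plan is to follow the method of Lemma \ref{--quotient 4}, adapted to the fibre setting; the small Seifert hypothesis on $M(\alpha)$ will force a tighter bound than in the semi-fibre case. First I would invoke \cite[Lemma 4.4]{BGZ3}: since $\tau_\alpha$ reverses Seifert fibre orientations, either (a) the base orbifold of $M(\alpha)$ is $S^2(\bar p, \bar p, m)$ for some $m \geq 2$ with $L_\alpha \subset L(\bar p, \bar q)$ a closed $m/n$-rational tangle in a Heegaard solid torus, or (b) the base orbifold is $S^2(\bar p, \bar p, m, s)$ with $L_\alpha$ a closed two-tangle Montesinos link. The small Seifert assumption forces the base of $M(\alpha)$ to have at most three cone points, so (b) is ruled out and (a) holds.

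Next I would pull $L_\alpha$ back via the universal cover $S^3 \to L(\bar p, \bar q)$; as in the semi-fibre case, the lifted link $\widetilde L_\alpha \subset S^3$ is a Montesinos link with $r \geq \bar p$ rational tangles. The braid structure lifts cleanly: the Heegaard solid torus $V \subset L(\bar p, \bar q)$ lifts to a Heegaard solid torus $\widetilde V \subset S^3$, and the $4$-braid $L \subset V$ lifts to a closed $4$-braid $\widetilde L \subset \widetilde V$. When $q$ is odd, $L_\alpha = L$ so $\widetilde L_\alpha = \widetilde L$ is already a closed $4$-braid in $S^3$. When $q$ is even, $L_\alpha = L \cup K_\alpha$, but $\widetilde K_\alpha$ is the unknotted braid axis of $\widetilde L$; taking a preliminary $2$-fold branched cover over $\widetilde K_\alpha$ returns $S^3$ with $\widetilde L$ lifting again to a closed $4$-braid. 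In either case, $\Sigma_2(\widetilde L_\alpha)$ is presented as the double branched cover of $S^3$ over a closed $4$-braid, hence has Heegaard genus at most $3$.

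Applying Schubert's theorem \cite[Theorem 5.3]{S} to the Seifert fibred $\Sigma_2(\widetilde L_\alpha)$ (with base orbifold a $2$-sphere with $r$ cone points), exactly as in the proof of Lemma \ref{--quotient 4}, forces $r \leq 4$, hence $\bar p \leq 4$. Finally, recalling $\bar p = 2p / \gcd(2, q)$: if $q$ is odd then $\bar p = 2p \leq 4$ gives $p \leq 2$, and if $q$ is even then coprimality of $p$ and $q$ forces $p$ odd, so $\bar p = p \leq 4$ with $p$ odd yields $p \leq 3$. In either case $\Delta(\alpha, \beta) = p \leq 3$, as required.

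The main obstacle will be verifying that the classification of \cite[Lemma 4.4]{BGZ3} applies directly to $M(\alpha)$ in the fibre setting (especially when $q$ is even, so that $L_\alpha = L \cup K_\alpha$ includes the core of the filling solid torus) and confirming the bound $r \geq \bar p$ on the number of rational tangles after lifting. The semi-fibre argument should adapt cleanly, since the Seifert structure of $M(\alpha)$ is constrained the same way by the fibre-reversing involution, with the small Seifert hypothesis straightforwardly excluding the $4$-cone-point alternative.
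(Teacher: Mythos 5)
Your proposal follows the paper's opening moves correctly — invoking the structure from \cite[Lemma 4.4]{BGZ3}, pulling back to the universal cover of the lens space, and comparing Heegaard genera of $\Sigma_2(\widehat L_\alpha)$ — but it breaks down in the $q$ even case, which is exactly where the hard work lies.

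The claim that ``taking a preliminary $2$-fold branched cover over $\widetilde K_\alpha$ returns $S^3$ with $\widetilde L$ lifting again to a closed $4$-braid'' is false: although the double branched cover of $S^3$ over the unknot $\widetilde K_\alpha$ is indeed $S^3$, the closed $4$-braid $\widetilde L$ lies in the solid torus complementary to $\widetilde K_\alpha$ and lifts to a closed $8$-braid, not a $4$-braid. In any case this staged construction does not compute $\Sigma_2(\widehat L_\alpha)$. The correct observation (which the paper makes) is that when $q$ is even, $\widehat L_\alpha$ is the union of a closed $4$-braid in one Heegaard solid torus of $S^3$ and the core of the other, hence the closure of a \emph{$5$-braid}, giving Heegaard genus at most $4$, not $3$. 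Combined with the lower bound $\bar p - 1$ (which the paper obtains from \cite{BoiZie} after first showing $m$ is odd and $\geq 3$, using a homology argument you omit — this is needed to rule out the horizontal-splitting exception), one gets $\bar p = p \leq 5$. Since $p$ is odd and $\geq 3$, this leaves $p \in \{3, 5\}$, and the case $p = 5$ does not fall to the genus count. The paper then spends the bulk of the proof (Claims 1--3 on the exteriors $Y_1$, $Y$ of sublinks of $L$ in $V$, followed by a graph-intersection argument with $\Gamma_P, \Gamma_Q$ using the distance $8$ between $\bar\alpha$ and $\bar\beta$ on $\partial V$, and an analysis of $1$-bridge braids via \cite{Wu3}) ruling out $p = 5$. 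Your proposal contains none of this, so the stated conclusion $\Delta(\alpha,\beta) \leq 3$ does not follow from what you have written.
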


\pf Suppose that $p = \Delta(\alpha, \beta) \geq 3$. Then $\bar p \geq 3$, so we can argue as in the proof of \cite[Lemma 4.4]{BGZ3} to see that $M(\alpha)$ has base orbifold $S^2(\bar p, \bar p, m)$ where $m \geq 2$. Further, there is an integer $n$ coprime with $m$ such that $L_\alpha$ is isotopic to the closure of an $m/n$ rational tangle in a genus $1$ Heegaard solid torus  $V$ of $M(\alpha) / \tau_\alpha$ as depicted in Figure \ref{bgz5-fig24}. We also have $|L_\alpha| = 1$ if $n$ is odd and $2$ if it is even.

Suppose that $m$ is even. Then $L_\alpha$ is connected, so $L = L_\alpha$ and therefore $q$ is odd (\ref{comp 2}). Further $L $ is homotopically trivial in $L(\bar p, \bar q)$; in fact, $L$ is homotopically trivial in the solid torus $V$ since it is the closure
in $V$ of an $m/n$-rational tangle with $m$ even. But the $4$-braid $L$ represents four times a generator of $H_1(L(\bar p, \bar q))$, so $2p = \bar p > 1$ divides $4$, which is impossible since we have assumed $p \geq 3$. Thus $m$ is odd, and so is at least $3$.

Considering the universal cover of the lens space $L(\bar p, \bar q)$, we obtain two links $\widehat L$ and $\widehat L_\alpha$ in $S^3$ where $\widehat L$, the lift of the image of $L$ in $L(\bar p, \bar q)$, is a $4$-braid and $\widehat L_\alpha$, the lift of $L_\alpha$, is a Montesinos link with $\bar p$ rational tangles each of type $m/n$.

The $2$-fold cover $\Sigma_2(\widehat L_\alpha)$ of $S^3$ branched over $\widehat L_\alpha$ is Seifert with base orbifold a $2$-sphere with $\bar p$ cone points, each of order $m \geq 3$. Thus the Heegard genus of
$\Sigma_2(\widehat L_\alpha))$ is $\bar p-1$ \cite{BoiZie}. (Note that any irreducible horizontal Heegaard surface
 of the Seifert manifold, if one exists, has genus larger than $\bar p-1$.)

If $q$ is odd, then $L_\alpha = L$, so $\widehat L_\alpha$ is
a closed $4$-braid. Since a $2$-sphere in $S^3$
which separates $(S^3, \widehat L_\alpha)$ into two trivial $4$-string tangles
lifts to a genus $3$ Heegaard surface in $\Sigma_2(\widehat L_\alpha)$, the Heegaard genus of
the latter is at most $3$. Thus as $\bar p = 2p$ when $q$ is odd, $2p - 1 = \bar p -1 \leq 3$. But then
$\Delta(\alpha, \beta) = p \leq 2$, contrary to our hypotheses.

Thus $q$ is even, so $p$ is odd and $\bar p = p$.
In this case, $L_\alpha$ is the union of $L$ and the core of the filling torus in
$L(\bar p, \bar q) = V(\bar \alpha)$. Hence there is a genus $1$ Heegaard splitting
$U_1 \cup U_2$ of $S^3$ such that $\widehat L_\alpha$ is the union of the closed
$4$-braid $\widehat L \subset U_1$ and the core of $U_2$.
It follows that $\widehat L_\alpha$ is the closure of a $5$-braid in $S^3$, so
the Heegaard genus of $\Sigma_2(\widehat L_\alpha)$ is at most $4$.
Hence $p - 1 = \bar p - 1 \leq 4$, which gives $p \leq 5$.

Since $p\ge 3$ is odd, $p=3$ or $5$.
We will now eliminate the second case.

Assume $p=5$.
Then $L$ is a 4-braid in the Heegaard solid torus $V\subset L(5,\bar q) = V(\bar\alpha)$,
and $L_\alpha$ is the union of $L$ and a core of the complementary solid torus.
Also, $L_\alpha$ is the closure of the $m/n$-rational tangle ($m$ odd, $n$ even) in
some Heegaard solid torus $W$ in $L(5,\bar q)$, as shown in Figure \ref{bgz5-fig24}.
It follows that each component of $L_\alpha$ is a core of $W$.
In particular, the exterior of $L$ in $L(5,\bar q)$ is a solid torus.

Let $Y$ be the exterior of $L$ in $V$.
Then $\partial Y$ has two components, $\partial V$ and $T_L$, say.
A meridian disk $D$ of $V$ gives rise to a 4-punctured disk $P\subset Y$, with
$\partial P = \partial D\cup c_1 \cup c_2 \cup c_3 \cup c_4$,  where the $c_j$'s are
meridians of $L$.
Note that $\partial D$ has slope $\bar\beta$ on $\partial V$.
Also, since $Y(\bar\alpha)$, the exterior of $L$ in $L(5,\bar q)$, is a solid torus,
there is an essential disk $E\subset Y(\bar\alpha)$ with $\partial E\subset T_L$.
Choosing $E$ to have minimal intersection with the core of the filling solid torus
in $Y(\bar\alpha)$ we get an essential punctured disk $Q\subset Y$, with
$\partial Q = \partial E \cup b_1 \cup \cdots \cup b_n$, where the $b_i$'s are curves
of slope $\bar\alpha$ on $\partial V$.
Since $Y$ is hyperbolic, $n\ge 2$.

Let $\ep$ be the slope of $\partial E$ on $T_L$, and let $\mu$ be the slope on
$T_L$ of a meridian of $L$.

\begin{claim}\label{LemmaOne}
$\Delta (\ep,\mu) =5$.
\end{claim}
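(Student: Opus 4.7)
The plan is to recognize $L(5,\bar q) = N(L) \cup Y(\bar\alpha)$ as a genus-one Heegaard splitting of the lens space $L(5,\bar q)$ and to read $\Delta(\epsilon,\mu)$ directly off the order of $H_1(L(5,\bar q))$. From the discussion preceding the Claim, $L$ is a core of a Heegaard solid torus $W$ in $L(5,\bar q)$, so its exterior $Y(\bar\alpha)$ is itself a solid torus, with boundary $T_L$. The essential disk $E\subset Y(\bar\alpha)$ with $\partial E\subset T_L$ shows that $\epsilon$ is the meridian slope of the solid torus $Y(\bar\alpha)$, while $\mu$ is by definition the meridian slope of the second solid torus $N(L)$. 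Thus $T_L$ is the splitting torus of a genus-one Heegaard splitting of $L(5,\bar q)$ and $\epsilon,\mu$ are its two meridian slopes.

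For any genus-one Heegaard splitting $V_1\cup V_2$ of a closed orientable $3$-manifold $N$, the two meridian slopes $\mu_1,\mu_2$ on the common boundary torus have geometric intersection number equal to $|H_1(N)|$: writing $\mu_2 = p\mu_1 + q\lambda_1$ in a symplectic basis $\{\mu_1,\lambda_1\}$ of $H_1(\partial V_1)$ identifies $N$ with $L(p,q)$ and gives $\Delta(\mu_1,\mu_2) = |p| = |H_1(N)|$. Applied to the splitting at hand this yields $\Delta(\epsilon,\mu) = |H_1(L(5,\bar q))| = 5$.

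As an optional cross-check I would verify the computation from the $L$-side as well. Since $L$ is a $4$-braid in $V$, its class in $H_1(V)$ is $4$ times the core class, namely $4\bar\beta^*$; under the $\bar\alpha$-filling with $\bar\alpha = 5\bar\beta^* + \bar q\bar\beta$ and $\bar\beta$ a meridian, one has $H_1(L(5,\bar q)) \cong \mathbb Z/5$ generated by $\bar\beta^*$, so $[L]$ represents $4\in\mathbb Z/5$, a generator. This is exactly consistent with $L$ being a core of one of the two Heegaard solid tori, and confirms that the order of $H_1$ is $5$, as needed for the Heegaard-splitting computation. I do not anticipate any real obstacle here: once one has that both $N(L)$ and $Y(\bar\alpha)$ are solid tori (already established), the conclusion is a purely formal consequence of the standard description of genus-one Heegaard splittings of lens spaces.
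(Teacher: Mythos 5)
Your proof is correct and takes a genuinely different route from the paper's. You recognize $T_L = \partial N(L) = \partial Y(\bar\alpha)$ as a Heegaard torus of $L(5,\bar q)$ (both complementary pieces are solid tori, which the paper has already established at this point), identify $\mu$ and $\ep$ as the two meridian slopes of that splitting, and invoke the standard fact that the two meridian slopes of a genus-one Heegaard splitting of a closed orientable $3$-manifold $N$ are at distance $|H_1(N)|$. Since $|H_1(L(5,\bar q))|=5$, the Claim follows. The paper instead runs an explicit calculation in $H_1(Y)$: it picks dual slopes $\lambda$ on $T_L$ and $\bar\gamma$ on $\partial V$, uses that $L$ has winding number $4$ in $V$ so that $\bar\beta=4\mu$ in $H_1(Y)$, writes $\ep = a\lambda + b\mu$ and $\bar\alpha = 5\bar\gamma + 4\bar q\,\mu$, and solves the constraint $\langle\ep\rangle = \ker\bigl(H_1(T_L)\to H_1(Y(\bar\alpha))\bigr)$ to force $a=\pm 5$. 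Your argument is shorter and more conceptual; the tradeoff is that the paper's computation yields a byproduct used immediately afterward, namely that $\ep = \pm 4\bar\alpha$ in $H_1(Y)$, which is what lets the paper conclude in the next paragraph that $n$ is even and $n\ge 4$. If you replace the paper's proof by yours, you would still need a short homological observation to recover that. One cosmetic slip: you write $\mu_2 = p\mu_1 + q\lambda_1$ and then assert $\Delta(\mu_1,\mu_2)=|p|$; with that normalization the intersection number is $|q|$. The essential identity $\Delta(\mu_1,\mu_2)=|H_1(N)|$ is of course correct as stated.
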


\begin{proof}[Proof of Claim \ref{LemmaOne}]
Let $\lambda,\bar\gamma$ be slopes on $T_L$, $\partial V$, respectively, such that
$\Delta (\lambda,\mu) =1$, $\Delta (\bar\gamma,\bar\beta) =1$.
Then $H_1 (Y) \cong \zed\oplus\zed$, generated by $\mu$ and $\bar\gamma$.
Also, in $H_1 (Y)$ we have $\bar\beta = 4\mu$ and $\lambda = 4\bar\gamma +r\mu$
for some $r\in\zed$.
Write $\ep = a\lambda +b\mu$, where $(a,b) =1$.
Then $\ep = a (4\bar\gamma +r\mu) +b\mu = 4a\bar\gamma + (b+ra)\mu\in H_1(Y)$.
Note that $\langle \ep\rangle = \ker (H_1 (T_L) \to H_1 (Y(\bar\alpha)))$.
Since $\bar\alpha = 5\bar\gamma + \bar q\bar\beta = 5\bar\gamma + 4\bar q \mu$,
it follows that there exists $m\in \zed$ such that
\begin{gather*}
4a = 5m\ ,\ \text{ and}\\
b+ra = 4\bar q m\ .
\end{gather*}
The first equation implies $a=5c$, $m=4c$ for some $c\in\zed$, and the second then
implies $b\equiv 0$ (mod~$c$).
Since $(a,b) =1$, $c=\pm1$.
Hence $\Delta (\ep,\mu) = |a| =5$.
\end{proof}

The intersection of $P$ and $Q$ defines in the usual way graphs $\Gamma_P$,
$\Gamma_Q$ in the 2-sphere, where $\Gamma_P$ has vertices $c_1,c_2,c_3,c_4$
and $d_V$ (corresponding to $\partial D$), and $\Gamma_Q$ has vertices
$b_1,b_2,\ldots,b_n$ and $e_L$ (corresponding to $\partial E$).
Thus in $\Gamma_P$ the vertex $d_V$ has valency $5n$ and each $c_j$ has valency~5,
while in $\Gamma_Q$ the vertex $e_L$ has valency $20$ and each $b_i$ has valency~5.
Since $L$ is a braid, all the $c_j$'s have the same sign.
Hence, by the parity rule, $\Gamma_P$ has no $C$-edges, $\Gamma_Q$ has no
$E$-edges, and all $B$-edges of $\Gamma_Q$ are negative.

The computation in the proof of Claim \ref{LemmaOne}
shows that $m = \pm4$ i.e. $\ep = \pm 4\bar\alpha \in H_1 (Y)$.
Thus (with any choice of orientations)
$$\big| \text{ number of positive $b_i$'s $-$ number of negative $b_i$'s }\big| = 4$$
It follows that $n$ is even and $\ge 4$.

\begin{claim}\label{LemmaTwo}
$n=4$.
\end{claim}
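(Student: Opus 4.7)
The plan is to rule out $n \geq 6$; combined with the already-established bounds $n \geq 4$ and $n$ even, this gives $n = 4$.

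The first step is a bipartite count in $\Gamma_Q$. Let $n_\pm$ be the number of positively/negatively signed $b_i$'s, so $n_+ + n_- = n$, and, after possibly reversing orientations, $n_+ - n_- = 4$ from the identity $\ep = \pm 4\bar\alpha$ established just before the claim. For each $i$, let $x_i$ be the number of $BE$-edges incident to $b_i$; since the valency of $e_L$ is $20$, we have $\sum_i x_i = 20$. Because every $B$-edge of $\Gamma_Q$ is negative, every $BB$-edge is bipartite between $\{b_i^+\}$ and $\{b_i^-\}$. Counting $BB$-edges two ways,
\[
\tfrac{1}{2}(5n-20) \;=\; \#\{BB\text{-edges}\} \;=\; \sum_{b_i^+}(5-x_i) \;=\; 5n_+ - \sum_{b_i^+} x_i.
\]
Substituting $n_+ = (n+4)/2$ yields $\sum_{b_i^+} x_i = 20$ and hence $\sum_{b_i^-} x_i = 0$. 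Thus every negative $b_i$ has all five of its incident edges being $BB$-edges to positive vertices.

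The second step assumes $n \geq 6$, so $n_- \geq 1$. Fixing a negative vertex $b_{i_0}$, its five incident $BB$-edges connect $b_{i_0}$ to positive vertices (with repetitions). I intend to extract a pair of these $BB$-edges that is parallel in $\Gamma_Q$ \emph{and} whose dual $D$-edges are parallel in $\Gamma_P$. The structure of $d_V$ helps: the labels around $d_V$ cycle through $1,2,\dots,n$ exactly five times, so the five label-$i_0$ endpoints at $d_V$ are evenly spaced, and all five must be $D$-edge endpoints going to positive labels. The tightest case is $n=6$, where $\sum_{b_i^+} x_i = 20$ over five positive vertices forces $x_j = 4$ for each $j \ne i_0$, so the five $BB$-edges from $b_{i_0}$ go one to each positive vertex; a case analysis patterned on Corollaries \ref{cor4.16} and \ref{cor4.18} is then used to locate the required parallel pair. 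For $n \geq 8$, the additional freedom in the distribution $(x_j)_{b_j^+}$ provides even more parallelism, handled by a pigeonhole argument on the $5 - x_j$ multiplicities of $BB$-edges at each positive $b_j$.

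The third step is the essential-annulus obstruction. Given a pair of $BB$-edges parallel in $\Gamma_Q$ whose dual $D$-edges are parallel in $\Gamma_P$, the disk between them in $Q$ pastes to the disk between their duals in $P$ along the two common edges, producing an embedded annulus $E^\ast \subset Y$ with one boundary of slope $\bar\alpha$ on $\partial V$ and the other on $T_L$. Minimality of $\partial D\cap (b_1\cup\cdots\cup b_n)$ and of $\partial E\cap{\rm core}$, exactly as in the proof of Claim~3 of Lemma~\ref{F2-lens}, forces both boundary components of $E^\ast$ to be essential in their respective tori, so $E^\ast$ is an essential annulus in $Y$. This contradicts the hyperbolicity of $Y$: the double cover $M\to V$ branched over $L$ restricts to the covering $M\setminus\widetilde L \to Y$ where $\widetilde L = \mathrm{Fix}(\tau)$ is a closed geodesic of the hyperbolic $M$, whence $M\setminus\widetilde L$ is hyperbolic, and its free $\mathbb Z/2$-quotient $Y$ inherits a complete hyperbolic structure. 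The contradiction forces $n < 6$, so $n = 4$.

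The main obstacle is the second step: the edge-count does not by itself produce the coincidence of parallelism in both graphs, and a careful inspection of the cyclic arrangement of label-$i_0$ endpoints at $d_V$ is required, particularly in the extremal $n=6$ case where the configuration of $BB$-edges at $b_{i_0}$ is rigidly determined. The remainder of the proof is modeled closely on the arguments of \S\ref{subsec: 7.2} and Claim~3 of Lemma~\ref{F2-lens}.
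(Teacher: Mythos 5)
Your approach is genuinely different from the paper's, and unfortunately it does not go through as written.

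The first step (the bipartite count in $\Gamma_Q$) is correct: with $n_+ - n_- = 4$, $\sum_i x_i = 20$, and every $BB$-edge bipartite, the arithmetic does force $\sum_{b_i^-} x_i = 0$, so every negative $b_i$ has all five edges of type $BB$. This is a nice observation. But from there the argument runs into two problems, one of incompleteness and one of substance.

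The incompleteness is in your second step, and you flag it yourself: the edge-count does not, by itself, produce a pair of $BB$-edges that are simultaneously parallel in $\Gamma_Q$ and whose duals are parallel in $\Gamma_P$, and the phrase ``a case analysis patterned on Corollaries \ref{cor4.16} and \ref{cor4.18} is then used'' describes work still to be done rather than a completed argument. Those corollaries concern $CD$-edges and disk faces of $\Gamma_P$; they do not directly address $BB$/$DD$ parallelism, and it is not clear how they would be adapted.

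The substantive error is in the third step. Pasting a $BB$-bigon from $\Gamma_Q$ to its dual $DD$-bigon from $\Gamma_P$ produces an annulus whose boundary consists of subarcs of $b_i$, $b_j$ (from the $BB$-bigon) and subarcs of $d_V$ (from the $DD$-bigon), all of which lie on $\partial V$. So both boundary components of $E^\ast$ lie on $\partial V$; it is \emph{not} an annulus with one boundary on $T_L$. You are modeling on Claim 3 of Lemma \ref{F2-lens}, but that argument pastes an $ab$-bigon (arcs on $a_0 \subset T_0$ and on some $b_i \subset \partial V$) to a $cd$-bigon (arcs on $c_0 \subset T_0$ and on $d \subset \partial V$), which is why the resulting annulus there does connect the two boundary tori. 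With $BB$/$DD$ edges the geometry is different, and the claimed obstruction, as stated, does not materialize. One would have to redo the essentiality analysis for a $\partial V$--$\partial V$ annulus and in particular rule out boundary-parallelism, none of which is addressed.

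For comparison, the paper's proof of this claim is a short weight count in $\Gamma_P$ using the $\lambda$-formalism introduced before Lemma \ref{lem4.12}: since each $c_j$ has valency $5$, for a type-$O$ edge $\bar e$ of $\bar\Gamma_P$ one has $\lambda(\bar e) = 2\,wt(\bar e) + 5$, which is odd, while $\lambda(\bar e) \le n$ in general and $n$ is even, forcing $\lambda(\bar e) \le n-1$; for type-$N$ edges $\lambda(\bar e) \le n$. Feeding these into the valency identity at $d_V$ gives $5n \le k_0(n-1) + (k-k_0)n + 5(4-k_0)$, and $k - k_0 \le 1$ then yields $(4-k_0)n \le 20 - 6k_0$, which fails for $n \ge 6$. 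No parallelism coincidence or annulus construction is needed.
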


\begin{rem}
This is equivalent to saying that $\Gamma_P$ has no $D$-edges.
\end{rem}

\begin{proof}[Proof of Claim \ref{LemmaTwo}]
Suppose $n\ge 6$.
We use the notation and terminology of the argument immediately preceding
Lemma \ref{lem4.12}.

Since each $c_j$ has valency 5, it is easy to see that if $\bar e$ is an edge of
$\bar\Gamma_P$ of type~$O$ then $\lambda (\bar e) \le n-1$.
If $\bar e$ is of type~$N$ then $\lambda (\bar e) \le n$.
Since the valency of $d_V$ is $5n$ we get
$$5n \le k_0 (n-1) + (k-k_0) n + (4-k_0) 5$$
Since $k-k_0 \le 1$ this gives
$$(4-k_0) n \le 20 - 6k_0\ ,$$
which contradicts our assumption that $n\ge 6$.
\end{proof}

\begin{claim}\label{LemmaThree}
$L$ is a $1$-bridge braid in $V$.
\end{claim}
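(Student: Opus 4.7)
The plan is to combine the combinatorial constraints now in place on the graphs $\Gamma_P$ and $\Gamma_Q$ (with $n=4$) with the hyperbolicity of $Y$ from Claim~2 to conclude that $L$ admits a $1$-bridge presentation as a braid in $V$.

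First I would record the structure forced by $n=4$. Since the parity rule rules out $C$-edges in $\Gamma_P$ and Claim~\ref{LemmaTwo} rules out $D$-edges, all $20$ edges of $\Gamma_P$ are $CD$-edges, each of the four vertices $c_j$ has valency $5$, and these $20$ edge-endpoints fill up the valency of $d_V$. Dually, $\Gamma_Q$ has no $B$-edges: every one of its $20$ edges is a $BE$-edge, with exactly $5$ joining $e_L$ to each $b_i$. I would then organize this data by analyzing the cyclic order of the five $CD$-edges around each $c_j$ (equivalently, of the $BE$-edges around each $b_i$ in $\Gamma_Q$), using the analog of Lemma~\ref{lem4.17} to match labels on $d_V$.

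Second, I would translate this back into the solid torus $V$. The essential disk $E\subset Y(\bar\alpha)$ is a meridian disk of the Heegaard solid torus $Y(\bar\alpha)\subset L(5,\bar q)$, so $L$ is isotopic in $L(5,\bar q)$ to a core of the complementary Heegaard solid torus. Its boundary slope $\epsilon$ on $T_L$ satisfies $\epsilon=\pm 4\bar\alpha$ in $H_1(Y)$ by the computation in Claim~\ref{LemmaOne}, which is compatible with $L$ being a knot of winding number $4$ in $V$ (consistent with $|L|=1$ and the fact that $L$ is the closure of a $4$-braid). At this point the data $(V,L,\bar\alpha)$ satisfies the hypothesis that $L$ is a closed braid in $V$ and that the exterior $Y=V\setminus N(L)$ admits a non-meridional Dehn filling on $\partial V$ yielding a solid torus.

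Third, I would invoke Gabai's classification of knots in a solid torus with a solid-torus surgery on $\partial V$ \cite{Ga2}: such a knot is either a torus knot on $\partial V$ (a $0$-bridge braid) or a $1$-bridge braid in $V$. The $0$-bridge possibility would force $Y$ to be a Seifert fibred cable space, contradicting the hyperbolicity of $Y$ established in Claim~2. Hence $L$ must be a $1$-bridge braid.

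The main obstacle will be either (a) verifying that the exceptional cases in Gabai's theorem are ruled out in our setting (which I expect to follow automatically from winding number $4$ together with $L$ being non-trivial in $V$, since $L$ bounds no disk in $V$), or (b) if one prefers a self-contained argument, extracting from the combinatorics of the $20$ $CD$-edges in $\Gamma_P$ a specific bigon or trigon face whose interior, together with a subarc of $\partial V$ and a subarc of $L$, can be assembled into an explicit bridge disk $B\subset V$ realizing the $1$-bridge structure. I expect the explicit construction to be the harder route and to require careful tracking of the cyclic pattern of labels on $d_V$.
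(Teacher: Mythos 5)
Your first two steps correctly consolidate the combinatorial data: with $n=4$, all $c_j$'s and all $b_i$'s have the same sign, so $\Gamma_P$ consists of $20$ $CD$-edges and $\Gamma_Q$ of $20$ $BE$-edges, with five edges joining $e_L$ to each $b_i$; and the exterior $Y$ of $L$ in $V$ is hyperbolic with the $\bar\alpha$-filling of $\partial V$ a solid torus. The gap is in the third step. You invoke ``Gabai's classification of knots in a solid torus with a solid-torus surgery on $\partial V$,'' but that is not what Gabai proved. Gabai's theorem (\emph{Surgery on knots in solid tori}, Topology 28 (1989)) concerns Dehn surgery \emph{on the knot itself}: if filling $\partial N(L)=T_L$ with a non-meridional slope yields a solid torus, then $L$ is a $0$- or $1$-bridge braid. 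Here the solid-torus filling is on the \emph{other} boundary component, $\partial V$ (namely $Y(\bar\alpha)$), while on $T_L$ we only know the trivial filling $Y(\mu_L)=V$ is a solid torus. There is no apparent way to convert the outer-torus filling into a surgery on $L$ (or on the dual knot $K'\subset W=Y(\bar\alpha)$, where again only the trivial filling of $\partial V$ is known to give a solid torus), so the hypothesis of Gabai's theorem is not met. You do not give a citation for the theorem you actually need, and I do not know of one.

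Your alternative sketch (b) is the route the paper takes, but you have not carried it out, and it needs more than you suggest. The paper does not locate a bigon or trigon face of $\Gamma_P$; rather, it observes that among the five $BE$-edges of $\Gamma_Q$ incident to a fixed $b_i$, all five are mutually parallel, and the four bigon faces of $\Gamma_Q$ between them define a (non-ambient) isotopy of $L$ in $V$ carrying it onto the union of an arc $\gamma\subset\partial V$ with an arc $\delta$ obtained from the image of $e\cup e_1'$ inside the meridian disk $D\supset P$ (treating the puncture $c_1$ as a point of $D$). That exhibits $\delta$ as the bridge and $\gamma$ as the braid part, so $L$ is a $1$-bridge braid; hyperbolicity of $Y$ rules out the $0$-bridge case, exactly as you note. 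This explicit construction from the graph is what replaces the classification theorem you hoped to cite.
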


\begin{proof}[Proof of Claim \ref{LemmaThree}]
Recall that in $\Gamma_P$ all the $c_j$'s have the same sign.
Claim \ref{LemmaTwo} implies that all the $b_i$'s have the same sign.
Hence $\Gamma_P$ has only $CD$-edges and $\Gamma_Q$ has only $BE$-edges.
Therefore there are five parallel $BE$-edges in $\Gamma_Q$; see Figure \ref{figC}.
Let $\gamma \subset \partial V$ be the arc in $\partial Q$ shown in Figure \ref{figC}.
Then the bigon faces of $\Gamma_Q$ shown in Figure \ref{figC} allow us to define a
(non-ambient) isotopy of $L$ in $V$ taking it to $\gamma\cup \delta$, where
$\delta$ is the image of $e \cup e'_1$ in $P\subset D$, where we
 treat the vertex $c_1$ as a point in $D$ so that $\d$ is considered as a properly embedded arc in $D$.
See Figure \ref{figD} for a typical situation of the arcs $e_1,e_2,e_3,e_4,e'_1$ in $P$.
This shows that $L$ is a 1-bridge braid in $V$ (with $\d$ as the bridge).
(Note that since $Y$ is hyperbolic $L$ is not a $0$-bridge braid.)
\end{proof}

\begin{figure}[!ht]
\centerline{\includegraphics{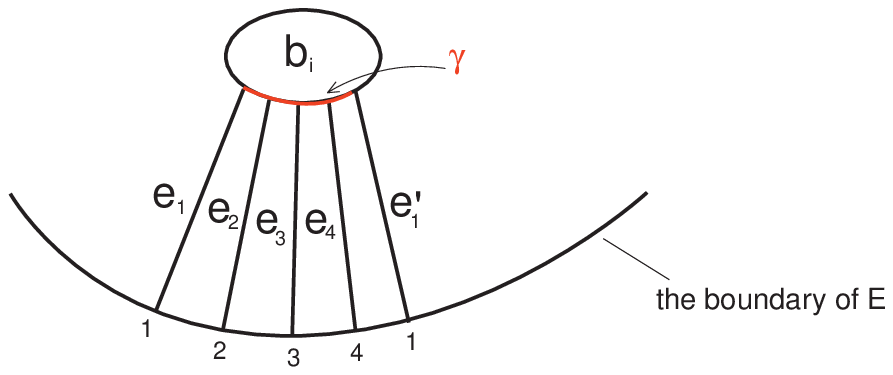}} \caption{}\label{figC}
\end{figure}

\begin{figure}[!ht]
\centerline{\includegraphics{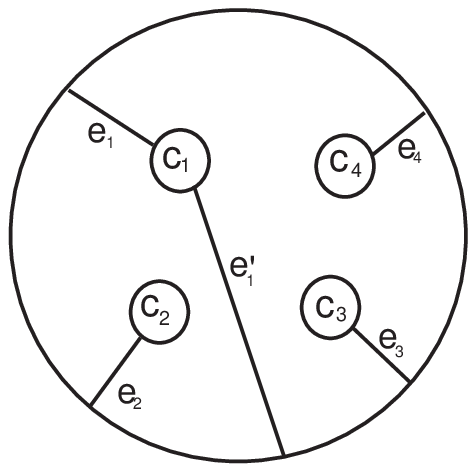}} \caption{}\label{figD}
\end{figure}

We thus have a 1-bridge $L$ in $V$ with a Dehn filling (namely $\bar\alpha$-filling)
on the boundary component $\partial V$ of the exterior $Y$ of $L$ in $V$ that
gives a solid torus.
Such braids are classified in \cite{Wu3}.
In particular \cite[Table 1]{Wu3} shows that there is a unique example (up to
homeomorphism) where $L$ has winding number~4 in $V$:
$L$ is the closure of the braid $\sigma_1 (\sigma_3\sigma_2\sigma_1)^2$.
This braid is conjugate to $\sigma_2\sigma_1\sigma_3\sigma_2\sigma_1^3$, whose
closure is shown in Figure \ref{figE}.
It is clear from this figure that $L$ bounds a M\"{o}bius band in $V$, contradicting
the fact that $Y$ is hyperbolic.

\begin{figure}[!ht]
\centerline{\includegraphics{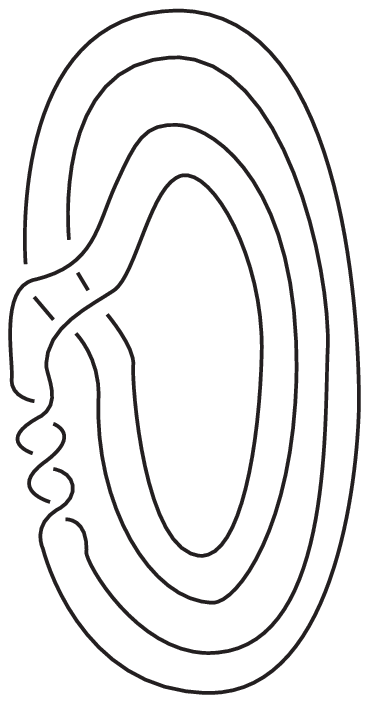}} \caption{}\label{figE}
\end{figure}

This contradiction completes the proof of Lemma~\ref{basicreduction3}.
\qed

To finish the proof of Proposition \ref{prop: F2fibre} we only need to prove the following two lemmas.

\begin{lemma}\label{F2fibre-non-Seifert}
Suppose that $M$ is a hyperbolic knot manifold which contains an essential twice-punctured torus $F$ of boundary slope $\beta$ and let $\alpha$ be a slope on $\partial M$ such that $M(\alpha)$ is an irreducible small Seifert manifold. If $F$ is a fibre in $M$, $M(\alpha)$ is not a lens space, and $\tau_\alpha$ preserves the orientations of the Seifert fibres of $M(\alpha)$, then
 $\Delta(\alpha,\beta)\leq 2$ when $M(\alpha)$ is not a prism manifold.
\end{lemma}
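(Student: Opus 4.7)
The plan is to argue by contradiction. Suppose $\Delta := \Delta(\alpha,\beta) \geq 3$. The goal is to combine the Seifert structure induced on the lens space quotient $L(\bar p,\bar q) = M(\alpha)/\tau_\alpha$ with the $4$-braid structure of $L \subset V = M/\tau$ to derive a contradiction, in the spirit of Lemmas \ref{+-quotient 5}--\ref{lem4.19} of the semi-fibre case.

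The first step is to establish an analogue of Lemma \ref{+-quotient 5}. Because $\tau_\alpha$ preserves the Seifert fibration of $M(\alpha)$ and the orientation of its fibres, the orbifold quotient of the base $S^2(a,b,c)$ by the induced involution $\bar\tau_\alpha$ is the base orbifold of an induced Seifert fibration on $L(\bar p,\bar q)$, and $L_\alpha$ is a union of its Seifert fibres. The hypothesis that $M(\alpha)$ is not a lens space forces $a,b,c \geq 2$, and $\bar\tau_\alpha$, being orientation-preserving on $S^2$, is either the identity or a rotation fixing exactly two points. A careful orbifold computation in each case---the fibrewise component of $\tau_\alpha$ converts a fibre of multiplicity $m$ into a fibre of multiplicity $m/\gcd(m,2)$, while two rotation-swapped cone points of equal order fuse and a rotation-fixed cone point may either retain or double its order---compared with the lens-space requirement that the quotient base carry at most two cone points forces tight restrictions on $(a,b,c)$. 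The hypothesis that $M(\alpha)$ is not a prism manifold eliminates the single surviving family $\{a,b,c\}=\{2,2,n\}$ and leaves only configurations in which $L_\alpha$ is a union of at most four Seifert fibres of $L(\bar p,\bar q)$, containing at least one regular fibre, with the quotient base carrying a cone point whose order is bounded below in terms of $\Delta$.

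Given this structural input, the second step parallels Lemmas \ref{lem4.7}--\ref{lem4.19}: I would use the $4$-braid $L\subset V$ to form a meridian $4$-punctured disk $P$ in the exterior $Y$ of $L$ in $V$ together with an essential vertical annulus $Q\subset Y$ arising from the Seifert exterior of $L_\alpha$ in $L(\bar p,\bar q)$, yielding intersection graphs $\Gamma_P, \Gamma_Q$ with the vertex $d_V \in \Gamma_P$ of valency $\bar p \cdot n \geq 3n$. The Scharlemann-cycle obstructions (Lemma \ref{lem4.8} and Corollary \ref{cor4.10}), the winding-number restrictions coming from the Seifert structure (Lemma \ref{lem4.7}), and the homological face-type argument of Lemma \ref{lem4.15} all apply verbatim, and a shorter version of the face-counting analysis of Lemmas \ref{lem4.12}--\ref{lem4.19} rules out each surviving configuration of $\Gamma_P$ under the hypothesis $\Delta \geq 3$.

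The main obstacle will be the bookkeeping in the first step: the parity interactions between $a,b,c$ and the two possibilities for $\bar\tau_\alpha$ generate several subcases that must each be reconciled with the two-cone-point constraint on a lens-space base, and one must verify that it is precisely the prism family $\{a,b,c\}=\{2,2,n\}$ that needs to be excluded in order to rule out $\Delta \geq 3$. Once this structural reduction is complete, the combinatorial analysis runs along the same lines as in the semi-fibre case, with the stronger Seifert-theoretic restrictions on $L_\alpha$ compensating for the weakened lower threshold $\Delta \geq 3$ (in place of $\Delta \geq 5$).
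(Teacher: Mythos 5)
Your overall strategy matches the paper's: assume $\Delta\geq 3$, use the structural result that $L_\alpha$ is a union of Seifert fibres of $L(\bar p,\bar q)$, then run the intersection-graph argument of Lemmas \ref{lem4.7}--\ref{lem4.19} in the exterior $Y$ of the $4$-braid $L\subset V$. But there is a quantitative gap in your first step that the combinatorial second step cannot absorb. You state that $d_V$ has valency $\bar p\cdot n\geq 3n$, which implicitly allows $\bar p=p=\Delta$, i.e.\ $q$ even. The paper's argument cannot get off the ground with only $\bar p\geq 3$: the key inequality for $n\geq 4$ is $\bar p\, n\leq kn+2(4-k_0)\leq kn+2(5-k)$, which with $\bar p\geq 3$ gives $(3-k)n\leq 2(5-k)$, and for $k=1,\;n=4$ this reads $8\leq 8$ --- no contradiction. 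The indispensable step you omit is showing that $L_\alpha=L$. This follows from the observation that $L$ is a hyperbolic link in $V$ (so $K_\alpha$, the core of the $\bar\alpha$-filling solid torus, cannot lie in the union of Seifert fibres $L_\alpha$); by \eqref{comp 2} this forces $q$ odd, hence $\bar p=2p=2\Delta\geq 6$, and the valency of $d_V$ is $2\Delta n\geq 6n$. That factor of $2$ is exactly what makes the graph argument close: $(2\Delta-k)n\leq 2(5-k)$ is incompatible with $2\Delta\geq 6$, $k\leq 5$, $n\geq 4$.

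Two smaller points. First, in the fibre case $L_\alpha$ is a union of at most \emph{three} Seifert fibres (this is \cite[Lemma 4.3]{BGZ3}), not four; the bound of four is the semi-fibre analogue (Lemma \ref{+-quotient 5}) which doesn't apply here. Second, your closing sentence attributes the rescue of the weaker hypothesis $\Delta\geq 3$ (vs.\ $\Delta\geq 5$ in the semi-fibre case) to ``stronger Seifert-theoretic restrictions on $L_\alpha$'', but that is not where the slack comes from --- it comes from the doubling $\bar p=2\Delta$, which you have not established. Once you add the hyperbolicity-of-$L$-in-$V$ step and the resulting parity of $q$, your Step 2 runs exactly as the paper's.
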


\begin{lemma}\label{prism-case}
Suppose that $M$ is a hyperbolic knot manifold which contains an essential twice-punctured torus $F$ of boundary slope $\beta$ and let $\alpha$ be a slope on $\partial M$ such that $M(\alpha)$ is a prism manifold. If $F$ is a fibre in $M$ and $\tau_\alpha$ preserves the orientations of the Seifert fibres of $M(\alpha)$, then $\Delta(\alpha,\beta)\leq 3$.
\end{lemma}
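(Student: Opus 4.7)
The plan is to argue by contradiction: assume $\Delta:=\Delta(\alpha,\beta)\ge 4$ and follow the blueprint of the not--very--small semi-fibre proof of \S \ref{subsec: 7.2}, modified for the fibre case. The first task is to establish a fibre-case analog of Lemma \ref{+-quotient 5}. Because $\tau_\alpha$ preserves fibre orientations, the $\tau_\alpha$-invariant Seifert structure on $M(\alpha)$ supplied by \cite[Lemma 4.1]{BGZ3} descends to a Seifert fibration of $L(\bar p,\bar q)\cong M(\alpha)/\tau_\alpha$ in which each component of $L_\alpha$ is a fibre. Combining the prism hypothesis (base orbifold $S^2(2,2,n)$) with the requirement that the induced base $\bar{\mathcal B}$ of $L(\bar p,\bar q)$ have at most two cone points (since $L(\bar p,\bar q)$ is a lens space), I would analyse the possible orientation-preserving orbifold involutions $\bar\tau_\alpha$ of $S^2(2,2,n)$ and conclude that $L_\alpha$ contains at least one regular fibre of $L(\bar p,\bar q)$, and that the induced Seifert structure contains an exceptional fibre of multiplicity at least $\Delta-1\ge 3$.

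With this analog in place, I would then, as in Cases I and II of \S \ref{subsec: 7.2}, choose a vertical essential annulus $A$ in the exterior of $L_\alpha$ in $L(\bar p,\bar q)$ with $\partial A$ on components of $L_\alpha$ that are regular fibres of $L(\bar p, \bar q)$, making the intersection with the complementary solid torus to $V$ minimal. A meridian disk of $V$ gives a $4$-punctured disk $P$ in the exterior $Y$ of $L$ in $V$, and $A$ gives an essential $n$-punctured annulus $Q$ in $Y$. The intersection $P\cap Q$ defines graphs $\Gamma_P$ on $P$ and $\Gamma_Q$ on $Q$ with vertices $d_V,c_1,\ldots,c_4$ and $a_1,a_2,b_1,\ldots,b_n$ respectively, where the valency of $d_V$ is $\Delta\cdot n\ge 4n$. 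All $c_j$'s have the same sign since $L$ is a braid.

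I would then carry out the graph-theoretic analysis paralleling Lemmas \ref{lem4.8}--\ref{lem4.19}: rule out $D$-edge Scharlemann $S$-cycles (these would build either a punctured lens space in a factor or reduce $|A\cap T_V|$); bound the weights of $D$-edges of $\bar\Gamma_P$ by $\lambda(\bar e)\le n$ via the parity rule; and bound the number of $D$-edges in $\bar\Gamma_P$ by~$5$. Combining these bounds with the valency count $\Delta n\le kn+c(4-k_0)$ for the appropriate $c\in\{1,2\}$ would force $\Delta\le 3$ in Case I and reduce Case II to small values of $n$. In the remaining cases the prism constraint — which pins down the multiplicities of exceptional fibres of $L(\bar p,\bar q)$ to lie in a narrow set — is used to rule out the Scharlemann cycles of orders $2,3$ that Lemmas \ref{lem4.9} and \ref{lem4.11} would otherwise permit.

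The main obstacle is the first step: determining, from the prism hypothesis on $M(\alpha)$ and the fact that $\bar\tau_\alpha$ preserves orientations of fibres, the precise form of the induced Seifert fibration on $L(\bar p,\bar q)$ and the location of $L_\alpha$ inside it. Once that prism analog of Lemma \ref{+-quotient 5} is in hand, the combinatorial analysis should follow the semi-fibre template closely, with the minor improvement that $(a,b,c)$ is now of the form $(2,2,n)$ rather than a general small-Seifert triple, yielding the sharper numerical constraints needed to push the bound down to $\Delta\le 3$.
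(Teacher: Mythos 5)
Your overall plan (set up $\Gamma_P,\Gamma_Q$ from a meridian disk and a vertical annulus, then run the edge-weight/valency counts) follows the paper's template, but there is a genuine gap at the heart of your combinatorial step. You propose to ``rule out $D$-edge Scharlemann $S$-cycles\ldots; these would build either a punctured lens space in a factor or reduce $|A\cap T_V|$,'' and again at the end to ``rule out the Scharlemann cycles of orders $2,3$.'' In the prism case this is exactly what you \emph{cannot} do, and the failure is not incidental: Lemma~\ref{lem4.9} shows that a $D$-edge $S$-cycle in $X_i$ forces the core of $X_i$ to be an exceptional fibre of multiplicity $2$, which is precisely what the base orbifold $S^2(2,2,c)$ permits. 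The punctured-lens-space obstruction used in Corollary~\ref{cor4.10} relies on Lemma~\ref{+-quotient 5}(3), i.e.\ on the absence of an order-$2$ cone point, and that hypothesis is gone once $M(\alpha)$ is a prism manifold. So the analog of Corollary~\ref{cor4.10} is false here, and any argument that begins by excluding $D$-edge $S$-cycles will not get off the ground.

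What the paper does instead is to \emph{accommodate} the $S$-cycles rather than exclude them. After reducing to $\Delta=4$ by citing \cite{L2} (a step your sketch omits and which shrinks the casework considerably), the proof observes that exactly one of $X_1,X_2$, say $X_1$, can be a solid torus whose core has multiplicity $2$, and then proves a string of new restrictions: every $D$-edge $S$-cycle bigon must lie on the $X_1$-side (Claim~\ref{prism-claim1}); there are no extended $S$-cycles when $n\ge 4$, via a fresh geometric annulus-reduction argument not present in the non-prism case (Claim~\ref{prism-claim2}); parallel families of $D$-edges have at most $n/2+1$ members (Claim~\ref{prism-claim3}); and analogues of Lemma~\ref{lem4.17} and Corollary~\ref{cor4.18} on the ordering of $CD$-edge endpoints (Claims~\ref{prism-claim4}, \ref{prism-claim5}). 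Only with these in hand does the count force $n=2$ and then the exhaustive $k$-by-$k$ analysis closes the argument. None of these new claims, nor the role of \cite{L2}, appears in your proposal, and without them the proof cannot be pushed through --- in particular, your step ``bound the number of $D$-edges in $\bar\Gamma_P$ by $5$'' no longer suffices once some of those edges can carry extra parallel copies through $S$-cycles on the $X_1$-side.
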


\begin{proof}[Proof of Lemma \ref{F2fibre-non-Seifert}]
  The argument is similar to that given in \cite[\S 6]{BGZ3}.

Suppose that $\Delta = \Delta (\alpha,\beta) \ge 3$.
Then the branch set $L_\alpha$ is a set of at most three Seifert fibres of $L(\bar p,\bar q)
= M(\alpha)/\tau_\alpha$ with the induced Seifert fibration from $M(\alpha)$ given by
\cite[Lemma~4.3]{BGZ3}.
Since $L$ is a hyperbolic link in $V$, $K_\alpha$ is not contained in $L_\alpha$.
Thus $L_\alpha =L$ so that $q$ is odd and $L(\bar p,\bar q)$ is a lens space of
order $2p = 2\Delta \ge 6$.
As in \cite[\S6.1]{BGZ3}, we
\begin{itemize}
\item define $X$ to be the exterior of $L$ in $V(\bar\alpha) = L(\bar p,\bar q)$
and $Y$ its exterior in $V$;
\item fix a component $K$ of $L$ which is a regular fibre of $L(\bar p,\bar q)$ and
define $T_K$; respectively $T_V$, to be the component of $\partial Y$ corresponding
to $K$, respectively $\partial V$;
\item use a meridian disk of $V$ to construct a 4-punctured disk $P$ properly
embedded in $Y$;
\item construct an essential, separating, vertical annulus $(A,\partial A) \subset (X,T_K)$
which separates $X$ into two components $X_1$ and $X_2$ such that each $X_i$
is Seifert with base orbifold either an annulus with no cone points or a disk with
one cone point, of order at least 3 (\cite[Lemma~6.1]{BGZ3}).
\item construct an $n$-punctured essential annulus $Q$ in $Y$, where $n$ is even,
from an appropriately chosen essential separating vertical annulus in the exterior
of $K$ in $L(\bar p,\bar q)$;
\item define graphs $\Gamma_P$ and $\Gamma_Q$, vertices $d_V$, $c_1,\ldots,c_4$,
$a_1,a_2$, $b_1,\ldots, b_n$, and $D$-edges, $CD$-edges, etc.;
\end{itemize}

In the graph $\Gamma_P$, $d_V$ has valency $2\Delta n\ge 6n$, and each $c_j$
has valency 0 or 2.

The proof closely follows the proof of Case II of Section \ref{sec: twice-punctured semi-fibre}.

First assume $n\ge 4$.
The argument immediately preceding Lemma \ref{lem4.12} then shows that, with the
notation established there,
\begin{align*}
2\Delta n &\le kn + 2 (4-k_0)\\
&\le kn +2(5-k)
\end{align*}
giving
$$(2\Delta -k) n \le 2(5-k)\ .$$
Since $2\Delta \ge 6$, this is a contradiction.

So we may assume $n=2$.
Note that there are no parallel $D$-edges in $\Gamma_P$ and valency $d_V\ge 12$.
We now follow the proof of Lemma \ref{lem4.19}.
By Lemma  \ref{lem4.7}(1), $\ell = 2,3$ or $4$.
\medskip

\noindent $\underline{k=5}$.
The $D$-edges of $\Gamma_P$ are as shown in Figure \ref{figA}.
Applying Lemma \ref{lem4.14}, the face $f$ implies that $c_3$ and $c_4$ have valency~2, and
similarly the face $g$ implies that $c_1$ and $c_2$ have valency~2.
Then $d_V$ has valency~18, implying that $\Delta =9$, a contradiction.
\medskip

\begin{figure}[!ht]
\centerline{\includegraphics{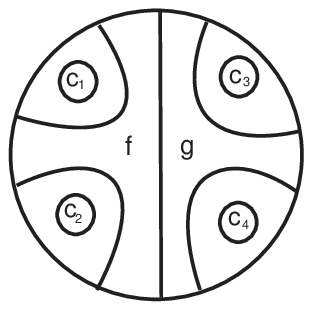}} \caption{}\label{figA}
\end{figure}

\noindent $\underline{k=4}$.
The $D$-edges of $\Gamma_P$ are as shown in Figure \ref{bgz5-fig3}(a) or (b).

In case (a) the proof of Lemma \ref{lem4.19} applies.

In case (b) the proof of Lemma \ref{lem4.19} applies unless $c_1$ and $c_2$ have valency~0.
In this case, $c_3$ and $c_4$ have valency~2 and there are two possibilities for
$\Gamma_P$, shown in Figure \ref{figB}(i) and (ii).
In both cases the faces $f$ and $g$ contradict Lemma \ref{lem4.14}.
\medskip

\begin{figure}[!ht]
\centerline{\includegraphics{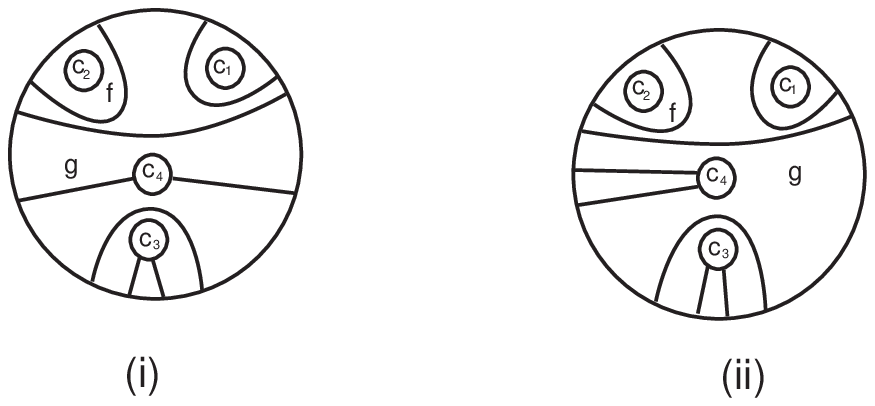}} \caption{}\label{figB}
\end{figure}

\noindent $\underline{k=3}$.
Since $12\le $ valency $d_V = 2k+2\ell$ we have $\ell = 3$ or $4$.
The proof of Lemma \ref{lem4.19} then applies to rule out every case here except that
shown in Figure \ref{bgz5-fig9}(iv).
But in that case $d_V$ has valency~10.
\medskip

\noindent $\underline{k=2}$.
Here $\ell=4$, and the appropriate parts of the proof of Lemma \ref{lem4.19} apply.
More precisely, in case (a) take the part of the proof where $c_3$ and $c_4$
have valency 2.
In case (b), take the part of the proof that starts with the supposition that both
$c_2$ and $c_3$ have valency~2.
Of the possibilities for $\Gamma_P$ shown in Figure \ref{bgz5-fig13}, (vi) and (vii) cannot
occur here since valency $d_V \ge 12$.
The other cases are eliminated as in the proof of Lemma \ref{lem4.19}.
\medskip

\noindent $\underline{k=1}$.
Since $12 \le$ valency $d_V = 2k+2\ell\le 10$, this case is impossible.
\end{proof}

\begin{proof}[Proof of Lemma \ref{prism-case}]
Suppose otherwise that $\D=\D(\alpha,\beta)\geq 4$. Then
by \cite{L2}, $\D=4$.

We may assume, by \cite[Lemma 4.1]{BGZ3}, that the $\tau_\alpha$-invariant Seifert
structure on $M(\alpha)$ is the one whose base orbifold is $S^2(2,2,c)$ for some $c\geq 2$.
 We continue to use the notations established so far in this section.
In the present case  we still have the manifolds $X, T_K, T_V, A, X_1, X_2, Y, P, Q$ and the graphs
$\G_P, \G_Q$ defined as above in this section. The only new situation that possibly arises
 in the present case (which we assume happens since otherwise the argument of Lemma \ref{F2fibre-non-Seifert} works identically)
 is that exactly one of $X_1$ and $X_2$, say $X_1$, is a solid torus whose
singular fibre has order two, and $X_2$ is either a $T^2\times I$ or a
solid torus whose singular fibre has order bigger than two. This assertion follows from the proof
of \cite[Lemma 6.1]{BGZ3}.
Thus  the graph $\G_P$ may contain
$D$-edge $S$-cycles, and the proof of Lemma \ref{lem4.9} yields the following

\begin{claim}\label{prism-claim1}
The bigon face bounded by a $D$-edge $S$-cycle in
$\G_P$ lies on the $X_1$-side of $A$. \qed
\end{claim}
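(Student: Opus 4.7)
The plan is to derive Claim \ref{prism-claim1} as a direct consequence of Lemma \ref{lem4.9} applied in the present prism setting, using the asymmetry between $X_1$ and $X_2$ that has just been established. A $D$-edge $S$-cycle is, by definition, a $D$-edge Scharlemann cycle of order $2$, so the bigon face it bounds falls under the hypotheses of Lemma \ref{lem4.9} with $m=2$. That lemma then tells us that if the bigon lies on the $X_i$-side of $A$, then $X_i$ must be a solid torus whose core is an exceptional fibre of multiplicity exactly $2$.

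Next I would simply compare this conclusion with the description of the pair $(X_1, X_2)$ recalled just before the claim. By assumption $X_1$ is a solid torus whose singular fibre has order $2$, while $X_2$ is either homeomorphic to $T^2 \times I$ or is a solid torus whose singular fibre has order strictly greater than $2$. In either case $X_2$ fails to be a solid torus with an exceptional fibre of multiplicity $2$, so the conclusion of Lemma \ref{lem4.9} cannot hold with $i=2$. Consequently the bigon face must lie on the $X_1$-side.

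The only point that would require a brief sanity check is that the proof of Lemma \ref{lem4.9} genuinely goes through verbatim in the present situation, since that lemma was stated in the semi-fibre context of \S\ref{sec: twice-punctured semi-fibre}. However, the construction there is local: it takes the annulus $A$, the $1$-handle $H$ coming from the label-pair of the Scharlemann cycle, and the bigon face $f$, and it combines them into a solid torus $W = N(A \cup H \cup f)$ inside $X_i$ in which $A$ has winding number $m$. This argument depends only on $A$ being a properly embedded essential annulus in $X_i$ and on $X_i$ being a standard JSJ piece, both of which hold here by construction of $X_1, X_2$ as recalled above (and using again that the edges of $\partial f$ in $\Gamma_Q$ do not bound a disk in $A$, lest $X_i$ contain a punctured lens space). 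No further obstacle is anticipated; the claim follows.
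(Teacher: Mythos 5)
Your proposal is correct and takes essentially the same approach as the paper, which simply notes that ``the proof of Lemma \ref{lem4.9} yields the following'' and then states the claim; you apply Lemma \ref{lem4.9} with $m=2$ and compare its conclusion against the description of $(X_1,X_2)$ recalled just before the claim, concluding as the paper does. Your extra sanity check that the construction $W=N(A\cup H\cup f)$ in the proof of Lemma \ref{lem4.9} is local and transfers to the present setting is sensible and matches why the paper cites the proof rather than the lemma statement itself.
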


\begin{claim}\label{prism-claim2}
When $n\geq 4$, $\G_P$ cannot have $D$-edge extended  $S$-cycles.
\end{claim}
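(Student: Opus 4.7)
Suppose for contradiction that $\G_P$ contains an extended $D$-edge $S$-cycle. By definition, this consists of three mutually parallel $D$-edges $e_1, e_2, e_3$ in $\G_P$ such that the bigon $f_1$ bounded by $\{e_1, e_2\}$ is a Scharlemann cycle on some label pair $\{i, i+1\}$ and the bigon $f_2$ bounded by $\{e_2, e_3\}$ is a Scharlemann cycle on the label pair $\{i+1, i+2\}$. The hypothesis $n \geq 4$ guarantees that these two label pairs are genuinely distinct modulo $n$. By Claim \ref{prism-claim1}, both $f_1$ and $f_2$ then lie on the $X_1$-side of $A$.

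The plan is to transplant the argument of Lemma \ref{lem4.11} into the prism setting. Even though $X_1$ is known a priori to be a solid torus whose singular fibre has multiplicity $2$, so that a single $S$-cycle is already consistent with the Seifert structure (as recorded by Claim \ref{prism-claim1}), the existence of two Scharlemann cycles in $X_1$ on distinct label pairs should nevertheless be obstructed by the same homological tower argument used in Lemma \ref{lem4.11}. Concretely, let $H_j \subset X_1$ be the component of $\Bar{L(\bar p, \bar q) - V} \cap X_1$ corresponding to the label pair of $f_j$ for $j = 1, 2$, and set $W_1 = N(A \cup H_1 \cup f_1)$. The proof of Lemma \ref{lem4.9} then shows that $W_1$ is a solid torus inside $X_1$ in which $A$ has winding number $2$; since $X_1$ is itself a solid torus, $U = \Bar{X_1 \setminus W_1}$ is also a solid torus and the frontier annulus $A_1 = \Bar{\p W_1 \setminus A}$ is longitudinal in $U$.

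To conclude, set $H_2' = H_2 \cap U$ and $f_2' = f_2 \cap U$, and consider $W_2 = N(A_1 \cup H_2' \cup f_2') \subset U$. A second application of the construction of Lemma \ref{lem4.9}, this time inside $U$, will show that $W_2$ is a solid torus in which $A_1$ has winding number $2$, whence $A_1$ also has winding number $2$ in the ambient solid torus $U$. This directly contradicts the longitudinality of $A_1$ in $U$. The main obstacle to executing the plan cleanly is to verify that $H_2'$ and $f_2'$ are substantial enough for the second application of the Lemma \ref{lem4.9} machinery to be non-vacuous; this is precisely where the distinctness of the two label pairs (guaranteed by $n \geq 4$) comes into play, since it forces $H_1$ and $H_2$ to be disjoint components of $\Bar{L(\bar p, \bar q) - V} \cap X_1$ and prevents the relevant portions of $f_2$ from being absorbed into $W_1$.
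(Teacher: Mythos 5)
Your proof rests on an incorrect recollection of what an extended $S$-cycle is. An extended $S$-cycle consists of \emph{four} mutually parallel edges $e_1,e_2,e_3,e_4$ with consecutive label sequence $\{i-1,i,i+1,i+2\}$ at one end (hence $n\geq 4$ is needed for these labels to be distinct), in which only the \emph{middle} bigon $R_2=\{e_2,e_3\}$ is an $S$-cycle; the outer bigons $R_1=\{e_1,e_2\}$ and $R_3=\{e_3,e_4\}$ are \emph{not} Scharlemann cycles, since their corners carry different label pairs at their two vertices (e.g.\ $R_1$ has corners $\{i-1,i\}$ and $\{i+1,i+2\}$). Your version — three parallel edges with $\{e_1,e_2\}$ and $\{e_2,e_3\}$ both $S$-cycles on the overlapping pairs $\{i,i+1\}$ and $\{i+1,i+2\}$ — cannot occur at all: if $e_2$ carries label $i+1$ at one endpoint (from $f_1$) it must carry label $i$ at the other, so the second bigon has corners $\{i+1,i+2\}$ and $\{i-1,i\}$ and is therefore never a Scharlemann cycle. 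Consequently there is no scenario in which two $S$-cycles on distinct label pairs are handed to you by the extended $S$-cycle hypothesis, and the plan of transplanting the homological tower argument of Lemma \ref{lem4.11} (two Scharlemann cycles in the same $X_i$) never gets started. (You would also face a second problem: adjacent bigons always lie on opposite sides of $A$, so even if your configuration existed, $f_1$ and $f_2$ could not both land in $X_1$.)

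The paper's actual argument is of a different character. Since $R_1$ and $R_3$ are not $S$-cycles, Claim \ref{prism-claim1} puts only $R_2$ in $X_1$, while $R_1,R_3$ lie in $X_2$. The proof then takes regular neighbourhoods $U_1 = N(H_i\cup R_2)\subset X_1$ and $U_2 = N(H_{i-1}\cup H_{i+1}\cup R_1\cup R_3)\subset X_2$, analyses the frontier annuli of $U_1$ and $U_2$, and pieces them together with $A\setminus(U_1\cup U_2)$ to form a new essential annulus $A'$ in $X$ with $\partial A'=\partial A$ but fewer intersection components with the filling solid torus $V_{\bar\alpha}$ than $A$. This contradicts the minimality in the choice of $A$. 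So the crucial ingredient is an annulus-replacement / minimality argument using all three bigons, not a duplication of the Lemma \ref{lem4.11} winding-number contradiction; you would need to rebuild the proof along these lines rather than invoking Lemma \ref{lem4.11}.
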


\begin{proof}[Proof of Claim  \ref{prism-claim2}]
Let $\{e_1, e_2, e_3, e_4\}$ be a $D$-edge extended $S$-cycle with
label sequence  $\{i-1, i, i+1, i+2\}$ (here labels are defined mod $n$).
Let $R_j$ be the bigon face between $e_j, e_{j+1}$ for $j=1,2,3$.
Then $R_2$ is contained in $X_1$ by Claim \ref{prism-claim1}, and $R_1$ and $R_3$   in $X_2$.
Let $H_{j}$  be the component  of  $V_{\bar\a}\setminus \cup_{k=1}^n \widehat b_k$
connecting  $\{\widehat b_{j}, \widehat b_{j+1}\}$ for $j=i-1, i, i+1$.
Let $U_1$ be a regular neighbourhood
of $H_{i}\cup R_2$ in $X_1$, and   $U_2$   a regular neighbourhood
of $H_{i-1}\cup H_{i+1}\cup R_1\cup R_3$ in $X_2$.
Then  $U_1$ is a solid torus and the frontier $E_1$ of $U_1$  in $X_1$ is an annulus
with  winding number
$2$ in $U_1$. Thus $E_1$ is  parallel to $\partial X_1\setminus (\partial U_1\cap A)$ in $X_1\setminus U_1$.
 The manifold   $U_2$ is also a solid torus, the frontier of $U_2$ in $X_2$ is a pair of annuli $E_2', E_2''$, and $E_2'$ is parallel to $E_2''$
in $U_2$.  We may assume that $U_1\cap A$ is equal to a component of $U_2\cap A$.
 Let $U_3$ and $U_4$ be the two components of $X_2\setminus U_2$, and we may assume that
$U_4$ is the one which contains $\partial X_2\setminus A$ and that $E_2''\subset \partial U_4$
(cf. Figure \ref{prism-extended}).
 Note that $U_4$ must be a solid torus in which $E_2''$ is parallel to $\partial U_4\setminus E_2''$
 for otherwise the frontier of $U_4$ in $X$ would be an essential annulus in $X$ which has the same
   boundary as the annulus $A$ in $T_K$ but   has
  less number of intersection components with $V_{\bar\a}$ than $A$.
 Thus $U_3$ is either a torus cross interval or a solid torus
 in which $E_2'$ has winding number larger than two.
 Now let $A'$ be the annulus $E_1\cup E_2'\cup [A\setminus (U_1\cup U_2)]$
 (cf. Figure \ref{prism-extended}).
 Then $A'$ is an essential annulus  in $X$ such that
$\partial A'=\partial A$ in $T_K$ but $A'$  has
  less number of intersection components with $V_{\bar\a}$ than  $A$,
  yielding a final contradiction.
\end{proof}

\begin{figure}[!ht]
\centerline{\includegraphics{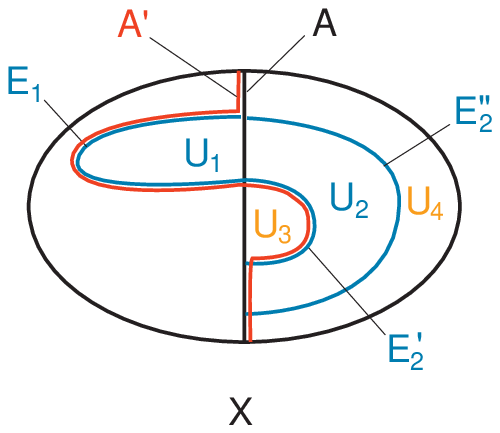}} \caption{}\label{prism-extended}
\end{figure}

Note that Lemma \ref{lem4.11} still holds in the present case.

\begin{claim}\label{prism-claim3}
 $\G_P$ cannot have a family of more than $\frac{n}{2}+1$ parallel $D$-edges.
\end{claim}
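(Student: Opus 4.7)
The plan is to force a contradiction with Lemma~\ref{lem4.11} by extracting, from a hypothetical long parallel family of $D$-edges, two order-$2$ Scharlemann cycles on distinct label-pairs that both lie on the $X_1$-side of $A$. The argument is essentially a pigeonhole count on labels along the parallel family, combined with Claim~\ref{prism-claim1}.

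I would begin by setting up coordinates. Assume for contradiction that $\Gamma_P$ has a family $e_1,e_2,\ldots,e_s$ of $s \geq n/2+2$ parallel $D$-edges, listed in the order they appear across the family, and let $(a_j,b_j)$ denote the label-pair at the two endpoints of $e_j$ (both at $d_V$). Parallelism forces the labels at each end to be consecutive around $d_V$, so after reindexing $a_j = a_1 + (j-1)$ and $b_j = b_1 - (j-1) \pmod n$. The first key input is the parity rule: the dual of a $D$-edge is a $bb$-edge of $\Gamma_Q$, and since both endpoints of a $D$-edge lie at the single vertex $d_V$ (so their signs multiply to $+1$), the parity rule forces the endpoints $b_i,b_j$ of the dual edge to have opposite signs; as the $b_i$ alternate in sign, this means $a_1+b_1$ is odd.

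Next I would identify the $S$-cycles in the family. The bigon between $e_j$ and $e_{j+1}$ has corners labelled $\{a_j,a_j+1\}$ and $\{b_j-1,b_j\}$, and is an $S$-cycle precisely when these agree as subsets of $\mathbb Z/n$, i.e.\ when $b_j = a_j+1$, equivalently when
\[
2j \equiv b_1 - a_1 + 1 \pmod n.
\]
Since $b_1-a_1$ is odd (same parity as $a_1+b_1$), the right-hand side is even and this congruence has exactly two solutions modulo $n$, separated by $n/2$. The admissible range $j\in\{1,\ldots,s-1\}$ has length $\geq n/2+1$, so it captures both solutions; hence the family contains at least two $S$-cycles, at positions $j_1$ and $j_1+n/2$, whose label-pairs are
\[
\{a_{j_1},a_{j_1}+1\}\quad\text{and}\quad\{a_{j_1}+n/2,\,a_{j_1}+n/2+1\},
\]
and for $n\geq 4$ these are distinct. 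Applying Claim~\ref{prism-claim1} to both $S$-cycles places both bigons in $X_1$, and Lemma~\ref{lem4.11} (which, as noted just before the statement of the claim, remains valid in the prism case) yields the desired contradiction.

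The only real hurdle is the initial parity verification; once $a_1+b_1$ is known to be odd, the rest is a standard pigeonhole count across the parallel family. The small case $n=2$ would need separate handling, since then $n/2+1=2$, the only possible label-pair is $\{1,2\}$, and the distinct-label hypothesis of Lemma~\ref{lem4.11} is unavailable; here one would instead use three parallel $D$-edges directly to produce incompatible homological relations in $X_1$ akin to those in Lemma~\ref{lem4.9} and the proof of Lemma~\ref{lem4.11}.
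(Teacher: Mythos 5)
The congruence computation and the parity argument are on the right track, but the key pigeonhole step is wrong, and as a result the proposal misses half of the dichotomy that makes the proof go through. You claim that the admissible range $j\in\{1,\ldots,s-1\}$ of length at least $n/2+1$ ``captures both solutions'' of $2j\equiv b_1-a_1+1\ \ (\mathrm{mod}\ n)$. That is false: the two solutions in $\{1,\ldots,n\}$ are $j_0$ and $j_0+n/2$ with $j_0\in\{1,\ldots,n/2\}$, so a window $\{1,\ldots,s-1\}$ with $s-1 = n/2+1$ contains $j_0+n/2$ only when $j_0=1$. For $j_0\geq 2$ your argument produces exactly one $S$-cycle, not two, and the appeal to Lemma~\ref{lem4.11} collapses.

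The paper's (terse) proof instead invokes the correct two-branch dichotomy: a family of more than $n/2+1$ parallel $D$-edges contains either an extended $S$-cycle or two $S$-cycles on distinct label pairs. The branch you are missing is the extended $S$-cycle case. When $j_0\geq 2$, the $S$-cycle sits at an interior position $2\leq j_0\leq n/2\leq s-2$, so the four consecutive edges $e_{j_0-1},e_{j_0},e_{j_0+1},e_{j_0+2}$ form an extended $S$-cycle, contradicting Claim~\ref{prism-claim2} — a result your proposal does not use at all. When $j_0=1$, one checks $j_0+n/2=n/2+1\leq s-1$ and recovers your two $S$-cycles on distinct label pairs, contradicting Lemma~\ref{lem4.11}. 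So the fix is genuine, not cosmetic: Claim~\ref{prism-claim2} is an essential input, and once you add the $j_0\geq 2$ branch your argument becomes a detailed version of the paper's one-liner. Your flagging of the degenerate case $n=2$ is reasonable but moot in context, since Claim~\ref{prism-claim3} is only applied in the proof of Claim~\ref{prism-claim6} to rule out $n\geq 4$.
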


\begin{proof}[Proof of Claim  \ref{prism-claim3}]
Otherwise $\G_P$ has either an $D$-edge extended $S$-cycle or two
$S$-cycles with distinct label pairs, contradicting
Claim \ref{prism-claim2} or Lemma \ref{lem4.11} respectively.
\end{proof}

\begin{claim}\label{prism-claim4}
In $\Gamma_Q$, the endpoints of the $AB$-edges incident to $b_i$ are
consecutive around $b_i$, for each fixed $i=1,...,n$.
\end{claim}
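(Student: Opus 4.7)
The plan is to follow the strategy of Lemma~\ref{lem4.17}, suitably adapted to arbitrary $n$. First I would establish the labeling convention: the coherent orientation of $L$ as a braid in $V$ forces all four vertices $c_j$ of $\Gamma_P$ to carry the same sign, and by the parity rule the vertices $a_1, a_2 \subset T_K$ and $b_1, \ldots, b_n \subset \partial V$ can be indexed consistently so that every $AB$-edge of $\Gamma_Q$ at $b_i$ is joined to a single preferred vertex $a_{j(i)}$. This generalizes the pairing used in the $n=2$ case of Lemma~\ref{lem4.17}.

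With this labeling in place, I would then deduce consecutiveness by a planarity argument. Suppose for contradiction that the $AB$-edges incident to $b_i$ are not consecutive around $b_i$; pick an innermost such pair $\gamma_1, \gamma_2$ of $AB$-edges (both ending on $a_{j(i)}$) bounding a sub-arc $\sigma \subset b_i$ that contains the endpoint of at least one $B$-edge but no endpoint of any other $AB$-edge. Together with $\sigma$ and a sub-arc of $a_{j(i)}$ joining the far endpoints of $\gamma_1, \gamma_2$, the arcs $\gamma_1, \gamma_2$ bound a subsurface $R$ of the planar surface $Q$. Any edge of $\Gamma_Q$ emanating from a point of $\sigma$ is necessarily a $B$-edge, and being disjoint from $\gamma_1, \gamma_2$ it is trapped inside $R$; hence it is either a loop at $b_i$ or terminates on some $b_k$ whose puncture circle lies inside $R$.

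The main obstacle is ruling out such trapped $B$-edges. Here I would invoke the minimality of $|Q \cap \partial V|$ (under which $Q$ was constructed) together with the essentiality of $Q$: a $B$-edge that is a loop at $b_i$ contained in $R$ would allow a disk-swap isotopy of $Q$ reducing $|Q\cap\partial V|$, while a $B$-edge ending on some $b_k \subset R$ would permit an analogous band-exchange producing an annulus with the same boundary as $A$ but fewer intersections with the filling solid torus. Either scenario contradicts the minimality, so no $B$-edge endpoint can lie between two $AB$-edge endpoints at $b_i$, which is the consecutiveness we seek.
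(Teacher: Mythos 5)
Your set-up follows the paper closely: you use the parity rule to see that every $AB$-edge at $b_i$ lands on a single preferred component $a_{j(i)}$ of $\partial A$, and you then argue by planarity with an innermost pair of $AB$-edges cutting off a region $R$ in which any interspersed $B$-edge is trapped. This much is in the spirit of Lemma~\ref{lem4.17}, and the paper's proof of the claim begins the same way.

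Where you diverge from the paper is in how you extract the contradiction, and there is a real gap. First, a minor point: a $B$-edge cannot be a loop at $b_i$, because by the parity rule every $B$-edge is negative, so it joins $b$-vertices of \emph{opposite} sign; the ``loop'' scenario in your fourth paragraph is vacuous. More seriously, for the main case where the trapped $B$-edge ends on some $b_k$ lying in $R$, you appeal to the minimality of $\lvert Q\cap\partial V\rvert$ and assert an isotopy (``band-exchange'') of $A$ that would reduce the number of intersection circles. But merely having a puncture $b_k$ inside the subregion $R$ of $\Gamma_Q$ does not produce such an isotopy: $R$ is not a bigon or an innermost disk, and there is no trivial edge or inessential circle in sight. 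Minimality of $\lvert A\cap\partial V\rvert$ forbids trivial intersection circles and, in other lemmas, disallows certain $D$-edge Scharlemann configurations; it does not directly see a $b_k$ sitting inside a region cut off by two $AB$-edges.

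The missing ingredient is precisely the sign observation that drives the paper's proof. Because $B$-edges are negative, the vertex $b_k$ trapped in $R$ has sign opposite to $b_i$. By the parity labelling that you already established, a $b$-vertex's $AB$-edges go to the component of $\partial A$ determined by its sign, and the two components $a_1,a_2$ are the targets for the two signs respectively. Hence any $AB$-edge at $b_k$ must reach $a_{3-j(i)}$, which lies on the other side of the cut $\gamma_1\cup\gamma_2$ and so outside $R$; such an edge cannot exist inside the planar region $R$ without crossing $\gamma_1$ or $\gamma_2$. The paper also records the auxiliary fact that every $b_i$ is incident to some $B$-edge, which is used to organise this argument (this is why the two groups of $b$'s are nonempty and interleaved). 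In short: replace the minimality/band-exchange step with the observation that negative $B$-edges force the trapped vertex to connect to the wrong boundary circle.
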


\begin{proof}[Proof of Claim  \ref{prism-claim4}]
By the parity rule  all the $AB$-edges incident to a fixed $b_i$
have their other endpoint on a fixed component of $\partial A=\{a_1, a_2\}$.
Also observe that all the $b_i$'s which are connected to
a fixed component of $\{a_1, a_2\}$ by $AB$-edges have the same sign,
and thus there can be no $B$-edges connecting  between these $b_i$'s (because all
$B$-edges are negative). Noticing that every vertex
$b_i$ is incident to some $B$-edges in $\G_Q$, one can now  see that the lemma follows.
(cf. Lemma \ref{lem4.17} and its proof.)\end{proof}

\begin{claim}\label{prism-claim5}
The endpoints with label $i$ on $d_V$ of the $CD$-edges
of $\Gamma_P$ (when non-empty) are consecutive among all the edge-endpoints with label~$i$
on $d_V$, for each fixed $i=1,...,n$. \end{claim}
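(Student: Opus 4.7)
\emph{Plan.} The plan is to deduce Claim~\ref{prism-claim5} from Claim~\ref{prism-claim4} by exactly the same duality argument that yields Corollary~\ref{cor4.18} from Lemma~\ref{lem4.17} in the semi-fibre setting. The edge-endpoints at $b_i$ in $\Gamma_Q$ and the label-$i$ edge-endpoints at $d_V$ in $\Gamma_P$ both parametrize the intersection set $\partial D\cap b_i\subset\partial V$, and under this canonical bijection $AB$-edges in $\Gamma_Q$ correspond to $CD$-edges in $\Gamma_P$ while $B$-edges correspond to $D$-edges.

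First I would number the points of $\partial D\cap b_i$ as $p_0,p_1,\ldots,p_{\bar p-1}$ in the cyclic order around $b_i$, where $\bar p=\Delta(\bar\alpha,\bar\beta)$ is the valency of $b_i$ in $\Gamma_Q$. As observed in the remarks preceding Corollary~\ref{cor4.18}, because $b_i$ and $\partial D$ are simple closed curves on the torus $\partial V$ meeting in $\bar p$ points, these same points occur around $d_V$ in the order $p_0,p_d,p_{2d},\ldots,p_{(\bar p-1)d}$ for some integer $d$ coprime to $\bar p$. By Claim~\ref{prism-claim4} the $AB$-edge endpoints at $b_i$ form a consecutive block among the $p_j$'s in the $b_i$-ordering, so Claim~\ref{prism-claim5} reduces to showing that this block remains consecutive after re-indexing by $j\mapsto jd\pmod{\bar p}$, which happens precisely when $d\equiv\pm1\pmod{\bar p}$.

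The crux of the proof is therefore to verify $d\equiv\pm1\pmod{\bar p}$ in the present setting. Since $\Delta(\alpha,\beta)=4$ and, by the same reasoning used in the proof of Lemma~\ref{F2fibre-non-Seifert} (exploiting that $L$ is a hyperbolic link in $V$, so $K_\alpha$ cannot belong to $L_\alpha$), the integer $q$ is odd and hence $\bar p=2\Delta=8$ with $\bar q=q$. The shift $d$ modulo $\bar p$ is then determined by $\bar q\pmod 8$ through the identification $M(\alpha)/\tau_\alpha\cong L(\bar p,\bar q)$. I expect the desired congruence $\bar q\equiv\pm1\pmod 8$ to fall out from compatibility constraints between the $\tau_\alpha$-invariant Seifert structure on $M(\alpha)$ (with base orbifold $S^2(2,2,c)$, on which $\bar\tau_\alpha$ fixes the order-$c$ cone point and swaps the two order-$2$ cone points) and the induced Seifert structure on the quotient lens space $L(8,\bar q)$, via a direct computation of the Seifert invariants of $L(8,\bar q)$. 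Once that arithmetic step is in hand, the block-consecutiveness of the $AB$-endpoints in the $b_i$-ordering transfers immediately to block-consecutiveness of the corresponding $CD$-endpoints in the $d_V$-ordering, proving the claim.
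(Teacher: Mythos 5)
Your plan correctly reproduces the duality argument behind Corollary~\ref{cor4.18}: the $\bar p=\Delta(\bar\alpha,\bar\beta)=2\Delta=8$ points of $\partial D\cap b_i$ are re-indexed by some $d\in(\mathbb{Z}/8)^{\times}$ when passing from the cyclic $b_i$-ordering in $\Gamma_Q$ to the label-$i$ ordering around $d_V$ in $\Gamma_P$, and Claim~\ref{prism-claim4} transfers to Claim~\ref{prism-claim5} exactly when $d\equiv\pm1\pmod 8$. You also correctly flag the real subtlety: $(\mathbb{Z}/8)^{\times}=\{\pm1,\pm3\}$, so unlike Corollary~\ref{cor4.18} (where $(\mathbb{Z}/6)^{\times}=\{\pm1\}$ makes the conclusion automatic) coprimality alone does not finish the job here.

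The gap is that you never actually establish $d\equiv\pm1\pmod8$; you only ``expect'' $\bar q\equiv\pm1\pmod8$ to follow from Seifert-invariant considerations, with no argument given. I am skeptical that route works. Since every odd residue is self-inverse modulo $8$, the shift is $d\equiv\bar q\pmod 8$, and the value of $\bar q\pmod 8$ is a genuine invariant of the lens space $L(8,\bar q)$ on which the setup imposes nothing beyond oddness. Nor does the requirement that $L(8,\bar q)$ carry the induced Seifert fibration over $S^2(2,m)$ single out $\bar q\equiv\pm1$: taking the fibre slope $2\bar\beta^{*}+b\bar\beta$ with $b$ odd and $|4b-\bar q|>1$ produces such a fibration for every odd $\bar q$. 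So the compatibility constraint you invoke does not rule out $\bar q\equiv\pm3\pmod 8$, in which case the re-indexing $j\mapsto 3j\pmod 8$ destroys consecutiveness (e.g. $\{0,1,2\}\mapsto\{0,3,6\}$). To close the argument you would need either a new constraint forcing $\bar q\equiv\pm1\pmod 8$ from elsewhere in the proof of Lemma~\ref{prism-case}, or a proof of Claim~\ref{prism-claim5} that does not hinge on $d\equiv\pm1$; the proposal as written supplies neither, so the key arithmetic step remains an unverified expectation.
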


\begin{proof}[Proof of Claim  \ref{prism-claim5}]
This follows from Claim \ref{prism-claim4} and the condition
that $\D=4$. (cf. Corollary \ref{cor4.18} and its proof.)
\end{proof}

\begin{claim}\label{prism-claim6}
 $n=2$.
\end{claim}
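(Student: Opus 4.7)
Suppose, for contradiction, that $n\geq 4$. The plan is to combine a global edge count in $\Gamma_P$ with the bounds on parallel $D$-edges and Scharlemann cycles supplied by Claims~\ref{prism-claim1}--\ref{prism-claim5} and Lemma~\ref{lem4.11}. As in the proof of Lemma~\ref{F2fibre-non-Seifert}, the hyperbolicity of $L$ in $V$ forces $L_\alpha=L$, so $q$ is odd and $\bar p=2\Delta=8$; hence the valency of $d_V$ in $\Gamma_P$ equals $\bar p\, n=8n$. Writing $K$, $M$, $R$ for the numbers of $D$-, $CD$-, and $C$-edges of $\Gamma_P$, and $\ell$ for the number of vertices $c_j$ of valency $2$, the endpoint counts at $d_V$ and at the $c_j$'s give
$$2K+M=8n,\qquad M+2R=2\ell\leq 8,$$
so $M\leq 8$ and $K\geq 4n-4$.

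Next, since $\Gamma_P$ lies on a $5$-punctured sphere, a standard Euler-characteristic argument shows that $\bar\Gamma_P$ has at most $5$ parallel classes of $D$-edges; combined with Claim~\ref{prism-claim3} this yields $K\leq 5(n/2+1)=5n/2+5$. Together with $K\geq 4n-4$ this forces $n\leq 6$, so $n\in\{4,6\}$ as $n$ is even. For $n=6$ every inequality is sharp: $K=20$, $M=8$, $\ell=4$, and each of the five parallel $D$-classes has the maximal $n/2+1=4$ members, so each of them contains a $D$-edge Scharlemann cycle. By Claim~\ref{prism-claim1} all five of these Scharlemann cycles lie in $X_1$, and by Lemma~\ref{lem4.11} they must all carry a common label-pair $\{i,i+1\}$. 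But a Scharlemann bigon of label-pair $\{i,i+1\}$ occupies two of the $\bar p=8$ endpoints of label $i$ (and two of label $i+1$) around $d_V$, so at most $\bar p/2=4$ such bigons can coexist, a contradiction.

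For $n=4$ we have $12\leq K\leq 15$, each parallel class of size at most $3=n/2+1$, and a maximal class carries an $S$-cycle which by Claim~\ref{prism-claim1} lies in $X_1$ and, by Lemma~\ref{lem4.11}, shares a common label-pair with every other $S$-cycle in $X_1$; the same endpoint count as above rules out more than four such same-label $S$-cycles. In particular $K=15$ (five parallel classes of three edges, hence five same-label $S$-cycles) is immediately excluded. The remaining subcases $K\in\{12,13,14\}$ will be dispatched by a case analysis on the reduced graph $\bar\Gamma_P$ patterned on the proof of Lemma~\ref{lem4.19}, with Corollary~\ref{cor4.10} replaced by the combination of Claims~\ref{prism-claim1}--\ref{prism-claim3} and Lemma~\ref{lem4.11}, and with Corollary~\ref{cor4.18} replaced by Claim~\ref{prism-claim5}; in each configuration one must locate either an extended $S$-cycle, two $S$-cycles of distinct label-pairs on $X_1$, a parallel class exceeding $3$, or a $CD$-edge label distribution at $d_V$ violating Claim~\ref{prism-claim5}.

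The principal obstacle is precisely this $n=4$ case analysis: because the available distance bound is only $\Delta=4$ (rather than the $\Delta\geq 5$ available in Lemma~\ref{lem4.19}) and because Claim~\ref{prism-claim1} admits $X_1$-side $S$-cycles that were categorically forbidden in the non-prism setting, each of the $k=1,\dots,5$ subcases must be re-examined under the relaxed hypotheses; the counting argument alone does not immediately close the $K=13$ and $K=14$ subcases, and the finer structure coming from Claim~\ref{prism-claim5} together with the four-$S$-cycle cap is required to eliminate them.
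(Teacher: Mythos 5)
Your reduction to $n\le 6$ is the same as the paper's: you count endpoints at $d_V$ and at the $c_j$'s to get $K\ge 4n-4$, cap parallel $D$-edge classes at $n/2+1$ via Claim~\ref{prism-claim3}, and combine with the fact that $\bar\Gamma_P$ has at most five $D$-edge classes. For $n=6$ you then diverge from the paper: the paper reads off an extended $S$-cycle from the forced structure of $\Gamma_P$ (Figure~\ref{prism-a}) and invokes Claim~\ref{prism-claim2}, whereas you use Claim~\ref{prism-claim1} to place all five $S$-cycles in $X_1$, Lemma~\ref{lem4.11} to force a common label pair, and then observe that five same-label $S$-cycles would consume ten endpoints of a single label at $d_V$, exceeding the available eight. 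That argument looks sound and is arguably cleaner than reading off an extended $S$-cycle from a picture.

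The real problem is $n=4$, which is the bulk of the paper's proof and which you do not carry out. Your endpoint-count kills $K=15$, but for $K\in\{12,13,14\}$ you only describe what a case analysis \emph{would} need to do, and you say explicitly that ``the counting argument alone does not immediately close the $K=13$ and $K=14$ subcases.'' The paper closes these cases concretely: it first shows $\overline\Gamma_P$ has no $C$-edges and must contain a type-$O$ $D$-edge $\bar e$ of weight $3$; it then shows the cut-off vertex $c_j$ must have two $CD$-edges incident (Figure~\ref{prism-b}, using the fact that a puncture cannot abut the solid torus $X_1$); and finally it runs the case split of Figure~\ref{prism-c}(a)--(f), where the contradictions come from Claim~\ref{prism-claim5} (label distribution of $CD$-edges around $d_V$) and Lemma~\ref{lem4.11} (no two distinct-label Scharlemann cycles of order $2$ or $3$ in $X_1$). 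Until that structural analysis is actually executed, your proof of the claim has a genuine gap.
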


\begin{proof}[Proof of Claim  \ref{prism-claim6}]
Since  there are at most $8$ $CD$-edges, there are at least
$(8 n-8)/2=4 n-4$ $D$-edges in $\G_P$. Thus  there
is a family of parallel $D$-edges in $\G_P$ with at least
$(4 n-4)/5$ edges.
By Claim \ref{prism-claim3},
$(4 n-4)/5\leq\frac{n}{2}+1$.
Thus $n\leq 6$.
So assume for contradiction that $n=6$ or $4$.

If $n=6$, then there are exactly $8$ $CD$-edges and $5$ families of
parallel $D$-edges, each having $4$ edges.
So part of $\G_P$ maybe assumed  as shown in
Figure \ref{prism-a}.
So there is an extended $D$-edge $S$-cycle in $\G_P$, contradicting Claim \ref{prism-claim2}.

\begin{figure}[!ht]
\centerline{\includegraphics{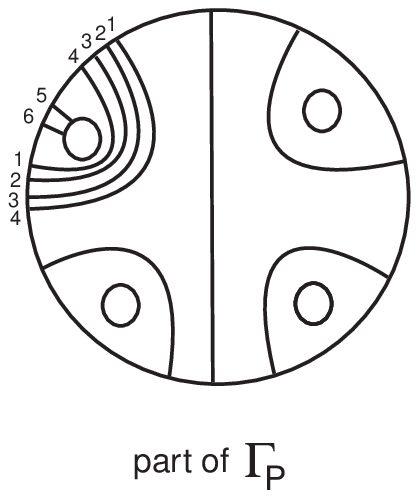}} \caption{}\label{prism-a}
\end{figure}

So  $n=4$. Note that the total weights of all $D$-edges in $\overline \G_P$
is at least $12$ and  each $D$-edge of $\overline \G_P$ has weight at most $3$ by Claim \ref{prism-claim3}.
It follows that $\overline\G_P$ has no $C$-edges.
It also follows that there is an $D$-edge $\bar e$ in $\overline\G_P$ of type $O$ and with weight
$3$.
Let $c_j$ be the single vertex that $\bar e$ cuts off as given in the definition of an $O$-type edge.
We note that there must be two $CD$-edges incident to $c_j$.
For otherwise a part of $\G_P$ is as shown in Figure \ref{prism-b}.
In the figure the face $f$ is bounded by an $D$-edge $S$-cycle and thus
is contained in $X_1$ (which is a solid torus), but the face $g$ is also contained in
$X_1$, yielding  a contradiction since it abuts a puncture $c_i$, which cannot exist
in the solid torus $X_1$.

\begin{figure}[!ht]
\centerline{\includegraphics{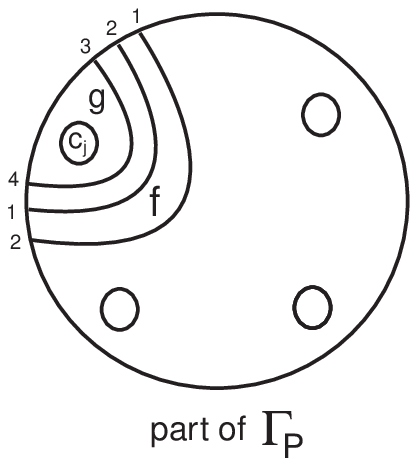}} \caption{}\label{prism-b}
\end{figure}

Hence part of $\G_P$ is as shown in Figure \ref{prism-c}(a). If there is no
$N$-type $D$-edge in $\overline \G_P$, then there are exactly $8$ $CD$-edges and
$12$ $D$-edges in $\G_P$, and $\G_P$  must look like that shown in Figure \ref{prism-c}(b).
But then we get a contradiction with Claim \ref{prism-claim5}, considering the label $1$.
Therefore part of $\G_P$ maybe assumed as shown in Figure \ref{prism-c}(c) where
the edge $e$ has label $4$ at its upper endpoint.

By the parity rule  the lower endpoint of the edge  $e$  has
label $1$ or $3$.
In the former case the part of the graph $\G_P$ on the left-hand side of
the edge $e$  must be as shown in Figure \ref{prism-c}(d) or (e), by considering labels around $d_V$.
But then the face  $g$ shown in both of the subcases  is bounded by an $D$-edge Scharlemann cycle of order $3$
and lies on $X_1$-side, contradicting Lemma \ref{lem4.11}.
In the latter case the  left-hand side of
the  edge $e$ must be as shown in Figure \ref{prism-c}(f).
But then we get a contradiction with Claim \ref{prism-claim5} by looking at the label $4$.
\end{proof}

\begin{figure}[!ht]
\centerline{\includegraphics{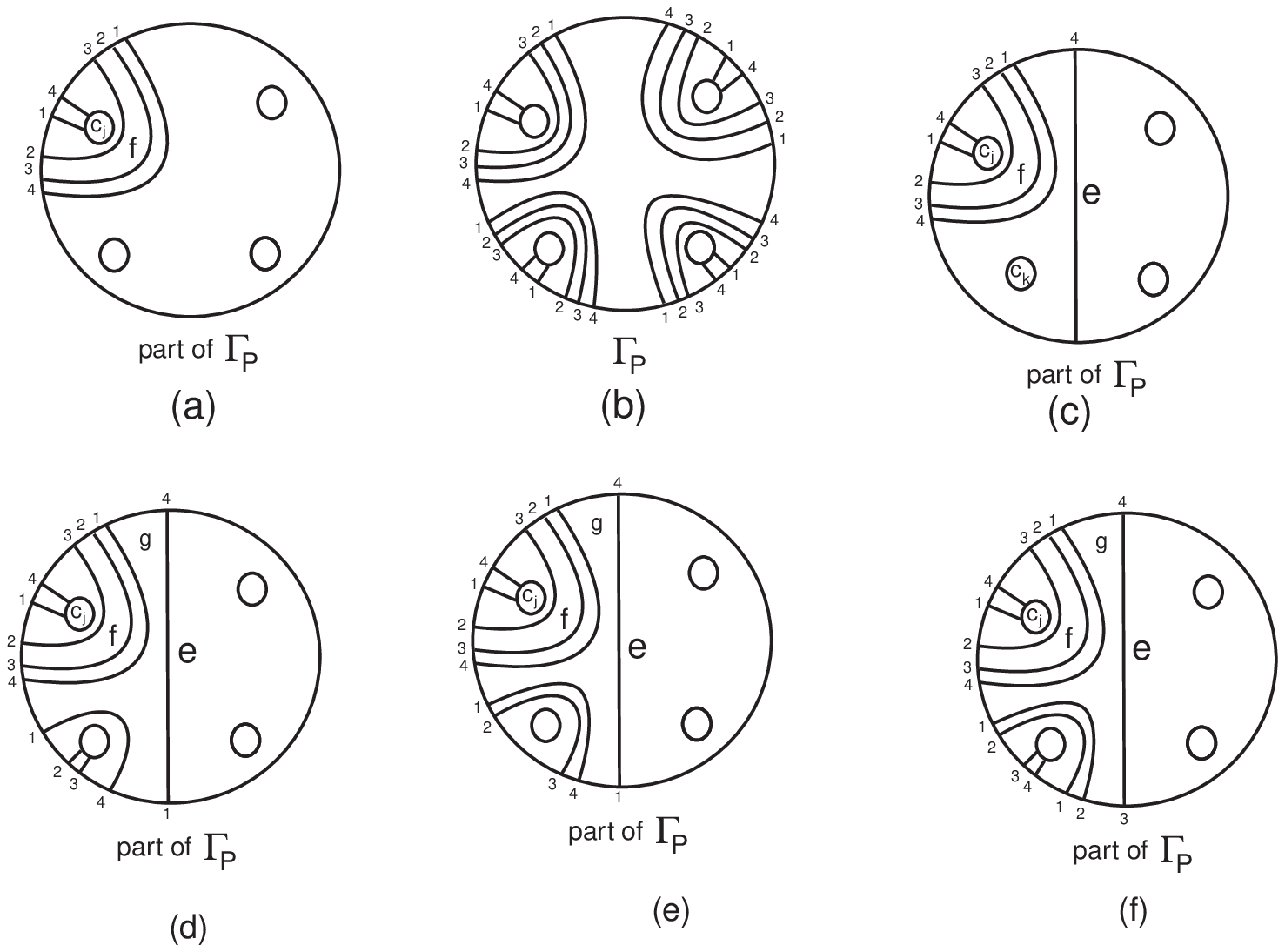}} \caption{}\label{prism-c}
\end{figure}

So $n=2$.
By the proof of Lemma \ref{F2fibre-non-Seifert} we may only consider
the situation when $\G_P$ contains $D$-edge $S$-cycles, i.e.
$\G_P$ contains parallel $D$-edges.
But note that each parallel family of $D$-edges contains at most two edges
by Claim \ref{prism-claim1}.
We have at least $4$ $D$-edges in $\G_P$.

Note that Lemma \ref{lem4.14} is still valid in the present case.

Recall that $k$ is the number of $D$-edges in $\overline \G_P$ and $k\leq 5$.
In the current case $k\geq 2$.
Note that we have assumed that there are at least $k+1$ $D$-edges in $\G_P$.

If $k=4$ or $5$, then $D$-edges in  $\overline\Gamma_P$ are as shown in
Figure \ref{bgz5-fig2} or Figure \ref{bgz5-fig3}, where at least one  edge has weight $2$.
One can easily see from the figures that
 $\Gamma_P$ must   have, in each case,   a disk face bounded by an $D$-edge Scharlemann cycle
  of order $3$ or $4$  that lies in $X_2$.
  Hence $X_2$ is a solid torus by Lemma \ref{lem4.14} and thus
  there are $8$ $CD$-edges in $\G_P$.
But there are at least $5$ $D$-edges in $\G_P$.
So the valency of the vertex $d_V$ would be  at least $18$
in $\G_P$, contradicting to the assumption
that the valency of $d_V$ is $16$ in $\G_P$.

If $k=3$, then the three $D$-edges in  $\overline\Gamma_P$ are as shown in Figure \ref{bgz5-fig6}
(a) (b) (c), with at least one edge  having  weight $2$.
We first consider  Case (a) of Figure \ref{bgz5-fig10}.
If the vertex $c_4$ has valency $2$ in $\G_P$, then $\G_P$ has a disk face
with at least two $d_V$-corners, contained in $X_2$.
Hence $X_2$ is a solid torus and every $c_i$ has valency two in $\G_P$.
Now it is easy to see that Claim \ref{prism-claim5}
is violated. So the valency of $c_4$ is zero, and thus there are at least $5$ $D$-edges
in $\G_P$. Since there are at most $6$ $D$-edges in
$\G_P$, at least two of $c_1,c_2, c_3$ have valency $2$.
Now it is easy to see that we have a contradiction again with Claim \ref{prism-claim5}.
Next we  consider  Case (b) of Figure \ref{bgz5-fig10}.
In this case $\G_P$ has a disk face bounded by an $D$-edge Scharlemann cycle of order 3, contained in $X_2$.  Hence $X_2$ is a solid torus by Lemma \ref{lem4.14} and thus
  every $c_i$ has valency $2$ in $\G_P$.
Now  one can see a contradiction with Claim \ref{prism-claim5}.
Lastly in  Case (c) of Figure \ref{bgz5-fig10} one can easily
see a contradiction with Claim \ref{prism-claim5} once again.

If $k=2$, then the two $D$-edges in  $\overline\Gamma_P$ are as shown in Figure \ref{bgz5-fig10}
(a) (b), each having  weight $2$.
Also there are exactly $8$  $CD$-edges in $\G_P$.
Now it is easy to see that Claim \ref{prism-claim5} is violated in both (a) and (b) cases.
\end{proof}

\section{Further assumptions, reductions, and background material}
\label{sec: second reduction}

The remainder of the paper deals with the cases that $F$ is neither a fibre or semi-fibre.

\begin{lemma} \label{b1 > 1}
If $F$ is neither a fibre nor semi-fibre in $M$ and $b_1(M) \geq 2$, then
$$\Delta(\alpha, \beta) \leq \left\{ \begin{array}{ll} 1 & \hbox{if $M(\alpha)$ is very small} \\ 5 & \hbox{otherwise}   \end{array}  \right.$$
\end{lemma}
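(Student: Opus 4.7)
By half-lives-half-dies applied to the torus $\partial M$ we have $b_1(M) \le 2$, so the hypothesis gives $b_1(M) = 2$. By the remark immediately following Assumptions \ref{assumptions 0}, $M(\alpha)$ then fibres over $S^1$ with horizontal surface fibre $\Sigma$. Removing the meridian disks of $V_\alpha$ from $\Sigma$ yields an essential properly embedded surface $\Sigma_0 \subset M$ whose boundary is a union of parallel copies of slope $\alpha$. Since $b_1(M) = 2$, the image of $\partial\colon H_2(M,\partial M) \to H_1(\partial M)$ equals the rank-one kernel of $H_1(\partial M) \to H_1(M)$, which is generated by $\lambda_M$. As $[\partial \Sigma_0]$ is a non-zero multiple of $[\alpha]$ lying in this subgroup, I conclude $\alpha = \lambda_M$.

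Applying the same homological analysis to $F$: its boundary is two parallel copies of slope $\beta$, so $[\partial F] \in \{0,\;\pm 2[\beta]\}$. Since $\beta \ne \alpha = \lambda_M$, $[\beta]$ does not lie in $\langle\lambda_M\rangle$, so $[\partial F] = 0$ is the only possibility. Hence the two boundary components of $F$ carry opposite induced orientations, and capping $F$ off with an annulus $A \subset \partial M$ joining its two boundary curves yields a closed orientable surface $S \subset M$ of genus $2$. The hypothesis that $F$ is neither a fibre nor a semi-fibre is precisely what guarantees that $S$ is essential rather than a capped-off (semi-)fibre that would be swallowed by the $\alpha$-filling.

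Finally, since $M(\alpha)$ is atoroidal small Seifert, its only essential closed surface up to isotopy is the horizontal fibre $\Sigma$. Unless $S$ is isotopic to $\Sigma$, $S$ compresses in $M(\alpha)$, placing $\alpha \in \mathcal C(S)$; by Wu's theorem $S$ then has a singular slope $\eta$, and Proposition \ref{prop: sing slope exceptional} yields $\Delta(\eta,\alpha) \le 1$ and $\Delta(\eta,\beta) \le 3$. For the general case, I would combine this with a Culler-Shalen seminorm coming from a strictly non-trivial curve in $X_{PSL_2}(M(\beta)) \subset X_{PSL_2}(M)$, obtained by pulling back (via the JSJ decomposition of $M(\beta)$ along $\hat F$) a bent character of a triangle-group amalgam as constructed in Lemmas \ref{lemma: bending homs sep}--\ref{lemma: bending homs non-sep}; writing $\beta = m\alpha + n\eta$ in a basis $(\alpha,\eta)$ of $H_1(\partial M)$ and applying Proposition \ref{prop: boundary values} to this curve gives the estimate $\Delta(\alpha,\beta) = |n| \le 5$. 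For the very small case, the rigidity of the Euclidean or Platonic triangle-group quotient of $\pi_1(M(\alpha))$ restricts irreducible characters of $\pi_1(M)$ factoring through $\pi_1(M(\alpha))$ to a very small list (compare the proof of Proposition \ref{prop: semi very small}), and direct enumeration collapses the bound to $\Delta(\alpha,\beta) \le 1$. The main obstacle is this last packaging step: extracting the sharp bounds $1$ and $5$ from the combined singular-slope and character-variety data, and in particular handling the borderline case where $S$ could be isotopic to the horizontal fibre $\Sigma$.
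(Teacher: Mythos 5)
The paper's proof of this lemma is short and goes via a completely different mechanism than yours: since $b_1(M)\geq 2$, one can choose a closed, non-separating, Thurston-norm-minimizing surface $S$ in the interior of $M$ (not the capped-off $F$). By Gabai's theorem [Ga, Corollary, p.~462], $S$ stays norm-minimizing (hence essential) in $M(\gamma)$ for all but at most one slope $\gamma_0$. If $\alpha=\gamma_0$, Wu's incompressibility result (\cite[Proposition 5.1]{BGZ1}) already gives $\Delta(\alpha,\beta)\le 1$. If $\alpha\ne\gamma_0$, then $M(\alpha)$ contains the essential closed surface $S$ of positive genus, so it cannot be very small; and \cite[Theorem 1.2]{BGZ1} then gives $\Delta(\alpha,\beta)\le 5$. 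Both cases of the dichotomy fall out immediately.

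Your proposal goes a different route and has genuine gaps. First, the assertion that \emph{``the hypothesis that $F$ is neither a fibre nor a semi-fibre is precisely what guarantees that $S=F\cup A$ is essential''} is not justified and is not true as stated; incompressibility of the annulus-capped genus-two surface does not follow from $F$ failing to be a (semi-)fibre, and the paper never makes such a claim (the genus-two surface obtained from $F$ is handled elsewhere, in Proposition \ref{compresses} and in the opening remark of \S\ref{non-sep not fibre}, by entirely different reasoning). Second, even granting essentiality of $S$, the final paragraph of your argument is explicitly incomplete: you acknowledge that extracting the bounds $1$ and $5$ ``is the main obstacle,'' and the proposed mixture of singular-slope estimates and a Culler-Shalen seminorm from a bent amalgam character is not carried out — in particular, the bending constructions of \S\ref{sec: amalgams} require the JSJ structure of $M(\beta)_{\widehat F}$ and the vanishing of $t_1^\pm$, none of which is available at this stage of the paper. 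The homological preliminaries ($\alpha=\lambda_M$, $[\partial F]=0$) are correct but are never used in the paper's argument and do not by themselves lead to the required bound; the key missing idea is Gabai's result that a non-separating norm-minimizing closed surface survives all but one Dehn filling.
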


\begin{proof}
If $b_1(M) \geq 2$, there is a closed, non-separating, Thurston norm minimizing surface $S$ in the interior of $M$. By \cite[Corollary, page 462]{Ga}, $S$ remains Thurston norm minimizing in $M(\gamma)$ for all but at most one slope $\gamma_0$ on $\partial M$. If $\alpha = \gamma_0$, a result of Wu implies that $\Delta(\alpha, \beta) \leq 1$ (cf. \cite[Proposition 5.1]{BGZ1}), so suppose that $\alpha \ne \gamma_0$. Then $M(\alpha)$ cannot be very small since it contains $S$ as an essential surface. Theorem 1.2 of \cite{BGZ1} now shows that $\Delta(\alpha, \beta) \leq 5$.
\end{proof}

We will apply the results of \cite{BGZ2} in what follows, which requires that more care be taken in the choice of $F$. Here is a list of assumptions we will make in our proof of Theorem \ref{thm: twice-punctured precise} when $F$ is neither a fibre nor semi-fibre. The references which justifiy the sufficiency of these assumptions are listed as well.

\begin{assumps}\label{assumptions 1} $\;$ \\
\vspace{-.4cm}
{\begin{enumerate}

\vspace{-.3cm} \item $b_1(M) = 1$ $(${\rm cf.}~{\rm Lemma \ref{b1 > 1}}$)$.

\vspace{.1cm} \item $M$ does not admit a fibre or semi-fibre which is an essential twice-punctured torus of boundary slope $\beta$ {\rm (cf.~Propositions \ref{prop: F2-semi-fibre} and \ref{prop: F2fibre})}.

\vspace{.1cm} \item $M$ admits no essential once-punctured torus of boundary slope $\beta$ $(${\rm cf.}~\cite[Theorem 1.3]{BGZ3}$)$.

\vspace{.1cm} \item $F$ is chosen according to the assumptions of \cite[\S 2]{BGZ2}.

\vspace{.1cm} \item If $M_F$ is not connected, then it is a union of two genus $2$ handlebodies $(${\rm cf.}~\cite[Proposition 3.3]{BGZ3}$)$.

\end{enumerate}
}
\end{assumps}
\vspace{-.3cm}
Next we provide a summary of the notation and terminology from \cite{BGZ2} that will be used below.

Let $F$ be chosen as above. If $F$ separates $M$ we take $S$ to be $F$. Otherwise we take $S$ to be the frontier of a small radius tubular neighbourhood of $F$ in $M$. Thus $S$ consists of two parallel copies  $F_1, F_2$ of $F$. In either case $S$ splits $M$ into two components $X^+$ and $X^-$. By Assumption \ref{assumptions 1} (2) above, we can suppose that $X^+$ is not an $I$-bundle.

Let $\widehat S$ be a closed surface in $M(\beta)$ obtained by attaching disjoint
meridian disks of the $\beta$-filling solid torus to $S$. Then $\widehat S$ splits $M(\beta)$
into two compact submanifolds $\widehat X^+$ containing $X^+$ and $\widehat X^-$ containing $X^-$,
each having incompressible boundary $\widehat S$.

 It was shown in \cite{BCSZ1} how to construct an immersion $h: Y \to M(\alpha)$ where $Y$ is a disk or torus, a labeled ``intersection" graph $\Gamma_F = h^{-1}(F) \subset Y$, and, for each sign $\epsilon = \pm$, a sequence of characteristic subsurfaces
$$F = \dot{\Phi}_0^\epsilon \supseteq  \dot{\Phi}_1^\epsilon \supseteq  \dot{\Phi}_2^\epsilon \supseteq  \ldots \supseteq  \dot{\Phi}_n^\epsilon \supseteq  \ldots $$
See \cite[\S 5]{BCSZ1} for the definition of the $j$-th characteristic subsurface $\Phi_j^\epsilon \subseteq S$. We shall assume throughout the paper that $\Phi_j^\epsilon$ is neatly embedded in $S$ (\cite[\S 3.1]{BGZ2}). It is characterized up to ambient isotopy by the following property:
$$(*) \left \{ \begin{array}{ll}
\mbox{\rm a large function $f_0:K \to S$ admits an essential homotopy of length $j$ which starts} \\
\mbox{\rm on the $\epsilon$-side of $S$ if and only if it is homotopic in $S$ to a map with image in $\Phi_j^\epsilon$ }
\end{array} \right. $$
See  \cite[Proposition 5.2.8]{BCSZ1}. When $j = 1$, basic Jaco-Shalen-Johannson theory guarantees the existence of an $(I, S^0)$-bundle pair $(\Sigma_1^\epsilon, \Phi_1^\epsilon) \subset (X^\epsilon, S)$. It was shown in \cite[Proposition 4.9]{BGZ2} that we can assume that $(\Sigma_1^\epsilon, \Phi_1^\epsilon)$ is neatly embedded in $(X^\epsilon, S)$ (\cite[\S 3.2]{BGZ2}).

As in \cite[\S 3.2]{BGZ2} we take $\dot{\Phi}_j^\epsilon$ to be the union of the components of $\Phi_j^\epsilon$ which contain some outer boundary components and $\breve{\Phi}_j^\epsilon$ to be the neat subsurface in $S$ obtained from the union of $\dot{\Phi}_j^\epsilon$ and a closed collar neighbourhood of $\partial S - \partial  \dot{\Phi}_j^\epsilon$ in $S - \dot{\Phi}_j^\epsilon$. There are corresponding $I$-bundle pairs  $(\dot{\Sigma}_1^\epsilon, \dot{\Phi}_1^\epsilon)$ and $(\breve \Sigma_1^\epsilon, \breve \Phi_1^\epsilon)$ neatly embedded in $(X^\epsilon, S)$.

A neat subsurface $S_0$ of $S$ is called {\it tight}  if it caps off to a disk in $\widehat S$. Equivalently, $S_0$ is a connected, planar, neat subsurface of $S$ with one inner boundary component.

We use $t_j^\epsilon$ to denote the number of  tight components of $\breve{\Phi}_j^\epsilon$. If $j$ is odd, $t_j^\epsilon$ is even, while if $j$ is even, $t_{j}^+ = t_{j}^-$. See \cite[\S 6]{BGZ2}.

An {\it  intersection graph} $\Gamma_F$ can be constructed in a disk or torus $Y$ from an immersion
\begin{eqnarray} \label{immersion}
h: Y \to M(\alpha)
\end{eqnarray}
(cf. \cite[Section 11]{BGZ2}). The immersion maps the vertices of $\Gamma_F$ to meridian disks of the $\alpha$-filling torus, edges of $\Gamma_F$ to $F$, and faces of $\Gamma_F$ to $X^+$ or $X^-$. For simplicity we shall say a face of $\Gamma_F$ is contained in $X^\epsilon$ if its image under the immersion is contained there. We refer to \cite[Sections 11 and 12]{BGZ2} for terms, notations and basic facts concerning $\Gamma_F$.

\begin{rem}
{\rm Note that  the intersection  graph $\G_F$ in $Y$ has the following property: either $\G_F$ has a connected component which lies in a subdisk of $Y$ or $Y$ is a torus and every face of $\G_F$ is a disk or an annulus.
So we may and shall  assume that either $\G_F$ is a connected graph in a disk or
it is a graph in a torus with only disk faces and/or annulus faces. It turns out that our graph related arguments are  never affected
whether  annulus faces exist or not.}
\end{rem}

\section{The proof of Theorem \ref{thm: twice-punctured precise} when $F$ is non-separating but not a fibre}  \label{non-sep not fibre}

In this section we prove that part of Theorem \ref{thm: twice-punctured precise} dealing with the case that $F$ is non-separating but not a fibre.
We  suppose throughout that Assumptions \ref{assumptions 0} and \ref{assumptions 1} hold.  Note that we can assume that  the components of $\partial F$ are like-oriented on $\partial M$. For  otherwise we can attach a peripheral annulus to $F$ to obtain a closed non-separating genus two surface in $M$. This surface must be incompressible in $M$
since $M$ is hyperbolic. It follows from \cite[Theorem 2.4.3]{CGLS} that $\b$ is a singular slope, which
contradicts  Assumptions \ref{assumptions 0}(4). By construction, $X^- = F\times I$.

We shall prove

\begin{prop}
\label{prop: F2-non-sep non-fibre}
Suppose that $M$ is a hyperbolic knot manifold which contains an essential, non-separating twice-punctured torus $F$ of boundary slope $\beta$ and let $\alpha$ be a slope on $\partial M$ such that $M(\alpha)$ is an irreducible small Seifert manifold. If $F$ is not a fibre and Assumptions \ref{assumptions 0} and \ref{assumptions 1} hold, then
$$\Delta(\alpha, \beta) \leq \left\{ \begin{array}{ll} 4 & \hbox{if $M(\alpha)$ is very small} \\ 5 & \hbox{otherwise}   \end{array}  \right.$$
\end{prop}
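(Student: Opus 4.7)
The plan is to apply the relative JSJ machinery developed in Sections \ref{sec: initial reduction}--\ref{sec: second reduction}. Since $F$ is non-separating, $S = F_1 \cup F_2$ consists of two parallel copies of $F$; one side $X^-$ is the product $F \times I$ while by Assumption \ref{assumptions 1} the other side $X^+$ is not an $I$-bundle. The capped-off surface $\widehat F$ is a non-separating torus in $M(\beta)$, so $\pi_1(M(\beta))$ has an HNN decomposition with base $\pi_1(X^+)$ and associated subgroup $\pi_1(F)$; this is the structure that will feed into the character variety argument.

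For the general bound $\Delta(\alpha,\beta) \leq 5$, I would analyze the intersection graph $\Gamma_F$ in $Y$ produced by the immersion $h : Y \to M(\alpha)$. Because $X^- = F \times I$, the $-$-side characteristic subsurface is $\Phi_1^- = S$ and every $-$-side face of $\Gamma_F$ is carried by the $I$-fibration, so the work is entirely on the $+$-side, where the characteristic subsurfaces $\Phi_j^+$ are controlled by the JSJ decomposition of $X^+$. The key ingredients are (i) bounds on the numbers $t_j^+$ of tight components, which control how many tight faces can appear in $\Gamma_F$; (ii) a valency count at each vertex of $\Gamma_F$ that is proportional to $\Delta$; and (iii) the Euler characteristic of $Y$. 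Combining these as in the face-counting arguments of Sections 11--13 of \cite{BGZ2}, refined by the hypothesis that $X^+$ is not an $I$-bundle, gives $\Delta \leq 5$.

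For the refined bound $\Delta(\alpha,\beta) \leq 4$ in the very small case, I would construct a suitable curve in the character variety. The HNN structure of $\pi_1(M(\beta))$, after killing fibre classes of the Seifert structure of $M(\alpha)$, factors through an HNN extension of a triangle group of the form $\Delta(d,n,d) *_{\psi}$. Lemma \ref{lemma: bending homs non-sep} then produces a strictly non-trivial curve $Y_0$ whose ideal points all carry poles of $\tilde f_{\beta^*}$. Pulling back yields a curve
$$X_0 \;\subset\; X_{PSL_2}(M(\beta)) \;\subset\; X_{PSL_2}(M)$$
with $s_{X_0} \geq 2$, and by Lemma \ref{lemma: factors} each point of $J_{X_0}(\alpha)$ corresponds to an irreducible representation factoring through $\pi_1(S^2(a,b,c)) = \Delta(a,b,c)$. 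Since $M(\alpha)$ is very small, $\Delta(a,b,c)$ admits only a few irreducible $PSL_2(\mathbb{C})$-characters of the required type (dihedral or small triangle-group image), which bounds $|J_{X_0}(\alpha)|$. Feeding these counts into Proposition \ref{prop: boundary values}(2) yields $\Delta(\alpha,\beta)\leq 4$.

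The main obstacle will be the combinatorial case analysis needed to close the gap between $\Delta \leq 5$ and $\Delta = 6$: ensuring that the number of tight $+$-side faces is strictly small enough requires careful use of the fact that $X^+$ is not an $I$-bundle together with the tightness/choice conditions built into Assumption \ref{assumptions 1}(4). A secondary obstacle will be handling the sporadic amalgam parameters in the very small case where Lemma \ref{lemma: bending homs non-sep} gives a curve $Y_0$ with only one ideal point (the triples with $n=2$ and $d \in\{2,4\}$), which will need to be disposed of by an independent argument using the explicit list of dihedral characters of $\pi_1(M(\alpha))$.
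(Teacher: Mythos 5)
Your framework is the right one, but both halves of the argument as you describe them leave the key step unfilled. For the bound $\Delta(\alpha,\beta) \leq 5$ in the not-very-small case, the face-counting machinery of [BGZ2] Sections 11--13 applied to a non-separating, non-fibre $F$ gives $\Delta \leq 6$ (this is exactly Proposition~\ref{prop: non-sep non-fibre background}(2)(a)), not $\leq 5$; ``refined by the hypothesis that $X^+$ is not an $I$-bundle'' is already what gets you to $6$. The actual content of the not-very-small case is to rule out $\Delta = 6$. The paper does this by showing that $\Delta = 6$ forces $\overline{\Gamma}_S$ to be rectangular with every edge of weight $6$, forces $Q := \dot\Phi_5^-$ to be a pair of twice-punctured disks with $\dot\Phi_5^+$ containing no large component, and then carries out the $Q$-disk analysis of Lemmas~\ref{Phi5+NoLarge}--\ref{PropB}: every essential $Q$-$4$-gon has type $X_2X_4^{-1}X_4X_4^{-1}$ or $X_4X_2^{-1}X_2X_2^{-1}$, these two types cannot coexist, and the parity balance of $X_2$- versus $X_4$-corners then gives the contradiction. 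Your proposal says nothing about this, and it is the bulk of the work.

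For the bound $\leq 4$ in the very small case, your character variety route via $\Delta(d,n,d)*_{\psi}$ is a genuine alternative to what the paper does, but it only functions when $d \geq 2$ (here $d$ is the intersection number between the two boundary fibre slopes of the Seifert piece $M(\beta)_{\widehat F}$ --- note you should kill the fibre classes of $M(\beta)_{\widehat F}$, not of $M(\alpha)$). When $d = 1$ the relations $a^1 = (ab)^1 = 1$ kill $a$ and $b$, the target collapses to $\mathbb{Z}$, and there is no irreducible curve to bend; when $t_1^+ > 0$ the annulus-with-one-cone-point structure of $M(\beta)_{\widehat F}$ on which the HNN quotient is built is simply absent. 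The paper instead gets $\leq 4$ in the very small case uniformly from the combinatorial Proposition~\ref{prop: non-sep non-fibre background} (citing [BGZ2] Propositions 13.1--13.2), reserving the bending curves for the sharper bounds of Proposition~\ref{twice-punctured very small t+=0}, which are explicitly stated under the hypothesis $d \neq 1$. Your sketch acknowledges the sporadic $(n,d)$ parameters with a single ideal point, but not the $d=1$ and $t_1^+ > 0$ degenerations where the method breaks down entirely; those need an independent (combinatorial) argument.
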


We begin with a result whose proof follows from \cite{BGZ2}.

\begin{prop}
\label{prop: non-sep non-fibre background}
Suppose that $F$ is a non-separating, essential, twice-punctured torus of slope $\beta$ in a hyperbolic knot manifold $M$ which completes to an essential torus in $M(\beta)$ but is not a fibre in $M$. Suppose as well that $M(\alpha)$ is an irreducible small Seifert manifold.

$(1)$ If $t_1^+ > 0$, then $\Delta(\alpha, \beta) \leq \left\{ \begin{array}{ll} 3 & \hbox{if $M(\alpha)$ is very small} \\ 4 & \hbox{otherwise}   \end{array}  \right.$

$(2)$ If $t_1^+ = 0$, then

\indent \hspace{.5cm} $(a)$ $\Delta(\alpha, \beta) \leq \left\{ \begin{array}{ll} 4 & \hbox{if $M(\alpha)$ is very small} \\ 6 & \hbox{otherwise}   \end{array}  \right.$

\indent \hspace{.5cm} $(b)$ $M(\beta)_{\widehat F}$ admits a Seifert structure with base orbifold an annulus with one cone point.
\end{prop}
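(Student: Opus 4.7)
The plan is to deduce Proposition \ref{prop: non-sep non-fibre background} as a direct application of the distance estimates and Seifert-recognition results of \cite{BGZ2}. The first step is to exploit the non-separability of $F$: since $X^- \cong F \times I$ is itself an $I$-bundle, the characteristic $I$-pair on the minus side is all of $(X^-, S)$, so $\breve{\Phi}_1^- = S$ and every outer boundary component of $S$ lies in a tight component. Because $|\partial F| = 2$, this immediately forces $t_1^- = 2$. Thus we are in the framework of \cite{BGZ2} with the $-$ side maximally tight, and the remaining analysis is governed by the value of $t_1^+$ and the nature of the small Seifert filling $M(\alpha)$.

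For part (1), the hypothesis $t_1^+ \geq 1$ together with the parity fact that $t_1^+$ is even when the index is odd (see \cite[\S 6]{BGZ2}) yields $t_1^+ \geq 2$, so both sides contribute tight components. The main combinatorial distance estimate of \cite[\S 13]{BGZ2} then applies to the intersection graph $\Gamma_F \subset Y$ from (\ref{immersion}), bounding the valency of a vertex by means of a corner count on tight components. This bound, combined with the $PSL_2(\mathbb C)$-character variety restrictions on small Seifert fillings from \S \ref{sec: cs theory} and \S \ref{sec: amalgams}, yields $\Delta(\alpha,\beta) \leq 4$ in general. In the very small case the character variety of $\pi_1(M(\alpha))$ is much more restricted (essentially finitely many characters with dihedral or small platonic image), and the refined count in \cite{BGZ2} adapted to this setting brings the bound down to $3$.

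For part (2)(a), when $t_1^+ = 0$ the face-counting is one-sided and weaker; nonetheless, combining the corner estimate from \cite[\S 12]{BGZ2} with the fact that $t_1^- = 2$, together with a characteristic-variety argument using curves built as in \S \ref{sec: amalgams}, produces $\Delta(\alpha,\beta) \leq 6$, refined to $\leq 4$ when $M(\alpha)$ is very small. For (2)(b), the combination $t_1^+ = 0$, $\widehat F$ essential in $M(\beta)$, and $X^+$ not an $I$-bundle (by Assumption \ref{assumptions 1}(2)) matches precisely the hypothesis of the Seifert-recognition theorems of \cite{BGZ2}: the vanishing of tight components on the $+$ side rules out every characteristic subsurface configuration of $\dot\Phi_1^+$ except a single annular piece carrying a fibred neighbourhood, which forces $\widehat X^+$ to be Seifert fibred with base orbifold an annulus with exactly one cone point, giving the stated structure of $M(\beta)_{\widehat F}$.

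The main obstacle I expect is bookkeeping rather than substance: verifying that the choice of $F$ prescribed by Assumption \ref{assumptions 1}(4), together with the neat embedding of $(\Sigma_1^\epsilon, \Phi_1^\epsilon)$ and the condition $b_1(M) = 1$, exactly matches the running hypotheses of the theorems in \cite{BGZ2} that are being invoked, so that the quoted numerical bounds and Seifert-recognition conclusions apply verbatim without modification.
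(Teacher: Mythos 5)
Your opening reduction is where the argument goes wrong. When $F$ is non-separating, the $-$ side is indeed the product region $F \times I$, so $\breve{\Phi}_1^- = S = F_1 \sqcup F_2$. But a tight component is by definition a \emph{planar} neat subsurface with one inner boundary component (one that caps off to a disk in $\widehat{S}$), and the two components $F_1, F_2$ of $S$ are twice-punctured tori of genus one — not planar. So there are no tight components on the $-$ side and $t_1^- = 0$, not $2$. Since your part (1) argument rests on ``both sides contributing tight components,'' it does not go through. In the non-separating case all the content lives on the $+$ side, with $t_1^+ \in \{0,2,4\}$ because $t_1^+$ is even and bounded above by $2|\partial F| = 4$.

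You also invoke the $PSL_2(\mathbb{C})$-character variety machinery from \S\ref{sec: cs theory} and \S\ref{sec: amalgams} to get the distance bounds; that machinery plays no role in this proposition (it appears later, e.g. in Propositions \ref{twice-punctured very small t+=0} and \ref{prop: very small d not 1}). The actual proof is essentially a direct citation: (2)(b) is \cite[Lemma 7.10]{BGZ2}; the general bound in (1) and (2)(a) are \cite[Propositions 13.1, 13.2]{BGZ2}; and the only piece requiring a new argument — the sharpening to $\Delta(\alpha,\beta) \leq 3$ in the very small case of (1) — is purely graph-theoretic: when $M(\alpha)$ is very small, $\overline{\Gamma}_S$ lies in a $2$-disk, hence has a vertex of low valency (at most $5$, and one shows at most $3$ when $t_1^+ = 2$ by ruling out the rectangular case), and \cite[Inequality 13.0.1]{BGZ2} then gives $\Delta(\alpha,\beta) \leq 2$ if $t_1^+ = 4$ and $\leq 3$ if $t_1^+ = 2$. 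Your instinct for (2)(b), that $t_1^+ = 0$ triggers the Seifert-recognition results of \cite{BGZ2}, is the right one.
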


\begin{proof} Assertion (2)(b) of the proposition is a consequence of \cite[Lemma 7.10]{BGZ2}. Assertion (2)(a) follows from \cite[Propositions 13.1 and 13.2]{BGZ2}, as does the general inequality $\Delta(\alpha, \beta) \leq 4$ when $t_1^+ > 0$ claimed in assertion (1).

Assume that $t_1^+ > 0$ and $M(\alpha)$ is very small. Since $t_1^+$ is even and the number of boundary components $F$ is bounded below by $\frac{ t_1^+}{2}$, we have $t_1^+ \in \{2, 4\}$.
As $M(\alpha)$ is very small, the graph $\overline{\Gamma}_S$ is contained in a $2$-disk and so it has a vertex of valency $5$ or less (e.g. see \cite[Proposition 12.2 and Corollary 12.4]{BGZ2}). Hence if $t_1^+ = 4$, then \cite[Inequality 13.0.1]{BGZ2} shows that $\Delta(\alpha, \beta) \leq 2$. Suppose then that $t_1^+ = 2$ and note that $\overline{\Gamma}_S$ has a vertex of valency $3$ or less, for if it doesn't, \cite[Lemma 11.6 and Proposition 11.5(2)]{BGZ2} imply that $\overline{\Gamma}_S$ is rectangular and so is contained in a torus, contrary to our assumptions. But then \cite[Inequality 13.0.1]{BGZ2} shows that $\Delta(\alpha, \beta) \leq 3$, so we are done.
\end{proof}

\subsection{Proof of Proposition \ref{prop: F2-non-sep non-fibre} when $M(\alpha)$ is not very small}
\label{non-sep non-fibre not very small}

By Proposition \ref{prop: non-sep non-fibre background} we can suppose that $t_1^+ = 0$ in this subsection and the base orbifold $S^2(a,b,c)$ of $M(\alpha)$ is hyperbolic.

Recall from \S \ref{sec: second reduction} that $S$ is the frontier of $X^+$ in $M$ and consists of two parallel copies $F_1, F_2$ of $F$.
By Proposition \ref{prop: non-sep non-fibre background} we can assume that $\Delta(\alpha, \beta) = 6$ in order to obtain a contradiction. In this case, the proof of \cite[Proposition 13.2]{BGZ2} shows that the reduced graph $\overline{\Gamma}_S$ is rectangular with every edge having  weight $6$, both $\dot{\Phi}_3^+$ and $\dot{\Phi}_5^-$ consist of a pair of tight components, each a twice-punctured disk, $\dot{\Phi}_5^+$ is a collar on $\partial S$, and so contains no large components.

Let  $b_1,...,b_4$ denote the  components of $\partial S=\partial F_1\cup \partial F_2$ indexed as they appear successively along
$\partial M$ and where $b_1\cup b_3 =  \partial F_1$ and $b_2\cup b_4 = \partial F_2$.
These four circles cut $\partial M$ into four annuli $A_{i,i+1}, i=1,...,4$, such that $\partial A_{i,i+1}=b_i\cup b_{i+1}$ (indexed (mod $4$)). We assume that $\partial X^+ =S\cup A_{2,3}\cup A_{4,1}$.

As in \cite{BGZ3}, an {\it $n$-gon} in $X^+$ means a singular disk $D$ with $\partial
D \subseteq \partial X^+$ such that $\partial D \cap (A_{2,3} \cup A_{4,1})$ is a set of $n$ embedded
essential arcs in $A_{2,3} \cup A_{4,1}$, called the {\it corners} of $D$, and $\partial D\cap S$ is a
set of $n$ singular arcs, called the {\it edges} of $D$. As we go around $\partial D$ in some direction we get
a cyclic sequence of $X_2^{\pm1}$ and {\em $X_4^{\pm1}$-corners}, where
$X_2,X_2^{-1}$ indicate that $\partial D$ is running across $A_{2,3}$ from
2 to 3 or from 3 to 2, respectively, and $X_4,X_4^{-1}$ indicate that
$\partial D$ is running across $A_{4,1}$ from 4 to 1 or 1 to 4, respectively.
In this way $D$ determines a cyclic word $W = W(X_2^{\pm 1},X_4^{\pm1})$,
well-defined up to inversion, and we say that $D$ is {\em of type\/} $W$.
(Thus $D$ is of type $W$ if and only if it is of type $W^{-1}$.)
We emphasize that $W$ is an unreduced word; for example $X_2$ and $X_2X_4X_4^{-1}$
are distinct.

There are no $n$-gons in $X^+$ with $n$ odd (cf. \cite[Lemma 11.6]{BGZ3}).

\begin{lemma}\label{Phi5+NoLarge}
There is no bigon $D$ in $X^+$ whose edges $e_1, e_2$ are essential paths in $(\dot\Phi_5^-, \partial
S)$ and for which the inclusion $(D, e_1 \cup e_2) \to (X^+, \dot\Phi_5^-)$ is essential as a map of
pairs.
\end{lemma}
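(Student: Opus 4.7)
\emph{Plan.} The strategy is to use the hypothetical essential bigon $D$ to append one extra step of essential homotopy through $X^+$ to the essential homotopy of length $5$ provided by $\dot\Phi_5^-$, thereby producing an essential homotopy of length $6$ that forces one of the two edges into the characteristic subsurface $\dot\Phi_6^+$. Since $\dot\Phi_5^+$ is by hypothesis only a collar of $\partial S$, and $\dot\Phi_6^+ \subseteq \dot\Phi_5^+$, this will contradict the essentiality of that edge in $S$.

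More precisely, apply the defining property $(*)$ to the edge $e_1$: because $e_1$ is an essential path in $(\dot\Phi_5^-, \partial S)$, the associated large map admits an essential homotopy $H$ of length $5$ starting on the minus-side of $S$, whose alternating strand sequence is $(X^-, X^+, X^-, X^+, X^-)$. The bigon $D$, being essential as a map of pairs $(D, e_1\cup e_2) \to (X^+, \dot\Phi_5^-)$, constitutes a single essential strand through $X^+$ connecting $e_2$ to $e_1$ on $S$. Pre-concatenating this strand with $H$ yields an essential homotopy of the map defined by $e_2$ with strand sequence $(X^+, X^-, X^+, X^-, X^+, X^-)$, that is, of length $6$ starting on the plus-side. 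Applying $(*)$ to $\Phi_6^+$ then shows that $e_2$ is homotopic rel endpoints in $S$ into $\Phi_6^+$, and since its endpoints lie on $\partial S$, into $\dot\Phi_6^+$.

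To finish, recall that $\dot\Phi_6^+ \subseteq \dot\Phi_5^+$ and that $\dot\Phi_5^+$ is a collar of $\partial S$ by the hypotheses of this subsection; any arc with endpoints on $\partial S$ contained in such a collar is boundary-parallel in $S$. Thus $e_2$ would be isotopic rel endpoints in $S$ into $\partial S$. On the other hand, the tight twice-punctured disk components of $\dot\Phi_5^-$ are incompressible and $\partial$-incompressible in $S$, so any arc essential in $(\dot\Phi_5^-, \partial S)$ is also essential in $(S, \partial S)$; this contradicts the isotopy just produced.

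The main obstacle will be to verify rigorously that the concatenation of $D$ with $H$ actually yields an essential homotopy in the sense demanded by $(*)$, so that the characterization of $\dot\Phi_6^+$ may be legitimately invoked. This depends critically on the hypothesis that $D$ is essential as a map of pairs, which prevents its strand through $X^+$ from being absorbed into the first $X^-$-strand of $H$ or homotoped back into $\dot\Phi_5^-$. A secondary, routine point is that the arcs $e_1, e_2$ should be formally treated as appropriate large maps of a $1$-complex when appealing to $(*)$, which poses no difficulty because essential properly embedded arcs in twice-punctured disks are large in the relevant sense.
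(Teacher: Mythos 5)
Your argument is correct and is essentially the paper's proof: the bigon $D$ extends the homotopy length by one, forcing a large (equivalently, essential-arc-carrying) piece of $\dot\Phi_5^-$ into $\dot\Phi_6^+ \subseteq \dot\Phi_5^+$, which is a collar of $\partial S$. The ``main obstacle'' you flag — verifying that the concatenated homotopy is essential — is discharged in the paper by applying $(*)$ with $j=1$ to homotope $e_1, e_2$ into $\dot\Phi_1^+$ and then invoking the recurrence $\dot\Phi_6^+ = \tau_+(\dot\Phi_5^- \wedge \dot\Phi_1^+)$ from \cite{BCSZ1}, which is precisely the rigorous machinery encoding the concatenation step.
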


\pf Suppose otherwise that such a bigon $D$ exists. Then $D$ gives rise to an essential homotopy
between its two edges and  thus the edges of $D$ can be homotoped, relative to their end points, into
$\dot\Phi_1^+$. Then the essential intersection $\dot\Phi_5^- \wedge \dot\Phi_1^+$ contains a
large component and therefore so does $\dot\Phi_6^+ = \tau_+(\dot\Phi_5^- \wedge \dot\Phi_1^+)$,
contrary to the fact  that $\dot\Phi_5^+$ has no large components.
\qed

Recall that $h$ is the $\pi_1$-injective map from the torus $T$ into $M(\alpha)$
which induces the graph $\Gamma_S$ in $T$. For a subset $s$ of $T$ we use $s^*$ to denote its image
under the map $h$.
The image under $h$ of every edge of a rectangular face of $\Gamma_S$ is contained in $\dot\Phi_5^-$.

For notational simplicity, let us write $\dot\Phi_5^- = Q$,  a pair
of twice-punctured disks.
Obviously we have $b_1\cup b_3$ is contained in one component of $Q$
and $b_2\cup b_4$ in the other.

A singular disk $D\subset X^+$ whose edges are contained in $Q$ will
be called a {\em $Q$-disk\/}.
An {\em essential\/} $Q$-disk is a $Q$-disk $D$ such that each edge of $D$
is an essential arc in $Q$.

Note that a $Q$-disk has no 12-, or 14-, or 32- or 34-edge.
This simple fact will be used many times in the rest of the proof
in determining the type $W(D)$ of a $Q$-disk $D$.

\begin{lemma} \label{PropA}
An essential $Q$-$n$-gon, $n\le4$, is a $4$-gon of type $X_2X_4^{-1}X_4X_4^{-1}$
or $X_4X_2^{-1}X_2X_2^{-1}$.
\end{lemma}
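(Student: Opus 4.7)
The strategy is a combinatorial analysis in three stages: first forcing $n$ even via the component structure of $Q$, then ruling out $n=2$ using Lemma \ref{Phi5+NoLarge}, and finally classifying the possible $4$-gon types by a case analysis on the cyclic corner-sequence.

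First I would observe that $Q = \dot\Phi_5^-$ consists of two twice-punctured disks: one component $Q_1$ containing $b_1 \cup b_3$ and another component $Q_2$ containing $b_2 \cup b_4$. Every essential edge of an essential $Q$-disk is an essential arc in one of $Q_1$ or $Q_2$, with both endpoints on the same component. Since each corner lies in either $A_{2,3}$ (joining $b_2$ and $b_3$) or $A_{4,1}$ (joining $b_4$ and $b_1$), every corner joins a boundary circle of $Q_1$ to a boundary circle of $Q_2$. Consecutive edges therefore lie in opposite components of $Q$, which forces $n$ to be even; for $n \le 4$ this leaves only $n = 2$ and $n = 4$.

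Next I would rule out $n = 2$. A $Q$-bigon $D$ has one edge in each of $Q_1, Q_2$, and since the two corners lie in the disjoint annuli $A_{2,3}$ and $A_{4,1}$, the inclusion $(D, e_1 \cup e_2) \hookrightarrow (X^+, \dot\Phi_5^-)$ cannot be homotoped rel corners into $\dot\Phi_5^-$; it is therefore essential as a map of pairs, so Lemma \ref{Phi5+NoLarge} applies and gives a contradiction.

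For $n = 4$ I would analyse the cyclic pattern of corners. Tracking endpoint parity (the corners $X_2^{+1}$ and $X_4^{+1}$ end at the ``odd'' circles $b_3, b_1$, while $X_2^{-1}$ and $X_4^{-1}$ end at the ``even'' circles $b_2, b_4$), combined with the alternation of edges between $Q_1$ and $Q_2$, forces the signs of consecutive corners to alternate around $\partial D$, giving cyclic sign pattern $+,-,+,-$. This leaves sixteen type-sequences, which reduce modulo cyclic rotation by two, inversion, and the $2 \leftrightarrow 4$ symmetry to the candidates $X_2 X_2^{-1} X_2 X_2^{-1}$, $X_2 X_2^{-1} X_2 X_4^{-1}$, $X_2 X_2^{-1} X_4 X_4^{-1}$, $X_2 X_4^{-1} X_2 X_4^{-1}$, $X_2 X_4^{-1} X_4 X_2^{-1}$, $X_2 X_4^{-1} X_4 X_4^{-1}$, and $X_4 X_4^{-1} X_4 X_4^{-1}$.

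The hard step is ruling out every candidate except $X_2 X_4^{-1} X_4 X_4^{-1}$ and its $2 \leftrightarrow 4$ image $X_4 X_2^{-1} X_2 X_2^{-1}$. My plan here is to argue that every other candidate contains an adjacent pair $X_l^{\epsilon}, X_l^{-\epsilon}$ whose intervening edge is isotopic in its component of $Q$ to a $\partial S$-parallel arc (because its two endpoints are separated by too few ``essential'' boundary features of $Q_j$), which permits a disk-compression across this parallel arc; the result is either an essential $Q$-bigon (contradicting the $n=2$ case) or an essential $Q$-$4$-gon of strictly simpler letter-pattern, so iterating the argument forces a contradiction. In the two surviving types the unique cancellation $X_4^{-1} X_4$ (respectively $X_2^{-1} X_2$) corresponds geometrically to an essential self-arc in $Q_2$ (respectively $Q_1$) that separates the opposite outer boundary circle from the inner boundary circle; no disk-compression is available in this case, and no further reduction occurs, which is why precisely these two types survive.
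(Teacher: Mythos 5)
Your skeleton matches the paper's — deduce $n$ is even, rule out $n=2$ via Lemma~\ref{Phi5+NoLarge}, then classify $4$-gons — and your parity argument using the two components $Q_1, Q_2$ is a nice self-contained alternative to citing \cite[Lemma 11.6]{BGZ3}. The alternating sign pattern $+,-,+,-$ that you derive from the same observation is also correct. But two steps are off. First, in ruling out $n=2$ you assert that the two corners of a $Q$-bigon ``lie in the disjoint annuli $A_{2,3}$ and $A_{4,1}$''; this is false for a bigon of type $X_2X_2^{-1}$, whose corners both lie in $A_{2,3}$. The essentiality-as-a-map-of-pairs that you need in order to invoke Lemma~\ref{Phi5+NoLarge} has to be justified differently (it follows because the edges are essential arcs in $Q$ and the corners run essentially across $B^\pm$), so the conclusion is salvageable but the reason you give is wrong.

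The genuine gap is in the ``hard step.'' You propose to rule out the bad $4$-gon types by finding an adjacent pair $X_l^{\epsilon}X_l^{-\epsilon}$ whose intervening edge is $\partial S$-parallel in $Q$ and then disk-compressing across it. But that intervening edge is by definition an edge of an \emph{essential} $Q$-disk, hence an essential arc in $Q$; it is never $\partial$-parallel, and no such compression exists. Moreover, both the surviving type $X_2X_4^{-1}X_4X_4^{-1}$ and the excluded type $X_2X_2^{-1}X_4X_4^{-1}$ contain such cancelling pairs, so the criterion you describe does not even distinguish survivors from non-survivors. The paper instead kills each bad type with a separate topological construction after first invoking the loop theorem to replace the singular $n$-gon by an \emph{embedded} one with the same corner set (a reduction you also omit, and which is essential for what follows): the type $X_2X_2^{-1}X_2X_2^{-1}$ forces a non-separating $2$-sphere in $M(\beta)$; the type $X_2X_2^{-1}X_4X_4^{-1}$ forces a $1$-loop and a $3$-loop in $\partial D$ to intersect; and the type $X_2X_4^{-1}X_2X_4^{-1}$ produces a punctured projective space inside $X^+$. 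None of these is captured by your reduction scheme, so the proof as proposed does not close.
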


\pf
Let $E$ be an essential $Q$-$n$-gon, $n\le 4$.
By the loop theorem (\cite[Theorem 4.10]{He}) we get an essential embedded $Q$-disk $D$, with
$\{\text{corners of }D\} \subset \{\text{corners of }E\}$.
Thus $D$ is a $k$-gon with $k\leq 4$.
We know $k$ must be  even,
and $D$ cannot be a bigon by Lemma \ref{Phi5+NoLarge}.
Hence $D$ is a $4$-gon.

There are three possibilities: $D$ has either
\begin{itemize}
\item[(A)] {\em all $X_2$-corners (or all $X_4$-corners)};
\item[(B)] {\em two $X_2$-corners and two $X_4$-corners};
\item[(C)] {\em one $X_2$-corner and three $X_4$-corners (or vice versa)}.
\end{itemize}

\begin{figure}[!ht]
\centerline{\includegraphics{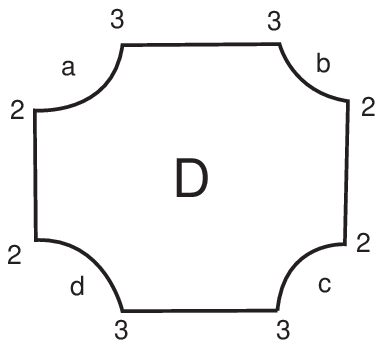}} \caption{ }\label{R1}
\end{figure}

In Case (A),   $W(D)=X_2X_2^{-1} X_2X_2^{-1}$ (or
$X_4X_4^{-1}X_4X_4^{-1}$).
Label the corners of $D$ $a,b,c,d$ as shown in
Figure \ref{R1}. Then $\partial D$ is as shown in Figure \ref{R2}.
Let $Y = \widehat X^-\cup H_{(23)}$.
Note that $\partial Y$  is a surface of genus 2.
We see from Figure \ref{R2} that $\partial D$ is isotopic in $\partial Y$ to a
meridian of $H_{(23)}$, and so bounds a non-separating disk $D'\subset Y$.
Then $D\cup D'$ is a non-separating 2-sphere in $M(\beta)$, a contradiction.

\begin{figure}[!ht]
\centerline{\includegraphics{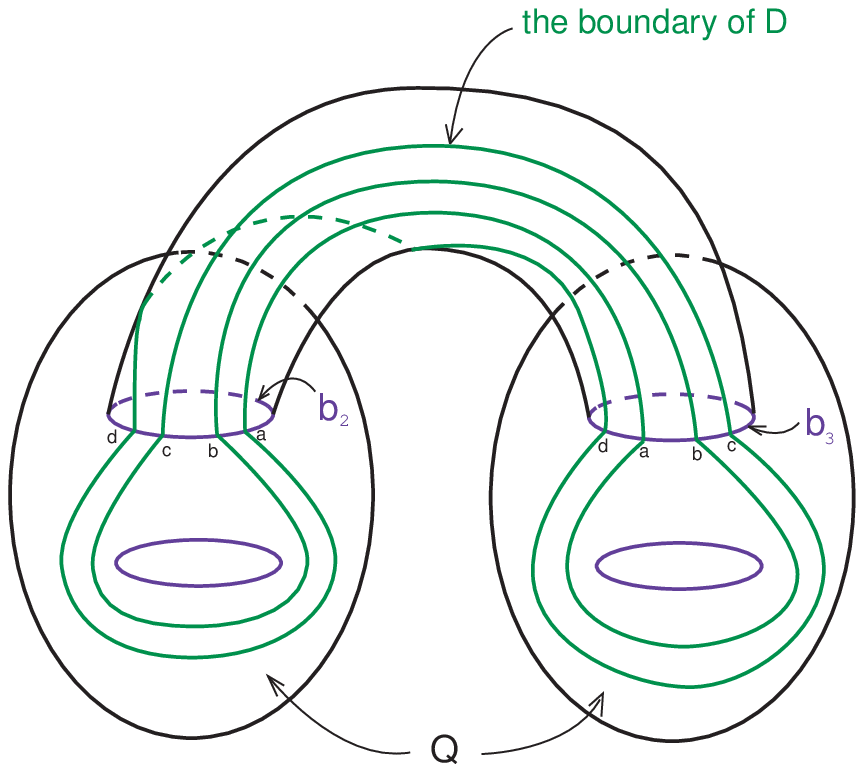}} \caption{ }\label{R2}
\end{figure}

In Case (B), the only possibilities
for $W(D)$ are $X_2X_2^{-1}X_4X_4^{-1}$ and $X_2X_4^{-1}X_2X_4^{-1}$.
In the first case,  $\partial D$ also contains a 1-loop and a 3-loop, which must intersect.
In the second case, let $U = \widehat Q\times I\cup H_{(23)} \cup
H_{(41)} \cup N(D) \subset \widehat X^+$.
Then $U$ is a punctured projective space in $X^+$,  a contradiction.

Hence Case (C) must hold; so suppose
that $D$ has one $X_2$-corner and three $X_4$-corners.
Since $\{\text{corners of }D\} \subset \{\text{corners of }E\}$, $E$
is also a 4-gon with one $X_2$-corner and three $X_4$ corners.
The  possibility for $W(E)$ is  $X_2X_4^{-1} X_4X_4^{-1}$.
This completes the proof of Lemma \ref{PropA} \qed

\begin{lemma}\label{lem6''}
There do not exist disjoint $Q$-disks of types $X_2X_4^{-1}X_4X_4^{-1}$
and $X_4X_2^{-1}X_2X_2^{-1}$.
\end{lemma}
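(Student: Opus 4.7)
The plan is to assume for contradiction that disjoint $Q$-disks $D_1$ of type $X_2X_4^{-1}X_4X_4^{-1}$ and $D_2$ of type $X_4X_2^{-1}X_2X_2^{-1}$ coexist, and to derive a contradiction with the fact, established in the proof of Proposition~\ref{prop: non-sep non-fibre background}(2) under the assumption $\Delta(\alpha,\beta)=6$, that $\dot\Phi_5^+$ is a collar on $\partial S$ and so contains no large component.

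First I would trace $\partial D_1$ and $\partial D_2$ on $\partial X^+=F_1\cup F_2\cup A_{2,3}\cup A_{4,1}$. Reading the word $X_2X_4^{-1}X_4X_4^{-1}$ around $\partial D_1$ and using that $Q_\epsilon$ lies in $F_\epsilon$ and carries two of the four boundary circles $b_i$, one finds that $D_1$ has edges $e_1\subset Q_1$ from $b_3$ to $b_1$, $e_2\subset Q_2$ a loop based at $b_4$, $e_3\subset Q_1$ a loop based at $b_1$, and $e_4\subset Q_2$ from $b_4$ to $b_2$. The analogous trace for $D_2$ produces edges $f_1\subset Q_1$ from $b_1$ to $b_3$, $f_2\subset Q_2$ a loop based at $b_2$, $f_3\subset Q_1$ a loop based at $b_3$, and $f_4\subset Q_2$ from $b_2$ to $b_4$. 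Since $D_1$ and $D_2$ are disjoint, all these edges are pairwise disjoint in $Q$, and all four corners on each of $A_{2,3}$ and $A_{4,1}$ are pairwise disjoint essential arcs, hence mutually parallel in the corresponding annulus.

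The central step is to construct a properly embedded two-sided annulus $\Sigma\subset X^+$ witnessing an essential length-one homotopy between two essential arcs in $\dot\Phi_5^-$. Writing the four parallel corners on $A_{2,3}$ in cyclic order, their signed $X_2$-pattern is $(+,-,+,-)$, with $\alpha_1$ the unique positive corner of $D_1$; similarly on $A_{4,1}$, with $\delta_1$ the unique positive corner of $D_2$. Pair $\alpha_1$ with an adjacent oppositely oriented corner of $D_2$ and take the rectangular sub-annulus $R_{23}\subset A_{2,3}$ that they cobound, whose other two sides are arcs on $b_2$ and $b_3$; similarly choose $R_{41}\subset A_{4,1}$ between $\delta_1$ and an adjacent oppositely oriented corner of $D_1$. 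Band-summing $D_1$ and $D_2$ first along $R_{23}$ and then along $R_{41}$ produces a connected, two-sided surface $\Sigma\subset X^+$ with $\chi(\Sigma)=\chi(D_1)+\chi(D_2)-2=0$; two-sidedness then forces $\Sigma$ to be an annulus, and its two boundary circles consist of edges of $D_1,D_2$ lying in $\dot\Phi_5^-$ joined by sub-arcs of $b_1\cup b_2\cup b_3\cup b_4$.

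The annulus $\Sigma$ then exhibits an essential homotopy in $X^+$ from an essential arc in $\dot\Phi_5^-$ to another such arc, of length at least one and starting on the $+$-side of $S$. As in the proof of Lemma~\ref{Phi5+NoLarge}, this homotopy can be pushed into $\dot\Phi_1^+$, forcing $\dot\Phi_5^-\wedge\dot\Phi_1^+$, hence $\dot\Phi_6^+=\tau_+(\dot\Phi_5^-\wedge\dot\Phi_1^+)$, and consequently $\dot\Phi_5^+$, to contain a large component, contradicting the structural fact recalled above. The main obstacle will be choosing the rectangles $R_{23}$ and $R_{41}$ compatibly so that the band-summing yields an embedded surface, and verifying that, after the sub-arcs of $\partial S$ on $\partial\Sigma$ are absorbed, the resulting boundary arcs are \emph{essential} in $\dot\Phi_5^-$ rather than boundary-parallel, which is where the detailed sign and cyclic-ordering bookkeeping of the four corners on each annulus is decisive.
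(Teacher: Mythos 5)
You correctly trace the edges of $D_1$ and $D_2$ and, in particular, correctly note that $D_1$ has an edge ($e_3$) which is a loop based at $b_1$ and $D_2$ has an edge ($f_3$) which is a loop based at $b_3$, both lying in the same component $Q_1$ of $Q=\dot\Phi_5^-$. You also note that disjointness of $D_1$ and $D_2$ forces all eight edges to be pairwise disjoint in $Q$. But at that point the decisive observation --- and the paper's entire proof --- is simply that an essential arc in $Q_1$ with both endpoints on $b_1$ and an essential arc in $Q_1$ with both endpoints on $b_3$ must intersect. Indeed $Q_1$ is a pair of pants with outer boundary $b_1\cup b_3$ and one inner boundary circle; the $1$-loop separates $Q_1$ into two annuli, one of which contains $b_3$, and a disjoint arc with both endpoints on $b_3$ would therefore be confined to that annulus and hence boundary-parallel, contradicting essentiality. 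Since you already had these two loops disjoint, the contradiction is immediate; no further construction is needed.

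Instead you abandon this and propose band-summing $D_1$ and $D_2$ along rectangles in $A_{2,3}$ and $A_{4,1}$ to build an annulus $\Sigma\subset X^+$ that is supposed to contradict Lemma~\ref{Phi5+NoLarge}. This is a genuinely different route, but it is both unnecessary and incomplete. Lemma~\ref{Phi5+NoLarge} rules out certain essential \emph{bigons}, i.e.\ length-one essential homotopies of a single arc in $\dot\Phi_5^-$; it does not directly apply to an annulus whose two boundary circles are each concatenations of several $Q$-edges of $D_1,D_2$ together with sub-arcs of $\partial S$, and you give no argument reducing $\Sigma$ to such a bigon. Moreover, an Euler-characteristic count alone does not show $\Sigma$ is an annulus rather than a M\"obius band or a disconnected surface, and you yourself flag as unresolved the issues of choosing the bands compatibly, embeddedness, and essentiality of the resulting boundary arcs in $\dot\Phi_5^-$. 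The short arc-intersection argument in the pair of pants $Q_1$ should replace all of this.
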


\pf
Let $D_1,D_2$ be $Q$-disks of types $X_2X_4^{-1}X_4X_4^{-1}$ and $X_4X_2^{-1}X_2X_2^{-1}$,
respectively.
Then  $\partial D_1$ contains a 1-loop and $\partial D_2$ contains a 3-loop,
and these must intersect.
\qed

\begin{lemma} \label{PropB}
There cannot be essential $Q$-$4$-gons of both types
$X_2X_4^{-1}X_4X_4^{-1}$ and\break $X_4X_2^{-1}X_2X_2^{-1}$.
\end{lemma}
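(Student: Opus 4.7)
The plan is to reduce the problem to the disjoint case, which is exactly Lemma \ref{lem6''}. Assume for contradiction that essential $Q$-$4$-gons $E_1, E_2$ of types $W_1 := X_2X_4^{-1}X_4X_4^{-1}$ and $W_2 := X_4X_2^{-1}X_2X_2^{-1}$ both exist. Applying the loop theorem exactly as in the proof of Lemma \ref{PropA}, I would obtain embedded essential $Q$-$4$-gons $D_1, D_2 \subset X^+$ of the same respective types: the corners of the embedded representative of $E_i$ form a sub-multiset of the four corners of $E_i$, and, since it is a $4$-gon by Lemma \ref{PropA}, the multisets coincide and the word is thereby forced.

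Place $D_1, D_2$ in general position and minimize $|D_1 \cap D_2|$ in the isotopy class of the pair. The surface $F$ is essential in the hyperbolic manifold $M$, so $X^+$ is irreducible and a standard innermost-disk argument removes all simple closed curves of $D_1 \cap D_2$; hence the intersection is a union of properly embedded arcs. If this intersection is empty, Lemma \ref{lem6''} applies and produces the desired contradiction directly.

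Otherwise, let $\delta \subset D_1 \cap D_2$ be outermost in $D_1$, cutting off a subdisk $\Delta \subset D_1$ with $\Delta \cap D_2 = \delta$, and set $\delta' := \overline{\partial \Delta \setminus \delta^{\circ}} \subset \partial D_1$. Surgering $D_2$ along $\Delta$ produces two embedded disks $D_2', D_2'' \subset X^+$, each of whose boundary is the concatenation of an arc of $\partial D_2$ with a parallel copy of $\delta'$. Since $\delta'$ lies in $\partial X^+$ with its edge portions in $Q$ and its corner portions in $A_{2,3} \cup A_{4,1}$, each of $D_2', D_2''$ is a $Q$-disk; moreover, each has strictly fewer intersections with $D_1$ than $D_2$ had. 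After isotoping away any boundary-inessential arcs in $Q$, the facts that no $Q$-bigons exist (Lemma \ref{Phi5+NoLarge}) and no $n$-gons with $n$ odd exist (parity rule) force, via Lemma \ref{PropA}, at least one of the pieces to be an essential $Q$-$4$-gon of type $W_1$ or $W_2$. A symmetric surgery can be performed with $\delta$ taken outermost in $D_2$ instead of $D_1$.

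Replacing $D_i$ with such a new $4$-gon of type $W_i$ (possibly after interchanging the roles of $D_1$ and $D_2$) yields a pair of embedded essential $Q$-$4$-gons, one of each type, with strictly smaller intersection, and induction on $|D_1 \cap D_2|$ produces disjoint $D_1, D_2$ of types $W_1, W_2$, contradicting Lemma \ref{lem6''}. The hard part will be the corner-counting case analysis in the preceding paragraph, which must verify that at each step one can choose outermost arc, surgered piece, and direction so as to reduce intersections while preserving access to both types: one tabulates, for each initial subword $\delta'$ of $W_1$ (respectively $W_2$) and each placement of the endpoints of $\delta$ on the opposite boundary, the words $W(D_2')$ and $W(D_2'')$ that arise, and confirms that at least one of them is $W_1$ or $W_2$. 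The crucial inputs here are the twice-punctured-disk structure of each component of $Q$ and the prescribed distribution of $b_1, b_3 \subset Q_1$ and $b_2, b_4 \subset Q_2$, which restrict the possible endpoints of edges on either side of every corner and rule out the pathological possibility that all surgery pieces might avoid both admissible types.
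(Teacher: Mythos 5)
Your high-level strategy matches the paper's: both reduce to the disjoint case settled by Lemma~\ref{lem6''} by an induction on $|D_1 \cap D_2|$, using the loop theorem and Lemma~\ref{PropA} to re-embed at each stage.  The difference is the reduction step, and I think yours has a genuine gap there.

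You take an \emph{outermost} arc $\delta$ cutting off $\Delta \subset D_1$ with $\Delta \cap D_2 = \delta$, then surger $D_2$ along $\Delta$, producing two embedded disks $D_2'$, $D_2''$.  The paper instead takes an \emph{arbitrary} arc $u_1 = u_2$ of intersection, cuts each $D_i$ into $D_i'$ (the piece with $1$ or $2$ corners) and $D_i''$, and forms the single singular disk $D_1^* = D_1'' \cup D_2'$; this has $(4-|D_1'|) + |D_2'|$ corners, which the parity rule on $n$-gons forces to equal $4$, and the fact that each $u_i$ has one endpoint in $Q_1 \supset b_1 \cup b_3$ and one in $Q_2 \supset b_2 \cup b_4$ (together with the figures) pins down the cyclic word.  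The construction gives a $4$-gon automatically.  Your surgery does not: both $D_2'$ and $D_2''$ acquire a parallel copy of $\delta' = \overline{\partial\Delta \setminus \delta}$, so the total corner count across the two pieces is $4 + 2k$, where $k$ is the number of corners of $D_1$ lying on $\delta'$.  An outermost arc in $D_1$ need not cut off a cornerless or one-corner region --- two parallel arcs joining opposite edges, for instance, leave an outermost $\Delta$ with $k=2$ --- and with $k=2$, $m=0$ (both endpoints of $\delta$ on one edge of $D_2$) you get a $(2,6)$-split.  Lemma~\ref{PropA} says nothing about $6$-gons, and Lemma~\ref{Phi5+NoLarge} only rules out the $2$-gon if its edges are \emph{essential} and its inclusion is essential as a map of pairs; the composite edge of $D_2'$, built from partial edges of $D_1$ and $D_2$, need not be.  There is a second, quieter problem: even when a piece is a $4$-gon, its corners mix $\Delta$'s (mostly $X_4$, since $D_1$ is $X_2X_4^{-1}X_4X_4^{-1}$) with $D_2$'s (mostly $X_2$), and the result can be a word with a $(2,2)$ corner split that is neither $W_1$ nor $W_2$; the only inference Lemma~\ref{PropA} then supports is that the $4$-gon fails to be essential, not that the other piece is usable.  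Your sentence ``the facts that no $Q$-bigons exist ... force ... at least one of the pieces to be an essential $Q$-$4$-gon of type $W_1$ or $W_2$'' therefore overstates what the cited lemmas give, and the ``tabulation'' you defer to must confront the $(2,6)$ and wrong-type $4$-gon cases, which I do not see how to close in your set-up.

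The fix is to replace surgery by the paper's cut-and-paste: instead of doubling $\Delta$ and keeping all of $D_2$, glue the complementary piece $D_1''$ of $D_1$ to the small piece $D_2'$ of $D_2$ along the arc.  Each side of the $4$-gon decomposes as $(1,3)$ or $(2,2)$, so $D_1'' \cup D_2'$ has either $3+1$ or $2+2$ corners, parity rules out the mixed $(1,2)$ possibilities, and the $Q_1/Q_2$ constraint on arc endpoints forces the word $W_1$.  This produces a (singular) $Q$-$4$-gon of the right type with fewer intersections with $D_2$; re-embed with the loop theorem and Lemma~\ref{PropA} and continue as you intended.
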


\pf
Let $E_1,E_2$ be $Q$-disks of types $X_2X_4^{-1}X_4X_4^{-1}$ and $X_4X_2^{-1}X_2X_2^{-1}$
respectively.
By the loop theorem and Lemma \ref{PropA} we get embedded $Q$-disks $D_1$ and
$D_2$ of these types.
By Lemma~\ref{lem6''}, $D_1$ and $D_2$ must intersect; consider an arc of
intersection, coming from the identification of arcs $u_i \subset D_i$,
$i=1,2$.
We may assume that the endpoints of $u_i$ lie on distinct edges of $D_i$,
$i=1,2$.
Then  $u_i$ separates $D_i$ into two disks, $D'_i$ and $D''_i$, say, where
$D'_i$ contains either one or two corners of $D_i$.

If $D'_1$ and $D'_2$ each contain a single corner, and these corners are
distinct, then $D'_1 \cup D'_2$ is a $Q$-bigon with one $X_2$- and one
$X_4$-corner, contradicting Lemma \ref{Phi5+NoLarge}.

If $D'_1$ and $D'_2$ both contain, say, a single $X_2$-corner, then $u_1$ is as
shown in Figure \ref{R3}, which also shows one of the three possibilities for $u_2$.
Since $b_1$ and $b_3$ lie in one component of $Q$, say $Q_1$, and $b_2$
and $b_4$ lie in the other component, say $Q_2$, and each of the arcs $u_1$
and $u_2$ has one endpoint in $Q_1$ and one in $Q_2$, $u_1$ and $u_2$ must
be identified as shown in Figure \ref{R3}.
Then $D_1^* = D''_1 \cup D'_2$ is a $Q$-disk of type $X_2X_4^{-1}X_4X_4^{-1}$
having fewer intersections than $D_1$ with $D_2$.

\begin{figure}[!ht]
\centerline{\includegraphics{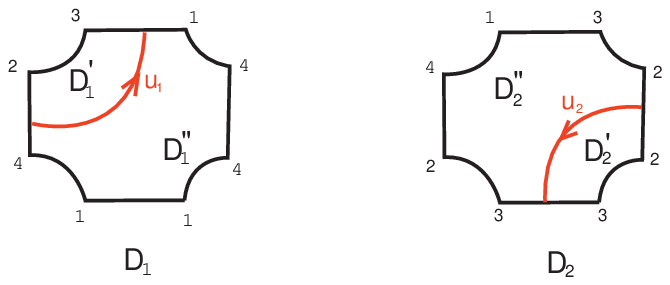}} \caption{ }\label{R3}
\end{figure}

\begin{figure}[!ht]
\centerline{\includegraphics{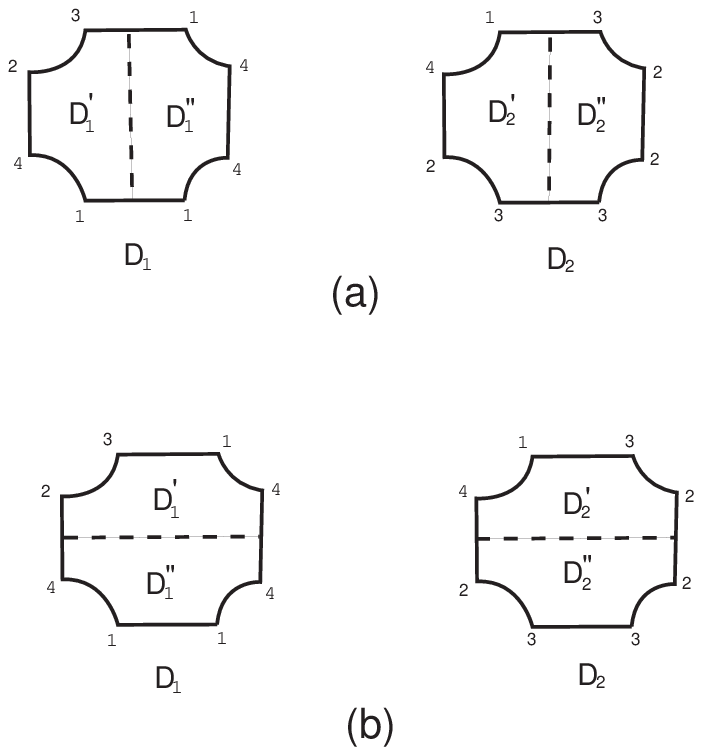}} \caption{ }\label{R4}
\end{figure}

If each of $D'_i$ and $D''_i$ contains two corners, $i=1,2$, the two
possibilities for $u_1$ and $u_2$ are illustrated in Figure \ref{R4}, (a) and (b).
In both cases, $D^*_1 = D''_1 \cup D'_2$ is again a $Q$-disk of type
$X_2X_4^{-1}X_4X_4^{-1}$ having fewer intersections with $D_2$.

Applying the loop theorem to the disk $D^*_1$ constructed above, and
using Lemma \ref{PropA}, we get an embedded $Q$-disk of type $X_2X_4^{-1}X_4X_4^{-1}$
having fewer intersections with $D_2$ than $D_1$.
Continuing, we eventually get disjoint embedded $Q$-disk of types
$X_2X_4^{-1}X_4X_4^{-1}$ and $X_4X_2^{-1}X_2X_2^{-1}$, contradicting Lemma~\ref{lem6''}.
This completes the proof of Lemma \ref{PropB}.
\qed

Since each edge of $\bar\Gamma_S$ has weight 6, consecutive 4-gon corners
of $\Gamma_S$ at a given vertex are distinct.
Hence the total number of $X_2$-corners in the 4-gon faces of $\Gamma_S$ is
the same as the total number of $X_4$-corners.
Since a 4-gon face of $\Gamma_S$ is an essential $Q$-disk, this
contradicts Lemmas \ref{PropA} and \ref{PropB}.

\subsection{Refinements to the very small case when $t_1^+ = 0$}
\label{subsec: refinements nonsep very small}

In this subsection we refine the very small case of Proposition \ref{prop: F2-non-sep non-fibre} when $t_1^+ = 0$.
Assume that this is the case and recall that $M(\beta)_{\widehat F}$ admits a Seifert structure with base orbifold an annulus with one cone point of order $n \geq 2$ (cf. Proposition \ref{prop: non-sep non-fibre background}). Let $T_0$ and $T_1$ be the boundary components of $M(\beta)_{\widehat F}$ and $\phi_0, \phi_1$ the Seifert slope of $M(\beta)_{\widehat F}$ on $T_0, T_1$ respectively. Denote by $f: T_0 \to T_1$ the gluing map which produces $M(\beta)$  and set
$$d = \Delta(f_*(\phi_0), \phi_1)$$

\begin{prop} \label{twice-punctured very small t+=0}
Let $M$ be a hyperbolic knot manifold which contains an essential non-separating twice-punctured torus $F$ of boundary slope $\beta$ and let $\alpha$ be a slope on $\partial M$ such that $M(\alpha)$ is a very small  Seifert manifold. If $t_1^+ =  0$ and $d \ne 1$, then
$$\Delta(\alpha, \beta) \leq \left\{
\begin{array}{ll} 1 & \hbox{if $d = 0$ or $\alpha$ is of $C$-type} \\
2 & \hbox{if $\alpha$ is of $D, T$ or $O$-type, or $(a, b, c)$ is $(2,3,6)$ or $(3,3,3)$} \\
3 & \hbox{if $\alpha$ is of $I$-type or $(a, b, c) = (2,4,4)$} \end{array} \right.$$
\end{prop}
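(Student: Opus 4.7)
The plan is to imitate the proof of Proposition \ref{prop: very small d not 1}, replacing the amalgamated-product constructions of Section \ref{afp} with the HNN constructions of Section \ref{hnn}. Since $\widehat F$ is non-separating in $M(\beta)$ and $M(\beta)_{\widehat F}$ is Seifert fibred with base orbifold an annulus with one cone point of order $n$, the Seifert--van Kampen description of $\pi_1(M(\beta))$ as an HNN extension of $\pi_1(M(\beta)_{\widehat F})$, combined with killing the Seifert fibres (whose boundary traces have distance $d$ under the gluing $f$), produces for each $d' \geq 2$ dividing $d$ an epimorphism
$$\varphi_{d'} \colon \pi_1(M(\beta)) \twoheadrightarrow \Delta(d', n, d') *_\psi,$$
with $\psi$ as in Section \ref{hnn}. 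When $d = 0$, the analogous construction gives a surjection onto an HNN extension whose stable letter centralises a subgroup of infinite order, playing the same role as the $X_0$ curve in the proof of Proposition \ref{prop: very small d not 1}.

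For each $d' \geq 2$, Lemma \ref{lemma: bending homs non-sep} produces a strictly non-trivial curve $Y_{d'} \subset X_{PSL_2}(\Delta(d', n, d') *_\psi)$ whose generic character is that of an irreducible representation with image containing a normal cyclic subgroup of order $d'$, and such that $\tilde f_{t a^j t a^l}$ has a pole at every ideal point for every $j, l \in \mathbb{Z}$. Pulling back by $\varphi_{d'}$ yields curves
$$X_{d'} := \varphi_{d'}^*(Y_{d'}) \subset X_{PSL_2}(M(\beta)) \subset X_{PSL_2}(M)$$
at whose ideal points $\tilde f_{\beta^*}$ also acquires a pole, since $\beta^*$ is carried by $\varphi_{d'}$ to a word of the form $t a^j t a^l$ (up to powers of a fibre class already killed). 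In particular $s_{X_{d'}} \geq 1$, and the identity (\ref{seminorm distance}) combined with the hypothesis $\Delta(\alpha, \beta) > 1$ forces $J_{X_{d'}}(\alpha) \neq \emptyset$ for every such $d'$.

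Lemma \ref{lemma: factors} then shows that each $\chi_\rho \in J_{X_{d'}}(\alpha)$ is a simple point of $X_{PSL_2}(M)$ whose associated representation factors through $\Delta(a, b, c)$ with image containing a normal cyclic subgroup of order $d'$. The only finite irreducible subgroups of $PSL_2(\mathbb{C})$ with such a subgroup (for $d' \geq 2$) are the dihedral groups $D_k$ with $d' \mid k$, since $A_4$, $S_4$ and $A_5$ have no non-trivial cyclic normal subgroups. Hence if $\alpha$ is of $C$-type then $\pi_1(M(\alpha))$ is cyclic and admits no irreducible representation, while if $d = 0$ the representations associated to characters on $X_0$ have infinite image and cannot factor through the finite group $\pi_1(M(\alpha))$; either conclusion yields $\Delta(\alpha, \beta) \leq 1$. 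In the remaining cases, an enumeration of the irreducible dihedral characters of $\Delta(a, b, c)$ via \cite[Lemma 5.3]{BZ1} and \cite[Propositions 5.3 and 5.4]{Bo}, together with the fact that each such character lies on a unique $X_{d'}$ and with Proposition \ref{prop: boundary values}(2) applied to each such curve, yields the refined bounds stated in the proposition. The main obstacle will be the precise identification of $\varphi_{d'}$, in particular verifying that $\beta^*$ is carried to a word of the stable-letter form required by Lemma \ref{lemma: bending homs non-sep}(3)(c), together with the delicate bookkeeping for the triples $(2,3,5)$, $(2,4,4)$, $(3,3,3)$ and $(2,3,6)$ in which several dihedral quotients must be weighed against the $\mathcal N$-correction term in Proposition \ref{prop: boundary values}(2).
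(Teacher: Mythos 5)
Your plan correctly identifies that the non-separating case should parallel Proposition \ref{prop: very small d not 1} with the HNN constructions of \S \ref{hnn} replacing the amalgam constructions of \S \ref{afp}, and the construction of curves $X_{d'}$ from epimorphisms $\pi_1(M(\beta)) \twoheadrightarrow \Delta(d',n,d')*_\psi$ is the right starting point. However, the core group-theoretic step does not transfer from the amalgam case, and this is a genuine gap.

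In the semi-fibre proof, the image of a representation with character on $X_{d'}$ contains a \emph{normal} copy of $\mathbb Z/d'$ because the amalgamating subgroup $\mathbb Z/d'$ is the index-two cyclic subgroup of each factor $\Delta(2,2,d')$, hence normal in both, hence normal in the pushout. In the HNN extension $\Delta(d',n,d')*_\psi = \langle a,b,t : a^{d'}=b^n=(ab)^{d'}=1,\ tat^{-1}=(ab)^s\rangle$, the stable letter $t$ conjugates $\langle a\rangle$ to the \emph{different} subgroup $\langle ab\rangle$ of $\Delta(d',n,d')$, so $\theta(\langle a\rangle)$ is not normal in the image of $(\theta,A)$. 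Your assertion that the images of the relevant representations are dihedral groups $D_k$ with $d'\mid k$ is therefore unjustified, and in fact false: the paper's own case analysis shows that when $d\notin\{2,4\}$ (or $d=4, n>2$) the curve $X_0$ contains \emph{no} dihedral characters at all — $\theta(\Delta(d,n,d))$ is generated by two elements of order greater than $2$ and so does not conjugate into $\mathcal N$ — while the points of $J_{X_0}(\alpha)$ can have image $T_{12}$, $O_{24}$, or $I_{60}$. If your dihedral claim held, you would conclude $J_{X_{d'}}(\alpha)=\emptyset$ for $T$-, $O$- and $I$-type $\alpha$ and obtain $\Delta(\alpha,\beta)\leq 1$, which is stronger than the proposition asserts; the extra slack in the stated bounds (2 for $T$/$O$-type, 3 for $I$-type) is exactly what is consumed by these non-dihedral characters.

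Two further, smaller deviations from the paper: the paper works with a single curve $X_0$ built from $\varphi: \pi_1(M(\beta))\to\Delta(d,n,d)*_\psi$ and bounds $|J_{X_0}(\alpha)|$ directly via the enumeration of irreducible $PSL_2(\mathbb C)$-characters of $\Delta(a,b,c)$ (Lemma 5.3 of \cite{BZ1}, Propositions 5.2--5.4 of \cite{Bo}), keeping careful track of $s_{X_0}$ (which is $\geq 2$ except when $n=2$ and $d\in\{2,4\}$) and of when $\mathcal K$-normality forces the image to be $D_4$; the divisor-by-divisor family $X_{d'}$ is not needed. And the $d=0$ case in the paper is dispatched by the singular-slope observation that $M(\beta)$ is then Seifert fibred with base orbifold a torus or Klein bottle with one cone point (so $\beta$ is a singular slope and Proposition \ref{prop: sing slope exceptional} applies), not by an infinite-image representation argument. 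To repair your proof you would need to drop the normal-subgroup claim and instead perform the image analysis directly as in the paper's two sub-cases $(d,n)$ with $d\notin\{2,4\}$ or $d=4,n>2$, versus $(d,n,d)\in\{(2,2,2),(4,2,4)\}$, together with the separate sub-case $d=2, n>2$.
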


\pf
If $d= 0$, $M(\beta)$ is a Seifert fibred manifold with base orbifold a torus or Klein bottle with one cone point and therefore $\beta$ is a singular slope of a closed essential surface in $M$ (cf. the third paragraph of \S \ref{sec: initial reduction}). Proposition \ref{prop: sing slope exceptional} then shows that $\Delta(\alpha, \beta) \leq 1$.

Assume that $d \geq 2$. By construction,
$$\pi_1(M(\beta)) \cong \langle \pi_1(M(\beta)_{\widehat F}), t : t \gamma t^{-1} = f_*(\gamma) \hbox{ for all } \gamma \in \pi_1(T_0) \rangle$$
Fix bases $\{\phi_j, \phi_j^*\}$ of $\pi_1(T_j)$ ($j = 0,1$). Then the quotient of $\pi_1(M(\beta)_{\widehat F})$ by the fibre class is isomorphic to $\mathbb Z * \mathbb Z/n$ where there are generators $a, b$ of $\mathbb Z, \mathbb Z/n$ respectively so that $\phi_0^*$ is sent to $a$ and $\phi_1^*$ to $ab$.

Write $f_* = \left(\begin{matrix} p & q \\ r & s \end{matrix} \right)$ with respect to the bases $\{\phi_j, \phi_j^*\}$. Since $d = \Delta(f_*(\phi_0), \phi_1)= |r|$, the integers $d$ and $s$ are coprime.

Since $d = \Delta(f_*(\phi_0), \phi_1)$, the quotient of $\pi_1(\widehat F)$ by $\langle \phi_0, \phi_1 \rangle$ is isomorphic to
$\mathbb Z / d$ generated by either $\phi_0^*$ or $\phi_1^*$. Hence if we quotient $\pi_1(M(\beta))$ by the normal
closure of $\langle \phi_0, \phi_1 \rangle$ in $\pi_1(M(\beta))$  we obtain an epimorphism
$$\varphi: \pi_1(M(\beta))  \to  \langle a, b, t : a^d = 1, b^n = 1, (ab)^d = 1, t a t^{-1} = (ab)^s \rangle = \Delta(d,n,d)*_{\psi}$$
where $\psi: \langle a \rangle \xrightarrow{\; \cong \;} \langle ab \rangle, a \mapsto (ab)^s$ (cf. \S \ref{hnn}). If $\beta^* \in \pi_1(\partial M)$ is a dual class to $\beta$, then
$$\varphi (\beta^*) = t a^j t a^l$$
for some integers $j, l$.

Let $Y_0 \subset X_{PSL_2}(\Delta(d,n,d)*_{\psi})$ be a curve constructed in Lemma \ref{lemma: bending homs non-sep}(3) by bending a representation $(\theta, A)$ and define
$$X_0 = (\varphi)^*(Y_0) \subset X_{PSL_2}(M(\beta)) \subset X_{PSL_2}(M)$$
Part (3) of Lemma \ref{lemma: bending homs non-sep} then shows that:
\vspace{-.2cm}
\begin{itemize}

\item $X_0$ is strictly non-trivial;

\vspace{.2cm} \item $\tilde f_{\beta^*}$ has a pole at each ideal point of $X_0$;

\vspace{.2cm} \item if either $n \ne 2$ or $d \not \in\{2,4\}$, then $X_0$ has exactly two ideal points, and therefore $s_{X_0} \geq 2$;

\vspace{.2cm} \item if $n = 2$ and $d \in\{2,4\}$, then $X_0$ has exactly one ideal point, and therefore $s_{X_0} \geq 1$.

\end{itemize}
\vspace{-.2cm}
By (\ref{seminorm distance}) we have
$$\|\alpha\|_{X_{0}} = \Delta(\alpha, \beta) s_{X_{0}}$$
If $\pi_1(M(\alpha))$ is cyclic, then \cite[Proposition 8.1]{BCSZ2}  implies that $\Delta(\alpha, \beta) s_{X_0} = \|\alpha\|_{X_0} = s_{X_0}$ and therefore
$\Delta(\alpha, \beta) = 1$. It also implies that $\Delta(\alpha, \beta) s_{X_0} = \|\alpha\|_{X} \leq 2 s_{X_0} $ if $M(\alpha)$ is a prism manifold since $X_0$ contains strictly irreducible characters. Thus $\Delta(\alpha, \beta) \leq 2$ (cf. \cite[Proposition 8.1]{BCSZ2}).

To deal with the remaining cases, note that by Lemma 5.3 of \cite{BZ1} and Propositions 5.2, 5.3 and 5.4 of \cite{Bo}, if $\rho: \pi_1(M(\alpha)) \to PSL_2(\mathbb C)$ is an irreducible representation, then
$$\mbox{image}(\rho) \cong \left\{
\begin{array}{ll}
T_{12} & \mbox{ if $\alpha$ is of $T$-type} \\
D_3 \mbox{ or } O_{24} & \mbox{ if $\alpha$ is of $O$-type} \\
I_{60} = \Delta(2,3,5) & \mbox{ if $\alpha$ is of $I$-type} \\
D_3 \mbox{ or } T_{12} & \mbox{ if $(a,b,c) = (2,3,6)$} \\
D_2 \mbox{ or } D_4 & \mbox{ if $(a,b,c) = (2,4,4)$}\\
T_{12} & \mbox{ if $(a,b,c) = (3,3,3)$}
\end{array} \right.
$$
Further, in each of these six possibilities at most two such representations have isomorphic images and if two, either $\alpha$ has $I$-type and the image is $I_{60}$ or $(a,b,c) = (2,4,4)$ and the image is $D_4$.
Note, in particular, that the image of $\rho$ is a finite group whose non-trivial elements have order $2, 3, 4$, or $5$.

Assume that $\Delta(\alpha, \beta) > 1$. Then as in the proof of Proposition \ref{prop: very small d not 1} we see that $J_{X_{0}}(\alpha) \ne \emptyset$ and is contained in $X_0^\nu$. Further, the image $\chi_\rho$ of an element of $J_{X_{0}}(\alpha)$ is a smooth point of $X_{PSL_2}(M)$. We know, moreover, that $\rho(\alpha) = \pm I$, and so $\rho$ induces irreducible homomorphism
$$\bar \rho: \pi_1(M(\alpha)) \to PSL(2, \mathbb C)$$
Lemma \ref{lemma: bending homs non-sep}(2) then shows that $n, d \in \{2,3,4,5\}$.

Suppose that either $d \not \in\{2,4\}$ or $d = 4$ and $n > 2$. Then Lemma \ref{lemma: bending homs non-sep}(2) shows that $\theta(\Delta(d, n, d))$ is generated by two elements of order larger than $2$ and so is not conjugate into $\mathcal{N}$. It follows that $X_0$ contains no dihedral characters and by Lemma 5.3 of \cite{BZ1} and Propositions 5.2, 5.3 and 5.4 of \cite{Bo} we have $|J_{X_{0}}(\alpha)| \leq 2$ with equality only if $\alpha$ is of $I$-type. Since $s_{X_0} \geq 2$, Proposition \ref{prop: boundary values}(2) implies that
$$\Delta(\alpha, \beta) \leq 1 + \frac{2|J_{X_{0}}(\alpha)|}{s_{X_0}} \leq 3$$
with equality implying that $\alpha$ is of $I$-type.

Next suppose that $n = 2$ and $d \in\{2,4\}$. That is, $(d, n, d) = (2,2,2)$ or $(4, 2, 4)$. In either case the curve $X_0$ is constructed by bending a representation $(\theta, A)$ where $\theta: \Delta(d,n,d) \to \mathcal{K} \cong \Delta(2,2,2)$ and $A \theta(a) A^{-1} = \theta(ab)$. It is shown in the last paragraph of the proof of Lemma \ref{lemma: bending homs non-sep} that we can suppose that $\theta(a) = \pm \left(\begin{smallmatrix} i & 0 \\ 0 & -i   \end{smallmatrix}\right)$ and $\theta(ab) = \pm \left(\begin{smallmatrix} 0 & 1 \\-1 & 0   \end{smallmatrix}\right)$. Further, $X_0$ is parametrised by the characters of the representations $(\theta, A_x)$ where
$$A_x = \pm \left(\begin{smallmatrix} x & i/2x \\ix & 1/2x   \end{smallmatrix}\right)$$
Suppose that $\mathcal{K}$ is normal in the image of $(\theta, A_x)$. That is, this image is either $D_2, D_4, T_{12}$, or $O_{24}$. Since $A_x \theta(a) A_x^{-1} = \theta(ab)$, $A_x$ must commute with $\theta(b) = \pm \left(\begin{smallmatrix} 0 & i \\ i & 0   \end{smallmatrix}\right)$. The reader will verify that this occurs if and only if $x = \pm \frac{1}{\sqrt{2}}$ or $x = \pm \frac{i}{\sqrt{2}}$ and if it does, the image of $(\theta, A_x)$ is isomorphic to $D_4$. This rules out the possibility that the image of $(\theta, A_x)$ is $D_2$, $T_{12}$, or $O_{24}$ and implies that if the image is conjugate into $\mathcal{N}$, it is $D_4$. It follows
that if $|J_{X_{0}}(\alpha)| \ne \emptyset$, then either $\alpha$ has type $I$ or $(a,b,c) = (2, 4, 4)$. Combined with the previous paragraph, this implies that $\Delta(\alpha, \beta) \leq 2$ when $\alpha$ has $T$-type or $O$-type or $(a,b,c)$ is either $(2,3,6)$ or $(3,3,3)$.

Suppose then that $(a,b,c) = (2,4,4)$, Proposition 5.3 of \cite{Bo} combines with Proposition \ref{prop: boundary values}(2) to show that
$$\Delta(\alpha, \beta) \leq 1 + \frac{|J_{X_{0}}(\alpha)|}{s_{X_0}} \leq 1 + 2 = 3$$
Finally suppose that $\alpha$ has $I$-type. Then $|J_{X_{0}}(\alpha)| \leq 2$ (\cite[Lemma 5.3]{BZ1}) and so as Proposition \ref{prop: non-sep non-fibre background}
shows that
$$\Delta(\alpha, \beta) = 1 + \frac{2|J_{X_{0}}(\alpha)|}{s_{X_0}}$$
is at most $4$,
we have $\Delta(\alpha, \beta) \leq 3$.

The remaining case to  consider is when $d=2$ and $n>2$. In this case we have $s_{X_0}\geq 2$. Further, no representation of $\pi_1(M)$ whose character lies on $X_0$ has image $D_2$ so the reader will verify that $2|J_{X_{0}}(\alpha)|-|\mathcal{N}(\a)|\leq 4$ with equality implying that $\a$ is of $I$-type.
Proposition \ref{prop: boundary values}(2) now gives the distance bounds in Proposition \ref{twice-punctured very small t+=0}.
This completes the proof.
\qed

\section{Algebraic and embedded $n$-gons in $X^\epsilon$}
\label{sec: n-gons}

We prove several results which will be used in the remainder of the paper. First we fix some
notation.

The twice punctured essential torus $F$ separates $M$ into two components $X^\epsilon$, $\e\in\{+,-\}$.
  Let  $b_1, b_2$ denote the components of
$\partial F$. These two circles cut $\partial M$ into two annuli $B^\epsilon$, $\e\in\{+,-\}$,
 such that $\partial X^\epsilon  = F \cup B^\epsilon$.
Let $\widehat b_1$ and $\widehat b_2$ be two disjoint meridian disks of the filling solid torus $V_\b$
bounded by $b_1$ and $b_2$. These two disks cut $V_\b$ into two
$3$-balls $H^\epsilon$, $\e\in\{+,-\}$, such that $\partial H^\e=B^\e\cup \widehat b_1\cup \widehat b_2$.
Recall that $\widehat F = F\cup \widehat b_1\cup \widehat b_2$ is an incompressible torus in $M(\beta)$
and let $\widehat X^\e=X^\e\cup H^\epsilon$. Here $H^\epsilon$ can be considered as an $2$-handle attached to $X^\epsilon$ along $B^\epsilon$.

Fix a sign $\epsilon$.  An {\it $n$-gon} $D$ in $X^\epsilon$ is a singular disk $D$ with $\partial D \subseteq \partial X^\epsilon=F\cup B^\epsilon$ such that
$\partial D \cap B^\epsilon$ is a set of $n$ embedded
essential arcs in $B^\epsilon$, called the {\it corners} of $D$, and $\partial D\cap F$ is a
set of $n$ singular paths, called the {\it edges} of $D$. As we go around $\partial D$ in some direction we get
a cyclic sequence of $Y^{\pm 1}$ where
$Y,Y^{-1}$ indicate that $\partial D$ is running across $B^\epsilon$ from
$b_1$ to $b_2$ or from $b_2$ to $b_1$, respectively.
In this way $D$ determines an unreduced cyclic word $W = W(Y^{\pm 1})$,
well-defined up to inversion. We say that $D$ is an {\it algebraic $m$-gon} if the absolute value of the exponent sum of
$W$ is $m$. We say that $D$ is {\it essential} if the map $(D,  \partial D) \to (X^\e, \p X^\e)$ is essential as a map of pairs. For instance, an algebraic $m$-gon is essential if $m > 0$.

We use the term {\it monogon, bigon} or {\it trigon} for $n$-gon when $n = 1,2$ or $3$ respectively. We call an $n$-gon an {\it algebraic monogon, algebraic bigon} or {\it algebraic trigon} if it is an algebraic $m$-gon for $m = 1, 2$ or $3$.

\begin{lemma}
\label{embedded n-gons}
Suppose that $D$ is an embedded $n$-gon in $X^\epsilon$ for some $n \leq 3$.

$(1)$ $D$ is not a monogon.

$(2)$ If $D$ is  an algebraic $n$-gon, then $t_1^\epsilon = 0$ and $\widehat X^\epsilon$ is Seifert fibred over $D^2$ with two cone points, one of order $n$.

$(3)$  If $n = 3$ and $D$ is an algebraic monogon, then there is an incompressible separating annulus $(A_\epsilon, \partial A_\epsilon) \subset (X^\epsilon, F)$ which is the frontier of a trefoil knot exterior $Q \subset \widehat X^\epsilon$ such that

\indent \hspace{3mm} $(a)$ $Q \cap \widehat F$ is an $\widehat F$-essential annulus whose slope on $\partial Q$ is the meridional slope of $Q$;

\indent \hspace{3mm} $(b)$ if $k_\e$ is a core arc of the handle $H^\e$, then $(k_\e, \partial k_\e)$ can be isotoped in $(Q, \partial Q)$ to lie as \\ \indent \hspace{9mm}  a core arc of an essential vertical annulus of $Q$.
\end{lemma}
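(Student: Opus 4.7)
My plan is to treat each of the three parts by extending the embedded disk $D$ across the $3$-ball handle $H^\epsilon$ and analyzing the resulting surface in $\widehat X^\epsilon$, invoking the incompressibility of $\widehat F$ (and of $F$ itself for (1)). For part (1): the monogon $D \subset X^\epsilon$ has corner $c$ essential in $B^\epsilon$, running between $b_1$ and $b_2$, and edge $\gamma \subset F$ joining the corresponding endpoints. Since $b_1 \neq b_2$, the arc $\gamma$ cannot be isotoped rel endpoints into $\partial F$ and is therefore essential in $F$, so $D$ would be a $\partial$-compressing disk for $F$ in $X^\epsilon \subset M$, contradicting the boundary-incompressibility of $F$.

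For part (2), given an embedded algebraic $n$-gon $D$ with $n \in \{2, 3\}$, all $n$ corners $c_1, \ldots, c_n$ are parallel essential arcs in $B^\epsilon$, oriented (say) from $b_1$ to $b_2$. I model $H^\epsilon$ as $D^2 \times I$ with $c_i = \{p_i\} \times I$ and cocore arc $k_\epsilon = \{0\} \times I$. Choosing radial arcs $r_i \subset D^2$ from $p_i$ to $0$, the disks $r_i \times I \subset H^\epsilon$ glue onto $D$ along the corners to produce a singular disk $\widehat D \subset \widehat X^\epsilon$, branched of order $n$ along $k_\epsilon$ and with boundary on $\widehat F$ crossing each meridian disk $\widehat b_i$ algebraically once. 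The boundary class $\phi := [\partial \widehat D] \in H_1(\widehat F)$ is therefore primitive, and the branched disk exhibits $\phi^n$ as nullhomotopic in $\widehat X^\epsilon$. Since $\widehat X^\epsilon$ is irreducible with incompressible torus boundary $\widehat F$, this torsion at an essential boundary class forces $\widehat X^\epsilon$ to be a Seifert fibred space over a disk in which $k_\epsilon$ is an exceptional fibre of order $n$. The hypothesis that $F$ is neither a fibre nor a semi-fibre rules out $\widehat X^\epsilon$ being a Seifert solid torus with a single cone point, giving the required $D^2$ base with two cone points, one of order $n$. The vanishing $t_1^\epsilon = 0$ then follows from (1): a tight component of $\breve \Phi_1^\epsilon$ together with the product structure of the associated $I$-bundle pair in $(X^\epsilon, S)$ furnishes an embedded monogon in $X^\epsilon$, contradicting (1).

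For part (3), when the trigon $D$ is algebraically a monogon, its corner word is, up to cyclic rotation and inversion, $Y Y Y^{-1}$: two corners point from $b_1$ to $b_2$ and the third points back. I would perform the coning construction of (2) on the two parallel corners, yielding a $2$-fold branching relation, while the third corner produces a $3$-fold relation among boundary loops; these together match the cabling data of the $(2, 3)$-torus knot. Concretely, I take $Q \subset \widehat X^\epsilon$ to be a regular neighbourhood of $D$ together with appropriate coning disks in $H^\epsilon$; a direct $\pi_1$ computation then yields $\pi_1(Q) \cong \langle x, y : x^2 = y^3 \rangle$, identifying $Q$ as the trefoil knot exterior. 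Its frontier $A_\epsilon := \partial Q \cap X^\epsilon$ is a properly embedded separating annulus in $X^\epsilon$ with $\partial A_\epsilon \subset F$, incompressible because both $\widehat F$ and $\partial Q$ are. Property (a) follows from the net algebraic corner count $\pm 1$, which forces $Q \cap \widehat F$ to carry the meridional slope of the trefoil; property (b) is immediate from the construction, since $k_\epsilon$ is by definition the coning axis, which sits as a core arc of a vertical annulus of $Q$ in its Seifert structure.

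The main obstacle will be in (3), where pinning down $Q$ precisely as the trefoil exterior (rather than some more general cable or Seifert piece) requires combining the $(2, 3)$-combinatorics of the corner word with the incompressibility of $\widehat F$ and carefully excluding degenerate configurations via the hypotheses on $F$. In (2) the analogous subtlety is locating the second cone point and showing that the base orbifold is a disk (not a sphere), both of which are controlled by the no-fibre and no-semi-fibre hypotheses.
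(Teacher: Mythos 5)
Your argument for (1) is valid (it relies on $\partial$-incompressibility of $F$; the paper instead notes that a monogon and the handle $H^\epsilon$ together have a $3$-ball regular neighbourhood across which $\widehat F$ could be isotoped into the atoroidal $M$).  The proof of (2), however, contains a genuine error.  You claim the branched disk exhibits a primitive peripheral class $\phi$ with $\phi^n$ nullhomotopic in $\widehat X^\epsilon$.  But $\widehat X^\epsilon$ is irreducible with nonempty boundary, so $\pi_1(\widehat X^\epsilon)$ is torsion-free; $\phi^n = 1$ would force $\phi = 1$, contradicting the incompressibility of $\widehat F$.  The relation the configuration actually produces goes the opposite way: the core $t$ of an $\widehat F$-essential annulus $A \supset \widehat b_1 \cup \widehat b_2 \cup e_1 \cup \cdots \cup e_n$ satisfies $t = x^{-n}$ in $\pi_1$ of the solid torus $Q := N(A \cup H^\epsilon \cup D)$, where $x$ is the cocore of $H^\epsilon$; that is, $A$ has winding number $n$ in $Q$.

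Even with the relation corrected, ``a primitive peripheral class is an $n$th power'' does not on its own give a Seifert fibration.  Two further steps you omit are needed: that $\widehat b_1 \cup \widehat b_2$ together with the edges lie in an essential annulus rather than a disk in $\widehat F$ (otherwise $N(D_0 \cup D \cup H^\epsilon)$ is a punctured lens space inside the irreducible $M(\beta)$), and that $\widehat X^\epsilon \setminus \mathrm{int}(Q)$ is a solid torus in which the frontier annulus $A_\epsilon$ of $Q$ has winding number at least $2$, using that this complement lies in the atoroidal $M$ with torus boundary containing the $\widehat F$-essential annulus $\widehat F \setminus A$.  Your claim that $t_1^\epsilon > 0$ yields an embedded monogon is also unsupported: a tight component of $\breve{\Phi}_1^\epsilon$ is an annular collar of a $b_i$, and the vertical boundary of the corresponding $I$-bundle piece over the inner boundary circle lies in the interior of $X^\epsilon$, not on $\partial X^\epsilon$, so the natural disk one cuts out is not a monogon.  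The paper instead isotopes $k_\epsilon$ onto a spanning arc of $A_\epsilon$ and uses the resulting product neighbourhood to exhibit an essential homotopy of a large subsurface of $F$, incompatible with $\breve{\Phi}_1^\epsilon$ being a collar.  Your sketch of (3) aims at the right $\pi_1$-identification $\langle x, y : x^2 = y^3 \rangle$, but the key step is locating the three edges in an annulus via the corner labels of the algebraic monogon, which the phrase ``coning the two parallel corners'' does not capture.
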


\begin{proof}
If $D$ is a monogon, its union with $H^\epsilon$ has a $3$-ball regular neighbourhood across which we could isotope the essential torus $\widehat F$ into $M$, which is impossible. Thus (1) holds, so that $n = 2$ or $3$.

Suppose that the embedded $n$-gon $D$ is also an algebraic $n$-gon. Let $e_1, \ldots , e_n \subset F$ be the edges of $D$. If $\hat b_1 \cup \hat b_2 \cup e_1 \cup \ldots \cup e_n$ is contained in a $2$-disk $D_0 \subset \widehat F$, then a regular neighbourhood of $D_0 \cup D \cup H^\epsilon$ in $M(\beta)$ is a once-punctured lens space of order $n$. But then as $\widehat F$ is essential in $M(\beta)$, the latter would be reducible, contrary to Assumption \ref{assumptions 0}(6). We claim that $\hat b_1 \cup \hat b_2 \cup e_1 \cup \ldots \cup e_n$ is contained in an $\widehat F$-essential annulus $A$ as depicted in Figure \ref{s2or3-gon}. This is obvious when $n=2$. To see it is the case when $n=3$,
we just need to note that each of the edges $e_1, e_2,e_3$ connects $b_1$ and $b_2$ and that if
the endpoints of the corners of $D$ at $b_1$  occur in clockwise (respectively  anticlockwise) order,
 then  their other endpoints at $b_2$
occur in anticlockwise (respectively clockwise) order.

Now define $Q$ to be a regular neighbourhood of $A \cup H^\e\cup D$ in $\widehat X^\epsilon$. The reader will verify that $\pi_1(Q) \cong \langle t, x : tx^n\rangle$ where $t$ generates $\pi_1(A)$ and $x$ corresponds to the core of $H^\epsilon$. Thus $Q$ is a solid
torus in which $A$ has winding number $n$. Let $A_\epsilon$ be the frontier of $Q$ in $X^\epsilon$.
Then $A_\epsilon$ is an annulus of  winding number $n$ in $Q$.

Since $\widehat X^\e - \mbox{int}(Q)$ is contained in $M$ and has boundary a torus containing an $\widehat F$-essential annulus,
it must be also  a solid torus. Furthermore $A_\epsilon$ has winding number larger than one in $\widehat X^\e - \mbox{int}(Q)$ as otherwise
$\widehat X^\epsilon$ would be a solid torus and thus $\widehat F$ would be compressible.
Thus $A_\epsilon$ is an essential annulus in $X^\epsilon$. It follows that $\widehat X^\epsilon$ is Seifert fibred over $D^2$ with two cone points, one of order $n$.

Recall from \S \ref{sec: second reduction} that $t_1^\e$ is an even positive integer bounded above by $2$. If $t_1^\e = 2$, then $\breve{\Phi}_1^\e$ is a collar on $\partial F$, so there are no essential homotopies in $(X^\e, F)$ of large maps to $F$. Let $K_\beta$ be the core of the $\beta$-filling solid torus in $M(\beta)$ and $k_\e = K_\beta \cap H^\e$. Then $k_\e$ is a core arc of $H^\e$ and by construction, the pair $(k_\e, \partial k_\e)$ is isotopic in $(Q, A)$ to a transverse arc of $A_\e$. Thus $k_\e$ is a transverse arc in an essential annulus $A'$ properly embedded in $\widehat X^\e$. Consider a tubular neighbourhood $N'$ of $A'$ containing $H^\e \setminus (\widehat b_1 \cup \widehat b_2)$ in its interior and set $N'_0 = N' \cap M$. Then $N_0' \cap F$ is a disjoint union of two once-punctured tori $A_1^0$ and $A_2^0$ and $(N_0', A_1^0 \sqcup A_2^0) \cong (A_1^0 \times I, A_1^0 \times \partial I)$. But then there is an essential homotopy $(X^\e, F)$ of the large subsurface $A_1^0$ of $F$, a contradiction. Thus $t_1^\e = 0$, which completes the proof of (2).

\begin{figure}[!ht]
\centerline{\includegraphics{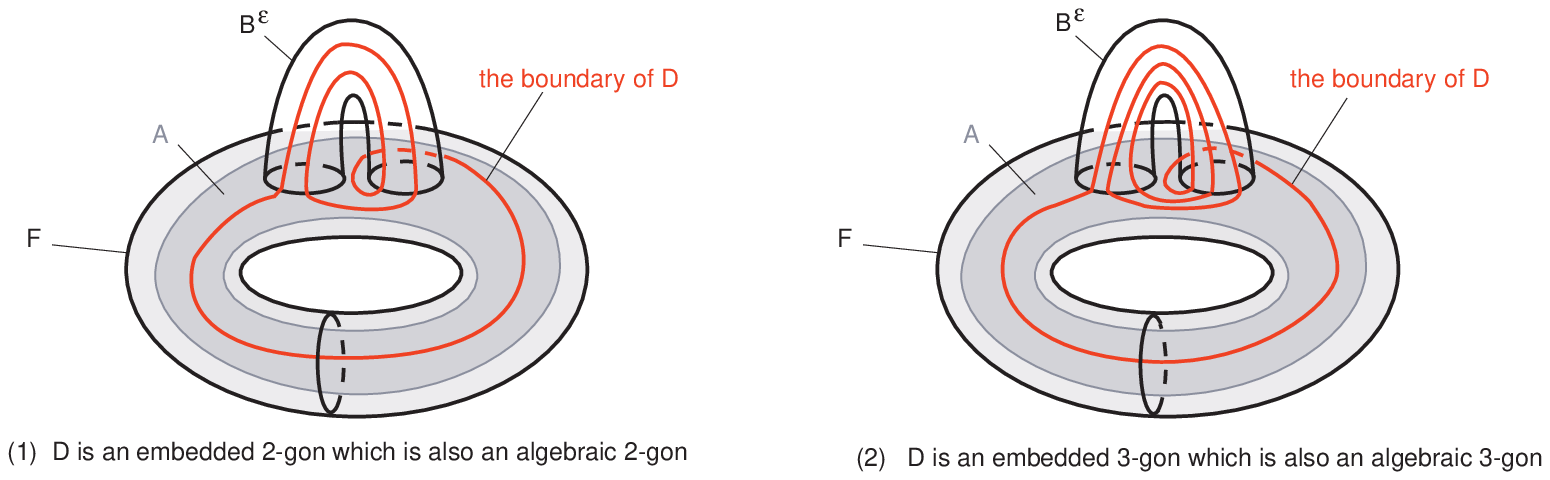}}
\caption{ }\label{s2or3-gon}
\end{figure}

Finally suppose that $n = 3$ and $D$ is an algebraic monogon. Then we may assume that the edges,
denoted $e_1,e_2, e_3$, and the corners, denoted $u, v, w$,  of $D$ are as shown  in Figure \ref{negative trigon}(1),
where an endpoint of an edge (or a corner) of $D$ is labeled $1$ or $2$ if it lies in $b_1$ or $b_2$.
For each of $i=1,2$, the `loop' $\widehat b_i\cup e_i$ cannot be contained in a disk in $\widehat F$, for otherwise
$(D, \partial D)\subset (X^\e, \partial X^\e)$ can be isotoped into an embedded monogon, contradicting part (1) of the lemma.
Hence the two loops $\widehat b_1\cup e_1$ and $\widehat b_2\cup e_2$ are essential and parallel
in $\widehat F$. So the edges of $D$ are contained in the annulus $A$ shown in Figure \ref{negative trigon} (2).

\begin{figure}[!ht]
\centerline{\includegraphics{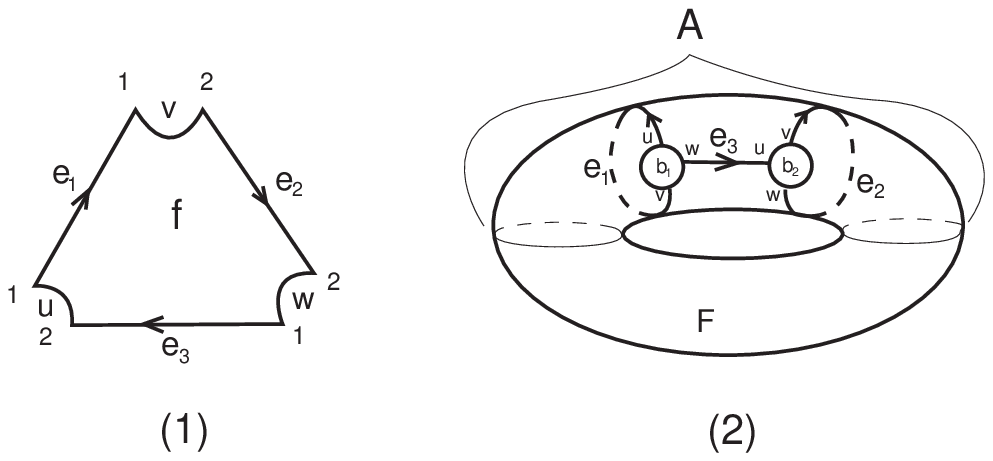}}
\caption{ }\label{negative trigon}
\end{figure}

\begin{figure}[!ht]
\centerline{\includegraphics{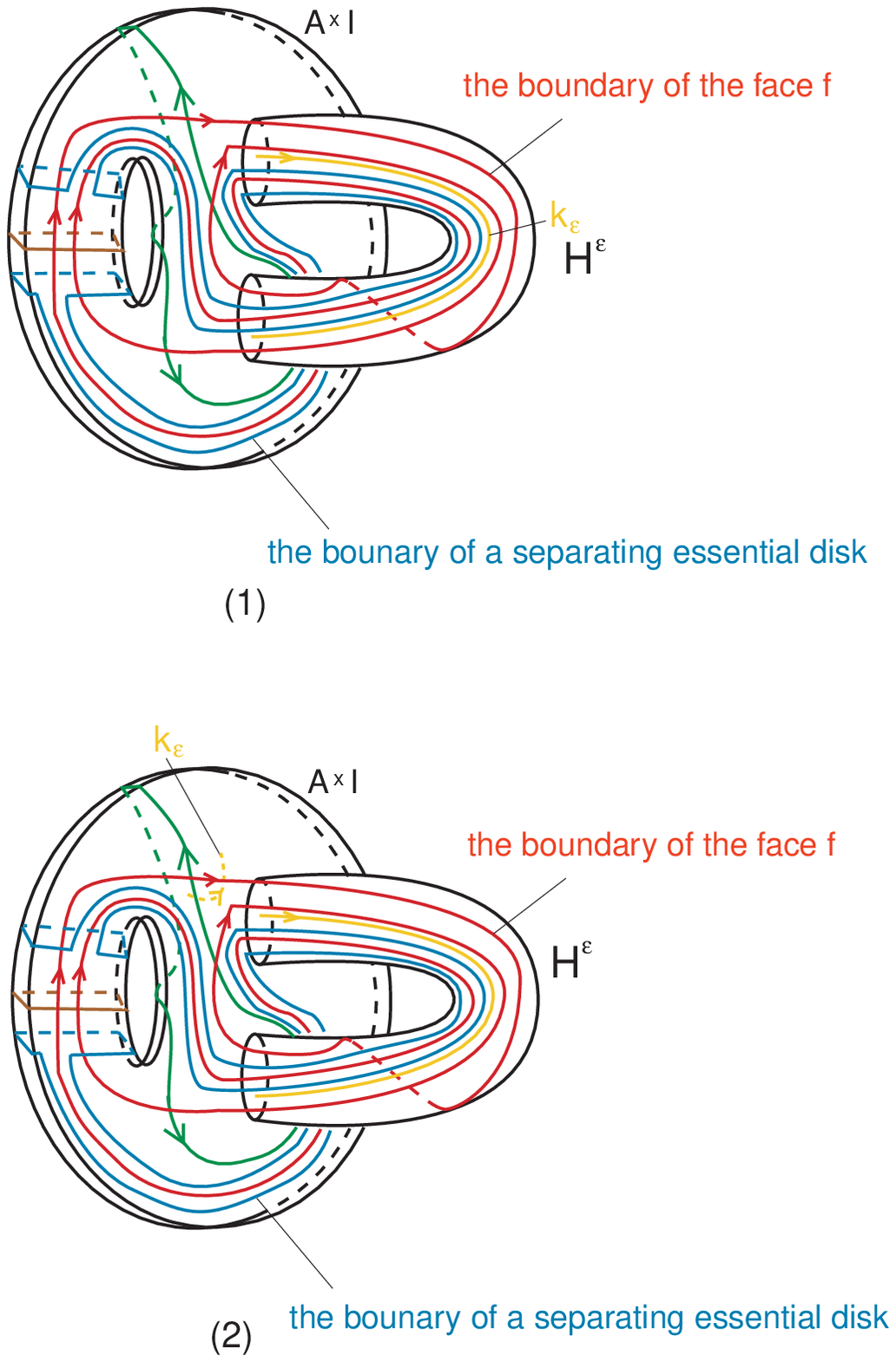}}
\caption{ }\label{trefoil}
\end{figure}

Now define $Q$ to be a regular neighbouhood of $A\cup H^\e\cup D$ in $\widehat X^\epsilon$.
Choose a fat base point in $\widehat F$ which contains $b_1\cup b_2 \cup e_3$. Let $x$ be the element of $\pi_1(\widehat X^\epsilon)$ represented by an essential arc of the annulus
$B^\epsilon$ running from $b_1$ to $b_2$ and let $t$ be the element represented by the edge $e_1$ oriented as  indicated in Figure \ref{negative trigon}(2). Then $Q$ has the following fundamental group:
$$\pi_1(Q) = \langle t, x : txtx^{-2} \rangle$$
Note that $\pi_1(Q)$ injects into $\pi_1(\widehat X^\epsilon)$ since
the frontier of $Q$ in $\widehat X^\epsilon$ is an incompressible annulus
in $(\widehat X^\epsilon, \widehat F)$.
Let $y=tx$, then $\pi_1(Q) = \langle x,y : y^2=x^3 \rangle$ and we see that $Q$ is homeomorphic to the
 trefoil knot exterior with the loop $t$ as a meridian.
 In particular the loop $t$ is distance $1$ from a regular fibre of $Q$ in $\partial Q$.
A more explicit geometric illustration of the situation is given in part (1) of Figure \ref{trefoil}, where $A\times I\cup H^\epsilon$ is a genus two handlebody, the brown loop and the green loop bound two disjoint disks which cut the handle body into a $3$-ball,
the face $f$ is attached to the handlebody along the red loop which intersects the green loop three times with the same sign and intersects the brown curve twice with the same sign.

In Figure \ref{trefoil}, the blue loop is the boundary of an essential separating disk $D_*$ of the handlebody.
The disk $D_*$ separates the handlebody into two solid tori. Let  $V$ be the one
which contains the green loop.
The red loop intersects the blue loop twice (with opposite signs).
Let $e$ be the part of the red curve in $\partial V$ and let $E=e\times I$ be a regular
neighbourhood of $e$ in $\partial V - \partial D_*$. Observe that  $D_*\cup E$ is a vertical essential annulus of $Q$.
Now one can see from Figure \ref{trefoil} that the arc $(k_\e, \partial k_\e)$ can be isotoped in $(Q, \partial Q)$
to lie in a position as shown in part (2) of Figure \ref{trefoil}.
This completes the proof.
\end{proof}

\begin{cor} \label{trigon implies trefoil exterior}
Suppose that $t_1^\epsilon = 0$. If there is a trigon face of $\Gamma_F$ lying in $X^\epsilon$ which is an algebraic monogon, then $\widehat X^\epsilon$ is a trefoil exterior.
\end{cor}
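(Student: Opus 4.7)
The plan is to promote the given trigon face of $\Gamma_F$ to an embedded algebraic monogon trigon $D$ via the loop theorem, invoke Lemma \ref{embedded n-gons}(3) to extract a trefoil knot exterior $Q \subset \widehat X^\epsilon$ bounded by an incompressible separating annulus $A_\epsilon$, and finally use the hypothesis $t_1^\epsilon = 0$ to show that the complementary piece $X_0^\epsilon = \overline{\widehat X^\epsilon \setminus Q}$ is a product $A_\epsilon \times I$, whence $\widehat X^\epsilon \cong Q$ is itself a trefoil exterior.

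First I would carry out the reduction to an embedded trigon. The trigon face supplies a singular disk $(D,\partial D) \to (X^\epsilon, F \cup B^\epsilon)$ whose cyclic corner word in $Y^{\pm 1}$ has exponent sum $\pm 1$. An innermost-arc/loop-theorem surgery on a least-area representative preserves the algebraic-monogon condition (splitting a word of exponent sum $\pm 1$ always leaves one subword of exponent sum $\pm 1$), and after finitely many steps produces an embedded algebraic monogon trigon. Applying Lemma \ref{embedded n-gons}(3) yields the trefoil exterior $Q \subset \widehat X^\epsilon$ with frontier $A_\epsilon$, together with an isotopy carrying the handle-core $k_\epsilon$ onto a core arc of an essential vertical annulus $A'$ in the Seifert fibration of $Q$.

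For the second step, consider $X_0^\epsilon = \overline{\widehat X^\epsilon \setminus Q}$. Since $A_\epsilon$ separates $\widehat X^\epsilon$ and its boundary circles cut $\widehat F$ into two annuli $\widehat F_1 \subset \partial Q$ and $\widehat F_2 \subset \partial X_0^\epsilon$, the piece $X_0^\epsilon$ has torus boundary $A_\epsilon \cup \widehat F_2$, which is incompressible because $\widehat F$ and $A_\epsilon$ are both incompressible. I would then argue that if $X_0^\epsilon$ were not homeomorphic to $A_\epsilon \times I$, the pair $(X_0^\epsilon, F \cap X_0^\epsilon)$ would carry a nontrivial $I$-bundle whose image in $F$ is a large subsurface; combined with the vertical annulus $A'$ of $Q$ crossing $A_\epsilon$, this would exhibit via the defining property $(*)$ of $\Phi_1^\epsilon$ a large component of $\dot\Phi_1^\epsilon$ capping off to a disk in $\widehat F$. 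The resulting tight component of $\breve\Phi_1^\epsilon$ would give $t_1^\epsilon > 0$, contradicting the hypothesis.

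The main obstacle will be executing this last implication rigorously. A non-product $X_0^\epsilon$ could a priori be Seifert, hyperbolic, or a non-trivial Dehn filling of the trefoil exterior after gluing, and its contribution to the JSJ/characteristic pair $(\Sigma_1^\epsilon, \Phi_1^\epsilon)$ need not be manifest as an $I$-bundle component. The remedy I anticipate is to use the vertical annulus $A'$ of $Q$ (which has one boundary circle on each side of $A_\epsilon$ after isotopy) to build a concrete essential homotopy of a large map into $F$ passing through $X_0^\epsilon$; the planar tightness of the resulting subsurface then follows from the fact that $A_\epsilon$ has both boundary components on a single subannulus of $\widehat F$, forcing the component of $\breve\Phi_1^\epsilon$ it produces to cap off to a disk in $\widehat F$.
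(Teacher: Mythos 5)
Your opening reduction step does not go through as stated. The claim that ``splitting a word of exponent sum $\pm 1$ always leaves one subword of exponent sum $\pm 1$'' is true arithmetically, but the subword with exponent sum $\pm 1$ may well be the one with a single corner, i.e.\ a monogon, which cannot exist (Lemma~\ref{embedded n-gons}(1)); the surviving embedded piece then has exponent sum $0$, $\pm 2$, or $\pm 3$. In particular, applying the loop theorem to the singular trigon can perfectly well produce an embedded trigon which is an \emph{algebraic trigon} rather than an algebraic monogon, and your scheme has no way to proceed from that point. The paper handles exactly this contingency: it first applies the loop theorem with a nontrivial mod $2$ intersection condition on $b_1$ (giving an embedded trigon that is either an algebraic monogon or an algebraic trigon), uses Lemma~\ref{embedded n-gons}(2) to get a cone point of order $3$ in the second case, then applies the loop theorem again with a mod $3$ condition to the \emph{same} singular trigon $f$ to produce either an algebraic-monogon trigon or an algebraic-bigon bigon. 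In both of its branches the cone point orders $2$ and $3$ come out, and the conclusion follows because $t_1^\epsilon = 0$ guarantees $\widehat X^\epsilon$ is Seifert fibred over $D^2$ with exactly two cone points.

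Your second step --- attempting to show $\overline{\widehat X^\epsilon \setminus Q} \cong A_\epsilon \times I$ via $I$-bundle and characteristic subsurface considerations --- is also unconvincing as written, and you flag as much. There is a much shorter route if you insist on this geometric decomposition: since $t_1^\epsilon = 0$ already forces $\widehat X^\epsilon$ to be Seifert over $D^2$ with two cone points, and the trefoil exterior $Q$ already carries cone points of orders $2$ and $3$ with $A_\epsilon$ vertical, the complementary solid torus cannot contain a third exceptional fibre; hence $A_\epsilon$ has winding number $1$ there and the complement is a collar. The paper bypasses this altogether by arguing only about cone point orders, never exhibiting $Q$ inside $\widehat X^\epsilon$ as the whole manifold. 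The essential missing ingredient in your writeup is the deliberate use of the Seifert structure that $t_1^\epsilon = 0$ provides, together with the two-stage (mod $2$, then mod $3$) loop theorem argument that covers the case where no embedded algebraic monogon trigon exists.
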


\begin{proof}
Suppose that $f$ is a trigon face of $\Gamma_F$ lying in $X^\epsilon$ which is an algebraic monogon. The hypothesis that $t_1^\epsilon = 0$ implies that $\widehat X^\e$ admits a Seifert structure with base orbifold a disk with two cone points. We must show that the orders of these cone points are $2$ and $3$.

Combining Lemma \ref{embedded n-gons}(1) and the loop theorem with respect to the (mod $2$) intersection with the class of $b_1$ in $H_1(\partial \widehat X^\e)$ (\cite[Theorem 4.10]{He}), we see that there is an embedded trigon in $X^\epsilon$. If it is an algebraic monogon we are done by Lemma \ref{embedded n-gons}(3). If it is an algebraic trigon, then the base orbifold of $\widehat X^\epsilon$ has a cone point of order $3$ by Lemma \ref{embedded n-gons}(2). Now apply the loop theorem to $f$ with respect to the (mod $3$) intersection with the class of $b_1$. The result is either an embedded trigon which is an algebraic monogon, so we are done, or an embedded bigon which is also an algebraic bigon, in which case the base orbifold of $\widehat X^\epsilon$ has a cone point of order $2$ (Lemma \ref{embedded n-gons}(2)), so we are done.
\end{proof}

\section{The proof of Theorem \ref{thm: twice-punctured precise} when $F$ separates but is not a semi-fibre and $t_1^+ + t_1^-  >  0$}
\label{sec: t1+ + t1- > 0}

The goal of this section is to prove Theorem \ref{thm: twice-punctured precise} in the case that $F$ separates, is not a semi-fibre, and $t_1^+ + t_1^-  >  0$. More particularly we show:

\begin{prop}
\label{prop: t1+ + t1- > 0}
Suppose that Assumptions \ref{assumptions 0} and \ref{assumptions 1}  hold, $F$ is separating, and $t_1^+ + t_1^-  >  0$. Then
$\Delta(\alpha, \beta) \leq 4$.
\end{prop}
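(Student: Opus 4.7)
By relabelling we may assume $t_1^+ > 0$, and argue by contradiction, supposing $\Delta = \Delta(\alpha,\beta) \ge 5$. The plan is to study the intersection graph $\Gamma_F$ of \S\ref{sec: second reduction} in its host surface $Y$ (a disk, or a torus with only disk and annulus faces) and run a face-counting argument in the spirit of \cite[\S 13]{BGZ2}. Each vertex of $\Gamma_F$ has valency $2\Delta \ge 10$, and since $F$ separates, the two faces of $\Gamma_F$ incident to any edge lie on opposite sides of $F$, so the graph is naturally two-coloured by $\{X^+,X^-\}$.

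The first step is to translate the hypothesis $t_1^+ > 0$ into restrictions on small faces of $\Gamma_F$ lying in $X^+$. Applying the loop theorem with $\mathbb Z/n$ coefficients on the homology class of $b_1$ (for $n=2,3$) converts any algebraic monogon, algebraic bigon, or algebraic trigon in $X^+$ into an embedded one of the same algebraic type. By Lemma~\ref{embedded n-gons}(1)--(2), none of these can exist when $t_1^+ > 0$. Consequently, no face of $\Gamma_F$ lying in $X^+$ is a monogon; every bigon face lying in $X^+$ is of type $YY^{-1}$ (an algebraic $0$-gon); and every trigon face lying in $X^+$ is an algebraic monogon. In addition, Lemma~\ref{embedded n-gons}(1) still forbids monogons on the $X^-$ side.

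The second step is the combinatorial count. The type-$YY^{-1}$ restriction on $X^+$-bigons caps the number of parallel bigon edges in each edge class of $\overline{\Gamma}_F$ attributable to $X^+$ (two parallel $YY^{-1}$ bigons on the same label pair would compose to an embedded algebraic bigon, contradicting Step~1). The algebraic-monogon restriction on $X^+$-trigons gives an analogous bound, where we must deal separately with the exceptional configurations identified by Lemma~\ref{embedded n-gons}(3) and Corollary~\ref{trigon implies trefoil exterior} in which a trefoil-exterior structure appears in $\widehat X^+$. Combining these bounds on parallel-edge multiplicities with the Euler inequality for $Y$ and the $X^+\!/X^-$ alternation, one derives from the valency estimate $2\Delta\ge 10$ a face-count contradiction, mirroring the arithmetic of \cite[Lemma 11.6, Propositions 13.1--13.2]{BGZ2}.

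\textbf{Main obstacle.} In the non-separating setting of \cite[\S 13]{BGZ2} one side of $\Gamma_F$ sits in the product $F\times I$, trivialising half of the combinatorics; here both $X^+$ and $X^-$ are genuine pieces and we have control only on the $X^+$ side. The hardest step will be propagating the $X^+$-restrictions across the alternating structure of $\Gamma_F$ to control the $X^-$-side faces --- in particular, ruling out configurations in which all $X^+$-bigons are of type $YY^{-1}$ while $X^-$ absorbs just enough small faces to balance the valency count. I expect the trefoil-exterior structure provided by Lemma~\ref{embedded n-gons}(3) to play an essential role in ruling out the borderline $\Delta=5$ cases.
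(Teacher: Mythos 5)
Your proposal identifies the right starting point (exploit $t_1^+ > 0$ to constrain small faces in $X^+$), but it mis-translates that hypothesis and leaves the hard part of the argument unaddressed.

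\textbf{The translation of $t_1^+ > 0$ is too weak.} Since $t_1^+$ is even and at most $2$, the hypothesis forces $t_1^+ = 2$, which means $\breve{\Phi}_1^+$ is a \emph{collar} on $\partial F$. By the characterization $(*)$ in \S\ref{sec: second reduction}, this kills \emph{all} essential homotopies in $(X^+, F)$, and hence excludes \emph{all} essential bigons in $X^+$ --- not merely the algebraic ones. In particular, no bigon face of $\Gamma_F$ lies in $X^+$ at all. You claim instead that bigons of type $YY^{-1}$ can still live in $X^+$; that is false, and the distinction matters. With the correct statement, every edge of $\overline{\Gamma}_F$ immediately has weight at most $2$ (since the bigons between parallel edges alternate sides of $F$), which is the key input to the combinatorics. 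Your proposed patch --- that two parallel $YY^{-1}$ bigons on the same label pair ``compose'' to an algebraic bigon --- does not hold up: two bigons in $X^+$ on the same parallel family are separated by a bigon in $X^-$ and do not compose to a disk.

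\textbf{The combinatorics alone do not close the gap.} Even with the weight-$\le 2$ bound, the Euler-characteristic / valency count you sketch only yields $\Delta(\alpha,\beta) \le 6$ (this is Lemma~\ref{constraints when some t1e > 0}(2)(b) in the paper), not $\le 4$. The actual proof needs three further structural steps that you do not supply: (i) for $\Delta \ge 5$, the loop theorem and Lemma~\ref{embedded n-gons}(3) produce an explicit decomposition $\widehat X^+ = Q \cup_{A_+} U$ with $Q$ a trefoil exterior (Lemma~\ref{constraints when some t1e > 0}(2)(c)); (ii) any trigon face of $\Gamma_F$ in $X^+$ can be homotoped to have image in $Q$ (Lemma~\ref{lemma: homotope face into Q}); and (iii) $\overline{\Gamma}_F$ has no pair of trigon faces sharing a weight-$2$ edge, positive or negative (Lemmas~\ref{lemma: two trigons pos edge}, \ref{lemma: two trigons negative edge}), proved by cutting $X^-$ along embedded bigons/M\"obius bands and reaching a contradiction with hyperbolicity of $M$. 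It is precisely these obstructions, plugged into \cite[Proposition 12.2, Corollary 12.4]{BGZ2}, that bring the bound down from $6$ to $4$. You flag the trefoil structure as likely relevant ``for borderline $\Delta = 5$ cases,'' but the entire range $\Delta \in \{5,6\}$ survives the face count, and excluding it is the bulk of the work, not a footnote. As it stands, the proposal is an outline of the correct strategy with a gap exactly where the theorem's content lies.
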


We begin with a lemma.

\begin{lemma}
\label{constraints when some t1e > 0}
Suppose that $t_1^+ + t_1^- > 0$.

$(1)$ If  $t_1^+ + t_1^- > 2$, then
$\Delta(\alpha, \beta) \leq \left\{ \begin{array}{ll} 2 & \hbox{if $M(\alpha)$ is very small} \\ 3 & \hbox{otherwise}   \end{array}  \right.$

$(2)$ Suppose that $t_1^+ + t_1^- = 2$, say  $t_1^\epsilon = 2$ and $ t_1^{-\epsilon} =  0$.

$(a)$ $X^\epsilon$ admits no essential bigons or embedded trigons which are algebraic trigons.

$(b)$  $\Delta(\alpha, \beta) \leq \left\{ \begin{array}{ll} 5 & \hbox{if $M(\alpha)$ is very small} \\ 6 & \hbox{otherwise} \end{array}  \right.$

$(c)$ If $\Delta(\alpha, \beta) \geq \left\{ \begin{array}{ll} 4 & \hbox{if $M(\alpha)$ is very small} \\ 5 & \hbox{otherwise}   \end{array}  \right.$, then  $\widehat X^\epsilon$ is the union of a trefoil exterior $Q$ and a solid torus $U$ along an annulus $A_\epsilon = \partial Q \cap \partial U$ which is meridional  in $Q$ and of winding number $2$ or more in $U$.

\end{lemma}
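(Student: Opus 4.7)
Since $F$ has exactly two boundary components and $t_1^\epsilon$ must be even and bounded above by $|\partial F| = 2$ (because each tight component of $\breve\Phi_1^\epsilon$ uses an outer boundary component of $F$), we have $t_1^\epsilon \in \{0,2\}$ for each $\epsilon$. Thus the hypothesis $t_1^+ + t_1^- > 0$ splits into the two cases of the lemma: either $t_1^+ = t_1^- = 2$ or exactly one of them equals~$2$.

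For part (1), both $\breve\Phi_1^+$ and $\breve\Phi_1^-$ consist solely of tight collars on $\partial F$, so there is no essential homotopy of a large map into either $\Phi_1^+$ or $\Phi_1^-$. Consequently, each face of the intersection graph $\Gamma_F$ has its edges confined to the thin collar region, and the standard counting inequality \cite[Inequality 13.0.1]{BGZ2} adapted to the separating case yields $\Delta(\alpha,\beta) \le 3$ in general. In the very small case, the reduced graph $\overline\Gamma_F$ has a vertex of small valency (by \cite[Proposition 12.2, Corollary 12.4]{BGZ2}), sharpening the estimate to $\Delta(\alpha,\beta) \le 2$. This mirrors the argument in the final paragraph of the proof of Proposition \ref{prop: non-sep non-fibre background}.

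For part (2)(a), assume $t_1^\epsilon = 2$. An essential bigon in $X^\epsilon$ would furnish an essential homotopy in $(X^\epsilon, F)$ between two arcs, which by the characterisation $(*)$ of the characteristic subsurface forces a large component in $\dot\Phi_1^\epsilon$, contradicting $t_1^\epsilon = 2$. Similarly, an embedded trigon in $X^\epsilon$ that is an algebraic trigon forces $t_1^\epsilon = 0$ by Lemma \ref{embedded n-gons}(2). Both conclusions contradict our hypothesis, which proves (a).

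For part (2)(b) and (2)(c), the face-type restrictions from (a) eliminate bigons and pure algebraic trigons in $X^\epsilon$, while $t_1^{-\epsilon} = 0$ permits the usual $I$-bundle structure on the other side but with no tight components. Running the counting argument as in \cite[\S 13]{BGZ2}, weighted by the allowed face types in $X^\epsilon$ versus $X^{-\epsilon}$, gives $\Delta(\alpha, \beta) \le 6$ in general and $\le 5$ when $M(\alpha)$ is very small (the extra saving coming, as in part (1), from the small valency vertex of $\overline\Gamma_F$ available in the very small case). For (2)(c), at the upper end of this range the inequality is only sharp if $X^\epsilon$ contains a trigon face that is an algebraic monogon; indeed, if every trigon in $X^\epsilon$ were an algebraic trigon then (a) would be contradicted, while the absence of bigons rules out lower $n$-gons. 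Corollary \ref{trigon implies trefoil exterior} then identifies $\widehat X^\epsilon$ as a trefoil exterior, and Lemma \ref{embedded n-gons}(3) supplies an essential separating annulus $A_\epsilon$ whose regular neighbourhood is a trefoil knot exterior $Q$ with $A_\epsilon$ meridional in $Q$. The complementary piece $U = \overline{\widehat X^\epsilon \setminus Q}$ is a solid torus (since its boundary is a torus bounding an incompressible annulus in $\widehat X^\epsilon$), and $A_\epsilon$ has winding number at least $2$ in $U$, for otherwise $\widehat X^\epsilon$ would itself be a solid torus, contradicting the incompressibility of $\widehat F$.

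The main obstacle will be the careful weighted face count in (2)(b): the bigon/algebraic-trigon exclusion in $X^\epsilon$ must be combined with the fact that $X^{-\epsilon}$ has no tight region, and the bookkeeping is more delicate here than in the non-separating $t_1^+ = 0$ case of Proposition \ref{prop: non-sep non-fibre background}. Nevertheless, the methodology is exactly that of \cite[\S 13]{BGZ2}, and no genuinely new technique is required beyond assembling the pieces listed above.
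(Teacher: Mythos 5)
Your parts (1) and (2)(a) follow the paper's line of reasoning. For (1), the paper is slightly more direct: since $\breve\Phi_1^+ = \breve\Phi_1^-$ is a collar on $\partial F$, no two distinct edges of $\Gamma_F$ are parallel, so every vertex of $\overline\Gamma_F$ has valency exactly $2\Delta(\alpha,\beta)$, and comparing with the valency bound of $5$ (disk) or $6$ (torus) finishes immediately. Your description is compatible but omits the crucial no-parallel-edges observation that turns ``small valency in $\overline\Gamma_F$'' into the stated bound. Your (2)(a) is essentially the paper's argument.

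For (2)(b) you flag the weighted count as the main open point; the paper's version is short and worth stating: since $X^\epsilon$ has no essential bigons, every bigon face of $\Gamma_F$ lies in $X^{-\epsilon}$, so each edge of $\overline\Gamma_F$ has weight at most $2$. Hence $2\Delta(\alpha,\beta) = \mbox{valency}_{\Gamma_F}(v) \le 2\,\mbox{valency}_{\overline\Gamma_F}(v)$ for every $v$, and the valency bounds of $5$ (disk/very small) or $6$ (torus) give the bounds in (2)(b). You have not supplied this and should not treat it as a formality, since the edge-weight bound is exactly what (2)(a) buys you.

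There is a genuine error in your proof of (2)(c). You cite Corollary~\ref{trigon implies trefoil exterior} to identify $\widehat X^\epsilon$ as a trefoil exterior, but that corollary explicitly requires $t_1^\epsilon = 0$, whereas here $t_1^\epsilon = 2$. In fact the conclusion of that corollary ($\widehat X^\epsilon$ itself a trefoil exterior) is false in the present setting; the statement of (2)(c) asserts instead that $\widehat X^\epsilon$ is a trefoil exterior $Q$ glued to a nontrivial solid torus $U$ along a meridional annulus, and the nontriviality of $U$ is precisely what distinguishes $t_1^\epsilon = 2$ from $t_1^\epsilon = 0$. Your write-up first asserts $\widehat X^\epsilon$ is a trefoil exterior and then introduces $U$, which is internally inconsistent. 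The correct route is: produce a trigon face of $\Gamma_F$ in $X^\epsilon$ (this requires a combinatorial argument via \cite[Propositions 12.2, 12.5 and Corollary 12.4]{BGZ2} — the hypothesis of (2)(c) is $\Delta \ge 4$ or $5$, not ``at the upper end,'' so existence is not automatic); apply the loop theorem to that face to obtain an embedded $n$-gon, rule out monogon by Lemma~\ref{embedded n-gons}(1), bigon by $t_1^\epsilon = 2$, and algebraic trigon by Lemma~\ref{embedded n-gons}(2), leaving an embedded algebraic monogon trigon; invoke Lemma~\ref{embedded n-gons}(3) to get $Q$ and $A_\epsilon$; and finally argue that $A_\epsilon$ is essential in $(X^\epsilon,F)$ (else $Q$ would be isotopic to $\widehat X^\epsilon$, forcing $t_1^\epsilon = 0$) so that the complementary piece is a solid torus in which $A_\epsilon$ has winding number at least $2$. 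Your argument also skips the essentiality of $A_\epsilon$, which is the step that justifies $U$ being a genuine cable space rather than a collar.
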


\pf Recall from \S \ref{sec: second reduction} that $t_1^+$ and $t_1^-$ are even integers satisfying $0 \leq t_1^+, t_1^- \leq 2$. If $t_1^+ = t_1^- = 2$, then $\breve{\Phi}_1^+ = \breve{\Phi}_1^-$ is a collar on $\partial F$, so there are no essential homotopies in $(M, F)$ of positive length of large maps to $F$. Hence no pair of distinct edges in $\Gamma_F$ are parallel (\cite[Proposition 11.3]{BGZ2}) and therefore the valency of each vertex of $\overline{\Gamma}_F$ equals that of the vertices of $\Gamma_F$, which is $2 \Delta(\alpha, \beta)$. If $M(\alpha)$ is very small, $\overline{\Gamma}_F$ is contained in a disk so that there is a vertex of valency at most $5$. Hence $\Delta(\alpha, \beta) \leq 2$. Otherwise $\overline{\Gamma}_F$ is contained in a torus and so has a vertex of valency at most $6$. Thus $\Delta(\alpha, \beta) \leq 3$, which proves (1).

Assume next that $t_1^+ + t_1^- = 2$. Without loss of generality we can suppose that $t_1^+ = 2$ and $t_1^- = 0$. Since $t^+_1=2$, $\breve{\Phi}_1^+$ is a collar on $\partial F$, so there are no essential homotopies of large maps in $(X^+, F)$. Hence there are no essential bigons in $X^+$. Lemma \ref{embedded n-gons}(2) implies that there are no embedded trigons in $X^+$ which are algebraic trigons, so (2)(a) holds.

Since there are no essential bigons in $X^+$, all bigon faces of $\Gamma_F$ lie in $X^-$. It follows that the weight of each edge of  $\overline{\Gamma}_F$ is no more than $2$. Since the valency of any vertex of $\Gamma_F$ is $2\Delta(\alpha, \beta)$ it follows that $\Delta(\alpha, \beta)$ is bounded above by the valency of any vertex $v$ of $\overline{\Gamma}_F$ with equality if and only if the weight of each edge incident to $v$ is $2$. When the immersion surface $Y$ containing $\overline{\Gamma}_F$ is a torus (cf. (\ref{immersion})), there is a vertex of $\overline{\Gamma}_F$ of valency $6$ or fewer, so $ \Delta(\alpha, \beta) \leq 6$. When it is a disk, for instance when $M(\alpha)$ is very small, there is a vertex of $\overline{\Gamma}_F$ of valency $5$ or fewer, so $\Delta(\alpha, \beta) \leq 5$. Thus (2)(b) holds.

Now  assume that the hypotheses of (2)(c) hold. Then from the previous paragraph we see that for each vertex $v$ of $\overline{\Gamma}_F$,
\begin{equation}
\label{eqn: valency bound}
\mbox{valency}_{\overline{\Gamma}_F}(v) \geq \Delta(\alpha, \beta) \geq \left\{ \begin{array}{ll} 4 & \hbox{if $M(\alpha)$ is very small} \\ 5 & \hbox{otherwise}   \end{array}  \right.
\end{equation}

\begin{claim}
\label{claim: trigon face}
$\Gamma_F$ has a trigon face $f$ which lies in $X^+$.
\end{claim}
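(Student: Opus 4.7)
The strategy is a direct Euler-characteristic count of $\Gamma_F$ using the face-type restrictions coming from $t_1^+=2$. As observed in the paragraph preceding the statement of (2)(b), the hypothesis $t_1^+=2$ forces every bigon face of $\Gamma_F$ to lie in $X^-$, and by Lemma \ref{embedded n-gons}(1) (applied, if necessary, through the loop theorem to promote a singular monogon face to an embedded monogon) no face of $\Gamma_F$ in either $X^+$ or $X^-$ can be a monogon. Consequently every face of $\Gamma_F$ in $X^+$ has at least three edges, while every face in $X^-$ has at least two.

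Assume for contradiction that no face of $\Gamma_F$ in $X^+$ is a trigon, so that every $X^+$-face has at least four edges. Let $V$ and $E$ denote the numbers of vertices and edges of $\Gamma_F$, and $F_+,F_-$ the numbers of faces lying in $X^+$ and $X^-$. Each vertex of $\Gamma_F$ has valency $2\Delta$, writing $\Delta := \Delta(\alpha,\beta)$, so $E = V\Delta$. Since $F$ separates $M$, each edge of $\Gamma_F$ is incident to exactly one $X^+$-face and one $X^-$-face; summing edges face-by-face on each side gives
\[
E \;\geq\; 4\,F_+\quad\text{and}\quad E \;\geq\; 2\,F_-,
\]
whence
\[
F_+ + F_- \;\leq\; \tfrac{E}{4}+\tfrac{E}{2}\;=\;\tfrac{3V\Delta}{4}.
\]

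The final step is to apply Euler's formula to $\Gamma_F\subset Y$, noting that annular faces (which can occur only in the torus case) contribute $\chi=0$ and so preserve the inequality $F_+ + F_- \geq E - V + \chi(Y)= V(\Delta-1)+\chi(Y)$. When $M(\alpha)$ is not very small, $Y$ is a torus, $\chi(Y)=0$, and we obtain $V(\Delta-1)\leq 3V\Delta/4$, forcing $\Delta\leq 4$ and contradicting $\Delta\geq 5$. When $M(\alpha)$ is very small, $Y$ is a disk, $\chi(Y)=1$, and we obtain $V(\Delta-1)+1\leq 3V\Delta/4$, i.e.\ $V(\Delta-4)\leq -4$, which is impossible for $V\geq 1$ and $\Delta\geq 4$. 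Either case yields a contradiction, establishing the claim.

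The main obstacle I anticipate is ensuring the counting is not corrupted by ``nonstandard'' faces: annular faces in the torus case, and singular (as opposed to embedded) monogon faces of $\Gamma_F$. The annular issue is handled by the observation that $\chi(\text{annulus})=0$, so their presence only strengthens the lower bound on $F_++F_-$ needed for the contradiction; the monogon issue is dealt with via the loop theorem, which produces an embedded monogon from any singular one, contradicting Lemma \ref{embedded n-gons}(1).
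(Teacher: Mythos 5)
Your approach is genuinely different from the paper's: you run a direct Euler-characteristic count of $\Gamma_F$, exploiting the observation that $t_1^+ = 2$ forbids bigon faces in $X^+$, whereas the paper invokes the $\mu(v)$ framework (Propositions 12.2, 12.5 and Corollary 12.4 of \cite{BGZ2}). Both routes are ultimately Euler-characteristic arguments, but the paper's packaging also extracts structural data (edge weights, vertex valencies) that is re-used in the subsequent steps of the proof of Proposition \ref{prop: t1+ + t1- > 0}, while your count gives only the bare existence of the $X^+$-trigon, which is all that the claim asserts. Your version is shorter and more transparent.

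There is, however, a gap in the way you handle annular faces. You address them only on the Euler-formula side — noting $\chi(\text{annulus}) = 0$ so $F_+ + F_- \geq E - V + \chi(Y)$ still holds — but not on the edge-counting side. The inequality $E \geq 4F_+$ requires \emph{every} $X^+$-face, including annular ones, to have at least $4$ edges; the loop theorem and $t_1^+=2$ only control disk faces, and an annular $X^+$-face can a priori have as few as $2$ edges (one per boundary circle). The fix is to work with disk-face counts $d_\pm$ rather than total face counts $F_\pm$ throughout: the Euler formula gives exactly $d_+ + d_- = E - V + \chi(Y)$, while $E = \sum_{X^+\text{-faces}}|\partial f| \geq 4d_+ + 2a_+ \geq 4d_+$ and $E = \sum_{X^-\text{-faces}}|\partial f| \geq 2d_- + 2a_- \geq 2d_-$ yield $d_+ + d_- \leq 3E/4$. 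Combining gives $V(\Delta - 4) \leq -4\chi(Y) \leq 0$, so $\Delta \leq 4$ when $Y$ is a torus and $\Delta < 4$ when $Y$ is a disk, contradicting the running hypotheses in either case. With that substitution your argument is correct.
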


\begin{proof}[Proof of Claim \ref{claim: trigon face}]
Let $\varphi_j(v)$ denote the number of corners of $j$-gons incident to a vertex $v$ of $\Gamma_F$ and set
$$\displaystyle \mu(v) = \varphi_2(v)+\frac{\varphi_3(v)}{3}$$
If there is a vertex $v$ of $\Gamma_F$ such that $\mu(v) > 2 \Delta(\alpha, \beta) - 4$, then our hypotheses combine with \cite[Proposition 12.2 (1)]{BGZ2} to show that either $v$ has valency $4$ and $\varphi_3(v) \geq 1$ or $v$ has valency $5$ and $\varphi_3(v) \geq 4$. In either case, our assumed lower bound on $\Delta(\alpha, \beta)$ implies that $\mbox{valency}_{\overline{\Gamma}_F}(v) = \Delta(\alpha, \beta)$. But as we remarked above, this implies that each edge of $\overline{\Gamma}_F$ incident to $v$ has weight $2$. This implies that each of the faces of $\overline{\Gamma}_F$ incident to $v$ lie in $X^+$. In particular, the fact that $\varphi_3(v) > 0$ implies that there is a trigon face of $\Gamma_F$ lying in $X^+$.

Assume next that $\mu(v) \leq 2 \Delta(\alpha, \beta) - 4$ for each $v$. Then Corollary 12.4 of \cite{BGZ2} implies that the immersion surface is a torus and $\mu(v) = 2 \Delta(\alpha, \beta) - 4$ for each $v$. Proposition 12.2 of \cite{BGZ2} then implies that each vertex of $\overline{\Gamma}_F$ has valency $5$ or $6$.

If $\Delta(\alpha, \beta) = 6$ then each vertex has valency $6$, each edge of $\overline{\Gamma}_F$ has weight $2$, and \cite[Proposition 12.2]{BGZ2} implies that each face of $\overline{\Gamma}_F$ is a trigon. Thus $\overline{\Gamma}_F$ is a hexagonal triangulation and any of its faces can serve as $f$.

Finally suppose that $\Delta(\alpha, \beta) = 5$. If there is a vertex $v$ of valency $5$ the edges of $\overline{\Gamma}_F$ incident to $v$ have weight $2$ and $\varphi_3(v) = 3$ (\cite[Proposition 12.2]{BGZ2}), so we are done. If all vertices of $\overline{\Gamma}_F$ have valency $6$, then all its faces are trigons and each vertex is incident to exactly four edges of weight $2$, from which the existence of $f$ follows. This completes the proof of Claim  \ref{claim: trigon face}
\end{proof}

Apply the loop theorem to replace a trigon face $f$ of $\Gamma_F$ lying in $X^+$ by an
embedded monogon, bigon or trigon $(D, \partial D)\subset (X^+, \partial X^+)$, with each edge of $D$ an
essential arc in $(F, \partial F)$. Lemma \ref{embedded n-gons}(1) shows that $D$ is not a monogon while as we noted above, the condition that $t_1^+ = 2$ implies that it is not a bigon. Thus it is a trigon. Lemma \ref{embedded n-gons}(2) shows that it is an algebraic monogon.

According to Lemma \ref{embedded n-gons}(3), there is an incompressible separating annulus $(A_+, \partial A_+) \subset (X^+, F)$ which is the frontier of a trefoil knot exterior $Q \subset \widehat X^+$ such that

\indent \hspace{3mm} $(a)$ $Q \cap \widehat F$ is an $\widehat F$-essential annulus whose slope on $\partial Q$ is the meridional slope of $Q$;

\indent \hspace{3mm} $(b)$ if $k_+$ is a core arc of the handle $H^+$, then $(k_+, \partial k_+)$ can be isotoped in $(Q, \partial Q)$ to lie as \\ \indent \hspace{9mm}  a core arc of an essential vertical annulus of $Q$.

If $A_+$ is inessential in $(X^+, F)$, then $Q$ is isotopic to $\widehat X^+$ in $M(\beta)$. But then, as in the proof of Lemma \ref{embedded n-gons}(2), $t_1^+=0$, which is a contradiction. Thus $A_+$ is essential in $(X^+, F)$. On the other hand, $A_+ \cup  (F \setminus Q \cap F)$ is a separating torus in $X^+ \subset M$ which contains an $\widehat F$-essential annulus. Thus it bounds a solid torus $U \subset X^+$ such that $U \cap Q = A_+$ and $A_+$ has winding number $2$ or more in $U$. It follows that $\widehat X^+$ is the union of a trefoil exterior with a cable space in such a way that  the cable slope is identified with the meridional slope of the trefoil. This completes the proof of Lemma \ref{constraints when some t1e > 0}.
\qed

\begin{lemma}
\label{lemma: homotope face into Q}
Suppose that Assumptions \ref{assumptions 0} and \ref{assumptions 1} hold, $F$ separates, $t_1^+= 2, t_1^-  = 0$, and
$$\Delta(\alpha, \beta) \geq \left\{ \begin{array}{ll} 4 & \hbox{if $M(\alpha)$ is very small} \\ 5 & \hbox{otherwise}   \end{array}  \right.$$
If $f$ is a trigon face of $\Gamma_F$ contained in $X^+$, then there is a homotopy $H: Y \times I \to M(\alpha)$ of $h$ to a new immersion $h'$ satisfying
\vspace{-.2cm}
\begin{itemize}

\item $H$ has support in an arbitrarily small neighbourhood in $Y$ of a compact subset of $f$ disjoint from its corners;

\vspace{.2cm} \item $(h')^{-1}(F) = h^{-1}(F)$;

\vspace{.2cm} \item $h'(f) \subset Q$ where $Q \subset \widehat X^+$ is the trefoil exterior described in Lemma \ref{constraints when some t1e > 0}(2).

\end{itemize}
\end{lemma}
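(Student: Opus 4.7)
The plan is to extract from the singular trigon $f$ an embedded trigon lying in the trefoil exterior $Q$, and then push the singular $f$ into $Q$ by a normal-form argument relative to the JSJ annulus $A_+$.

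First, I would apply a relative loop theorem to the singular trigon $h|_{\overline f}\colon (\overline f,\partial \overline f) \to (X^+,F\cup B^+)$ to obtain an embedded trigon $D \subset X^+$ with edges on $F$ and corners on $B^+$. Because the hypothesis $t_1^+=2$ forbids essential bigons in $X^+$ and Lemma~\ref{constraints when some t1e > 0}(2)(a) forbids embedded algebraic trigons, $D$ must be an algebraic monogon. Lemma~\ref{embedded n-gons}(3) then attaches to $D$ a trefoil knot exterior $Q_D \subset \widehat X^+$ containing $H^+\cup D$ with separating annular frontier. Since the splitting $\widehat X^+ = Q\cup_{A_+}U$ from Lemma~\ref{constraints when some t1e > 0}(2)(c) realises the JSJ decomposition of $\widehat X^+$, the uniqueness of JSJ up to ambient isotopy lets us assume $Q_D=Q$, and in particular $D\subset Q$.

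Next, I would cap each corner of $f$ by a half-disk in $H^+\subset Q$ to extend $h|_{\overline f}$ to a singular disk $\widetilde f\to\widehat X^+$ with $\partial\widetilde f\subset\widehat F$. By comparing $\partial\widetilde f$ with the analogous capped boundary of the embedded $D$ (which lies on $A':=Q\cap\widehat F$ by the construction of $Q_D$) and using that both disks are null-homotopic in $\widehat X^+$, I would show that $[\partial\widetilde f]$ is freely homotopic on $\widehat F$ into $A'$. This homotopy amounts to a homotopy of the edges of $f$ inside $F$ rel their endpoints on $\partial F$, after which $\partial\widetilde f\subset A'$. Once $\partial\widetilde f\subset A'$, putting $\widetilde f$ in normal form with respect to the essential annulus $A_+$ and performing innermost-circle and outermost-arc reductions (using the incompressibility of $A_+$ in both $Q$ and $U$ and the irreducibility of both pieces) shows that $\widetilde f$ can be homotoped rel boundary into $Q$. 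Removing the half-disk extensions returns a homotopy of $h|_{\overline f}$ into $Q$, and the support can be taken arbitrarily small near a compact subset of $f$ disjoint from the corners, since all moves occur in the interior of $f$ (or in collars of the edges in $F$) and the corners themselves are never touched.

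The main obstacle is justifying that $\partial\widetilde f$ is freely homotopic on $\widehat F$ into $A'$. This is precisely where the algebraic monogon property of $f$ must enter in an essential way; a careful bookkeeping in $\pi_1(\widehat F)\to\pi_1(\widehat X^+)$ is required to compare the two capped boundaries, and one has to verify that their difference already lies in the subgroup generated by the core of $A'$.
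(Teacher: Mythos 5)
Your proposal and the paper diverge at a key technical point, and the step you flag as ``the main obstacle'' is a genuine gap rather than a detail to be filled in.

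The paper does not cap the corners of $f$. Instead it works directly with the singular disk $h|_f$ and the separating annulus $A_+$, putting $h|_f$ transverse to $A_+$ and analyzing the arcs and loops of $f\cap h^{-1}(A_+)$ one type at a time: loops are removed via $\pi_2(Q,A_+)=\pi_2(U,A_+)=0$; arcs with both endpoints on a single edge of $f$ are removed using the fact that $A_+$ has winding number at least $2$ in $U$; and arcs connecting distinct edges of $f$ are ruled out by a $\pi_1$ computation. For the last type, one takes a cornermost such arc $a$ cutting off a disk $D_0$ containing a single corner $c$; the presence of $D_0$ forces the generator $x$ of $\pi_1(Q)=\langle t,x\colon txtx^{-2}\rangle$ (carried by the corner, hence by the core of $H^+$) to be represented by a loop in $\partial Q$, which would make $\pi_1(\partial Q)\to\pi_1(Q)$ surjective, a contradiction. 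Once $f\cap h^{-1}(A_+)=\emptyset$ the face lies entirely in $Q$ because the corners map to $B^+\subset\partial H^+\subset Q$.

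Your capping strategy runs into a problem the paper's argument is specifically designed to avoid. Incompressibility of $\widehat F$ gives that $\partial\widetilde f$ is null-homotopic in $\widehat F$, and this does show that $\partial\widetilde f$ is \emph{freely} homotopic in $\widehat F$ to a constant, hence ``into $A'$''. But what you actually need is a homotopy of $\partial\widetilde f$ into $A'$ that is the identity on the cap arcs in $\widehat b_1\cup\widehat b_2$ and restricts, on each edge of $f$, to a homotopy rel endpoints within $F$ carrying that edge into $Q\cap F$. There is no reason an individual singular edge of $f$ should be homotopable rel endpoints in $F$ into the twice-punctured annulus $Q\cap F$: the nullity of the composite loop $\partial\widetilde f$ in $\pi_1(\widehat F)$ is a global cancellation statement and does not control the individual edges. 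This is exactly the information the paper extracts instead from the interaction of the trigon/algebraic-monogon structure with the presentation of $\pi_1(Q)$, and it is not recoverable from free-homotopy considerations on $\widehat F$ alone. (Your preliminary step, extracting an embedded algebraic-monogon trigon $D$ and identifying its trefoil with $Q$, is essentially a restatement of Lemma~\ref{constraints when some t1e > 0}(2)(c) and does not contribute to closing this gap.) There is also a secondary issue in your normal-form reduction: once $\widetilde f$ meets $A_+$ in arcs, the cut-off disks have boundary partly on $A_+$ and partly on $\widehat F$ rather than on a single surface, so the ``innermost-circle and outermost-arc'' reductions require an argument like the paper's case analysis rather than the standard incompressibility/irreducibility reductions you invoke.
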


\begin{proof}
Lemmas \ref{embedded n-gons}(3) and \ref{constraints when some t1e > 0}(3) imply that
\vspace{-.2cm}
\begin{itemize}

\item  $\widehat X^+$ is the union of a trefoil exterior $Q$ and a solid torus $U$ along an essential annulus
$$(A_+, \partial A_+) = (\partial Q \cap \partial U, \partial Q \cap \partial U \cap F)$$
which is meridional in $\partial Q$ and of winding number $2$ or more in $U$;

\vspace{.2cm} \item $Q \cap \widehat F$ is an $\widehat F$-essential annulus $A$ containing $\partial F$ and $H^+ \subset Q - A_+$;

\vspace{.2cm} \item $U \cap F$ is an $\widehat F$-essential annulus $E$.

\end{itemize}

First homotope $h$, preserving $h^{-1}(F)$, in a small neighbourhood of $f$ in $Y$ disjoint from its corners so that $h|_{f}$ is transverse to $A_+$.
A loop component of $f \cap h^{-1}(A_+)$ lies in $\hbox{int}(f) \subset \hbox{int}(X^+)$ and so as
$\pi_2(Q, A_+) = \pi_2(U, A_+) = 0$, any such component could be removed
by a homotopy with support in the interior of $f$. In particular, the homotopy is fixed on $h^{-1}(F)$.

Next suppose there is an arc component of $f \cap h^{-1}(A_+)$ whose
end-points lie on an edge of $f$. Let $a$ be such an arc which is
outermost in $f$ and $D_0$ a disk in $f$ whose boundary consists
of $a$ and an arc $b$ in the edge of $f$ containing $\partial a$. Then
$h|_{(D_0, a \cup b)}$ represents an element of $\pi_2(Q, \partial Q)$ or
$\pi_2(U, \partial U)$ which is necessarily zero. To see why the latter holds, note that
the long exact homotopy sequence of the pair $(Q, \partial Q)$
shows that $\pi_2(Q, \partial Q) = 0$ while that of $(U, \partial U)$ yields a short exact sequence
of abelian groups
$$0 \to \pi_2(U, \partial U) \xrightarrow{\; \partial \;} \pi_1(\partial U) \to \pi_1(U) \to 0,$$
from which we see that $\pi_2(U, \partial U) \cong \mathbb Z$ is generated by a meridional disk
of $U$.

Suppose that $h|_{(D_0, a \cup b)}$ represents a non-zero homotopy class
in its group. Then $h(D_0) \subset U$ and the fundamental class of $\partial D_0$
is sent by $h_*$ to a non-zero multiple of the meridional class in $H_1(\partial U)$.
If $h(\partial a)$ is contained in a single component of $\partial A_+$, then $h|_{D_0}$ is
homotopic to a map $(D_0, \partial D_0) \to (U, E)$ and so
$h|_{(D_0, a \cup b)}$ lies in the image of $0 = \pi_2(U, E) \to \pi_2(U, \partial U)$,
a contradiction. Thus the two points of $\partial a = \partial b$ are sents into distinct components of $\partial A_+$ by $h$. It follows that
$h|_a$ is homotopic (rel $\partial a$) to a transverse arc in $A_+$ and
$h|_b$ is homotopic (rel $\partial b = \partial a$) to a transverse arc in the annulus $E$. But then $h|_{\partial D_0}$
is homotopic to a simple closed curve in $\partial U$ which must be meridional. By construction, this curve
has algebraic intersection $\pm 1$ with the core of $A_+$. On the other hand, the absolute value of this intersection is
the winding number of $A_+$ in $U$, which is therefore $1$, a contradiction.

Suppose next that $h|_{(D_0, a \cup b)}$ represents zero in
$\pi_2(W, \partial W)$ where $W$ is $Q$ or $U$ as the case may be.
Then we can remove $a$ from $f \cap h^{-1}(A_+)$ by a homotopy
preserving $h^{-1}(F)$ with support in an arbitrarily small neighbourhood of $D_0$ in $Y$. More precisely, the
long exact sequence in homotopy of the pair $(W, \partial W)$ shows that $h|_{\partial D_0}$ is homotopically
trivial in $\partial W$. Hence its algebraic intersection with either component of $\partial A_+$ is zero. It follows that
$\partial b$ is contained in a single component of $\partial A_+$ and therefore can be homotoped (rel $\partial b$)
into that component. Hence we can homotope $h$, preserving $h^{-1}(F)$ and with support in an arbitrarily small neighbourhood of $b$
in $Y$ so as to replace $a$ by a closed component of $f \cap h^{-1}(A_+)$ contained in the interior of $f$. The latter can be removed as in the
the first paragraph of this proof.

Finally suppose that there is an arc component of $f \cap h^{-1}(A_+)$ whose
end-points lie on distinct edges of $f$. Choose a cornermost such arc
$a$. Then $a$ cobounds a disk $D_0$ in $f$ with arcs $b, c, d$ where
$h(D_0) \subset Q$, $h(b \cup d) \subset A$, $h(c)$ runs once
over the annulus $B^+$, and the arcs are oriented so that
the product $a * b * c * d$ is well-defined and represents a fundamental
class of $\partial D_0$.

Choose the base point for $\pi_1(Q)$ to be $p_0 = h(d \cap a) \in A_+ \cap F \subset A$ and recall the presentation
$\pi_1(Q; p_0) = \langle t, x : txtx^{-2} \rangle$ described above. By construction, we can suppose that
$t \in \pi_1(A; p_0) \leq \pi_1(Q; p_0)$. On the other hand, $x$ may be assumed to be represented by
$h(d)^{-1} * h(c)^{-1} * \gamma$ where $\gamma$ is a path in $A$ from $h(b \cap c)$ to $h(a \cap d)$. Then
the existence of the disk $D_0$ implies that $h(d)^{-1} * h(c)^{-1} * \gamma \simeq h(a) * h(b) * \gamma$ rel $\{0, 1\}$.
But then $x = [h(a) * h(b) * \gamma] \in \pi_1(\partial Q; p_0) \leq \pi_1(Q; p_0)$ which would imply that
$\pi_1(\partial Q; p_0) \to \pi_1(Q; p_0)$ is surjective, a contradiction.
Thus this case does not arise, which completes the proof of the lemma.
\end{proof}

\begin{lemma}
\label{lemma: two trigons pos edge}
Suppose that Assumptions \ref{assumptions 0} and \ref{assumptions 1}  hold, $F$ is separating, $t_1^+= 2, t_1^-  = 0$, and
$$\Delta(\alpha, \beta) \geq \left\{ \begin{array}{ll} 4 & \hbox{if $M(\alpha)$ is very small} \\ 5 & \hbox{otherwise}   \end{array}  \right.$$
Then $\overline{\Gamma}_F$ contains no pair of trigon faces $f_1, f_2$ which share a positive edge $e$ of weight $2$.
\end{lemma}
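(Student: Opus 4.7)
Suppose for contradiction that such a pair $f_1,f_2$ exists. The strategy is to lift the configuration to $\Gamma_F$, homotope both trigons into the trefoil piece of $\widehat X^+$, amalgamate them across the bigon implicit in the weight-$2$ edge, and then extract a contradiction from the resulting $4$-gon using the precise structure of $\widehat X^+$ given by Lemma~\ref{constraints when some t1e > 0}(2)(c).

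\emph{Reducing to $X^+$.} The weight-$2$ positive edge $e$ of $\overline{\Gamma}_F$ corresponds to a pair of parallel edges $e',e''$ of $\Gamma_F$ cobounding a bigon face $b$. Since $t_1^+=2$ makes $\breve{\Phi}_1^+$ a collar on $\partial F$, no essential bigon lies in $X^+$ (cf.\ the proof of Lemma~\ref{constraints when some t1e > 0}(2)), so $b\subset X^-$ and hence both trigons $f_1,f_2$, which sit opposite $b$ across $e',e''$, lie in $X^+$. Under the distance hypothesis, Lemma~\ref{constraints when some t1e > 0}(2)(c) gives $\widehat X^+ = Q\cup_{A_+} U$, with $Q$ a trefoil exterior containing $H^+$ (hence $B^+$) and $U$ a solid torus glued along the meridional annulus $A_+\subset\partial Q$ with winding number at least $2$. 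Invoking Lemma~\ref{lemma: homotope face into Q} with disjoint supports on $f_1$ and $f_2$, I may assume both trigons map into $Q$, with edges in $A=Q\cap\widehat F$ and corners in $B^+$.

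\emph{Amalgamation.} The parallel arcs $e',e''\subset F$ cobound a rectangle in $F$. I perform an isotopy of $h$ supported in a small $Y$-neighbourhood of $e''\cup b$ that pulls $e''$ onto $e'$, collapsing $b$ and leaving $f_1, f_2$ meeting along a common edge $e=e'=e''$. Since both trigons sit on the $X^+$-side of $F$, folding $f_2$ across $e$ onto $f_1$ yields a singular $4$-gon $D$ in $X^+$ (mapped into $Q$) whose four edges are the remaining edges of $f_1,f_2$ and whose four corners consist of the two corners of each $f_i$ not adjacent to $e$, plus two \emph{compound corners} formed at the endpoints of $e$ by concatenating the pairs of $f_i$-corners abutting that endpoint around the vertex disk. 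A case analysis on the corner words of $f_1,f_2$ (which, because embedded algebraic trigons are forbidden in $X^+$ by Lemma~\ref{constraints when some t1e > 0}(2)(a) and monogons/bigons are likewise ruled out, force each $f_i$ to reduce to an algebraic monogon) together with a parity-rule analysis of the labels at the endpoints of $e$, shows that $D$ is an algebraic $n$-gon with $n\in\{0,2,4\}$.

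\emph{Producing the contradiction.} Apply the loop theorem to $D$ inside $(X^+, F\cup B^+)$, preserving the $\bmod\, n$ homological data so that the algebraic type of the output matches that of $D$. Embedded monogons are forbidden by Lemma~\ref{embedded n-gons}(1) and essential bigons by $t_1^+=2$; if the loop-theorem output is an embedded $n$-gon with $n\leq 3$ and algebraic type equal to $n$, Lemma~\ref{embedded n-gons}(2) forces $t_1^+=0$, a contradiction. This handles the $n=2$ case. For $n=4$, the embedded $4$-gon in $Q$ has boundary in $A\cup A_+$; pushing its boundary across $A_+$ produces a disk in the solid torus $U$ whose boundary winds a non-zero multiple (dictated by the algebraic degree $4$) times around the core of $U$, contradicting the hypothesis that $A_+$ has winding number at least $2$ in $U$. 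The $n=0$ case is disposed of by observing that the compound corners at the endpoints of $e$ force $\partial D$ to spell a non-trivial word on $\partial Q$ with non-trivial image in $\pi_1(Q)=\langle x,y : x^2=y^3\rangle$, contradicting the existence of a disk filling.

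\textbf{Expected obstacle.} The principal technical challenge is the amalgamation bookkeeping: precisely identifying the algebraic type of $D$ from the corner words of $f_1,f_2$ and the labels at the endpoints of $e$ imposed by the parity rule, in particular understanding how the two compound corners contribute. Once this is pinned down, the three cases $n\in\{0,2,4\}$ of the final step are each eliminated by the structural results (Lemma~\ref{embedded n-gons}(2)), the winding-number inequality for $A_+\subset\partial U$ from Lemma~\ref{constraints when some t1e > 0}(2)(c), or a direct $\pi_1$-computation inside the trefoil exterior $Q$.
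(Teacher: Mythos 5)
Your approach is genuinely different from the paper's, but the key step — the amalgamation — has a gap that I don't see how to close.

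The gap is in the passage from the configuration $f_1, b, f_2$ (where $b$ is the bigon face of $\Gamma_F$ cobounded by the two parallel edges $e',e''$) to a singular $4$-gon in $X^+$. You propose to "perform an isotopy of $h$ supported in a small $Y$-neighbourhood of $e''\cup b$ that pulls $e''$ onto $e'$, collapsing $b$." Such an isotopy would require $h|_{e'}$ and $h|_{e''}$ to be homotopic in $F$ rel endpoints, which is exactly what parallelism of edges does \emph{not} give: $e'$ and $e''$ cobound an essential bigon lying in $X^-$, i.e.\ the homotopy between them goes through the interior of $X^-$ and cannot be pushed into $F$. (If it could, the two edges would coincide in $\Gamma_F$.) Moreover, even if you instead consider the literal union $D_0 = f_1\cup b\cup f_2$ in $Y$ without collapsing, $h(D_0)$ is not a singular disk in $X^+$: the ``compound corners'' at the two shared vertices consist of a corner of $f_1$ in $B^+$, followed by the corner of $b$ in $B^-$, followed by a corner of $f_2$ in $B^+$, so $\partial(h(D_0))$ passes through both $B^+$ and $B^-$. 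This is not a corner in the sense used here (a single essential arc in one of $B^\pm$), so $D_0$ is not an $n$-gon of $X^+$, and neither Lemma~\ref{embedded n-gons} nor the loop theorem in $(X^+,\partial X^+)$ applies to it. The three endgame cases $n\in\{0,2,4\}$ therefore never get off the ground.

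The paper's proof works on the opposite side, with the bigon $R$ (your $b$) itself. Since $e$ is a positive edge of weight $2$, $R$ is an algebraic bigon in $X^-$ whose boundary, after first homotoping $f_1,f_2$ into $Q$ via Lemma~\ref{lemma: homotope face into Q} so that the edges $e_1,e_2$ land in $A=Q\cap\widehat F$, lies in $\partial X^- - E$. Applying the loop theorem to $R$ with respect to the $(\bmod\,3)$ intersection with $b_1$ yields an embedded algebraic bigon $D$ in $X^-$ with $\partial D\subset\partial X^- - E$. Capping $D$ off across $H^-$ gives a M\"obius band in $\widehat X^-$ containing $k_-$, whose solid-torus regular neighbourhood meets $\widehat F$ in an annulus with boundary $\partial A_+$ and which, together with the complementary solid torus $V'\subset X^-$ with $\partial V'=A_-\cup E$ and the solid torus $U\subset X^+$ from Lemma~\ref{constraints when some t1e > 0}(2)(c), produces a Seifert piece over $D^2$ with two cone points embedded in $M$ — impossible since $M$ is hyperbolic. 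Your intuition to exploit the winding-number constraint on $A_+\subset\partial U$ is in the right spirit, but the contradiction should be sought on the $X^-$ side, using the M\"obius band arising from the algebraic bigon, rather than by attempting to fuse the two trigons on the $X^+$ side.
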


\begin{proof}
We use the notation from the previous lemma and note that it implies that we can assume $h(f_1) \cup h(f_2) \subset Q$.

Let $e_i$ be the edge of $f_i$ parallel to $e$ and let $R$ be the bigon of $\Gamma_F$ cobounded by $e_1$ and $e_2$.
Since $e$ is a positive edge, $R$ is an algebraic bigon. By construction, $\partial R \subset \partial X^- - E$. Applying the loop theorem with respect to the (mod $3$) intersection with $b_1$ yields an embedded $n$-gon $D$ with boundary contained in $\partial X^- - E$ where $n \leq 2$ such that $D$ is an algebraic $m$-gon for some $m \not \equiv 0$ (mod $3$). Lemma \ref{embedded n-gons}(1) shows that $n = 2$ and so $m = 2$ as well.

There is a disk $D'$ properly embedded in $H^-$ which contains $k_- = K_\beta \cap H^-$ as a properly embedded arc and which intersects $B^-$ in the two arcs $D \cap B^-$. Then $D \cup D'$ is a M\"{o}bius band properly embedded in $\widehat X^-$ which contains $k_-$ as a properly embedded essential arc. It has a solid torus regular neighbourhood, $V$ say, which intersects $\widehat F$ in an essential annulus $E' \subset \widehat F- E$ whose interior contains $\partial F$ and which has winding number $2$ in $V$. Up to isotopy, we can assume that $\partial E' = \partial A_+$.

The annulus $A_- = \partial V \setminus E'$ is properly embedded and separating in $X^-$ and as $t_1^- = 0$, it cobounds a solid torus $V' \subset X^-$
with $E$ in which $A_-$ has winding number two or more. But then $U \cup V$ would be a Seifert manifold over the $2$-disk with two cone points which is contained in the hyperbolic manifold $M$, which is impossible. Thus  $\overline{\Gamma}_F$ contains no pair of trigon faces $f_1, f_2$ which share a positive edge $e$.
\end{proof}

\begin{lemma}
\label{lemma: two trigons negative edge}
Suppose that Assumptions \ref{assumptions 0} and \ref{assumptions 1}  hold, $F$ is separating, $t_1^+= 2, t_1^-  = 0$, and
$$\Delta(\alpha, \beta) \geq \left\{ \begin{array}{ll} 4 & \hbox{if $M(\alpha)$ is very small} \\ 5 & \hbox{otherwise}   \end{array}  \right.$$
Then $\Gamma_{\overline{F}}$ contains no pair of trigon faces $f_1, f_2$ which share a negative edge $e$ of weight $2$.
\end{lemma}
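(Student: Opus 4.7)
The proof will follow the structure of Lemma \ref{lemma: two trigons pos edge}, with the principal difference that the bigon $R \subset X^-$ cobounded by the two parallel edges is now an algebraic $0$-gon rather than a $2$-gon. Let $e_1, e_2$ denote the parallel edges of $\Gamma_F$ underlying $e$, with $e_i$ on $\partial f_i$, and let $R$ be the bigon face of $\Gamma_F$ they cobound. Since $t_1^+ = 2$ forbids essential bigons in $X^+$, we have $R \subset X^-$, and since $e$ is negative the two corners of $\partial R$ traverse $B^-$ in opposite directions, so $R$ is an algebraic $0$-gon. Applying the loop theorem to $R$ (keeping its boundary in $\partial X^- - E$, where $E = U \cap \widehat{F}$ is the attaching annulus from Lemma \ref{constraints when some t1e > 0}(2)(c)) and invoking Lemma \ref{embedded n-gons}(1) to rule out monogons, I will extract an embedded bigon $D \subset X^-$ which remains an algebraic $0$-gon, its corner orientations being inherited from those of $R$.

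Next I will cap $D$ with a properly embedded disk $D' \subset H^-$ meeting $B^-$ in $D \cap B^-$ and containing $k_- = K_\beta \cap H^-$ as a properly embedded spanning arc. Because the two corners of $D$ run in opposite directions, the identification $D \cup_{D \cap B^-} D'$ is orientation-preserving, so $A^{*} = D \cup D'$ is a properly embedded \emph{annulus} (rather than a Möbius band) in $\widehat{X}^-$ carrying $k_-$ as a transverse essential arc, with $\partial A^{*} \subset \widehat{F} - E$ and the two boundary circles of $A^{*}$ isotopic on $\widehat{F}$ to the two components of $\partial A_+$. Essentiality of $A^{*}$ in $\widehat{X}^-$ will follow by a standard loop-theorem/tower argument using the incompressibility of $\widehat{F}$ in $M(\beta)$ (Assumptions \ref{assumptions 0}(6)) together with the fact that $A^{*}$ cannot be boundary-parallel, since $k_-$ is properly embedded in $A^{*}$ but cannot be isotoped into $\widehat{F}$.

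A small regular neighbourhood $N(A^{*})$ in $\widehat{X}^-$, together with the annular region of $\widehat{F} - E$ between the two components of $\partial A^{*}$, amalgamates to a solid torus $V \subset \widehat{X}^-$ whose intersection with $\widehat{F}$ is a single essential annulus $E^{*}$ satisfying $\partial E^{*} = \partial A_+$; up to isotopy on $\widehat{F}$, either $E^{*} = E$ or $E^{*} = A := \widehat{F} - \operatorname{int}(E)$. The frontier annulus $A^{\flat} = \partial V - E^{*}$ is then separating and properly embedded in $X^-$, and (just as in Lemma \ref{lemma: two trigons pos edge}) the hypothesis $t_1^- = 0$ forces $A^{\flat}$ to cobound in $X^-$ a solid torus $V^{\flat}$ with a portion of $F$, in which $A^{\flat}$ has winding number at least two. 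Combining $V \cup V^{\flat}$ with the decomposition $\widehat{X}^+ = Q \cup_{A_+} U$ of Lemma \ref{constraints when some t1e > 0}(2)(c) then produces, in either of the two subcases for $E^{*}$, a Seifert fibred submanifold of $M$ with base orbifold a disk with two cone points (or an annulus with one cone point), contradicting the hyperbolicity of $M$. The principal obstacle is verifying the essentiality of $A^{*}$ in $\widehat{X}^-$ and identifying the isotopy class of $E^{*}$ on $\widehat{F}$ correctly enough to exhibit the Seifert piece in each subcase, which parallels the delicate final step of Lemma \ref{lemma: two trigons pos edge}.
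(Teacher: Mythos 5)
Your overall scheme matches the paper's: take the bigon face $R \subset X^-$ cobounded by the two parallel edges, apply the loop theorem to get an embedded bigon $D$, build an annulus out of $D$, show it is essential, and deduce a Seifert piece inside $M$. But there are two genuine gaps in your execution.

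First, you need $\partial R$ to be an essential loop in $\partial X^- - E$ before the loop theorem gives anything. The paper obtains this from the relation associated to the trigon face $f_i$ (via \cite[Corollary 15.2]{BGZ2}): if the loop $\sigma_i = h|_{(e_i,\partial e_i)}$ were $\widehat F$-inessential, the class $x \in \pi_1(Q) \leq \pi_1(\widehat X^+)$ would be peripheral, which is false. Without this the rest of your argument never starts. Second and more seriously, you never consider the possibility that the annulus $A^*$ (the paper calls it $A^-$) is \emph{non-separating} in $\widehat X^-$. Since $\widehat X^-$ is a twisted $I$-bundle, this case actually happens for essential annuli with boundary slope $\phi_-$; the paper handles it by observing that a non-separating $A^-$, together with $A_+$, yields an embedded closed non-separating Klein bottle or torus in the hyperbolic $M$. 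Your construction silently assumes $A^*$ separates when you ``amalgamate'' $N(A^*)$ with an annular region of $\widehat F - E$ to produce a solid torus $V$. That amalgamation is not automatically a solid torus: a regular neighbourhood of the torus $A^* \cup R_0$ is a $T^2 \times I$, and the region of $\widehat X^-$ on one side of $A^*$ is a solid torus only because, once $A^*$ is known to be separating and vertical for the Seifert structure of $\widehat X^-$ over $D^2(2,2)$, it necessarily splits $\widehat X^-$ into two solid tori in each of which it has winding number $2$. You assert the conclusion without this justification. Once both gaps are filled, the paper reaches the contradiction more directly --- the separating case gives a solid torus $V \subset X^-$ with $\partial V = A^- \cup E$, and $V \cup U$ is a Seifert piece over $D^2(2,q)$ inside $M$ --- so the extra layer of frontier annuli $A^\flat, V^\flat$ in your final paragraph is an unnecessary detour and should be eliminated.
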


\begin{proof}
We use the notation of Lemma \ref{lemma: homotope face into Q} and note that it implies that we can assume $h(f_1) \cup h(f_2) \subset Q$.

Let $e_i$ be the edge of $f_i$ parallel to $e$ and let $R$ be the bigon of $\Gamma_F$ cobounded by $e_1$ and $e_2$. Let
$\sigma_i$ be the essential loop $h|{(e_i, \partial e_i)}: (e_i, \partial e_i) \to (F, \partial F)$. Without loss of generality we can assume that $b_i$ is the origin of $\sigma_i$. We can also assume that the images of $\sigma_1, \sigma_2$ are contained in $\dot{\Phi}_1^-$.

If $\sigma_i$ is $\widehat F$-inessential, then the relation associated to $f_i$ (cf. \S 15 of \cite{BGZ2}) implies that the class of $x \in \pi_1(Q) \leq \pi_1(\widehat X^+)$ (cf. the proof of Lemma \ref{embedded n-gons}(3)) is peripheral (cf. \cite[Corollary 15.2]{BGZ2}), which is false. Thus $\sigma_1$ and $\sigma_2$ represent $\widehat F$-essential loops and therefore $\partial R$ is an essential loop in $\partial X^- - E$. The loop theorem produces an embedded bigon $(D, \partial D) \subset (X^-, \partial X^-)$. The proof of Lemma \ref{lemma: two trigons pos edge} shows that $D$ is not an algebraic bigon, so intersects $F$ in disjoint loops, one based at $b_1$ and one at $b_2$. Since $F$ is incompressible, each of the loops is $\widehat F$-essential and thus is isotopic in $\widehat F$ into $\partial E$.

The union of $D$ and an appropriate choice of a product region in $B^-$ between the two arcs of $D \cap B^-$
yields an embedded annulus $A^-$ in $X^-$ whose boundary is contained in $A$ and
is isotopic in $A$ to $\partial A = \partial E$. After an isotopy in $X^-$, we can suppose that $\partial A_- = \partial E$. Since
$(k_-, \partial k_-)$ is isotopic in $(\widehat X^-, \widehat F)$ into the chosen product region in $B^-$, $A_-$ is an essential annulus
in $X^-$.

If $A_-$ is non-separating in $\widehat X^-$, then $\widehat X^-$ is a twisted $I$-bundle over the Klein bottle and
$A_- \cup A_+$ is an embedded non-separating Klein bottle or torus in the hyperbolic manifold $M$,
which is impossible. But if $A_-$ is separating in $\widehat X^-$, it splits the latter into two solid tori in each of which
$A_-$ has winding number $2$. One of these solid tori, $V$ say, is contained in $X^-$ and has boundary $A_- \cup E$.
Then $V \cup U$  is a Seifert manifold over a $2$-disk with two cone points contained in $M$, which is impossible.
\end{proof}

\begin{proof}[Proof of Proposition \ref{prop: t1+ + t1- > 0}]
Suppose otherwise that $\Delta(\alpha, \beta) \geq 5$ and, without loss of generality, that $t_1^+ = 2, t_1^- =  0$.
We saw in the proof of Lemma \ref{constraints when some t1e > 0}(2) that the condition $t_1^+ = 2$ implies that each edge of $\overline{\Gamma}_F$ has weight at most $2$ and each vertex has valency at least $5$. Further, if $v$ is a vertex of $\overline{\Gamma}_F$, then $\mbox{valency}_{\overline{\Gamma}_F}(v) \geq \Delta(\alpha, \beta)$ with equality if and only if each edge incident to $v$ has weight $2$.

Corollary 12.4 of \cite{BGZ2} shows that there is a vertex $v$ of $\Gamma_F$ for which $\mu(v) \geq 2 \Delta(\alpha, \beta) - 4$. Proposition 12.2 of that paper then shows that $5 \leq \mbox{valency}_{\overline{\Gamma}_F}(v) \leq 6$. Hence if $k$ is the number of weight $2$ edges incident to $v$, then
$$10 \leq 2 \Delta(\alpha, \beta) = \mbox{valency}_{\Gamma_F}(v) = 2k + (\mbox{valency}_{\overline{\Gamma}_F}(v)  - k) \leq 6 + k$$
Hence $k \geq 4$.

If $\mu(v) > 2 \Delta(\alpha, \beta) - 4$, Proposition 12.2 of \cite{BGZ2} shows that $v$ has valency $5$ with at least four trigon faces incident to it, two of which share a weight $2$ edge as $k \geq 4$, contrary to Lemmas \ref{lemma: two trigons pos edge} and \ref{lemma: two trigons negative edge}.

On the other hand if $\mu(v)= 2 \Delta(\alpha, \beta) - 4$, then Proposition 12.2 of \cite{BGZ2} shows that $v$ has valency $5$ with at least four trigon faces incident to it or $6$ with six trigon faces incident to it. In either case the fact that $k \geq 4$ implies that there are a pair of trigons incident to $v$ which share a weight $2$ edge, contrary to Lemmas \ref{lemma: two trigons pos edge} and \ref{lemma: two trigons negative edge}. This final contradiction shows that $\Delta(\alpha, \beta) \leq 4$.
\end{proof}

\section{Background for the proof of Theorem \ref{thm: twice-punctured precise} when $F$ separates and $t_1^+ = t_1^-=0$}
\label{sec: background sep not semifibre}

Throughout the the remainder of the paper we assume that $F$ separates, is not a semi-fibre, and $t_1^+ = t_1^-=0$.

Recall that when $t_1^+ = t_1^-=0$, $\widehat X^+$ and $\widehat X^-$ are Seifert fibred over the disk with two cone points \cite[Proposition 7.4]{BGZ2}. Further, if $\widehat X^\epsilon$ has base orbifold $D^2(p_\epsilon,q_\epsilon)$, $2 \leq p_\epsilon \leq q_\epsilon$,  then
\begin{itemize}
\vspace{-.1cm} \item $2 < p_\epsilon \leq  q_\epsilon$ if and only if $\widehat{\dot \Phi}_1^\epsilon$ is the union of two once-punctured $\widehat F$-essential annuli which are vertical in $\widehat X^\epsilon$.

\vspace{.3cm} \item $2 = p_\epsilon < q_\epsilon$ if and only if $\widehat{\dot \Phi}_1^\epsilon$ is a twice-punctured $\widehat F$-essential annulus which is vertical in $\widehat X^\epsilon$.

\vspace{.3cm} \item $p_\epsilon = q_\epsilon = 2$ if and only if $X^\epsilon$ is a twisted $I$ bundle.

\end{itemize}
Without loss of generality, we assume that $(p_-, q_-) \leq (p_+, q_+)$ lexicographically. That is,
\begin{itemize}
\vspace{-.1cm} \item $p_- \leq p_+$ and if $p_- = p_+$, then $q_- \leq q_+$.
\end{itemize}
Since $F$ is not a semi-fibre, $(p_+, q_+) \ne (2,2)$. An analysis of the possible essential tori in $M(\beta)$ then shows that it is not the union of two twisted $I$-bundles over the Klein bottle (cf. \cite[Corollary 7.6]{BGZ2}). Further, under the added assumption that $X^-$ is not a twisted $I$-bundle, $(p_-, q_-) \ne (2,2)$, if $M(\beta)$ contains a Klein bottle then $M(\beta)$ is Seifert with base orbifold $S^2(p_+,q_+,p_-,q_-)$ where $p_+ = p_- = 2$.

\subsection{The case that $M(\beta)$ is Seifert}
Let $\phi_\epsilon$ denote the slope on $\widehat F$ of a Seifert fibre  of $\widehat X^\epsilon$
corresponding to the Seifert structure whose base orbifold is $D^2(p_\e,q_\e)$ and set
$$d = \Delta(\phi_+, \phi_-).$$
When $\widehat X^-$ is a twisted $I$-bundle over the Klein bottle we let $\phi_-'$ denote the slope  on $\widehat F$ of
a Seifert fibre of $\widehat X^-$
corresponding to the Seifert structure whose base orbifold
is a M\"{o}bius band. It is well-known that
$$\D(\phi_-,\phi_-')=1.$$
Set
$$d' = \Delta(\phi_+, \phi_-')$$
and orient $\phi_+, \phi_-$, and $\phi_-'$ so that
$$d = \phi_+ \cdot \phi_- \; \hbox{ and } d' = \phi_+ \cdot \phi_-'.$$
If $d = 0$ or $X^-$ is a twisted $I$-bundle and $d' = 0$, then there are Seifert fibre structures on $\widehat X^+$ and $\widehat X^-$ which piece together to give one on $M(\beta)$.

Conversely, if $M(\beta)$ is a Seifert fibre space, the separating essential torus $\widehat F$ cannot be horizontal as this would imply that $M(\beta)$ has non-orientable Euclidean base orbifold
$P^2(2,2)$ or the Klein bottle $K$. In either case, $M(\beta)$ would be a union of two twisted $I$-bundles over the Klein bottle, contrary to what we noted above. Thus $\widehat F$ is vertical and this implies that there are Seifert structures on $\widehat X^+$ and $\widehat X^-$ which coincide on $\widehat F$. It follows that either $\phi_+ = \phi_-$ (i.e. $d = 0$) or $\widehat X^-$ is a twisted $I$-bundle and $\phi_+ = \phi_-'$ (i.e. $d' = 0$).

\begin{prop}
\label{prop: d=0}
Suppose that Assumptions \ref{assumptions 0} and \ref{assumptions 1} hold, $F$ is separating but not a semi-fibre, and $t_1^+ = t_1^-  = 0$.
If $d = 0$, then $\Delta(\alpha, \beta) \leq 1$.
In particular, Theorem \ref{thm: twice-punctured precise} holds.
\end{prop}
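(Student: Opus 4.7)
The plan is to show that under the hypothesis $d=0$, the slope $\beta$ must be a singular slope for a closed essential surface in $M$, at which point Proposition \ref{prop: sing slope exceptional} (applied to the small Seifert filling $M(\alpha)$) immediately gives $\Delta(\alpha,\beta)\le 1$.

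First, I would observe that when $d=\Delta(\phi_+,\phi_-)=0$, the Seifert fibre slopes on the two sides of $\widehat F$ agree, so the Seifert structures on $\widehat X^+$ and $\widehat X^-$ (with base orbifolds $D^2(p_+,q_+)$ and $D^2(p_-,q_-)$) glue to yield a global Seifert structure on $M(\beta)$ in which $\widehat F$ is a vertical essential torus. The base orbifold is then obtained by gluing the two disk base orbifolds along their boundary circles, giving
\[
\mathcal{B}(M(\beta))\;\cong\;S^2(p_-,q_-,p_+,q_+),
\]
a $2$-sphere with exactly four cone points.

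Next I would verify that $\mathcal{B}(M(\beta))$ is hyperbolic. Since $F$ is not a semi-fibre we have $(p_+,q_+)\ne(2,2)$, and by the lexicographic convention $(p_-,q_-)\le(p_+,q_+)$ together with $2\le p_\epsilon\le q_\epsilon$, at least one of the four cone orders is $\ge 3$. The Euler characteristic of the base is
\[
\chi(\mathcal{B})\;=\;-2+\tfrac{1}{p_+}+\tfrac{1}{q_+}+\tfrac{1}{p_-}+\tfrac{1}{q_-}\;\le\;-2+\tfrac{1}{2}+\tfrac{1}{2}+\tfrac{1}{2}+\tfrac{1}{3}\;=\;-\tfrac{1}{6}\;<\;0,
\]
so $\mathcal{B}$ is hyperbolic. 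In particular $\mathcal{B}$ is neither a $2$-sphere with three cone points nor a projective plane with two cone points.

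Finally, by the characterization of singular slopes recalled in \S\ref{sec: initial reduction} (namely \cite[Theorem 1.7]{BGZ1}), the fact that $M(\beta)$ is Seifert fibred with hyperbolic base orbifold not of the two excluded types implies that $\beta$ is a singular slope for some closed essential surface in $M$ (one can take $\widehat F$ itself). Since $M(\alpha)$ is an irreducible small Seifert manifold by Assumptions \ref{assumptions 0}(5), Proposition \ref{prop: sing slope exceptional} yields $\Delta(\alpha,\beta)\le 1$, as desired. No step here is a real obstacle: the entire argument is a direct verification followed by a citation, and there is no graph-theoretic or character-variety input needed.
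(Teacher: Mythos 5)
Your proof is correct and follows essentially the same line as the paper: when $d=0$ the two Seifert structures glue to give one on $M(\beta)$ with base orbifold $S^2(p_+,q_+,p_-,q_-)$, the condition $(p_+,q_+)\ne(2,2)$ forces this orbifold to be hyperbolic, so \cite[Theorem 1.7]{BGZ1} makes $\beta$ a singular slope and Proposition~\ref{prop: sing slope exceptional} gives $\Delta(\alpha,\beta)\le 1$. The one small misstatement is the parenthetical ``one can take $\widehat F$ itself'': the theorem produces a closed essential surface \emph{in $M$}, whereas $\widehat F$ lies in $M(\beta)$ and meets the filling solid torus, so it is not such a surface; the aside plays no role in the argument and everything else is fine.
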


\begin{proof}
When $d = 0$, $M(\beta)$ is   Seifert fibreed with base orbifold $S^2(p_+,q_+,p_-,q_-)$. Since  $(p_+, q_+) \ne (2,2)$, $S^2(p_+,q_+,p_-,q_-)$ is a hyperbolic $2$-orbifold and so   \cite[Theorem 1.7]{BGZ1} implies that $\beta$ is a singular slope of a closed essential surface in $M$. Consequently, $\Delta(\alpha, \beta) \leq 1$ by Proposition \ref{prop: sing slope exceptional}.
\end{proof}

\begin{prop}
\label{prop: d'=0}
Suppose that Assumptions \ref{assumptions 0} and \ref{assumptions 1} hold, $F$ is separating but not a semi-fibre, and $t_1^+ = t_1^-  = 0$, $p_- = q_- = 2$ $($i.e. $X^-$ is a twisted $I$-bundle$)$, $d' = 0$, and $\Delta(\alpha, \beta)$ is even. Then $\Delta(\alpha, \beta) \leq 2$. In particular, Theorem \ref{thm: twice-punctured precise} holds.
\end{prop}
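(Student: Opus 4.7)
The hypotheses force $M(\beta)$ to carry a Seifert structure over $P^{2}(p_{+},q_{+})$ with $(p_{+},q_{+})\neq(2,2)$, obtained by matching the Möbius-band Seifert structure on $\widehat X^{-}$ (from its twisted $I$-bundle description as $p_{-}=q_{-}=2$) with the $D^{2}(p_{+},q_{+})$-structure on $\widehat X^{+}$ via $d'=0$. Although this base orbifold is hyperbolic, $P^{2}$-with-two-cone-points is precisely one of the two cases excluded from \cite[Theorem 1.7]{BGZ1}, so $\beta$ need not be a singular slope in $M$ directly, and the plan is to pass to the orientation double cover of the base.

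The orientation cover $S^{2}(p_{+},q_{+},p_{+},q_{+})\to P^{2}(p_{+},q_{+})$ corresponds to the $\mathbb Z/2$-quotient of $\pi_{1}^{\mathrm{orb}}(P^{2}(p_{+},q_{+}))$ which kills the two cone-point generators and the fibre. Composed with $\pi_{1}(M)\twoheadrightarrow\pi_{1}(M(\beta))\twoheadrightarrow\pi_{1}^{\mathrm{orb}}(P^{2}(p_{+},q_{+}))$ (which is surjective since $\beta$ dies in $\pi_{1}(M(\beta))$), this gives an epimorphism $\pi_{1}(M)\twoheadrightarrow\mathbb Z/2$ and an associated connected $2$-fold cover $\widetilde M\to M$. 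The filling $\widetilde M(\widetilde\beta)$ is then Seifert fibred over $S^{2}(p_{+},q_{+},p_{+},q_{+})$, a hyperbolic orbifold with four cone points, which is neither $S^{2}(a,b,c)$ nor $P^{2}(a,b)$, so \cite[Theorem 1.7]{BGZ1} applies to give that $\widetilde\beta$ is a singular slope for some closed essential surface in $\widetilde M$.

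The parity hypothesis on $\Delta(\alpha,\beta)$ is exactly what is needed so that $\alpha$ also lifts: writing $\alpha=p\beta^{*}+q\beta$ with $p=\Delta(\alpha,\beta)$, the image of $\alpha$ in $\mathbb Z/2$ is $p$ times the image of $\beta^{*}$, hence vanishes when $p$ is even (regardless of whether $\beta^{*}$ lies in the kernel). With both slopes lifted, distinguish two sub-cases: the cover is either trivial on $\partial M$ (so $\partial\widetilde M=T_{1}\sqcup T_{2}$ and $\Delta(\widetilde\alpha_{i},\widetilde\beta_{i})=\Delta(\alpha,\beta)$) or non-trivial (so $\partial\widetilde M$ is a single torus and $\Delta(\widetilde\alpha,\widetilde\beta)=\Delta(\alpha,\beta)/2$). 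Proposition~\ref{prop: sing slope exceptional} applied to the singular slope $\widetilde\beta$ in $\widetilde M$, together with the fact that $\widetilde M(\widetilde\alpha)$ is a Seifert manifold (a $2$-fold cover of a small Seifert manifold), then yields an upper bound on $\Delta(\widetilde\alpha,\widetilde\beta)$ which translates to the desired bound $\Delta(\alpha,\beta)\leq 2$.

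The main obstacle will be reaching the sharp bound in the non-trivial-cover sub-case when $\widetilde M(\widetilde\alpha)$ is toroidal Seifert: Proposition~\ref{prop: sing slope exceptional} alone gives only $\Delta(\widetilde\alpha,\widetilde\beta)\leq 3$, which translates to $\Delta(\alpha,\beta)\leq 6$. To sharpen this to $\leq 2$ one must either show that the singular surface produced by \cite[Theorem 1.7]{BGZ1} can be chosen non-separating in $\widetilde M$ (reducing the bound to $\leq 1$) or invoke Theorem~\ref{thm:toroidal Seifert} to classify the exceptional triples $(\widetilde M;\widetilde\alpha,\widetilde\beta)$ with large distance and rule them out using the free $\mathbb Z/2$-deck action on $\widetilde M$.
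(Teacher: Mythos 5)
Your proposal follows essentially the same approach as the paper: pass to a double cover $\widetilde M \to M$ so that $\widetilde M(\widetilde\beta)$ is Seifert over $S^2(p_+,q_+,p_+,q_+)$, apply \cite[Theorem 1.7]{BGZ1} to get a singular slope there, use the parity hypothesis to lift $\alpha$, and invoke Proposition~\ref{prop: sing slope exceptional}. Two comments and one genuine gap.

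First, the case split you introduce on whether the cover is trivial on $\partial M$ is unnecessary: the paper observes directly from the twisted $I$-bundle structure on $X^-$ (the cover restricts to $F\times I\to X^-$ on the $-$-side and is trivial over $X^+$) that $\partial\widetilde M$ is connected when $m\equiv 2\pmod 4$, which holds here since $m=2$. So only your ``non-trivial on $\partial M$'' sub-case occurs, and $\Delta(\widetilde\alpha,\widetilde\beta)=\Delta(\alpha,\beta)/2$.

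Second, and more importantly, you correctly identify the final sharpening as the ``main obstacle'' and correctly guess that the viable route is to show the surface $S$ can be taken non-separating, but you stop short of the argument. This is precisely where the paper does real work, and it is not optional: without it you only get $\Delta(\alpha,\beta)\leq 6$ from the toroidal clauses of Proposition~\ref{prop: sing slope exceptional}, which is useless here. The paper's closing argument runs as follows. Suppose $\Delta(\alpha,\beta)\geq 4$, so $\Delta(\widetilde\alpha,\widetilde\beta)\geq 2 > 1$. Since $\widetilde\beta$ is a singular slope for $S$, the surface $S$ is incompressible in $\widetilde M(\widetilde\alpha)$. As $\widetilde M$ is hyperbolic, $S$ is not a torus, so it cannot be vertical in the Seifert fibred manifold $\widetilde M(\widetilde\alpha)$; hence $S$ is horizontal. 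A horizontal surface in a Seifert manifold over an orientable base orbifold is non-separating. The ``Seifert fibred and $S$ does not separate'' clause of Proposition~\ref{prop: sing slope exceptional} then forces $\Delta(\widetilde\alpha,\widetilde\beta)\leq 1$, a contradiction. Hence $\Delta(\alpha,\beta)\leq 3$, and evenness gives $\leq 2$. Your alternative suggestion (invoking Theorem~\ref{thm:toroidal Seifert} on the cover and ruling out exceptional triples by the deck action) would be substantially more work, and is unnecessary once the non-separating observation is in hand. As written, your proposal identifies the right strategy but leaves the decisive step unproved.
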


\begin{proof}
By assumption, $X^-$ is a twisted $I$-bundle over a once-punctured Klein bottle and hence there is a $2$-fold cover of $\widetilde M \to M$ which restricts to the cover $F \times I \to X^-$ on the $-$-side of $F$ and the trivial double cover on the $+$-side of $F$. Since $m \equiv 2$ (mod $4$) the boundary of $\widetilde M$ is connected.

Now $\beta$ lifts to a slope $\beta'$ on $\partial \widetilde M$ whose associated filling admits a $2$-fold cover $\widetilde M(\beta') \to M(\beta)$. Since $d' = 0$, $M(\beta)$ admits a Seifert fibre structure with base orbifold $P^2(p_+, q_+)$ and the reader will verify that $\widetilde M(\beta')$ admits a Seifert fibre structure with base orbifold $S^2(p_+, q_+, p_+, q_+) \ne S^2(2,2,2,2)$. Hence $\beta'$ is a singular slope of some closed essential surface $S \subseteq \widetilde M$ (\cite[Theorem 1.7]{BGZ1}).

Since the distance of $\alpha$ to $\beta$ is even, $\alpha$ also lifts to a slope $\alpha'$ on $\partial \widetilde M$ with the associated filling a $2$-fold cover of $M(\alpha)$. Hence $\widetilde M(\alpha')$ admits a Seifert fibre structure with base orbifold a $2$-sphere with three or four cone points.

Suppose that $\Delta(\alpha, \beta) \geq 4$. It's easy to see that the distance between $\alpha'$ and $\beta'$ is $\Delta(\alpha, \beta)/2 \geq 2$. Hence as $\beta'$ is a singular slope for $S$, $S$ is incompressible in $\widetilde M(\alpha')$. As $\widetilde M$ is hyperbolic, $S$ cannot be a torus and therefore must be horizontal in $\widetilde M(\alpha')$. It cannot be separating as the base orbifold of $\widetilde M(\alpha')$ is orientable. Thus it is non-separating. But then Proposition \ref{prop: sing slope exceptional} implies the distance between $\alpha'$ and $\beta'$ is at most $1$, and therefore $\Delta(\alpha, \beta) = 2\Delta(\alpha', \beta') \leq 2$, a contradiction. Thus $\Delta(\alpha, \beta) \leq 3$, and as this distance is even, $\Delta(\alpha, \beta) \leq 2$.
\end{proof}

\subsection{Edge weights}

\begin{lemma}
\label{lemma: edge wts}
Suppose that $t_1^+ = t_1^- = 0$ and $\Delta(\alpha, \beta) > 3$.

$(1)$ If $X^-$ is not a twisted $I$-bundle, then,

$(a)$ the edges of $\overline{\Gamma}_F$ have weight at most $3$ and a negative edge of weight $3$ determines an $\widehat F$-inessential loop;

$(b)$ if there is an edge of weight $3$ then $p_+ = p_- = 2$;

$(c)$ a negative edge of $\overline{\Gamma}_F$ incident to a trigon has weight at most $2$, and if $2$, then $d = 1$.

$(2)$ If  $X^-$ is a twisted $I$-bundle, then the faces of $\overline{\Gamma}_F$ which lie in $X^-$ can be assumed to be bigons and its edges can be assumed to have even weight. Further,

$(a)$ the edges of $\overline{\Gamma}_F$ have weight at most $4$;

$(b)$ a negative edge of $\overline{\Gamma}_F$ incident to a trigon has weight at most $2$.

\end{lemma}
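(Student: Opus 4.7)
An edge of $\overline\Gamma_F$ of weight $k$ is a class of $k$ parallel edges in $\Gamma_F$ sharing a pair of vertices; together they cobound $k-1$ bigon faces, each lying in one of $X^+$ or $X^-$. By the parity rule, positive edges produce bigons alternating between the two sides, while negative edges produce bigons all lying on a single side. A single bigon provides an essential homotopy in $(X^\epsilon,F)$ between its two edges, so $j$ consecutive same-side bigons yield an essential homotopy of length $j$ in $(X^\epsilon,F)$, whose image may be assumed to lie in $\dot\Phi_j^\epsilon$ by property $(*)$ of \S\ref{sec: second reduction}. Under $t_1^+=t_1^-=0$, each $\widehat X^\epsilon$ is Seifert fibred over $D^2(p_\epsilon,q_\epsilon)$ with large components of $\dot\Phi_1^\epsilon$ being vertical $\widehat F$-essential annuli of boundary slope $\phi_\epsilon$; by Proposition \ref{prop: d=0} we may assume $d\geq 1$, so $\dot\Phi_1^+\wedge\dot\Phi_1^-$ is a disjoint union of rectangles, and the recursion $\dot\Phi_{j+1}^\epsilon\subset\tau_\epsilon(\dot\Phi_j^{-\epsilon}\wedge\dot\Phi_1^\epsilon)$ forces the deep characteristic subsurfaces to shrink.

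For Part (1), suppose an edge of $\overline\Gamma_F$ has weight $k\geq 4$. The $k-1\geq 3$ bigons contain at least two consecutive ones on some side $X^\epsilon$, producing an essential homotopy of length $\geq 2$ of a large arc. Applying the loop theorem to the bigon stack and then Lemma \ref{embedded n-gons}, we replace it by an embedded $n$-gon ($n\leq 2$); combined with the Seifert structure of $\widehat X^\epsilon$ this yields either a compression of $\widehat F$ or a $2$-sphere in $M(\beta)$ bounding a punctured lens space, each contradicting Assumptions \ref{assumptions 0}(3) or (6). The refinement that a negative weight-$3$ edge represents an $\widehat F$-inessential loop follows by capping its two same-side bigons with the two meridian disks of $H^\epsilon$ to form a singular disk in $M(\beta)$ whose boundary is the loop in question. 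For (1)(b), an edge of weight $3$ demands that some component of $\dot\Phi_1^\epsilon$ accommodate three parallel essential arcs with endpoints on $\partial F$; by the characterisation of $\dot\Phi_1^\epsilon$ recalled in \S\ref{sec: background sep not semifibre}, this is possible only when that component is a twice-punctured annulus, i.e.\ $p_\epsilon=2$, and the presence of such arcs on both sides forces $p_+=p_-=2$. For (1)(c), the single-side bigon of a negative weight-$2$ edge, sharing a vertex with an incident trigon $\triangle$, combines with $\triangle$ to produce (after the loop theorem) an embedded algebraic monogon or trigon on the opposite side; Lemma \ref{embedded n-gons}(3) then supplies a vertical essential annulus in $\widehat X^{-\epsilon}$ whose boundary slope on $\widehat F$ is simultaneously controlled by the $\phi_+$ and $\phi_-$ structures, forcing $\Delta(\phi_+,\phi_-)=1$.

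For Part (2), $X^-$ is a twisted $I$-bundle carrying an involution $\iota$ that swaps the sides of $F$. A trigon face in $X^-$ would, via the loop theorem and Lemma \ref{embedded n-gons}(2), give $\widehat X^-$ a Seifert structure with a cone point of order $\geq 3$, contradicting the fact that the only Seifert fibration of a twisted $I$-bundle over the once-punctured Klein bottle has cone points of order $2$; thus we may take all $X^-$-faces to be bigons. The double cover $\widetilde M\to M$ of Proposition \ref{prop: d'=0} then makes every edge of $\overline\Gamma_F$ pull back to an even-weight edge, so weights are even. The bound weight $\leq 4$ in (2)(a) follows by running the Part (1)(a) argument in $\widetilde M$, where the halved weight cannot exceed $3$ by (1)(a). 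Part (2)(b) is immediate: trigons lie only in $X^+$, so the bigons of a negative edge incident to a trigon lie entirely in $X^-$, and three such bigons would already give the contradiction of (1)(a). The main obstacle is converting the combinatorial data ``many parallel edges'' into a genuine geometric obstruction (a compression disk, a non-orientable summand, or coincident fibre slopes); the $d=1$ conclusion in (1)(c) is the most delicate step, since it requires extracting both fibre slopes from the bigon-trigon interaction through a careful application of Lemma \ref{embedded n-gons}(3).
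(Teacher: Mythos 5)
Your proposal takes a very different route from the paper, which runs almost entirely through the characteristic submanifold machinery of \cite{BGZ2} (properties of the subsurfaces $\dot\Phi_j^\epsilon$, and specifically \cite[Lemma 9.2]{BGZ2} and \cite[Proposition 9.3]{BGZ2}) together with an algebraic argument in $\pi_1(\widehat X^\epsilon)$. Unfortunately, several steps in your approach either fail or are left undischarged.

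For (1)(a), the paper's bound comes from the fact that, once $d\geq 1$, $\dot\Phi_2^\pm$ is tight and hence $\dot\Phi_3^\pm=\emptyset$, which caps the weight at $3$. Your argument starts ``the $k-1\geq 3$ bigons contain at least two consecutive ones on some side $X^\epsilon$.'' For a \emph{positive} edge the bigons alternate sides, so there are never two consecutive same-side bigons; your argument does not touch positive edges at all. Even for negative edges, you merely assert that the loop theorem ``yields either a compression of $\widehat F$ or a $2$-sphere bounding a punctured lens space.'' That conclusion does not follow from producing an embedded $1$- or $2$-gon: Lemma \ref{embedded n-gons} only gives back the known Seifert structure of $\widehat X^\epsilon$ and hence no contradiction by itself. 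The $\widehat F$-inessentiality of a negative weight-$3$ loop is also obtained in the paper by locating the lead edge in a \emph{tight} component of $\dot\Phi_2^\epsilon$, not by capping with meridian disks.

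For (1)(b) the paper's argument is the short dichotomy: $p_\epsilon>2$ forces every arc in $\dot\Phi_1^\epsilon$ to be negative and $\widehat F$-essential, contradicting (1)(a); your claim that three parallel arcs ``can only fit in a twice-punctured annulus'' is neither proved nor clearly true. For (1)(c), the paper excludes weight $3$ because an $\widehat F$-essential loop common to $\dot\Phi_1^+$ and $\dot\Phi_1^-$ would force $\phi_+=\phi_-$ (i.e.\ $d=0$), and it obtains $d=1$ from the trigon relation by an abelianization computation $\pi_1(\widehat X^\epsilon)/\langle\!\langle\phi_{-\epsilon}^k\rangle\!\rangle\cong\langle u,v:u^{p_\epsilon},v^{q_\epsilon},(uv)^{kd}\rangle$ which must be abelian, giving $kd=\pm1$. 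Your sketch (combining a bigon on one side with a trigon on the other via the loop theorem) does not produce this.

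For (2), the assertion ``the faces in $X^-$ can be assumed to be bigons and the edges to have even weight'' is a \emph{choice of graph}: the paper builds $\Gamma_F$ from an intersection graph $\Gamma_{K_0}$ in the once-punctured Klein bottle base and doubles each edge, from which bigon faces and even weights are automatic. Excluding trigons in $X^-$, as you do, leaves all higher faces unaddressed, and the appeal to the double cover of Proposition \ref{prop: d'=0} to get even weights does not work because in $\widetilde M$ the preimage of $F$ is two disjoint copies and the preimage of $X^-$ is $F\times I$, so the graph structure changes entirely. For (2)(a) the paper first pins the weight down to $6$ via \cite[Corollary 11.4, Proposition 9.4(3)]{BGZ2} and then rules it out by an explicit loop-theorem argument producing either a slope clash $\phi_-=\phi_-'$ or a Klein bottle in $M$; ``run (1)(a) in $\widetilde M$'' is not a substitute. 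Finally, your (2)(b) appeals to (1)(a), whose hypothesis is that $X^-$ is \emph{not} a twisted $I$-bundle; the paper cites \cite[Lemma 19.6]{BGZ2} here instead.
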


\begin{proof}
(1)(a) Proposition \ref{prop: d=0} shows that $d \geq 1$, so when $X^-$ is not a twisted $I$-bundle, $\dot{\Phi}_2^+ \cong \dot{\Phi}_2^- \cong \dot{\Phi}_1^+ \wedge \dot{\Phi}_1^-$ consists of tight components (\cite[Lemma 9.2]{BGZ2}). Hence $\dot{\Phi}_3^+ = \dot{\Phi}_3^- = \emptyset$ (\cite[Proposition 9.3]{BGZ2}), so the weight of each edge is at most $3$. Further, any negative edge of weight $3$ represents loop contained in a tight component of $\dot{\Phi}_2^\epsilon$ for some $\epsilon$, so is $\widehat F$-inessential. This proves (1)(a).

For (1)(b), suppose that $p_\epsilon > 2$. Then $\dot{\Phi}_1^\epsilon$ consists of two once-punctured annuli, so  each edge of $\Gamma_F$ which lies in $\dot{\Phi}_1^\epsilon$ is negative and $\widehat F$-essential. Thus if an edge $\bar e$ of $\overline{\Gamma}_F$ has weight $3$, it is negative and its lead edge is $\widehat F$-essential, contrary to (1)(a). Thus $p_+ = p_- = 2$.

(1)(c) Let $\bar e$ be a negative edge of $\overline{\Gamma}_F$ incident to a trigon $f$ lying in $X^\e$ and denote by $e_1$ the edge of $f$ parallel to $\bar e$. Then \cite[Corollary 15.2]{BGZ2} shows that the loop in $\widehat F$ corresponding to $e_1$ is $\widehat F$-essential. If the weight of $\bar e$ is $3$, $\tau_{-\e}(h(e_1))$ is an $\widehat F$-essential loop contained in $\dot{\Phi}_1^+$ and in $\dot{\Phi}_1^-$. But then $\phi_+ = \phi_-$, contrary to the assumption that $ d \ne 0$. Hence the weight of $\bar e$ can be at most $2$, which proves the first part of (1)(c).

For the second part of (1)(c), suppose that $\bar e$ has weight $2$ and recall from the proof of Lemma \ref{embedded n-gons}(3) that $\pi_1(\widehat X^\e)$ is generated by $\pi_1(\widehat F)$ and a class $x$ represented by a loop which is the concatenation of an essential arc in $B^\e$ and a path in $F$. Let $\g$ denote the class of the loop $h(e_1)$ in $\pi_1(\widehat X^\e)$. Since $X^{-\e}$ is not a twisted $I$-bundle and the weight of $\bar e$ is $2$, $\g = \phi_{-\e}^k$ for some integer $k$. The relation associated to $f$ (cf. \S 15 of \cite{BGZ2}) shows that the image $x$ in $\pi_1(\widehat X^\e)/\langle \langle \g \rangle \rangle = \pi_1(\widehat X^\e)/\langle \langle \phi_{-\e}^k \rangle \rangle$ is contained in the image of $\pi_1(\widehat F)$, so $\pi_1(\widehat X^\e)/\langle \langle \phi_{-\e}^k \rangle \rangle$ is abelian. On the other hand, $\pi_1(\widehat X^\e)/\langle \langle \phi_{-\e}^k \rangle \rangle$ admits the presentation $\langle u, v : u^{p_\e}, v^{q_\e}, (uv)^{kd} \rangle$ which is abelian only if $kd = \pm 1$. Thus $k = \pm 1$ and $d = 1$.

(2) If $X^-$ is a twisted $I$-bundle, its base is a once-punctured Klein bottle $K_0$. We use $K_0$ to construct an intersection graph $\Gamma_{K_0}$ as in \S 2 of \cite{BCSZ1}. Though surfaces were assumed orientable in \cite{BCSZ1}, the construction goes through in our context to yield a graph $\Gamma_{K_0}$ such that the image of each of its edges in $K_0$ is an essential path in $(K_0, \partial K_0)$. There is an associated intersection graph $\Gamma_F$ obtained by doubling each edge of $\Gamma_{K_0}$. In this case, each face of $\Gamma_F$ contained in $X^-$ is a bigon and each edge of $\overline{\Gamma}_F$ has even weight.

(2)(a) If the weight of an edge $\bar e$ is larger than $4$, then
\cite[Corollary 11.4 and Proposition 9.4(3)]{BGZ2} show that the weight of $\bar e$ is $6$, $\dot{\Phi}_1^+$ is an $\widehat F$-essential twice-punctured annulus, $\dot{\Phi}_3^+$ is the union of two $\widehat F$-essential once-punctured annuli, and $\phi_+ = \phi_-'$ so that $M(\beta)$ is Seifert fibred with base orbifold $P^2(2, n)$ for some $n \geq 3$.
Let $e_1,...,e_6$ be the parallel successive edges represented by $\bar e$. Since $e_2$ lies in $\dot{\Phi}_4^+ = \dot{\Phi}_3^+$, $\bar e$ is a negative edge representing an $\widehat F$-essential loop.

Let $R$ be the bigon face between $e_3$ and $e_4$, which lie in $\dot{\Phi}_1^+$,
and apply the loop theorem  to the singular disk
$h|_R:(R, \partial R)\ra (X^-, \partial X^-)$ to obtain an embedded disk $(D, \partial D)\subset (X^-,\partial X^-)$, which must be a bigon
 whose two edges $d_1$ and $d_2$ are contained in  $\dot{\Phi}_1^+$ and two corners of $D$ are essential arcs in the annulus $B^-$.

If $D$ is an algebraic bigon, then the union of $D$ and a product region in $B^-$ between the two corners of $D$ yields an
embedded M\"{o}bius band in $X^-$ whose boundary has slope  $\phi_-$ (since the boundary of any embedded M\"{o}bius band in $\widehat X^-$
 has slope $\phi_-$).
But the boundary of this M\"{o}bius band is contained in $\dot{\Phi}_1^+$ and thus has slope $\phi_+=\phi_-'$. But then $\phi_- =  \phi_-'$,
which is false.

So $D$ is an algebraic $0$-gon. In this case, the union of $D$ and an appropriate choice of a product
region in $B^-$ between the two corners of $D$ yields an embedded essential annulus $A^-$ in $X^-$
whose boundary is contained in $\dot{\Phi}_1^+$ and is isotopic to the boundary of the annulus $\widehat{\dot{\Phi}}_1^+$.
 Since the boundary slope of $A^-$ in $\widehat F$ is $\phi_+=\phi_-'$, $A^-$ is a non-separating annulus in $\widehat X^-$.
 On the other hand we know that the boundary of
 $\widehat{\dot{\Phi}}_1^+$ bounds a separating essential annulus $A^+$ in $X^+$.
  So up to isotopy the two annuli  $A^-$ and $A^+$ can be pieced  together to form a Klein bottle in $M$,
  which contradicts the fact that $M$ is hyperbolic.

(2)(b) follows as in the proof of \cite[Lemma 19.6]{BGZ2}.
\end{proof}

\begin{defn}
\label{def: maxl weight}
{\rm We say that an edge of $\overline{\Gamma}_F$ has {\it maximal weight} if its weight is $3$ when $X^-$ is not a twisted $I$-bundle and $4$ when $X^-$ is a twisted $I$-bundle.}
\end{defn}

\subsection{The trace of an essential annulus in a face of $\Gamma_F$}
\label{subsec: trace face}

In this subsection we assume that $X^\e$ is not a twisted $I$-bundle. In particular,
$$q_\epsilon \geq 3.$$

Up to isotopy, there are two choices for an essential annulus $A$ properly embedded in $(X^\epsilon, F)$ which cobounds a solid torus in $X^\epsilon$ with an $\widehat F$-essential annulus in $F$, one for which $A$ has winding number $q_\epsilon \geq 3$ in the solid torus and one for which it has winding number $p_\epsilon \geq 2$. Throughout the rest of the paper we use
\vspace{-.2cm}
\begin{itemize}

\item $A_\epsilon$ to denote the essential annulus of winding number $q_\epsilon$, $U$ to denote the associated solid torus, $E = U \cap F$ to denote the associated $\widehat F$-essential annulus, and $c_1, c_2$ to denote the boundary components of $A_\epsilon$;

\vspace{.2cm} \item $A_\epsilon'$ to denote the essential annulus of winding number $p_\epsilon$, $V$ to denote the associated solid torus, $G = V \cap F$ to denote the associated $\widehat F$-essential annulus, and $c_1', c_2'$ to denote the boundary components of $A_\epsilon'$.

\end{itemize}
We can write $X^\epsilon = U \cup P \cup V$ where $P$ is the product of a once-punctured annulus with an interval and $P$ has frontier $A_\epsilon \cup A_\epsilon'$ in $X^\epsilon$. Up to isotopy we have
$$(\dot{\Sigma}_1^\epsilon, \dot{\Phi}_1^\epsilon) = \left\{
\begin{array}{cl}
(P, P \cap F) & \mbox{ if } p_\epsilon > 2 \\
(P \cup V, (P \cup V) \cap F) & \mbox{ if } p_\epsilon = 2.
\end{array} \right.$$
Note that
$$(\widehat P, A_\epsilon, A_\epsilon') \cong (A_\epsilon \times I, A_\epsilon \times \{0\}, A_\epsilon \times \{1\})$$
in such a way that $P$ is the exterior of $a_0 \times \{\frac12\}$ where $a_0$ is a transverse arc of $A_\epsilon$.

Index the boundary components of $A_\epsilon$ and $A_\epsilon'$ so that $b_i, c_i$, and $c_i'$ lie in the same component of $P \cap F$.

It is clear that $W = V \cup P =  X^\epsilon \setminus U$ is a genus $2$ handlebody and $\widehat W = W \cup H^\epsilon$ is a solid torus in which $A_\epsilon \subset \partial \widehat W$ has winding number $p_\epsilon$.
Then
$$(\widehat{\dot{\Sigma}_1^\epsilon}, \widehat{\dot{\Phi}_1^\epsilon}) = \left\{
\begin{array}{cl}
(\widehat P, \widehat P \cap \widehat F) & \mbox{ if } p_\epsilon > 2 \\
& \\
(\widehat W, \widehat W \cap \widehat F) & \mbox{ if } p_\epsilon = 2.
\end{array} \right.$$
In either case, $(\widehat P, \widehat P \cap \widehat F)$ is a regular neighbourhood of $A_\epsilon$ in $(\widehat W, \widehat W \cap \widehat F)$.

Recall the immersion $h$ of a disk or torus $Y$ into $M(\alpha)$ which yields the graph $\Gamma_F$. Fix a face $f$ of $\Gamma_F$ lying in $X^\epsilon$.
We can homotope $h$ to a new immersion such that
\vspace{-.2cm}
\begin{itemize}

\item the homotopy has support in an arbitrarily small neighbourhood in $Y$ of a compact subset of $f$ disjoint from its corners;

\vspace{.2cm} \item the homotopy preserves $h^{-1}(F), h^{-1}(X^+), h^{-1}(X^-)$ and therefore $\Gamma_F$;

\vspace{.2cm} \item the restriction of the new immersion to $f$ is transverse to $A_\epsilon$ and further, $f \cap h^{-1}(A_\epsilon)$ has the minimal number of components among all such immersions.

\end{itemize}
We say that $f$ is in {\it minimal position} if $h|_f$ satisfies the last of these conditions and assume that this is the case for the remainder of this subsection.

Label an endpoint of an arc component of $h^{-1}(A_\epsilon) \cap f$ by $i$ if its image under $h$ lies in $c_i$.

\begin{lemma}
\label{lemma: A epsilon}
Suppose that  $X^\e$ is not a twisted $I$-bundle and that $f$ is a minimally positioned face of $\Gamma_F$ lying in $X^\e$.

$(1)$ There are no closed components of $f \cap h^{-1}(A_\epsilon)$.

$(2)$ No arc component of $f \cap h^{-1}(A_\epsilon)$ has both of its endpoints lying on the same edge of $f$.

$(3)$ If $e$ is an edge of $f$ and there are endpoints of arcs of $f \cap h^{-1}(A_\epsilon)$ which are successive on $e \cap h^{-1}(A_\epsilon)$ and have the same label, then they cobound a subarc $e_0$ of $e$ which is mapped by $h$ into $F - E \subset W$.
\end{lemma}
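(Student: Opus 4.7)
The proof of all three parts proceeds by standard innermost/outermost disk arguments in the face $f$, exploiting the minimal position of $h|_f$ together with the irreducibility of $U$ and $W := X^\e \setminus U$ and the $\pi_1$-injectivity of $A_\e$ in each piece. Throughout I will use that $U$ and $W$ are Seifert-fibred submanifolds of the hyperbolic-obtained irreducible manifold $X^\e$, that $\pi_2$ vanishes on each (they are Haken with infinite fundamental group), and that $A_\e$ is a $\pi_1$-injective essential annulus in both.

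For (1), take $\sigma$ to be an innermost closed component of $f \cap h^{-1}(A_\e)$, bounding a subdisk $D_0 \subset f$ disjoint from the rest of $h^{-1}(A_\e)$. Then $h(D_0)$ lies entirely in $U$ or in $W$; in either case $\pi_2(\cdot, A_\e) = 0$ by the long exact sequence of the pair and the properties above, so $h|_{D_0}$ is homotopic rel $\sigma$ into $A_\e$. A further small push-off (supported near $D_0$ and disjoint from the edges and corners of $f$) removes $\sigma$ from $f \cap h^{-1}(A_\e)$, contradicting minimal position.

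For (2), take an outermost arc $a$ of $f \cap h^{-1}(A_\e)$ with both endpoints on a single edge $e$, cobounding with a subarc $b \subset e$ a disk $D_0 \subset f$ whose interior is disjoint from $h^{-1}(A_\e)$. The interior of $D_0$ maps entirely into $U$ or into $W$; focus on the first, the second being analogous using that $\widehat W$ is a solid torus in which $A_\e$ has winding number $p_\e \geq 2$. Then $h(b) \subset F \cap \bar U = E$, so $h(\partial D_0)$ is a loop in the torus $\partial U = A_\e \cup_{c_1 \cup c_2} E$ that bounds a disk in the solid torus $U$; hence its class in $\pi_1(\partial U)$ is a multiple $m\mu$ of the meridian $\mu$ of $U$. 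Since $h(a)$ is a single arc in the annulus $A_\e$ and $h(b)$ a single arc in the annulus $E$, both joining the same pair of points on $c_1 \cup c_2$, a short intersection-number computation in $\partial U$ using the winding number $q_\e \geq 3$ of $A_\e$ in $U$ forces $m = 0$. Hence $h|_{\partial D_0}$ is null-homotopic in $\partial U$, and a standard homotopy of $h|_f$ supported in a neighbourhood of $D_0$ (and disjoint from the edges and corners of $f$) removes $a$ from $f \cap h^{-1}(A_\e)$, contradicting minimality.

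For (3), suppose to the contrary that two successive points of $e \cap h^{-1}(A_\e)$ share label $i$ but the subarc $e_0 \subset e$ between them satisfies $h(e_0) \subset E$. Let $a_1, a_2$ be the two arcs of $f \cap h^{-1}(A_\e)$ incident to the endpoints of $e_0$, and choose a short arc $\alpha \subset \mathrm{int}(f)$ parallel to $e_0$ so that $e_0$, a subarc of $a_1$, $\alpha$, and a subarc of $a_2$ cobound a rectangular disk $D_0 \subset f$; since $h(e_0) \subset E$, $D_0$ can be arranged so that $h(D_0) \subset U$. The same analysis as in (2) shows $h|_{\partial D_0}$ is null-homotopic in $\partial U$, and the resulting push-off removes both $a_1$ and $a_2$ from $f \cap h^{-1}(A_\e)$, once more contradicting minimal position. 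The key technical step shared by (2) and (3) is the ruling out of non-trivial multiples of the meridian $\mu$ on $\partial U$; this is the main obstacle and is handled by an explicit homology computation in $\partial U$ using the hypotheses $p_\e \geq 2$ and $q_\e \geq 3$ coming from the fact that $X^\e$ is not a twisted $I$-bundle.
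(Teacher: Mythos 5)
Parts (1) and (2) of your proposal follow the paper's argument. For (1) you invoke vanishing of $\pi_2(U, A_\e)$ and $\pi_2(W, A_\e)$ exactly as the paper does. For (2) the paper refers to the proof of its Lemma~\ref{lemma: homotope face into Q}, and your case analysis (either the boundary class is trivial and the arc can be removed, or it is a nonzero multiple of the meridian and the winding number hypothesis $q_\e \ge 3$, resp.\ $p_\e \ge 2$, gives a contradiction) reproduces that computation, though the phrase ``forces $m=0$'' glosses over the fact that in one of the two sub-cases one obtains a contradiction rather than $m=0$.

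Your argument for (3) has a genuine gap. You build a disk $D_0 \subset f$ bounded by $e_0$, subarcs of $a_1$ and $a_2$, and an auxiliary arc $\alpha \subset \mathrm{int}(f)$ parallel to $e_0$. The arc $\alpha$ is not a component of $h^{-1}(A_\e)$, so $h(\alpha)$ lies in the \emph{interior} of $U$, not on $\partial U$. Consequently $h(\partial D_0) \not\subset \partial U$, and the claim that ``the same analysis as in (2) shows $h|_{\partial D_0}$ is null-homotopic in $\partial U$'' has no meaning: there is no relative homotopy class in $\pi_2(U,\partial U)$ to analyze. Moreover, even if that issue were patched, removing $a_1$ and $a_2$ by a push-off would require further justification since those arcs extend beyond $D_0$. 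The paper's argument for (3) is purely two-dimensional and does not use any solid-torus disk class at all: if $h(e_0) \subset E$ then $h(e_0)$ is an arc in the annulus $E$ with both endpoints on the single boundary circle $c_i$ (the shared label) and with interior disjoint from $c_1 \cup c_2$. Such an arc is necessarily inessential in $E$, and using the strong deformation retraction of $E$ onto a boundary circle one pushes $h(e_0)$ across $c_i$ into $F - E$; this homotopy, supported near $e_0$, merges $a_1$ and $a_2$ into a single arc and decreases $|f \cap h^{-1}(A_\e)|$, contradicting minimal position. You should replace your (3) with this retraction argument; the key observation you are missing is that the identical labels make $h(e_0)$ an inessential arc in the annulus $E$.
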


\begin{proof}
(1) Fix an innermost closed component $c$ of $f \cap h^{-1}(A_\epsilon)$ and let $f_0 \subset f$ be the disk which it bounds. Then $h|f_0$ represents an element of $\pi_2(U, A_\epsilon)$ or of $\pi_2(W, A_\epsilon)$ depending on which side of $A_\epsilon$ $h(f_0)$ lies on. Since the homomorphisms $\pi_1(A_\epsilon) \to \pi_1(U), \pi_1(A_\epsilon) \to \pi_1(W)$ are injective and $\pi_2(U) \cong \pi_2(W) \cong \{0\}$, the groups $\pi_2(U, A_\epsilon)$ and $\pi_2(W, A_\epsilon)$ are both $\{0\}$. Hence we can homotope $h$ with support in an arbitrarily small neighbourhood of $f_0$ to eliminate $c$. The minimality condition on $h|f$ implies that there can be no such component $c$ of $f \cap h^{-1}(A_\epsilon)$. Thus $f \cap h^{-1}(A_\epsilon)$ contains no loop components.

(2) Since $A_\epsilon$ has winding number $2$ or more in $U$ and in $\widehat W$, the result follows as in the proof of a similar situation described in the proof of Lemma \ref{lemma: homotope face into Q}.

(3) Suppose that $e$ is an edge of $f$ and there are two arc components of $f \cap h^{-1}(A_\epsilon)$ whose endpoints are successive on $e$ and have the same label. Let $e_0$ be the subarc of $e$ bounded by these endpoints. Then  $h(e_0)$ is a path contained in the annulus $E$ or
$F\setminus E$.
If $h(e_0)$ is contained in $E$, it is disjoint from one of the components of $c_1 \cup c_2 = \partial E$.
Using the fact that $E$ strong deformation retracts onto either of its boundary components, we could homotope $h|_f$
with support in an arbitrarily  small neighbourhood of $e_0$ to eliminate the two endpoints  from $e \cap h^{-1}(A_\epsilon)$.
Minimality shows that such a situation does not arise.
\end{proof}

We know from Lemma \ref{lemma: A epsilon} that the components of $f \cap h^{-1}(A_\epsilon)$ are arcs
connecting different edges of $f$. Call an arc component of $f \cap h^{-1}(A_\epsilon)$ a {\it corner-arc}
if it is parallel to a corner of $f$, i.e. the arc joins adjacent edges of $f$. We call an arc component of $f \cap h^{-1}(A_\epsilon)$
a {\it cross-arc} if it joins edges of $f$ which are not adjacent.

Since $A_\epsilon$ separates $X^\epsilon$ into two components, the arcs of $f \cap h^{-1}(A_\epsilon)$ decompose $f$ into subsurfaces, called {\it tiles}, each of which is mapped by $h$ into $U$ or $W$. Adjacent tiles lie on different sides of $A_\e$. Colour a tile red if it is mapped into $U$ by $h$ and call it an $n$-gon if its boundary contains $n$
arcs from $f \cap h^{-1}(A_\epsilon)$.

\begin{cor}
\label{cor: edge intersection}
Suppose that  $X^\e$ is not a twisted $I$-bundle and that $f$ is a minimally positioned face of $\Gamma_F$ lying in $X^\e$.
If the intersection of a red tile with an edge $e$ of $f$ is non-empty, the intersection is a closed arc $e_0$ contained in the interior of $e$. Further, the labels at the endpoints of $e_0$ are different.
\qed
\end{cor}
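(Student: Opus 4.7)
The plan is to deduce the three assertions of the corollary directly from Lemma \ref{lemma: A epsilon} together with the tree structure of the tile decomposition of the disk $f$. First I would observe that each endpoint of an edge $e$ of $f$ lies on $\partial F$, adjacent to a corner of $f$, and corners map into $B^\epsilon$. Since $B^\epsilon$ is disjoint from the solid torus $U$, corners belong only to white tiles. Consequently a red tile $T$ misses a neighbourhood in $f$ of each endpoint of $e$, so every component of $T\cap e$ is a closed subarc lying in the interior of $e$, with endpoints in $e\cap h^{-1}(A_\epsilon)$.

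Next I would prove uniqueness. The interior of any tile is disjoint from $f\cap h^{-1}(A_\epsilon)$, so suppose for contradiction that $T\cap e$ had two distinct components $e_0$ and $e_1$ with a subarc $e^\ast \subset e$ between them contained in some other tile $T'$. The two arcs of $f\cap h^{-1}(A_\epsilon)$ based at the endpoints of $e^\ast$ each locally separate $T$ from $T'$, and so each contributes an edge joining $T$ to $T'$ in the dual graph of the tile decomposition. Since $f$ is a disk and the components of $f\cap h^{-1}(A_\epsilon)$ are pairwise disjoint properly embedded arcs (Lemma \ref{lemma: A epsilon}(1),(2)), this dual graph is a tree, so $T$ and $T'$ can be joined by at most one edge. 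This contradiction forces $T\cap e$ to be a single subarc $e_0\subset \mathrm{int}(e)$.

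Finally, to see that the two endpoints of $e_0$ carry different labels, I would note that they are consecutive points of $e\cap h^{-1}(A_\epsilon)$, since no interior point of $e_0$ can meet $h^{-1}(A_\epsilon)$ (the interior of the tile $T$ is disjoint from it). Because $T$ maps into $U$ and $e_0\subset e$ maps into $F$, we have $h(e_0)\subset U\cap F = E$. The contrapositive of Lemma \ref{lemma: A epsilon}(3) then forces the two consecutive endpoint labels to be distinct.

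No step should be a serious obstacle: each is a routine consequence of planarity and the setup of the preceding lemma. The only mildly delicate point is the claim that corners of $f$ lie in white tiles, but this uses nothing beyond the fact that $B^\epsilon$ and $U$ are disjoint, together with the observation that the restriction $h|_f$ was already homotoped to send corners into $B^\epsilon$ while maintaining minimal position with respect to $A_\epsilon$.
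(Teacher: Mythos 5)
Your overall strategy is the right one and uses the right ingredients: since corners of $f$ map into $B^\epsilon$, which is disjoint from $U$, a red tile avoids a neighbourhood of each corner, so its intersection with $e$ consists of closed arcs interior to $e$; and the label claim is precisely the contrapositive of Lemma \ref{lemma: A epsilon}(3), since the subarc $e_0$ maps into $E = U\cap F$, hence not into $F - E$. These two parts are fine.

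The uniqueness step, however, has a gap as written. You assume that the subarc $e^\ast$ of $e$ strictly between the two putative components $e_0, e_1$ of $T\cap e$ lies in a \emph{single} tile $T'$, so that both arcs of $f\cap h^{-1}(A_\epsilon)$ at $\partial e^\ast$ separate $T$ from the same $T'$ and yield a double edge in the dual tree. In general $e^\ast$ may cross several arcs of $f\cap h^{-1}(A_\epsilon)$ and hence pass through several tiles, and then the two arcs at $\partial e^\ast$ are incident to $T$ on one side and to two a priori distinct tiles on the other, so no double edge appears and your contradiction evaporates. Two ways to close the gap: the dual-tree idea does work if you instead observe that the subarcs of $e$ between $e_0$ and $e_1$ trace a closed walk in the tree from $T$ back to $T$ through pairwise distinct edges, which a tree does not admit; alternatively — and more directly in the spirit of the lemma the corollary is attributed to — join $e_0$ to $e_1$ by a properly embedded arc $\gamma$ in the disk $T$, so that $\gamma$ together with the subarc $\delta$ of $e$ between (and containing) $e_0,e_1$ bounds a subdisk $D_0\subset f$; any arc of $f\cap h^{-1}(A_\epsilon)$ with an endpoint in $\operatorname{int}(\delta)$ (and there is at least one) is disjoint from $\gamma\subset\operatorname{int}(T)$, hence lies entirely in $D_0$ and so has both endpoints on $e$, contradicting Lemma \ref{lemma: A epsilon}(2). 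The paper records no proof, but the second argument is almost certainly what makes the corollary ``immediate'' from Lemma \ref{lemma: A epsilon}(2)--(3).
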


\begin{lemma}
\label{lemma: corner-arcs}
Suppose that  $X^\e$ is not a twisted $I$-bundle and that $f$ is a minimally positioned face of $\Gamma_F$ lying in $X^\e$.

$(1)$ If $a$ is a corner-arc of  $f \cap h^{-1}(A_\epsilon)$, then its endpoints have different labels.

$(2)$ No two corner-arcs in $f \cap h^{-1}(A_\epsilon)$ are parallel in $f$. Equivalently, there are no red tile bigons which intersect
adjacent edges of $\partial f$. In particular, a trigon face of $\Gamma_F$ lying in $X^\e$ contains no red tile bigons.

\end{lemma}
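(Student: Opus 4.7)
The plan is to prove both parts by contradicting the minimal position of $h|_f$, leveraging the fact that any corner $\kappa$ of $f$ is mapped into $B^\epsilon$ and that $B^\epsilon\cap U=\emptyset$ (since $\partial U\cap\partial X^\epsilon = E\subset F$). This immediately forces any corner disk of $f$ containing $\kappa$ to be mapped into $W$, not into $U$, and in particular forbids the corner disk itself from being a red tile.

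For part (1), suppose $a$ is a corner-arc both of whose endpoints carry the label $i$, cutting off the corner disk $D_0\subset f$ containing a corner $\kappa$. Then $h(a)$ is an arc in $A_\epsilon$ with both endpoints on $c_i$, so it cobounds a disk $\Delta\subset A_\epsilon$ with a subarc $\gamma$ of $c_i$. Choose $a$ innermost, in the sense that $\Delta$ is disjoint from the image of every other component of $f\cap h^{-1}(A_\epsilon)$. I will then construct a homotopy of $h$ supported in a small neighbourhood of $a\cup\Delta$ in $Y$ that pushes $h|_a$ across $\Delta$ off $A_\epsilon$ into $E\subset F$, and then slightly off $F$ into $W$. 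The two endpoints of $a$ on the adjacent edges of $f$ correspond to two crossings of $h|_{\partial f}$ with $c_i\subset F$; the push across $\Delta$ removes these crossings by a homotopy of $h|_e,h|_{e'}$ supported near $a$ and kept inside $F$, which preserves $h^{-1}(F)$ as a subset of $Y$. The resulting immersion has strictly fewer components in $f\cap h^{-1}(A_\epsilon)$, contradicting the minimal position of $h|_f$.

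For part (2), suppose $a_1,a_2$ are two parallel corner-arcs cutting off the same corner $\kappa$, with $a_1$ the innermost. They cobound a bigon $D'\subset f$ disjoint from $\kappa$. Since the corner disk bounded by $a_1$ and $\kappa$ lies in $W$, the adjacent tile $D'$ sits on the opposite side of $A_\epsilon$, so $h(D')\subset U$ and $D'$ is a red bigon. Its boundary consists of $a_1,a_2$ in $A_\epsilon$ together with two subarcs of edges of $f$ which, being in the red tile, must lie in $F\cap U=E$. By part (1), each of $a_1,a_2$ has endpoints of different labels, and by Corollary \ref{cor: edge intersection} the two edge subarcs of $\partial D'$ likewise have endpoints of different labels. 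A direct bookkeeping then shows that $h(\partial D')$ meets each of $c_1,c_2$ transversely in exactly two points.

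Since $D'$ bounds a disk in the solid torus $U$, the loop $h(\partial D')$ is null-homotopic in $U$, and therefore on the torus $\partial U$ it is either null-homotopic or isotopic to a meridian of $U$. A meridian of $U$ meets $c_i$ in $q_\epsilon\ge 3$ points (as $A_\epsilon$ has winding number $q_\epsilon$ in $U$), whereas the geometric intersection of $h(\partial D')$ with $c_i$ is $2$. Hence $h(\partial D')$ is null-homotopic on $\partial U$, and using $\pi_2(U)=0$ as in the proof of Lemma \ref{lemma: A epsilon}(1) I can homotope $h|_{D'}$ into $\partial U$ and then off $A_\epsilon$, strictly reducing $|f\cap h^{-1}(A_\epsilon)|$ and contradicting minimality. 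The final ``in particular'' clause is then immediate: inside a trigon, any red tile bigon must be bounded by two corner-arcs cutting off the single corner between the two edges the bigon touches, and is thus excluded by the equivalent form of (2). The main obstacle is executing the reducing homotopy cleanly in part (1), which requires a careful local analysis near $c_i$ to verify that the push of $h(a)$ across $\Delta$ does not introduce new components of intersection with $F$ or with $A_\epsilon$ elsewhere in $f$.
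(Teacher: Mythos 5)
Your plan, to contradict minimal position by explicitly constructing reducing homotopies, differs from the paper's approach, and the homotopies you describe do not actually go through.

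For part (1), the paper's proof is short and direct: if both endpoints of $a$ are labelled $i$ then $h(a)$ can be homotoped rel endpoints into $c_i\subset F$, and then the corner disk cut off by $a$ shows that the corner $\kappa$ of $f$ can be homotoped rel endpoints into $F$, which is impossible. Your reduction tries to push $h|_a$ across $\Delta$, off $A_\epsilon$, and thereby delete $a$ from $f\cap h^{-1}(A_\epsilon)$. But deleting $a$ requires carrying a neighbourhood of $a$ in $f$ to a single side of $A_\epsilon$: one side of $a$ is the corner disk $D_0$, which contains $\kappa$ with $h(\kappa)\subset B^\epsilon\subset W$ and $B^\epsilon\cap U=\emptyset$, so $D_0$ cannot be pushed into $U$; the other side is the red tile, already in $U$, and pushing it into $W$ near $a$ merely slides the intersection arc away from the corner rather than eliminating it (a new arc of $f\cap h^{-1}(A_\epsilon)$ appears farther from $\kappa$). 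The "main obstacle" you flag is not a cleanup detail that can be handled by a careful local analysis; it is the actual content of the lemma, and the paper's argument establishes the obstruction directly rather than attempting to realize the homotopy.

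For part (2), the observation $[h(\partial D')]\in\ker\bigl(\pi_1(\partial U)\to\pi_1(U)\bigr)=\langle\mu\rangle$ is a sound start, though "null-homotopic or isotopic to a meridian" should read "homotopic to $n\mu$ for some integer $n$." You do not need the second reducing homotopy, which would run into the same trouble as in part (1). Instead, note that both crossings of $h(\partial D')$ with $c_1$ happen at the transition from an $E$-arc of $\partial D'$ to the following $A_\epsilon$-arc, so they have the same sign and the algebraic intersection is $\pm 2$; since $(n\mu)\cdot[c_1]=\pm n q_\epsilon$ with $q_\epsilon\geq 3$, this is impossible for every $n$. The paper packages the same intersection count via the loop theorem: applied to the singular red bigon it produces an embedded meridian disk of $U$ whose boundary meets $c_1$ in at most two points, contradicting the winding number $q_\epsilon$ of $A_\epsilon$ in $U$. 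Either route is algebraic and avoids a second appeal to minimality.
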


\pf (1) If the endpoints of $a$ have the same label, say $1$, then $a$ can be homotoped, with its endpoints fixed,
 into $c_1\subset F$. So in turn, the corner of $f$ parallel to $a$ can be homotoped, with its endpoints fixed,
 into $F$, which is impossible.

(2) Suppose otherwise and choose $a_1, a_2$ from among the arcs of $f \cap h^{-1}(A_\epsilon)$ parallel to a given corner
such that $a_1 \cup a_2$ lie in the boundary of a red tile $D$. By part (1), the endpoints of each $a_i$ have different labels
and by Lemma \ref{lemma: A epsilon}(3) the labels of endpoints of $a_1$ and $a_2$
alternate $1, 2, 1, 2$ around $\partial D$. This implies that  the algebraic intersection in $\partial U$ between the loop $h|_{\partial D}$ and the circle $c_1$ is $2$. Now applying the loop theorem to the singular disk $h|_D$ in $U$, we get an emdedded disk $D_*$ in $U$
such that the geometric intersection number in $\p U$ between $\p D_*$ and $c_1$ is either $1$ or $2$,
which in turn implies that  $A_\e$ has winding number $1$ or $2 < q_\epsilon$ in  $U$, a contradiction.
This completes the proof.
\qed

\begin{cor}
\label{cor:3-gon}
Suppose that  $X^\e$ is not a twisted $I$-bundle and that $f$ is a minimally positioned trigon face of $\Gamma_F$ lying in $X^\e$. Then $f$ contains at most one red tile and if one, it is a trigon. If there is one, then $q_\epsilon = 3$.
\end{cor}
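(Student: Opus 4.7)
The plan is to enumerate how the corner-arcs of $f \cap h^{-1}(A_\epsilon)$ decompose the trigon $f$ into tiles, use a topological observation to restrict where red tiles can sit, and then in the single surviving configuration use a winding-number computation on $\partial U$ to pin down $q_\epsilon$.

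First I would set up the combinatorics. Since $f$ is a trigon, any two of its distinct edges are adjacent, so by Lemma~\ref{lemma: A epsilon}(2) every arc of $f \cap h^{-1}(A_\epsilon)$ is a corner-arc, and by Lemma~\ref{lemma: corner-arcs}(2) there is at most one corner-arc at each corner. This leaves four possible configurations (with $0, 1, 2, 3$ corner-arcs and $1, 2, 3, 4$ resulting tiles). The key geometric observation I would establish is that no red tile can have a corner of $f$ on its boundary: using the decomposition $X^\epsilon = U \cup P \cup V$ along $A_\epsilon \cup A_\epsilon'$ together with the indexing convention of \S\ref{subsec: trace face} that $b_1, b_2 \in P \cap F$, the annulus $B^\epsilon$ (connected and disjoint from $A_\epsilon \cup A_\epsilon'$) must lie entirely in $P$; so $B^\epsilon \cap U = \emptyset$, and since each corner of $f$ maps into $B^\epsilon$, it cannot lie on the boundary of a tile mapping into $U$. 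Combining this with the fact that two tiles on opposite sides of an arc have opposite colours, I go through the four configurations: in the $0$-arc case $f$ maps entirely to $W$ and there are no red tiles; the $1$- and $2$-arc configurations are impossible since in each every tile contains a corner of $f$ yet at least one tile must be red; and in the $3$-arc case the three small $1$-gons at the corners each contain a corner and hence map to $W$, forcing the central tile (whose boundary consists of three arcs on $A_\epsilon$ together with three subarcs of edges of $f$, and contains no corner of $f$) to be the unique red tile. This establishes the first two assertions of the corollary.

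For the last assertion, in the $3$-arc case let $R$ denote the central red $3$-gon, and label the endpoints of the arcs according to whether they map to $c_1$ or $c_2$. Lemma~\ref{lemma: corner-arcs}(1) says the two endpoints of each arc have opposite labels; Lemma~\ref{lemma: A epsilon}(3) applied to the three edge subarcs on $\partial R$ (which, since $R$ is red, map into $E$ rather than $F \setminus E$) says the two endpoints of each such subarc also have opposite labels. Hence the labels alternate $1,2,1,2,1,2$ around $\partial R$, so $h|_{\partial R}$ meets $c_1 \subset \partial U$ transversally in three points, and their signed sum gives an algebraic intersection number $I$ with $|I|$ odd and $|I| \le 3$. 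On the other hand $h|_R: R \to U$ is a singular disk, so $[h|_{\partial R}] = \lambda m$ in $H_1(\partial U)$ for some $\lambda \in \mathbb{Z}$, where $m$ is a meridian of $U$; since $A_\epsilon$ has winding number $q_\epsilon$ in $U$, $m \cdot c_1 = \pm q_\epsilon$ and therefore $|I| = |\lambda|\,q_\epsilon$. With $q_\epsilon \ge 3$ and $|I|$ odd in $\{1,3\}$, this forces $|\lambda| = 1$ and $q_\epsilon = 3$. The main point to verify carefully is the label-alternation that guarantees $|I|$ is odd (so $\lambda \ne 0$); the rest is a direct winding-number count.
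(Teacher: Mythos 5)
Your proof is correct and reaches the same conclusion via essentially the same ideas, but it organizes the first half of the argument a bit differently from the paper, and replaces the loop-theorem step at the end with a homological computation.

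For the first two assertions, the paper simply invokes Lemma~\ref{lemma: corner-arcs}(2) to say a red tile cannot be a bigon, and then asserts it is a trigon; what is implicit there is that Corollary~\ref{cor: edge intersection} already shows a red tile's intersection with any edge is a closed arc in the edge's \emph{interior}, so a red tile cannot touch a corner of $f$, ruling out the $1$-gon tiles. You instead rederive that same topological fact from scratch (the annulus $B^\epsilon$ is connected, disjoint from $A_\epsilon \cup A_\epsilon'$, and its boundary $b_1 \cup b_2$ lies in $P \cap F$, so $B^\epsilon \subset P$ and hence $B^\epsilon \cap U = \emptyset$), and then run an explicit case analysis on $0,1,2,3$ corner-arcs. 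That analysis is correct: with $1$ or $2$ corner-arcs every tile carries a corner of $f$ but adjacent tiles must have opposite colours, a contradiction, and with $3$ the only cornerless tile is the central trigon, which is therefore the unique red tile. This is a valid, self-contained route to the same conclusion, at the cost of some redundancy with Corollary~\ref{cor: edge intersection}.

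For the last assertion, the paper observes that the three label-$1$ intersection points of $h|_{\partial D}$ with $c_1$ all have the same sign (so the algebraic intersection number is exactly $\pm 3$) and then applies the loop theorem, as in Lemma~\ref{lemma: corner-arcs}(2), to produce an embedded meridian disk meeting $c_1$ in at most $3$ points. You instead note that $h|_{\partial R}$ is null-homotopic in $U$, so its class in $H_1(\partial U)$ is $\lambda m$ for a meridian $m$, and since $m \cdot c_1 = \pm q_\epsilon$, the algebraic intersection equals $\pm\lambda q_\epsilon$; you then only need the parity observation $|I|$ odd (rather than $|I|=3$) to force $\lambda \ne 0$ and hence $q_\epsilon = 3$. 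This avoids the loop theorem and is marginally more elementary; both arguments are sound and give the same bound. No gaps.
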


\begin{proof}
Suppose that $f$ contains a red tile. Lemma \ref{lemma: corner-arcs}(2) shows that it cannot be a bigon, so it is a trigon. Then $\partial D$ consists of three corner-arcs $a_1, a_2, a_3$, one for each corner of $f$, and three subarcs $e_1^0, e_2^0, e_3^0$ of $\partial f$, one for each boundary edge of $f$. The labels of endpoints
of $a_1, a_2, a_3$ alternate $1, 2, 1, 2, 1, 2$ around $\partial D$ and so as $h|_D :(D, \partial D) \to (U, \partial U)$, the algebraic intersection in $\partial U$ between the loop $h|_{\partial D}$ and $c_1$ is $3$, which in turn by the loop theorem (as in the proof of Lemma \ref{lemma: corner-arcs} (2))
implies that $q_\epsilon$, the winding number of $A_\e$ in $U$, is $3$.
\end{proof}

\begin{cor}\label{cor:4-gon}
Suppose that  $X^\e$ is not a twisted $I$-bundle and that $f$ is a minimally positioned quad face of $\Gamma_F$ lying in $X^\e$. If $e_1,e_2$ are two adjacent edges of $f$ and two or more red tiles are incident to $e_1$, then there is at most one red tile incident to $e_2$.
\end{cor}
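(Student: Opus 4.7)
The plan is to suppose for contradiction that two or more red tiles are incident to each of the adjacent edges $e_1$ and $e_2$ of $f$, and then derive a contradiction from the non-crossing arrangement of the arcs of $f\cap h^{-1}(A_\e)$ in the disk $f$. Label the edges cyclically as $e_1,e_2,e_3,e_4$, write $c_i$ for the corner between $e_i$ and $e_{i+1}$ (indices mod $4$), and put $n_i:=|e_i\cap h^{-1}(A_\e)|$. First I would observe that the tiles along each edge $e_i$ alternate in colour across the arcs of $h^{-1}(A_\e)$, since $A_\e$ separates $X^\e$ into the red side $U$ and the non-red side $W$, and that the two corner tiles at the ends of $e_i$ must be non-red: by Corollary \ref{cor: edge intersection} a red tile meets $e_i$ in an arc contained in the interior of $e_i$, while a corner tile meets $e_i$ in a subarc running up to a corner. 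Consequently $n_i$ is even, the red tiles along $e_i$ are pairwise distinct (again by the uniqueness of the interior arc in Corollary \ref{cor: edge intersection}), and their number equals $n_i/2$. The hypothesis therefore forces $n_1\ge 4$, and the desired conclusion is exactly $n_2\le 2$.

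Suppose for contradiction that $n_2\ge 4$ as well. By Lemma \ref{lemma: A epsilon}(2) each component of $f\cap h^{-1}(A_\e)$ is an arc whose two endpoints lie on distinct edges of $f$. An arc meeting $e_1$ thus joins $e_1$ either to the adjacent edge $e_2$ (a corner-arc at $c_1$), to the adjacent edge $e_4$ (a corner-arc at $c_4$), or to the opposite edge $e_3$ (a cross-arc). I would next show that at each corner of $f$ there is at most one corner-arc: two corner-arcs at the same corner would be nested by the non-crossing property, and Lemma \ref{lemma: A epsilon}(2) forbids any further arcs inside the subdisk they cobound, so they would be parallel, contradicting Lemma \ref{lemma: corner-arcs}(2). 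Therefore at least $n_1-2\ge 2$ of the arcs incident to $e_1$ are cross-arcs from $e_1$ to $e_3$, and by the symmetric count at least $n_2-2\ge 2$ arcs are cross-arcs from $e_2$ to $e_4$.

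To finish, I would invoke the standard topological fact that in the disk $f$ with its boundary split cyclically into $e_1,e_2,e_3,e_4$, any simple arc from $e_1$ to $e_3$ separates the interior of $e_2$ from the interior of $e_4$, so any simple arc from $e_2$ to $e_4$ must cross it. By Lemma \ref{lemma: A epsilon}(1) the components of $f\cap h^{-1}(A_\e)$ are pairwise disjoint simple arcs in $f$, so cross-arcs of the two types cannot coexist, contradicting the previous paragraph. The argument is purely combinatorial, the only real step being the at-most-one-corner-arc bound derived from Lemmas \ref{lemma: A epsilon}(2) and \ref{lemma: corner-arcs}(2); there is no substantial obstacle, and the structure parallels the reasoning already used in the proofs of Corollary \ref{cor:3-gon} and Lemma \ref{lemma: corner-arcs}.
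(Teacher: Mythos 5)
Your proof is correct and rests on the same underlying idea as the paper's, namely that cross-arcs of $f\cap h^{-1}(A_\e)$ joining $e_1$ to $e_3$ and cross-arcs joining $e_2$ to $e_4$ cannot coexist in the disk $f$. The paper's write-up organizes this by classifying the shapes of the red tiles incident to $e_1$ and $e_2$ (showing each red tile on $e_1$ must carry a cross-arc, that a red tile on $e_2$ would then be forced to be a trigon meeting $e_1,e_2,e_3$, and that a second such red tile is impossible), whereas you organize it by directly counting the endpoints $n_i=|e_i\cap h^{-1}(A_\e)|$ and observing that at most two of the arcs meeting $e_i$ can be corner-arcs, so $n_i\ge 3$ forces a cross-arc. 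These are the same argument in different packaging, and yours is arguably the more transparent version.

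One auxiliary step is misstated. To establish that each corner of $f$ is met by at most one corner-arc you write that two nested corner-arcs at a corner cobound a subdisk in which, by Lemma \ref{lemma: A epsilon}(2), no further arc can lie, so the two must be parallel. That is not what Lemma \ref{lemma: A epsilon}(2) gives: it forbids an arc with both endpoints on a single edge, but another corner-arc at the same corner (with one endpoint on each of the two edges meeting the corner) is perfectly compatible with it, so the subdisk need not be empty. The conclusion you want, however, is exactly the content of Lemma \ref{lemma: corner-arcs}(2); as its proof makes explicit, ``no two corner-arcs are parallel'' is established by taking any two corner-arcs at a given corner, passing to an innermost pair, and using colour-alternation to produce a red tile bigon meeting adjacent edges, and hence is equivalent to there being at most one corner-arc per corner. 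Citing Lemma \ref{lemma: corner-arcs}(2) directly (rather than re-deriving it via Lemma \ref{lemma: A epsilon}(2)) closes the glitch, and the rest of your argument goes through unchanged.
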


\pf No red tile bigon connects adjacent edges by Lemma \ref{lemma: corner-arcs}(2) and so as there are at least two
red tiles incident to $e_1$, each of their boundaries contains a cross-arc. It follows that if there is a red tile incident to $e_2$,
it must be a trigon with boundary arcs connecting $e_1, e_2$ and the other edge $e_3$ of $f$ adjacent to $e_2$. But then any other
red tile incident to $e_2$ would have to be a bigon, which we have already ruled out.
\qed

\begin{lemma}
\label{parallel cross-arcs}
Suppose that  $X^\e$ is not a twisted $I$-bundle and that $f$ is a minimally positioned face of $\Gamma_F$ lying in $X^\e$.
Let $\{a_1,...,a_n\}$ be any  set of parallel adjacent cross-arcs of  $f \cap h^{-1}(A_\epsilon)$.

$(1)$ Every $a_i$ has the same label at its endpoints or every $a_i$ has different
labels at its endpoints.

$(2)$ If the $a_i$ have different labels at their endpoints, then $n \leq 2$ and if $2$, the two arcs are contained in different
red tiles.

$(3)$ If $e$ is an edge of  $\Gamma_F$ incident to $f$ which has maximal weight in $\overline{\Gamma}_F$, the $a_i$ connect the two edges of $f$ adjacent to $e$, and the labels at the endpoints of $a_i$ are the same, then there is a red tile incident to $e$.

\end{lemma}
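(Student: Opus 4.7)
The proof will analyze the decomposition of $f$ into tiles by $f\cap h^{-1}(A_\e)$, exploiting one key preliminary: consecutive tiles $D_i$ and $D_{i+1}$ separated by an arc $a_{i+1}$ lie on opposite sides of $A_\e$, so the colour alternates red/non-red along the sequence $D_1,\ldots,D_{n-1}$ between the parallel arcs.

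For part~(1), the plan is to show the dichotomy arc-by-arc by ruling out the case in which $a_i$ has one ``type'' (same vs.\ different labels) and $a_{i+1}$ has the other. If the separating tile $D_i$ is red, Corollary~\ref{cor: edge intersection} forces the labels of $a_{i+1}$ on each of the two edges of $f$ bounding $D_i$ to be opposite those of $a_i$, which preserves the type. If $D_i$ is non-red, the image $h(D_i)$ lies in the genus-two handlebody $W$; then a change of type would require $h|_{\partial D_i}$ to simultaneously (i)~bound a disk in $W$ (since $\pi_2(W,A_\e)=0$ and the Loop Theorem) and (ii)~have, via Lemma~\ref{lemma: A epsilon}(3) applied to the two edge-subarcs, an algebraic intersection with the meridional system of $\widehat W$ incompatible with the winding number $p_\e\ge 2$ of $A_\e$ in $\widehat W$. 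This contradiction completes the dichotomy.

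For part~(2), I assume each $a_i$ has different labels, so each $h(a_i)$ is an essential spanning arc of $A_\e$. If $n\ge 3$, colour-alternation guarantees that among $D_1,D_2$ one is red; its boundary contains two essential arcs in $A_\e$ and two edge-subarcs whose endpoints cycle through labels $1,2,1,2$. Arguing exactly as in the proof of Lemma~\ref{lemma: corner-arcs}(2), the Loop Theorem applied inside $U$ produces an embedded disk whose boundary intersects $c_1$ exactly twice, forcing $q_\e\le 2$ and contradicting $q_\e\ge 3$. Hence $n\le 2$; and when $n=2$ the same argument shows that the unique tile between $a_1$ and $a_2$ cannot be red, so $a_1$ and $a_2$ lie in distinct red tiles on opposite sides of the parallelism.

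For part~(3), assume the $a_i$ all have the same labels, let $R\subset f$ be the subdisk cut off by the innermost $a_1$ containing $e$, and let $D$ be the tile of $R$ incident to $e$; any arcs in $R$ other than $a_1$ are corner-arcs by innermost-ness, so $D$ is well-defined. Suppose $D$ is non-red. Propagating the common label along $\partial D$ by Lemma~\ref{lemma: A epsilon}(3) forces the subarcs of $\partial D$ on $e_L$ and $e_R$ to map into $F-E$, so $h|_{\partial D}$ is a disk in $W$ with boundary lying in $A_\e\cup (F-E)$. The maximal weight hypothesis on $e$ gives two further edges parallel to $e$ in $\Gamma_F$ together with the two bigon faces they cobound with $e$; concatenating $h|_D$ with the essential homotopies coming from these bigons (cf.~\cite[\S 15]{BGZ2}) yields an essential homotopy of length $\ge 3$ in $(X^\e,F)$ starting on the $\e$-side of $F$, producing a non-empty $\dot\Phi_3^\e$ and contradicting the vanishing statement proved in Lemma~\ref{lemma: edge wts}(1)(a). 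Hence $D$ must be red. The main obstacle throughout will be the handlebody winding-number accounting in part~(1) and the bookkeeping in part~(3) when $R$ contains corner-arcs.
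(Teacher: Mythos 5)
Your arguments for parts~(2) and~(3) follow the paper's strategy closely: part~(2) is the exact Scharlemann-cycle/winding-number argument from Lemma~\ref{lemma: corner-arcs}(2), and part~(3) uses the same idea (the disk cut off by $a_1$ provides an essential homotopy of $e$, contradicting maximal weight), though your citation of $\dot\Phi_3^\e = \emptyset$ via Lemma~\ref{lemma: edge wts}(1)(a) covers only the case that $X^-$ is not a twisted $I$-bundle; when it is, the maximal weight is $4$ and you would need the corresponding statement at level $4$. The paper simply invokes maximal weight without pinning down the $\Phi_j$ index.

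Part~(1) is where your route genuinely diverges, and it has a gap. The paper's proof is a one-liner that bypasses your case analysis entirely: since $A_\e$ is an essential annulus in $X^\e$ and $\pi_2(X^\e,F)=0$, a properly embedded arc of $A_\e$ is essential as a map of pairs to $(A_\e,\partial A_\e)$ if and only if it is essential as a map of pairs to $(X^\e,F)$, i.e.\ if and only if its endpoint labels differ; and the bigon of $f$ between $a_i$ and $a_{i+1}$ is itself a homotopy of $h|_{a_i}$ to $h|_{a_{i+1}}$ through maps of pairs into $(X^\e,F)$, so all the $h|_{a_i}$ are simultaneously essential or simultaneously inessential. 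No colour of the intermediate tile enters. Your red-bigon step via Corollary~\ref{cor: edge intersection} is fine, but the non-red step is not carried through: the loop $h|_{\partial D_i}$ crosses $c_1\cup c_2$ at the four arc-endpoints with signs that can cancel, and you do not establish that it has a non-zero algebraic intersection with any meridional class of $\widehat W$, so no contradiction with $p_\e\ge 2$ is actually obtained. You do not need this at all --- the bigon homotopy observation makes the dichotomy immediate and tile-colour-blind.
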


\pf
(1) Since $A_\epsilon$ is an essential annulus in $X^\epsilon$ and $\pi_2(X^\epsilon, F) = 0$, a
properly embedded arc in $A_\epsilon$ is essential as a map of pairs to $(A_\epsilon, \partial A_\epsilon)$ if
and only if it is essential as a map of pairs to $(X^\epsilon, F)$. It follows that for each $i$, the map
$h_i = h|: (a_i, \partial a_i) \to (X^\epsilon, F)$ is essential as a map of pairs if and only if the labels at the endpoints
of $a_i$ are distinct. Since any two of the $h_i$ are homotopic as maps to $(X^\epsilon, F)$, the first assertion of the lemma follows.

(2) For the second assertion suppose $n \geq 2$ and there is a pair of adjacent arcs  $a_i$ and $a_{i+1}$ which
lie in the boundary of some red tile $D$, necessarily a bigon.
It follows from Lemma \ref{lemma: A epsilon}(3) that the labels of the endpoints
of $a_i$ and $a_{i+1}$ alternate around $\partial D$ as  $1,2,1,2$, which implies that $D$ is an
essential disk in the solid torus $U$ whose algebraic intersection number
with $c_1$ is $2$. But then $c_1$ has winding number $2$ in $U$, a contradiction.

(3) Suppose that the $a_i$ closest to $e$ is $a_1$ and that there are no red tiles incident to $e$. Then $a_1$ cobounds a disk $D$ in $f$ with
$e$, the two corners incident to $e$, and two subarcs $e_0, e_0'$ of the edges of $\Gamma_F$ incident to $e$. Since the labels at the endpoints of $a_1$ are the same, we can homotope $h|_{e_0 * a_1 * e_0'}$ (rel $\partial$) to have image in $F$. But then the disk $D$ provides an essential homotopy of $e$ in $X^\epsilon$ contrary to the fact that it has maximal weight. Thus there must be a red tile incident to $e$.
\qed

\section{Recognizing the figure eight knot exterior}
\label{sec: recogn the fig 8}
In this section we describe how work of Martelli, Petronio and Roukema can be used to recognize when $M$ is the figure eight knot exterior. This will be used below to handle the proof of Theorem \ref{thm: twice-punctured precise} in the case that $d=1$.

\subsection{Exceptional Dehn fillings of the minimally twisted chain link}

In \cite{MP}, Martelli and Petronio classified the non-hyperbolic Dehn fillings of
the ``magic manifold" $M_3$, which is the exterior of the hyperbolic chain link of three components in $S^3$.
(This link appears as $6_1^3$ in Appendix C of Rolfsen's book \cite{Rlf}.) We remark that $M_3$ is the manifold $N$ of the first two sections of the paper.
We've changed notation for convenience.

The following result is contained in \cite[Corollary A.6]{MP}.

\begin{prop} {\rm (Martelli, Petronio)}
Let $M$ be a hyperbolic knot manifold obtained by Dehn filling $M_3$
along  two of
its boundary components.
If  $M(\beta)$ is a toroidal manifold  and $M(\alpha)$ is a Seifert fibred manifold
such that $\D(\alpha,\beta)>5$, then $M$ is the figure eight knot exterior.
\qed\end{prop}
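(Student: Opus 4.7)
\subsection*{Proof plan}

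The plan is to deduce the proposition directly from the Martelli--Petronio classification of the exceptional Dehn fillings of the magic manifold $M_3$ (\cite{MP}). Since $M$ is obtained from $M_3$ by filling along one of its three boundary components, there is a slope $\gamma$ on $\partial M_3$ such that $M = M_3(\gamma)$ is hyperbolic, and $\alpha,\beta$ are slopes on what remains of $\partial M_3$. Thus the hypotheses translate into the existence of a triple $(\gamma;\alpha,\beta)$ of slopes on $M_3$ such that $M_3(\gamma)$ is hyperbolic, $M_3(\gamma,\beta)$ is toroidal, $M_3(\gamma,\alpha)$ is Seifert fibred, and $\Delta(\alpha,\beta) > 5$. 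The strategy is to enumerate all such triples using the tables in \cite{MP} and check that in every case, $M_3(\gamma)$ is the figure eight knot exterior.

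First I would use \cite[Theorem 1.1]{MP} (or its accompanying tables in Appendix A) to list the slopes $\gamma$ for which $M_3(\gamma)$ is hyperbolic; equivalently, the slopes $\gamma$ outside the finite exceptional set for $M_3$. For each such $\gamma$, \cite{MP} records the exceptional slopes on the remaining two boundary tori of $M_3(\gamma)$ and specifies which of the resulting fillings $M_3(\gamma,\delta)$ are toroidal, which are Seifert fibred, which are lens spaces, etc. So the problem reduces to checking, for each hyperbolic $M_3(\gamma)$, whether the set of toroidal slopes and the set of (non-toroidal) Seifert-fibred slopes on $\partial M_3(\gamma)$ contain a pair $(\alpha,\beta)$ at distance at least $6$.

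The main case analysis will split according to how many cusps $M_3(\gamma)$ has and on which of the Tables A.1--A.8 of \cite{MP} the slope $\gamma$ appears. For hyperbolic one-cusped $M_3(\gamma)$, the exceptional slopes are already enumerated in Appendix A of \cite{MP}, and one just reads off the toroidal/Seifert dichotomy and computes $\Delta(\alpha,\beta)$. For two-cusped $M_3(\gamma)$, the exceptional fillings of the remaining cusp are again tabulated, and pairs of filling slopes on the two boundary tori that yield a toroidal-Seifert pair on $\partial M$ are in bijection with entries of the corresponding table with $\Delta > 5$. In every case, the arithmetic constraint $\Delta(\alpha,\beta)>5$ is very restrictive: on each hyperbolic knot manifold obtained from $M_3$ there are at most a handful of exceptional slopes, and the only pair $(\alpha,\beta)$ at distance $\geq 6$ with one toroidal and one Seifert filling occurs when $M_3(\gamma)$ is recognised as the figure eight knot exterior (for which the toroidal slopes $\pm 4$ and the Seifert slopes $\pm 1,\pm 2,\pm 3$ realise distance exactly $6$ in the extremal cases).

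The hard part will not be any single case but rather the bookkeeping required to verify the distance computations uniformly across all entries of \cite[Tables A.1--A.8]{MP}, particularly translating between the slope coordinates used in \cite{MP} and the standard coordinates on the various knot and link exteriors, and correctly identifying $M_3(\gamma)$ for those $\gamma$ whose filling is the figure eight exterior or one of its close relatives (such as the sister manifold or $Wh(n)$ for small $n$). Once the identification of the figure eight knot exterior among the $M_3(\gamma)$ is pinned down (using e.g.\ the fact that it is the unique cusped hyperbolic manifold with ten exceptional slopes realising the distance bound of $8$ between exceptional pairs), the rest of the cases are ruled out by direct inspection of the tables, completing the proof.
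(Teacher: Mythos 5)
Your overall strategy — reducing the problem to the Martelli–Petronio classification of exceptional fillings of the magic manifold and then inspecting the tables in \cite{MP} — is exactly the right one, and it is essentially what the paper does, except the paper's proof is one line: the statement is literally Corollary~A.6 of \cite{MP}. That corollary already packages the case analysis you outline (enumerating hyperbolic two-fillings $M_3(\gamma_1,\gamma_2)$, listing their toroidal and Seifert-fibred filling slopes, and computing the pairwise distances), so the paper does not have to redo it. Your plan would of course also work, at the cost of reproducing work that \cite{MP} has already carried out.

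One slip worth flagging: you begin by writing that $M$ is ``obtained from $M_3$ by filling along \emph{one} of its three boundary components, there is a slope $\gamma$ on $\partial M_3$ such that $M = M_3(\gamma)$.'' But the hypothesis (and the fact that $M$ is a knot manifold, i.e.\ has a single torus boundary component, while $M_3$ has three) means $M$ is obtained by filling \emph{two} of the components of $\partial M_3$, so $M = M_3(\gamma_1,\gamma_2)$ for a pair of slopes. Your later sentences ($M_3(\gamma,\delta)$, the dichotomy between one- and two-cusped $M_3(\gamma)$) show you understand this, but the opening line misstates it. Also, for the figure eight knot exterior the extremal toroidal--Seifert distance is $\Delta(4,-3)=7$, not $6$ as you assert; what matters for the proposition is only that it exceeds $5$, so this is harmless, but it is worth getting right.
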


In \cite{Rou}, Roukema classified all  non-hyperbolic  Dehn fillings on
the link exterior $M_5$ of the minimally twisted  chain link of five components
in $S^3$ (shown in Figure \ref{5link}).

\begin{figure}[!ht]
\centerline{\includegraphics{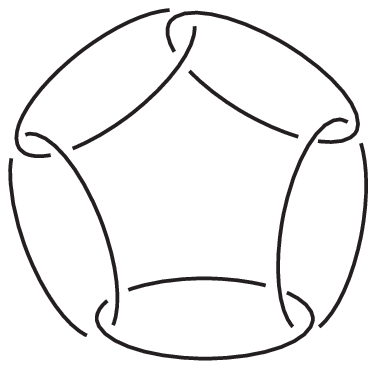}} \caption{ }\label{5link}
\end{figure}

In particular, the following result is contained in \cite[Theorem 3 and Theorem 4]{Rou}.

\begin{prop} {\rm (Roukema)}
Suppose that  $M$ is  a hyperbolic knot manifold obtained by Dehn filling $M_5$ along
 four of
its boundary components but $M$ cannot be obtained by Dehn filling
$M_3$ along two of its boundary components.
Then $\D(\alpha,\beta)\leq 4$ for any two non-hyperbolic  filling  slopes $\alpha, \b$ of $M$.
\qed\end{prop}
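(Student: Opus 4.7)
The plan is to invoke Roukema's classification of exceptional Dehn fillings of $M_5$ and extract the desired distance bound, using the preceding proposition for $M_3$ as a blueprint. First I would set up the framework: the minimally twisted $5$-chain link exterior $M_5$ has a large symmetry group acting on its five boundary components, which drastically reduces the number of $4$-tuples of filling slopes that need to be considered up to equivalence. For each such $4$-tuple $(r_1,r_2,r_3,r_4)$ producing a hyperbolic knot manifold $M = M_5(r_1,r_2,r_3,r_4)$, one lists all exceptional slopes $\gamma$ on the remaining boundary component (those $\gamma$ for which $M(\gamma)$ is non-hyperbolic), and then computes the pairwise distances $\Delta(\alpha,\beta)$ among these exceptional slopes.

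Next, I would use the fact that the parameter space of $4$-tuples of slopes on $M_5$ is controlled by a finite core of ``small'' slopes plus families indexed by one integer parameter: by a Gromov--Thurston $2\pi$-type argument (or by explicit hyperbolic volume estimates, as in the analysis of $M_3$ in \cite{MP}), all but finitely many fillings at a boundary component are hyperbolic, and large-parameter fillings inherit exceptional slopes in a uniform way. Thus the classification reduces to checking a finite list, which is the technical heart of \cite[Theorems 3 and 4]{Rou}.

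The key observation for the proposition is that whenever a pair of exceptional slopes on such an $M$ achieves distance $\geq 5$, Roukema's tabulation shows that the corresponding $4$-tuple $(r_1,r_2,r_3,r_4)$ is equivalent, via the symmetry group of $M_5$ together with the Dehn-filling descriptions relating $M_5$ to $M_3$ (e.g.\ a filling of one component of $M_5$ yields $M_3$), to a $4$-tuple in which two of the $r_i$ are exceptional slopes of $M_5$ collapsing $M_5$ to $M_3$. In other words, the $M_5$-fillings with a large-distance exceptional pair factor through a $2$-component filling of $M_3$, so they are precisely the ones excluded by hypothesis.

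The main obstacle is purely combinatorial/computational: organizing the enumeration of Seifert, toroidal, reducible, and small fillings of $M_5$ in a way that (i) respects the symmetry action and (ii) lets one recognize which $4$-tuples descend from $M_3$. This step is where one depends on the machine-assisted tabulation in \cite{Rou}, analogous to the role of \cite[Tables A.1--A.8]{MP} in the proof of the previous proposition. Once the tabulation is in hand, the distance bound $\Delta(\alpha,\beta) \leq 4$ is simply read off the list, and the hypothesis that $M$ does not arise from $M_3$ excludes the only entries where the bound could be exceeded.
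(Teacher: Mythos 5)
Your proposal is correct and matches the paper's approach: the paper gives no proof at all, simply noting that the statement ``is contained in \cite[Theorem 3 and Theorem 4]{Rou},'' and your sketch is an accurate account of how Roukema's machine-assisted classification of exceptional fillings of $M_5$ yields the distance bound once fillings descending to $M_3$ are excluded. The only caveat is that your plan is really a summary of the internal structure of the cited work rather than an independent argument, but since the paper itself treats this as a black-box citation, that is exactly the level of detail required here.
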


Combining the latter two propositions, one gets

\begin{cor}
\label{MPR}
Suppose that  $M$ is  a hyperbolic knot manifold obtained by Dehn filling $M_5$ along
 four of
its boundary components. If  $M(\beta)$ is a toroidal manifold  and $M(\alpha)$ is a Seifert fibred manifold
such that $\D(\alpha,\beta)>5$, then $M$ is the figure eight knot exterior.
\qed\end{cor}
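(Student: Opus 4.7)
The plan is to deduce Corollary \ref{MPR} by a straightforward dichotomy from the two propositions of Martelli--Petronio and Roukema that are quoted immediately before it. Given $M$ obtained by Dehn filling $M_5$ along four of its boundary components, together with filling slopes $\alpha$ and $\beta$ on the remaining boundary component such that $M(\beta)$ is toroidal, $M(\alpha)$ is Seifert fibred, and $\Delta(\alpha,\beta)>5$, I would split into two cases according to whether $M$ can also be realised as a Dehn filling of the magic manifold $M_3$ on two of its boundary components.

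In the first case, $M$ is a Dehn filling of $M_3$ on two of its cusps. Then the Martelli--Petronio proposition applies directly: a hyperbolic knot manifold of this form admitting a toroidal slope $\beta$ and a Seifert fibred slope $\alpha$ with $\Delta(\alpha,\beta)>5$ must be the figure eight knot exterior, which is the conclusion sought.

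In the second case, $M$ is a Dehn filling on four cusps of $M_5$ that cannot be re-expressed as a two-cusp filling of $M_3$. Then the Roukema proposition applies and tells us that any pair of non-hyperbolic filling slopes on $M$ satisfies $\Delta\le 4$. Since both $M(\alpha)$ (Seifert fibred) and $M(\beta)$ (toroidal) are non-hyperbolic, this yields $\Delta(\alpha,\beta)\le 4$, contradicting the hypothesis $\Delta(\alpha,\beta)>5$. Hence the second case cannot occur, and the corollary follows.

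There is essentially no obstacle here: the work is packaged into the two cited propositions, and the corollary is a formal case analysis. The only thing to be careful about is the logical structure of the hypothesis in the Roukema proposition (it assumes $M$ is \emph{not} obtainable from $M_3$), so the case division must be made exactly along that line. No further combinatorial or geometric input is needed.
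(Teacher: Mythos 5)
Your proposal is exactly the intended argument: the paper introduces the corollary with the phrase ``Combining the latter two propositions, one gets,'' and the dichotomy you describe (is $M$ a two-cusp filling of $M_3$ or not, then apply Martelli--Petronio or Roukema accordingly) is precisely that combination. Your observation that Seifert fibred and toroidal manifolds are non-hyperbolic, so that Roukema's bound $\Delta\le 4$ contradicts $\Delta(\alpha,\beta)>5$ in the second case, is the only detail needed, and you have it.
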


\subsection{Essential annuli in $\widehat X^\epsilon$ and the figure eight knot exterior}
We prove two lemmas which provide sufficient topological conditions for $M$ to be the figure eight knot exterior.

Let $K_\beta$ be the core of the filling solid torus in forming $M(\beta)$.
We noted above that for each $\epsilon$, $\widehat X^\epsilon$ admits a Seifert structure over $D(p_\epsilon, q_\epsilon)$. The $(I, S^0)$-bundle pair $(\Sigma_1^\epsilon, \Phi_1^\epsilon)$ extends to an $(I, S^0)$-bundle pair $(\widehat{\Sigma_1^\epsilon}, \widehat{\Phi_1^\epsilon})$ in which $k_\epsilon = K_\beta \cap \widehat{\dot\Sigma_1^\epsilon}$ is an $I$-fibre.

\begin{figure}[!ht]
\centerline{\includegraphics{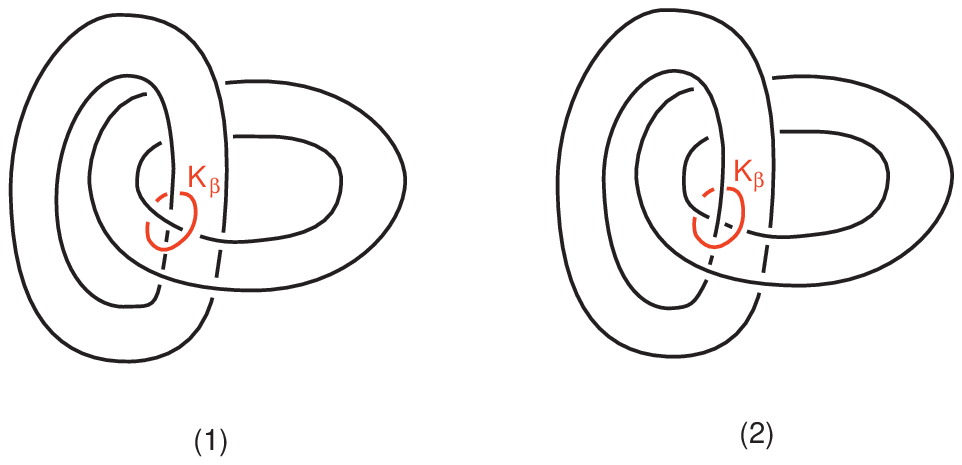}} \caption{ }\label{standard}
\end{figure}

\begin{lemma}
\label{nice annuli}
Suppose that $\Delta(\alpha, \beta) > 5$ and that for each $\epsilon$ there is an embedded essential annulus $A^\epsilon$ in $\widehat X^\epsilon$ such that
\vspace{-.2cm}
\begin{itemize}

\item $\partial A^\epsilon$ has slope $\phi_\epsilon$ in $\widehat F$;

\vspace{.2cm} \item $k_\epsilon$ is an essential arc in $A^\epsilon$;

\vspace{.2cm} \item each boundary component of $A^+$ intersects each boundary component of $A^-$ transversely and
exactly once.

\end{itemize}
\vspace{-.2cm}
Then $M$   can be obtained by Dehn filling four boundary components
of the minimally twisted $5$-chain link exterior and thus it is the figure eight knot exterior.
\end{lemma}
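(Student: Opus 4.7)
The plan is to produce a 5-component link $L = K_\beta \cup C_1 \cup C_2 \cup C_3 \cup C_4$ in $M(\beta)$ whose exterior is homeomorphic to the minimally twisted $5$-chain link exterior $M_5$. Granting such a homeomorphism, $M$, the exterior of $K_\beta$ in $M(\beta)$, is obtained from $M_5$ by Dehn filling the four boundary tori corresponding to $C_1,\dots,C_4$, and Corollary~\ref{MPR}, together with the hypotheses $\Delta(\alpha,\beta)>5$, $M(\beta)$ toroidal, and $M(\alpha)$ Seifert fibred, then forces $M$ to be the figure eight knot exterior.

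The four circles $C_i$ arise from the annuli $A^\epsilon$ as follows. For each $\epsilon$, the essential annulus $A^\epsilon \subset \widehat X^\epsilon$ has boundary of slope $\phi_\epsilon$ on $\widehat F$, so it is vertical in the Seifert fibration of $\widehat X^\epsilon$ over $D^2(p_\epsilon,q_\epsilon)$; being essential, it separates the two cone points and cuts $\widehat X^\epsilon$ into two solid tori whose cores are the exceptional fibres. Let $C_1, C_2$ be these cores on the $+$ side and $C_3, C_4$ those on the $-$ side; they are pairwise disjoint and disjoint from $A^+ \cup A^-$, hence from $K_\beta = k_+ \cup k_- \subset A^+ \cup A^-$. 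The hypothesis that each component of $\partial A^+$ meets each component of $\partial A^-$ transversely in exactly one point forces $d = \Delta(\phi_+,\phi_-) = 1$ and places $\partial A^+ \cup \partial A^-$ on $\widehat F$ as a $2 \times 2$ grid, cutting the torus $\widehat F$ into four disk ``squares''.

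To identify $N := M(\beta) \setminus N(L)$ with $M_5$, consider the branched surface $\Sigma = \widehat F \cup A^+ \cup A^-$. Cutting $M(\beta)$ along $\Sigma$ gives the four solid tori constructed above; removing the cores $C_i$ turns each into a copy of $T^2 \times I$, and drilling out $N(K_\beta)$ opens $A^+ \cup A^-$ along the essential simple closed curve $K_\beta \subset A^+ \cup A^-$. This exhibits $N$ as a union of four $T^2 \times I$ blocks glued along six annuli, coming from the two halves of each $A^\epsilon$ cut by $K_\beta$, together with the four square pieces of $\widehat F$ repaired pairwise into annuli by the removal of $K_\beta$. The main obstacle is to match this gluing pattern---equipped with the meridian-longitude structure induced on each $T^2 \times I$ block by the Seifert fibration of $\widehat X^\epsilon$---with the analogous four-block decomposition of $M_5$; the matching is a finite combinatorial check, most efficiently carried out by verifying that both decompositions agree with the model depicted in Figure~\ref{standard}. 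Once this identification is in hand, $M = M_5(\delta_1,\delta_2,\delta_3,\delta_4)$ for the filling slopes $\delta_i$ determined by $N(C_i) \subset M(\beta)$, and Corollary~\ref{MPR} concludes the proof.
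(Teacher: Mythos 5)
Your proposal follows the same strategy as the paper's proof: take the four exceptional fibres on the two sides of $\widehat F$ as the remaining components of a $5$-component link $L = K_\beta \cup C_1 \cup \cdots \cup C_4$ and identify $M(\beta) \setminus N(L)$ with $M_5$; and you correctly observe that the intersection hypothesis forces $d = 1$, and that Corollary~\ref{MPR} then finishes.

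The gap is in the assertion that the matching of your block decomposition with that of $M_5$ is ``a finite combinatorial check.'' It is not, as stated: the hypotheses only specify $k_\epsilon$ as an essential spanning arc of $A^\epsilon$ with fixed endpoints, and the isotopy class of such an arc rel $\partial k_\epsilon$ is parameterized by an integer ``wrap number'' around the circle direction of $A^\epsilon$. Without eliminating this freedom there are infinitely many a priori candidates for the gluing pattern, and no single figure to compare against. The paper removes this freedom by applying explicit self-homeomorphisms of the pieces $Z^\epsilon \cong Q^\epsilon \times S^1$ (complements of the cores $C_i$ in $\widehat X^\epsilon$): first Dehn twisting along a vertical annulus disjoint from $A^\epsilon$ running from $\widehat F$ to an inner boundary torus so that the outer boundary of $Q^\epsilon$ has slope $\phi_{-\epsilon}$ --- this, together with $d=1$, identifies $Z = Z^+ \cup Z^-$ with the untwisted double Hopf link exterior --- and then Dehn twisting along $\sigma_\epsilon \times S^1$, with $\sigma_\epsilon$ a spanning arc of $Q^\epsilon$ meeting $\delta_\epsilon$ once, to kill the wrap number of $k_\epsilon$ and put $K_\beta$ in the standard position of Figure~\ref{standard}(1). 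Only after these normalizations does the identification of the exterior with $M_5$ reduce to reading off a surgery diagram. Your proposal needs this step, or an equivalent one, before the conclusion is justified. (A small secondary point: cutting $A^\epsilon$ along the spanning arc $k_\epsilon$ yields a single disk, not ``two halves.'')
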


\pf Note that $A^\epsilon$ separates $\widehat X^\epsilon$ into two solid tori
in which $A^\epsilon$ has winding numbers $p_\e, q_\epsilon$ respectively
and the core circles of the two solid tori are singular fibres of the Seifert
fibred space $\widehat X^\epsilon$.
Let $Z^\epsilon$ be the exterior of the singular fibres  in  $\widehat X^\epsilon$, and let $Z$ be the connected submanifold of $M(\beta)$ which is the union of $Z^+$ and $Z^-$ in $M(\beta)$ meeting along $\widehat F$.
Then $Z^\epsilon$  is homeomorphic to $Q^\e\times S^1$ where
$Q^\epsilon$ is a twice punctured disk.
Call $\widehat F$ the outer boundary  component  of $Z^\epsilon$ and called the other two boundary  components of $Z^\epsilon$
inner components. Correspondingly the component of $\partial Q^\epsilon$ contained in $\widehat F$ is called the outer component and the other two components of $\partial Q^\epsilon$ are call inner components.
The choice of $Q^\epsilon$ in the product structure of $Z^\e=Q^\e\times S^1$ is not unique. By Dehn twisting along a vertical annulus in $Z^\epsilon$ disjoint from $A^\epsilon$ which connects $\widehat F$ to an inner boundary component of $Z^\e$, we may assume that $Q^+$ has been chosen so that its  outer boundary component
has slope $\phi_-$ in $\widehat F$ and similarly we may assume
that $Q^-$ has been chosen  so that its  outer boundary component
has slope $\phi_+$ in $\widehat F$.
It follows that $Z$ is homeomorphic to the exterior of ``the untwisted double Hopf link'' in $S^3$
such that the torus $\widehat F$ becomes
a torus in $S^3$  which bounds a trivial solid torus $V^\epsilon$  in $S^3$ on each  side of $\widehat F$   and which separates  the two pairs of
parallel components of the double Hopf link. Furthermore  the twice punctured disk  $Q^\epsilon$ in $Z^\epsilon$
caps off in $V^\epsilon$ to a standard meridian disk of $V^\epsilon$.
The annulus $A^\epsilon$ is   a vertical essential annulus in $Z^\epsilon$ and it separates the two inner components of $\partial Z^\epsilon$. In fact $A^\e=\d_\e\times S^1$ where $\d_\epsilon$ is a proper  arc in $Q^\epsilon$
which separates the two inner components of $\partial Q^\epsilon$.
The arc $k_\epsilon$ in $A^\epsilon$ is an essential arc.
By Dehn twisting along a vertical annulus $\s_\e\times S^1$ in $Q^\epsilon$ where $\s_\epsilon$ is a proper arc in $Q^\epsilon$ connecting the two inner boundary  components of $Q^\epsilon$ and intersecting $\d_\epsilon$ exactly once,
we may assume that $K_\b=k_+\cup k_-$ is as shown in part (1) of Figure \ref{standard} (cf. the red coloured component).
Let $Y$ be the exterior of $K_\b$ in $Z$, i.e. $Y$ is the exterior of the $5$-component link
shown in Figure \ref{standard} (1).
Considering Dehn surgery  on the component $K_\b$ (the red coloured component) in $S^3$, we see that
$Y$ is homeomorphic to the exterior of
the $5$-component link in $S^3$  shown in part (2) of  Figure \ref{standard}.
The latter link is the minimally twisted chain link of $5$ components.
 \qed

\begin{rem}{\rm This argument was used in \cite{GL2}.}
\end{rem}

For the exterior $N$ of a link $L$ in $S^3$ with components $K_0,K_1,\cdots, K_{n-1}$,
let $T_0, T_1, \cdots, T_{n-1}$ be the corresponding ordered boundary tori of $N$.
 A slope on each $T_i$  will be expressed as $(p, q)$ with respect to the standard
(meridian, longitude) coordinates, where $p, q$ are relative prime
integers and $q\geq 0$, e.g. $(1,0)$ is the meridional slope, $(0, 1)$
is the canonical longitude. Let $(0,0)$ denote the empty slope. Dehn
filling of $N$ will be denoted by $N[(p_0, q_0), (p_1, q_1),\cdots,
(p_{n-1}, q_{n-1})]$, meaning each $T_i$ is assigned the slope
$(p_i, q_i)$, possibly empty. For example when $n=3$, $N[(0,0),
(-2,1), (2,3)]$ means leaving  $T_0$ unfilled and filling $T_1, T_2$
with slopes $(-2,1), (2, 3)$ respectively. When $N$ is clear, we may
simply denote  the filling by $[(p_0, q_0), (p_1, q_1),\cdots,
(p_{n-1}, q_{n-1})]$. For each $k$, $1\leq k\leq n$, a $k$-filling
of $N$ means  filling $N$ along $k$ boundary tori with nonempty
slopes  and leaving  the rest of the boundary tori unfilled. For example
$N[(0,0), (-2,1), (2,3)]$ is a $2$-filling of $N$.

\begin{lemma}\label{lemma:nice annuli2}
Suppose that $\Delta(\alpha, \beta) > 5$, $X^-$ is a twisted $I$-bundle  and  $d=1$.
 Suppose that for each $\epsilon$ there is an embedded essential annulus $A^\epsilon$ in $\widehat X^\epsilon$ such that
\vspace{-.2cm}
\begin{itemize}

\item $\partial A^+$ has slope $\phi_+$ in $\widehat F$ and $\partial A^-$ has slope $\phi_-'$ in $\widehat F$;

\vspace{.2cm} \item $k_\epsilon$ is an essential arc in $A^\epsilon$;

\vspace{.2cm} \item each boundary component of $A^+$ intersects each boundary component of $A^-$ transversely and
exactly once.

\end{itemize}
\vspace{-.2cm} Then $M$   can be obtained by Dehn filling the
boundary components $T_1,\cdots T_6$ of the exterior of the
7-component link $\{K_0,K_1, \cdots, K_6\}$  in $S^3$  shown in
{\rm Figure \ref{bgz5-7-component link}(2)} such that the slopes on $T_3$
and $T_4$ are both $(-1, 1)$.  Furthermore $M$ is the figure eight knot
exterior.
\end{lemma}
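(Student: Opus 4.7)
The proof will adapt the strategy of Lemma \ref{nice annuli} to the twisted $I$-bundle setting. The hypothesis that each boundary component of $A^+$ meets each boundary component of $A^-$ transversely in a single point forces $d' := \Delta(\phi_+, \phi_-') = 1$, a fact that will be used throughout when matching slopes.

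For the $+$-side I would argue exactly as in the proof of Lemma \ref{nice annuli}. The annulus $A^+$ is vertical in the Seifert fibration of $\widehat X^+$ over $D^2(p_+, q_+)$, so drilling regular neighbourhoods of the two exceptional fibres produces $Z^+ \cong Q^+ \times S^1$, where $Q^+$ is a twice-punctured disk, $A^+ = \delta^+ \times S^1$ for an essential arc $\delta^+$ separating the two inner punctures of $Q^+$, and $k_+$ sits as a core arc of $A^+$. Using $d' = 1$ and a Dehn twist along a vertical annulus joining $\widehat F$ to an inner boundary component of $Z^+$, I arrange the outer boundary of $Q^+$ to realise the slope $\phi_-'$ on $\widehat F$.

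The $-$-side requires extra work because the non-orientability of the Möbius band base prevents $\widehat X^-$ from being expressed as $Q^- \times S^1$ for a planar surface $Q^-$. I would exploit both Seifert fibrations of $\widehat X^-$ simultaneously: the fibration over $D^2(2,2)$ has two exceptional fibres, and the Möbius band fibration is the one in which $A^-$ is vertical. To overcome the non-orientability I would drill four curves rather than two: the two exceptional fibres of the $D^2(2,2)$ structure, together with a pair of unknotted curves $K_3, K_4 \subset S^3$ that encode the $I$-bundle twist and whose $(-1, 1)$ Dehn fillings (i.e. Rolfsen twists along their spanning disks) are exactly what converts a product piece back into $\widehat X^-$. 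After this four-fold drilling, the resulting $Z^-$ has the form $Q^- \times S^1$ with $Q^-$ a planar surface having five boundary components, $A^-$ vertical, and $k_-$ a core arc of $A^-$; a further vertical Dehn twist arranges the outer boundary slope of $Q^-$ to be $\phi_+$ on $\widehat F$.

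Now I would assemble $Z = Z^+ \cup_{\widehat F} Z^-$ inside $S^3$ exactly as in Lemma \ref{nice annuli}: the matched outer-boundary slopes together with the one-point transverse intersection of $\partial A^+$ with $\partial A^-$ allow $\widehat F$ to be realised as an unknotted torus in $S^3$ bounding trivial solid tori on either side, identifying $Z$ as the exterior of a 6-component link. Adding $K_\beta = k_+ \cup k_-$ (whose halves, by construction, are core arcs of $A^+$ and $A^-$) yields the 7-component link of Figure \ref{bgz5-7-component link}(2), with $T_3, T_4$ corresponding to $K_3, K_4$ and the slopes on these two tori being $(-1, 1)$ by the choice of the auxiliary curves. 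Performing the $(-1, 1)$ surgeries on $T_3$ and $T_4$ undoes the Rolfsen twists, converting the 7-link exterior into the exterior of the minimally twisted 5-chain link $\{K_0, K_1, K_2, K_5, K_6\}$, so that $M$ is realised as a 4-filling of the exterior $M_5$. Since $M(\alpha)$ is Seifert fibred, $M(\beta)$ is toroidal, and $\Delta(\alpha, \beta) > 5$, Corollary \ref{MPR} then forces $M$ to be the figure eight knot exterior. The principal obstacle will be the explicit choice of $K_3$ and $K_4$ inside $S^3$ so that the resulting 7-link is the one diagrammed in Figure \ref{bgz5-7-component link}(2) and so that the $(-1, 1)$ Rolfsen twists along them indeed reduce that link to the minimally twisted 5-chain of Figure \ref{5link}.
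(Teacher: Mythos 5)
Your proposal has the right skeleton (drill fibres, realise $Z$ as a link exterior in $S^3$, add $K_\beta$, reduce to a five-component link, and appeal to a classification), but three points go wrong, one of them fatal.

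First, your diagnosis of why extra drilling is needed on the $-$-side is not right. The non-orientability of the M\"{o}bius band fibration of $\widehat X^-$ is irrelevant here: the paper forms $Z^-$ by drilling the two exceptional fibres of the {\it orientable-base} $D^2(2,2)$ Seifert structure, so $Z^- \cong Q^- \times S^1$ with $Q^-$ a twice-punctured disk, exactly as on the $+$-side. What fails is that $A^-$ is vertical only with respect to the M\"{o}bius band fibration, so $A^- \cap Z^-$ is {\it not} a vertical annulus in the product structure; it is a twice-punctured annulus, and the arc $k_-\subset A^-$ may wind arbitrarily around the $S^1$ factor. The two auxiliary knots $K_5,K_6$ (parallel pushoffs of the centre circles of the two annuli $\widehat F\setminus\partial A^-$, pushed into the interior of $Z^-$) are drilled precisely so that a Dehn twist along an annulus joining $T_5$ and $T_6$ can unwrap $k_-$. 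Your $K_3,K_4$ ``encoding the $I$-bundle twist'' are instead the exceptional-fibre tori, filled with slope $(-2,1)$ to recover $\widehat X^-$; the $(-1,1)$ appears only after a Kirby--Rolfsen simplification of the seven-component link, not by your design choice.

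Second, and decisively, your final step does not go through. You assert that performing the $(-1,1)$ surgeries on $T_3,T_4$ reduces the seven-component link exterior to the exterior of the minimally twisted $5$-chain link $M_5$, so that Corollary~\ref{MPR} finishes the proof. The paper makes no such claim, and the reduction it performs does not yield $M_5$: after showing the $T_5$-- and $T_6$-fillings can be chosen so that one is trivial, and after eliminating a component via Kirby--Rolfsen, one lands on a {\it different} hyperbolic $5$-component link $N_\#$, and $M = N_\#[(0,0),(m_1,n_1),(m_2,n_2),(m_3,n_3),(0,1)]$. The required statement---that such a hyperbolic $4$-filling of $N_\#$ with two non-hyperbolic slopes at distance $\ge 6$ is the figure-eight exterior or one of two explicit Whitehead-link fillings---is {\it not} a consequence of \cite{Rou} or \cite{MP}; it is established by a dedicated SnapPy/Martelli computer verification specific to $N_\#$. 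Without an argument that your resulting link actually is $M_5$ (which it is not in the paper's set-up), the appeal to Corollary~\ref{MPR} leaves an unproved gap exactly where the hard content of the lemma lives.

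Third, a bookkeeping point: your count ``$Z$ is a $6$-component link exterior'' is off. In the paper, $Z$ is the exterior of the $4$-component double Hopf link; the seventh link component is reached by drilling $K_5,K_6$ inside $Z^-$ and the core $K_0=K_\beta$ of the $\beta$-filling solid torus, not by drilling four curves out of $\widehat X^-$ before gluing.
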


\begin{proof}
As in the proof of Lemma \ref{nice annuli},
let $Z^\epsilon$ be the exterior of the singular fibres  in  $\widehat X^\epsilon$
with respect to the Seifert structure
whose base orbifold is $D^2(p_\e,q_\e)$, and let $Z$ be the connected
submanifold of $M(\beta)$ which is the union of $Z^+$ and $Z^-$ in $M(\beta)$
meeting along $\widehat F$.
Then $Z^\epsilon$  is homeomorphic to $Q^\e\times S^1$ where
$Q^\epsilon$ is a twice punctured disk.
Since $d=1$,  we have, as in the proof of Lemma \ref{nice annuli}, that
  $Z$ is homeomorphic to the exterior of ``the double Hopf link'' in $S^3$
such that the torus $\widehat F$ becomes a torus in $S^3$  which
bounds a trivial solid torus $V^\epsilon$  in $S^3$ on each  side of
$\widehat F$   and which separates  the two pairs of parallel
components of the double Hopf link and that  the twice punctured
disk  $Q^\epsilon$ in $Z^\epsilon$ caps off in $V^\epsilon$ to a
standard meridian disk of $V^\epsilon$. Figure \ref{bgz5-d=d'=1}
shows  $Z^-$ cut open along $Q^-$. Note that $\widehat
X^-$ is an annulus bundle over $S^1$ with $A^-$ as a fibre. The part
of $A^-$ in $Z^-$ is a twice punctured annulus. Figure
\ref{bgz5-d=d'=1} shows the part of $A^-$ in  the cut-open $Z^-$
(the surface  coloured green). Here we may assume that $A^-$ is of
the form shown in Figure \ref{bgz5-d=d'=1} inside  $Z=Z^+\cup Z^-$
(i.e. after the specific choice of $Q^-$ for $Z^-$) since there is
only one twisted I-bundle over the Klein bottle up to homeomorphism.

\begin{figure}[!ht]
\centerline{\includegraphics{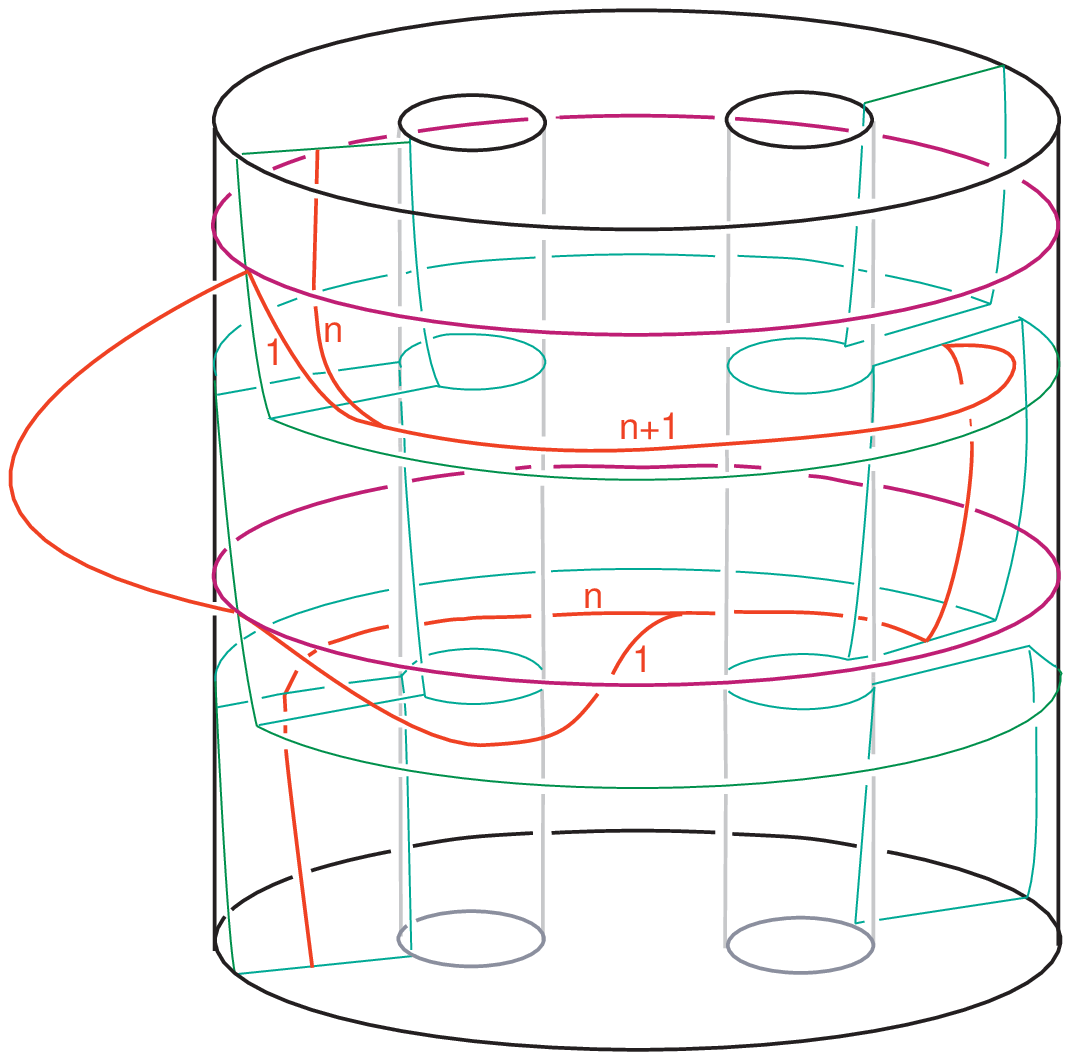}}
\caption{}\label{bgz5-d=d'=1}
\end{figure}

Also as in the proof of Lemma \ref{nice annuli}, we have that  the
annulus $A^+$ separates the two inner components of $\partial Z^+$,
$\partial A^+$  is as shown in Figure \ref{bgz5-d=d'=1} (the two
horizontal circles coloured purple), and up to Dehn twists and
isotopy, the arc $k_+$ in $A^+$ is  transverse to the $S^1$ fibres of
$A^+$ and is disjoint from a copy of $Q^+$ (shown in Figure
\ref{bgz5-d=d'=1} as the  red coloured arc that lies outside $Z^-$).
The arc $k_-$ is an essential transverse arc in $A^-$ which may wrap
along the $S^1$ factor of $A^-$ many times. In Figure
\ref{bgz5-d=d'=1} $k_-$ is illustrated as a branched curve in $A^-$
(a branch with label $n$ means $n$ parallel  arcs in $A^-$). To
eliminate the wraps of $k_-$ in $A^-$, we drill out two knots $K_5,
K_6$ in $Z^-$ which are chosen as follows: $\partial A^-$ cuts  the torus
$\widehat F$ into two annuli, push the center circle of each of
these two annuli slightly into the interior of $Z^-$ but disjoint
from $A^-$, the resulting knots are $K_5$ and $K_6$. Let $Y^-$ be
the exterior of $K_5$ and $K_6$ in $Z^-$, and let $T_5$ and $T_6$ be
the torus boundary components of $Y^-$ corresponding to $K_5$ and
$K_6$. There is an obvious  essential annulus $A$ in $Y^-$
connecting $T_5$ and $T_6$ which intersects $A^-$ in a single circle
which is essential in both $A$ and $A^-$. Twisting along this
annulus $A$ (which does not change the homeomorphism type of $Y^-$,
which fixes $\widehat F$ point-wise and which preserves $A^-$
set-wise) together with some isotopy,
 we may simplify $k_-$ in $A^-$ so that it does not fully wrap around the $S^1$ factor of $A^-$ and
 is disjoint from a copy of $Q^-$.
Now let $N$ be the exterior  in $S^3$ of the $7$-component link
shown in part (1) of Figure \ref{bgz5-7-component link} where the
red coloured component $K_0$  is $K_\b=k_+\cup k_-$, the two green
coloured components are $K_5$ and $K_6$ and the remaining four components
form the double Hopf link. Then by the construction of $N$, $M$ can
be obtained by Dehn filling the six boundary components $T_1, \cdots,
T_6$ of $N$, leaving $T_0$ unfilled.

Note that the twisted $I$-bundle over the Klein bottle $\widehat
X^-$ can be  recovered  from $Z^-$ by Dehn filling the two inner
boundary tori $T_3$ and $T_4$ of $Z^-$ both with slope $(-2,1)$
since the twisting along the annulus $A$ above fixes $T_3$ and $T_4$
point-wise and thus does not change filling slopes on these two
tori.

Simplifying this link as illustrated in Figure \ref{bgz5-7-component
link} (which does not change the homeomorphism type of $N$), we get
the link shown in Figure \ref{bgz5-7-component link} part (2). By the
Kirby-Rolfsen surgery calculus, the filling slope for the new $T_3$
and $T_4$  becomes $(-1, 1)$. This proves the first conclusion of
the lemma.

\begin{figure}[!ht]
\centerline{\includegraphics{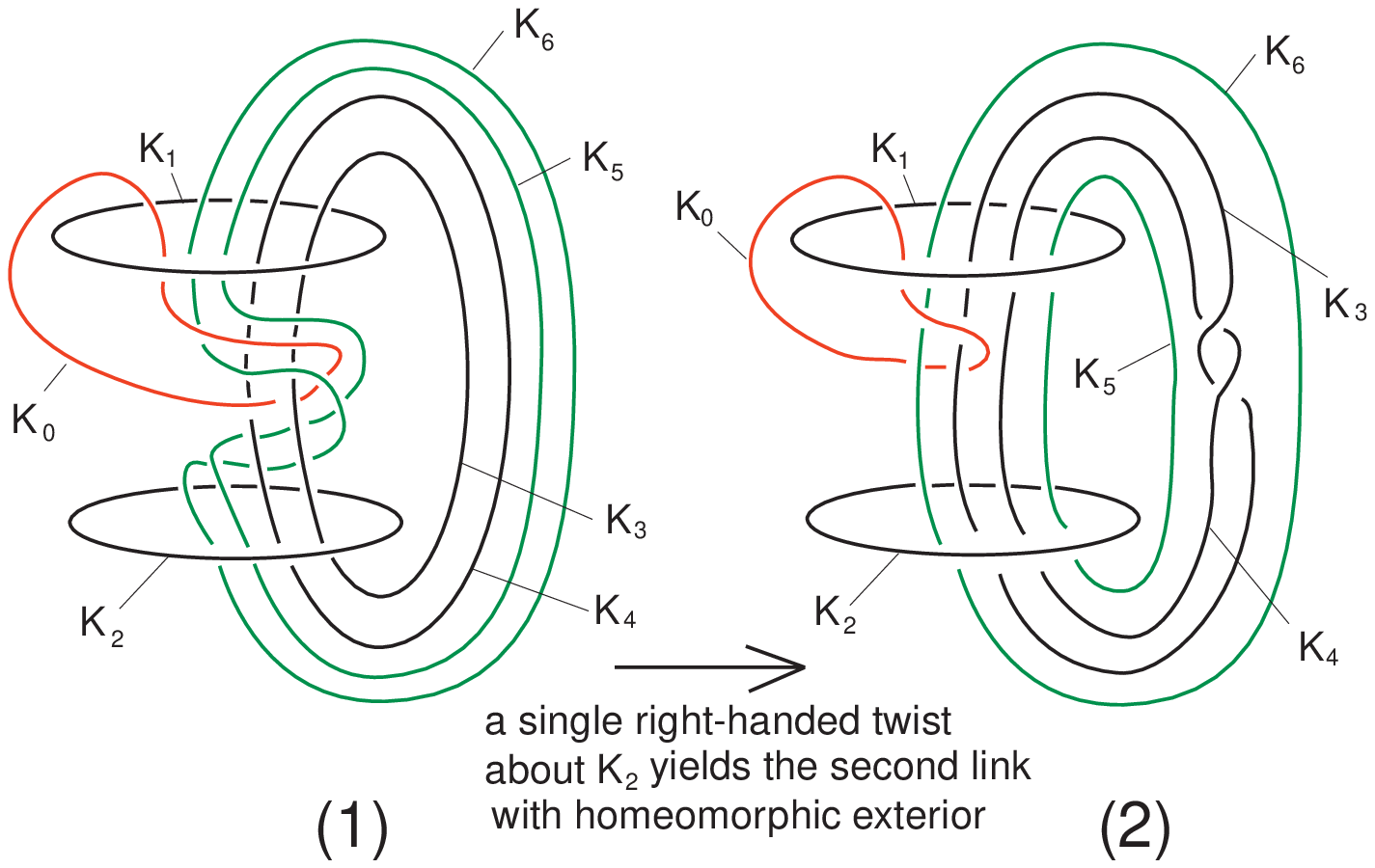}} \caption{ }\label{bgz5-7-component link}.
\end{figure}

To prove the second conclusion of the lemma, let $N$ be the exterior
of the ordered $7$-component link in $S^3$ shown in part (2) of
Figure \ref{bgz5-7-component link}, which can be checked with SnapPy
to be hyperbolic.
 Our manifold $M$ is supposed to be a hyperbolic filling of $N$ leaving
$T_0$ unfilled, i.e.
$$M=N[(0,0), (p_1,q_1), (p_2, q_2), (-1,1), (-1,1),(p_5,q_5), (p_6,q_6)]$$
 for some nonempty  slopes $(p_1,q_1), (p_2, q_2), (p_5,q_5), (p_6, q_6)$.
Let $ Y$  be the manifold  $$ N[(0,0),(0,0),(0,0),(-1,1),(-1,1),(0,0),(0,0)].$$
There is an annulus $ E $ in Y joining $T_5$  and $T_6$ (since $ \widehat X^- $ is an $A^-$-bundle
over $ S^1$, $K_5$ and $K_6$  bound an annulus in $\widehat X^-$
 disjoint from the annulus $ A^-$  which contains $k_-$). A regular neighbourhood $Z_*$  of
$E \cup T_5 \cup T_6$  in $Y$  is homeomorphic to $ P \times S^1$, where $P$  is a twice punctured disk.
 Let $ W = Y \setminus Z_*$, and let $T$  be the torus
$W \cap Z_*$. Our manifold $M$  is obtained by Dehn filling $Y $ along $T_1$,
$T_2$, $T_5$  and $T_6$: $ M = \widehat W \cup_{T} \widehat Z_*$, say,
where $\widehat W$ is the result of  Dehn filling $W$ along $T_1$
and $T_2$  with slopes $ (p_1,q_1)$  and $(p_2,q_2)$, and $\widehat Z_*$ is $Z_*$
Dehn filled along $T_5$  and $T_6 $ with slopes $(p_5,q_5) $ and $(p_6,q_6)$.
Since $M$  is hyperbolic, $T$  compresses in either $\widehat {Z_*}$  or
$\widehat W$. In the first case, $\widehat Z_*$ must be a solid torus, and in the second case,
$\widehat W  \cong M \# V$, where $V$ is a solid torus and $V \cup \widehat Z_* \cong S^3$. In the first case,
$\widehat Z_*$ can be obtained from $Z_*$  by doing the trivial Dehn filling $(1,0)$ on $T_6$ and some filling on $T_5$, with slope
$(p'_5,q'_5)$, say. In the second case, we can do a similar Dehn filling on $Z_*$ to get a solid torus $U$
 such that $ V \cup_{T} U \cong S^3$. Thus in both cases $ M = N[(0,0),(p_1,q_1),(p_2,q_2),(-1,1),(-1,1),(p'_5,q'_5),(1,0)]$.
 That is, if $N_{*}$ is the exterior of the $6$-component link
  in $S^3$ shown in part (1) of Figure \ref{bgz5-5-component link},
  then $M=N_*[(0,0), (p_1,q_1), (p_2, q_2), (-1,1), (-1,1), (p_5',q_5')]$.
   Using the Kirby-Rolfsen surgery calculus  we can eliminate the component $K_4$
    and get the ordered
   $5$-component link $\{J_0, J_1, J_2, J_3, J_4\}$ in $S^3$ shown in part (2) of Figure \ref{bgz5-5-component link}
   so that  $M=N_{\#}[(0,0), (m_1, n_1), (m_2, n_2), (m_3,n_3),(0,1)]$
   for non-empty slopes $(m_1, n_1), (m_2, n_2), (m_3,n_3)$,
   where $N_{\#}$ is the exterior of the ordered $5$-component link $\{J_0, J_1, J_2, J_3, J_4\}$ in $S^3$.

\begin{figure}[!ht]
\centerline{\includegraphics{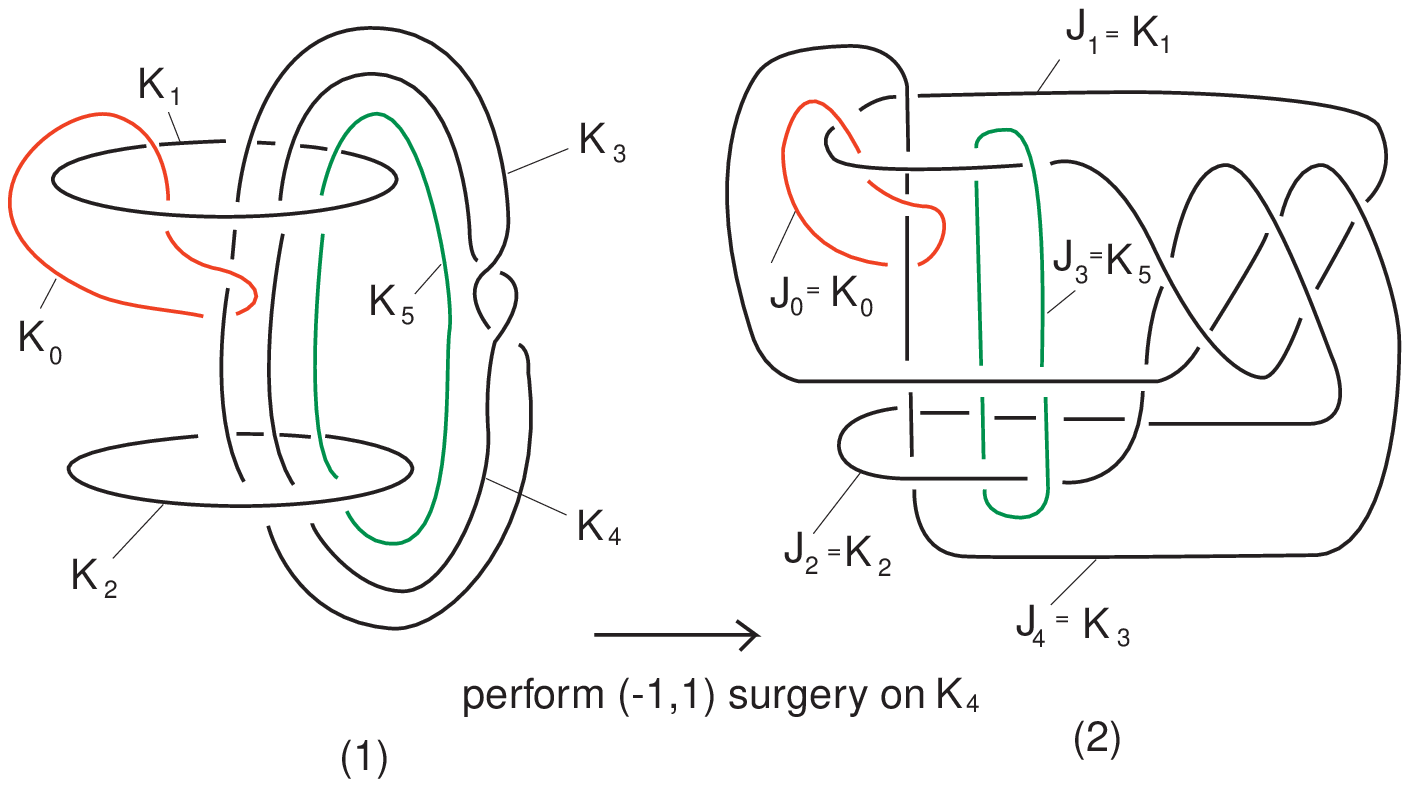}} \caption{ }\label{bgz5-5-component link}.
\end{figure}

The five component link $\{J_0, J_1, J_2, J_3, J_4\}$ in $S^3$ is
hyperbolic and we have reduced the proof of the lemma to the proof
of the following

{\bf Claim}. If $M$ can be obtained by Dehn filling $N_{\#}$ along
$T_1,..., T_4$ with slope $(0, 1)$ on $T_4$, then $M$ is the figure
8 knot exterior.

It is enough to show that if a $4$-filling $N_{\#}[(0,0), (m_1,
n_1), (m_2, n_2), (m_3, n_3), (0, 1)]$ is a hyperbolic manifold
which has two non-hyperbolic filling slopes  on $T_0$ distance at
least $6$ apart, then it is either the figure eight knot exterior, or
$Wh[(0,0), (-2,1)]$ or $Wh[(0,0),(5, 1)]$, where $Wh$ is the
exterior of the Whitehead link in the $3$-sphere\footnote{For the rest of this section we've replaced the Whitehead link exterior $Wh$ (cf. Figure \ref{2 and 3 chain link}) with its mirror image as a matter of convenience.}, because for each of
$Wh[(0,0), (-2,1)]$ and  $Wh[(0,0),(5, 1)]$, if two non-hyperbolic
filling slopes are distance at least $6$ apart, they are both
toroidal filling slopes.

To this end,  we apply the Python codes {\it
find\_exceptional\_fillings.py}
 and
 {\it search\_-geometric\_solutions}
 written by Martelli \cite{Ma}, used on SnapPy written by Culler, Dunfield,
   Goemer and Weeks \cite{CDGW}, to  the $5$-component link exterior
$N_{\#}$. Note that when running {\it
find\_exceptional\_fillings.py} on the $5$-component link exterior
$N_{\#}$ we only need to find those fillings of  $N_{\#}$ which
either have slope $(0, 1)$ on $T_4$ or leave $T_4$ unfilled (this
can be done by adding a finite set of well chosen $1$-fillings into the
exclude.py file in Martelli's Python package). We get two lists as the outputs
from running the code {\it find\_exceptional\_fillings.py}: a list of
candidate isolated non-hyperbolic fillings on $N_{\#}$ and a list of
candidate hyperbolic fillings on $N_{\#}$,   here an isolated
non-hyperbolic filling means a non-hyperbolic filling whose  proper
sub-fillings are all hyperbolic fillings.

We first need to check if the fillings in the second list are
indeed hyperbolic fillings, which is accomplished by running {\it
search\_-geometric\_solutions} on it. It turns out the
fillings in the second list are all $5$-fillings (there are $862$ of
them) and they are all hyperbolic. (To reduce the running time, we
first set the max degree in the code to $3$, at which 809 fillings
are confirmed to be hyperbolic, then set the max degree at $5$, at
which another twenty fillings are confirmed, then degree  $7$
confirming twenty-six more fillings. The remaining seven fillings are confirmed at
degree $11$.)

We now deal with the first list.
We don't have to confirm if all the fillings listed are indeed
non-hyperbolic.
We treat them as possible isolated non-hyperbolic fillings.
The good thing is that the list contains all real isolated non-hyperbolic fillings.
The list contains eight $1$-fillings, twenty-seven $2$-fillings, sixty-three $3$-fillings, one hundred and seventy $4$-fillings and three hundred and one
$5$-fillings.

If a $k$-filling of $N_{\#}$, $1\leq k\leq 3$, is an isolated non-hyperbolic filling but
 contains hyperbolic pieces, it might extend to infinitely many hyperbolic
 $4$-fillings of $N_{\#}$ with $T_0$ unfilled, which could potentially contribute
  many new non-hyperbolic
 slopes on $T_0$ which have not occurred in the fillings of the list.
 Therefore such cases, if they exist, are potentially difficult to deal with.
 Fortunately such  a case does not happen for our manifold $N_{\#}$.
 There are some $k$-fillings, $1\leq k\leq 3$, which are isolated non-hyperbolic
 containing hyperbolic pieces. But after adding the slope  $(0,1)$
 on $T_4$ to such $k$-filling (if $T_4$ is unfilled  there), it still contains a hyperbolic piece
 only  when  its slope on $T_0$ is non-empty and
 therefore this $k$-filling contributes at most one
 non-hyperbolic slope on $T_0$.
 Similarly we don't need to worry about isolated non-hyperbolic $4$-fillings
 as each of them either contributes only one slope on $T_0$ or
 can not yield $M$.

 Now we combine fillings in the first list (exclude those with empty slope on $T_0$)
 into the list of maximal subgroups  so that
 in each subgroup any two fillings are compatible in the sense
 that their slopes on each of $T_1, T_2, T_3, T_4$ agree
 unless one or both of  them are empty.
 (The new list, obtained using  a simple Python code, as well as the two lists mentioned earlier
will be posted along with the paper on authors' web
pages).
 Each such subgroup contains  at least one 5-filling
 which by restriction  gives a hyperbolic $4$-filling leave $T_0$ unfilled
 (this hyperbolic $4$-filling is unique for this subgroup).
 Slopes on $T_0$ from the elements in this subgroup
 contain all possible non-hyperbolic filling slopes for
 the hyperbolic $4$-filling.
 So we just need to calculate the  distance
 between such  slopes on $T_0$ for each subgroup (which  can be done
 using a simple Python code).
 It turns out that if the distance between two slopes on $T_0$
 from  a subgroup is at least $6$, then one of the following
 events holds:

 1) The corresponding hyperbolic $4$-filling for this subgroup
 is the figure eight knot exterior (occurs for $4$ subgroups) or
 $Wh[(0,0), (5,1)]$ (occurs for $4$ subgroups) or
 $Wh[(0,0), (-2,1)]$ (occurs for $8$ subgroups), which can be verified by SnapPy.

 2) One of the two slopes is
 $(0, 1)$.
 But this slope is
 not a non-hyperbolic slope of the corresponding
 $4$-filling of the subgroup, which can be checked  using {\it search\_geometric\_solutions}.
 (Such instance occurs for $11$ subgroups).
 Such case  happens because
 $[(0,1), (0,0), (0,0), (0,0), (0,0)]$
 is a non-hyperbolic filling which contains a hyperbolic piece.

 3) The distance is $7$  and is realized on the unique pair
  $[(-3,4), (2,1), (3,1), (-2, 1), (0, 1)]$ and\newline
 $[(1,1), (2,1), (0, 0), (0, 0), (0, 0)]$.
  But $[(1,1), (2,1), (3,1), (-2, 1), (0, 1)]$ is
 hyperbolic.
  This happens because
 $[(1,1), (2,1), (0,0), (0,0), (0,0)]$
 is an isolated  non-hyperbolic filling which contains hyperbolic piece.

 The proof of the second conclusion of the lemma is now finished.
 \end{proof}

\subsection{The combinatorics of $\overline{\Gamma}_F$ and the figure eight knot exterior}
We show how combinatorial conditions on $\overline{\Gamma}_F$ guarantee the existence of
annuli as in the previous subsection.

\begin{prop}
\label{prop: trigon with max weight edge}
Suppose that $d \ne 0$, $\Delta(\alpha, \beta) \geq 6$, and $f$ is a trigon face  of $\overline \Gamma_F$ contained in $X^\epsilon$
with a maximal weight edge $\bar e$. Then there are annuli $A^+$ and $A^-$ with the properties given in Lemma \ref{nice annuli}.
Therefore $M$ is the figure eight knot exterior.
\end{prop}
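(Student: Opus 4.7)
The strategy is to use the trigon $f$ together with its maximal weight edge $\bar e$ to construct essential annuli $A^\epsilon$ in $\widehat X^\epsilon$ and $A^{-\epsilon}$ in $\widehat X^{-\epsilon}$ satisfying the hypotheses of Lemma~\ref{nice annuli}, from which the figure eight conclusion follows immediately. First I would read off the structural consequences of $\bar e$ being of maximal weight: by Lemma~\ref{lemma: edge wts}(1)(c) every negative edge of $\overline\Gamma_F$ incident to a trigon has weight at most $2$, so $\bar e$ must be positive; then Lemma~\ref{lemma: edge wts}(1)(b) forces $p_+ = p_- = 2$, so both $\dot\Phi_1^{\pm}$ are twice-punctured $\widehat F$-essential annuli that are vertical in $\widehat X^{\pm}$, with $q_\pm \ge 3$.

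Next I would construct $A^\epsilon$ on the side that contains $f$. Applying the loop theorem to $f$, and invoking $t_1^\epsilon = 0$ together with Lemma~\ref{embedded n-gons}(1) to rule out embedded monogons and essential embedded bigons, yields an embedded trigon $(D, \partial D) \subset (X^\epsilon, \partial X^\epsilon)$ which is either an algebraic monogon or an algebraic trigon. In the monogon case Corollary~\ref{trigon implies trefoil exterior} identifies $\widehat X^\epsilon$ as a trefoil exterior and Lemma~\ref{embedded n-gons}(3) furnishes a vertical essential annulus $A^\epsilon$ of slope $\phi_\epsilon$ in which $k_\epsilon$ is isotopic to a core arc; in the algebraic trigon case Lemma~\ref{embedded n-gons}(2) produces the analogous annulus with $k_\epsilon$ essential in it.

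To obtain $A^{-\epsilon}$ I would exploit the three parallel lead edges $e_1, e_2, e_3$ of $\bar e$. Since $p_{-\epsilon} = 2$, $\dot\Phi_1^{-\epsilon}$ is a vertical twice-punctured annulus in $\widehat X^{-\epsilon}$, and because $\bar e$ is positive the middle edge $e_2 \subset \dot\Phi_2^{-\epsilon}$ represents a loop of slope $\phi_{-\epsilon}$ in $\widehat F$. The two bigons in $X^{-\epsilon}$ cobounded by $\{e_1, e_2\}$ and $\{e_2, e_3\}$, pasted along appropriate arcs of $B^{-\epsilon}$ to a properly embedded disk in $H^{-\epsilon}$ containing $k_{-\epsilon}$, will assemble into a properly embedded essential annulus $A^{-\epsilon} \subset \widehat X^{-\epsilon}$ of slope $\phi_{-\epsilon}$ in which $k_{-\epsilon}$ is an essential arc.

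Finally I would establish $d = \Delta(\phi_+, \phi_-) = 1$, which is exactly the condition that each boundary component of $A^+$ meets each boundary component of $A^-$ transversely at a single point. Imitating the argument in Lemma~\ref{lemma: edge wts}(1)(c), the trigon relation carried by $f$, read in $\pi_1(\widehat X^\epsilon)/\langle\langle \phi_{-\epsilon} \rangle\rangle$, should force the non-peripheral generator $x$ of $\pi_1(\widehat X^\epsilon)$ into the image of $\pi_1(\widehat F)$, so that this quotient, which admits the presentation $\langle u, v \mid u^{p_\epsilon}, v^{q_\epsilon}, (uv)^d \rangle$, is abelian; this forces $d = 1$. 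Once this is in hand Lemma~\ref{nice annuli} completes the proof. The principal obstacle will be the third step: producing $A^{-\epsilon}$ as a genuinely embedded annulus of the correct slope, since a careless identification of the two bigons around $H^{-\epsilon}$ could instead yield a non-orientable surface, a once-punctured torus, or a surface of the wrong slope. Careful bookkeeping of how the endpoints of $e_1, e_2, e_3$ distribute between $b_1$ and $b_2$, together with the vertical product structure of $\dot\Phi_1^{-\epsilon}$ in $\widehat X^{-\epsilon}$, should suffice to resolve this.
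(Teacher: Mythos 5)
Your opening reduction is sound: since negative edges incident to trigons have weight at most $2$ (Lemma \ref{lemma: edge wts}(1)(c) and (2)(b)), a maximal weight edge $\bar e$ of a trigon $f$ is indeed positive. But your claim that $p_+ = p_- = 2$ invokes Lemma \ref{lemma: edge wts}(1)(b), which only applies when $X^-$ is not a twisted $I$-bundle; the proposition must also handle the twisted $I$-bundle case, where $\bar e$ has weight $4$ and $p_+$ can be larger.

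The serious gap is in the third step. You write that ``the two bigons in $X^{-\epsilon}$ cobounded by $\{e_1,e_2\}$ and $\{e_2,e_3\}$'' can be pasted to a disk in $H^{-\epsilon}$ to form $A^{-\epsilon}$. But consecutive bigons of a parallel family alternate sides of $F$: if the trigon $f$ (which has $e_1$ as an edge) lies in $X^\epsilon$, then the bigon $R_{12}$ between $e_1,e_2$ lies in $X^{-\epsilon}$, while the bigon $R_{23}$ between $e_2,e_3$ lies back in $X^\epsilon$. Only one of your two bigons is on the side you need, so the assembly you describe does not exist. This is not a matter of ``careful bookkeeping''; it is a topological obstruction to your construction as stated. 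Moreover, even if both bigons lay on the same side, they are a priori only singular maps, and one needs the loop theorem to produce an embedded object before forming an annulus.

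The paper's construction of $A^{-\epsilon}$ uses only the single bigon $R$ between $e_1$ and $e_2$: because $\bar e$ is positive, $R$ is an algebraic bigon, and applying the loop theorem (with respect to algebraic intersection with $b_1$) yields an embedded algebraic bigon $D$ in $X^{-\epsilon}$; together with a product region in $B^{-\epsilon}$ this gives an embedded M\"obius band, and $A^{-\epsilon}$ is the frontier of a regular neighbourhood of that M\"obius band. The condition $d = 1$ then falls out geometrically: the M\"obius band has boundary of slope $\phi_{-\epsilon}$, and the minimality constraints from Corollary \ref{cor:3-gon} force the edge $d_1$ of $D$ to meet each $c_i$ (of slope $\phi_\epsilon$) exactly once transversely, rather than requiring the algebraic quotient argument you sketch. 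You should also note that the $A^\epsilon$ used by the paper is simply the middle level $A_\epsilon \times \{1/2\}$ of the product region $\widehat P$ from \S\ref{subsec: trace face} rather than being reconstructed from $f$ via the loop theorem as you propose; your alternative route likely lands on the same annulus up to isotopy, but it is unnecessary work given that $A^\epsilon$ is already canonically present.
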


\begin{proof}
Since $f$ is a trigon, $X^\epsilon$ is not a twisted $I$-bundle and by Lemma \ref{lemma: edge wts}, $\bar e$ is a positive edge.

Let $e_1 ,e_2,e_3$ be parallel adjacent edges of $\Gamma_F$ corresponding to $\bar e$ where
$e_1$ is an edge of $f$ and $e_2$ is the edge lying between $e_1$ and $e_3$.
Since $\bar e$ is a positive edge of weight $3$ or more, $\widehat{\dot\Sigma_1^+}$ is a twisted $I$-bundle over a M\"{o}buis band and $\widehat{\dot\Sigma_1^-}$ is either a twisted $I$-bundle over a Mobuis band or
a twisted $I$-bundle over a Klein bottle.

\begin{figure}[!ht]
\centerline{\includegraphics{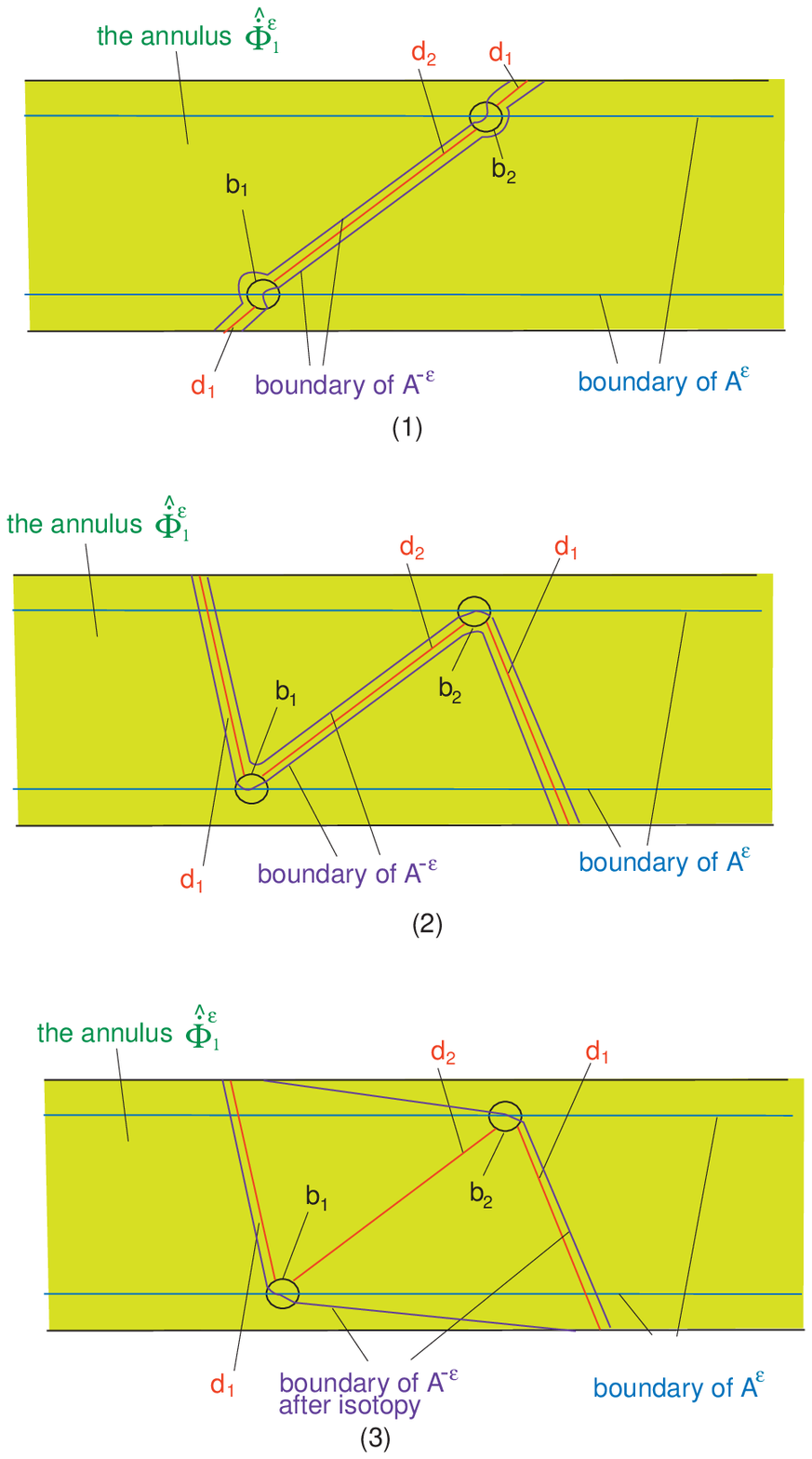}} \caption{ }\label{annuli1}
\end{figure}

After a homotopy of $h$ we can assume that the face $f$ of $\Gamma_F$ is minimally positioned (cf. \S \ref{subsec: trace face}).
Recall that $A_\e$ is the frontier of $\widehat{\dot\Sigma_1^\e}$ in $\widehat X^\e$
with boundary components $c_1$ and $c_2$,
$\widehat P=A_\e\times I$ is a product region in $\widehat{\dot\Sigma_1^\e}$
and $k_\e = K_\beta \cap H^\epsilon$ is an essential arc in $A^\e = A_\e\times\{1/2\}$.
By Corollary \ref{cor:3-gon}, $e_1$ can be homotoped in $F$ with its two endpoints fixed so that
it intersects each $c_i$ at most once.
Further, since the bigon face between $e_2$ and $e_3$ lies in $X^\e$,
$e_2$ can be homotoped in $F$ with its two endpoints fixed
so that it is contained in the interior of $\dot\Phi_1^\e$.

Recall that $\partial X^{-\e}=F\cup B^{-\e}$ and $b_1$, $b_2$ are components of $\partial F=\partial B^{-\e}$.

If we apply the loop theorem to the algebraic bigon $R$ between $e_1$ and $e_2$ with respect to
the algebraic intersection with $b_1$, we obtain an embedded disk $D$ in $X^{-\e}$ such that $\partial D$ has non-zero algebraic intersection
with $b_1$. In particular $\partial D$ must intersect $b_1$. The loop theorem implies that
we may assume that $\partial D\cap (B^{-\e}\cup c_1\cup c_2)$ is contained in
$h(\partial R)\cap (B^{-\e}\cup c_1\cup c_2)$ (cf. \cite[Theorem 4.10]{He}). Lemma \ref{embedded n-gons}(1) shows that
$D$ cannot be a monogon, so it is an algebraic bigon. Each of the two components of $F \cap \partial D$ are arcs connecting
$b_1$ and $b_2$ so that one of them, $d_1$ say, intersects each $c_i$ at most once, and the other, $d_2$ say, is contained in the interior of $\dot\Phi_1^\e$, and thus is disjoint from each $c_i$.

Note that $D$ is part of an embedded M\"{o}bius band in $X^{-\e}$ obtained by attaching a product region in the annulus
$B^{-\e}$ between the two corners of $D$ (cf. the proof of Lemma \ref{lemma: two trigons pos edge}).
The boundary of this M\"{o}bius band has slope $\phi_{-\e}$ and $c_i$ has slope $\phi_\e$.
Our hypothesis that $d \ne 0$ implies that $\phi_+\ne \phi_-$,
so the boundary of the M\"{o}bius band intersects each $c_i$ exactly once and the intersection is transverse. (So $d = 1$.)
That is, the edge $d_1$ intersects each $c_i$ transversely exactly once.

Recall $\hat b_1$ and $\hat b_2$ are the disks attached to $F$ to form $\widehat F$.
If we consider  $\hat b_1$, $\hat b_2$ as points
then $d_1\cup \hat b_1\cup d_2\cup \hat b_2$ is a loop in $\widehat F$  of slope $\phi_{-\e}$
which intersects $c_i$ transversely exactly once. We may now isotope $D$ in $X^{-\e}$ so that its edges
become transverse to the $S^1$-fibres of the annulus $\widehat{\dot\Phi_1^\e}$.
Figures \ref{annuli1}(1) and (2) illustrate the possibilities for
the edges $d_1$ and $d_2$ of $D$ up to Dehn twists fixing their endpoints along a $S^1$-fibre of the annulus $\widehat{\dot\Phi_1^\e}$.
If the case of Figure \ref{annuli1}(1) occurs, the resulting algebraic bigon $D$ yields a corresponding embedded M\"{o}bius band $B$ in $X^{-\e}$
such that the frontier of a suitably  chosen regular neighbourhood of $B$ in $\widehat X^{-\e}$ is an essential annulus $A^{-\e}$ which contains $k_{-\e}$ as an essential arc. Moreover each component of $\partial A^{-\e}$ intersects each component of $\partial A^\e$
exactly once. The boundary of $A^{-\e}$ is illustrated in Figure \ref{annuli1}(1).

If the case of Figure \ref{annuli1}(2) occurs, the resulting algebraic bigon $D$ also yields a corresponding embedded M\"{o}bius band $B$ in $X^{-\e}$
such that the frontier of a suitably  chosen regular neighbourhood of $B$ in $\widehat X^{-\e}$ is an essential annulus $A^{-\e}$ which contains $k_{-\e}$ as an essential arc, as illustrated in
Figure \ref{annuli1}(2). But at this stage each component of $\partial A^{-\e}$ intersects
each component of $\partial A^\e$ exactly twice, one transverse and one tangent. We may now isotope
$A^{-\e}$ in $\widehat{X^{-\e}}$, fixing the arc $k_{-\e} \subset A^{-\e}$, so that at the end of the isotopy
each component of $\partial A^{-\e}$ intersects each component of $\partial A^\e$ exactly once, and transversely, as indicated in Figure \ref{annuli1}(3).
\end{proof}

\begin{prop}
\label{prop: quad with adjacent max weight edges}
If $\Delta(\alpha, \beta) \geq 6$ and
there is  a quad face $f$ of $\overline \Gamma_F$ with adjacent edges of maximal weight, then
there are annuli $A^+$ and $A^-$ with the properties given in Lemma \ref{nice annuli}. Therefore $M$ is the figure eight knot exterior.
\end{prop}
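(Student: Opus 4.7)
The plan is to follow the template of the proof of Proposition~\ref{prop: trigon with max weight edge}. Propositions~\ref{prop: d=0} and \ref{prop: d'=0} together with $\Delta(\alpha,\beta)\geq 6$ rule out $d=0$, and by Lemma~\ref{lemma: edge wts}(2) the presence of a maximal-weight edge precludes $X^\e$ being a twisted $I$-bundle, while Lemma~\ref{lemma: edge wts}(1)(c) forces both maximal-weight edges $\bar e,\bar e'$ to be positive. Hence $\widehat{\dot\Sigma_1^\e}$ is a twisted $I$-bundle over a M\"obius band and $\widehat{\dot\Sigma_1^{-\e}}$ is a twisted $I$-bundle (over a M\"obius band or a Klein bottle). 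I would take $A^\e$ to be the frontier annulus $A_\e$ of $\widehat{\dot\Sigma_1^\e}$ translated to the middle slice $A_\e\times\{1/2\}\subset \widehat P\cong A_\e\times I$, so that $k_\e$ becomes an essential arc in it; $A^{-\e}$ will be produced from an embedded M\"obius band in $X^{-\e}$ exactly as in Proposition~\ref{prop: trigon with max weight edge}.

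Put the quad face $f$ in minimal position with respect to $A_\e$ as in \S\ref{subsec: trace face} and let $\bar e,\bar e'$ meet at a corner $v$ of $f$. Write $e_1,e_2,e_3$ for the successive parallel edges of $\Gamma_F$ representing $\bar e$, with $e_1$ the edge of $f$, and similarly $e_1',e_2',e_3'$ for $\bar e'$. Using Corollary~\ref{cor:4-gon} together with Lemma~\ref{parallel cross-arcs}(3) at the corner $v$, the presence of two adjacent maximal-weight edges restricts the red-tile pattern of $f$ enough that, after a fixed-endpoint homotopy in $F$, one may assume $e_1$ meets each component $c_i$ of $\partial A_\e$ at most once while $e_2$ is pushed into the interior of $\dot\Phi_1^\e$ (and similarly for $e_1',e_2'$). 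The bigon $R$ of $\Gamma_F$ bounded by $e_1$ and $e_2$ lies in $X^{-\e}$; applying the loop theorem to $R$ with respect to mod-$2$ intersection with $b_1$ yields an embedded algebraic bigon $D\subset X^{-\e}$ whose $F$-edges $d_1,d_2$ inherit the same structure as $e_1,e_2$. The M\"obius band obtained by attaching a product region of $B^{-\e}$ to $D$ has boundary of slope $\phi_{-\e}$, and since $d\neq 0$ this boundary meets each $c_i$ transversely exactly once; a suitable regular neighbourhood in $\widehat X^{-\e}$ therefore supplies the desired essential annulus $A^{-\e}$ containing $k_{-\e}$ as an essential arc and meeting $\partial A^\e$ in the configuration required by Lemma~\ref{nice annuli} (after a possible isotopy of $A^{-\e}$ fixing $k_{-\e}$, as in Figure~\ref{annuli1}).

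The main obstacle is the preliminary homotopy cleanup. In contrast with the trigon case, where Corollary~\ref{cor:3-gon} immediately limits red tiles to at most a single trigon, a quad face of $\overline\Gamma_F$ can in principle carry several red tiles and its edges may thread through $A_\e$ in complicated patterns. The hypothesis that \emph{two adjacent} maximal-weight edges are present, rather than just one, is exactly what permits the simultaneous application of Corollary~\ref{cor:4-gon} at the common corner $v$ and of Lemma~\ref{parallel cross-arcs}(3) to both $\bar e$ and $\bar e'$, forcing the red tiles to be distributed around $v$ in the controlled way needed to carry out the required pushoff of $e_1,e_2,e_1',e_2'$. Once this is in place, the remainder of the argument runs exactly parallel to the trigon case, and the conclusion follows from Lemma~\ref{nice annuli}.
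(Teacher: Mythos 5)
Your outline follows the paper's strategy (minimal position, Corollary~\ref{cor:4-gon}, push $e_2$ into $\dot\Phi_1^\e$, loop theorem applied to the bigon $R$, M\"obius band, Lemma~\ref{nice annuli}), but there is a genuine gap in the middle of the argument.

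The assertion that Lemma~\ref{lemma: edge wts}(1)(c) forces $\bar e$ and $\bar e'$ to be positive is not justified: parts (1)(c) and (2)(b) of that lemma concern negative edges incident to a \emph{trigon}, and there is no analogue stated for a quad face, so a maximal-weight edge incident to a quad may well be negative. The paper's proof of this proposition in fact makes no positivity claim. It applies the loop theorem directly to the essential loop $\partial R$ (with no auxiliary subgroup), shows the resulting embedded disk $D$ is a bigon using Lemma~\ref{embedded n-gons}(1) and incompressibility of $F$, and then explicitly \emph{excludes} the case that $D$ is an algebraic $0$-gon: in that case the two $F$-edges of $D$ cap off to parallel $\widehat F$-essential loops, so their combined algebraic intersection with each $c_i$ is even, which is shown to contradict the deduced fact that $\partial D$ meets each $c_i$ transversely in at most one point. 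Your proposal neither establishes positivity nor treats the $0$-gon possibility, and without one of these the passage to the M\"obius band does not go through. Relatedly, the loop theorem ``with respect to mod-$2$ intersection with $b_1$'' cannot give what you want: a bigon face $R$ crosses $b_1$ an even number of times whether $\bar e$ is positive or negative, so $[\partial R]$ always lies in the kernel of the mod-$2$ pairing with $b_1$. The trigon argument (Proposition~\ref{prop: trigon with max weight edge}) uses the \emph{algebraic} intersection with $b_1$, which works there precisely because the trigon lemma does yield positivity and hence $\partial R \cdot b_1 = \pm 2 \ne 0$; that route is not available here without a positivity statement.

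Minor points: the appeal to Lemma~\ref{parallel cross-arcs}(3) is extraneous --- Corollary~\ref{cor:4-gon} alone produces the needed edge with at most one incident red tile --- and it suffices to control one of $\bar e,\bar e'$, not both; adjacency enters only through Corollary~\ref{cor:4-gon}, which guarantees that at least one of them behaves well.
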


\pf Suppose that $f$ lies in $X^\e$ and note that $X^\e$ cannot be a twisted $I$-bundle as in this case the only faces of $\Gamma_F$ lying in $X^-$
are bigons. We may assume that $f$ is minimally positioned (cf. \S \ref{subsec: trace face}).

By  Corollary \ref{cor:4-gon}, $f$ has an edge $\bar e$ of maximal weight which intersects each $c_i$ at most once.
Let $e_1 ,e_2,e_3$ be the parallel adjacent edges in the family of edges represented by  $\bar e$ with
$e_1$ being the edge of $f$ and $e_2$ being the one between $e_1$ and $e_3$.
Note that we may assume that $e_2$ is contained in the interior of $\dot{\Phi}^\e_1$ and thus is disjoint from
each $c_i$.

Let $R$ be the bigon face between $e_1$ and $e_2$. Note that $\partial R$ is an essential loop in $\partial X^{-\e}$ since each of the edges $e_1$ and $e_2$ is an essential arc in $F$,
so we can apply the loop theorem to the singular disk $R$ which is the bigon face between $e_1$ and $e_2$.
We obtain a properly embedded disk $(D, \partial D)$ in $(X^{-\e}, \partial X^{-\e})$ such that $\partial D$ is an essential loop in $\partial X^{-\e}$
which intersects each $c_i$ transversely and in at most one point. Since $F$ is incompressible and $X^{-\e}$ does not contain a monogon, $D$ is a bigon.

If $D$ is an algebraic $0$-gon, its edges are disjoint $\widehat F$-essential loops and therefore their algebraic intersection with each $c_i$
is even, contrary to what we have deduced. Thus it is an algebraic bigon. Now proceed as in the proof of Proposition \ref{prop: trigon with max weight edge} to see that the existence of such an embedded disk $D$ implies that
there are annuli $A^+$ and $A^-$  with the properties given in Lemma \ref{nice annuli}. Therefore $M$ is the figure eight knot exterior.
\qed

\begin{prop}
\label{prop: two trgons sharing a positive edge}
Suppose that $\Delta(\alpha, \beta)=6$ and $d=1$.
If $\overline \Gamma_F$
has two trigon faces $f_1$ and $f_2$  sharing a common positive edge $\bar e$ of weight $2$,
then  $M$ is the figure eight knot exterior.
\end{prop}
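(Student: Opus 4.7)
The plan is to use the hypotheses to produce embedded essential annuli $A^+ \subset \widehat X^+$ and $A^- \subset \widehat X^-$ satisfying the conditions of Lemma \ref{nice annuli} (respectively Lemma \ref{lemma:nice annuli2} when $X^-$ is a twisted $I$-bundle), from which the conclusion follows immediately since $\Delta(\alpha,\beta) = 6 > 5$. Assume without loss of generality that $f_1, f_2$ lie in $X^\epsilon$; then $X^\epsilon$ is not a twisted $I$-bundle, so the Seifert base $D^2(p_\epsilon, q_\epsilon)$ of $\widehat X^\epsilon$ has $q_\epsilon \geq 3$. Let $e_1 \subset \partial f_1$ and $e_2 \subset \partial f_2$ be the parallel edges of $\Gamma_F$ represented by $\bar e$, and let $R$ be the bigon face of $\Gamma_F$ they cobound. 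Since $\bar e$ is positive, $R$ lies in $X^{-\epsilon}$ and is an algebraic bigon.

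First I would construct $A^{-\epsilon}$. Because each $f_i$ is a trigon in $X^\epsilon$, Corollary \ref{cor:3-gon} (after the minimal-position homotopy of \S\ref{subsec: trace face}) lets me homotope each $e_i$ in $F$ so that it meets each $c_j \subset \widehat F$ in at most one transverse point. Applying the loop theorem to the algebraic bigon $R$ with respect to the mod $2$ intersection with $b_1$, and using Lemma \ref{embedded n-gons}(1) to rule out monogons, I obtain an embedded algebraic bigon $D \subset X^{-\epsilon}$ whose edges $d_1, d_2$ inherit the property of meeting each $c_j$ in at most one point. As in the proof of Proposition \ref{prop: trigon with max weight edge}, gluing $D$ to a suitable product rectangle in $B^{-\epsilon}$ between its corners yields an embedded M\"obius band $B \subset X^{-\epsilon}$ whose boundary has slope $\phi_{-\epsilon}$ (or $\phi_-'$ when $X^{-\epsilon}$ is a twisted $I$-bundle), and the frontier of a regular neighbourhood of $B$ in $\widehat X^{-\epsilon}$ is an essential annulus $A^{-\epsilon}$ containing $k_{-\epsilon}$ as an essential arc.

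Next, since $\widehat X^\epsilon$ is Seifert fibred and $k_\epsilon$ may be taken (up to isotopy) as an $I$-fibre of $(\widehat{\dot\Sigma}_1^\epsilon, \widehat{\dot\Phi}_1^\epsilon)$, I would take $A^\epsilon$ to be a vertical essential annulus in $\widehat X^\epsilon$ containing $k_\epsilon$ as a transverse arc; its boundary automatically has slope $\phi_\epsilon$ on $\widehat F$. The hypothesis $d = 1$ (respectively $d' = 1$ in the twisted $I$-bundle case) forces $\phi_\epsilon \cdot \phi_{-\epsilon} = \pm 1$, and combined with the at-most-one-point intersection of each $d_i$ with each $c_j$, this lets me run the figure-by-figure analysis of Figures \ref{annuli1}(1)--(3) to conclude that after a small isotopy of $A^{-\epsilon}$ fixing $k_{-\epsilon}$, each component of $\partial A^{-\epsilon}$ meets each component of $\partial A^\epsilon$ transversely in exactly one point. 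Applying Lemma \ref{nice annuli} (or Lemma \ref{lemma:nice annuli2}) then gives that $M$ is the figure eight knot exterior.

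The main obstacle will be verifying the one-point transverse intersection of $\partial A^-$ with $\partial A^+$ in detail. Unlike Proposition \ref{prop: trigon with max weight edge}, where the weight-$3$ edge produces an intermediate parallel edge $e_2$ lying in the interior of $\dot\Phi_1^\epsilon$ (hence automatically disjoint from the $c_j$), here both $e_1$ and $e_2$ come from trigon faces and only satisfy the weaker ``at most one'' property. I expect that a more delicate case analysis, leveraging both trigons simultaneously and the tight constraint $d = 1$ (or $d' = 1$), will be required to eliminate spurious tangential intersections and confirm the configuration of Lemma \ref{nice annuli}.
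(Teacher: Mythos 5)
Your proposal is essentially the paper's proof: you isolate the parallel edges $e_1,e_2$ from the trigons, invoke Corollary \ref{cor:3-gon} to control their intersection with $c_1,c_2$, apply the loop theorem mod~$2$ to the bigon $R$ to obtain an embedded algebraic bigon $D$ in $X^{-\epsilon}$, build the M\"obius band and the annulus $A^{-\epsilon}$, and close with Lemma \ref{nice annuli} exactly as the paper does.

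The worry in your last paragraph is already resolved by the observation you yourself make: the boundary of the M\"obius band is a single embedded circle of slope $\phi_{-\epsilon}$, its geometric intersection with each $c_i$ is at most two by the ``at most once'' control on each of the two edges of $D$, and its algebraic intersection with $c_i$ (slope $\phi_\epsilon$) is $\pm d = \pm 1$. Since the geometric count is at most two and agrees in parity with $\pm 1$, it is exactly one and transverse --- no further case analysis is needed, and this is precisely what the paper records.
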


\pf
Let $e_1$ and $e_2$ be the two parallel edges of $\Gamma_F$ represented by $\bar e$.
Note that $f_1$ and $f_2$ lie on the same side of $F$, say $X^\e$, and as they are trigons, $X^\epsilon$ is not a twisted $I$-bundle.
We may assume that each of $e_1$ and $e_2$ intersects each $c_i$ exactly once by Corollaries \ref{cor: edge intersection}(2) and \ref{cor:3-gon}.

The bigon $R$ containing $e_1$ and $e_2$ in its boundary is an $S$-bigon. That is, $e_1$ and $e_2$ form an $S$-cycle.
Applying the loop theorem to the singular disk $R$ with respect to (mod $2$) intersection with $b_1$ yields an embedded $S$-bigon $D$ in $X^{-\e}$ the union of whose two edges intersects each $c_i$ at most twice.
As in the proof of Proposition \ref{prop: trigon with max weight edge}, the bigon yields a M\"{o}bius band
in $X^{-\e}$ whose boundary  has slope $\phi_{-\e}$ and intersects
each $c_i$ at most twice. Since $d=1$, we see that the boundary of the M\"{o}bius band
intersects each $c_i$ exactly once and the intersection is transverse.
As in the proof of Proposition \ref{prop: trigon with max weight edge}, such a M\"{o}bius band will
yield an embedded essential annulus $A^{-\e}$ which together with $A^\e$, chosen as in the proof
of Proposition \ref{prop: trigon with max weight edge}, satisfy the properties of Lemma \ref{nice annuli}
and thus $M$ is the figure eight knot exterior.
\qed

\begin{prop}
\label{prop: adjacent trigons}
Suppose that $\Delta(\alpha, \beta)=6$, $d=1$  and $X^-$ is a twisted $I$-bundle.
If $\overline \Gamma_F$
has two trigon faces $f_1$ and $f_2$  sharing a common edge $\bar e$,
then  $M$ is the figure eight  knot exterior.
\end{prop}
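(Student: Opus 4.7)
My plan is to follow the paradigm of Propositions \ref{prop: trigon with max weight edge}, \ref{prop: quad with adjacent max weight edges}, and \ref{prop: two trgons sharing a positive edge}: reduce to a single unresolved configuration and then use it to construct a pair of essential annuli satisfying the hypotheses of Lemma \ref{nice annuli}, whence the conclusion will follow. Since $X^-$ is a twisted $I$-bundle, Lemma \ref{lemma: edge wts}(2) forces every face of $\overline{\Gamma}_F$ lying in $X^-$ to be a bigon, so both trigons $f_1, f_2$ lie in $X^+$, and the common edge $\bar e$ has even weight at most $4$. If $\bar e$ has weight $4$, then $\bar e$ is a maximal-weight edge incident to $f_1$, and Proposition \ref{prop: trigon with max weight edge} immediately yields the conclusion. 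Thus $\bar e$ has weight $2$. If $\bar e$ is positive, Proposition \ref{prop: two trgons sharing a positive edge} applies verbatim (its statement places no restriction on $X^-$). The essential new case is therefore that $\bar e$ is a negative edge of weight $2$, which the rest of the plan addresses.

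In this remaining case, the two parallel edges $e_1, e_2$ of $\Gamma_F$ representing $\bar e$ are ``same-label'': after relabeling, $e_1$ runs from $b_1$ to $b_1$ and $e_2$ runs from $b_2$ to $b_2$, and the bigon $R$ of $\Gamma_F$ between them lies in $X^-$ with two corners that are transverse arcs of the annulus $B^-$. Crucially, the algebraic winding of $\partial R$ about the core of $B^-$ is zero. I will use these data to construct an embedded essential annulus $A^- \subset \widehat X^-$ of slope $\phi_-$ containing $k_-$ as an essential arc, and to construct an embedded essential annulus $A^+ \subset \widehat X^+$ of slope $\phi_+$ containing $k_+$. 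For $A^-$: the bigon $R$ glued to one of the rectangular components of $B^- \setminus (\text{corners})$ is an immersed annulus in $X^-$ whose two boundary loops lie in $F$ and close up in $\widehat F$ to curves of slope $\phi_-$ (this is exactly where the zero-winding computation is used). The annulus theorem applied in the handlebody $X^-$ produces an embedded representative, and by routing the homotopy of each corner of $R$ through the co-core of the $2$-handle $H^-$, I place $k_-$ as an essential transverse arc on $A^-$. For $A^+$: using the minimal-position analysis of Section \ref{subsec: trace face} (in particular Corollaries \ref{cor:3-gon} and \ref{cor:4-gon}), each trigon $f_i$ yields, as in the proof of Proposition \ref{prop: trigon with max weight edge}, an embedded bigon or M\"obius band in $X^+$; the fact that $f_1, f_2$ share the negative edge $\bar e$ glues these local data coherently into a single embedded essential annulus $A^+$ of slope $\phi_+$ on which $k_+$ sits as an essential arc, in the manner of Propositions \ref{prop: trigon with max weight edge} and \ref{prop: two trgons sharing a positive edge}.

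Once both annuli are in place, the intersection condition of Lemma \ref{nice annuli} is immediate: $\Delta(\phi_+, \phi_-) = d = 1$ forces each component of $\partial A^+$ to meet each component of $\partial A^-$ transversely in exactly one point. Lemma \ref{nice annuli} then identifies $M$ as a Dehn filling of the exterior of the minimally twisted $5$-chain link, and concludes that $M$ is the figure eight knot exterior. The main obstacle will be the construction of $A^-$ together with the placement of $k_-$ as an essential arc on it: in contrast with the positive-edge case handled in Proposition \ref{prop: two trgons sharing a positive edge}, the vanishing of the winding of $\partial R$ about the core of $B^-$ means that $A^-$ arises as a vertical annulus of the $D^2(2,2)$-Seifert structure on $\widehat X^-$ rather than as the frontier of a regular neighborhood of a M\"obius band. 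One must therefore verify both that the resulting annulus is not boundary-parallel and that $k_-$ truly lies on $A^-$ after routing the two corners of $R$ through $H^-$; this bookkeeping will be the most delicate step.
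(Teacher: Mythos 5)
Your reduction to the weight-$2$ case via Proposition \ref{prop: trigon with max weight edge}, the use of Proposition \ref{prop: two trgons sharing a positive edge} when $\bar e$ is positive, and the general strategy of applying the loop theorem to the $X^-$-bigon $R$ and converting the output into an annulus $A^-$ to feed into a recognition lemma, all match the paper's approach. (A minor structural difference: the paper does not split on the sign of $\bar e$; it applies the loop theorem to $R$ and then splits on whether the embedded disk $D$ produced is an algebraic $0$-gon or an algebraic bigon. Since the loop theorem does not preserve algebraic type, $D$ can be an algebraic bigon even when $\bar e$ is negative; in that situation one obtains a M\"obius band exactly as in Proposition \ref{prop: trigon with max weight edge}, and you should not assume the zero winding of $\partial R$ descends to $D$.)

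The genuine gap is your claim that, in the $0$-gon case, the annulus $A^-$ built from $D$ is necessarily vertical in the $D^2(2,2)$ Seifert structure on $\widehat X^-$, hence has boundary slope $\phi_-$. In a twisted $I$-bundle over the Klein bottle there are two isotopy classes of embedded essential annuli meeting $\widehat F$: the separating one of slope $\phi_-$ and the non-separating one of slope $\phi_-'$ (the two vertical slopes for the $D^2(2,2)$ and M\"obius-band Seifert structures respectively, with $\Delta(\phi_-,\phi_-')=1$). The algebraic $0$-gon $D$ can produce either. If $A^-$ is non-separating, then $\partial A^-$ has slope $\phi_-'$ and its intersection number with $\phi_+$ is $d'$, not $d$; Lemma \ref{nice annuli}, which is formulated for $\partial A^-$ of slope $\phi_-$, does not apply. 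The paper handles this by showing $d'=1$ and invoking the separate Lemma \ref{lemma:nice annuli2}, whose hypotheses and (computer-assisted) proof are specifically adapted to the slope-$\phi_-'$ case. Your proposal makes no provision for this alternative and explicitly asserts it cannot occur, so as written the argument is incomplete in precisely the case that required the most work to handle.
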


\pf Since $X^-$ is a twisted $I$-bundle, both $f_1$ and $f_2$ lie in $X^+$.
By Proposition \ref{prop: trigon with max weight edge}, we may assume that
$\bar e$ has weight $2$.

Let $e_1$ and $e_2$ be the two parallel edges of $\Gamma_F$ represented by $\bar e$.
By Corollary \ref{cor:3-gon}, we may assume that each of
$e_1$ and $e_2$ intersects each $c_i$ at most once.
Now applying the loop theorem to the bigon face $R$ of $\Gamma_F$ containing $e_1$ and $e_2$
in its boundary to obtain an embedded bigon $D$ in $X^{-}$.
The boundary of $D$ intersects each $c_i$ transversely and in at most two points.

{\bf Case 1}.  $D$ is an algebraic $0$-gon.

In this case, the edges of $D$ are disjoint $\widehat F$-essential loops and therefore their union intersects each $c_i$ transversely
in at most two points. Since these two loops are homologous, either both are disjoint from the $c_i$ or both intersect each $c_i$ exactly once.

As in the third paragraph of the proof of Lemma \ref{lemma: two trigons negative edge},
the edges of $D$ form a pair of $\widehat F$-essential loops which are the boundary of an essential annulus $A^-$ in
$X^-$. Thus they have slope $\phi_-$ or $\phi_-'$. Since $d=1$ and $d'\ne 0$ (cf. Proposition \ref{prop: d'=0}),
each edge of $D$, which is an $\widehat F$-essential loop in $F$, intersects each $c_i$ exactly once and the intersection is transverse. In particular,
in the case that the edges of $D$ have slope $\phi_-'$, we have $d' = 1$. The reader will verify that $A^-$ and $A^+$ can be positioned
to have the properties listed in the statement of Lemma \ref{nice annuli} when $\partial A^-$ has slope $\phi_-$, or
those listed in Lemma \ref{lemma:nice annuli2} when $\partial A^-$ has slope $\phi_-'$.
Hence $M$ is the figure eight knot exterior.

{\bf Case 2}.  $D$ is an algebraic bigon.

The argument is similar to that given in the proof of Proposition \ref{prop: trigon with max weight edge}.
\qed

The proof of Proposition \ref{prop: adjacent trigons} actually shows  the following result.

\begin{cor}
\label{cor: parallel edges intersecting c_i at most twice}
Suppose that $\Delta(\alpha, \beta)=6$, $d=1$, and $X^-$ is a twisted $I$-bundle.
If $\Gamma_F$ has a bigon face in $X^-$ the union of whose two edges are incident to at most
two red tiles, then $M$ is the figure eight knot exterior.
\qed\end{cor}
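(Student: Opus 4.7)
I would closely mirror the proof of Proposition \ref{prop: adjacent trigons}, replacing the bound ``each of $e_1, e_2$ meets each $c_i$ at most once'' (previously coming from Corollary \ref{cor:3-gon}) by a weaker but still sufficient consequence of the new hypothesis. Specifically, after arranging the two faces of $\Gamma_F$ in $X^+$ adjacent to $e_1$ and $e_2$ into minimal position (Section \ref{subsec: trace face}), Corollary \ref{cor: edge intersection} tells me that each red tile incident to an edge contributes a single subarc of that edge whose endpoints have distinct labels. Thus each red tile incident to $e_i$ produces exactly one point of $e_i \cap c_1$ and one point of $e_i \cap c_2$, so the hypothesis of at most two red tiles total gives $|(e_1 \cup e_2) \cap c_j| \leq 2$ for each $j \in \{1,2\}$.

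Next, I would apply the loop theorem to the singular bigon $R \subset X^-$ bounded by $e_1 \cup e_2$, with respect to mod $2$ intersection with $b_1$, to produce an embedded bigon $D$ in $X^-$ satisfying $\partial D \cap (c_1 \cup c_2) \subseteq h(\partial R) \cap (c_1 \cup c_2)$. By the previous paragraph, $\partial D$ therefore meets each $c_j$ transversely in at most two points. As in the proof of Proposition \ref{prop: adjacent trigons}, $D$ is then either an algebraic $0$-gon or an algebraic bigon, and I would handle the two cases in turn.

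If $D$ is an algebraic $0$-gon, its edges form the boundary of an essential annulus $A^-$ in $X^-$ of slope $\phi_-$ or $\phi_-'$. The algebraic intersection of each such edge with $c_j$ is $\pm d = \pm 1$ (slope $\phi_-$) or $\pm d'$ (slope $\phi_-'$), where by Proposition \ref{prop: d'=0} we have $d' \geq 1$. Combined with the bound of $2$ on the total geometric intersection of both edges with each $c_j$, this forces exactly one transverse intersection per edge per $c_j$, and in the second sub-case it additionally forces $d' = 1$. The pair $(A^+, A^-)$ then satisfies the hypotheses of Lemma \ref{nice annuli} (slope $\phi_-$) or Lemma \ref{lemma:nice annuli2} (slope $\phi_-'$), yielding that $M$ is the figure eight knot exterior. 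If instead $D$ is an algebraic bigon, I would attach a product region of $B^-$ across its two corners to form an embedded M\"obius band $B$ of slope $\phi_-$; the congruence $1 \equiv d \pmod{2}$ together with the bound on geometric intersection of $\partial B$ with each $c_j$ forces a single transverse intersection point, and the regular-neighbourhood construction of Proposition \ref{prop: trigon with max weight edge} (followed by a possible isotopy as illustrated in Figure \ref{annuli1}) yields an annulus $A^-$ meeting the hypotheses of Lemma \ref{nice annuli}.

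The main obstacle is the precise accounting at the end of the algebraic $0$-gon case: the bound of $2$ must be matched against two edges each carrying algebraic intersection of modulus $1$, leaving no room for tangential cancellations, and this is what both forces the one-point-per-edge-per-$c_j$ transversality and forces $d' = 1$ in the $\phi_-'$ sub-case so that Lemma \ref{lemma:nice annuli2} becomes applicable. Once this bookkeeping is verified, the closing steps are identical to those appearing in Propositions \ref{prop: trigon with max weight edge} and \ref{prop: adjacent trigons}.
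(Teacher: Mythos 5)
Your proposal is correct and is essentially the paper's own (unwritten) argument: the paper introduces this corollary with the single remark that the proof of Proposition \ref{prop: adjacent trigons} already establishes it, and you have made that explicit by identifying the hypothesis on red tiles as the replacement for Corollary \ref{cor:3-gon} that still delivers the bound $|\partial D\cap c_j|\le 2$, after which Cases 1 and 2 of that proof apply verbatim. The one thing to correct is the invocation of the loop theorem ``with respect to mod $2$ intersection with $b_1$'': $\partial R$ crosses $b_1$ exactly twice, once at each corner, so its mod-$2$ intersection number with $b_1$ is zero and that criterion is vacuous for a bigon; the plain loop theorem combined with Lemma \ref{embedded n-gons}(1) and incompressibility of $F$ (to rule out monogons and $0$-gons), exactly as in the proof of Proposition \ref{prop: adjacent trigons}, is what yields the embedded bigon $D$ with $\partial D\cap(c_1\cup c_2)\subseteq h(\partial R)\cap(c_1\cup c_2)$.
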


\begin{prop}\label{prop: 5-gon with adjacent max weight edges}
If $\Delta(\alpha, \beta)=6$, $d=1$, $X^-$ is a twisted $I$-bundle, and
there is  a $5$-gon face $f$ of $\overline \Gamma_F$ with adjacent edges of weight $4$, then $M$ is the figure eight knot exterior.
\end{prop}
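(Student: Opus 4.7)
My plan is to follow the template established by Propositions \ref{prop: quad with adjacent max weight edges} and \ref{prop: adjacent trigons}, adapting their combinatorial setup from quad/trigon faces to the present 5-gon face.

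First, since $X^-$ is a twisted $I$-bundle, Lemma \ref{lemma: edge wts}(2) guarantees that every face of $\overline{\Gamma}_F$ contained in $X^-$ is a bigon; in particular the 5-gon face $f$ must lie in $X^+$. Let $\bar e_1, \bar e_2$ be the two adjacent weight-$4$ edges of $f$, and place $f$ in minimal position with respect to the annulus $A_+ \subset X^+$ in the sense of \S \ref{subsec: trace face}. The goal is to find one of $\bar e_1, \bar e_2$ whose representing edge in $f$ can be homotoped in $F$, rel $\partial$, to meet each $c_i$ at most once; then the construction of Proposition \ref{prop: adjacent trigons} will apply.

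Next I would establish a $5$-gon analog of Corollary \ref{cor:4-gon}: if two adjacent edges of a minimally positioned face each have two or more red tiles incident, then forbidden configurations arise. The argument should mirror the quad case: a red tile straddling the shared corner of $\bar e_1$ and $\bar e_2$ must be a bigon, and Lemma \ref{lemma: corner-arcs}(2) limits this to at most one such bigon at that corner; extra red tiles on $\bar e_1$ then force cross-arcs whose adjacency with cross-arcs from $\bar e_2$ runs afoul of Lemma \ref{parallel cross-arcs}(2) or the winding-number restriction in the proof of Lemma \ref{lemma: corner-arcs}(2). One concludes that at least one of $\bar e_1, \bar e_2$, say $\bar e$, has at most one red tile incident; its edge in $f$, call it $e_1$, then intersects each $c_i$ at most once. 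Since $\bar e$ has weight $4$, the middle edges of the parallel family can be taken to lie in the interior of $\dot\Phi_1^+$; pick $e_2$ among them, disjoint from each $c_i$. The bigon $R$ between $e_1$ and $e_2$ in $\Gamma_F$ lies in $X^-$, and its boundary is essential in $\partial X^-$ since both edges are essential in $(F,\partial F)$.

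Apply the loop theorem to $R$ (with respect to the mod-$2$ intersection with $b_1$) to obtain an embedded bigon $(D,\partial D)\subset (X^-,\partial X^-)$ whose two edges together meet each $c_i$ at most once. Now run the case analysis from Proposition \ref{prop: adjacent trigons}: if $D$ is an algebraic bigon, the M\"obius band construction of Proposition \ref{prop: trigon with max weight edge} yields an essential annulus $A^-$ in $\widehat X^-$ of boundary slope $\phi_-$ containing $k_-$ as an essential arc, and together with an appropriate vertical annulus $A^+$ in $\widehat X^+$ of slope $\phi_+$ through $k_+$, the pair $(A^+, A^-)$ satisfies the hypotheses of Lemma \ref{nice annuli}. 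If instead $D$ is an algebraic $0$-gon, its two edges are disjoint $\widehat F$-essential loops; since $d = 1$ the constraint on their intersection with each $c_i$ forces $\partial A^-$ to have slope $\phi_-$ or $\phi_-'$, and we conclude via Lemma \ref{nice annuli} or Lemma \ref{lemma:nice annuli2} respectively (in the latter case $d' = 1$ follows from the transversality count). Either way, $M$ is the figure eight knot exterior.

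The main obstacle is the $5$-gon version of Corollary \ref{cor:4-gon}: the configurations of red tiles in a 5-gon are richer than in a quad because cross-arcs can connect edges separated by an additional edge, so one must carefully enumerate the possibilities and repeatedly invoke Lemmas \ref{lemma: corner-arcs} and \ref{parallel cross-arcs} to rule them out. Once that combinatorial lemma is secured, the remaining geometric steps are direct adaptations of the proofs of Propositions \ref{prop: quad with adjacent max weight edges} and \ref{prop: adjacent trigons}.
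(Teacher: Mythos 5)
Your plan lands on the right framework --- put the $5$-gon $f$ in minimal position in $X^+$, bound the number of red tiles meeting one of the two adjacent weight-$4$ edges, pass to a bigon face in $X^-$, and run the M\"obius band/essential annulus machinery --- but the crucial combinatorial step is left as a hope (``should mirror the quad case\ldots one must carefully enumerate'') and is aimed at the wrong threshold. You want to show one of the two edges meets at most \emph{one} red tile so that its edge of $f$ meets each $c_i$ at most once. The paper only proves, and only needs, that one of them meets at most \emph{two}; once that is established Corollary \ref{cor: parallel edges intersecting c_i at most twice} applies directly, because the parallel edge in the $X^-$ bigon belongs to a bigon face in $X^+$, which contains no red tiles when $q_+\geq 3$. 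Aiming for ``at most one'' both re-derives that corollary unnecessarily and may simply be unachievable.

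More seriously, the pentagon requires a new idea that ``carefully enumerating the possibilities'' along the lines of Corollary \ref{cor:4-gon} will not supply. In a quad every cross-arc from $e_1$ must land on the unique opposite edge, which immediately forces any red tile on $e_2$ to be a trigon; the incidental claim you make --- that a red tile straddling the shared corner must be a bigon, controlled by Lemma \ref{lemma: corner-arcs}(2) --- misremembers this (that lemma says there are \emph{no} red-tile bigons straddling a corner). In a pentagon each of $e_1,e_2$ is non-adjacent to two edges, so cross-arcs have more freedom and the quad counting does not carry over. The paper instead supposes three red tiles meet each of $e_1,e_2$, observes that each such tile must then carry a cross-arc to the unique edge $e'$ of $f$ adjacent to neither $e_1$ nor $e_2$, applies Lemma \ref{parallel cross-arcs} to conclude these parallel cross-arcs all have equal labels at their endpoints (so that $h$ restricted to each is homotopic rel endpoints into $F$), and then exhibits the sub-disk of $f$ between a cross-arc $a$ from $e_1$ to $e'$ and a cross-arc $a'$ from $e_2$ to $e'$, containing the corner shared by $e_1$ and $e_2$, as a monogon in $X^+$ --- a contradiction. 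The forced cross-arcs to $e'$ and the resulting monogon are the heart of the pentagon case and are absent from your proposal.
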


\pf Note  that $f$ lies in $X^+$, so we may assume that $f$ is minimally positioned (cf. \S \ref{subsec: trace face}).
Let $e_1$ and $e_2$ be adjacent edges of $f$ in $\Gamma_F$ which have maximal weight. Suppose that we can show that there are at most two red tiles incident to
one of these edges, $e_1$ say. Since $q_+ \geq 3$, no bigon face of $\Gamma_F$ lying in $X^+$ contains a red tile, so we can apply Corollary \ref{cor: parallel edges intersecting c_i at most twice} to the bigon face of $\Gamma_F$ incident to $e_1$ to complete the proof.

Suppose that there are three red tiles incident to each of $e_1$ and $e_2$. Since $q_+ \geq 3$, there are no red tile bigons connecting
$e_1$ to either of its adjacent edges in $\partial f$, and a similar statement holds for $e_2$. It follows that for $i = 1, 2$, a red tile incident to $e_i$
contains a cross-arc in its boundary running between $e_i$ and the edge $e'$ of $f$ which is adjacent to neither $e_1$ nor $e_2$ (cf. Figure \ref{bgz5-5gon}).
By Lemma \ref{parallel cross-arcs}, each cross-arc connecting $e_1$ and $e'$ has the same labels at its endpoints, and the same holds for the cross-arcs connecting $e_2$ and $e'$. But then the restriction of $h$ to each of these arcs is homotopic (rel endpoints) to a path in $F$ and so we can construct a monogon in $X^+$ (cf. Figure \ref{bgz5-5gon}); the sub-disk of $f$ between the cross-arcs $a$ and $a'$ with a corner at $v$
 is a monogon), which gives a contradiction.
\qed

\begin{figure}[!ht]
\centerline{\includegraphics{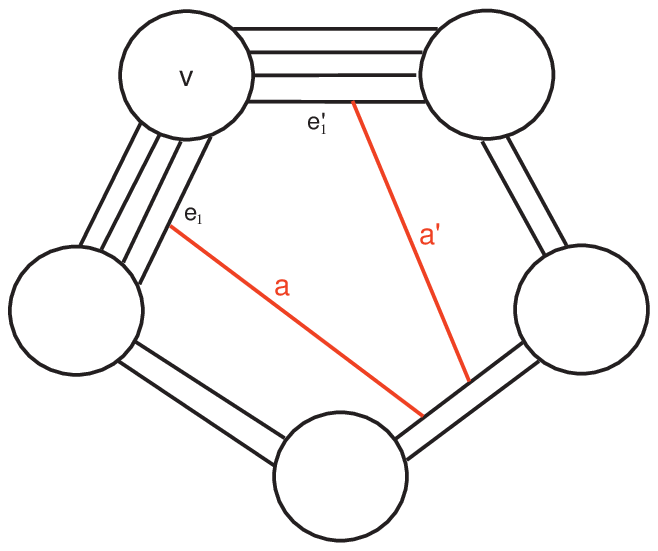}} \caption{ }\label{bgz5-5gon}
\end{figure}

\section{Completion of the proof of Theorem \ref{thm: twice-punctured precise} when $\Delta(\alpha, \beta) \geq 7$}
 \label{sec: Delta 7 or more}

 To complete of the proof of Theorem \ref{thm: twice-punctured precise} when $\Delta(\alpha, \beta) \geq 7$, we must prove the following proposition.

\begin{prop}
\label{prop: delta at least 7}
Suppose that Assumptions \ref{assumptions 0} and \ref{assumptions 1} hold, $F$ is separating but not a semi-fibre, and $t_1^+ = t_1^-  = 0$.
If $\Delta(\alpha, \beta) \geq 7$, then $M$ is the figure eight knot exterior.
\end{prop}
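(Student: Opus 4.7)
The plan is to use the combinatorics of $\overline{\Gamma}_F$ to produce a configuration to which one of the recognition propositions of \S \ref{sec: recogn the fig 8} applies, thereby concluding that $M$ is the figure eight knot exterior.

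First, by Proposition \ref{prop: d=0} we may assume $d \geq 1$, which makes Propositions \ref{prop: trigon with max weight edge} and \ref{prop: quad with adjacent max weight edges} available. By Lemma \ref{lemma: edge wts}, each edge of $\overline{\Gamma}_F$ has weight at most $3$ when $X^-$ is not a twisted $I$-bundle, and at most $4$ (and even) when it is. Since every vertex of $\Gamma_F$ has valency $2\Delta(\alpha,\beta) \geq 14$, the number $k$ of distinct edges of $\overline{\Gamma}_F$ meeting a given vertex is at least $\lceil 14/3\rceil = 5$ in the first case and at least $\lceil 14/4\rceil = 4$ in the second.

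Next, using the general face-count inequality of \cite[Proposition 12.2 and Corollary 12.4]{BGZ2}, I would locate a vertex $v$ of $\Gamma_F$ at which
$$\mu(v) = \varphi_2(v) + \tfrac{1}{3}\varphi_3(v) \geq 2\Delta(\alpha,\beta) - 4 \geq 10,$$
forcing a heavy concentration of bigon and trigon corners at $v$. Combining this with the valency lower bound and the edge-weight bound, I expect to show that at least one of the following configurations must occur somewhere in $\overline{\Gamma}_F$: (i) a trigon face with a maximal-weight edge, or (ii) a quad face with two adjacent maximal-weight edges. In case (i), Proposition \ref{prop: trigon with max weight edge} gives the desired conclusion, and in case (ii), Proposition \ref{prop: quad with adjacent max weight edges} does.

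The hard part will be ruling out every local distribution of maximal-weight edges and of small faces around $v$ that avoids both (i) and (ii), especially in the borderline subcases $\Delta(\alpha,\beta) = 7$ or $8$ where $k$ may be minimal and the maximal-weight edges are few and can be pushed away from the bigon/trigon corners. In the twisted $I$-bundle case this requires keeping track of the alternation of $X^+$ and $X^-$ faces between consecutive parallel edges and exploiting the fact that all faces in $X^-$ can be taken to be bigons (Lemma \ref{lemma: edge wts}(2)). These constraints make the distributional counting sufficiently rigid to force configuration (i) or (ii), from which Proposition \ref{prop: delta at least 7} follows.
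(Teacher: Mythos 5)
Your approach is essentially the one in the paper: bound edge weights via Lemma~\ref{lemma: edge wts}, note the valency of each vertex in $\overline{\Gamma}_F$ is at least~$4$, use \cite[Proposition 12.2 and Corollary 12.4]{BGZ2} to locate a vertex with $\mu(v) \geq 2\Delta(\alpha,\beta)-4$, and then feed into Propositions~\ref{prop: trigon with max weight edge} and~\ref{prop: quad with adjacent max weight edges}. However, the combinatorial analysis you flag as ``the hard part'' and leave open is in fact short once you add the tool you omit, namely \emph{Proposition 12.5} of \cite{BGZ2}. In the case $\mu(v) = 2\Delta(\alpha,\beta)-4$ for every $v$, that result restricts each vertex of $\overline{\Gamma}_F$ to valency $4$ with $\varphi_4(v)=4$, valency $5$ with $\varphi_3(v)=3$ and $\varphi_4(v)=2$, or valency $6$ with $\varphi_3(v)=6$. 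Since valency is at most $6$ and the weights of the incident edges (each at most $4$) sum to $2\Delta(\alpha,\beta)\geq 14$, a majority of edges at each vertex must carry maximal weight. A valency-$5$ or valency-$6$ vertex then necessarily has a trigon incident to a maximal-weight edge, while if every vertex has valency $4 = \varphi_4(v)$ (so $\overline{\Gamma}_F$ is rectangular) one reads off $\Delta(\alpha,\beta)\in\{7,8\}$, $X^-$ a twisted $I$-bundle, and at least three of the four edges at each vertex of weight $4$, producing a quad face with adjacent maximal-weight edges. In the complementary case that $\mu(v) > 2\Delta(\alpha,\beta)-4$ for some $v$, \cite[Proposition 12.2]{BGZ2} already forces $v$ to have valency $4$ or $5$ with a trigon incident, and the same pigeonhole on weights finishes. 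So the distributional counting is rigid rather than delicate; the missing ingredient is Proposition 12.5 of \cite{BGZ2} together with the observation that $\Delta(\alpha,\beta)\geq 7$ and valency $\leq 6$ forces most edges at every vertex to be of maximal weight.
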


\pf
Since the edges of $\overline{\Gamma}_F$ have weight at most $4$ (Lemma \ref{lemma: edge wts}), for each vertex $v$ of $\Gamma_F$ we have an inequality
$$14 \leq 2 \Delta(\alpha, \beta) = \mbox{valency}_{\Gamma_F}(v) \leq 4 \; \mbox{valency}_{\overline{\Gamma}_F}(v),$$
which shows that the valency of $v$ in $\overline\Gamma_F$ is at least $4$.

First suppose that there is a vertex $v$ of $\Gamma_F$ such that $\mu(v) > 2 \Delta(\alpha, \beta) - 4$. Propoition 12.2 of \cite{BGZ2} then shows that the valency of $v$ is at most $5$ and
$$\varphi_3(v) \geq \left\{\begin{array}{ll} 1 & \mbox{ if valency}(v) = 4 \\ 4 & \mbox{ if valency}(v) = 5 \end{array} \right.$$
Consequently, the condition that $\Delta(\alpha, \beta) \geq 7$ implies that there is a trigon face of $\overline{\G}_F$ incident to $v$ with an edge of maximal weight and therefore $M$ is the figure eight knot exterior by Proposition  \ref{prop: trigon with max weight edge}.

Thus we may assume that $\mu(v) \leq 2 \Delta(\alpha, \beta) - 4$ for each $v$. By Corollary 12.4 of \cite{BGZ2}, $\mu(v) =2 \Delta(\alpha, \beta) - 4$ for each $v$ and then Proposition 12.5 of \cite{BGZ2} implies that if $v$ is a vertex of $\overline{\Gamma}_F$, then either
\vspace{-.2cm}
\begin{itemize}

\item $\mbox{valency}(v) = 4 = \varphi_4(v)$, or

\vspace{.2cm} \item $\mbox{valency}(v) = 5$, $\varphi_3(v) = 3$, and $\varphi_4(v) = 2$, or

\vspace{.2cm} \item $\mbox{valency}(v) = 6 = \varphi_3(v)$.

\end{itemize}
Hence if $\overline{\Gamma}_F$ has a vertex of valency $5$ or $6$, our assumption that $\Delta(\alpha, \beta) \geq 7$ implies that it is incident to a trigon face with an edge of maximal weight, so we are done by Proposition  \ref{prop: trigon with max weight edge}.

Assume then that $\mbox{valency}(v) = 4 = \varphi_4(v)$ for each vertex $v$. Then $\overline{\Gamma}_F$ is rectangular and as the weight of the edges is $4$ or less, $\Delta(\alpha, \beta)$ is either $7$ or $8$. It follows that each vertex $v$ of $\overline{\Gamma}_F$ is incident to at least three edges of weight $4$, so there is a quad face of $\overline{\Gamma}_F$ with adjacent edges of maximal weight. Proposition \ref{prop: quad with adjacent max weight edges}  then shows that $M$ is the figure eight knot exterior.
\qed

\section{Completion of the  proof of Theorem \ref{thm: twice-punctured precise} when $X^-$ is not a twisted $I$-bundle }
\label{sec: delta = 6 not twisted}
In this section we complete of the proof of Theorem \ref{thm: twice-punctured precise} when $X^-$ is not a twisted $I$-bundle.

\begin{prop}
\label{prop: X^- is twisted I bundle}
Suppose that Assumptions \ref{assumptions 0} and \ref{assumptions 1}  hold, $F$ is separating but not a semi-fibre, and $t_1^+ = t_1^-  = 0$.
If $X^-$ is not a twisted $I$-bundle, then $\Delta(\alpha, \beta) \leq 5$.
\end{prop}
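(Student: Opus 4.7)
Assume for contradiction that $\Delta(\alpha,\beta)\ge 6$. By Proposition \ref{prop: d=0} I may also assume $d\ge 1$. Since $X^-$ is not a twisted $I$-bundle, Lemma \ref{lemma: edge wts}(1) ensures that every edge of $\overline{\Gamma}_F$ has weight at most $3$; that any negative weight-$3$ edge carries an $\widehat F$-inessential loop; and that any negative weight-$2$ edge incident to a trigon forces $d=1$. Because each vertex of $\Gamma_F$ has valency $2\Delta\ge 12$, every vertex of $\overline{\Gamma}_F$ has valency at least $4$.

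First I would dispose of the case $\Delta\ge 7$ using Proposition \ref{prop: delta at least 7}, which forces $M$ to be the figure eight knot exterior. The last step there is to check that the hypothesis ``$X^-$ not a twisted $I$-bundle'' is incompatible with $M$ being the figure eight knot exterior: for each toroidal slope $\beta\in\{\pm 4\}$, a direct inspection of the unique essential torus in $M(\beta)$ shows that both JSJ pieces are twisted $I$-bundles over the Klein bottle, so any essential twice-punctured torus of slope $\beta$ realizing this JSJ torus as its cap-off gives $X^-$ a twisted $I$-bundle, contradicting the hypothesis.

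For the remaining case $\Delta=6$, the plan is a combinatorial analysis of $\overline{\Gamma}_F$. Invoking Corollary 12.4 and Proposition 12.5 of \cite{BGZ2} for the function $\mu(v)=\varphi_2(v)+\varphi_3(v)/3$, I obtain a vertex $v$ at which $\mu(v)\ge 2\Delta-4=8$, and the local face pattern at $v$ is one of three types: valency $4$ with four quad corners; valency $5$ with three trigons and two quads; or valency $6$ with six trigons. In each type the total edge-weight at $v$ equals $12$, so at least one edge at $v$ has weight $3$. In the valency-$5$ and valency-$6$ types, a pigeonhole count on the weight distribution produces a trigon at $v$ incident to a weight-$3$ edge, so that Proposition \ref{prop: trigon with max weight edge} forces $M$ to be the figure eight knot exterior, which is excluded as above. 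In the valency-$4$ type, the weight sum $12$ across four edges forces at least two adjacent edges of weight $3$ on some quad, and Proposition \ref{prop: quad with adjacent max weight edges} produces the same conclusion.

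The principal technical difficulty lies in the borderline subcases at valency $4$ and $5$ where the weight-$3$ edges might a priori cluster so as to avoid the relevant trigon or adjacent quad pair. These will be handled using Lemma \ref{lemma: edge wts}(1)(c) — which restricts negative weight-$2$ edges on trigons to the case $d=1$, allowing Proposition \ref{prop: two trgons sharing a positive edge} to be invoked as a back-up hammer when two trigons share a positive weight-$2$ edge — together with the parity-rule constraints of \cite{BGZ2} on edge signs, which tightly control the global arrangement of positive and negative edges around $v$ and prevent the weight-$3$ edges from simultaneously being concentrated on negative (hence $\widehat F$-inessential) bigon runs.
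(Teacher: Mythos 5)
Your overall structure — dispose of $\Delta\ge 7$ via Proposition \ref{prop: delta at least 7}, then analyze $\overline{\Gamma}_F$ at $\Delta=6$ using the $\mu$-counting machinery of \cite{BGZ2} and the weight bound from Lemma \ref{lemma: edge wts}(1) — is the same as the paper's, and your observation that $X^-$ not being a twisted $I$-bundle is incompatible with $M$ being the figure eight knot exterior is both correct and necessary (the paper simply asserts it).

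However, the pigeonhole step contains a genuine error. You claim that in each of the three vertex types ``the total edge-weight at $v$ equals $12$, so at least one edge at $v$ has weight $3$.'' For a valency-$6$ vertex this is false: six edges of weight $2$ already sum to $12$, so the hexagonal case can perfectly well have no weight-$3$ edge at all. In fact this is precisely what happens after the other cases are eliminated — the paper shows $\overline{\Gamma}_F$ must be hexagonal and \emph{every} edge must have weight exactly $2$ (a weight-$3$ edge would be incident to a trigon, which Proposition \ref{prop: trigon with max weight edge} rules out). At that point Propositions \ref{prop: trigon with max weight edge} and \ref{prop: quad with adjacent max weight edges} are both unavailable, and a different argument is needed: Proposition 11.2 of \cite{BGZ2} guarantees a negative edge; that negative weight-$2$ edge is incident to a trigon, so $d=1$ by Lemma \ref{lemma: edge wts}(1)(c); and since a trigon face cannot have all three edges negative (parity), every trigon has a positive weight-$2$ edge, whence Proposition \ref{prop: two trgons sharing a positive edge} applies to give the contradiction. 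You do cite the right tools in your last paragraph, but you attribute the residual difficulty to ``borderline subcases at valency $4$ and $5$'' (which are actually fine by your own count) rather than to the hexagonal case, which is where the plan actually breaks down.

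A smaller logical slip: Proposition 12.5 of \cite{BGZ2}, which yields the three local face patterns, applies once one knows $\mu(v)=2\Delta-4$ for every vertex; that requires first splitting into the case $\mu(v)>2\Delta-4$ for some $v$ (handled directly via Proposition 12.2(1), which produces a trigon with a maximal-weight edge) and the complementary case where Corollary 12.4 forces equality everywhere. Quoting Proposition 12.5 at a single vertex satisfying $\mu(v)\ge 2\Delta-4$, as you do, is not how the result is stated.

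Finally, one helpful observation you omit (which the paper notes and which simplifies the valency-$4$ case) is that since edges have weight at most $3$ and $2\Delta=12$, there are no valency-$3$ vertices at all, and at any valency-$4$ vertex \emph{all four} incident edges automatically have weight $3$; this immediately gives a quad with adjacent maximal-weight edges without any further pigeonhole.
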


\pf
Since $X^-$ is not a twisted $I$-bundle, $M$ is not the figure eight knot exterior.

Suppose that $\D(\alpha,\beta) \geq 6$ and note that $\D(\alpha,\beta)=6$ by Proposition \ref{prop: delta at least 7}.
Proposition \ref{prop: d=0} implies that $d > 0$. The weight of each edge in $\overline\Gamma_F$ is at most $3$ by Lemma \ref{lemma: edge wts}
and therefore $\overline \Gamma_F$ has no valency $3$ vertices and each edge incident to a valency $4$ vertex is of maximal weight.

If there is a vertex $v$ of $\Gamma_F$ such that $\mu(v) > 2 \Delta(\alpha, \beta) - 4$,
Proposition  12.2(1) of \cite{BGZ2} implies that $\overline\Gamma_F$ has a trigon face with an edge of weight $3$.
But then $M$ would be the figure eight knot exterior by Proposition \ref{prop: trigon with max weight edge}, a contradiction.

Corollary  12.4 of \cite{BGZ2} now shows that $\mu(v) = 2 \Delta(\alpha, \beta) - 4$ for each vertex  $v$
and by Proposition  12.5 of that paper, for each vertex $v$ of $\overline{\Gamma}_F$ either
\begin{itemize}

\item $\mbox{valency}(v) = 4 = \varphi_4(v)$, or

\vspace{.2cm} \item $\mbox{valency}(v) = 5$, $\varphi_3(v) = 3$, and $\varphi_4(v) = 2$, or

\vspace{.2cm} \item $\mbox{valency}(v) = 6 = \varphi_3(v)$.

\end{itemize}
Applying Propositions \ref{prop: trigon with max weight edge} and \ref{prop: quad with adjacent max weight edges} we see that
$\overline\Gamma_F$ has no vertices of valency $4$ or $5$. It follows that $\overline{\Gamma}_F$ is hexagonal. Each of its edges has weight $2$
by Proposition \ref{prop: trigon with max weight edge} while Proposition 11.2 of \cite{BGZ2} shows that some edge $\bar e$ of $\overline{\Gamma}_F$ is negative.
Then $d = 1$ by Lemma \ref{lemma: edge wts}(1)(c) and as every trigon face of
$\overline\Gamma_F$ has a positive edge we can apply Proposition \ref{prop: two trgons sharing a positive edge}
to see that $M$ is the figure eight knot exterior. This final contradiction
completes the proof.
 \qed

\section{Completion of the  proof of Theorem \ref{thm: twice-punctured precise} when $\D(\alpha,\beta)=6$ and $d = 1$}
\label{sec: delta = 6 d=1}

We suppose that $F$ is separating, $\D(\alpha,\beta)=6$, $t_1^+ = t_1^-  = 0$, and $d = 1$ in this section.
By Proposition \ref{prop: X^- is twisted I bundle} we can assume that
$X^-$ is a twisted $I$-bundle and therefore each edge of $\overline\Gamma_F$ has weight $2$ or $4$ by Lemma \ref{lemma: edge wts}.
We prove,

\begin{prop}
\label{prop: Delta = 6, d= 1}
Suppose that Assumptions \ref{assumptions 0} and \ref{assumptions 1} hold, $F$ is separating but not a semi-fibre, and $t_1^+ = t_1^-  = 0$.
If $\D(\alpha,\beta)=6$ and $d = 1$, then $M$ is the figure eight knot exterior.
\end{prop}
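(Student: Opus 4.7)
The plan is to assume that $M$ is not the figure eight knot exterior and deduce a contradiction via a combinatorial analysis of $\overline{\Gamma}_F$. Lemma~\ref{lemma: edge wts}(2), together with the propositions of \S\ref{sec: recogn the fig 8}, yields a list of forbidden configurations: every edge of $\overline{\Gamma}_F$ has weight $2$ or $4$; every face of $\Gamma_F$ lying in $X^-$ is a bigon; no trigon face has a weight-$4$ edge (Proposition~\ref{prop: trigon with max weight edge}); no two trigon faces share an edge (Proposition~\ref{prop: adjacent trigons}); no quadrilateral face has two adjacent weight-$4$ edges (Proposition~\ref{prop: quad with adjacent max weight edges}); no pentagonal face has two adjacent weight-$4$ edges (Proposition~\ref{prop: 5-gon with adjacent max weight edges}); and no bigon face of $\Gamma_F$ in $X^-$ has its two edges incident to at most two red tiles (Corollary~\ref{cor: parallel edges intersecting c_i at most twice}).

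Since the sum of the edge weights at each vertex of $\overline{\Gamma}_F$ equals $2\Delta(\alpha,\beta)=12$ and every weight lies in $\{2,4\}$, the possible weight patterns at a vertex $v$ are $(4,4,4)$, $(4,4,2,2)$, $(4,2,2,2,2)$ or $(2,\ldots,2)$, corresponding to valencies $3,4,5,6$ in $\overline{\Gamma}_F$. I would then invoke Proposition~12.2 and Corollary~12.4 of \cite{BGZ2}: either some vertex $v_0$ has $\mu(v_0)>2\Delta-4=8$ with $v_0$ of valency $4$ and $\varphi_3(v_0)\geq 1$ or of valency $5$ and $\varphi_3(v_0)\geq 4$; or $\mu(v)=8$ at every vertex, in which case Proposition~12.5 of \cite{BGZ2} restricts the face pattern at each $v$ to valency $4$ with $\varphi_4(v)=4$, valency $5$ with $\varphi_3(v)=3$ and $\varphi_4(v)=2$, or valency $6$ with $\varphi_3(v)=6$.

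I would then dispatch each configuration in turn. A valency-$5$ vertex with three or more trigon corners, and a valency-$6$ vertex with six trigon corners, each force two trigons to share a common edge at $v$, contradicting Proposition~\ref{prop: adjacent trigons}. The valency-$3$ pattern $(4,4,4)$ has $\mu(v)\geq 9>8$ but is not on the list of Proposition~12.2, so does not occur. A valency-$4$ vertex with a trigon corner (i.e.~$\varphi_3(v)\geq 1$) must place its trigon between the two weight-$2$ edges, forcing the two weight-$4$ edges to be adjacent at $v$; the resulting $(4,4)$-corner then carries a face with two adjacent weight-$4$ edges, which is neither a trigon, quadrilateral nor pentagon, hence an $n$-gon with $n\geq 6$, contradicting the available room at $v$ once Proposition~12.2 is taken into account. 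A valency-$4$ vertex with $\varphi_3(v)=0$ and $\varphi_4(v)=4$ in which the two weight-$4$ edges are adjacent at $v$ likewise produces a quadrilateral face with two adjacent weight-$4$ edges, contradicting Proposition~\ref{prop: quad with adjacent max weight edges}. The only remaining possibility is that every vertex has valency $4$ with weight pattern $(4,2,4,2)$ (weight-$4$ edges opposite) and $\varphi_4(v)=4$, so $\overline{\Gamma}_F$ is rectangular and every face is a quadrilateral whose edges alternate $4,2,4,2$ around its boundary.

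To rule out this rectangular case, I would select a weight-$2$ edge $\bar e$ of $\overline{\Gamma}_F$ and the associated bigon $B$ of $\Gamma_F$ in $X^-$ bounded by its two parallel edges, and carry out a red-tile count on the two quadrilateral faces of $\Gamma_F$ adjacent to $B$ (both lying in $X^+$). Using Lemma~\ref{lemma: A epsilon}, Corollary~\ref{cor:4-gon}, Lemma~\ref{parallel cross-arcs}, the alternation of edge weights $4,2,4,2$ around each such quadrilateral, and the hypothesis $d=1$ (which keeps $\partial A_+$ and $\partial A_-$ meeting minimally), I would show that the two edges of $B$ together meet at most two red tiles, contradicting Corollary~\ref{cor: parallel edges intersecting c_i at most twice} and finishing the proof. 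The hardest step will be this final red-tile count: one must simultaneously control the label patterns at endpoints of arcs of $h^{-1}(A_+)$ along all edges of the two adjacent quadrilaterals, using the rigidity of the rectangular structure and the constraint $q_+\geq 3$ to rule out extra red tiles near $B$.
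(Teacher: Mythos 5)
Your proposal follows the same broad template as the paper's proof—enumerate vertex weight/face patterns, use Propositions~\ref{prop: trigon with max weight edge}, \ref{prop: adjacent trigons}, \ref{prop: quad with adjacent max weight edges}, \ref{prop: 5-gon with adjacent max weight edges} to recognize the figure eight knot exterior whenever a small face touches a maximal-weight edge, and reduce the rectangular case to a red-tile count that feeds Corollary~\ref{cor: parallel edges intersecting c_i at most twice}. But there is a genuine gap in the handling of valency-$3$ vertices. You write that the valency-$3$ pattern $(4,4,4)$ ``has $\mu(v)\ge 9>8$ but is not on the list of Proposition~12.2, so does not occur,'' but nothing in the paper's usage of \cite[Proposition~12.2]{BGZ2} justifies concluding that valency $3$ is excluded; that proposition only bounds valency above by $5$ and gives $\varphi_3$-constraints for valencies $4$ and $5$. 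Indeed, when $\Delta=6$ and edges have weight up to $4$, the valency inequality $2\Delta\le 4\cdot\mathrm{valency}_{\overline\Gamma_F}(v)$ gives only $\mathrm{valency}\ge 3$, not $\ge 4$ as happens for $\Delta\ge 7$. A valency-$3$ vertex with all three edges of weight $4$ and all three incident $\overline\Gamma_F$-faces hexagons is a live possibility, and it is precisely the case that requires extra work: the paper's Lemma~\ref{lemma: homogeneous} shows in that situation that some hexagonal face has an edge incident to at most one red tile (using $q_+\ge 3$ to exclude red tile bigons between adjacent edges and cross-arc bookkeeping), after which Corollary~\ref{cor: parallel edges intersecting c_i at most twice} applies. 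Your proposal has no substitute for that argument, so as written it does not close the valency-$3$ case.

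A secondary, more stylistic point: the paper organizes the dichotomy around the Euler characteristic contribution $\chi(v)$ of a vertex of $\overline\Gamma_F$ (Lemmas~\ref{lemma: not homogeneous} and \ref{lemma: homogeneous}: some $\chi(v)>0$ versus all $\chi(v)=0$), which cleanly produces the finite list of valency-$3$ face triples $(4,6,12),(4,8,8),(5,5,10),(6,6,6)$ along with the valency-$4,5,6$ patterns, so that the propositions of \S\ref{sec: recogn the fig 8} can be applied face by face. Your proposal tries to extract the same information from $\mu(v)$ and \cite[Propositions~12.2, 12.5, Corollary~12.4]{BGZ2}, whose conclusions are phrased in terms of $\Gamma_F$-corner counts and whose valency range does not obviously close off valency $3$. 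If you wish to proceed with the $\mu(v)$-based bookkeeping, you must both verify what those BGZ2 statements actually say about valency-$3$ vertices and, more importantly, supply the hexagonal-face argument above; otherwise it is cleaner to switch to the $\chi(v)$ count, which is how the paper proceeds.
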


The proof of this proposition follows from Lemmas \ref{lemma: not homogeneous} and \ref{lemma: homogeneous} below.
In order to set these lemmas up, recall that $0 \leq \chi(Y) = \sum_v \chi(v)$ where the sum is over the vertices of $\overline{\Gamma}_F$ and
$$\chi(v) = 1 - \frac{\mbox{valency}(v)}{2} + \sum_{v \in f} \frac{\chi(f)}{|\partial f|}$$

\begin{lemma}
\label{lemma: not homogeneous}
Suppose that $X^-$ is a twisted $I$-bundle, $\Delta(\alpha, \beta) = 6$,  and $d=1$. If $\overline\Gamma_F$ has a vertex $v$ with $\chi(v)>0$,  then $M$ is the figure eight knot exterior.
\end{lemma}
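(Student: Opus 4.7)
The plan is to use the hypothesis $\chi(v)>0$ to force the local combinatorics of $\overline{\Gamma}_F$ at $v$ into one of the configurations recognised in Section \ref{sec: recogn the fig 8}. Since $X^-$ is a twisted $I$-bundle, Lemma \ref{lemma: edge wts}(2) tells us that every face of $\overline{\Gamma}_F$ lying in $X^-$ is a bigon and every edge of $\overline{\Gamma}_F$ has weight $2$ or $4$. Writing $k$ and $l$ for the numbers of weight-$2$ and weight-$4$ edges incident to $v$, the valency $2\Delta(\alpha,\beta)=12$ of $v$ in $\Gamma_F$ forces $2k+4l=12$, hence $(k,l)\in\{(6,0),(4,1),(2,2),(0,3)\}$ and $\mathrm{val}_{\overline{\Gamma}_F}(v)=k+l\in\{6,5,4,3\}$. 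The hypothesis $\chi(v)>0$ reads $\sum_i 1/n_i > (k+l)/2-1$, where $n_i$ runs over the face sizes at the $k+l$ corners at $v$ (annular corners contributing $0$).

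Before entering the case analysis I would dispose of the immediate recognition results: if any trigon at $v$ has a weight-$4$ edge, Proposition \ref{prop: trigon with max weight edge} finishes; if any quad at $v$ has two adjacent weight-$4$ edges, Proposition \ref{prop: quad with adjacent max weight edges} finishes; if two trigons at $v$ share a common edge, Proposition \ref{prop: adjacent trigons} finishes; and if a $5$-gon at $v$ has two adjacent weight-$4$ edges, Proposition \ref{prop: 5-gon with adjacent max weight edges} finishes. Henceforth I may assume none of these configurations appears at $v$.

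I would then work through the four valency cases. When $\mathrm{val}_{\overline{\Gamma}_F}(v)=3$, all three incident edges have weight $4$; the reduction step forbids any trigon corner, and $\sum 1/n_i>1/2$ then forces at least one corner to be a bigon or $4$- or $5$-gon; the $4$- and $5$-gon options are ruled out by the reductions (two adjacent weight-$4$ edges), so some corner is a bigon, which must lie in $X^-$ because a bigon in $X^+$ bounded by two weight-$4$ edges would provide an essential bigon in $X^+$ precluded by the structure of $\widehat{\dot\Sigma}{}^+_1$. Corollaries \ref{cor:3-gon} and \ref{cor:4-gon} together with Lemma \ref{parallel cross-arcs} then show that the two edges of this $X^-$ bigon are incident to at most two red tiles in the adjacent $X^+$-faces, so Corollary \ref{cor: parallel edges intersecting c_i at most twice} yields that $M$ is the figure eight knot exterior. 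The higher-valency cases $\mathrm{val}\in\{4,5,6\}$ proceed analogously: increasing valency and the reductions progressively force enough small corners for $\sum 1/n_i$ to exceed $(k+l)/2-1$, and what remains after excluding the configurations treated by Propositions \ref{prop: trigon with max weight edge}--\ref{prop: 5-gon with adjacent max weight edges} is always an unavoidable weight-$2$ bigon in $X^-$ whose two edges are incident to at most two red tiles, triggering Corollary \ref{cor: parallel edges intersecting c_i at most twice} once more.

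The main obstacle will be the bookkeeping in the valency-$5$ and valency-$6$ cases, where $\chi(v)>0$ permits many distinct local configurations at $v$; in each of these subcases one must carefully certify, using the red-tile analysis of \S \ref{subsec: trace face} and the parallel cross-arc constraints of Lemma \ref{parallel cross-arcs}, that the identified $X^-$ bigon indeed has its two edges meeting at most two red tiles in total, so that Corollary \ref{cor: parallel edges intersecting c_i at most twice} applies. Equivalently, one must rule out any pathological accumulation of red tiles on those edges under the standing assumptions that $d=1$ and $\Delta(\alpha,\beta)=6$.
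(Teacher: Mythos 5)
Your framework — case analysis on the valency of $v$ in $\overline\Gamma_F$, applying Propositions \ref{prop: trigon with max weight edge}, \ref{prop: quad with adjacent max weight edges}, \ref{prop: adjacent trigons} and \ref{prop: 5-gon with adjacent max weight edges} as reductions — is the same as the paper's. However there is a genuine gap: you repeatedly assert that after the reductions $\chi(v) > 0$ forces a \emph{bigon} corner at $v$ in $\overline\Gamma_F$, which you then propose to feed into Corollary \ref{cor: parallel edges intersecting c_i at most twice}. But the reduced graph $\overline\Gamma_F$ has no bigon disk faces: two edges of $\overline\Gamma_F$ cobounding a bigon disk would represent parallel families of edges of $\Gamma_F$, hence would already have been merged into a single edge of $\overline\Gamma_F$. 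The terms $1/n_i$ in the formula for $\chi(v)$ therefore all satisfy $n_i \geq 3$ (for disk faces; annulus faces contribute $0$, as you note). Your claim that ``some corner is a bigon'' is thus vacuous, and with it the proposed appeal to the red-tile machinery collapses in every valency case.

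Once this is corrected the argument in fact closes \emph{without} red tiles, which is what the paper does. In the valency-$3$ case all three edges at $v$ have weight $4$; the three Propositions eliminate $3$-, $4$- and $5$-gons at $v$, so every face at $v$ has $\geq 6$ sides and $\sum 1/n_i \leq 1/2$, contradicting $\chi(v) > 0$. In the valency-$4$ case (weights $2,2,4,4$ in some cyclic order) and the valency-$5$ case (weights $2,2,2,2,4$), a similar Euler-characteristic count after the reductions produces either a contradiction or a configuration handled directly by one of the quoted Propositions — the paper shows $\chi(v)>0$ forces $\varphi_3(v)\geq 1$ (resp.\ $\geq 4$), then locates a trigon, quad or $5$-gon meeting adjacent weight-$4$ edges. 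No analysis of red tiles or invocation of Corollary \ref{cor: parallel edges intersecting c_i at most twice} is needed in this lemma; that machinery belongs to the complementary Lemma \ref{lemma: homogeneous}, where $\chi(v)=0$ for all $v$, the reductions do not eliminate the configuration outright, and one must find a bigon face of $\Gamma_F$ (not of $\overline\Gamma_F$) in $X^-$ whose two edges meet few red tiles.
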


\pf
Consider a vertex $v$ of $\overline{\Gamma}_F$ such that $\chi(v) > 0$. Then
$$0 < \chi(v) = 1 - \frac{\mbox{valency}(v)}{2} + \sum_{v \in f} \frac{\chi(f)}{|\partial f|} \leq 1 - \frac{\mbox{valency}(v)}{2} + \frac{\mbox{valency}(v)}{3}  \leq 1 - \frac{\mbox{valency}(v)}{6}$$
so that $\mbox{valency}(v) \leq 5$. On the other hand, the assumption that $\Delta(\alpha, \beta) = 6$ implies that the vertices of $\overline{\Gamma}_F$ have valency $3$ or more. Thus the valency of $v$ is either $3, 4$, or $5$.

\begin{case}
$v$ has valency $5$.
\end{case}

The reader will verify that $\chi(v) > 0$ is equivalent to requiring that $\varphi_3(v) \geq 4$. Then each edge incident to $v$ is incident to a trigon face of $\overline{\Gamma}_F$. On the other hand, as $\D(\alpha,\beta)=6$ and $v$ has valency $5$, there is at least one edge incident to $v$
having weight $4$. Proposition \ref{prop: trigon with max weight edge} then shows that $M$ is the figure eight knot exterior.

\begin{case}
$v$ has valency $4$.
\end{case}

Since $\Delta(\alpha, \beta)  = 6$, two edges incident to $v$ have weight $2$ and two have weight $4$. The condition that $\chi(v) > 0$ implies that $\varphi_3(v)\geq 1$.
By Proposition \ref{prop: trigon with max weight edge}, we may assume that $\varphi_3(v)=1$
and the weights of the edges incident to $v$ are $2, 2, 4, 4$ in cyclic order.
The  condition that $\chi(v) > 0$ then implies that either $\varphi_4(v) = 3$ or $\varphi_4(v) = 2$ and $\varphi_5(v) = 1$. In the former case there is a quad face incident to $v$ which has adjacent edges of maximal weight, and so Proposition \ref{prop: quad with adjacent max weight edges} shows that $M$ is the figure eight knot exterior. In the latter case there is either a quad face or a $5$-gon face incident to $v$ with adjacent edges of weight $4$.
We may now apply Propositions \ref{prop: quad with adjacent max weight edges} and \ref{prop: 5-gon with adjacent max weight edges}
to see that $M$ is the figure eight knot exterior.

\begin{case}
$v$ has valency $3$.
\end{case}

Since $\Delta(\alpha, \beta)  = 6$, each of the edges incident to $v$ has weight $4$.
The condition that $\chi(v) > 0$ shows that at least one of the faces incident to $v$ is a $3$-gon or  $4$-gon or $5$-gon.
So we may apply Propositions   \ref{prop: trigon with max weight edge}, \ref{prop: quad with adjacent max weight edges}
and \ref{prop: 5-gon with adjacent max weight edges} respectively
to see that $M$ is the figure eight knot exterior.
\qed

\begin{lemma}
\label{lemma: homogeneous}
Suppose that $X^-$ is a twisted $I$-bundle, $\Delta(\alpha, \beta) = 6$, and $d=1$. If $\chi(v) \leq 0$ for each vertex $v$ of $\overline\Gamma_F$,  then $M$ is the figure eight knot exterior.
\end{lemma}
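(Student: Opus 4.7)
The plan is to reduce, via Euler characteristic and a local vertex analysis, to a short list of admissible vertex configurations of $\overline{\Gamma}_F$, and then in each case produce a bigon face of $\Gamma_F$ in $X^-$ whose two edges carry at most two red tiles, so Corollary \ref{cor: parallel edges intersecting c_i at most twice} applies and gives that $M$ is the figure eight knot exterior.

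First I would observe that $\sum_v \chi(v) = \chi(Y) \in \{0,1\}$ and each summand is nonpositive by hypothesis, so $Y$ is the torus and $\chi(v) = 0$ for every $v$. Lemma \ref{lemma: edge wts}(2) gives that the edges of $\overline{\Gamma}_F$ have weight in $\{2,4\}$ summing to $2\Delta(\alpha,\beta) = 12$ at each vertex; this forces the valency of $v$ to lie in $\{3,4,5,6\}$, with the incident-edge-weight multiset determined by the valency. Expanding $\chi(v) = 1 - \mathrm{val}(v)/2 + \sum_{f \ni v} 1/|\partial f| = 0$ produces a short explicit list of face patterns at each vertex.

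Next I would eliminate patterns using the earlier propositions. Proposition \ref{prop: adjacent trigons} forbids two trigons sharing an edge; a pigeonhole argument on the possible $(n_3, n_4)$ and $(n_3, n_6)$ then kills every valency-$5$ and valency-$6$ pattern. Proposition \ref{prop: trigon with max weight edge} forces trigon edges at $v$ to have weight $2$, Proposition \ref{prop: quad with adjacent max weight edges} forbids two adjacent weight-$4$ edges on a quad at $v$, and Proposition \ref{prop: 5-gon with adjacent max weight edges} does the same for pentagons. The surviving vertex pictures are: (a) valency $3$ with three hexagons and three weight-$4$ edges; (b) valency $4$ with four quads and alternating edge-weights $(4,2,4,2)$; and (c) valency $4$ with one trigon spanning two weight-$2$ edges, one hexagon spanning two weight-$4$ edges, and two intermediate quads, with incident weights in cyclic order $(2,2,4,4)$.

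In each surviving case I would produce an $X^-$-bigon of $\Gamma_F$ whose edges are incident to at most two red tiles in total. A weight-$4$ edge of $\overline{\Gamma}_F$ contains four parallel edges $e_1, \ldots, e_4$ of $\Gamma_F$ with three intermediate bigons $R_1, R_2, R_3$; alternation of sides forces $R_1, R_3 \in X^-$ and $R_2 \in X^+$. For the $X^-$-bigon $R_1$ the edge $e_1$ lies on an outer $X^+$-face (a hexagon or a quad in the surviving cases) while $e_2$ lies on $\partial R_2$. Lemma \ref{lemma: corner-arcs}(2) forbids red-tile bigons in $R_2$ between its two adjacent edges, and Lemma \ref{parallel cross-arcs} limits parallel cross-arcs in $R_2$ to at most two, bounding the red tiles on $e_2$ by two; Corollaries \ref{cor:3-gon} and \ref{cor:4-gon} then control the red-tile count on $e_1$ inside the outer face. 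In each of (a), (b), (c) this yields a bigon of $\Gamma_F$ in $X^-$ whose two edges carry at most two red tiles in total, so Corollary \ref{cor: parallel edges intersecting c_i at most twice} gives $M =$ figure eight knot exterior.

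The hard part will be controlling the red-tile count on the outer edge when the outer $X^+$-face is a hexagon, since hexagons are not directly bounded by Corollary \ref{cor:3-gon} or \ref{cor:4-gon}; this will likely require a global counting argument exploiting the regularity of the hexagonal tiling in case (a), and a local interaction argument between the hexagon, its adjacent trigon, and the neighbouring quads in case (c). A secondary obstacle will be accommodating possible bigon faces of $\overline{\Gamma}_F$ or annular faces of $Y$, which I expect to fit into the same framework by essentially the same argument.
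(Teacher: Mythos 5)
Your skeleton matches the paper's: deduce $\chi(v)=0$ for every vertex, enumerate the resulting vertex configurations, eliminate valency $5$ and $6$ via Proposition \ref{prop: adjacent trigons}, and then in the surviving cases exhibit an $X^-$-bigon of $\Gamma_F$ whose two edges meet at most two red tiles so that Corollary \ref{cor: parallel edges intersecting c_i at most twice} applies. Up to that point the approaches agree. However, the proposal has a genuine gap exactly where you flag it: you do not actually carry out the red-tile count in the hexagonal case, and the ingredients you list (Corollaries \ref{cor:3-gon} and \ref{cor:4-gon}) do not apply to a hexagon face. The paper's argument here has two steps you are missing. First, it observes that at a valency-$3$ vertex $v$ all incident edges have weight $4$ and all incident faces are hexagons, and then — crucially — that $v$ cannot cobound an edge with a valency-$4$ vertex (a valency-$4$ vertex is incident only to quads, but the two faces meeting any edge at $v$ are hexagons), so every neighbour of $v$ also has valency $3$; it follows that \emph{every} edge of a hexagon $f$ at $v$ has weight $4$. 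Second, using that no $X^+$-bigon contains a red tile (since $q_+\geq 3$) and that corner-arc red tiles cannot join adjacent edges (Lemma \ref{lemma: corner-arcs}(2)), one shows that if an edge of $f$ carries two or more red tiles then each such tile has a cross-arc to a non-adjacent edge of $f$; a short counting argument on the hexagon then produces an edge of $f$ incident to at most one red tile, and the adjacent $X^-$-bigon does the job. You also misstate where the red tiles live: in your case analysis, the inner $X^+$-bigon $R_2$ has \emph{no} red tiles at all (since $q_+\geq 3$), not ``at most two,'' so the bound you need is entirely a constraint on the outer face, which is precisely the part you leave open.

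A secondary inaccuracy: in the rectangular (valency-$4$) case, the paper does not invoke Corollary \ref{cor:4-gon}; instead it argues that one may assume every weight-$4$ edge meets cross-arcs (otherwise Corollary \ref{cor: parallel edges intersecting c_i at most twice} applies directly), hence no cross-arc meets a weight-$2$ edge, whence the $X^-$-bigon carried by a weight-$2$ edge has at most two red tiles via corner-arcs alone. Your proposed case (c), a valency-$4$ vertex with faces $(3,4,4,6)$ and cyclic weights $(2,2,4,4)$, does not appear in the paper's enumeration; if it is truly possible it is not addressed by the paper's rectangular argument either, so before you spend time on it you should verify whether the surrounding constraints (e.g.\ that a trigon must have all three edges of weight $2$, forcing all three of its vertices to have valency $4$, and then propagating this structure) in fact rule it out. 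As written, the proposal does not constitute a proof.
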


\pf
Since $0 \leq \chi(Y) = \sum_v \chi(v)$, we have $\chi(v)=0$ for each vertex $v$.
One can easily  verify that for any vertex $v$ with  $\chi(v)=0$, one of the following four cases
holds:
\vspace{-.2cm}
\begin{itemize}

\item $\mbox{valency}(v) = 3$ and the number of edges of the three faces incident to $v$ are $(4,6, 12), (4,8,8),$ $(5,5,10)$, or $(6,6,6)$;

\vspace{.2cm} \item $\mbox{valency}(v) = 4 = \varphi_4(v)$;

\vspace{.2cm} \item $\mbox{valency}(v) = 5$, $\varphi_3(v) = 3$, and $\varphi_4(v) = 2$;

\vspace{.2cm} \item $\mbox{valency}(v) = 6 = \varphi_3(v)$.

\end{itemize}
\vspace{-.2cm}
By Proposition \ref{prop: adjacent trigons}, we may assume that $\overline\Gamma_F$ has no vertices of valency $5$ or $6$.

Suppose that $\overline\Gamma_F$ has a vertex $v$ of valency $3$. Then each of the edges incident to $v$ have weight $4$, so
by Propositions \ref{prop: trigon with max weight edge}, \ref{prop: quad with adjacent max weight edges}
and \ref{prop: 5-gon with adjacent max weight edges} we may assume that $\varphi_6(v)=3$.
It follows that $v$ cannot be connected to a valency $4$ vertex by an edge of $\overline\Gamma_F$. In
particular, the vertices of $\overline{\Gamma}_F$ which cobound an edge with $v$ have valency $3$.
It follows that if $f$ is a $6$-gon faces incident to $v$, each of its edges has weight $4$.

Since $q_+ \geq 3$, no bigon face of $\Gamma_F$ lying in $X^+$ contains a red tile and there are no red tile bigons connecting
adjacent edges of $f$. It follows that if an edge $e$ of $f$ has at least two red tiles incident to it, each of these tiles has a
cross-arc in its boundary connecting $e$ to another edge of $f$. But then it's easy to see that some edge of $f$ is incident to
at most one red tile. Corollary  \ref{cor: parallel edges intersecting c_i at most twice} then shows that $M$ is the figure eight knot exterior.

Finally suppose that $\overline\Gamma_F$ has no valency $3$ vertices. In this case, $\overline\Gamma_F$ is rectangular
(\cite[Proposition 11.5]{BGZ2}) and the weights of edges at any of its vertices alternate $2$, $4$, $2$, $4$
(Proposition \ref{prop: quad with adjacent max weight edges}). By Corollary \ref{cor: parallel edges intersecting c_i at most twice} we may assume that every edge of weight $4$ intersects cross-arcs and thus there are no cross-arcs incident to a weight $2$ edge. It follows that the two edges of a bigon face corresponding to a weight $2$ edge is incident to at most two red tiles. Corollary \ref{cor: parallel edges intersecting c_i at most twice} then implies that $M$ is the figure 8 exterior.
\qed

\section{The case that $F$ separates but not a semi-fibre, $t_1^+ = t_1^- = 0$, $d \ne 1$, and $M(\alpha)$ is very small}
\label{sec: sep not semifibre very small}
In this section we suppose that $F$ is separating, though not a semi-fibre, $t_1^+ = t_1^- = 0$, $d \ne 1$, and $M(\alpha)$ is a small Seifert manifold which is very small.
We use character variety methods to prove the following proposition.

\begin{prop}
\label{twice-punctured very small t+t-=0}
Suppose that Assumptions \ref{assumptions 0} and \ref{assumptions 1} hold, $F$ is separating but not a semi-fibre, $t_1^+ = t_1^-  = 0$, and $d \ne 1$.
If $M(\alpha)$ is a very small Seifert manifold, then
$$\Delta(\alpha, \beta) \leq \left\{
\begin{array}{ll} 1 & \hbox{if $d = 0$ or $M(\alpha)$ is of $C$-type} \\
2 & \hbox{if $\alpha$ is of $D$-type} \\
3 & \hbox{if $\alpha$ is of $T$-type, $O$-type or $I$-type, or $(a,b,c) = (2,3,6), (2,4,4)$ or $(3,3,3)$}
\end{array} \right.$$
\end{prop}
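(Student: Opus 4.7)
The plan is to mimic the strategy used in the proofs of Proposition \ref{prop: very small d not 1} and Proposition \ref{twice-punctured very small t+=0}, building a suitable curve of characters in $X_{PSL_2}(M(\beta)) \subset X_{PSL_2}(M)$ by bending representations of an amalgam of triangle groups. First, the case $d=0$ is immediate from Proposition \ref{prop: d=0} since $(p_+,q_+) \ne (2,2)$ forces $M(\beta)$ to be Seifert fibred over a hyperbolic base orbifold, hence $\beta$ is a singular slope by \cite[Theorem 1.7]{BGZ1}, and Proposition \ref{prop: sing slope exceptional} then gives $\Delta(\alpha,\beta) \leq 1$. So I would assume $d \geq 2$ throughout.

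The next step is to kill the fibre classes $\phi_+, \phi_-$ in $\pi_1(\widehat X^+) *_{\pi_1(\widehat F)} \pi_1(\widehat X^-) = \pi_1(M(\beta))$. Since $\Delta(\phi_+, \phi_-) = d$, the quotient of $\pi_1(\widehat F)$ by $\langle \phi_+, \phi_- \rangle$ is cyclic of order $d$, so one obtains an epimorphism
$$\varphi: \pi_1(M(\beta)) \to \Delta(p_+, q_+, d) *_\psi \Delta(p_-, q_-, d)$$
with $\psi(a_+ b_+) = (a_- b_-)^s$ for some $s$ coprime to $d$. Lemma \ref{lemma: homs 1} furnishes an irreducible representation of this amalgam to $PSL_2(\mathbb C)$, and Lemma \ref{lemma: bending homs sep} shows it can be bent non-trivially to produce a strictly non-trivial curve $Y_0$ (since at least one $(p_\epsilon, q_\epsilon, d)$ avoids the excluded triples, using that $F$ is not a semi-fibre and $\widehat F$ is essential). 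Pulling back, $X_0 = \varphi^*(Y_0)$ is a curve in $X_{PSL_2}(M(\beta)) \subset X_{PSL_2}(M)$, and Lemma \ref{lemma: bending homs sep}(3) guarantees that $\tilde f_{\beta^*}$ has a pole at each ideal point of $X_0$, giving $s_{X_0} \geq 1$, with $s_{X_0} \geq 2$ unless $d = 2$ and $p_\epsilon = q_\epsilon = 2$ for some $\epsilon$.

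Now I would apply the Culler--Shalen norm inequality $\|\alpha\|_{X_0} = \Delta(\alpha,\beta) s_{X_0}$ and Proposition \ref{prop: boundary values}. For $M(\alpha)$ of $C$-type, \cite[Proposition 8.1]{BCSZ2} together with the strict non-triviality of $X_0$ forces $\Delta(\alpha,\beta) \leq 1$. Otherwise, by Lemma \ref{lemma: factors} any element of $J_{X_0}(\alpha)$ corresponds to an irreducible representation $\bar\rho: \pi_1(M(\alpha)) \to PSL_2(\mathbb C)$ which, by \cite[Lemma 3.1]{BeBo}, factors through $\Delta(a,b,c)$; its image lies on the known list (from \cite[Lemma 5.3]{BZ1}, \cite[Propositions 5.2--5.4]{Bo}) of finite subgroups of $PSL_2(\mathbb C)$ arising for the given type of $M(\alpha)$. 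Moreover the image must contain a normal cyclic subgroup of order $d$ (or $3$ in the exceptional $(2,3,6)$ case of Lemma \ref{lemma: homs 1}(3)), which rules out most possibilities and gives $d \in \{2,3,4,5\}$. Counting the characters on $X_0$ using the one-or-two bound on irreducible dihedral characters of $\Delta(a,b,c)$ together with Proposition \ref{prop: boundary values}(2) in the form
$$\Delta(\alpha,\beta) \leq 1 + \frac{2|J_{X_0}(\alpha)| - |\mathcal{N}_{X_0}(\alpha)|}{s_{X_0}},$$
yields the claimed bounds case by case, exactly parallel to the analysis in the last four paragraphs of the proof of Proposition \ref{twice-punctured very small t+=0}.

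The main obstacle, as in the semi-fibre case, is the bookkeeping of which image types are realized on $X_0$ versus which are admitted by $\pi_1(M(\alpha))$: one must verify for each combination of $(p_+,q_+,d;p_-,q_-)$ arising from the geometry and each type of small Seifert filling $M(\alpha)$ that the number of characters of $\pi_1(M(\alpha))$ lying on $X_0$ is small enough to give the stated bound, with special care in the exceptional cases of Lemma \ref{lemma: homs 1} where the image of $\rho_\epsilon$ is forced to be a smaller quotient. The $D$-type case in particular requires using that the two dihedral characters of $\Delta(2,2,l)$ (one of order $2$ and at most one other) both lie in $\mathcal{N}$, which together with $s_{X_0} \geq 1$ produces the bound $\Delta(\alpha,\beta) \leq 2$ rather than $3$.
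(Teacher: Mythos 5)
Your high-level strategy matches the paper's: kill the fibre classes to get an epimorphism $\varphi$ onto an amalgam of triangle groups, pull back bending curves, and bound $\Delta(\alpha,\beta)$ via $J_{X_0}(\alpha)$ and Proposition \ref{prop: boundary values}(2). However there are two genuine gaps.

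First, you assert that $Y_0$ (hence $X_0 = \varphi^*(Y_0)$) is strictly non-trivial because ``at least one $(p_\epsilon, q_\epsilon, d)$ avoids the excluded triples.'' But $F$ not being a semi-fibre only guarantees $(p_+,q_+) \ne (2,2)$; it does not preclude $(p_+,q_+,d') = (2,4,4)$. By Lemma \ref{lemma: bending homs sep}(2), if $(p_+,q_+,d') = (2,4,4)$ and $(p_-,q_-,d') \in \{(2,2,4),(2,4,4)\}$ then $X_{d'}$ fails to be strictly non-trivial, and Proposition \ref{prop: boundary values}(2) cannot be applied to it. The paper's fix is to observe that in this exceptional case $d$ is divisible by $4$ (hence by $2$), and to work instead with the curve $X_2$ associated to $\Delta(p_+,q_+,2) *_{\mathbb Z/2} \Delta(p_-,q_-,2)$, which is always strictly non-trivial. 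Without this substitution the argument does not close.

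Second, and more seriously, the $I$-type bound $\Delta(\alpha,\beta) \le 3$ cannot be obtained purely by counting characters when $(p_-,q_-,d')=(2,2,2)$. In that regime $s_{X_2}$ may equal $1$, and $I_{60}$ has exactly two irreducible $PSL_2(\mathbb C)$-characters, so the estimate you cite gives only $\Delta(\alpha,\beta) \le 1 + 4/1 = 5$. You write that the analysis is ``exactly parallel'' to Proposition \ref{twice-punctured very small t+=0}, but there the $I$-type case was closed using an a priori bound $\Delta(\alpha,\beta) \le 4$ from Proposition \ref{prop: non-sep non-fibre background}, which is specific to the non-separating setting and unavailable here. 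Instead the paper shows that if $|J_{X_2}(\alpha)| = 2$ then there can be no other non-trivial curve in $X_{PSL_2}(M(\beta))$, forcing $\gcd(p_+,q_+)=1$ and $(p_+,q_+,d)=(2,3,2)$, so $\widehat X^+$ is a trefoil exterior; then a Mayer--Vietoris computation, using that $X^-$ is a twisted $I$-bundle over the Klein bottle, gives $b_1(M(\beta);\mathbb Z/2)=2$, hence $b_1(M(\alpha);\mathbb Z/2)\geq 1$, which is incompatible with $\alpha$ being $I$-type. This topological step is essential and not anticipated by your plan.
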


The case that $d = 0$ is handled in Proposition \ref{prop: d=0}, so we suppose that $d \geq 2$ below.

Recall that $\widehat X^\epsilon$ has base orbifold $D^2(p_\epsilon, q_\epsilon)$ where $2 \leq p_\epsilon \leq q_\epsilon$. Write $\pi_1(D^2(p_\epsilon, q_\epsilon)) \cong \mathbb Z/p_\epsilon * \mathbb Z/q_\epsilon$ and choose generators  $a_\epsilon$ of $\mathbb Z/p_\epsilon$ and $b_\epsilon$ of $\mathbb Z/q_\epsilon$ for which $a_\epsilon b_\epsilon$ generates $\pi_1(\partial D^2(p_\epsilon, q_\epsilon))$. There is an epimorphism
$$\pi_1(M(\beta)) \stackrel{\varphi}{\longrightarrow} \Delta(p_+, q_+, d) *_{\psi} \Delta(p_-, q_-, d)$$
where $\varphi$ is the quotient by $\langle \langle \phi_+, \phi_- \rangle \rangle_{\pi_1(M(\beta))}$ and $\psi: \varphi(\pi_1(\widehat F)) \to \varphi(\pi_1(\widehat F))$ is the isomorphism determined by the gluing map $\partial \widehat X^+ \to \partial \widehat X^-$. The reader will verify that for either $\epsilon$, $\varphi(\pi_1(\widehat F))$ is the copy $\mathbb Z/d$ contained in $\Delta(p_\e, q_\e, d)$ corresponding to $(a_\e b_\e)^d$.

If $\beta^* \in \pi_1(\partial M)$ is a dual class to $\beta$, then  $\varphi(\beta^*) = a_+(a_+b_+)^k a_-$ for some integer $k$.

\begin{lemma}
\label{lemma: bending curves}
Suppose that $t_1^+ = t_1^- =  0$ and $d \geq 2$. If $d' > 1$ is a divisor of $d$, there is a non-trivial curve $X_{d'} \subset X_{PSL_2}(M(\beta)) \subset X_{PSL_2}(M)$ such that

$(1)$ $\tilde f_{\beta^*}$ has a pole at each ideal point of $X_{d'}$;

$(2)(a)$ $X_{d'}$ has one ideal point if $(p_-, q_-, d') = (2,2,2)$, and therefore $s_{X_{d'}} \geq 1$;

$(b)$ $X_{d'}$ has two ideal points if $(p_-, q_-, d') \ne (2,2,2)$, and therefore $s_{X_{d'}} \geq 2$.

$(3)$ $X_{d'}$ is not strictly non-trivial if and only if $(p_+, q_+, d') = (2,4,4)$ and $(p_-, q_-, d')$ is either $(2,2,4)$ or $(2,4,4)$.

\end{lemma}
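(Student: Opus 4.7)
The plan is to build $X_{d'}$ by pulling back a bending curve in the character variety of an amalgam of triangle groups, using the machinery developed in \S \ref{sec: amalgams}. First I would compose the epimorphism $\varphi: \pi_1(M(\beta)) \twoheadrightarrow \Delta(p_+,q_+,d) *_\psi \Delta(p_-,q_-,d)$ with the natural quotient
\[
\Delta(p_+,q_+,d) *_\psi \Delta(p_-,q_-,d) \twoheadrightarrow \Delta(p_+,q_+,d') *_{\psi'} \Delta(p_-,q_-,d')
\]
obtained by killing $(a_\pm b_\pm)^{d'}$ (well-defined because $d' \mid d$ and $\psi$ is compatible with passing to the $d/d'$-th power quotient). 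Denote the composed epimorphism by $\varphi_{d'}$. Because this factorisation is consistent with the paragraph preceding the lemma, we still have $\varphi_{d'}(\beta^*) = a_+(a_+b_+)^k a_-$ for some integer $k$.

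Next, apply Lemma \ref{lemma: homs 1} to produce an irreducible representation $\rho: \Delta(p_+,q_+,d') *_{\psi'} \Delta(p_-,q_-,d') \to PSL_2(\mathbb C)$ whose restriction to each factor is irreducible, and then apply Lemma \ref{lemma: bending homs sep} to bend $\rho$ non-trivially into a curve $Y_{d'} \subset X_{PSL_2}(\Delta(p_+,q_+,d') *_{\psi'} \Delta(p_-,q_-,d'))$. Define $X_{d'} = (\varphi_{d'})^*(Y_{d'}) \subset X_{PSL_2}(M(\beta)) \subset X_{PSL_2}(M)$; since $\varphi_{d'}^*$ is a closed injective morphism, $X_{d'}$ is a non-trivial curve.

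The three properties then transfer directly from Lemma \ref{lemma: bending homs sep}. For (1), the formula $\varphi_{d'}(\beta^*) = a_+ (a_+b_+)^k a_-$ (with $c_\pm = a_\pm$) lets us apply Lemma \ref{lemma: bending homs sep}(3): $\tilde f_{a_+ (a_+b_+)^k a_-}$ has a pole at each ideal point of $Y_{d'}$, and this pole pulls back to a pole of $\tilde f_{\beta^*}$ at each ideal point of $X_{d'}$. For (2), Lemma \ref{lemma: bending homs sep}(3) yields two ideal points of $Y_{d'}$ unless $p_\epsilon = q_\epsilon = d' = 2$ for some $\epsilon$, in which case there is one; because $(p_-,q_-) \leq (p_+,q_+)$ lexicographically and $(p_+,q_+) \neq (2,2)$, this exceptional case is exactly $(p_-,q_-,d') = (2,2,2)$. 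The inequalities $s_{X_{d'}} \geq 1$, respectively $\geq 2$, then follow from $s_{X_{d'}} = \|\beta^*\|_{X_{d'}} = \sum_{x \in \mathcal I(X_{d'})} \Pi_x(\tilde f_{\beta^*})$ combined with (1). For (3), Lemma \ref{lemma: bending homs sep}(2) says $Y_{d'}$ fails to be strictly non-trivial exactly when both triples $(p_\pm,q_\pm,d')$ lie in $\{(2,2,d'),(2,4,4)\}$; using $(p_+,q_+) \neq (2,2)$ forces $(p_+,q_+,d') = (2,4,4)$, and then the lexicographic ordering leaves $(p_-,q_-,d') \in \{(2,2,4),(2,4,4)\}$, matching the stated list.

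The main technical obstacle has already been handled inside \S \ref{sec: amalgams}: constructing the bent curve $Y_{d'}$, locating its ideal points, and deciding when strictly irreducible characters appear on it. What remains for the present lemma is essentially bookkeeping — checking that the natural quotient to the $d'$-amalgam is well defined, that $\varphi_{d'}^*$ preserves the desired analytic features (number of ideal points and pole orders of $\tilde f_{\beta^*}$), and that the lexicographic hypothesis on $(p_\pm,q_\pm)$ reduces the exceptional cases in Lemma \ref{lemma: bending homs sep} to the precise lists appearing in (2) and (3).
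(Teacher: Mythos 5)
Your proposal is correct and follows essentially the same route as the paper: compose $\varphi$ with the natural quotient to the $d'$-amalgam, pull back a bending curve produced by Lemmas \ref{lemma: homs 1} and \ref{lemma: bending homs sep}, and read off (1)--(3) from Lemma \ref{lemma: bending homs sep}(2),(3) together with the constraint $(p_+,q_+)\ne(2,2)$ and the lexicographic ordering. The additional remarks you make (that $\varphi_{d'}^*$ is a closed embedding, hence preserves ideal points and pole orders, and that $\varphi_{d'}(\beta^*)=a_+(a_+b_+)^k a_-$) are exactly the tacit justifications the paper relies on.
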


\begin{proof}
If $d' > 1$ is a positive integer dividing $d$ we can use the surjective composition of homomorphisms
$$\pi_1(M(\beta)) \xrightarrow{\; \varphi \;} \Delta(p_+, q_+, d) *_{\mathbb Z / d} \Delta(p_-, q_-, d) \to \Delta(p_+, q_+, d') *_{\mathbb Z / d'} \Delta(p_-, q_-, d')$$
to construct a curve $X_{d'} \subset X_{PSL_2}(M(\beta)) \subset X_{PSL_2}(M)$. Lemma \ref{lemma: bending homs sep}(3) shows that assertion (1) of the lemma holds and further, that $X_{d'}$ has two ideal points unless $(p_\epsilon, q_\epsilon, d) = (2,2,2)$ for some $\epsilon$, and otherwise one. Since $(p_+, q_+) \ne (2,2)$, assertion (2) of the lemma holds. Lemma \ref{lemma: bending homs sep}(2) shows that $X_{d'}$ is not strictly non-trivial if and only if $(p_\epsilon, q_\epsilon, d') \in \{(2,2,d'), (2,4,4)\}$ for both $\epsilon$. Since $(p_+, q_+) \ne (2,2)$, the latter condition is equivalent to $(p_+, q_+, d') = (2,4,4)$ and $(p_-, q_-, d') \in \{(2,2,4), (2,4,4)\}$. Thus assertion (3) of the lemma holds.
\end{proof}

\begin{proof}[Proof of Proposition \ref{twice-punctured very small t+t-=0}]
As mentioned above, the case $d = 0$ is handled by Proposition \ref{prop: d=0}. Suppose then that $d \geq 2$.

If $d' > 1$ is a divisor of $d$ and $X_{d'} \subset X_{PSL_2}(M)$ is a non-trivial curve obtained as in Lemma \ref{lemma: bending curves}, then
$$\Delta(\alpha, \beta) s_{X_{d'}} = \|\alpha\|_{X_{d'}}$$
by (\ref{seminorm distance}) and further, $s_{X_{d'}} \ne 0$. Hence if $\pi_1(M(\alpha))$ is cyclic, then $\|\alpha\|_{X_{d'}} = s_{X_{d'}}$ by \cite[Proposition 8.1]{BCSZ2}, so $\Delta(\alpha, \beta) = 1$.

Next suppose that $M(\alpha)$ is a prism manifold. That is, $\alpha$ is of $D$-type. If $X_{d'}$ is strictly non-trivial, then
$\Delta(\alpha, \beta) s_{X_{d'}} = \|\alpha\|_{X} \leq 2 s_{X_{d'}}$ (\cite[Proposition 8.1]{BCSZ2}), so $\Delta(\alpha, \beta) \leq 2$ .
On the other hand, if $X_{d'}$ is not strictly non-trivial, then $(p_+, q_+, d') = (2,4,4)$ and $(p_-, q_-, d')$ is $(2,2,4)$ or $(2,4,4)$ by Lemma \ref{lemma: bending curves}(3). In either case, $X_2$ is a strictly non-trivial curve, so $\Delta(\alpha, \beta) \leq 2$ as above.

Finally suppose that $M(\alpha)$ is neither $C$-type nor $D$-type and recall from the proof of Proposition \ref{twice-punctured very small t+=0} that if $\rho: \pi_1(M(\alpha)) \to PSL_2(\mathbb C)$ is an irreducible representation, then
$$\mbox{image}(\rho) \cong \left\{
\begin{array}{ll}
T_{12} & \mbox{ if $\alpha$ is of $T$-type or $(a,b,c) = (3,3,3)$} \\
D_3 \mbox{ or } O_{24}& \mbox{ if $\alpha$ is of $O$-type} \\
I_{60}& \mbox{ if $\alpha$ is of $I$-type} \\
D_3 \mbox{ or } T_{12} & \mbox{ if $(a,b,c) = (2,3,6)$} \\
D_2 \mbox{ or } D_4 & \mbox{ if $(a,b,c) = (2,4,4)$}
\end{array} \right.
$$
Further, in each of these five possibilities at most two such representations have isomorphic images and if two, either $\alpha$ has $I$-type and the image is $I_{60}$ or $(a,b,c) = (2,4,4)$ and the image is $D_4$.
Note, in particular, that the image of $\rho$ is finite with elements of order at most $5$.

Suppose that $(p_-, q_-, d') \ne (2,2,2)$, so that $s_{X_{d'}} \geq 2$ and no representation with character lying on $X_{d'}$ has image isomorphic to $D_2$.
If $X_{d'}$ is strictly non-trivial, we can apply Proposition \ref{prop: boundary values}(2) to see that $\Delta(\alpha, \beta) \leq 2$ when $\alpha$ is of $T$-type, $O$-type or $(a, b, c)$ is either $(2,3,6), (2,4,4)$, or $(3,3,3)$ and that $\Delta(\alpha, \beta) \leq 3$ when it is of $I$-type.

If $(p_-, q_-, d') \ne (2,2,2)$ and $X_{d'}$ is not strictly non-trivial, then as in the prism manifold case we know that $(p_+, q_+, d') = (2,4,4)$ and $(p_-, q_-, d')$ is either $(2,2,4)$ or $(2,4,4)$. In either case, $X_2$ is a strictly non-trivial curve with $s_{X_2} \geq 2$ and as in the previous paragraph, $\Delta(\alpha, \beta) \leq 2$ when $\alpha$ is of $T$-type, $O$-type or $(a, b, c)$ is either $(2, 3, 6), (2, 4, 4)$, or $(3,3,3)$, and that $\Delta(\alpha, \beta) \leq 3$ when it is of $I$-type.

Next suppose that $(p_-, q_-, d') = (2,2,2)$. In this case $X_{d'}=X_2$ is strictly non-trivial and the image of each representation whose character lies on $X_2$ contains a copy of $\Delta(2,2,2)$ as a proper subgroup, which excludes the possibility that the image is $D_2$ or $D_3$.  It then follows from Proposition \ref{prop: boundary values}(2) that $\Delta(\alpha, \beta) \leq 3$ when $\alpha$ is of $T$-type or $O$-type or $(a, b, c)$ is either $(2, 3, 6), (2, 4, 4)$ or $(3,3,3)$.

Finally suppose that $\alpha$ has $I$-type. Proposition \ref{prop: boundary values} shows that to obtain the inequality $\Delta(\alpha, \beta) \leq 3$, it suffices to prove $|J_{X_2}(\alpha)| \leq 1$.

Suppose then that $|J_{X_2}(\alpha)| = 2$. There cannot be another non-trivial curve $X_*\subset X_{PSL_2}(M(\beta)) \subset X_{PSL_2}(M)$ as otherwise $|J_{X_*}(\alpha)| = \emptyset$, so $\Delta(\alpha, \beta) = 1$ (Proposition \ref{prop: boundary values}). In particular, since
killing $\pi_1(\widehat F)$ in $\pi_1(M(\beta))$ yields $\mathbb Z/  \gcd(p_+, q_+)  * \mathbb Z/  2 $, we must have $\gcd(p_+, q_+) = 1$.
Also, the discussion in \S \ref{subsec: prod cyclics} shows that $(p_+, q_+) = (2,3)$, so $\widehat X^+$ is a trefoil exterior and $(p_+, q_+, d) = (2,3, 2)$.
We claim that there are no $I$-type filling slopes in this situation.

The assumption that $(p_-, q_-, d') = (2,2,2)$ implies that $X^-$ is a twisted $I$-bundle (\cite[Proposition 7.5]{BGZ2})
and therefore $X^-$ is a twisted $I$-bundle over the Klein bottle. Then $H_1(\widehat X^-) \cong \mathbb Z \oplus \mathbb Z/2$ where the $\mathbb Z$ factor is generated by a class $\xi$ such that $2 \xi = [\phi_-]$ and the $\mathbb Z/2$ factor is generated by $[\phi_-']$.

Now if $[\phi_+] = m [\phi_-] + n [\phi_-']$, then $|n| = \Delta(\phi_+, \phi_-) = d = 2$, so $[\phi_+] = 2m \xi \pm 2 [\phi_-'] \in H_1(\widehat X^-)$. Hence $[\phi_+]$ maps to zero in $H_1(\widehat X^-; \mathbb Z/2)$. On the other hand, as the fibre class in a trefoil knot exterior, $[\phi_+]$ maps to zero in $H_1(\widehat X^+; \mathbb Z/2)$. Consideration of the Mayer-Vietoris sequence for $M(\beta) =  \widehat X^+ \cup \widehat X^-$ then shows that the first (mod $2$) Betti number of $M(\beta)$ is $2$. Hence that of $M$ is at least $2$, and so that of $M(\alpha)$ is at least $1$. In particular, $\alpha$ cannot be an $I$-type filling slope (cf. \cite[page 117]{BZ1}), contrary to our assumptions.

We conclude that $|J_{X_2}(\alpha)| \leq 1$ and therefore $\Delta(\alpha, \beta) \leq 3$.
\end{proof}

\section{The case that $F$ separates but is not a semi-fibre, $t_1^+ = t_1^- = 0$, $d>1$, and $M(\alpha)$ is not very small}
\label{sec: sep not semifibre not very small}

In this final section we complete the proof of Theorem \ref{thm: twice-punctured precise} with the following proposition.

\begin{prop}
\label{twice-punctured not very small t+t-=0}
Suppose that Assumptions \ref{assumptions 0} and \ref{assumptions 1} hold, $F$ is separating but not a semi-fibre, $t_1^+ = t_1^-  = 0$, and $d > 1$.
If $M(\alpha)$ is a small Seifert manifold but not very small, then $\Delta(\alpha, \beta) \leq 5$.
\end{prop}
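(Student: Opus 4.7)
The plan is to combine the reductions already established with a Culler--Shalen seminorm analysis built from the bending constructions of Section~\ref{sec: amalgams}. First I would assemble the reductions. Every Seifert filling of the figure eight knot exterior is very small, so Proposition~\ref{prop: delta at least 7} gives $\Delta(\alpha,\beta)\le 6$; Proposition~\ref{prop: X^- is twisted I bundle} forces $X^-$ to be a twisted $I$-bundle (so $(p_-,q_-)=(2,2)$) whenever $\Delta(\alpha,\beta)=6$; and the hypothesis $d>1$ already excludes the conclusion of Proposition~\ref{prop: Delta = 6, d= 1}. It therefore suffices to derive a contradiction from the assumption that $\Delta(\alpha,\beta)=6$, $X^-$ is a twisted $I$-bundle, and $d\ge 2$.

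Second, I would construct a strictly non-trivial curve in $X_{PSL_2}(M)$. Quotienting $\pi_1(M(\beta))$ by the normal closure of $\{\phi_+,\phi_-\}$ gives an epimorphism $\varphi\colon\pi_1(M(\beta))\twoheadrightarrow \Delta(p_+,q_+,d) *_{\psi} \Delta(2,2,d)$. Using Lemma~\ref{lemma: homs 1} to choose an irreducible representation and Lemma~\ref{lemma: bending homs sep} to bend it, I obtain a curve $Y_0$ whose pullback $X_0=\varphi^*(Y_0)\subset X_{PSL_2}(M(\beta))\subset X_{PSL_2}(M)$ is strictly non-trivial in the generic case. Lemma~\ref{lemma: bending homs sep}(3) further shows that $\tilde f_{\beta^*}$ has a pole at every ideal point of $X_0$ (since $\varphi(\beta^*)=a_+(a_+b_+)^k a_-$), so $s_{X_0}\ge 2$, with $s_{X_0}\ge 1$ in the degenerate subcase $(p_-,q_-,d)=(2,2,2)$. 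The exceptional subcase $(p_+,q_+,d)=(2,4,4)$, in which $Y_0$ fails to be strictly non-trivial, would be handled separately by passing to a bending associated to a different divisor $d'\mid d$, as in Lemma~\ref{lemma: bending curves}.

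Third, I would exploit Proposition~\ref{prop: boundary values}(2):
\[
\Delta(\alpha,\beta) \;=\; 1 + \frac{1}{s_{X_0}}\bigl(2|J_{X_0}(\alpha)| - |\mathcal{N}_{X_0}(\alpha)|\bigr).
\]
Substituting $\Delta(\alpha,\beta)=6$ and $s_{X_0}\ge 2$ forces $2|J_{X_0}(\alpha)|-|\mathcal{N}_{X_0}(\alpha)|\ge 10$. Since $M(\alpha)$ is not very small, the base orbifold $S^2(a,b,c)$ is hyperbolic, and by Lemma~\ref{lemma: factors} every $\chi_\rho\in J_{X_0}(\alpha)$ is the character of an irreducible representation factoring as $\pi_1(M)\to\pi_1(M(\alpha))\to \Delta(a,b,c)\to PSL_2(\mathbb{C})$, with the restriction to $\pi_1(\widehat X^-)$ taking values in the dihedral image of $\Delta(2,2,d)$.

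The hard part will be producing a sharp upper bound on $|J_{X_0}(\alpha)|$ and $|\mathcal{N}_{X_0}(\alpha)|$. Any such $\chi_\rho$ must satisfy three independent constraints: compatibility with the amalgam (so $\rho_+(a_+b_+)=\rho_-((a_-b_-)^s)$), compatibility with the hyperbolic triangle group (the images of the three standard generators of $\Delta(a,b,c)$ must have orders dividing $a$, $b$, $c$), and the dihedral restriction on the $\widehat X^-$-side. Since an irreducible $PSL_2(\mathbb{C})$-character of $\Delta(a,b,c)$ is determined by the squared traces of the images of its three standard generators, and the amalgam plus dihedral constraints pin those trace-squares down to a short list of candidates, the aim is to establish $2|J_{X_0}(\alpha)|-|\mathcal{N}_{X_0}(\alpha)|\le 4\,s_{X_0}$, contradicting the inequality above. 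I expect the case analysis to split on $(p_+,q_+,d)$ and on whether $(a,b,c)$ admits a compatible non-dihedral factorization, following the pattern of the very-small proof (Proposition~\ref{twice-punctured very small t+t-=0}) but with tighter control coming from the hyperbolicity of $S^2(a,b,c)$.
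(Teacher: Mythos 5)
Your proposal correctly assembles the reductions (Propositions \ref{prop: delta at least 7} and \ref{prop: X^- is twisted I bundle}, $\Delta=6$, $X^-$ a twisted $I$-bundle, $d\ge 2$) and correctly sets up the bending curve $X_0$ via $\varphi\colon \pi_1(M(\beta))\to \Delta(p_+,q_+,d)*_{\psi}\Delta(2,2,d)$ together with Proposition \ref{prop: boundary values}(2). However, there is a genuine gap at the step you flag as ``the hard part.'' The estimate $2|J_{X_0}(\alpha)|-|\mathcal{N}_{X_0}(\alpha)|\le 4\,s_{X_0}$ cannot be established from the amalgam and dihedral constraints alone. The points of $J_{X_0}(\alpha)$ are characters of representations factoring through $\Delta(a,b,c)$, and for a hyperbolic triple $(a,b,c)$ the number of irreducible $PSL_2(\mathbb C)$-characters of $\Delta(a,b,c)$ grows without bound (roughly $\lfloor a/2\rfloor\lfloor b/2\rfloor\lfloor c/2\rfloor$ by \cite[Proposition~3.2]{BeBo}). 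The constraint that those characters also factor through the amalgam and have a dihedral $\widehat X^-$-side does not, by itself, pin them to a bounded list; one needs to know $(a,b,c)$ explicitly before the Culler--Shalen count can close. Your proposal has no mechanism to determine $(a,b,c)$.

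This is precisely what the paper supplies and what your plan omits: the involution argument of \S\ref{subsec: involutions}. The paper builds involutions $\tau_\pm$ on $\widehat X^\pm$ which glue to $\tau_\beta$, extend across the Dehn filling, and exhibit $M(\alpha)$ as the $2$-fold cover of $S^3$ branched over a link $L_\alpha$. Because $\Delta(\alpha,\beta)=6$ is even, $L_\alpha$ contains a trivial component $U_0$ and $L_\alpha-U_0$ is a Montesinos link of type $(p_+,q_+,d)$; because $d>1$ this sublink is not $2$-bridge, so $L_\alpha$ is forced to be a Seifert link (rather than a Montesinos link with three cyclically composed tangles). Lemma \ref{lemma: branched link} then restricts $(a,b,c)$ to a short explicit list (e.g.\ $S^2(2,8,3)$, $S^2(3,3,5)$, $S^2(3,5,5)$, $S^2(2,4,2q)$), and \emph{only then} does the character-count estimate $\Delta(\alpha,\beta)\le 1+\frac{1}{s_X}\bigl(2|J_X(\alpha)|-|\mathcal{N}_X(\alpha)|\bigr)\le 5$ become available, sometimes after summing over several disjoint bending curves to boost $s_X$. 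So the route you propose --- bounding $|J_{X_0}(\alpha)|$ directly ``following the pattern of the very-small proof'' --- works there only because $(a,b,c)$ is a priori Platonic or Euclidean and $X_{PSL_2}(\Delta(a,b,c))$ is finite and small; in the not-very-small case you must first deduce $(a,b,c)$ geometrically, and you would need to supply the involution/branched-cover argument (or some substitute) before the Culler--Shalen step can be completed.
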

We suppose that the hypotheses of the proposition hold for the remainder of this section.
Since $d > 1$, $M$ is not the figure eight knot exterior, and so $\Delta(\alpha,\beta) \leq 6$ by Proposition \ref{prop: delta at least 7}.
We assume below that $\Delta(\alpha,\beta)=6$ in order to derive a contradiction. Proposition \ref{prop: X^- is twisted I bundle} then implies that
$X^-$ is a twisted $I$-bundle.

\subsection{Involutions on $M$ and its fillings}
\label{subsec: involutions}

For each $\epsilon \in \{+,-\}$, recall the essential annulus $A^{\epsilon}$ which separates $\widehat X^{\e}$ into a union of two solid tori which contains $k_\e = K_\b\cap \widehat X^{\e}$
as an essential arc. Recall as well that $\widehat X^+$ is Seifert fibred over $D^2(p_+, q_+)$ with
$p_+ \geq 3$ and $q_+ \geq 2$, and $\widehat X^-$ is Seifert fibred over $D^2(2, 2)$.

\begin{figure}[!ht]
\centerline{\includegraphics{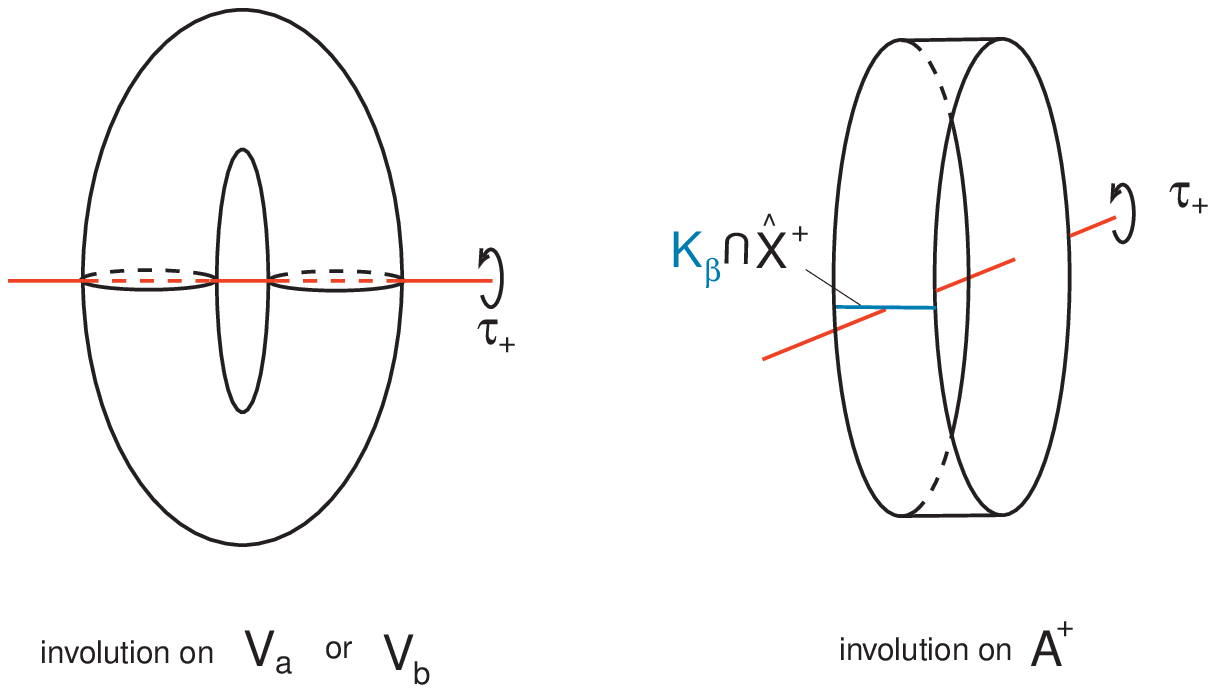}} \caption{ }\label{involution}
\end{figure}

Let $V_a$ and $V_b$ be the two solid tori which arise as the components of $\widehat X^+ \setminus A^+$ where $A^+$  has winding number $p_+$ in $V_a$ and $q_+$ in $V_b$.

There is an involution $\tau_+$ on $\widehat X^+$ under which each of $V_a,V_b, A^+, k_+$ is invariant. More precisely,
\vspace{-.7cm}
\begin{itemize}

\item the restriction of $\tau_+$ to both $V_a$ and $V_b$ is a standard involution of a solid torus whose fixed point set is a pair of arcs each contained in
a meridional disk of the solid torus;

\vspace{.2cm} \item the restriction of $\tau_+$ to $A^+$ is a $\pi$-rotation with two fixed points (cf. Figure \ref{involution});

\vspace{.2cm} \item the restriction of $\tau_+$ to $k_+$ is a rotation about a fixed point (cf. Figure \ref{involution});

\vspace{.2cm} \item the restriction of $\tau_+$ to $\widehat F$ is a hyperelliptic involution which exchanges the two points $\partial k_+$;

\vspace{.2cm} \item the quotient space of $\widehat X^+$ under $\tau_+$ is a $3$-ball whose branched set is shown
in Figure \ref{involution2}(1).

\end{itemize}
There is an analogous involution $\tau_-$ on $\widehat X^-$ with quotient space as shown in Figure \ref{involution2}(2), at least up to homeomorphism.

\begin{figure}[!ht]
\centerline{\includegraphics{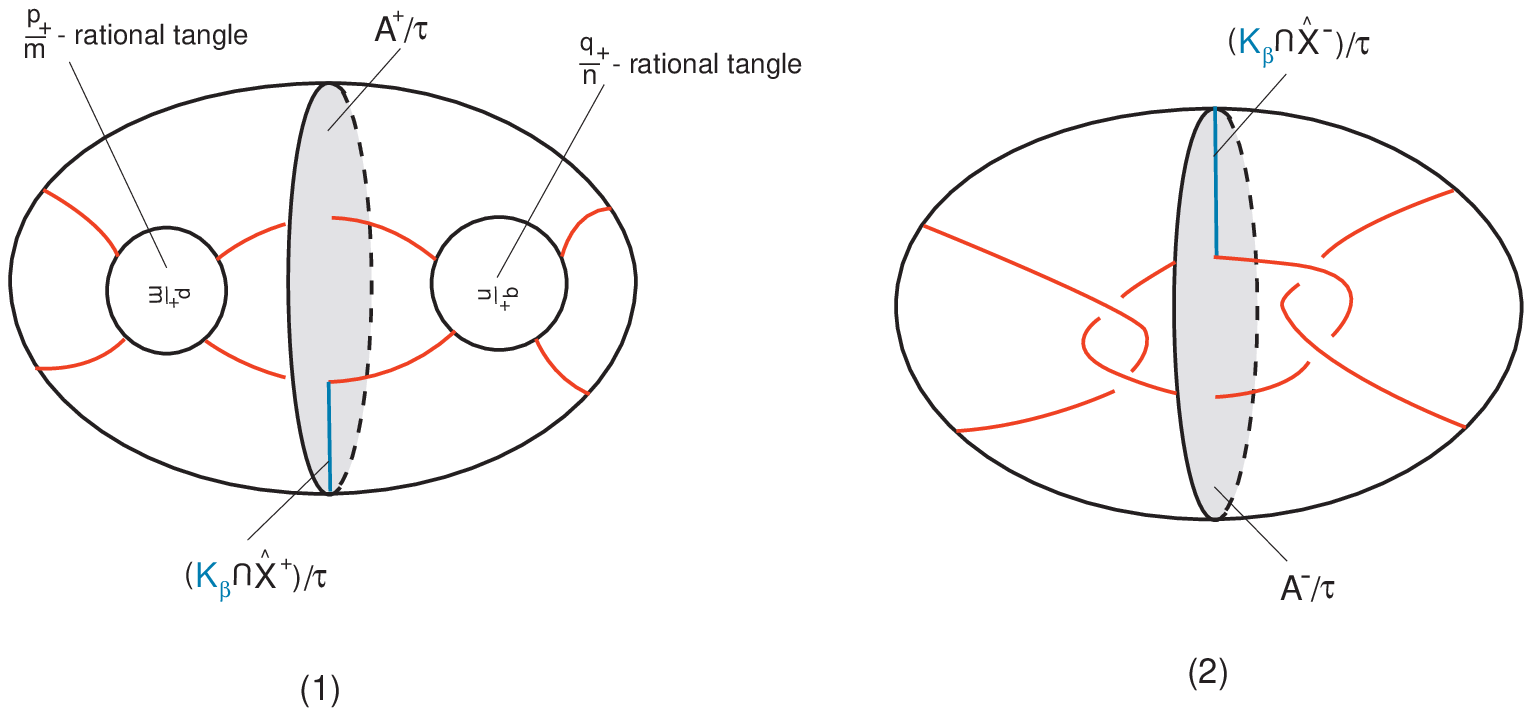}} \caption{ }\label{involution2}
\end{figure}

Let $f: \partial \widehat X^+ \to \partial \widehat X^-$ be the gluing map yielding $M(\beta) = \widehat X^+ \cup \widehat X^-$. Then
as both $\tau_-|_{\widehat F}$ and $f \circ \tau_+|_{\widehat F} \circ f^{-1}$ are hyperelliptic involutions exchanging the two points
$\partial k_+ = \partial k_-$, there is a homeomorphism $g$ of $\widehat X^-$ isotopic (rel $\partial k_-$) to the identity such that
$$f \circ (\tau_+|_{\partial  \widehat X^+}) \circ f^{-1} = (g \circ \tau_- \circ g^{-1})|_{\partial  \widehat X^-}$$
Hence, up to replacing $\tau_-$ by $g \circ \tau_- \circ g^{-1}$, we may assume that the restriction of $\tau_+$ to $(\partial  \widehat X^+, k_+)$
equals that of $\tau_-$ to $(\partial  \widehat X^-, k_-)$. Let
$$\tau_\beta: (M(\beta), K_\beta) \to (M(\beta), K_\beta)$$
be the involution obtained by gluing
$\tau_+$ and $\tau_-$ together along $\widehat F$.

We may assume that the filling solid torus $V_\b$ is  $\tau_\beta$-invariant
and thus obtain an involution $\t$ of $M=M(\beta)\setminus V_\b$.
The quotient spaces $M(\beta)/\tau_\beta$ and $M/\t$ and their branched sets are as shown in Figure \ref{involution3}.
The involution $\t$ extends to an involution $\t_\g$ on any $\g$-filling of $M$ along $\partial M$ and the branched set
in $M(\g)/\t_\g$ is obtained by filling $M/\t$ with a corresponding rational tangle along $\partial M/\t$.
It follows that $M(\g)$ is a $2$-fold cover of $S^3$ branched along a link $L_\g$.

\begin{figure}[!ht]
\centerline{\includegraphics{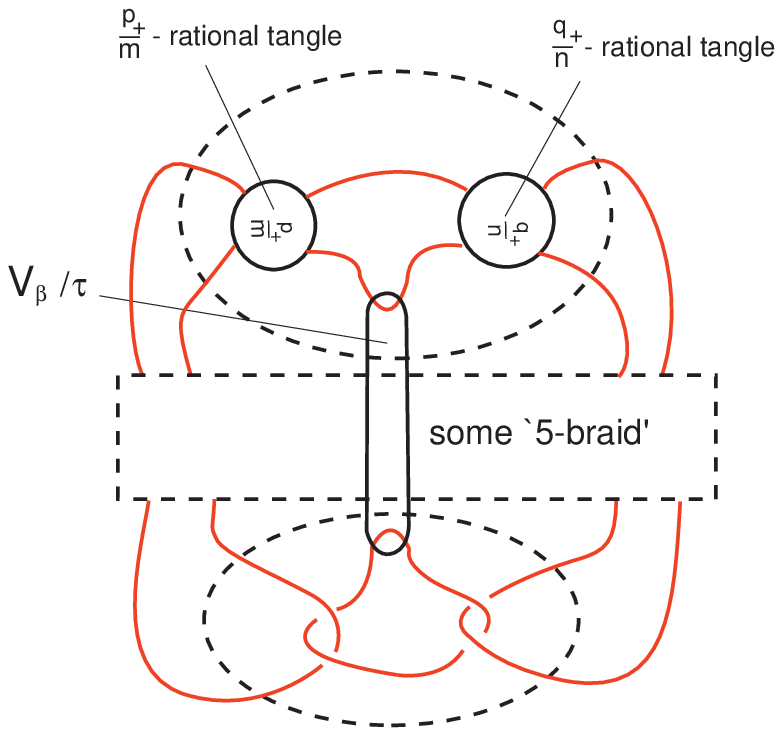}} \caption{ }\label{involution3}
\end{figure}

\subsection{Branched sets of the involutions on small Seifert manifolds}
Suppose that $W$ is Seifert fibred with base orbifold $S^2(a,b,c)$ where $a,b,c \geq 2$,
but is not a prism manifold, which is the $2$-fold cover of $S^3$ branched over a link $L$.
By the orbifold theorem we may assume that the associated covering involution, $\theta$ say,
preserves the Seifert structure on $W$. We distinguish two cases. If $\theta$ reverses the
orientations of the Seifert fibres of $W$ then $L$ is a Montesinos link (\cite{Mo}). If it preserves their
orientations, the Seifert structure on $W$ quotients to one on $S^3$ for which $L$ is a union of fibres.
In the former case we say that $L$ is a Montesinos link $L$ of {\it type} $(a,b,c)$. In the latter case we say that $L$ is a
{\it Seifert link}.

Our goal in this subsection is to describe more precisely the possibilities for $L$ when it is a Seifert link. We begin with some standard examples
of Seifert links and the base orbifolds of their branched covers.

\begin{exa}
{\rm
If $L$ is a $(p,q)$ torus knot, then its $2$-fold branched cover has base orbifold
$$S^2(a,b,c) \cong \left\{ \begin{array}{ll} S^2(2,p,q) & \hbox{if $pq$ is odd} \\
S^2(\frac{p}{2} ,q,q) & \hbox{if $p$ is even} \\  S^2(\frac{q}{2} ,p,p) & \hbox{if $q$ is even}\end{array}
\right.$$
}
\end{exa}

\begin{exa}
{\rm
If $L=U \cup K \subset S^3$ is a two component link where $U$ is an unknot and $K$ is a $(p,q)$-cable of $U$ (i.e. $K$
is isotopic in $S^3 - U$ into the boundary of a regular neighbourhood $N(U)$ of $U$ as a $(p,q)$-torus knot with winding number $q$
in $N(U)$), then we say $K$ is a {\it $(p,q)$-torus knot with respect to $U$}. In this case, the linking number between $U$ and $K$ is $p$. The $2$-fold branched cover of $L$ has base orbifold
$$S^2(a,b,c) \cong \left\{ \begin{array}{ll} S^2(q,p,p) & \hbox{if $q$ is odd} \\
S^2(2,p,2q) & \hbox{if $q$ is even}\end{array} \right.$$
}
\end{exa}

\begin{exa}
{\rm
If $L=U_0\cup U_1\cup K \subset S^3$ is a three component link where $U_0 \cup U_1$
is a Hopf link and $K$ is a $(p,q)$-torus knot which is isotopic in $S^3 - (U_0 \cup U_1)$
into  the boundary of a regular neighbourhood $N(U_0)$ of $U_0$,  then we say that $K$ is a {\it $(p,q)$-torus knot with respect to
$U_0\cup U_1$}. The $2$-fold branched cover of $L$ has base orbifold
$$S^2(a,b,c) \cong S^2(2,2p,2q) $$
}
\end{exa}

The following lemma is a consequence of \cite[Lemma 4.3]{BGZ3} and
the list of the base orbifolds of the $2$-fold branched covers of the Seifert links in the three examples above.

\begin{lemma}
\label{lemma: branched link}
Let $W$ and $\theta$ be as above. Suppose that $\theta$ preserves the orientations of the Seifert fibres of $W$ and endow $W/\t$
with the induced Seifert structure. Assume as well that $W/\theta=S^3$ and let $L_\theta \subset W/\t \cong S^3$ be the branched set of $\theta$.

$(1)$ $L_\theta$ is a union of at most three Seifert fibres of $S^3$. Moreover at least one component of $L_\theta$ is a regular fibre.

$(2)$ If $|L_\theta|=1$, then either
   \newline
\indent \hspace{4.5mm} $(i)$ $(a,b,c)=(2, p,q)$ where  $p$ and $q$ are odd, $\gcd(p,q)=1$, and  $L_\theta$ is a
 $(p,q)$ torus knot, or
   \newline
\indent \hspace{4.5mm} $(ii)$ $(a,b,c)=(p/2, q, q)$ where $p$ is even, $\gcd(p, q)=1$, and $L_\theta$ is a
   $(p,q)$-torus knot.

$(3)$ If $|L_\theta|=2$, then either \newline
\indent \hspace{4.5mm} $(i)$ $(a,b,c)=(2, p, 2q)$ where $q$ is even,  $\gcd(p,q)=1$, and
  $L_\theta$ is the union of an unknot $U$ \\
   \indent \hspace{1cm} and a $(p,q)$-torus knot with respect to $U$,
  or \newline
\indent \hspace{4.5mm} $(ii)$ $(a,b,c)=(q,p,p)$ where $q$ is odd, $\gcd(p,q)=1$, and $L_\theta$ is the union of an unknot $U$ \\
   \indent \hspace{1cm} and
   a $(p,q)$-torus knot with respect to $U$.

$(4)$ If $|L_\theta|=3$, then $(a,b,c)=(2,2p, 2q)$ where $\gcd(p,q)=1$ and $L_\theta$ is the union of
 a Hopf link and a $(p,q)$-torus knot $K$ with respect to $L_\theta - K$.
   \qed
\end{lemma}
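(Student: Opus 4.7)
The plan is to apply \cite[Lemma 4.3]{BGZ3} to establish the structural assertion $(1)$ about $L_\theta$, and then identify the branch link in each case with the Seifert link of one of the three preceding examples. Since $\theta$ preserves the Seifert structure on $W$ together with the orientations of the fibres, it must restrict to either the identity or a rotation by $\pi$ on each setwise invariant fibre. In particular its fixed set is a disjoint union of entire Seifert fibres of $W$, and hence its image in $W/\theta=S^3$ is a disjoint union of Seifert fibres of the induced Seifert structure on $S^3$. Thus $L_\theta$ is a union of Seifert fibres of $S^3$, which is the main content of $(1)$.

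For the cardinality bound $|L_\theta|\le 3$, I would invoke \cite[Lemma 4.3]{BGZ3} in the form adapted to the present configuration: since the base of $W$ has exactly three cone points while the base of any Seifert fibration of $S^3$ has at most two, no more than three distinct fibres can appear in $L_\theta$. To see that at least one component of $L_\theta$ is a regular fibre of $S^3$, note that $S^3$ admits at most two exceptional fibres; were every component of $L_\theta$ exceptional, then $|L_\theta|\le 2$ and at most two cone points of $S^2(a,b,c)$ could be accounted for via the doubled multiplicities of branched exceptional fibres and the possibly disconnected preimages of unbranched exceptional fibres, leaving the third cone point unexplained.

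With $(1)$ in hand, parts $(2)$, $(3)$, $(4)$ reduce to matching the fibre configuration to one of the three preceding examples. When $|L_\theta|=1$, the sole Seifert fibre of $S^3$ is a torus knot $T(p,q)$, and the first example records that the base of its double branched cover is $S^2(2,p,q)$ when $p,q$ are both odd and $S^2(p/2,q,q)$ when $p$ is even, giving $(2)$. When $|L_\theta|=2$, the regular component is an unknot $U$ and the other component is a Seifert fibre realised as a $(p,q)$-torus knot with respect to $U$; the second example yields $(a,b,c)=(2,p,2q)$ when $q$ is even and $(q,p,p)$ when $q$ is odd, which is $(3)$. When $|L_\theta|=3$, the two exceptional fibres of $S^3$ form a Hopf link $U_0\cup U_1$ and the remaining regular fibre is a $(p,q)$-torus knot with respect to $U_0\cup U_1$, so the third example gives $(a,b,c)=(2,2p,2q)$, establishing $(4)$.

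The only nontrivial input is \cite[Lemma 4.3]{BGZ3}; once the structural result $(1)$ is in place, each of $(2)$, $(3)$, $(4)$ is a direct appeal to the base-orbifold computation recorded in the corresponding preceding example. The one point that requires care is verifying that every configuration of the prescribed cardinality containing at least one regular fibre is isotopic to the one listed, which follows from the classification of Seifert fibrations of $S^3$ together with the observation that, up to reindexing, a $(p,q)$-cable of a Seifert fibre of $S^3$ with the appropriate coprimality is determined up to isotopy.
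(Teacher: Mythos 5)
Your proof follows the same route the paper takes: the paper disposes of the lemma with a one-line remark that it "is a consequence of [Lemma 4.3, BGZ3] and the list of the base orbifolds of the 2-fold branched covers of the Seifert links in the three examples above," and you invoke exactly that lemma for the structural assertion in $(1)$ and read off $(2)$–$(4)$ from the three preceding examples. Your additional explanation of why $|L_\theta|\le 3$ and why a regular fibre must occur is a reasonable unpacking of what is delegated to \cite[Lemma 4.3]{BGZ3}, and is consistent with the paper's intent.
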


\subsection{The proof of Proposition \ref{twice-punctured not very small t+t-=0}}
Suppose that $M(\alpha)$ is Seifert fibred with base orbifold $S^2(a,b,c)$ where $a,b,c \geq 2$,
but is not a prism manifold. We know that with respect to the involution $\t_\a$ constructed in \S \ref{subsec: involutions},
$M(\alpha)$ is a $2$-fold branched cover of $(S^3, L_\alpha)$ where $L_\a$ is either
a Montesinos link or a link whose exterior is Seifert fibred.

As $\D(\alpha,\beta) = 6$ is even,  $L_\alpha$ contains a trivial component $U_0$, as shown in Figure \ref{involution5}(1),
where $L_\a - U_0$ is a Montesinos link whose $2$-fold branched cover is obtained by
Dehn filling $\widehat X^+$ with a slope of distance $d$ from $\phi_+$.
Thus $L_\a - U_0$ is a Montesinos link of type $(p_+, q_+,d)$ (cf. Figure \ref{involution5}(2)).
As $d > 1$, the fundamental group of the $2$-fold branched cover of $L_\a - U_0$ is not cyclic, so
$L_\a - U_0$ is not a $2$-bridge link, and therefore $L_\a$
cannot be a Montesinos link of three cyclically composed rational tangles. Hence $L_\a$ must be a Seifert link of two or three components. In particular,
$\tau_\alpha$ can be assumed to preserve the orientations of the Seifert fibres of $M(\alpha)$, so that Lemma \ref{lemma: branched link} applies to our situation. Then $L_\a - U_0$ has either one or two components.

\setcounter{case}{0}

\begin{case}
$K = L_\a - U_0$ is a knot
\end{case}

If $K = L_\a - U_0$ is a knot, it is both a torus knot and a Montesinos knot of type $(p_+, q_+,d)$. Therefore
$K$ is either the $(-2, 3, 3)$-pretzel knot or the $(-2,3,5)$-pretzel knot
(\cite{Oe}). Equivalently,
$K$ is either the $(3,4)$-torus knot or, respectively, the $(3,5)$-torus knot. Further,
the linking number between $U_0$ and $K$ is an odd number (see Figure \ref{involution5}(2) and
apply \cite[Remark 12.7]{BuZi}).

First suppose that $K=T(3,4)$. Then $K$ is a $(3,4)$-torus knot with respect to $U_0$.
Lemma \ref{lemma: branched link} then shows that $M(\alpha)$ has base orbifold $S^2(2,8,3)$. On the other hand,
the $2$-fold branched cover of $K$ has base orbifold $S^2(p_+, q_+, d) \cong S^2(2,3,3)$, so $(p_+, q_+,d)$ is either $(2,3,3)$ or
$(3,3,2)$.

If $(p_+, q_+,d) = (2,3,3)$, there is an epimorphism $\pi_1(M(\beta)) \stackrel{\varphi}{\longrightarrow} \Delta(2,3, 3) *_{\psi} \Delta(2, 2, 3)$ and as in Lemma \ref{lemma: bending curves}, we can build a strictly non-trivial curve $X_0$ in $X_{PSL_2}(M(\beta)) \subset X_{PSL_2}(M)$ which contains only irreducible characters and for which $s_{X_0} \geq 2$. Further, $\tilde f_{\alpha}$ has a pole at each of ideal point of $X_0$. By \cite[Proposition 3.2]{BeBo}, $\Delta(2,8,3)$ has exactly four irreducible characters and therefore Proposition \ref{prop: boundary values}(2) implies that $\Delta(\alpha, \beta) \leq 1 + \frac{8}{2} = 5$, contrary to our assumptions.

If $(p_+, q_+,d) = (3,3,2)$, there is an epimorphism $\pi_1(M(\beta)) \stackrel{\varphi}{\longrightarrow} \Delta(3,3, 2) *_{\psi} \Delta(2, 2, 2)$ and as above we can build a strictly non-trivial curve $X_0$ in $X_{PSL_2}(M(\beta)) \subset X_{PSL_2}(M)$ which contains only irreducible characters and for which $s_{X_0} \geq 1$. Further, $\tilde f_{\alpha}$ has a pole at each of ideal point of $X_0$. Two of the four irreducible characters of $\Delta(2,8,3)$ correspond to the discrete faithful representation $\Delta(2,8,3) \to PSL_2(\mathbb R) \to PSL_2(\mathbb C)$ and to the quotient map to $\Delta(2,2,3)$. Neither of these can lie on $X_0$, so Proposition \ref{prop: boundary values}(2) implies that $\Delta(\alpha, \beta) \leq 1 + \frac{4}{1} = 5$, contrary to our assumptions. Thus $K \ne T(3,4)$.

Next suppose that $K=T(3,5)$. By Lemma \ref{lemma: branched link}, the base orbifold of $M(\alpha)$ is either $S^2(3,3,5)$ or $S^2(3,5,5)$. Further, the base orbifold of the $2$-fold branched cover of $K$ is $S^2(2,3,5)$, so $(p_+, q_+,d)=(2,3,5)$ or $(2,5,3)$ or $(3,5,2)$. In any event, we can construct a strictly non-trivial curve $X_0 \subset X_{PSL_2}(M)$ as above. In fact, since there are two conjugacy classes of elements of order $5$ in $PSL_2(\mathbb C)$, we can construct disjoint, strictly non-trivial curve $X_0, X_1 \subset X_{PSL_2}(M)$. Since $\Delta(3,3,5)$ and $\Delta(3,5,5)$ are generated by elements of odd order,
no representation whose character lies on these curves can have image in $\mathcal{N}$. Hence if $s_{X_j} = 1$ for some $j$, Proposition \ref{lemma: factors}(3)  implies that $\Delta(\alpha, \beta)$ is odd, contrary to assumption. Thus $s_{X_j} \geq 2$ for both $j$. Set $X = X_0 \cup X_1$. Then $\|\cdot\|_X = \|\cdot\|_{X_0} + \|\cdot\|_{X_1}$ so that $s_X = s_{X_0} + s_{X_1} \geq 4$. Since $\Delta(3,3,5)$ has exactly four irreducible characters and $\Delta(3,5,5)$ exactly eight (\cite[Proposition 3.2]{BeBo}), we see that $\Delta(\alpha, \beta) \leq 1 + \frac{16}{4} = 5$, a contradiction. We have therefore ruled out the possibility that $L_\a - U_0$ is a knot.

\begin{figure}[!ht]
\centerline{\includegraphics{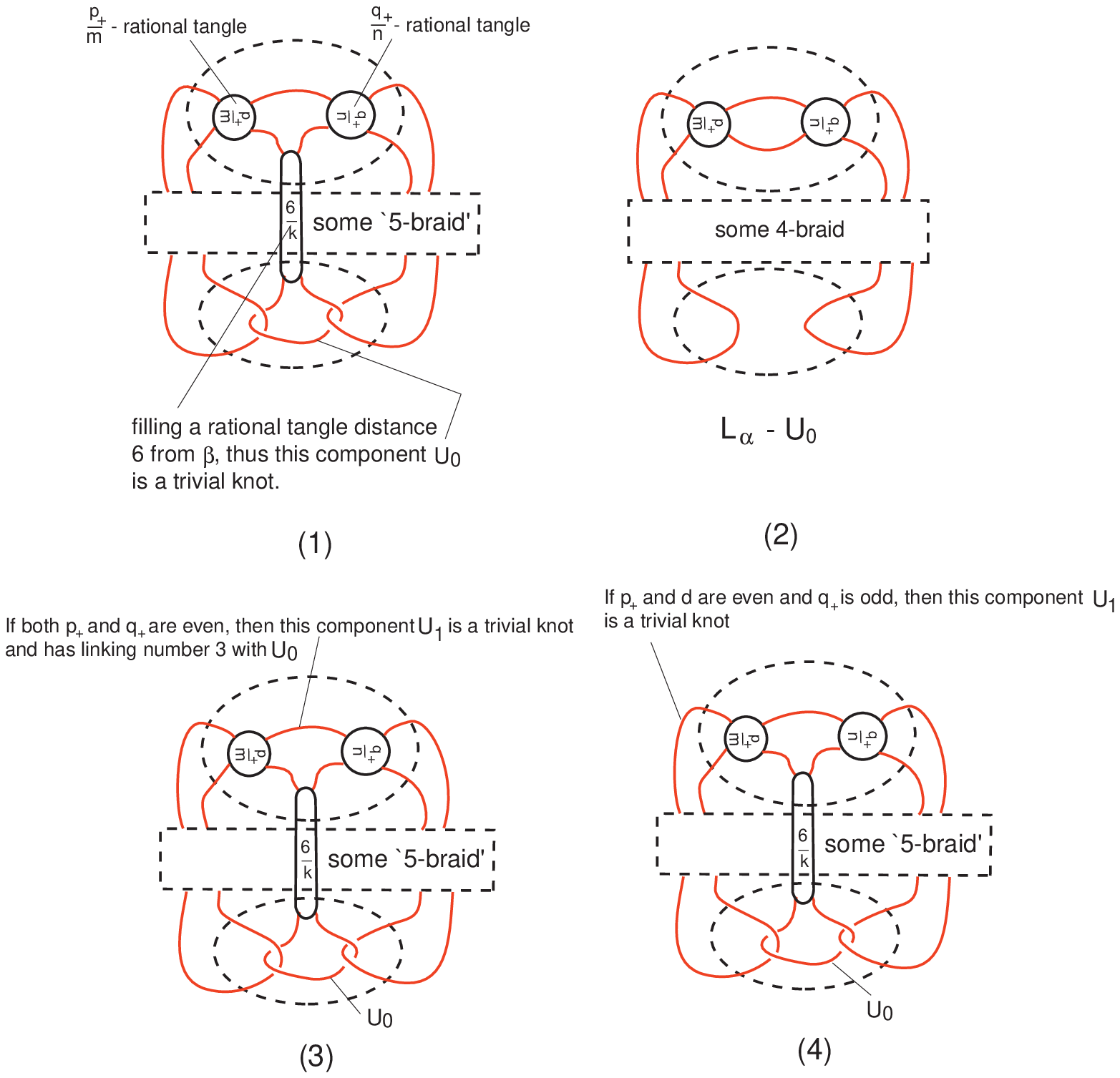}} \caption{ }\label{involution5}
\end{figure}

\begin{case}
$L_\a - U_0$ is a two component link
\end{case}

If $L_\a - U_0$ is a two component link, then $L_\a - U_0=U_1\cup K_1$ where $U_1$ is a trivial knot
and $K_1$ is both a $2$-bridge knot and a torus knot. Thus $K_1$ is a $(2,n)$ torus knot for some odd integer $n$. By Lemma \ref{lemma: branched link}(3), $M(\alpha)$ is Seifert fibred over $S^2(2, 4, 2q)$.

Now $K$ is either a $(2,n)$ or $(n,2)$ torus knot with respect to $U_1$. Hence the base orbifold $S^2(p_+, q_+,d)$ of the $2$-fold branched cover of $U_1 \cup K_1$ is either $S^2(2,4,n)$ or $S^2(2,2,n)$, so exactly two of $p_+, q_+$ and $d$ are even. Now $d$ cannot be odd, as otherwise $p_+$ and $q_+$ would both be even. Inspection of Figure \ref{involution5}(3) then shows that the linking number between $U_0$ and $U_1$ is $3$, which is impossible since $U_1\cup U_0$ is a Hopf link. We assume then that $p_+$ and $d$ are even. The case that $q_+$ and $d$ are even is treated similarly.

Since $q_+ = n$ odd, we know that $(p_+, q_+,d)$ is either $(2,q_+,2), (2,q_+,4)$ or $(4,q_+,2)$. Further, $U_1$ is as illustrated in Figure \ref{involution5}(4) and $K_1$ is a $(2,q_+)$-torus knot. We can also see from the figure that the linking number between $U_0$ and $K_1$ is even and therefore, $K_1$ is a $(q_+,2)$-torus knot with respect to $U_1$. Hence the base orbifold $S^2(p_+, q_+,d)$ of the $2$-fold branched cover of $U_1 \cup K_1$ is $S^2(2,4, q_+)$, so $(p_+, q_+,d)=(2, q_+, 4)$ or $(4,q_+,2)$.

If $d = 4$, we can build $\lfloor \frac {\;q_+}{2} \rfloor$ strictly non-trivial disjoint curves $X_0$ as above, each with $s_{X_0} \geq 2$ (cf. \S \ref{subsec: prod cyclics}). Hence if $X$ is the union of these curves, $s_X \geq q_+ -1$. Similarly if $d = 2$, we can build $2 \lfloor \frac {\;q_+}{2} \rfloor = q_+-1$ strictly non-trivial disjoint curves $X_0$ as above, each with $s_{X_0} \geq 1$. Hence if $X$ is the union of these curves, we again have $s_X \geq q_+ -1$. By Lemma \ref{lemma: branched link}(2), $M(\alpha)$ is Seifert fibred over $S^2(2, 4, 2q_+)$ and by \cite[Proposition 3.2]{BeBo}), $\Delta(2,4,2q_+)$ has exactly $q_+$ irreducible characters of representations with values in $PSL_2(\mathbb C)$. Hence, $\Delta(\alpha, \beta) \leq 1 + \frac{2q_+}{q_+ -1} \leq 5$, contrary to our assumptions. This rules out the possibility that $L_\a - U_0$ is a two component link and completes the proof of Proposition \ref{twice-punctured not very small t+t-=0}.

\section{Proof of Theorem \ref{thm: very small cases}}\label{sec: very small cases}

The first proposition deals with the case where $M(\alpha)$ is of $C$- or $D$-type.

\begin{prop}\label{prop: C and D types} Let $M$ be a hyperbolic knot manifold with slopes $\alpha$ and $\beta$ on $\partial M$ such that $M(\beta)$ is toroidal and $M(\alpha)$ is a very small Seifert manifold.

$(1)$ If $M(\alpha)$ is of $C$-type then $\Delta(\alpha,\beta) \le 4$.

$(2)$ If $M(\alpha)$ is of $D$-type then \newline
$\;\;\;\;\;\;(a)$ $\Delta(\alpha,\beta) \le 4$, and
\newline
$\;\;\;\;\;\;(b)$ if $M(\beta)$ is a torus semi-bundle then $\Delta(\alpha,\beta) \le 3$.
\end{prop}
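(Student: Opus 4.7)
The strategy is to combine known distance bounds for specific types of exceptional slopes with the explicit classifications of toroidal Dehn surgeries due to Gordon \cite{Go1} and Gordon--Wu \cite{GW}, following the pattern of the proofs of Theorems \ref{thm:torus bundle} and \ref{thm:torus semi-bundle} in Section \ref{sec:proofs of todoaidl bundle and semi-bundle cases}.

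First I would dispose of part (1). Since $M(\alpha)$ is of $C$-type, $\pi_1(M(\alpha))$ is cyclic, so $\alpha$ is a cyclic filling slope. The Cyclic Surgery Theorem of Culler--Gordon--Luecke--Shalen \cite{CGLS}, together with its refinement for the mixed cyclic-versus-toroidal case, bounds the distance between a cyclic slope and a toroidal slope on the boundary of a hyperbolic knot manifold by $2$. In particular $\Delta(\alpha,\beta)\le 2\le 4$, settling (1).

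For part (2)(a), $M(\alpha)$ has base orbifold $S^2(2,2,n)$ and so contains a Klein bottle; equivalently, $\alpha$ is a Klein-bottle filling slope. The desired bound $\Delta(\alpha,\beta)\le 4$ would follow from the established distance bound between a Klein-bottle slope and a toroidal slope on the boundary of a hyperbolic knot manifold, going back to Lee \cite{L2}. Alternatively, when $M(\beta)$ additionally contains a Klein bottle one has $\Delta(\alpha,\beta)\le 4$ directly by Matignon--Sayari \cite[Theorem 1.1]{MS}; the general case reduces to this via a standard $2$-fold cover argument or by combining the above with the classifications of \cite{Go1} and \cite{GW} and ruling out large distance by inspection, exactly as done for the torus-bundle/semi-bundle cases in Section~\ref{sec:proofs of todoaidl bundle and semi-bundle cases}.

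For part (2)(b), if $M(\beta)$ is a torus semi-bundle, then it is the union of two twisted $I$-bundles over the Klein bottle glued along a torus, and in particular contains an embedded Klein bottle. Thus both $\alpha$ and $\beta$ are Klein-bottle slopes, and \cite[Theorem 1.1]{MS} gives $\Delta(\alpha,\beta)\le 4$. To refine this to $\le 3$, I would invoke Lee's classification \cite{L1} of triples $(M;\alpha,\beta)$ for which $M(\alpha)$ and $M(\beta)$ both contain Klein bottles with $\Delta(\alpha,\beta)=4$: these are exhaustively enumerated by the two families $X_n$ and $Y(r)$ (with appropriate fillings of the second boundary component in the link-exterior cases). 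Going through these triples and using Tables A.1--A.8 of \cite{MP} in the same manner as in Section~\ref{sec:proofs of todoaidl bundle and semi-bundle cases}, one checks in each case whether $M(\beta)$ can be a torus semi-bundle and whether $M(\alpha)$ is small Seifert of $D$-type. The expected outcome is that the simultaneous requirements are never met, forcing $\Delta(\alpha,\beta)\le 3$.

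The main obstacle will be the verification step in (2)(b): although the classification reduces us to an explicit finite list of families, each case requires computing both the Seifert-fibered type of $M(\alpha)$ (to confirm $D$-type) and the JSJ decomposition of $M(\beta)$ (to detect a torus semi-bundle structure). This is a bookkeeping task closely analogous to, but somewhat more delicate than, the one carried out for Theorems \ref{thm:torus bundle} and \ref{thm:torus semi-bundle}, and it is where the bulk of the work will lie.
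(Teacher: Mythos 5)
Your argument for part (1) contains a genuine error. There is no known result bounding the distance between a cyclic filling slope and a toroidal filling slope on the boundary of a hyperbolic knot manifold by $2$; the Cyclic Surgery Theorem concerns two \emph{cyclic} slopes, and no ``refinement for the mixed cyclic-versus-toroidal case'' with bound $2$ exists. The correct bounds depend on which cyclic quotient occurs: Gordon--Luecke \cite{GL1} give $\Delta \le 2$ when $M(\alpha) \cong S^3$, Oh \cite{Oh} and Wu \cite{Wu2} give $\Delta \le 3$ when $M(\alpha) \cong S^1 \times S^2$, and Lee \cite{L3} gives $\Delta \le 4$ when $M(\alpha)$ is a lens space. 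The lens-space bound $4$ is the operative one, so your claimed bound of $2$ is off by two, and the reasoning that produced it cannot be repaired without replacing the CGLS appeal by these three separate theorems.

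For part (2)(a), the paper argues more directly than you propose: if $M(\alpha)$ contains a Klein bottle and $\Delta(\alpha,\beta) > 5$, then by Valdez-S\'anchez \cite{V} $M(\alpha)$ is toroidal; if $\Delta(\alpha,\beta)=5$, Lee \cite{L2} gives the same conclusion. Since a $D$-type manifold is atoroidal, $\Delta(\alpha,\beta)\le 4$. You cite \cite{L2} and Matignon--Sayari \cite{MS}, but \cite{MS} only applies when \emph{both} fillings contain Klein bottles, and your suggested reduction of the general case to that one (``a standard $2$-fold cover argument'') is not justified and is not how the paper proceeds; you omit \cite{V}, which does the heavy lifting for $\Delta>5$. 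Your treatment of (2)(b) is essentially correct in spirit --- the paper defers to Proposition \ref{prop: semi very small}(5), whose proof does precisely the Lee/Matignon--Sayari case analysis you outline --- but (1) must be reworked along the lines above before the argument stands.
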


\pf  (1) If $M(\alpha)$ is $S^3$ then $\Delta(\alpha,\beta) \le 2$ by \cite{GL1}, if $M(\alpha)$ is $S^1 \times S^2$ then $\Delta(\alpha,\beta) \le 3 $ by
\cite{Oh} and \cite{Wu2}, and if $M(\alpha)$ is a lens space then $\Delta(\alpha,\beta) \le 4$ by \cite{L3}.

(2) (a) If $M$ is a hyperbolic knot manifold such that $M(\beta)$ is toroidal and $M(\alpha)$ contains a Klein bottle, it follows from \cite{V} (if $\Delta(\alpha,\beta) > 5$) and \cite{L2} (if $\Delta(\alpha,\beta) = 5$), that $M(\alpha)$ is toroidal. If $M(\alpha)$ is of $D$-type then it contains a Klein bottle but is atoroidal. Hence $\Delta(\alpha,\beta) \le 4$.
Part (b) is  Proposition \ref{prop: semi very small} (5).
\qed

\begin{prop}\label{prop: toroidal vs very small} Let $M$ be a hyperbolic knot manifold which contains an $m$-punctured torus $F$ with boundary slope $\beta$, that is not a fibre in $M$ if $m \ge 3$. Let $\alpha$ be a slope on $\partial M$ such that $M(\alpha)$ is a very small Seifert manifold. Then

$(1)$ $\Delta(\alpha,\beta) \le 5$, and

$(2)$ if $\Delta(\alpha,\beta) = 5$, then either
\newline
$\;\;\;\;\;(a)$ $m = 1$, $F$ is not a fibre, $(M;\alpha,\beta) \cong (Wh(-3/2);-5,0)$, and $M(\alpha)$ has base orbifold $S^2(2,3,3)$, or
\newline
$\;\;\;\;\;\;(b)$ $F$ is separating in $M$ though not a semi-fibre.\end{prop}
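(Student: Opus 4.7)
The plan is a case analysis on $m$ and on the topological type of $F$, assembling bounds from earlier in this paper together with \cite{BGZ2, BGZ3}. For $m = 1$, Theorem 1.3 of \cite{BGZ3} already furnishes the bound $\Delta(\alpha, \beta) \leq 5$ together with a complete characterization of equality, which is precisely $(M;\alpha,\beta) \cong (Wh(-3/2); -5, 0)$ with $M(\alpha)$ of base orbifold $S^2(2,3,3)$ --- that is, conclusion (a). For $m \geq 3$, the hypothesis forces $F$ not to be a fibre: if $F$ is a semi-fibre, Proposition \ref{prop: semi very small} gives $\Delta \leq 4$, and otherwise Theorem 2.7 of \cite{BGZ2} gives $\Delta \leq 5$, with any equality having $F$ separating and hence falling under conclusion (b).

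The bulk of the work is the case $m = 2$. If $F$ is a fibre, Proposition \ref{prop: F2fibre} gives $\Delta \leq 3$; if $F$ is a semi-fibre, Proposition \ref{prop: semi very small} gives $\Delta \leq 4$; and if $F$ is non-separating but not a fibre, Proposition \ref{prop: F2-non-sep non-fibre} gives $\Delta \leq 4$ under the very small hypothesis. The remaining possibility, $F$ separating but not a semi-fibre, is exactly the setting of conclusion (b), for which I must establish $\Delta \leq 5$. First I dispose of reducible $M(\alpha)$ via \cite[Proposition 3.3]{BGZ3} ($\Delta \leq 3$) and of $b_1(M) \geq 2$ via Lemma \ref{b1 > 1} ($\Delta \leq 1$ in the very small case); in the remaining situation one may invoke Assumptions \ref{assumptions 0} and \ref{assumptions 1}. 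The $t_1$-dichotomy then applies: $t_1^+ + t_1^- > 0$ gives $\Delta \leq 4$ via Proposition \ref{prop: t1+ + t1- > 0}, while $t_1^+ = t_1^- = 0$ trifurcates on $d$, with $d = 0$ giving $\Delta \leq 1$ via Proposition \ref{prop: d=0} and $d \geq 2$ giving $\Delta \leq 3$ via Proposition \ref{twice-punctured very small t+t-=0}.

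The main obstacle is the residual subcase $d = 1$, where a priori $\Delta \leq 6$ by Proposition \ref{prop: delta at least 7} and Proposition \ref{prop: Delta = 6, d= 1} asserts that $\Delta = 6$ forces $M$ to be the figure eight knot exterior. To exclude this configuration I invoke the classical list of exceptional fillings of the figure eight exterior: its Seifert fillings are $M(\pm 1), M(\pm 2), M(\pm 3)$, each with base orbifold of negative Euler characteristic (so $\pi_1$ contains a non-abelian free subgroup and the filling is not very small), together with $M(\infty) = S^3$. Under the hypotheses in force, $\widehat F$ is essential in $M(\beta)$, so $\beta$ is a toroidal slope, forcing $\beta \in \{\pm 4\}$; the only very small Seifert slope is then $\alpha = \infty$, but $\Delta(\infty, \pm 4) = 4 < 6$, contradicting $\Delta = 6$. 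This rules out $\Delta = 6$ in the $d = 1$ subcase and completes the bound $\Delta \leq 5$ in case (b). Apart from this figure-eight verification --- the sole genuine input beyond bookkeeping --- the argument is pure assembly of results already established in the paper.
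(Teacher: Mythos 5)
Your argument is essentially the paper's one-line assembly, unpacked into explicit casework. The only place you do real work is the critical case $m = 2$ with $F$ separating and not a semi-fibre, and there you re-derive the internal structure of the proof of Theorem \ref{thm: twice-punctured precise}(4) --- the reducibility and $b_1$ reductions, the $t_1$-dichotomy, and the trifurcation on $d$ --- rather than simply citing the statement of Theorem \ref{thm: twice-punctured precise}(4), which already says ``$\Delta(\alpha,\beta) > 5$ implies $M$ is the figure eight knot exterior.'' Citing that part directly (together with \cite[Prop.~3.3]{BGZ3} for the reducible case, which Theorem \ref{thm: twice-punctured precise} excludes by hypothesis) would shorten things considerably and matches what the paper actually does.

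The one slip is the distance computation at the end: $\Delta(\infty, \pm 4) = 1$, not $4$ --- the meridian meets any integer slope exactly once. Since $1 < 6$ the contradiction still goes through, so the argument is not damaged. Your surrounding analysis --- that the only \emph{very small} Seifert filling of the figure eight knot exterior is $M(\infty) \cong S^3$, since $M(\pm 1), M(\pm 2), M(\pm 3)$ all have hyperbolic base orbifold and $M(0)$, while very small, is not Seifert --- is correct and in fact more careful than the paper's own terse justification, which says the figure eight exterior ``admits no small Seifert Dehn fillings which are very small'' even though $S^3$ is such a filling; what is really needed (and what you supply) is that none of its very small Seifert fillings lies at distance $\geq 6$ from the toroidal slopes $\pm 4$.
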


\pf  This follows from Propositions 13.1 and 13.2 and Theorem 2.7 of \cite{BGZ2}, Theorem 1.3 of \cite{BGZ3},
Theorem \ref{thm: twice-punctured precise} and Proposition \ref{prop: F2-non-sep non-fibre}, and the fact that the figure eight knot exterior admits no small Seifert  Dehn fillings which are very small.
\qed

Theorem \ref{thm: very small cases} follows from Propositions \ref{prop: C and D types},
\ref{prop: semi very small} and \ref{prop: toroidal vs very small}.

\def\bysame{$\underline{\hskip.5truein}$}

\end{document}